\newtheorem{lemma}{Lemma}[section]
\newtheorem{theorem}{Theorem}[section]
\newtheorem{proposition}{Proposition}[section]
\newtheorem{remark}{Remark}[section]
\newtheorem{corollary}{Corollary}[section]
\numberwithin{equation}{section}
\newcommand{\R}{\mathbb{R}}
\newcommand{\na}{\nabla}
\newcommand{\al}{\alpha}
\newcommand{\om}{\omega}
\newcommand{\pa}{\partial}
\newcommand{\f}{\frac}
\newcommand{\beq}{\begin{equation}}
\newcommand{\eeq}{\end{equation}}
\newcommand{\ben}{\begin{eqnarray}}
\newcommand{\een}{\end{eqnarray}}
\newcommand{\beno}{\begin{eqnarray*}}
\newcommand{\eeno}{\end{eqnarray*}}
\begin{document}
\title[Nonlinear stability for 3-D Poiseuille flow]{Nonlinear stability for 3-D plane Poiseuille flow in a finite channel}

%\thanks{$^*$Corresponding author}
%\thanks{{\it Keywords}: Poiseuille flow; transition threshold; Navier-Stokes equations; stability}
%\thanks{{\it AMS Subject Classification}: 76N10, 35Q30, 35R35}%

\author[Qi Chen]{Qi Chen}
\address[Q. Chen]{School of Mathematical Sciences, Zhejiang University, Hangzhou, 310058, China}
\email{chenqi123@zju.edu.cn}

\author[Shijin Ding]{Shijin Ding}
\address[S. Ding]{School of Mathematical Sciences, South China Normal University,
Guangzhou, 510631, China}
\email{dingsj@scnu.edu.cn}

\author[Zhilin Lin]{Zhilin Lin}
\address[Z. Lin]{School of Mathematical Sciences, South China Normal University,
Guangzhou, 510631, China}
\email{zllin@m.scnu.edu.cn}

\author[Zhifei Zhang]{Zhifei Zhang}
\address[Z. Zhang]{School of Mathematical Sciences, Peking University,
Beijing, 100871, China}
\email{zfzhang@math.pku.edu.cn}

\date{\today}

\begin{abstract}
In this paper, we study the nonlinear stability for the 3-D plane Poiseuille flow $(1-y^2,0,0)$ at high Reynolds number $Re$ in a finite channel $\mathbb{T}\times [-1,1 ]\times \mathbb{T}$ with non-slip boundary condition. We prove that if the initial velocity $v_0$ satisfies
$\|v_0-(1-y^2,0,0)\|_{H^{4}}\leq c_0 Re^{-\frac{7}{4}}$ for some $c_0>0$ independent of $Re$, then the solution of 3-D Naiver-Stokes equations is global in time and does not transit away from the plane Poiseuille flow. To our knowledge, this is the first nonlinear stability result for the 3-D plane Poiseuille flow and the transition threshold is accordant with the numerical result by Lundbladh et al. \cite{LHR}.
\end{abstract}

\maketitle
\tableofcontents

\vspace{-5mm}

\section{Introduction}

The hydrodynamics stability has been a main theme in the fluid mechanics since Reynolds's famous experiment in 1883 \cite{Rey}. This field is mainly  concerned with the transition of fluid motion from laminar to turbulent flow. Despite the efforts of many great scientists like Reynolds, Kelvin, Orr, Sommerfeld etc, up to now,  the hydrodynamic stability theory was incompletely understood \cite{Eck}. It is well known that some laminar flows such as plane Couette flow and pipe Poiseuille flow are linearly stable for any Reynolds number \cite{Rom, CWZ-cpam, DR, Yaglom}. However, these flows could become unstable and transit to the turbulence when Reynolds number exceeds some number, which is usually much smaller than the critical Reynolds number predicted by the eigenvalue analysis. This is so called the subcritical transition \cite{DHB} or Sommerfeld paradox \cite{Li}.

\subsection{Transition threshold problem}

Many attempts from different points of view are devoted to resolving the Sommerfeld paradox, see \cite{Tre, Chapman, Li} and  references therein. In particular, the resolution introduced by Kelvin \cite{Kelvin} is that the basin of attraction of laminar flows may shrink to zero as the Reynolds number tends to infinity. With this point of view, the transition threshold problem, which was firstly proposed by Trefethen et al. \cite{Tre} and formulated as a mathematical version in \cite{BGM-bams}, is stated as follows \smallskip

\emph{Given a norm $\Vert \cdot\Vert_X$, determine a $\gamma=\gamma(X)$ such that
$$\Vert u_0\Vert_X \ll Re^{-\gamma} \Longrightarrow \ \ stability,$$
$$\Vert u_0\Vert_X \gg Re^{-\gamma} \Longrightarrow\ \ instability.$$}
\noindent  Such exponent $\gamma$ is referred to as the transition threshold. Later on, a lot of works are devoted to determining $\gamma$ for some important laminar flows such as Couette flow and Poiseuille flow via the numerical or asymptotic analysis, see  \cite{LHR, Reddy, Chapman} and references therein.

Recently, there are many important progress on the transition threshold problem for  the 3-D plane Couette flow or Kolmogorov flow via rigorous mathematical analysis \cite{BGM1, BGM2, BGM3,  LWZ, WZ-cpam, CWZ-mem}. In particular, Chen, Wei and Zhang proved that the transition threshold is less than 1 for the 3-D plane Coutte flow in a finite channel for $H^2$ perturbations \cite{CWZ-mem}, which confirms the result of Chapman via the asymptotic analysis \cite{Chapman}.
For the 3-D Kolmogorov flow, Li, Wei and Zhang proved that the transition threshold is less than $\f 74$ for $H^2$ perturbations \cite{LWZ}.
The following table summarizes  some known results.
\begin{table}[H]
\begin{tabular}{|c|c|c|c|c|}
\hline
\textbf{Shear flow}                                                               & \textbf{Domain}                                       & \textbf{\begin{tabular}[c]{@{}c@{}}Boundary \\ condition\end{tabular}} & \textbf{\begin{tabular}[c]{@{}c@{}}Transition\\  threshold\\ (class \\ of  $X$)\end{tabular}} & \textbf{Reference} \\ \hline
\multirow{4}{*}{\begin{tabular}[c]{@{}c@{}}2-D \\ Couette \\ flow\end{tabular}}   & $\mathbb{T}\times \mathbb{R}$                         & None                                                                   & \begin{tabular}[c]{@{}c@{}}$\gamma=0$\\ (Gevrey)\end{tabular}                                      &\cite{BMV}                    \\ \cline{2-5}
                                                                                  & $\mathbb{T}\times \mathbb{R}$                         & None                                                                   & \begin{tabular}[c]{@{}c@{}}$\gamma\leq \frac{1}{2}$\\ (Sobolev)\end{tabular}                       &       \cite{BVW}             \\ \cline{2-5}
                                                                                  & $\mathbb{T}\times [-1,1]$                             & Non-slip                                                                & \begin{tabular}[c]{@{}c@{}}$\gamma\leq \frac{1}{2}$\\ (Sobolev)\end{tabular}                       &             \cite{CLWZ}       \\ \cline{2-5}
                                                                                  & $\mathbb{T}\times \mathbb{R}$                          & None                                                                   & \begin{tabular}[c]{@{}c@{}}$\gamma\leq \frac{1}{3}$\\ (Sobolev)\end{tabular}                       &      \cite{MZ, WZ-tmj}              \\ \hline
\multirow{4}{*}{\begin{tabular}[c]{@{}c@{}}3-D\\ Couette \\ flow\end{tabular}}    & $\mathbb{T}\times \mathbb{R}\times \mathbb{T}$        & None                                                                   & \begin{tabular}[c]{@{}c@{}}$\gamma=1$\\ (Gevrey)\end{tabular}                                      &\cite{BGM2}                    \\ \cline{2-5}
                                                                                  & $\mathbb{T}\times \mathbb{R}\times \mathbb{T}$        & None                                                                   & \begin{tabular}[c]{@{}c@{}}$\gamma\leq \frac{3}{2}$\\ (Sobolev)\end{tabular}                       &       \cite{BGM1}             \\ \cline{2-5}
                                                                                  & $\mathbb{T}\times \mathbb{R}\times \mathbb{T}$        & None                                                                   & \begin{tabular}[c]{@{}c@{}}$\gamma\leq 1$\\ (Sobolev)\end{tabular}                                 &    \cite{WZ-cpam}                \\ \cline{2-5}
                                                                                  & $\mathbb{T}\times [-1,1] \times \mathbb{T}$           & Non-slip                                                                & \begin{tabular}[c]{@{}c@{}}$\gamma\leq 1$\\ (Sobolev)\end{tabular}                                 &      \cite{CWZ-mem}              \\ \hline
\multirow{2}{*}{\begin{tabular}[c]{@{}c@{}}Kolmogorov \\ flow\\ $(\delta<1)$\end{tabular}}       & $\mathbb{T}_{2\pi \delta}\times \mathbb{T}^2$                                        & None                                                                   & \begin{tabular}[c]{@{}c@{}}$\gamma\leq \frac{7}{4}$\\ (Sobolev)\end{tabular}                       &          \cite{LWZ}          \\ \cline{2-5}
                                                                                  & $\mathbb{T}_{2\pi \delta}\times \mathbb{T}$ & None                                                                   & \begin{tabular}[c]{@{}c@{}}$\gamma\leq \frac{2}{3}+$\\ (Sobolev)\end{tabular}                      &      \cite{WZZ-aim}              \\ \hline
\multirow{3}{*}{\begin{tabular}[c]{@{}c@{}}2-D\\ Poiseuille\\  flow\end{tabular}} & $\mathbb{T}\times \mathbb{R}$                         & None                                                                   & \begin{tabular}[c]{@{}c@{}}$\gamma \leq \frac{3}{4}+$\\ (Sobolev)\end{tabular}                     &     \cite{CEK}               \\ \cline{2-5}
                                                                                  & $\mathbb{T}\times [-1,1]$                             & Navier-slip                                                            & \begin{tabular}[c]{@{}c@{}}$\gamma \leq \frac{3}{4}$\\ (Sobolev)\end{tabular}                      &      \cite{DL}              \\ \cline{2-5}
                                                                                  & $\mathbb{T}\times \mathbb{R}$                         & None                                                                   & \begin{tabular}[c]{@{}c@{}}$\gamma \leq \frac{2}{3}+$\\ (Sobolev)\end{tabular}                     & \cite{De-Zo}                   \\ \hline
%\begin{tabular}[c]{@{}c@{}}3-D \\ Poiseuille flow\end{tabular}                    & $\mathbb{T}\times [-1,1] \times \mathbb{T}$           & Non-slip                                                                & \begin{tabular}[c]{@{}c@{}}$\gamma\leq \frac{7}{4}$\\ (Sobolev)\end{tabular}                       & This paper         \\ \hline
\end{tabular}
\label{table1}
\end{table}

In the fundamental fluid dynamics, one of the oldest yet unsolved problems is to understand the hydrodynamic stability of pipe flow \cite{Ker}, which is related to Reynolds experiment. In this setting, a classical example of laminar flow is the pipe Poiseuille flow $(0,0, 1-r^2)$. For the transition threshold of this flow, both experimental and numerical results seem to indicate $\beta=1$ \cite{HJM, MM}. However, it is challenging to solve this conjecture. In a recent work, Chen, Wei and Zhang proved the linear stability of pipe Poiseuille flow at high Reynolds number regime \cite{CWZ-cpam}, which lays a foundation for the theoretical  analysis of hydrodynamic stability of pipe flow. In this paper, we consider the nonlinear stability of the 3-D plane Poiseuille flow $u_s=(1-y^2,0,0)$ in a finite channel $\Omega=\mathbb{T}\times [-1,1]\times \mathbb{T}$.  In addition to its own physical importance, we expect that the study of this flow can bring new insights into the stability of pipe Poiseuille flow due to their similarity. The plane Poiseuille flow is in fact linearly unstable when the streamwise direction is infinitely long \cite{GGN}. To study the subcritical problem,  we choose the streamwise direction to be periodic such that the flow is linearly stable at high Reynolds number regime, which will be proved in section \ref{sec:resolvent-slip}.

\subsection{Main result}

We consider the 3-D incompressible Navier-Stokes(NS) equations with high Reynolds number $Re\gg 1$:
\begin{equation}\label{ins}
\left \{
\begin{aligned}
&\partial_t v-\nu\Delta v+v\cdot\nabla v+\nabla q=0,\\
&\mathrm{div}\ v=0,\\
&v|_{t=0}=v_0(x,y,z),\ (x,y,z)\in \mathbb{T}\times [-1,1]\times \mathbb{T},
\end{aligned}\right.
\end{equation}
where $v(t,x,y,z)=(v_1,v_2,v_3)\in \mathbb{R}^3$ is the velocity field and $q\in\mathbb{R}$ is the pressure, and $\nu=Re^{-1}$ is the viscosity. It is obvious that 3-D plane Poiseuille flow $u_s=(1-y^2,0,0)$ is a steady solution to \eqref{ins} with constant pressure gradient $\nabla Q=(Q_0,0,0)$ for some constant $Q_0$.
Let  $u=v-u_s,  p=q-Q$. The perturbed Navier-Stokes system takes as follows
\begin{equation}\label{pertur-ins}
\left \{
\begin{aligned}
&\partial_t u-\nu\Delta u+(1-y^2)\partial_x u+u\cdot\nabla u+\left(
\begin{array}{c} -2y u_2 \\ 0 \\ 0\end{array}\right)+\nabla p=0,\\
&\mathrm{div}\ u=0,\\
&u|_{t=0}=u_0(x,y,z),
\end{aligned}\right.
\end{equation}
with non-slip boundary condition
\begin{equation}\label{no-slip}
u(t,x,\pm 1,z)=0.
\end{equation}

Before stating our main result, let us define
\begin{align*}
 f(t,x,y,z)=\sum_{k\in\mathbb{Z}^2}f_k(t,k_1,y,k_3)e^{ik_1x+ik_3z},
  \end{align*}
where $f_k=f_{(k_1,k_3)}=\frac{1}{(2\pi)^2}\int_{\mathbb{T}^2}f(t,x,y,z)e^{-i(k_1x+k_3z)}\mathrm{d}x\mathrm{d}z$. We define the zero mode and non-zero mode as follows
\begin{align*}
   & \overline{f}(t,y,z)=(P_0f)(t,y,z)=\dfrac{1}{2\pi}\int_{\mathbb{T}} f(t,x,y,z)\mathrm{d}x,\quad  f_{\not=}=f-P_0f.
\end{align*}

Our main result is stated as follows.

\begin{theorem}\label{main-result}
Assume that $u_0\in H^1_0\cap H^{4}(\Omega)$ with $\mathrm{div}u_0=0$. There exist constants $\nu_0,c_0,c,C>0$ independent of $\nu$ such that if $\|u_0\|_{H^{4}}\leq c_0 \nu^{\frac{7}{4}}, 0<\nu \leq \nu_0$, then the solution $u$ of the problem \eqref{pertur-ins}-\eqref{no-slip} is global in time with the following stability estimates:
\begin{itemize}

\item[(1)] Uniform bounds and decay of the background streak:
\begin{align}
&\| \overline{u}_1(t)\|_{H^2}+\|\overline{u}_1(t)\|_{L^\infty}\leq C\nu^{-1} e^{-\nu t}\|u_0\|_{H^{4}}, \label{sta-est-1}\\
&\|\overline{ u}_2(t)\|_{H^2}+\|\overline{ u}_3(t)\|_{H^1}+\|( \overline{u}_2,\overline{ u}_3)(t)\|_{L^\infty}\leq Ce^{-\nu t}\|u_0\|_{H^{4}}.\label{sta-est-2}
\end{align}

\item[(2)] Rapid convergence to a streak:
\begin{align}
&\label{streak-1} \nu^{\frac{1}{4}}\Big(\|(\partial_x,\partial_z)\partial_x u_{\not=}(t)\|_{L^2}+\|(\partial_x^2+\partial_z^2)u_{3,\not=}(t)\|_{L^2}\\
&\qquad +\|(\partial_x,\partial_z)\nabla u_{2,\not=}(t)\|_{L^2}+\nu^{\f1{24}}\|u_{2,\not=}(t)\|_{H^2}\Big)\nonumber\\
&\qquad +\nu^{s}\|u_{2,\not=}(t)\|_{L^\infty}  +\nu^{\frac{3}{8}}\Big(\|(u_{1,\not=},u_{3,\not=})(t)\|_{L^\infty}\nonumber\\
&\qquad +\|(u_{1,\not=},u_{3,\not=})(t)\|_{H^1}\Big) \leq Ce^{-c\nu^{\frac{1}{2}}t}\|u_0\|_{H^{4}},\nonumber\\
&\nu^{\frac 1 8}\Big(\|u_{\not=}\|_{L^\infty L^2}+\nu^{\frac{3}{4}}\|t(u_{1,\not=},u_{3,\not=})\|_{L^2 L^2}\Big)\label{streak-2}  \\
&\qquad +\|\nabla u_{2,\not=}\|_{L^\infty L^2}+\|\nabla u_{2,\not=}\|_{L^2 L^2}\leq C\|u_0\|_{H^{4}},\nonumber
\end{align}
where $s>0$ is {any constant independent of $\nu$}.
\end{itemize}

\end{theorem}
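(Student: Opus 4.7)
The plan is a continuity/bootstrap argument on the perturbation system \eqref{pertur-ins}, organized around the zero/non-zero mode decomposition $u=\overline{u}+u_{\not=}$ and two disjoint time scales: the purely viscous scale $t\sim\nu^{-1}$ for the streak $\overline{u}$, and the enhanced-dissipation scale $t\sim\nu^{-1/2}$ for $u_{\not=}$ provided by the linear stability of $(1-y^2,0,0)$ proved in Section~\ref{sec:resolvent-slip}. I would first establish local well-posedness in $H^4$, then assume \eqref{sta-est-1}--\eqref{streak-2} on $[0,T]$ with constant $2C$, and close the bootstrap by improving every estimate to one with constant $C$ under the smallness $\|u_0\|_{H^4}\le c_0\nu^{7/4}$.

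\textbf{Linear bounds for the non-zero modes.} The natural good variables at each non-zero Fourier mode $k=(k_1,k_3)$ are the Orr--Sommerfeld quantity $u_{2,\not=}$ (whose Dirichlet data allow one to bypass the boundary layer of the stream-function) and the vertical vorticity $\omega_{2,\not=}=\partial_z u_{1,\not=}-\partial_x u_{3,\not=}$, from which $u_{1,\not=}$ and $u_{3,\not=}$ are reconstructed by div-curl with a loss in $k_1^2+k_3^2$ accounting for the $\nu^{3/8}$ weights in \eqref{streak-1}. From the resolvent bounds of Section~\ref{sec:resolvent-slip}, together with a Gearhart--Pr\"uss / Fourier-in-time argument, I would extract (i) pointwise-in-time decay $e^{-c\nu^{1/2}t}$ for $u_{2,\not=}$ and $\omega_{2,\not=}$ in the various norms appearing in \eqref{streak-1}, and (ii) Kato-type space-time estimates that control the weighted $L^2_tL^2$ norms in \eqref{streak-2}. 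The subtlety is to track how the Squire forcing $-2i k_3 u_{2,\not=}$ drives $\omega_{2,\not=}$, which is the linear mechanism producing the $\nu^{-1/4}$ amplification of the horizontal velocity and sets the $\nu^{1/4},\nu^{3/8},\nu^{1/8}$ weights exactly.

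\textbf{Streak equations and lift-up.} The zero-mode components $(\overline{u}_2,\overline{u}_3)$ solve a 2D Navier--Stokes-type system on $[-1,1]\times\mathbb{T}$ with Dirichlet data, forced only by quadratic $u_{\not=}$-interactions; a standard $H^2\times H^1$ energy estimate combined with the Poincar\'e inequality gives the $e^{-\nu t}$ decay in \eqref{sta-est-2}. The equation for $\overline{u}_1$ is transport-diffusion,
\[
\partial_t\overline{u}_1-\nu\Delta\overline{u}_1+\overline{u}_2\partial_y\overline{u}_1+\overline{u}_3\partial_z\overline{u}_1=2y\,\overline{u}_2-\overline{(u_{\not=}\cdot\nabla u_{1,\not=})},
\]
with the \emph{lift-up} source $2y\,\overline{u}_2$ of size $O(\|u_0\|_{H^4})$. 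Duhamel integrated over the viscous time $\nu^{-1}$ produces the $\nu^{-1}$ prefactor of \eqref{sta-est-1}; this is exactly what dictates the threshold exponent $\gamma=7/4$, since $\nu^{-1}\|u_0\|_{H^4}\lesssim \nu^{3/4}\ll 1$ is needed to treat $\overline{u}_1$ as a perturbative transporter of $u_{\not=}$.

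\textbf{Closing the nonlinear estimates.} The main obstacle is the nonlinear coupling $u\cdot\nabla u$, split into zero$\,\cdot\,$zero, zero$\,\cdot\,$non-zero and non-zero$\,\cdot\,$non-zero interactions projected onto each mode. The most delicate term is $\overline{u}_1\partial_x u_{\not=}$: since $\overline{u}_1$ is large (size $\nu^{-1}\|u_0\|_{H^4}=O(\nu^{3/4})$), it acts as a quasi-shear background and should be absorbed into the Orr--Sommerfeld/Squire linear structure up to a perturbation whose size is controlled by $\|\overline{u}_1-(\text{shear})\|_{H^2}$, exploiting the resolvent estimate's stability under small shear deformations; this is where the weight $\nu^{1/24}$ in the $H^2$ norm of $u_{2,\not=}$ enters. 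The remaining trilinear bounds are handled by anisotropic Sobolev/product estimates (using $L^\infty_x$ from the periodic Fourier decomposition and $L^\infty_{y,z}\hookleftarrow H^2_{y,z}$), the space-time norms of \eqref{streak-2}, and Duhamel with the linear decay kernels. A careful accounting shows that each interaction contributes at most $C\nu^{-a}\|u_0\|_{H^4}^2$ with $a\le 7/4$, so the smallness $\|u_0\|_{H^4}\le c_0\nu^{7/4}$ yields a factor $c_0\,\|u_0\|_{H^4}$ gain and improves the bootstrap; letting $T\to\infty$ completes the proof.
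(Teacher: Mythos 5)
Your overall skeleton—local well-posedness, bootstrap on a family of weighted space-time norms, zero/non-zero splitting with two time scales ($\nu^{-1}$ for $\overline{u}$, $\nu^{-1/2}$ for $u_{\not=}$), Duhamel against the resolvent-derived semigroup bounds, and closing via trilinear estimates—matches the paper's strategy. However, there is one substantive misstep in your plan for the hardest term.

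You propose to absorb $\overline{u}_1\,\partial_x u_{\not=}$ into the linearization, i.e., to prove resolvent and space-time estimates for the Orr–Sommerfeld/Squire operator with background shear $1-y^2+\overline{u}_1(t,y,z)$ and argue ``stability under small shear deformations.'' This is not what the paper does and would be a considerable additional project. The paper's resolvent analysis (Sections~\ref{sec:resolvent-slip}--\ref{sec-no-slip}) uses the \emph{exact} profile $1-y^2$ in an essential way—e.g.\ the identity behind Lemma~\ref{H-1-wL2inside}, the explicit change of variable $1-y^2-\lambda=(y-y_1)(y-y_2)$, the Airy-function boundary correctors built from the linearization of $2(1\pm y)$, and the limiting-absorption estimates for the Rayleigh operator $(1-y^2-\lambda+i\delta)W+2\Phi=f$. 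A perturbation $\overline{u}_1$ of size $\nu^{3/4}$ is \emph{not} small relative to the resolvent gaps encountered near the critical point/critical layer collision ($\delta_1\sim\nu^{1/3}$, boundary-layer width $\sim\nu^{1/3}$), so there is no soft argument that the resolvent bounds are stable under such a deformation; the paper's own introduction singles this out (``whether the stable non-monotone shear flow under the Euler equations is also linearly stable under the Navier--Stokes equations'') as a genuinely open issue. Instead, the paper keeps the linear operator fixed at Poiseuille and treats every term $\overline{u}\cdot\nabla u_{\not=}$ and $u_{\not=}\cdot\nabla\overline{u}$ as nonlinear forcing, estimated via the anisotropic product laws in Section~\ref{sec:iter} (Lemmas~\ref{lem:int-zn-1-23}, \ref{lem:int-zn-11}, \ref{lem:int-nz-23}, \ref{lem:int-zz-11-pz}). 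This is feasible precisely because the bootstrap bound $E_1\lesssim\nu^{3/4}$ makes $\overline{u}_1$ perturbative at the level of the \emph{nonlinear} estimates, e.g.\ $\nu^{-3/2}E_1^2E_3^2\lesssim E_3^2$ under the threshold scaling.

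A smaller inaccuracy: the weight $\nu^{1/24}$ in front of $\|u_{2,\not=}\|_{H^2}$ in \eqref{streak-1} (which combines with $\nu^{1/4}$ to give $\nu^{7/24}$ in the energy $E_{3,0}$) does not come from absorbing $\overline{u}_1$. It is a \emph{linear} artifact of the space-time estimate for the non-slip Orr–Sommerfeld problem: Theorem~\ref{thm:sp-no-slip} produces the term $\nu^{7/12}\bigl(\|e^{c\nu^{1/2}t}\omega\|_{L^\infty L^2}^2+\nu\|e^{c\nu^{1/2}t}\partial_y\omega\|_{L^2L^2}^2\bigr)$ for $\omega=\Delta u_{2,\not=}$, and this exponent propagates through $E_{3,0}$ into the final statement. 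You should cut the speculative sentence attributing it to shear-deformation stability.

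With those two corrections—keeping the linear operator fixed and moving all of $\overline{u}\cdot\nabla u_{\not=}$ into the nonlinear estimates, and correctly sourcing the $\nu^{1/24}$ weight from the linear space-time estimate—your outline would align with the paper's argument.
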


Let us give some remarks about our result.

\begin{itemize}

\item[1.]To our knowledge, Theorem \ref{main-result} is the first result rigorously analyzing the transition threshold of 3-D plane Poiseuille flow with non-slip boundary condition. The transition threshold $\gamma\leq \frac{7}{4}$ is accordant with the numerical result by Lundbladh et al. \cite{LHR}. We believe that this result should not be optimal. Based on the asymptotic result by Chapman \cite{Chapman}, it seems reasonable to conjecture that the transition threshold $\gamma=\frac{3}{2}$. To solve this conjecture, we have to study the secondary instability of oblique(non-zero) modes induced by  3-D lift-up effect. This is left to the future work.

\item[2.] The global estimates yield that
$$\| u(t)\|_{L^\infty} \leq c_0 Ce^{-\nu t} \to 0 \quad \mathrm{as}\ \ t\to +\infty,$$
which implies that 3-D plane Poiseuille flow is nonlinearly stable in $L^\infty$ sense provided that the perturbation is $o(\nu^{\frac{7}{4}})$ in $H^{4}$.

\item[3.] In \eqref{streak-2}, we obtain the following estimate independent of $\nu$
$$\|\nabla u_{2,\not=}\|_{L^\infty L^2}+\|\nabla u_{2,\not=}\|_{L^2 L^2}\leq C\|u_0\|_{H^{4}}.$$
This estimate is due to the inviscid damping effect  induced by the Poiseuille flow.

\item[4.] We believe that the method and framework developed in this paper could be applied to the stability threshold problem of the 2-D plane Poiseuille flow with non-slip boundary condition. In this case, it is very  interesting  to prove the stability threshold $\gamma \leq \frac{1}{2}$ as in the 2-D Couette flow \cite{CLWZ}.

\end{itemize}

\subsection{Ideas and sketch of the proof}\label{sec:analysis}

\subsubsection{Reformulation of nonlinear system}
The zero mode $P_0u$ enjoys
\begin{equation}\label{zero}
\left \{
\begin{aligned}
&(\partial_t-\nu\Delta)P_0u_1-2yP_0u_2+P_0(u\cdot\nabla u_1)=0,\\
&(\partial_t-\nu\Delta )P_0u_j+\partial_j P_0 p+(P_0 u_2\partial_y+P_0u_3\partial_z)P_0u_j \\
&\quad+P_0(P_{\not=}u\cdot\nabla (u_j)_{\not=})=0, \ j=2,3.
\end{aligned}
\right.
\end{equation}
For nonzero modes, it is more convenient to use the system in terms of $(\Delta u_2, \om_2)$ with $\omega_2=\partial_zu_1-\partial_xu_3$:
\begin{equation}\label{vor-ins}
\left \{
\begin{aligned}
&\big(\partial_t-\nu \Delta+(1-y^2)\partial_x\big)\Delta u_2+2\partial_x u_2=-\Delta_{x,z}  (u\cdot\nabla u_2)  \\
&\qquad\qquad\qquad\qquad \qquad+\partial_y[\partial_x (u\cdot \nabla u_1)+\partial_z (u\cdot \nabla u_3)],\\
&(\partial_t-\nu \Delta+(1-y^2)\partial_x)\omega_2-2y\partial_z u_2=-\partial_z (u\cdot \nabla u_1)+\partial_x (u\cdot\nabla u_3),\\
&(u_2,\partial_y u_2,\omega_2)|_{y=\pm 1}=(0,0,0).
\end{aligned}\right.
\end{equation}
The above formulation firstly introduced by Kelvin \cite{Kelvin}  plays a crucial role in this work and also in \cite{BGM1, WZ-cpam, CWZ-mem, Chapman, LWZ}.

%It should be pointed out that the linearized operator for $\Delta u_2$ is a nonlocal operator with non-slip boundary condition if  we view $u_2$ as the stream function, and while $\omega_2$ has a vanishing boundary condition.

Taking the Fourier transform in $(x,z)$, the estimates for non-zero modes could be reduced to studying the following linearized NS system
\begin{equation}\label{eq:LNS-non}
\left \{
\begin{array}{lll}
(\partial_t+\mathscr{H}_k)\omega=F,\\
(\partial_y^2-|k|^2)\varphi=\omega,\quad \varphi(\pm 1)=0,\\
\omega(\pm 1)=0,\ \
\omega|_{t=0}=\omega_0(k_1,y,k_3),
\end{array}
\right.
\end{equation}
and
\begin{equation}\label{eq:LNS-full}
\left \{
\begin{array}{lll}
(\partial_t+\mathscr{L}_k)\omega=F ,\\
  (\partial_y^2-|k|^2)\varphi=\omega,\\ \omega|_{t=0}=\omega_0, \ \varphi(\pm 1)=\varphi'(\pm 1)=0,
\end{array}\right.\end{equation}
where $|k|^2=k_1^2+k_3^2,\ k_1\neq0,$ and
\beno
&&\mathscr{H}_k=-\nu(\partial_y^2-|k|^2)+ik_1(1-y^2),\\
 &&\mathscr{L}_k=-\nu (\partial_y^2-|k|^2)+ik_1(1-y^2)+2ik_1 (\partial_y^2-|k|^2)^{-1}.
 \eeno

 As in \cite{WZ-cpam, CWZ-mem}, one of  the key ingredients is to establish the space-time estimates for the linearized NS system.

\subsubsection{Resolvent estimates}

To establish the space-time estimates of the linearized NS system \eqref{eq:LNS-non} and \eqref{eq:LNS-full}, we will use the resolvent estimate method developed in \cite{CLWZ, LWZ, CWZ-mem}, which will be conducted in section 2 and section 3.

The key point for the resolvent estimates is to solve the  Orr-Sommerfeld (OS) equation as follows
\begin{equation}\label{OS-operator}
\left \{
\begin{aligned}\mathbf{OS}[w]:=-\nu(\partial_y^2-|k|^2)w+ik_1[(1-y^2-\lambda)w+2\varphi]=F,\\
\varphi(\pm 1)=\varphi'(\pm 1)=0, \ \ (\partial_y^2-|k|^2)\varphi=w,\ \ |k|^2=k_1^2+k_3^2.
\end{aligned}
\right.
\end{equation}
The main difficulty is to deal with three kinds of singularities from the critical point $y=0$, critical layer and boundary layer respectively:
  \begin{figure}[H]
  \label{fig1}
		\centering
		\includegraphics[width=0.76\textwidth]{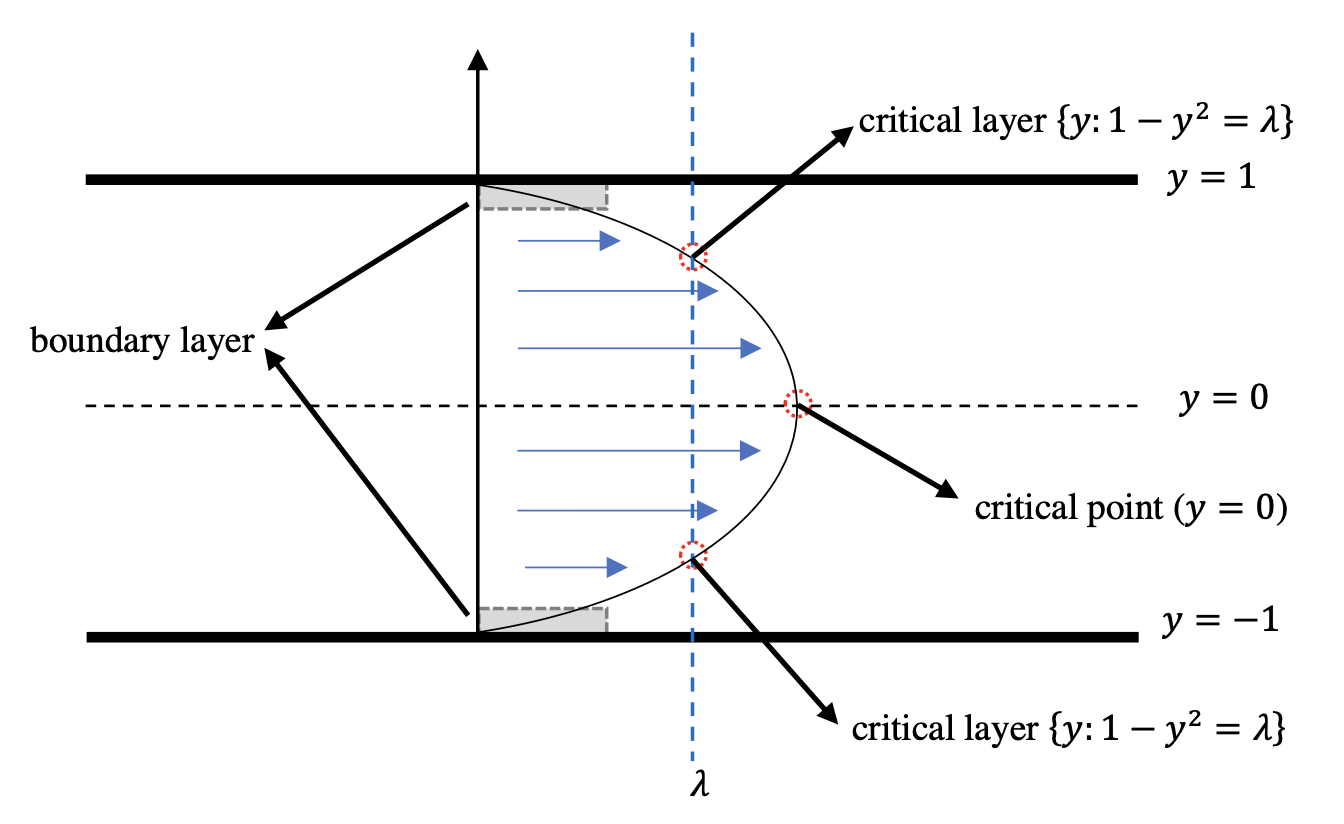}
		\caption{Plane Poiseuille flow $(1-y^2,0,0)$}		
\end{figure}

Inspired by \cite{CLWZ,CWZ-mem}, we decompose $w=w_{Na}+w_I$, where
\begin{equation*}
\left \{
\begin{aligned}
&\mathbf{OS}[w_{Na}]=F,\ (\partial_y^2-|k|^2)\varphi_{Na}=w_{Na}, (w_{Na},\varphi_{Na})|_{y=\pm 1}=(0,0),\\
&\mathbf{OS}[w_{I}]=0,\ (\partial_y^2-|k|^2)\varphi_{I}=w_{I},\, \varphi_{I}|_{y=\pm 1}=0.
\end{aligned}
\right.
\end{equation*}
The advantage of this decomposition is to separate the singularity of  the boundary layer from the singularity of the critical point and critical layer.
Indeed, the favorable boundary condition for $w_{Na}$ ensures that we can apply the energy method with a good choice of the weight to establish the resolvent estimates for $F\in L^2,H^{-1}_k, H^1_0$($H^{-1}_k$ is defined in \eqref{def:H-1k-00}). See section \ref{sec:resolvent-slip} for the details.

{\bf Compared with the Couette flow, the new difficulty is that the critical layer and the critical point will collide when $\lambda$ is  close to 1.} Thus, for $\lambda\in (0,1)$ so that $1-y_1^2=1-y_2^2=\lambda$, the proof is much difficult and involved. We need to make subtle treatments near  or away from the colliding interval $(y_1,y_2)$, where the special structure of the Poiseuille flow plays a crucial role in dealing with the colliding singularity. For example,  there holds
\begin{equation}\nonumber
\begin{aligned}
\int_{-1}^1&\rho_2^2\left|(1-y^2-\lambda)^{\frac{1}{2}}w+\frac{2 \varphi}{(1-y^2-\lambda)^{\frac{1}{2}}}\right|^2\mathrm{d}y\\
&+2\int_{-1}^1\left|(\rho_2\varphi)'+\frac{2y}{1-y^2-\lambda}\rho_2\varphi\right|^2 \mathrm{d}y+2|k|^2\int_{-1}^1\rho^2_2|\varphi|^2\mathrm{d}y\\
=&2\int_{-1}^1\partial_y(\rho_2\rho_2')|\varphi|^2\mathrm{d}y+\mathrm{Re}\langle (1-y^2-\lambda)w+2\varphi,\rho^2_2w\rangle,
\end{aligned}
\end{equation}
where $\rho_2$ is a cut-off function, see Lemma \ref{H-1-wL2inside}. Here {\bf a very interesting question} is {\it whether the stable non-monotone shear flow under the Euler equations is also linearly stable under the Navier-Stokes equations.} In this case, the main trouble is the absence of good structures as the above formula for general flows. For the stable monotone flow, Chen, Wei and Zhang overcome this difficulty by using  the compactness method \cite{CWZ-cmp}.\smallskip

{\bf Another key ingredient is to establish the weak-type resolvent estimate}(see Proposition \ref{weaktype} for example), which is crucial to obtain a sharp estimate for the velocity. This relies on the resolvent estimate for the Rayleigh equation of the Poiseuille flow
\begin{equation}\label{Rayleigh}
(1-y^2-\lambda+i\delta)W+2\Phi=f,\
(\partial_y^2-|k|^2)\Phi=W,\ \Phi(\pm 1)=0.
\end{equation}
More precisely, there holds
  \begin{align*}
     & \|(\partial_y,|k|)\Phi\|_{L^2}+\delta^{\f12}\|W\|_{L^2}+ \delta^{\f32}(|1-\lambda|+\delta)^{-\f12}\|W'\|_{L^2}\le  C\|(\partial_y,|k|)f\|_{L^2}.
  \end{align*}
This estimate has been proved in \cite{WZZ-apde} for any fixed $k$ via the limiting absorption principle. For the nonlinear stability, we have to give a precise bound for the constant $C$. For this, we need to refine the proof in \cite{WZZ-apde}. This will be conducted in the appendix \ref{sec:lap}. \smallskip

Finally, the boundary corrector $w_I$ is used to match non-slip boundary condition. Near the boundary  $\{y=\pm 1\}$, it holds that $1-y^2\sim (1-|y|)$, which implies that the main part of $w_I$ could be approximated by the Airy type function $W_{Airy}$ as in the Coutte flow. More precisely, we write $w_I=W_{Airy}+W_{err}$, where
\begin{equation*}
\left \{
\begin{array}{lll}
-\nu (\partial_y^2-|k|^2)W_{Airy}+ik_1 (2(1\pm y)-\lambda)W_{Airy}-\epsilon \nu^{\frac{1}{2}} |k_1|^{\frac{1}{2}} W_{Airy}=0,\\
 (\partial_y^2-|k|^2)\Phi_{Airy}=W_{Airy},\ \Phi_{Airy}(\pm 1)=0,
 \end{array}
\right.
\end{equation*}
and
\begin{equation*}
\left \{
\begin{array}{lll}
\mathbf{OS}[W_{err}]=-ik_1(1\pm y)^2W_{Airy}+2ik_1\Phi_{Airy},\\
 (\partial_y^2-|k|^2)\Phi_{err}=W_{err},\ \Phi_{err}(\pm 1)=W_{err}(\pm 1)=0.
\end{array}
\right.
\end{equation*}
The key point is that the source terms $(1\pm y)^2W_{Airy}\in L^2$ and $\Phi_{Airy} \in H^1_0$ could be viewed as a perturbation. Thus, the error part $W_{err}$ can be controlled via the resolvent estimates for the OS equation with Navier-slip boundary condition. See section \ref{sec-no-slip} for the details.

 \subsubsection{Space-time estimates}
 In section 4, we will establish the space-time estimates for the linearized NS system.  As in the Couette flow, the space-time estimates will incorporate some important physical effects such as the enhanced dissipation, inviscid damping and boundary layer, which play crucial roles in the nonlinear stability. Let us mention some relevant results \cite{AH, AH2, BH, BM, GNR, IJ1, IJ2, IJ3, IMM, LX, MZ2, WZ-SCM, WZZ-CPAM, Z1, Z2}.\smallskip

For the linearized system \eqref{eq:LNS-non} with $F=ik_1g_1-\partial_yg_2-ik_3g_3-g_4$, it holds that
\begin{equation*}
\begin{aligned}
&\nu \|e^{c\nu^{\frac{1}{2}}t} (\partial_y,|k|)\omega\|_{L^2 L^2}^2+(\nu |k_1|)^{\frac{1}{2}}\|e^{c\nu^{\frac{1}{2}}t} \omega\|_{L^2 L^2}^2\\
&\lesssim   \|\omega_0\|_{L^2}^2 + |k_1|^{-1}|k|^{-2}\| e^{c\nu^{\frac{1}{2}}t} (\partial_y,|k|)g_4\|_{L^2 L^2}^2+\nu^{-1}\|e^{c \nu^{\frac{1}{2}}t} g_2\|_{L^2 L^2}^2\\
&\quad+\min\{(\nu |k|^2)^{-1},(\nu |k_1|)^{-1/2} \}\|e^{c \nu^{\frac{1}{2}}t} (k_1g_1+k_3g_3)\|_{L^2 L^2}^2.
\end{aligned}
\end{equation*}
In this case, the proof is relatively simple due to good boundary condition and the absence of nonlocal term.
See Theorem \ref{thm:sp-without-nonlocal} for the details. Let us emphasize that here we do not obtain a sharp
inviscid damping estimate due to the absence of nonlocal term, which plays a crucial role near the critical point.

For the linearized system \eqref{eq:LNS-full} with $F=-ik_1f_1-\partial_y f_2-ik_3f_3$, it holds that
\begin{equation*}
\begin{aligned}
&(|k|+\nu^{\f12} |k|^2) \|e^{c\nu^{\frac{1}{2}}t} u\|_{L^\infty L^2}^2+(\nu|k|+ \nu^{\f32} |k|^2) \|e^{c\nu^{\frac{1}{2}}t} \omega\|_{ L^2 L^2 }^2\\
 &\quad+(|k_1|+\nu^{\f12} |k_1| |k|) \|e^{c\nu^{\frac{1}{2}}t} u\|_{L^2 L^2}^2+{\nu^{\f34} |k|^{\f34} |k_1|^{\f14} \|e^{c\nu^{\frac{1}{2}}t} \omega\|_{ L^2 L^2 }^2}\\
 &\quad{+\nu^{\f7{12}} (\|e^{c\nu^{\frac{1}{2}}t} \omega\|_{L^\infty L^2}^2+\nu \|e^{c\nu^{\frac{1}{2}}t} \partial_y \omega\|_{L^2 L^2}^2)}\\
 &\lesssim \|(\partial_y^2-|k|^2) \omega_0\|_{L^2}^2 +\nu^{-1} \|e^{c\nu^{\frac{1}{2}}t} (f_1,f_2,f_3)\|_{L^2 L^2}^2 .
\end{aligned}
\end{equation*}
For this system, the proof is much difficult due to non-slip boundary condition and the presence of nonlocal term.  We first introduce a decomposition $\omega=\omega_I+\omega_{H}$, where $\omega_I$ is the solution of the inhomogeneous problem with zero initial data, and $\omega_{H}$ is the solution of the homogeneous problem. The estimates of $\omega_I$ is based on the resolvent estimates established in section \ref{sec-no-slip}. The estimates of  $\omega_H$ is much involved and difficult. Motivated by \cite{CLWZ}, we need to make a further decomposition
\begin{align}\nonumber
   & \omega_H=\omega_H^{(1)}+\omega_H^{(2)}+\omega_H^{(3)},
\end{align}
where $\omega_H^{(1)}(t,k_1,y,k_3)=g(t,y)\omega_H^{(0)}$ with $g(t,y)=e^{-\nu |k|^2t-4\nu k_1^2y^2t^3/3-|\nu k_1|^{1/2}t}$ and $\omega_H^{(0)}$ solving
\begin{equation*}
\left\{
\begin{aligned}
&\partial_t\omega_H^{(0)}+ik_1(1-y^2)\omega_H^{(0)}+2ik_1\varphi_H^{(0)}=0,\\
&(\partial_y^2-|k|^2) \varphi_H^{(0)}=\omega_H^{(0)},\ \omega_H^{(0)}|_{t=0}=\omega_0,
\end{aligned}\right.
\end{equation*}
and $\omega_H^{(2)}$ satisfies
\begin{equation*}
\left \{
\begin{array}{lll}
(\partial_t+\mathcal{L}_k)\omega_H^{(2)}=\nu\partial_y^2\omega_{H}^{(1)}+ 4\nu k_1^2y^2t^2\omega_{H}^{(1)} +|\nu k_1|^{1/2}\omega_{H}^{(1)}\\
    \qquad\qquad\quad+2ik_1(g(t,y)\varphi_{H}^{(0)}-\varphi_{H}^{(1)}),\\
 (\partial_y^2-|k|^2) \varphi_H^{(2)}=\omega_H^{(2)},\
\omega_H^{(2)}|_{t=0}=0, \  \langle \omega_H^{(2)},e^{\pm |k|y}\rangle=0,
\end{array}\right.
\end{equation*}
and $\omega_H^{(3)}$ satisfies
\begin{equation*}
\left \{
\begin{array}{lll}
(\partial_t+\mathcal{L}_k)\omega_H^{(3)}=0
,\\ (\partial_y^2-|k|^2) \varphi_H^{(3)}=\omega_H^{(3)},\
\omega_H^{(3)}|_{t=0}=0, \  \langle \omega_H^{(1)}+\omega_H^{(3)},e^{\pm |k|y}\rangle=0.
\end{array}\right.
\end{equation*}
Then the estimate of $\omega^{(1)}_H$ is based on the linear inviscid damping estimates of the linearized Euler equation around the Poiseuille flow, which is highly nontrivial. The estimate of $\omega^{(2)}_H$ is based on the estimates of $\omega_I$, and the estimate of $\omega_H^{(3)}$ is based on
the estimate of boundary correctors and the associated coefficients. See Theorem \ref{thm:sp-no-slip} for the details.

\subsubsection{Energy functional and nonlinear stability}

To control zero mode $\overline{u}_1$, we introduce the following energy functional
\begin{align}\label{eq:E1-define-00}
  & E_{1}=\|\overline{u}_{1}\|_{L^{\infty}H^2}+\nu^{\f12}\|\nabla \overline{u}_{1}\|_{L^{2}H^2},
\end{align}
and to control zero modes $(\overline{u}_2,\overline{u}_3)$, we introduce
\begin{align*}
E_2=&\|\Delta\overline{u}_2\|_{L^{\infty}L^2}
+\nu^{\f12}\|\nabla\Delta\overline{u}_2\|_{L^2L^2} +\nu^{\f12}\|\Delta\overline{u}_2\|_{L^2L^2}\\
&+\|\nabla\overline{u}_3\|_{L^{\infty}L^2}+\nu^{\f12}\|\Delta\overline{u}_3\|_{L^2L^2}
+\nu^{\f12}\|\nabla\overline{u}_3\|_{L^2L^2}.
\end{align*}
The energy of zero modes is consistent with the heat diffusion mechanism.

For non-zero modes, the construction of energy functional is based on the space-time estimates for the linearized NS system in terms of  $(\Delta u_2,\omega_2)$.
Let $\Lambda_{x,z}:=(-\partial_x^2-\partial_z^2)^{1/2}$. The energy functional $E_3$ is defined by
\begin{equation}\label{eq:E3-define-00}
\begin{aligned}
      &E_{3}= E_{3,0}+E_{3,1}+E_{3,2}+E_{3,3},
\end{aligned}
\end{equation}
where $E_{3,i}(i=0,1,2,3)$ are defined by
 \begin{equation}\label{eq:E30-define-00}
\begin{aligned}
E_{3,0}=
&\nu^{\f12}\|e^{c\nu^{\frac{1}{2}}t}\Lambda_{x,z}^{1/2}\Delta u_{2,\neq }\|_{L^2 L^2}+ \nu^{\f14}\|e^{c\nu^{\frac{1}{2}}t} |\partial_{x}|^{1/2} \Lambda_{x,z}^{1/2}\nabla u_{2,\neq }\|_{L^2 L^2}\\
&+\nu^{\f7{24}}(\|e^{c\nu^{\f12}t} \Delta u_{2,\neq}\|_{L^\infty L^2}+\nu^{\f12}\|e^{c\nu^{\f12}t} \partial_y \Delta u_{2,\neq}\|_{L^2 L^2})\\
&+ \nu^{\f34}\|e^{c\nu^{\frac{1}{2}}t}(\partial_x,\partial_z) \Delta u_{2,\neq}\|_{L^2L^2} +\|e^{c\nu^{\frac{1}{2}}t}\Lambda_{x,z}^{1/2}\nabla u_{2,\neq }\|_{L^\infty L^2}\\
&+ \|e^{c\nu^{\frac{1}{2}}t}|\partial_{x}|^{1/2}\nabla u_{2,\neq }\|_{L^2 L^2}+\nu^{\f14}\|e^{c\nu^{\frac{1}{2}}t}(\partial_x,\partial_z)\nabla u_{2,\neq }\|_{L^\infty L^2}\\
&+\nu^{\f18} \|e^{c\nu^{\f12}t} |\partial_x|^{\f18}\Lambda_{x,z}^{3/8} \Delta u_{2,\neq}\|_{L^2 L^2},
\end{aligned}
\end{equation}
\begin{equation}\label{eq:E31-define-00}
\begin{aligned}
E_{3,1}= &\nu^{\f38} \big(\|e^{c\nu^{\frac{1}{2}}t}\Lambda_{x,z}^{-1/2}\nabla \omega_{2,\neq }\|_{L^\infty L^2}+ \nu^{\f12}\|e^{c\nu^{\frac{1}{2}}t}\Lambda_{x,z}^{-1/2} \Delta \omega_{2,\neq }\|_{L^2 L^2}\\
      &+{\nu^{\f34}\|e^{c\nu^{\frac{1}{2}}t}\Delta \omega_{2,\neq }\|_{L^2 L^2}}+\nu^{\f18}\|e^{c\nu^{\frac{1}{2}}t} \Lambda_{x,z}^{-1/2} \nabla    \omega_{2,\neq }\|_{L^2 L^2}\big),
\end{aligned}
\end{equation}
\begin{equation}\label{eq:E32-define-00}
\begin{aligned}
      E_{3,2}=&  \nu^{\f18} \|e^{c\nu^{\frac{1}{2}}t}\Lambda_{x,z}^{3/2}u_{3,\neq }\|_{L^\infty L^2}+ \nu^{\f12}\|e^{c\nu^{\frac{1}{2}}t}\Lambda_{x,z}^{3/2} \nabla u_{3,\neq }\|_{L^2 L^2}\\
      &+\nu^{\f14} \|e^{c\nu^{\frac{1}{2}}t}|\partial_x|^{1/2} \Lambda_{x,z}^{3/2} u_{3,\neq }\|_{L^2 L^2},
\end{aligned}
\end{equation}
and
\begin{equation}\label{eq:E33-define-00}
E_{3,3}= \nu^{\f14}\|e^{c\nu^{\frac{1}{2}}t}(\partial_x^2+\partial_z^2) u_{3,\neq }\|_{L^\infty L^2}+ \nu^{\f34}\|e^{c\nu^{\frac{1}{2}}t}(\partial_x^2+\partial_z^2) \nabla u_{3,\neq }\|_{L^2 L^2}.
\end{equation}

In section 5, we establish nonlinear interaction estimates among different modes such as $0\cdot 0\to 0, \, 0\cdot \neq \to \neq,\, \neq\cdot\neq\to \neq$ or 0. Due to different behaviors of  different modes and different velocity components,
we need to use the anisotropic product laws in the Sobolev spaces.\smallskip

In section 6,  by using the energy method and nonlinear estimates, we can show that
\begin{align*}
&E_{1}\lesssim \|{u}(0)\|_{H^2}+\nu^{-1}E_2+\nu^{-1}E_2E_{1}+{\nu^{-\f{15}{8}}}E_3^2,\\
&E_2\lesssim \big(1+\nu^{-1}E_2\big)^2\big(\|u(0)\|_{H^2}+\nu^{-\f32}E_3^2\big).
\end{align*}
See Proposition \ref{prop:E1}-\ref{prop:E2} for the details. \smallskip

In section 7, by using the space-time estimates and nonlinear estimates, we can show that
\begin{align*}
&E^2_{3,0}+E_{3,2}^2+E_{3,3}^2  \lesssim \|u(0)\|^2_{H^{3}}+{\nu^{-3}E_3^4+\nu^{-3}E_2^2E_3^2}+\nu^{-\f32} E_1^2E_3^2,\\
& E^2_{3,1} \lesssim {\|u(0)\|^2_{H^4}}+{\nu^{-\f{13}4}E_3^4+\nu^{-3}E_2^2E_3^2+\nu^{-\f32} E_1^2E_3^2}.
\end{align*}
See Proposition \ref{prop:E30}-\ref{prop:E31} for the details.\smallskip

In section 8, we prove the nonlinear stability, which is based on a bootstrap argument. Under the bootstrap assumption
 \begin{align}\nonumber
E_1\leq \varepsilon_1\nu^{\f34}, \quad E_2\leq \varepsilon_1\nu^{\f74},\quad  E_3\leq \varepsilon_1\nu^{\f74},
\end{align}
if $\|u_0\|_{H^{4}}\le c\nu^{\f74}$, the above estimates of  $E_1, E_2, E_3$ imply the global nonlinear stability. Notice that due to the lift-up effect, the bound of $E_1$ is much bigger than those of $E_2, E_3$.\medskip

Finally, we introduce some notations.  We denote by  $\|\cdot\|_{L^p L^q}:=\|\cdot \|_{L^p(0,t;L^q(D))}$ the space-time norm,
   where $D=\Omega$ or $D=[-1,1]$, and $t=T$ or $t=+\infty$ in the arguments.
 Through this paper, we assume that $|k_1 |\geq1$, and denote by $C$ a positive constant independent of $\nu,k,\lambda$. In addition, we use the notation $a \sim b$ for $C^{-1}b \leq a \leq Cb$ and $a \lesssim b$ for $a \leq Cb$, where $C>0$ is an absolute constant.

\section{Resolvent estimates with Navier-slip boundary condition}\label{sec:resolvent-slip}

In this section, we establish the resolvent estimates for the Orr-Sommerfeld equation with Navier-slip boundary condition
\begin{equation}\label{OS-nopertur}
\left \{
\begin{array}{lll}
-\nu(\partial_y^2-|k|^2)w+ik_1[(1-y^2-\lambda)w+2\varphi]=F,\\
(\partial_y^2-|k|^2)\varphi=w,\quad \varphi(\pm 1)=\varphi''(\pm 1)=0,
\end{array}
\right.
\end{equation}
where  $|k|^2=k_1^2+k_3^2$ and $k_1\neq 0$.

In the sequel, we denote $u=(\partial_y\varphi,-i|k|\varphi)$ and
$$\|u\|_{L^2}^2=\|(\partial_y,|k|)\varphi\|_{L^2}^2=\langle w,-\varphi\rangle.$$

\subsection{Resolvent estimate for $F\in L^2$}
First of all, we consider the case of $\lambda\in\mathbb{R}$.

\begin{proposition}\label{resolvent-L2}
Assume that $\lambda\in\mathbb{R}$ and $F\in L^2(I)$. Let $w\in H^2(I)$ be a solution to \eqref{OS-nopertur}. Then we have
\begin{equation}\nonumber
\nu^{\frac{3}{8}}|k_1|^{\frac{5}{8}}(\|w\|_{L^1}+|k|^{\frac{1}{2}}\|u\|_{L^2})
+\nu^{\frac{3}{4}}|k_1|^{\frac{1}{4}}\|w'\|_{L^2}+\nu^{\frac{1}{2}}|k_1|^{\frac{1}{2}}\|w\|_{L^2}\leq C\|F\|_{L^2}.
\end{equation}
\end{proposition}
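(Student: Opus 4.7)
The Navier-slip data $\varphi(\pm 1)=\varphi''(\pm 1)=0$, combined with $w=\varphi''-|k|^2\varphi$, force $w(\pm 1)=0$, so every integration by parts performed below is boundary-term free. My strategy is to extract each of the four quantities on the left-hand side from a well-chosen $L^2$ pairing of the Orr-Sommerfeld equation against either $\bar w$ or a Poiseuille-adapted multiplier.

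\smallskip
\emph{Step 1 (basic energy pairing).} Pair the equation with $\bar w$ in $L^2(I)$. Using $\langle\varphi,w\rangle=-\|u\|_{L^2}^2\in\mathbb{R}$, the real part produces
\begin{equation*}
\nu(\|w'\|_{L^2}^2+|k|^2\|w\|_{L^2}^2)=\mathrm{Re}\langle F,w\rangle,
\end{equation*}
while the imaginary part produces
\begin{equation*}
k_1\int_{-1}^1(1-y^2-\lambda)|w|^2\,dy-2k_1\|u\|_{L^2}^2=\mathrm{Im}\langle F,w\rangle.
\end{equation*}
The first identity immediately gives the sharp $\|w'\|_{L^2}$ bound once $\|w\|_{L^2}$ is controlled.

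\smallskip
\emph{Step 2 (enhanced dissipation).} Pair the equation with the Poiseuille-adapted weight $(1-y^2-\lambda)\bar w$. Since the profile $\phi:=1-y^2-\lambda$ satisfies $\phi''\equiv -2$, the commutator of $\phi$ with the viscous term yields a coercive $2\nu\|w\|_{L^2}^2$ contribution in the real part, while the imaginary part of the transport term supplies the coercive quantity $k_1\!\int\phi^2|w|^2$. Controlling the nonlocal cross term $2ik_1\!\int\phi\varphi\bar w$ via the Biot-Savart bounds on $\varphi$ and combining with Step 1, one obtains the enhanced dissipation estimate
\begin{equation*}
\nu^{1/2}|k_1|^{1/2}\|w\|_{L^2}\leq C\|F\|_{L^2}.
\end{equation*}
Feeding this back into the real identity of Step 1 yields $\nu^{3/4}|k_1|^{1/4}\|w'\|_{L^2}\leq C\|F\|_{L^2}$.

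\smallskip
\emph{Step 3 ($L^1$ and velocity bounds).} A one-dimensional Gagliardo-Nirenberg interpolation together with $\|\varphi'\|_{L^2}^2+|k|^2\|\varphi\|_{L^2}^2=\|u\|_{L^2}^2$ yields $\|\varphi\|_{L^\infty}\lesssim|k|^{-1/2}\|u\|_{L^2}$. Combined with
\begin{equation*}
\|u\|_{L^2}^2=-\langle w,\varphi\rangle\leq \|w\|_{L^1}\|\varphi\|_{L^\infty},
\end{equation*}
this gives $|k|^{1/2}\|u\|_{L^2}\leq C\|w\|_{L^1}$, thereby reducing the remaining two bounds to an $L^1$ estimate $\|w\|_{L^1}\leq C\|F\|_{L^2}/(\nu^{3/8}|k_1|^{5/8})$. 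The gain $(\nu/|k_1|)^{1/8}$ over $\|w\|_{L^2}$ matches the Airy concentration length $(\nu/|k_1|)^{1/4}$ around the degenerate critical point $y=0$ of the Poiseuille profile; I extract it by a cutoff-and-solve argument: inside a layer of this width, apply Cauchy-Schwarz using the layer measure together with Step 2, and outside it, invert the now nondegenerate transport term $\phi w$ against the viscous remainders controlled by Step 1.

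\smallskip
\emph{Main obstacle.} Step 2 is the most delicate. The Poiseuille profile is non-monotone with a degenerate interior critical point at $y=0$ where $U'=0$, so the Romanov-type monotone-shear multipliers are unavailable; I must exploit the special algebraic feature $\phi''\equiv-2$ to generate the required dissipation. Moreover, the nonlocal Biot-Savart cross term $2ik_1\!\int\phi\varphi\bar w$, whose main contribution is concentrated in the critical-layer region, must be absorbed while keeping the resulting constants independent of $\nu$, $k$, and $\lambda$—a uniformity that is essential for the space-time estimates downstream.
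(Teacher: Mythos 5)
Your Step 1 is identical to the paper's \eqref{inte-1}, and your Step 3 reduction $|k|^{1/2}\|u\|_{L^2}\lesssim\|w\|_{L^1}$ (via $\|\varphi\|_{L^\infty}\lesssim|k|^{-1/2}\|u\|_{L^2}$) is exactly the paper's Lemma \ref{lap-w-phi}. So the plan is in the right family. But there is a genuine gap in Step~2, and the cutoff sketch in Step~3 is too coarse to close it.

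The gap in Step 2: pairing with $\phi\bar w$, $\phi=1-y^2-\lambda$, the commutator $-\tfrac{\nu}{2}\int\phi''|w|^2=\nu\|w\|_{L^2}^2$ is indeed a gift from $\phi''\equiv -2$, but the accompanying real-part term $\nu\int\phi|w'|^2$ is not sign-definite (for $\lambda\in(0,1)$ the weight $\phi$ changes sign inside $(-1,1)$), and the imaginary-part coercive quantity is $k_1\int\phi^2|w|^2$, which degenerates precisely at the critical points $y_1,y_2=\mp\sqrt{1-\lambda}$. Turning $\int\phi^2|w|^2$ into $\|w\|_{L^2}^2$ costs a factor that blows up near the critical layer; absorbing that loss is the entire content of the proposition and cannot be waved away by ``combining with Step 1.'' In particular you never explain how the nonlocal term $2ik_1\int\phi\varphi\bar w$ is controlled, and for $\lambda\in(0,1)$ that term involves $\varphi$ evaluated where $\phi$ vanishes, which is the delicate part. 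The paper avoids a global multiplier for exactly this reason: it localizes with cutoffs $\rho_1$ (supported away from $(y_1,y_2)$) and $\rho_2$ (supported inside $(y_1,y_2)$) and uses a \emph{different} multiplier $\rho_i^2 w/(1-y^2-\lambda)$ inside/outside, together with the auxiliary stream-function decomposition $\varphi=\varphi_1+\varphi_2$ with $\varphi_1(y_i)=0$; this appears in Lemmas~\ref{lem:outside}--\ref{lem:dw-Linfty} and the $\mathscr{E}_1,\mathscr{E}_2,\mathscr{F}_1,\mathscr{F}_2$ bookkeeping.

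Relatedly, Step 3's picture of a single Airy layer of width $(\nu/|k_1|)^{1/4}$ ``around $y=0$'' is only correct for $\lambda\geq 1$ or for $\lambda$ close to $1$ (critical points merged). For generic $\lambda\in(0,1)$ the critical layers sit at $\pm\sqrt{1-\lambda}$ and may be well separated; the paper's proof branches on whether $(y_2-y_1)/4$ is larger or smaller than $\nu^{1/4}|k_1|^{-1/4}$, with a dedicated choice $\delta^3(y_2-y_1)|k_1|=\nu$ in the separated regime. Similarly, the cases $\lambda\geq 1$ and $\lambda\leq 0$ are handled in the paper by quite different mechanisms (the imaginary-part sign is favorable in the first, and Lemma~\ref{positive} plus the Hardy inequality of Lemma~\ref{hardy-type} provide coercivity in the second). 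Without this case analysis, your cutoff-and-solve argument does not yield a constant uniform in $\lambda$, which is essential for the space-time estimates that follow. You correctly name the obstacle (degenerate, non-monotone critical point; colliding layers), but the proposal does not actually resolve it.
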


\begin{proof}
The estimate $(\nu|k_1|)^{\frac{1}{2}}\|w\|_{L^2} \leq C \|F\|_{L^2}$ can be obtained by a similar argument as in Proposition 3.1 in \cite{DL}.
It is easy to see that
\begin{equation}\label{inte-1}
\begin{aligned}
&\nu\|(\partial_y,|k|)w\|_{L^2}^2+ik_1\int_{-1}^1\left[(1-y^2-\lambda)|w|^2- 2(|\varphi'|^2+|k|^2|\varphi|^2)\right]\mathrm{d}y\\
&=\langle F,w\rangle,
\end{aligned}
\end{equation}
which gives
$$\nu\|(\partial_y,|k|)w\|_{L^2}^2=\left|\mathrm{Re}\langle F,w\rangle\right|\leq \|F\|_{L^2}\|w\|_{L^2},$$
and then
$$\|w'\|_{L^2}\lesssim\nu^{-\frac{1}{2}}\|F\|_{L^2}^{\frac{1}{2}}\|w\|_{L^2}^{\frac{1}{2}}\lesssim \nu^{-\frac{3}{4}}|k_1|^{-\frac{1}{4}}\|F\|_{L^2}.$$

Now we turn to establish other resolvent estimates. \smallskip

\noindent\textbf{Case 1}. $\lambda\geq  1$.

In this case, by taking the imaginary part of \eqref{inte-1}, we get
$$\int_{-1}^1(\lambda-1+y^2)|w|^2\mathrm{d}y +2\|u\|_{L^2}^2\leq |k_1|^{-1}\Vert F\Vert_{L^2}\Vert w \Vert_{L^2}.$$

For $\delta=(\nu |k_1|^{-1})^{\frac{1}{4}}\ll 1$,  we have
$\lambda-1+y^2\geq \delta^2$ for $y\in (-1,1)\setminus(-\delta,\delta)$ and
\begin{align*}
\Vert w \Vert_{L^1}=&\int_{-\delta}^\delta|w|\mathrm{d}y+\int_{(-1,1)\setminus (-\delta,\delta)}|w|\mathrm{d}y\\
\leq &(2\delta)^{\frac{1}{2}}\Vert w \Vert_{L^2}+\Vert \sqrt{\lambda-1+y^2}w\Vert_{L^2} \|(\lambda-1+y^2)^{-\frac{1}{2}}\|_{L^2\left((-1,1)\setminus (-\delta,\delta)\right)}\\
\lesssim&\delta^{\frac{1}{2}}(\nu|k_1|)^{-\frac{1}{2}}\|F\|_{L^2}+\delta^{-\frac{1}{2}}\nu^{-\frac{1}{4}}|k_1|^{-\frac{3}{4}}\|F\|_{L^2}
\lesssim \nu^{-\frac{3}{8}}|k_1|^{-\frac{5}{8}}\|F\|_{L^2},
\end{align*}
from which and Lemma \ref{lap-w-phi}, we infer that
\begin{align*}
\Vert u \Vert_{L^2}^2\lesssim |k|^{-1}\Vert w\Vert_{L^1}^2 \lesssim \nu^{-\frac{3}{4}}|k_1|^{-\frac{5}{4}}|k|^{-1}\|F\|_{L^2}^2.
\end{align*}

\noindent\textbf{Case 2.} $\lambda \leq 0$.

Taking the imaginary part of \eqref{inte-1} and using Lemma \ref{positive} to obtain
\begin{align*}
  &(-\lambda)\|w\|_{L^2}^2+\big(\|(\partial_y,|k|)\varphi\|_{L^2}^2+|k|^2\|\varphi\|_{L^2}^2\big)/9 \\
  &\leq  (-\lambda)\|w\|_{L^2}^2+ \langle (1-y^2)w+2\varphi,w\rangle= k_1^{-1}\mathrm{Im}\langle F,w\rangle,
\end{align*}
which gives
\begin{equation}\nonumber
\begin{aligned}
(-\lambda)\|w\|_{L^2}^2+\|u\|_{L^2}^2 \lesssim  |k_1|^{-1}\|F\|_{L^2}\|w\|_{L^2}.
\end{aligned}
\end{equation}
Thanks to $(-\lambda)\|w\|_{L^2}^2+ \|(1-y^2)^{\f12}w\|_{L^2}^2-2\|u\|_{L^2}^2=k_1^{-1} \mathrm{Im}\langle F,w\rangle$, we have
\begin{equation}\nonumber
\begin{aligned}
\|(1-y^2)^{\f12}w\|_{L^2}^2\leq |k_1|^{-1}\|F\|_{L^2}\|w\|_{L^2}+2\|u\|_{L^2}^2\lesssim |k_1|^{-1}\|F\|_{L^2}\|w\|_{L^2}.
\end{aligned}
\end{equation}
The above inequality, Lemma \ref{hardy-type} and the fact
$ \nu\|\partial_yw\|_{L^2}^2\leq \|F\|_{L^2}\|w\|_{L^2},
   %\\  &\|w\|_{L^2}\lesssim \|(1-y^2)^{\f12}w\|_{L^2}^{\f23}\left\|\dfrac{w}{(1-y^2)}\right\|_{L^2}^{\f13} \leq  \|(1-y^2)^{\f12}w\|_{L^2}^{\f23}(2\|\partial_yw\|_{L^2})^{\f13},
$
yield that
\begin{align*}
    & (\nu |k_1|^2)^{\f13}\|w\|_{L^2}\leq C\|F\|_{L^2}.
 \end{align*}
Thus, we obtain
 \begin{equation}\label{improvedL2}
   \begin{aligned}
  &   \nu^{\f23}|k_1|^{\f13}\|w'\|_{L^2}+ \nu^{\f16}|k_1|^{\f56}\big(\|u\|_{L^2} +\|(1-y^2)^{\f12}w\|_{L^2}+(-\lambda)^{\f12}\|w\|_{L^2} \big)\\
    &\quad+(\nu|k_1|^2)^{\f13}\|w\|_{L^2} \lesssim \|F\|_{L^2}.
 \end{aligned}
 \end{equation}
Moreover, we have
\beno
&&\|w\|_{L^1}\lesssim \|w\|_{L^2}\lesssim \nu^{-\frac{1}{3}}|k_1|^{-\frac{2}{3}}\|F\|_{L^2},\\
 &&\|u\|_{L^2}^2 \lesssim|k|^{-1}\|w\|_{L^1}^2\lesssim\nu^{-\frac{2}{3}}|k_1|^{-\frac{4}{3}}|k|^{-1}\|F\|_{L^2}^2.
 \eeno
This implies the desired estimates.\smallskip

\noindent\textbf{Case 3.} $\lambda\in (0,1)$.

 Let $1-y_i^2=\lambda \in (0,1)$ with $-1\leq y_1\leq 0\leq y_2\leq 1, \ y_1=-y_2$, and introduce the decomposition $\varphi=\varphi_1+\varphi_2$ as follows
\begin{equation}\label{streamdeco1}
\left \{
\begin{array}{lll}
(\partial_y^2-|k|^2)\varphi_1=w,\\
\varphi_1(\pm 1)=\varphi_1(y_i)=0,\ i=1,2,\end{array}
\right.
\end{equation}
and
\begin{equation}\label{streamdeco2}
\left \{
\begin{array}{lll}
(\partial_y^2-|k|^2)\varphi_2=0,\\
\varphi_2(\pm 1)=0,\ \varphi_2(y_i)=\varphi(y_i),\ i=1,2,\end{array}
\right.
\end{equation}
It is easy to see that
\begin{equation}\nonumber%\label{stream2}
\varphi_2(y)=\left \{
\begin{aligned}
&\frac{\sinh \big(|k|(1+y)\big)}{\sinh\big(|k|(1+y_1)\big)}\varphi(y_1),&y\in [-1,y_1],\\
&\frac{\sinh\big(|k|(y-y_1)\big)\varphi(y_2)+\sinh\big(|k|(y_2-y)\big)\varphi(y_1)}{\sinh \big(|k|(y_2-y_1)\big)},&y\in [y_1,y_2],\\
&\frac{\sinh \big(|k|(1-y)\big)}{\sinh\big(|k|(1-y_2)\big)}\varphi(y_2),&y\in [y_2,1].
\end{aligned}
\right.
\end{equation}

If $\frac{y_2-y_1}{4}\leq \nu^{\frac{1}{4}}|k_1|^{-\frac{1}{4}}$, let $\delta=\nu^{\frac{1}{4}}|k_1|^{-\frac{1}{4}}\ll 1$. We know from  (3.15) in \cite{DL} that
\begin{align}\label{est:wL1-E1(w)-00}
\Vert w \Vert_{L^1((-1,1)\setminus (y_1-\delta,y_2+\delta))}^2
\lesssim \mathscr{E}_1(w)(y_2-y_1+\delta),
\end{align}
where  $\mathscr{E}_1$ is defined in Lemma \ref{lem:outside}.  From the proof of Proposition 3.1 in \cite{DL}, we can see that
\begin{align*}
 \|w\|_{L^1}\lesssim \delta^{\frac{1}{2}}\|w\|_{L^2}+\sqrt{\mathscr{E}_1(w)(y_2-y_1+\delta)}\lesssim \nu^{-\frac{3}{8}}|k_1|^{-\frac{5}{8}}\|F\|_{L^2},
\end{align*}
which along with Lemma \ref{lap-w-phi} shows
\begin{align*}
\|u\|_{L^2}^2\leq &|k|^{-1}\|w\|_{L^1}^2\lesssim \nu^{-\frac{3}{4}}|k_1|^{-\frac{5}{4}}|k|^{-1}\|F\|_{L^2}^2.
\end{align*}

If $\frac{y_2-y_1}{4}\geq \nu^{\frac{1}{4}}|k_1|^{-\frac{1}{4}}$, we take $\delta^3(y_2-y_1)|k_1|=\nu$ and then  $0<\delta\leq \nu^{\frac{1}{4}}|k_1|^{-\frac{1}{4}}\leq \frac{y_2-y_1}{4}$.
In this case, by Lemma \ref{lem:outside}, Lemma \ref{lem:inside}, and  the fact that $\mathscr{E}_1(w)\lesssim \mathscr{E}_2(w)$, we have
\begin{align}\label{est:w-E2(w)-00}
  \|w\|_{L^2}^2\leq \mathscr{E}_2(w).
\end{align}
 Therefore, we get by \eqref{est:wL1-E1(w)-00} and \eqref{est:w-E2(w)-00} that
\begin{align}\label{est:wL1-E1(w)-01}
 \|w\|_{L^1}^2&\leq (y_2-y_1+\delta)\mathscr{E}_1+\delta\|w\|_{L^2}^2+(y_2-y_1)\mathscr{E}_2\nonumber\\
 &\lesssim (y_2-y_1+\delta)(\mathscr{E}_2+\|w\|_{L^2}^2).
\end{align}

It remains to estimate each term of $\mathscr{E}_2(w)$.  Due to $|\nu/k_1|=\delta^3(y_2-y_1)$, we have
  \begin{align*}
&\frac{\nu}{|k_1|\delta (y_2-y_1)}\|w'\overline{w}\|_{L^\infty(B(y_1,\delta)\cup B(y_2,\delta))} \lesssim \delta\|w\|_{L^\infty}^2+ \delta^3\|w'\|_{L^\infty(B(y_1,\delta)\cup B(y_2,\delta))}^2,\\
&\frac{\nu^2\|w\|_{L^\infty}^2}{|k_1|^2(y_2-y_1)^3\delta^4}= \dfrac{\delta^6(y_2-y_1)^2}{(y_2-y_1)^3\delta^4}\|w\|^2_{L^\infty}\lesssim \delta\|w\|_{L^\infty}^2 ,\\
&\frac{\nu\|w'\|_{L^2}\|w\|_{L^\infty}}{|k_1|\delta^{\frac{3}{2}}(y_2-y_1)}
=\delta^{\f32}\|w'\|_{L^2}\|w\|_{L^\infty}\lesssim\delta^2\|w'\|_{L^2}^2 +\delta\|w\|_{L^\infty}^2 ,\\
&\dfrac{\|F\|_{L^2}^2}{|k_1|^2(y_2-y_1)^3\delta}\lesssim \dfrac{\|F\|_{L^2}^2}{|k_1|^2(y_2-y_1)^2\delta^2},\\
& \dfrac{\nu^2\|w'\|_{L^2}^2}{|k_1|^2(y_2-y_1)^3\delta^3}= \dfrac{\delta^6(y_2-y_1)^2}{(y_2-y_1)^3\delta^3}\|w'\|^2_{L^2}\lesssim \delta^2\|w'\|_{L^2}^2.
\end{align*}
Therefore, we conclude that
\begin{align}\label{est:e2(w)-00}
\mathscr{E}_2(w) \lesssim& |k_1\delta(y_2-y_1)|^{-1}\|F\|_{L^2}\|w\|_{L^2} +|k_1\delta(y_2-y_1)|^{-2}\|F\|_{L^2}^2+\delta\|w\|_{L^\infty}^2\nonumber\\
   &+\delta^2\|w'\|_{L^2}^2+\delta^3\|w'\|_{L^\infty(B(y_1,\delta)\cup B(y_2,\delta))}^2+\frac{\|\varphi_2\|_{L^\infty}^2}{(y_2-y_1)^2\delta}.
\end{align}
By Lemma \ref{lem:varphi-Linfty} and Lemma \ref{lem:dw-Linfty}, we get
\begin{align*}
\frac{\|\varphi_2\|_{L^\infty}^2}{(y_2-y_1)^2\delta}\lesssim& \mathscr{F}_1(w)\leq  |k_1\delta(y_2-y_1)|^{-1}\|F\|_{L^2}\|w\|_{L^2} \\
&+|k_1\delta(y_2-y_1)|^{-2}\|F\|_{L^2}^2+\delta\|w\|_{L^\infty}^2 \end{align*}
and
\begin{align*}
\delta^3\|w'&\|_{L^\infty(B(y_1,\delta)\cup B(y_2,\delta))}^2\lesssim \frac{\delta^6(y_2-y_1)^2|k|^2}{\nu^2}\mathscr{F}_1(w)+\frac{\delta^4}{\nu^2}\|F\|_{L^2}^2+\delta\|w\|_{L^\infty}^2\\
\lesssim &|k_1\delta(y_2-y_1)|^{-1}\|F\|_{L^2}\|w\|_{L^2} +|k_1\delta(y_2-y_1)|^{-2}\|F\|_{L^2}^2+\delta\|w\|_{L^\infty}^2.\end{align*}
Plugging them into \eqref{est:e2(w)-00}, we obtain
\begin{align*}
     \mathscr{E}_2(w) \lesssim& |k_1\delta(y_2-y_1)|^{-1}\|F\|_{L^2}\|w\|_{L^2} \\
    &+|k_1\delta(y_2-y_1)|^{-2}\|F\|_{L^2}^2+\delta\|w\|_{L^\infty}^2+
    \delta^2\|w'\|_{L^2}^2,
 \end{align*}
 which along with $\delta\|w\|_{L^\infty}^2\lesssim\delta\|w\|_{L^2}\|w'\|_{L^2}$, $\|w'\|_{L^2}^2\leq\nu^{-1}\|F\|_{L^2}\|w\|_{L^2}$ and \eqref{est:w-E2(w)-00} gives
\begin{align}\label{eq:wL2-1prove}
\|w\|_{L^2}+\mathscr{E}_2(w)^{\f12}&\lesssim |k_1\delta(y_2-y_1)|^{-1}\|F\|_{L^2}\nonumber\\
&= \nu^{-\frac{1}{3}}|k_1|^{-\frac{2}{3}}(y_2-y_1)^{-\frac{2}{3}}\|F\|_{L^2}.
\end{align}
Therefore, we infer from \eqref{est:wL1-E1(w)-01} and \eqref{eq:wL2-1prove}  that
\begin{align}\label{est:u-FL2-01}
|k|\|u\|_{L^2}^2+\|w\|_{L^1}^2\lesssim &\|w\|_{L^1}^2 \lesssim (y_2-y_1+\delta)(\mathscr{E}_2+\|w\|_{L^2}^2)\nonumber\\
\lesssim& \nu^{-\frac{2}{3}}|k_1|^{-\frac{4}{3}}(y_2-y_1)^{-\frac{1}{3}}\|F\|_{L^2}^2.
\end{align}
Thanks to $\nu^{\frac{1}{4}}|k_1|^{-\frac{1}{4}}\leq \frac{y_2-y_1}{4}$, we arrive at
\begin{align*}
  |k|\|u\|_{L^2}^2+\|w\|_{L^1}^2
\lesssim& \nu^{-\frac{2}{3}}|k_1|^{-\frac{4}{3}}(\nu^{-\frac{1}{12}}|k_1|^{\frac{1}{12}})\|F\|_{L^2}^2
\lesssim \nu^{-\frac{3}{4}}|k_1|^{-\frac{5}{4}}\|F\|_{L^2}^2.
\end{align*}
\end{proof}

As a corollary, we can establish the resolvent estimate when $\lambda\in \Omega_\epsilon=\{-i\epsilon\nu^{\frac{1}{2}}|k_1|^{-\frac{1}{2}}+\lambda_r:\lambda_r\in\mathbb{R}\}$.

\begin{corollary}\label{resolvent-L2-complex}
Let $w\in H^2(I)$ be a solution to \eqref{OS-nopertur} with $\lambda\in\Omega_\epsilon$ and $F\in L^2(I)$. Then there exists  $\epsilon$ small enough such that
\begin{equation}\nonumber
\nu^{\frac{3}{8}}|k_1|^{\frac{5}{8}}(\|w\|_{L^1}+|k|^{\frac{1}{2}}\|u\|_{L^2})
+\nu^{\frac{3}{4}}|k_1|^{\frac{1}{4}}\|w'\|_{L^2}+\nu^{\frac{1}{2}}|k_1|^{\frac{1}{2}}\|w\|_{L^2}\leq C\|F\|_{L^2}.
\end{equation}
\end{corollary}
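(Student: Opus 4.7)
The plan is a direct perturbation argument on top of Proposition \ref{resolvent-L2}. Write $\lambda=\lambda_r-i\epsilon\nu^{1/2}|k_1|^{-1/2}$ with $\lambda_r\in\mathbb{R}$. Then the term $-ik_1\lambda w$ in \eqref{OS-nopertur} splits as
\begin{equation*}
-ik_1\lambda w = -ik_1\lambda_r w \;-\; \operatorname{sgn}(k_1)\,\epsilon\,\nu^{1/2}|k_1|^{1/2}\,w,
\end{equation*}
so $w$ also solves the OS equation with the \emph{real} parameter $\lambda_r$ and modified right-hand side
\begin{equation*}
\widetilde F \;:=\; F \;+\; \operatorname{sgn}(k_1)\,\epsilon\,\nu^{1/2}|k_1|^{1/2}\,w \;\in\; L^2(I).
\end{equation*}

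Now I would apply Proposition \ref{resolvent-L2} to this real-$\lambda_r$ problem. Since the constant $C$ there is independent of $\lambda_r,\nu,k$, it yields
\begin{equation*}
\nu^{\tfrac{3}{8}}|k_1|^{\tfrac{5}{8}}\bigl(\|w\|_{L^1}+|k|^{\tfrac12}\|u\|_{L^2}\bigr)
+\nu^{\tfrac{3}{4}}|k_1|^{\tfrac{1}{4}}\|w'\|_{L^2}
+\nu^{\tfrac{1}{2}}|k_1|^{\tfrac{1}{2}}\|w\|_{L^2}
\;\le\; C\|F\|_{L^2} + C\epsilon\,\nu^{\tfrac{1}{2}}|k_1|^{\tfrac{1}{2}}\|w\|_{L^2}.
\end{equation*}

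The crucial observation is that the perturbation produced by shifting $\lambda$ off the real axis has exactly the same size as the good term $\nu^{1/2}|k_1|^{1/2}\|w\|_{L^2}$ already present on the left. Choosing $\epsilon>0$ with $C\epsilon\le\tfrac12$ and absorbing, one obtains the claimed bound with constant $2C$. There is no real obstacle: the only thing to verify, which is visible from the proof of Proposition \ref{resolvent-L2}, is that $C$ is genuinely an absolute constant, so the smallness of $\epsilon$ can be fixed once and for all independently of $\nu,k$, and $\lambda_r$.
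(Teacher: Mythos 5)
Your argument is correct and is essentially the paper's own: both write $\widetilde F = F \pm \epsilon(\nu|k_1|)^{1/2}w$, note that $w$ solves the real-$\lambda$ problem with source $\widetilde F$, invoke Proposition \ref{resolvent-L2}, and absorb the $\epsilon(\nu|k_1|)^{1/2}\|w\|_{L^2}$ term using the matching term on the left for $\epsilon$ small. The paper phrases the absorption as $\|F\|_{L^2}\ge(1-C\epsilon)\|\widetilde F\|_{L^2}$, while you move the perturbation to the right side and subtract; these are the same step.
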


\begin{proof}
Let $\tilde{F}=F+\epsilon \nu^{\frac{1}{2}}|k_1|^{\frac{1}{2}}w$. Then by taking $\epsilon$ small enough, we get
\begin{equation}\nonumber
\begin{aligned}
\|F\|_{L^2}\geq& \|\tilde{F}\|_{L^2}-\epsilon(\nu|k_1|)^{\frac{1}{2}}\|w\|_{L^2}\geq(1-C\epsilon)\|\tilde{F}\|_{L^2}\\
\gtrsim&
\nu^{\frac{3}{8}}|k_1|^{\frac{5}{8}}(\|w\|_{L^1}+|k|^{\frac{1}{2}}\|u\|_{L^2})
+\nu^{\frac{3}{4}}|k_1|^{\frac{1}{4}}\|w'\|_{L^2}+\nu^{\frac{1}{2}}|k_1|^{\frac{1}{2}}\|w\|_{L^2}.
\end{aligned}
\end{equation}
\end{proof}

\begin{corollary}\label{resol-lambda}
Under the same assumptions of Corollary \ref{resolvent-L2-complex}, it holds that
 \begin{equation}\nonumber
 \begin{aligned}
\nu^{\frac{2}{3}}|k_1|^{\frac{1}{3}}&\big( |\lambda-1|^{\frac{1}{2}}+|\nu/k_1|^{\frac{1}{4}}\big)^{\frac{1}{3}}\|(\partial_y,|k|)w\|_{L^2}\\
& +\nu^{\frac{1}{3}}|k_1|^{\frac{2}{3}}\big( |\lambda-1|^{\frac{1}{2}}+|\nu/k_1|^{\frac{1}{4}}\big)^{\frac{2}{3}}\|w\|_{L^2}\lesssim \|F\|_{L^2}.
 \end{aligned}
\end{equation}
\end{corollary}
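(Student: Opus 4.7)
The plan is to first establish a sharpened $L^2$ bound
\[
\nu^{1/3}|k_1|^{2/3}\bigl(|\lambda-1|^{1/2}+|\nu/k_1|^{1/4}\bigr)^{2/3}\|w\|_{L^2}\lesssim\|F\|_{L^2},
\]
and then combine it with the real-part energy identity to get the derivative estimate. Multiplying \eqref{OS-nopertur} by $\bar w$, integrating by parts using $w(\pm1)=0$ (a consequence of $\varphi(\pm1)=\varphi''(\pm1)=0$) and $\int\varphi\bar w\,dy=-\|u\|_{L^2}^2$, the real part reads $\nu\|(\partial_y,|k|)w\|_{L^2}^2+k_1\lambda_i\|w\|_{L^2}^2=\mathrm{Re}\langle F,w\rangle$ with $\lambda_i=-\epsilon\nu^{1/2}|k_1|^{-1/2}$. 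The correction term is absorbed via $\nu^{1/2}|k_1|^{1/2}\|w\|_{L^2}\lesssim\|F\|_{L^2}$ from Corollary \ref{resolvent-L2-complex}, yielding $\nu\|(\partial_y,|k|)w\|_{L^2}^2\lesssim\|F\|_{L^2}\|w\|_{L^2}$; substituting the sharpened $L^2$ bound produces exactly the derivative estimate in the corollary.

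For the sharpened $L^2$ bound, the elementary inequality $(a+b)^{2/3}\le a^{2/3}+b^{2/3}$ reduces the task to two pieces. The contribution from $|\nu/k_1|^{1/6}$ amounts to the baseline $\nu^{1/2}|k_1|^{1/2}\|w\|_{L^2}\lesssim\|F\|_{L^2}$ from Corollary \ref{resolvent-L2-complex}; the contribution $|\lambda-1|^{1/3}$ requires, in the regime $|\lambda-1|^{1/2}\ge c|\nu/k_1|^{1/4}$, the estimate
\[
\nu^{1/3}|k_1|^{2/3}|\lambda-1|^{1/3}\|w\|_{L^2}\lesssim\|F\|_{L^2}.
\]
For $\mathrm{Re}(\lambda)\ge 1$ or $\mathrm{Re}(\lambda)\le -1/2$, taking the imaginary part of the energy identity exploits that $(1-y^2-\mathrm{Re}(\lambda))|w|^2$ and $-2\|u\|_{L^2}^2$ combine with a single definite sign, giving $|k_1||\lambda-1|\|w\|_{L^2}\lesssim\|F\|_{L^2}$; interpolation with the baseline at exponents $(1/3,2/3)$ produces the desired bound. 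For $\mathrm{Re}(\lambda)\in[-1/2,0]$, where $|\lambda-1|\sim 1$, the stronger estimate $(\nu|k_1|^2)^{1/3}\|w\|_{L^2}\lesssim\|F\|_{L^2}$ from \eqref{improvedL2} suffices. For $\mathrm{Re}(\lambda)\in(0,1)$ with $y_2-y_1=2|\lambda-1|^{1/2}\ge 4\nu^{1/4}|k_1|^{-1/4}$, estimate \eqref{eq:wL2-1prove} of Case 3 in Proposition \ref{resolvent-L2} delivers $\|w\|_{L^2}\lesssim\nu^{-1/3}|k_1|^{-2/3}|\lambda-1|^{-1/3}\|F\|_{L^2}$ directly.

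The principal obstacle is the regime $\mathrm{Re}(\lambda)\in(0,1)$, where the two critical points $y=\pm\sqrt{1-\mathrm{Re}(\lambda)}$ generate a colliding-singularity pattern and no simple sign-definite imaginary-part estimate is available; one has to invoke the stream-function decomposition $\varphi=\varphi_1+\varphi_2$ of \eqref{streamdeco1}--\eqref{streamdeco2} and the careful $\delta$-layer analysis packaged as \eqref{eq:wL2-1prove}. A secondary bookkeeping issue is absorbing the perturbation $\lambda_i=-\epsilon\nu^{1/2}|k_1|^{-1/2}$ coming from $\lambda\in\Omega_\epsilon$ uniformly across all three cases; this is handled by choosing $\epsilon$ sufficiently small and invoking Corollary \ref{resolvent-L2-complex} as a black box, exactly as in its own proof.
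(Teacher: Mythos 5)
Your overall strategy matches the paper's: first prove $\nu^{1/3}|k_1|^{2/3}(|\lambda-1|^{1/2}+|\nu/k_1|^{1/4})^{2/3}\|w\|_{L^2}\lesssim\|F\|_{L^2}$ by a case split on $\mathrm{Re}(\lambda)$, then obtain the gradient estimate from $\nu\|(\partial_y,|k|)w\|_{L^2}^2\lesssim\|F\|_{L^2}\|w\|_{L^2}$, with the $\Omega_\epsilon$ correction absorbed as in Corollary \ref{resolvent-L2-complex}. The regimes $\lambda\ge 1$ and $\lambda\in(0,1)$ are handled exactly as in the paper (interpolation against $|k_1||\lambda-1|\|w\|_{L^2}\lesssim\|F\|_{L^2}$ for the first, and \eqref{eq:wL2-1prove} for the second).

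However, there is a gap in your treatment of $\mathrm{Re}(\lambda)\le -1/2$. You assert that, in this regime, $(1-y^2-\mathrm{Re}\lambda)|w|^2$ and $-2\|u\|_{L^2}^2$ ``combine with a single definite sign,'' which would let you conclude $|k_1||\lambda-1|\|w\|_{L^2}\lesssim\|F\|_{L^2}$ by the same elementary argument as for $\lambda\ge 1$. This is not an elementary sign observation: for $\lambda\le 0$ one has $1-y^2-\lambda\ge 0$ pointwise, so $k_1^{-1}$ times the imaginary part consists of a \emph{positive} term $\int(1-y^2-\lambda)|w|^2$ and a \emph{negative} term $-2\|u\|_{L^2}^2$. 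The non-negativity of their sum (equivalently, of $\int(1-y^2)|w|^2-2\|u\|_{L^2}^2$) is precisely the Rayleigh positivity in Lemma \ref{positive}, a structural fact about the Poiseuille profile that you never invoke. Without it, your argument would appear to fail. The paper's own Case~2 sidesteps this by a different algebraic rearrangement of the imaginary part, namely writing
\[
|\lambda-1|\,\|w\|_{L^2}^2\le |k_1|^{-1}\|F\|_{L^2}\|w\|_{L^2}+\|yw\|_{L^2}^2+2\|(\partial_y,|k|)\varphi\|_{L^2}^2,
\]
absorbing the last two terms by $\|w\|_{L^2}^2$ and using the previously established bound $(\nu|k_1|^2)^{1/3}\|w\|_{L^2}\lesssim\|F\|_{L^2}$ from \eqref{improvedL2}, then interpolating with exponents $(2/3,1/3)$ on the pair $\nu^{1/3}|k_1|^{2/3}|\lambda-1|^{1/2}\|w\|_{L^2}$ and $(\nu|k_1|^2)^{1/3}\|w\|_{L^2}$. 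If you wish to keep your route via $|k_1||\lambda-1|\|w\|_{L^2}\lesssim\|F\|_{L^2}$, you must explicitly invoke Lemma \ref{positive} (as Proposition \ref{resolvent-L2} does in its Case~2); otherwise replace that step by the paper's rearrangement.
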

\begin{remark}\label{rk-resolvent-1}
This result implies that away from the critical point $y=0$,  we can obtain a resolvent estimate such as
 \beno
 (\nu |k_1|^2)^{\frac{1}{3}}\|w\|_{L^2}\lesssim \|F\|_{L^2},
  \eeno
 which is the same as one for the monotone flow considered in \cite{CWZ-cmp}. Near the critical point, we just have
  \beno
 (\nu |k_1|)^{\frac{1}{2}}\|w\|_{L^2}\lesssim \|F\|_{L^2}.
  \eeno
 \end{remark}

\begin{proof}
We only consider the case of $\lambda\in \R$. The case of $\lambda\in \Omega_\epsilon$ can be proved by the perturbation argument as in Corollary \ref{resolvent-L2-complex}.\smallskip

\noindent{\bf Case 1.} {$\lambda\geq 1$.}

First of all, we know that
$$\nu^{\frac{1}{3}}|k_1|^{\frac{2}{3}} (\nu^{\frac{1}{4}} |k_1|^{-\frac{1}{4}})^{\frac{2}{3}}\|w\|_{L^2} =\nu^{\frac{1}{2}} |k_1|^{\frac{1}{2}}\|w\|_{L^2}\lesssim \|F\|_{L^2},$$
It remains to show $ (\nu |k_1|^2)^{\frac{1}{3}}|\lambda-1|^{\frac{1}{3}}\|w\|_{L^2}\lesssim \|F\|_{L^2}$. For $\lambda\geq 1$,  we have
 \begin{equation*}
|\lambda-1|\|w\|_{L^2}^2\leq |k_1|^{-1}\|F\|_{L^2}\|w\|_{L^2},
\end{equation*}
which gives
 \begin{equation*}
|k_1| |\lambda-1| \|w\|_{L^2}\lesssim \|F\|_{L^2}.
\end{equation*}
Therefore, we obtain
 \begin{equation*}
 \begin{aligned}
(\nu |k_1|^2)^{\frac{1}{3}}|\lambda-1|^{\frac{1}{3}}\|w\|_{L^2}=(|k_1| |\lambda-1| \|w\|_{L^2})^{\frac{1}{3}} (\nu^{\frac{1}{2}} |k_1|^{\frac{1}{2}}\|w\|_{L^2})^{\frac{2}{3}}\lesssim \|F\|_{L^2}.
\end{aligned}
\end{equation*}

\noindent{\bf Case 2.} {$\lambda\leq 0$.}

In this case, we have $(\nu |k_1|^2)^{\frac{1}{3}}\|w\|_{L^2}\lesssim \|F\|_{L^2}$ by \eqref{improvedL2}. Due to $(\nu |k_1|^{-1})^{\frac{1}{4}}\ll 1\leq |1-\lambda|^{\frac{1}{2}}$, it remains to show $ (\nu |k_1|^2)^{\frac{1}{3}}|\lambda-1|^{\frac{1}{3}}\|w\|_{L^2}\lesssim \|F\|_{L^2}$.
Indeed, we find that
 \begin{equation*}
  \begin{aligned}
|\lambda-1|\|w\|_{L^2}^2\leq & |k_1|^{-1}\|F\|_{L^2}\|w\|_{L^2}+\|yw \|_{L^2}^2+2\|(\partial_y,|k|)\varphi\|_{L^2}^2\\
\lesssim& |k_1|^{-1}\|F\|_{L^2}\|w\|_{L^2}+\|w \|_{L^2}^2\\
\lesssim& \nu^{-\frac{1}{3}}|k_1|^{-\frac{5}{3}}\|F\|_{L^2}^2+ \nu^{-\frac{2}{3}}|k_1|^{-\frac{4}{3}}\|F\|_{L^2}^2\lesssim \nu^{-\frac{2}{3}}|k_1|^{-\frac{4}{3}}\|F\|_{L^2}^2,
\end{aligned}
\end{equation*}
which yields that
\begin{equation*}
  \begin{aligned}
  \nu^{\frac{1}{3}}|k_1|^{\frac{2}{3}}|\lambda-1|^{\frac{1}{2}} \|w\|_{L^2}\lesssim \|F\|_{L^2}.
\end{aligned}
\end{equation*}
Thus, we have
 \begin{equation*}
 \begin{aligned}
(\nu |k_1|^2)^{\frac{1}{3}}|\lambda-1|^{\frac{1}{3}}\|w\|_{L^2}=( \nu^{\frac{1}{3}}|k_1|^{\frac{2}{3}}|\lambda-1|^{\frac{1}{2}} \|w\|_{L^2})^{\frac{2}{3}} ( \nu^{\frac{1}{3}}|k_1|^{\frac{2}{3}}\|w\|_{L^2})^{\frac{1}{3}}\lesssim \|F\|_{L^2}.
\end{aligned}
\end{equation*}

\noindent{\bf Case 3.} {$\lambda \in (0,1)$.}

Let $1-y_i^2=\lambda$ with $y_1=-y_2$ and $|1-\lambda|^{\frac{1}{2}}= (y_2-y_1)/2$. If $|1-\lambda|^{\frac{1}{2}}\leq 4|\nu /k_1|^{\frac{1}{4}}$, then we get by $(\nu |k_1|)^{\frac{1}{2}}\|w\|_{L^2}\lesssim \|F\|_{L^2}$ that
\begin{equation}\nonumber
 \begin{aligned}
(\nu |k_1|^2)^{\frac{1}{3}} (|\lambda-1|^{\frac{1}{2}}+\nu^{\frac{1}{4}} |k_1|^{-\frac{1}{4}})^{\frac{2}{3}} \|w\|_{L^2}
&\lesssim
(\nu |k_1|^2)^{\frac{1}{3}} (\nu^{\frac{1}{4}} |k_1|^{-\frac{1}{4}})^{\frac{2}{3}} \|w\|_{L^2}\\
&=(\nu |k_1|)^{\frac{1}{2}} \|w\|_{L^2}\leq C \|F\|_{L^2}.
\end{aligned}
\end{equation}
If $|1-\lambda|^{\frac{1}{2}}\geq 4 |\nu /k_1|^{\frac{1}{4}}$, then $|1-\lambda|^{\frac{1}{2}}= (y_2-y_1)/2\sim |1-\lambda|^{\frac{1}{2}}+|\nu /k_1|^{\frac{1}{4}}$, and we can deduce from \eqref{eq:wL2-1prove} that
\begin{align*}
\|w\|_{L^2}\lesssim& \nu^{-\frac{1}{3}}|k_1|^{-\frac{2}{3}}(y_2-y_1)^{-\frac{2}{3}}\|F\|_{L^2}\\
\lesssim&\nu^{-\frac{1}{3}}|k_1|^{-\frac{2}{3}}\big( |\lambda-1|^{\frac{1}{2}}+|\nu/k_1|^{\f14}\big)^{-\frac{2}{3}}\|F\|_{L^2}.
\end{align*}

Combining the above cases, we arrive at $\nu^{\frac{1}{3}}|k_1|^{\frac{2}{3}}\big( |\lambda-1|^{\frac{1}{2}}+|\nu/k_1|^{\frac{1}{4}}\big)^{\frac{2}{3}}\|w\|_{L^2}\lesssim \|F\|_{L^2}$, which along with $\nu\|(\partial_y,|k|)w\|_{L^2}^2\leq\|F\|_{L^2}\|w\|_{L^2}$ shows
 \begin{equation}\nonumber
\nu^{\frac{2}{3}}|k_1|^{\frac{1}{3}}\big( |\lambda-1|^{\frac{1}{2}}+|\nu/k_1|^{\frac{1}{4}}\big)^{\frac{1}{3}}\|(\partial_y,|k|)w\|_{L^2} \lesssim \|F\|_{L^2}.
\end{equation}
\end{proof}

\subsection{Weak-type resolvent estimate for $F\in L^2$}

To establish the weak-type resolvent estimate, we first need to introduce some estimates for the Rayleigh equation
\begin{equation}\label{eq:Ray-01}
(1-y^2-\lambda+i\delta)W+2\Phi=f,\quad
(\partial_y^2-|k|^2)\Phi=W,\quad \Phi(\pm 1)=0,
\end{equation}
here $\lambda\in \R$ and we denote by $W=\mathbf{Ray}_{\delta}^{-1}f$ the solution.

\begin{lemma}\label{lem:Ray-simple-1}
  Let $(\Phi,W)$ solve \eqref{eq:Ray-01} with $f(\pm1)=0$ and $\delta\ll 1$. Then it holds that
  \begin{align*}
     & \|(\partial_y,|k|)\Phi\|_{L^2}+\delta^{\f12}\|W\|_{L^2}+ \delta^{\f32}(|1-\lambda|+\delta)^{-\f12}\|W'\|_{L^2}\le  C\|(\partial_y,|k|)f\|_{L^2}.
  \end{align*}
\end{lemma}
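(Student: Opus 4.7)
The plan is to exploit the special structure of the Poiseuille profile $U(y)=1-y^2$ through the substitution $\Phi=T\Psi$ where $T:=1-y^2-\lambda+i\delta$; since $T'=-2y$ and $T''=-2$, a direct computation converts \eqref{eq:Ray-01} into the self-adjoint Sturm--Liouville form
\begin{equation*}
(T^2\Psi')' - |k|^2 T^2\Psi = f,\qquad \Psi(\pm 1)=0,
\end{equation*}
the boundary condition being legitimate because $T(\pm 1)=-\lambda+i\delta\neq 0$ for $\delta>0$. The advantage of this formulation is that the lower-order coupling $2\Phi$ coming from $U''=-2$ is absorbed into the weight $T^2$.

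The heart of the argument is the $H^1$-bound $\|(\partial_y,|k|)\Phi\|_{L^2}\lesssim \|(\partial_y,|k|)f\|_{L^2}$. Pairing the Sturm--Liouville equation against $\bar\Psi$ and integrating by parts yields
\begin{equation*}
-\int_{-1}^1 T^2\bigl(|\Psi'|^2+|k|^2|\Psi|^2\bigr)\,dy = \int_{-1}^1 f\bar\Psi\,dy.
\end{equation*}
Its imaginary part controls the weighted quantity $\delta\int (1-y^2-\lambda)(|\Psi'|^2+|k|^2|\Psi|^2)\,dy$, while the real part controls $\int[(1-y^2-\lambda)^2-\delta^2](|\Psi'|^2+|k|^2|\Psi|^2)\,dy$. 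When $\lambda\notin(0,1)$ the factor $1-y^2-\lambda$ has a definite sign and these two identities combine directly; when $\lambda\in(0,1)$ one must split with cut-offs near the critical points $y_\pm=\pm\sqrt{1-\lambda}$ and use Hardy-type estimates in the spirit of Lemma~\ref{hardy-type}. Translating back via $\Phi=T\Psi$ and the identity $T\Psi'=\Phi'+2y\Psi$ then yields the required bound on $\|(\partial_y,|k|)\Phi\|_{L^2}$.

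With the $H^1$-bound on $\Phi$ in hand, the remaining two estimates follow quickly. Pairing the first equation of \eqref{eq:Ray-01} against $\bar W$, the imaginary part reads $\delta\|W\|_{L^2}^2=\mathrm{Im}\langle f,W\rangle$. Using $W=(\partial_y^2-|k|^2)\Phi$ together with the hypothesis $f(\pm 1)=0$ to integrate by parts, one obtains
\begin{equation*}
\langle f,W\rangle = -\langle f',\Phi'\rangle - |k|^2\langle f,\Phi\rangle,
\end{equation*}
so that $\delta\|W\|_{L^2}^2\leq \|(\partial_y,|k|)f\|_{L^2}\|(\partial_y,|k|)\Phi\|_{L^2}\lesssim \|(\partial_y,|k|)f\|_{L^2}^2$, which is the $\delta^{1/2}\|W\|_{L^2}$ bound. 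For $W'$, differentiating $TW+2\Phi=f$ produces $TW'=f'+2yW-2\Phi'$, hence $|W'|\leq |T|^{-1}(|f'|+2|W|+2|\Phi'|)$. Combining the trivial lower bound $|T|\geq\delta$ inside the critical layer (whose measure is $O(\delta/(|1-\lambda|+\delta)^{1/2})$ when $\lambda\in(0,1)$) with $|T|\gtrsim|1-\lambda|+\delta$ away from it produces the announced weight $\delta^{-3/2}(|1-\lambda|+\delta)^{1/2}$.

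The main obstacle is the $H^1$-bound on $\Phi$ in the collision regime $\lambda\in(0,1)$, where the weight $T^2$ changes sign and the two critical points $y_\pm$ can coalesce as $\lambda\uparrow 1$. Here the quadratic factorization $1-y^2-\lambda=-(y-y_-)(y-y_+)$ specific to Poiseuille and the constancy of $T''$ have to be exploited through carefully chosen cut-offs at scale $\sim\delta^{1/2}/(|1-\lambda|+\delta)^{1/4}$ around each $y_\pm$. This amounts to a quantitative refinement of the limiting absorption principle of \cite{WZZ-apde}, in which one has to track that the implicit constant $C$ is independent of $k$, $\lambda$ and $\delta$; that uniformity is precisely what makes the estimate usable in the weak-type resolvent bound of Proposition~\ref{weaktype} and in the subsequent nonlinear stability analysis.
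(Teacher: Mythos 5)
The Sturm--Liouville substitution $\Phi=T\Psi$ and the energy identity you propose for the $H^1$-bound are a genuinely different route from the paper, which obtains $\|(\partial_y,|k|)\Phi\|_{L^2}\lesssim\|(\partial_y,|k|)f\|_{L^2}$ by invoking Proposition 6.1 of \cite{WZZ-apde} for bounded $|k|$ and the contradiction-compactness argument of Proposition~\ref{est-1} for large $|k|$; your outline is plausible but stays schematic exactly where the paper's cited results do the heavy lifting, so this part is more a change of bookkeeping than a new proof. Your treatment of the $\delta^{1/2}\|W\|_{L^2}$ bound is identical to the paper's.

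There is, however, a genuine gap in the $W'$ estimate. After correctly differentiating $TW+2\Phi=f$ to get $TW'=f'+2yW-2\Phi'$, you pass to $|W'|\le |T|^{-1}(|f'|+2|W|+2|\Phi'|)$, discarding the factor $y$ in $2yW$. That factor is essential. With the crude bound the $W$-contribution is
\[
\|T^{-1}W\|_{L^2}\le\delta^{-1}\|W\|_{L^2}\lesssim\delta^{-3/2}\|(\partial_y,|k|)f\|_{L^2},
\]
which misses the required weight by a factor $(|1-\lambda|+\delta)^{-1/2}$ whenever $|1-\lambda|+\delta\ll 1$ (i.e.\ precisely the collision regime $\lambda\to1$ you flag as delicate). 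The region split you propose does not repair this: on the critical layer $|T|\ge\delta$ and the only available bound is $\|W\|_{L^2}\lesssim\delta^{-1/2}\|(\partial_y,|k|)f\|_{L^2}$, with no pointwise or improved local control on $W$ to exploit the small measure of the layer. Moreover, the asserted lower bound $|T|\gtrsim|1-\lambda|+\delta$ off the critical layer is false (just outside the layer $|T|\sim\delta$). The paper's fix is to keep the $y$ and use the pointwise estimate
\[
\Big\|\frac{y}{1-y^2-\lambda+i\delta}\Big\|_{L^\infty}\lesssim \delta^{-1}(|1-\lambda|+\delta)^{1/2},
\]
which is exactly what encodes that $|y|\sim\sqrt{1-\lambda}$ is \emph{small} near the colliding critical points; together with $\|W\|_{L^2}\lesssim\delta^{-1/2}\|(\partial_y,|k|)f\|_{L^2}$ this gives the stated $\delta^{-3/2}(|1-\lambda|+\delta)^{1/2}$ weight, while the $f'-2\Phi'$ terms are handled by the cruder $|T|\ge\delta$. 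You should restore the $y$ factor and prove the $L^\infty$ bound on $y/T$ rather than split regions.
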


\begin{proof}
Using Lemma \ref{est-1} for large $|k|$ and Proposition 6.1 of \cite{WZZ-apde} for others $|k|$, we have
\begin{align*}
   & \|(\partial_y,|k|)\Phi\|_{L^2}\lesssim\|(\partial_y,|k|)f\|_{L^2}.
\end{align*}
 Taking the inner product between \eqref{eq:Ray-01} and $W$, we obtain
  \begin{align*}
     & \int_{-1}^{1}(1-y^2-\lambda)|W|^2\mathrm{d}y-\|(\partial_y,|k|)\Phi\|^2_{L^2} +i\delta \|W\|_{L^2}^2 =\langle f,-W\rangle,
  \end{align*}
  which gives
  \begin{align*}
     &\delta\|W\|_{L^2}^2\lesssim \|(\partial_y,|k|)f\|_{L^2}\|(\partial_y,|k|)\Phi\|_{L^2}\lesssim \|(\partial_y,|k|)f\|_{L^2}^2.
  \end{align*}

Thanks to
  \begin{align*}
     & W'=\dfrac{f'-2\Phi'}{1-y^2-\lambda+i\delta}+\dfrac{2yW}{1-y^2-\lambda+i\delta},
  \end{align*}
we infer that
\begin{align*}
   \|W'\|_{L^2}&\leq \delta^{-1}\|f'-2\Phi'\|_{L^2}+ 2\|W\|_{L^2}\left\|\dfrac{y}{1-y^2-\lambda+i\delta}\right\|_{L^\infty}\\
   &\lesssim \delta^{-1}\|(\partial_y,|k|)f\|_{L^2}+ \delta^{-\f12}\|(\partial_y,|k|)f\|_{L^2}\cdot \delta^{-1}(|1-\lambda|+\delta)^{\f12}\\
   &\lesssim \delta^{-\f32}(|1-\lambda|+\delta)^{\f12}\|(\partial_y,|k|)f\|_{L^2}.
\end{align*}
\end{proof}

\begin{lemma}\label{weaktype-FL2}
Let $(\varphi,w)$ be a solution of \eqref{OS-nopertur}. If $\nu|k|^3|k_1|^{-1}\leq|\lambda-1|^{\f12}+|\nu/k_1|^{\f14}$ and ${ F}\in L^2(I)$, then we have
\begin{align*}
|\langle w,f\rangle|
\le &C|k_1|^{-1}\|F\|_{L^2}\big(
\|\mathbf{Ray}_{\delta_1}^{-1}f\|_{L^2}+|\nu/k_1|^{\f12}\delta_1^{-\f12} \|\partial_y(\mathbf{Ray}_{\delta_1}^{-1}f)\|_{L^2}\big).
\end{align*}
where $\delta_1 =|\nu /k_1|^{\frac{1}{3}}\big(|\lambda-1|^{\frac{1}{2}}+|\nu/k_1|^{\f14}\big)^{\frac{2}{3}}$.
\end{lemma}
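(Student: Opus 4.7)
The plan is to establish this weak-type estimate by a duality argument: pair $\langle w,f\rangle$ against the Rayleigh resolvent $W=\mathbf{Ray}_{\delta_1}^{-1}f$ and convert the viscous and transport pieces through the full Orr-Sommerfeld equation, then bound the resulting terms using the sharp resolvent estimates of Corollary \ref{resol-lambda}. Let $(W,\Phi)$ solve \eqref{eq:Ray-01} so that $f=(1-y^2-\lambda+i\delta_1)W+2\Phi$ with $\Phi(\pm 1)=0$, and work under the convention $f(\pm 1)=0$ (so that $W(\pm 1)=0$, consistent with the hypotheses of Lemma \ref{lem:Ray-simple-1}). The key identity comes from two steps. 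Substituting the representation of $f$ and using the self-adjoint pairing $\langle w,\Phi\rangle=\langle\varphi,W\rangle$ — which follows from $(\partial_y^2-|k|^2)\varphi=w$, $(\partial_y^2-|k|^2)\Phi=W$, and $\varphi(\pm1)=\Phi(\pm1)=0$ by two integrations by parts — yields
\[
\langle w,f\rangle=\langle(1-y^2-\lambda)w+2\varphi,W\rangle-i\delta_1\langle w,W\rangle.
\]
Next, using the Orr-Sommerfeld equation $(1-y^2-\lambda)w+2\varphi=(ik_1)^{-1}[F+\nu(\partial_y^2-|k|^2)w]$ and integrating by parts once in the viscous term (using $w(\pm 1)=W(\pm 1)=0$), I arrive at
\[
\langle w,f\rangle=(ik_1)^{-1}\langle F,W\rangle-(ik_1)^{-1}\nu\langle w',W'\rangle-\bigl((ik_1)^{-1}\nu|k|^2+i\delta_1\bigr)\langle w,W\rangle.
\]

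Each of the four contributions is then controlled by Cauchy-Schwarz combined with the sharp resolvent estimates of Corollary \ref{resol-lambda}:
\[
\|w\|_{L^2}\le C\nu^{-1/3}|k_1|^{-2/3}\bigl(|1-\lambda|^{1/2}+|\nu/k_1|^{1/4}\bigr)^{-2/3}\|F\|_{L^2},
\]
\[
\|w'\|_{L^2}\le C\nu^{-2/3}|k_1|^{-1/3}\bigl(|1-\lambda|^{1/2}+|\nu/k_1|^{1/4}\bigr)^{-1/3}\|F\|_{L^2}.
\]
The first term $(ik_1)^{-1}\langle F,W\rangle$ produces $|k_1|^{-1}\|F\|_{L^2}\|W\|_{L^2}$ directly. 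The definition $\delta_1=|\nu/k_1|^{1/3}(|1-\lambda|^{1/2}+|\nu/k_1|^{1/4})^{2/3}$ is precisely tuned so that $\delta_1\|w\|_{L^2}\|W\|_{L^2}$ collapses to $C|k_1|^{-1}\|F\|_{L^2}\|W\|_{L^2}$. The hypothesis $\nu|k|^3/|k_1|\le|1-\lambda|^{1/2}+|\nu/k_1|^{1/4}$ is exactly what is needed to fold $\nu|k|^2/|k_1|\cdot\|w\|_{L^2}\|W\|_{L^2}$ into the same $|k_1|^{-1}\|F\|_{L^2}\|W\|_{L^2}$ bound. Finally, the derivative estimate combined with the definition of $\delta_1$ gives the exact algebraic match
\[
\nu|k_1|^{-1}\|w'\|_{L^2}\|W'\|_{L^2}\le C|k_1|^{-1}|\nu/k_1|^{1/2}\delta_1^{-1/2}\|F\|_{L^2}\|W'\|_{L^2}.
\]
Summing the four estimates yields the stated inequality.

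The main obstacle is not conceptual but bookkeeping: every factor of $(|1-\lambda|^{1/2}+|\nu/k_1|^{1/4})$ and of $\delta_1$ must line up, and the two exponents in the definition of $\delta_1$ together with the exact form of the hypothesis on $\nu|k|^3/|k_1|$ are precisely what allow the $\nu|k|^2/|k_1|$ contribution and the $\delta_1$ contribution to both collapse into the $\|W\|_{L^2}$ part while the $\langle w',W'\rangle$ contribution assembles into the $\|W'\|_{L^2}$ part. The sharpness of Corollary \ref{resol-lambda} is essential — a crude $(\nu|k_1|)^{1/2}\|w\|_{L^2}\lesssim\|F\|_{L^2}$ bound would not suffice — and this is where the whole estimate would fail without the refined work of Section 2.
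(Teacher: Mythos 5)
Your proposal is correct and follows essentially the same route as the paper's own proof. The paper phrases the argument as a dual inequality — it starts from $\|F\|_{L^2}\|\phi\|_{L^2}\ge|\langle F,\phi\rangle|$ for $\phi\in H^1_0$, isolates the Rayleigh pairing $|k_1||\langle w,(1-y^2-\lambda+i\delta_1)\phi+2(\partial_y^2-|k|^2)^{-1}\phi\rangle|$ on one side, bounds the viscous and $\delta_1$ remainders via Corollary \ref{resol-lambda}, and only at the very end sets $\phi=\mathbf{Ray}_{\delta_1}^{-1}f$ so that the Rayleigh combination collapses to $f$ — whereas you compute the explicit identity for $\langle w,f\rangle$ first and estimate its four pieces afterwards. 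This is the same computation written in the opposite direction: your second-term estimate $\nu|k_1|^{-1}\|w'\|_{L^2}\|W'\|_{L^2}\lesssim|k_1|^{-1}|\nu/k_1|^{1/2}\delta_1^{-1/2}\|F\|_{L^2}\|W'\|_{L^2}$ is exactly the paper's bound on $\nu\|w'\|_{L^2}\|\phi'\|_{L^2}$, and your absorption of the $\nu|k|^2/|k_1|$ contribution into the $\delta_1$ contribution via the hypothesis is exactly the paper's observation $|\nu|k|^2-k_1\delta_1|\lesssim|k_1|\delta_1$. (One small point: you explicitly invoke $f(\pm1)=0$ so that $W\in H^1_0$; the paper does not state this in the lemma but needs it equally, as it takes $\phi\in H^1_0$ and then sets $\phi=\mathbf{Ray}_{\delta_1}^{-1}f$. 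In the only application, $f=\varphi$ vanishes at $\pm1$, so both treatments are consistent.)
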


\begin{proof}
For $\phi\in H^1_0(-1,1)$, we have
\begin{align*}
\|F\|_{L^2}\|\phi\|_{L^2}\geq& |\langle F,\phi\rangle|
=|-\nu(\partial_y^2-|k|^2)w+ik_1(1-y^2-\lambda)w
+2ik_1\varphi,\phi\rangle|\\
\geq &-\nu\|w'\|_{L^2}\|\phi'\|_{L^2}-\big|{\nu|k|^2}-k_1\delta_1\big|\|w\|_{L^2}\|\phi\|_{L^2}\\
&+|\langle k_1(1-y^2-\lambda-i\delta_1)w+2k_1(\partial_y^2-|k|^2)^{-1}w, \phi\rangle|.
\end{align*}
Thanks to $ \nu|k|^3|k_1|^{-1}\leq|\lambda-1|^{\f12}+|\nu/k_1|^{\f14}$, we have
$\left|\nu|k|^2-k_1\delta_1\right|\lesssim |k_1|\delta_1.$ Then we infer that
\begin{equation*}
\begin{aligned}
&|k_1||\langle w, (1-y^2-\lambda+i\delta_1)\phi+2(\partial_y^2-|k|^2)^{-1}\phi\rangle|\\
&\lesssim  \|F\|_{L^2}\|\phi\|_{L^2}+\nu\|w'\|_{L^2}\|\phi'\|_{L^2}+ |k_1|\delta_1\|w\|_{L^2}\|\phi\|_{L^2},
\end{aligned}
\end{equation*}
which along with Corollary \ref{resol-lambda} gives
\begin{equation}\label{basicphi-FL2}
  \begin{aligned}
   &|k_1||\langle w, (1-y^2-\lambda+i\delta_1)\phi+2(\partial_y^2-|k|^2)^{-1}\phi\rangle| \\ &\lesssim \|F\|_{L^2}\|\phi\|_{L^2}+|\nu/k_1|^{\f12}\delta_1^{-\f12}\|F\|_{L^2}\|\phi'\|_{L^2}.
\end{aligned}
\end{equation}
Now the lemma follows by taking $\phi=\mathbf{Ray}_{\delta_1}^{-1}f$.
\end{proof}

\begin{proposition}\label{resol-lambda-00}
Under the same assumptions of Corollary \ref{resolvent-L2-complex},  it holds that
 \begin{equation}\label{resol-est-L2-w-00}
\nu^{\f16}|k_1|^{\f56}\big( |\lambda-1|^{\frac{1}{2}}+|\nu/k_1|^{\frac{1}{4}}\big)^{\frac{1}{3}}\|u\|_{L^2}\le C\|F\|_{L^2}.
\end{equation}
\end{proposition}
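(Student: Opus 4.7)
The plan is to reduce the bound on $\|u\|_{L^2}$ to the already-established weak-type resolvent estimate via a duality argument. Since $\varphi(\pm 1)=0$, integration by parts gives $\|u\|_{L^2}^2 = -\langle w,\varphi\rangle$, so I will estimate $|\langle w,\varphi\rangle|$ by applying Lemma \ref{weaktype-FL2} with the test function $f=\varphi\in H^1_0(-1,1)$ and using Lemma \ref{lem:Ray-simple-1} to control the Rayleigh quantities $\mathbf{Ray}_{\delta_1}^{-1}\varphi$. Writing $\alpha := |1-\lambda|^{1/2}+|\nu/k_1|^{1/4}$, the argument splits according to whether the hypothesis of Lemma \ref{weaktype-FL2}, namely $\nu|k|^3|k_1|^{-1}\le\alpha$, is satisfied.

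In the regime $\nu|k|^3|k_1|^{-1}\le\alpha$, taking $f=\varphi$ in Lemma \ref{weaktype-FL2} yields
\[
\|u\|_{L^2}^2 \le C|k_1|^{-1}\|F\|_{L^2}\Big(\|\mathbf{Ray}_{\delta_1}^{-1}\varphi\|_{L^2}+|\nu/k_1|^{1/2}\delta_1^{-1/2}\|\partial_y\mathbf{Ray}_{\delta_1}^{-1}\varphi\|_{L^2}\Big),
\]
and Lemma \ref{lem:Ray-simple-1} (with $f=\varphi$, $\delta=\delta_1$) gives $\|\mathbf{Ray}_{\delta_1}^{-1}\varphi\|_{L^2}\le C\delta_1^{-1/2}\|u\|_{L^2}$ and $\|\partial_y\mathbf{Ray}_{\delta_1}^{-1}\varphi\|_{L^2}\le C\delta_1^{-3/2}(|1-\lambda|+\delta_1)^{1/2}\|u\|_{L^2}$. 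After dividing by $\|u\|_{L^2}$, the proposition reduces to the algebraic observation that with $\delta_1=|\nu/k_1|^{1/3}\alpha^{2/3}$, both $\delta_1^{-1/2}$ and $|\nu/k_1|^{1/2}\delta_1^{-2}(|1-\lambda|+\delta_1)^{1/2}$ are of order $|\nu/k_1|^{-1/6}\alpha^{-1/3}$. This follows from $|1-\lambda|\le\alpha^2$ together with $\delta_1\le\alpha^2$, the latter because $\alpha\ge|\nu/k_1|^{1/4}$ forces $|\nu/k_1|^{1/3}\le\alpha^{4/3}$. Multiplying by $|k_1|^{-1}$ produces the bound $\|u\|_{L^2}\le C\nu^{-1/6}|k_1|^{-5/6}\alpha^{-1/3}\|F\|_{L^2}$, which is exactly \eqref{resol-est-L2-w-00}.

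In the complementary regime $\nu|k|^3|k_1|^{-1}>\alpha$ where Lemma \ref{weaktype-FL2} is unavailable, a cheaper argument suffices: Cauchy--Schwarz applied to $\|u\|_{L^2}^2=-\langle w,\varphi\rangle$ combined with the Poincar\'e inequality $\|\varphi\|_{L^2}\le|k|^{-1}\|u\|_{L^2}$ (valid since $|k|\ge 1$ and $\varphi(\pm 1)=0$) gives $\|u\|_{L^2}\le|k|^{-1}\|w\|_{L^2}$; together with $(\nu|k_1|)^{1/2}\|w\|_{L^2}\le C\|F\|_{L^2}$ from Corollary \ref{resolvent-L2-complex} and the regime condition, which is equivalent to $\nu^{-1/3}|k_1|^{1/3}\alpha^{1/3}|k|^{-1}\le 1$, this yields the claim. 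The case $\lambda\in\Omega_\epsilon$ then follows by the same perturbation trick as in Corollary \ref{resolvent-L2-complex}: rewrite the equation as a real-$\lambda_r$ problem with source $\tilde F=F+\mathrm{sign}(k_1)\epsilon(\nu|k_1|)^{1/2}w$ and absorb the extra term via $(\nu|k_1|)^{1/2}\|w\|_{L^2}\le C\|F\|_{L^2}$ for $\epsilon$ small.

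The main obstacle is the algebraic balancing in the first regime: one must recognize that $f=\varphi$ is the correct test function (so that the Rayleigh bound produces the factor $\|u\|_{L^2}$ to absorb on the left), and then verify that the two contributions from Lemma \ref{weaktype-FL2}, proportional to $\delta_1^{-1/2}$ and $|\nu/k_1|^{1/2}\delta_1^{-2}(|1-\lambda|+\delta_1)^{1/2}$ respectively, balance precisely at $|\nu/k_1|^{-1/6}\alpha^{-1/3}$ with the chosen $\delta_1=|\nu/k_1|^{1/3}\alpha^{2/3}$. Once this observation is made, every subsequent step is a direct invocation of an earlier lemma.
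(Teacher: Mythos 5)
Your proposal is correct and follows essentially the same two-case strategy as the paper: in the regime $\nu|k|^3|k_1|^{-1}\le|1-\lambda|^{1/2}+|\nu/k_1|^{1/4}$ you test Lemma~\ref{weaktype-FL2} against $f=\varphi$, control $\mathbf{Ray}_{\delta_1}^{-1}\varphi$ via Lemma~\ref{lem:Ray-simple-1}, verify that both contributions balance at $\delta_1^{-1/2}$ with $\delta_1=|\nu/k_1|^{1/3}\alpha^{2/3}$, and divide through by $\|u\|_{L^2}$. This is exactly the paper's Case~1. In the complementary regime the paper chains $\|u\|_{L^2}\le|k|^{-2}\|(\partial_y,|k|)w\|_{L^2}$ with the refined bound of Corollary~\ref{resol-lambda} and the fact $\nu|k|^4|k_1|^{-1}\gtrsim1$, whereas you instead use the cheaper $\|u\|_{L^2}\le|k|^{-1}\|w\|_{L^2}$ together with the basic estimate $(\nu|k_1|)^{1/2}\|w\|_{L^2}\lesssim\|F\|_{L^2}$ from Corollary~\ref{resolvent-L2-complex}, closing by noting that the regime condition is precisely $\nu^{-1/3}|k_1|^{1/3}\alpha^{1/3}|k|^{-1}\le1$; this is a slightly more economical route that avoids the $\lambda$-weighted resolvent bound entirely, at no loss. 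The perturbation argument for $\lambda\in\Omega_\epsilon$ at the end is also handled correctly.
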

\begin{proof}
We only consider the case of $\lambda\in \R$, and the case of $\lambda\in \Omega_\epsilon$ can be proved by a perturbation argument.
\smallskip

\noindent{\textbf{Case 1.} $\nu|k|^3|k_1|^{-1}\leq|\lambda-1|^{\f12}+|\nu/k_1|^{\f14}$}.

By Lemma \ref{lem:Ray-simple-1}, we have
\begin{align*}
    & \|\mathbf{Ray}_{\delta_1}^{-1}\varphi\|_{L^2}+|\nu/k_1|^{\f12}\delta_1^{-\f12} \|\partial_y(\mathbf{Ray}_{\delta_1}^{-1}\varphi)\|_{L^2} \\
    &\lesssim \delta_1^{-\f12}\big(1+|\nu/k_1|^{\f12}\delta_1^{-\f32}(|1-\lambda|+\delta_1)^{\f12}\big) \|u\|_{L^2}.
\end{align*}
Taking $\delta_1 =|\nu /k_1|^{\frac{1}{3}}\big(|\lambda-1|^{\frac{1}{2}}+|\nu/k_1|^{\f14}\big)^{\frac{2}{3}}$, we have $|1-\lambda|+\delta_1\lesssim \big(|\lambda-1|^{\frac{1}{2}}+|\nu/k_1|^{\f14}\big)^{2}$, and then $|\nu/k_1|^{\f12}\delta_1^{-\f32}(|1-\lambda|+\delta_1)^{\f12}\lesssim1$ and
\begin{align*}
   & \|\mathbf{Ray}_{\delta_1}^{-1}\varphi\|_{L^2}+|\nu/k_1|^{\f12}\delta_1^{-\f12} \|\partial_y(\mathbf{Ray}_{\delta_1}^{-1}\varphi)\|_{L^2} \lesssim \delta_1^{-\f12}\|u\|_{L^2}.
\end{align*}
Using Lemma \ref{weaktype-FL2} with $f=\varphi$, we infer that
\begin{align*}
   & \|u\|_{L^2}^2=|\langle w,f\rangle|\lesssim |k_1|^{-1}\delta_1^{-\f12}\|F\|_{L^2}\|u\|_{L^2}.
\end{align*}

\noindent{\textbf{Case 2.} $\nu|k|^3|k_1|^{-1}\geq|\lambda-1|^{\f12}+|\nu/k_1|^{\f14}$}.

 In this case, $\nu|k|^3|k_1|^{-1} \geq |\nu/k_1|^{\f14}$ and $\nu|k|^4|k_1|^{-1}\geq 1$. By Corollary \ref{resol-lambda}, we get
\begin{align*}
    \|u\|_{L^2}&\leq |k|^{-1}\|w\|_{L^2}\leq  |k|^{-2}\|(\partial_y,|k|)w\|_{L^2}\\
    &\lesssim \nu^{-\f23}|k_1|^{-\f13}|k|^{-2}\big(|\lambda-1|^{\f12}+|\nu/k_1|^{\f14}\big)^{-\f13} \|F\|_{L^2}\\
    &=\nu^{-\f16}|k_1|^{-\f56}\big( |\lambda-1|^{\frac{1}{2}}+|\nu/k_1|^{\frac{1}{4}}\big)^{-\frac{1}{3}}\|F\|_{L^2} \times \big(\nu|k|^4|k_1|^{-1}\big)^{-\f12}\\
    &\lesssim\nu^{-\f16}|k_1|^{-\f56}\big( |\lambda-1|^{\frac{1}{2}}+|\nu/k_1|^{\frac{1}{4}}\big)^{-\frac{1}{3}}\|F\|_{L^2},
\end{align*}
which gives the desired bound.
\end{proof}

%\begin{remark}\label{rk-resol}
%The scaling with $\lambda$ can be determine by
%$$\delta^*(\lambda):=\delta^{\frac{4}{3}}\left(\min\{\max\{1-\lambda,0\}^{\frac{1}{2}}+\delta,1\}\right)^{-\frac{1}{3}},$$
%where $\delta= (\nu |k_1|^{-1})^{\frac{1}{4}}\ll 1$.

%The estimate \eqref{resol-est-L2-w} implies the effect of the critical point $y=0$ and the critical layer $\{y:y\in [-1,1],1-y^2=\lambda\}$. Roughly speaking,
%\end{remark}

\subsection{Resolvent estimate for $F\in {H_k^{-1}}$}
For $k=(k_1,k_3)\in\mathbb{Z}^2,\ k_1\neq 0$, we define the norms $\|\cdot\|_{H^{1}_k}$ and $\|\cdot\|_{H^{-1}_k}$ as
\begin{equation}\label{def:H-1k-00}
  \|f\|_{H^{-1}_k}:=\inf_{\{g\in H_0^1:\|g\|_{H^1_k}=1\}} \big\langle f,g\big\rangle, \ \|g\|_{H^1_k}:=\|(k_1g,\partial_yg,k_3g)\|_{L^2},
\end{equation}
This subsection is to establish the resolvent estimates for the Orr-Sommerfeld equation when $F\in H^{-1}_k$. Consider
\begin{equation}\label{OSH-1}
\left \{
\begin{array}{lll}
-\nu(\partial_y^2-|k|^2)w+ik_1[(1-y^2-\lambda)w+2\varphi]-\epsilon (\nu |k_1|)^{\frac{1}{2}}w=F,\\
 (\partial_y^2-|k|^2)\varphi=w,\quad \varphi(\pm 1)=\varphi''(\pm 1)=0,
 \end{array}
\right.
\end{equation}
where $0<\epsilon\ll 1,\lambda\in\mathbb{R}$.

\begin{proposition}\label{resolvent-H-1}
 Let $w\in H^1(I)$ be a solution of \eqref{OSH-1} with $F\in H^{-1}_k(I)$. Then it holds that
$$\nu\|w'\|_{L^2}+\nu^{\frac{3}{4}}|k_1|^{\frac{1}{4}}\|w\|_{L^2}\leq C\|F\|_{H^{-1}_k}.$$
\end{proposition}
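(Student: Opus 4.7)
The plan is to combine a direct energy estimate with a duality argument grounded in Corollary \ref{resolvent-L2-complex}. The Navier-slip conditions $\varphi(\pm 1)=\varphi''(\pm 1)=0$ in \eqref{OSH-1} force $w(\pm 1)=\varphi''(\pm 1)-|k|^2\varphi(\pm 1)=0$, so $w\in H^1_0$ and $\|w\|_{H^1_k}=\|(\partial_y,|k|)w\|_{L^2}$. Testing \eqref{OSH-1} against $\bar w$ and using that $\int_{-1}^1 ik_1(1-y^2-\lambda)|w|^2\,\mathrm dy$ and $\int_{-1}^1 2ik_1\varphi\bar w\,\mathrm dy=-2ik_1(\|\varphi'\|_{L^2}^2+|k|^2\|\varphi\|_{L^2}^2)$ are purely imaginary, the real part gives
\begin{equation*}
\nu\|(\partial_y,|k|)w\|_{L^2}^2-\epsilon(\nu|k_1|)^{1/2}\|w\|_{L^2}^2=\mathrm{Re}\langle F,w\rangle.
\end{equation*}
Bounding $|\langle F,w\rangle|\le \|F\|_{H^{-1}_k}\|(\partial_y,|k|)w\|_{L^2}$ and applying Young's inequality yields
\begin{equation*}
\nu\|(\partial_y,|k|)w\|_{L^2}^2\le C\nu^{-1}\|F\|_{H^{-1}_k}^2+2\epsilon(\nu|k_1|)^{1/2}\|w\|_{L^2}^2.
\end{equation*}

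To control the remaining $\|w\|_{L^2}$ by $\|F\|_{H^{-1}_k}$, I would argue by duality. For arbitrary $g\in L^2$, let $\psi$ solve the $L^2$-adjoint of \eqref{OSH-1} with stream function $\tilde\varphi=(\partial_y^2-|k|^2)_D^{-1}\psi$ satisfying Navier-slip conditions; since $(\partial_y^2-|k|^2)_D^{-1}$ is self-adjoint, this adjoint operator has exactly the form of \eqref{OSH-1} with $k_1\to -k_1$, and Corollary \ref{resolvent-L2-complex} applies to give $\nu^{1/2}|k_1|^{1/2}\|\psi\|_{L^2}+\nu^{3/4}|k_1|^{1/4}\|\psi'\|_{L^2}\le C\|g\|_{L^2}$. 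In the regime $|k|\le\nu^{-1/4}|k_1|^{1/4}$ this also implies $|k|\|\psi\|_{L^2}\le C\nu^{-3/4}|k_1|^{-1/4}\|g\|_{L^2}$, so $\|\psi\|_{H^1_k}\le C\nu^{-3/4}|k_1|^{-1/4}\|g\|_{L^2}$. Since $w(\pm 1)=\psi(\pm 1)=0$, integration by parts is clean and produces $\langle w,g\rangle=\langle F,\psi\rangle$, whence $|\langle w,g\rangle|\le C\nu^{-3/4}|k_1|^{-1/4}\|F\|_{H^{-1}_k}\|g\|_{L^2}$, and taking the supremum over $g$ yields $\nu^{3/4}|k_1|^{1/4}\|w\|_{L^2}\le C\|F\|_{H^{-1}_k}$.

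In the complementary regime $|k|\ge\nu^{-1/4}|k_1|^{1/4}$, the dissipation $\nu|k|^2$ dominates $\epsilon(\nu|k_1|)^{1/2}$, so for small $\epsilon$ the perturbation term on the right of the energy estimate is directly absorbable, giving $\|(\partial_y,|k|)w\|_{L^2}\le C\nu^{-1}\|F\|_{H^{-1}_k}$ and then $\|w\|_{L^2}\le |k|^{-1}\|(\partial_y,|k|)w\|_{L^2}\le C\nu^{-3/4}|k_1|^{-1/4}\|F\|_{H^{-1}_k}$. Combining both regimes yields the uniform $L^2$ bound on $w$, and reinserting it into the energy identity absorbs $\epsilon(\nu|k_1|)^{1/2}\|w\|_{L^2}^2\le C\epsilon\nu^{-1}\|F\|_{H^{-1}_k}^2$, delivering $\nu\|w'\|_{L^2}\le C\|F\|_{H^{-1}_k}$. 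The main obstacle will be the duality step: one must verify that the $L^2$-adjoint problem inherits Navier-slip boundary conditions so that Corollary \ref{resolvent-L2-complex} applies with constants uniform in $(\nu,k,\lambda)$, and that the integration-by-parts identity $\langle w,g\rangle=\langle F,\psi\rangle$ produces no boundary contribution. Once this adjoint setup is in place, the remainder is Cauchy--Schwarz and Young's inequality.
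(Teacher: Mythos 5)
Your duality argument is correct and takes a genuinely different — and considerably shorter — route than the paper. The paper proves Proposition \ref{resolvent-H-1} directly, by reproducing the case analysis of Proposition \ref{resolvent-L2} in the $H^{-1}_k$ setting: the case $\lambda\in(0,1)$ is handled via the cut-off estimates in Lemmas \ref{H-1-wL2outside}--\ref{streamL-infty-H-1}, which control $\|w\|_{L^2}$ inside and outside the colliding interval $(y_1,y_2)$ in terms of weighted combinations of $\|F\|_{H^{-1}_k}$, $\|w'\|_{L^2}$, $\|w\|_{L^\infty}$ and $\|\varphi\|_{L^\infty(B(y_i,\delta))}$. Your approach transfers all of this difficulty to the $L^2$ resolvent bound of Corollary \ref{resolvent-L2-complex} by observing that the formal $L^2$-adjoint of \eqref{OSH-1} is again \eqref{OSH-1} with $k_1\mapsto-k_1$ (the Dirichlet inverse $(\partial_y^2-|k|^2)^{-1}$ is self-adjoint, the damping $-\epsilon(\nu|k_1|)^{1/2}$ is self-adjoint, and the BC $w(\pm1)=\psi(\pm1)=0$ give a clean integration by parts), and that the estimates in Proposition \ref{resolvent-L2} depend on $k_1$ only through $|k_1|$. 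The split at $|k|\lessgtr\nu^{-1/4}|k_1|^{1/4}$ is exactly right: below threshold duality delivers $\|\psi\|_{H^1_k}\lesssim\nu^{-3/4}|k_1|^{-1/4}\|g\|_{L^2}$, above threshold the Poincar\'e bound $\|w\|_{L^2}\le|k|^{-1}\|(\partial_y,|k|)w\|_{L^2}$ together with absorption of the $\epsilon$-term suffices.

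Two small points are worth flagging. First, the duality step implicitly requires solvability of $\mathbf{OS}_\epsilon^*[\psi]=g$ for every $g\in L^2$ under Navier-slip BC; this follows from the a priori estimate (uniqueness) and the Fredholm alternative applied to the compact perturbation of $-\nu(\partial_y^2-|k|^2)$, and should be stated. Second, the set $\Omega_\epsilon=\{-i\epsilon\nu^{1/2}|k_1|^{-1/2}+\lambda_r\}$ in Corollary \ref{resolvent-L2-complex} is written with a fixed sign, whereas passing to $-k_1$ flips the effective imaginary shift; the proof of Corollary \ref{resolvent-L2-complex} (adding $\pm\epsilon(\nu|k_1|)^{1/2}w$ to $F$) is symmetric in this sign, so the citation is correct in substance but worth a remark. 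Beyond this, one difference in scope: the paper's longer proof produces the intermediate bound \eqref{est:w-FH-1-00} with explicit dependence on $y_2-y_1$, which is what powers Corollary \ref{reslvent-H-1-sgap}; to recover that corollary by duality you would have to re-run your argument using Corollary \ref{resol-lambda} in place of Corollary \ref{resolvent-L2-complex}, with the case threshold $\nu|k|^3|k_1|^{-1}\lessgtr|1-\lambda|^{1/2}+|\nu/k_1|^{1/4}$ replacing $\nu|k|^4\lessgtr|k_1|$.
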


To prove Proposition \ref{resolvent-H-1}, let us present some key lemmas, which will be applied in the case of $\lambda \in (0,1)$.

\begin{lemma}\label{H-1-wL2outside}
Assume that $-1\leq y_1\leq 0\leq y_2\leq 1$ with $1-y^2_i=\lambda\in (0,1), y_1=-y_2$. Then it holds that for $\delta\in (0,1]$,
\begin{equation}\label{H-1-wL2outside-estimate}
\|w\|_{L^2((-1,1)\setminus (y_1,y_2))}^2\lesssim\mathscr{E}_3,
\end{equation}
where
\begin{equation}\nonumber%\label{outsideE5}
\begin{aligned}
\mathscr{E}_3=&
\delta\|w\|_{L^\infty}^2+\frac{\|F\|_{H_k^{-1}}\|(\partial_y,|k|)w\|_{L^2}}{|k_1|(y_2-y_1+\delta)\delta}
+\frac{\|F\|_{H^{-1}_k}\|w\|_{L^\infty}}{|k_1|(y_2-y_1+\delta)\delta^{\frac{3}{2}}}
\\
&+\frac{\nu\|w'\|_{L^2}\|w\|_{L^\infty}}{|k_1|(y_2-y_1+\delta)\delta^{\frac{3}{2}}}+\frac{\|\varphi\|_{L^\infty(B(y_1,\delta)\cup B(y_2,\delta))}^2}{(y_2-y_1+\delta)^2\delta}.
\end{aligned}
\end{equation}
\end{lemma}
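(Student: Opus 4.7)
The plan is to control $\|w\|_{L^2((-1,1)\setminus(y_1,y_2))}^2$ by a weighted energy estimate, exploiting that $1-y^2-\lambda=-(y-y_1)(y-y_2)$ is bounded away from zero outside a $\delta$-neighborhood of the critical points $y_1,y_2$. First I would split the target region into the two $\delta$-collars $\mathcal{B}=(y_1-\delta,y_1)\cup(y_2,y_2+\delta)$ adjacent to the critical interval and the remainder $\mathcal{A}=(-1,y_1-\delta]\cup[y_2+\delta,1)$. On $\mathcal{B}$ the trivial bound $\|w\|_{L^2(\mathcal{B})}^2\le|\mathcal{B}|\|w\|_{L^\infty}^2\lesssim\delta\|w\|_{L^\infty}^2$ accounts for the first term of $\mathscr{E}_3$, so it remains to bound $\|w\|_{L^2(\mathcal{A})}^2$ by the other four.

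For this, I would introduce a smooth cutoff $\rho:[-1,1]\to[0,1]$ with $\rho\equiv 1$ on $\mathcal{A}$, $\mathrm{supp}\,\rho\cap(y_1-\delta/2,y_2+\delta/2)=\emptyset$, $|\rho'|\lesssim\delta^{-1}$, $|\rho''|\lesssim\delta^{-2}$, and $\mathrm{supp}\,\rho'\subset B(y_1,\delta)\cup B(y_2,\delta)$. On $\mathrm{supp}\,\rho$, one has the lower bound $|1-y^2-\lambda|=|y-y_1||y-y_2|\gtrsim \delta(y_2-y_1+\delta)$, since one factor is $\gtrsim\delta$ and the other $\gtrsim y_2-y_1+\delta$. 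Testing \eqref{OSH-1} against $\rho\bar w$, using $w(\pm1)=0$, and taking imaginary parts yields
\begin{align*}
k_1\!\int\rho(1-y^2-\lambda)|w|^2\,\mathrm{d}y + 2k_1\,\mathrm{Re}\!\int\rho\varphi\bar w\,\mathrm{d}y=\mathrm{Im}\langle F,\rho w\rangle + \nu\,\mathrm{Im}\!\int\rho' w'\bar w\,\mathrm{d}y.
\end{align*}
Expanding the middle term by $\bar w=(\partial_y^2-|k|^2)\bar\varphi$ and integrating by parts, the non-positive contribution $-2k_1\!\int\rho(|\varphi'|^2+|k|^2|\varphi|^2)$ can be moved to the LHS and discarded, while the remaining commutator becomes $k_1\!\int\rho''|\varphi|^2$ after a second integration by parts. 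Combining with the lower bound on $|1-y^2-\lambda|$ produces
\begin{align*}
|k_1|\,\delta(y_2-y_1+\delta)\|w\|_{L^2(\mathcal{A})}^2 \lesssim |\langle F,\rho w\rangle| + |k_1|\!\left|\!\int\rho''|\varphi|^2\right| + \nu\!\left|\!\int\rho' w'\bar w\right|.
\end{align*}

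The forcing is controlled by $H^{-1}_k$--$H^1_k$ duality with $\|\rho w\|_{H^1_k}\lesssim\|(\partial_y,|k|)w\|_{L^2}+\delta^{-1/2}\|w\|_{L^\infty}$, and the viscous commutator by $\nu\delta^{-1/2}\|w'\|_{L^2}\|w\|_{L^\infty}$. After dividing through by $|k_1|\delta(y_2-y_1+\delta)$, these yield the second, third, and fourth terms of $\mathscr{E}_3$.

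The main obstacle will be obtaining the sharp constant for the $\varphi$-contribution. The naive bound $|\!\int\rho''|\varphi|^2|\lesssim\delta^{-1}\|\varphi\|_{L^\infty(B(y_1,\delta)\cup B(y_2,\delta))}^2$ only gives denominator $\delta^2(y_2-y_1+\delta)$, whereas the statement requires $(y_2-y_1+\delta)^2\delta$. The improvement should come from refining the cutoff (in particular, tuning its transition width relative to $\max(\delta,y_2-y_1)$) and exploiting the elliptic decomposition $\varphi=\varphi_1+\varphi_2$ from \eqref{streamdeco1}--\eqref{streamdeco2}, whose harmonic part $\varphi_2$ has explicit form and hence controllable oscillation between $y_1$ and $y_2$. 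This sharp dependence is what will eventually feed the $H^{-1}_k$ resolvent estimate in Proposition~\ref{resolvent-H-1}.
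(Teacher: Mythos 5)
Your proposal reaches the honest conclusion that testing against $\rho\bar w$ alone produces the $\varphi$-term with denominator $\delta^2(y_2-y_1+\delta)$ instead of the required $(y_2-y_1+\delta)^2\delta$, and this gap is genuine. When $\delta\ll y_2-y_1$ (which is exactly the regime the lemma is used in, with $\delta\sim|\nu/k_1|^{1/3}(y_2-y_1)^{-1/3}$) the two expressions differ by the large factor $(y_2-y_1)/\delta$, so the estimate you derive is substantively weaker, not merely suboptimal by a constant. However, the two remedies you propose cannot close it. Tuning the cutoff's transition width cannot help: widening the transition layer beyond size $\delta$ means $\rho$ is no longer $\equiv 1$ on the region where you want to bound $w$, and you cannot cover the gap with $\delta\|w\|_{L^\infty}^2$; narrowing it makes $\|\rho''\|_{L^1}$ worse. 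And the decomposition $\varphi=\varphi_1+\varphi_2$ of \eqref{streamdeco1}--\eqref{streamdeco2} is the device used for the $F\in L^2$ case (Lemmas~\ref{lem:outside}--\ref{lem:inside}); it does not enter the $H^{-1}_k$ estimate at all.

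The missing idea is a \emph{second} energy estimate with the weighted test function $\rho_1^2 w/(1-y^2-\lambda)$. This cancels the singular prefactor and produces $\int\rho_1^2|w|^2$ directly on the left (so no division by the lower bound $\delta(y_2-y_1+\delta)$ is needed for the $w$-term), while the nonlocal term picks up the weight and, after taking the imaginary part, contributes $\int\frac{|\rho_1\varphi|^2}{(1-y^2-\lambda)^2}\,\mathrm{d}y$. One then factors $(1-y^2-\lambda)^2=(y-y_1)^2(y-y_2)^2$: on the outer region one factor is $\gtrsim (y_2-y_1+\delta)^2$, and the remaining $\int\frac{|\rho_1\varphi|^2}{(y-y_i)^2}$ is bounded via Hardy's inequality by $\int\rho_1^2|\varphi'|^2 + \delta^{-1}\|\varphi\|_{L^\infty(B(y_1,\delta)\cup B(y_2,\delta))}^2$. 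Crucially, $\int\rho_1^2|\varphi'|^2$ is not an error term: it is exactly what the \emph{first}, unweighted energy estimate controls, which is why the paper records \eqref{outside1} keeping $2\int\rho_1^2(|\varphi'|^2+|k|^2|\varphi|^2)$ on the coercive side rather than discarding it, as you suggest. Feeding that bound into the Hardy estimate yields the factor $\frac{1}{(y_2-y_1+\delta)^2\delta}$ on $\|\varphi\|_{L^\infty}^2$. So the structure is a two-pass argument -- unweighted test to get $\varphi'$-coercivity, weighted test plus Hardy to get the sharp $w$-bound -- and without the second pass the lemma does not follow.
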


\begin{proof}
We follow the proof of Lemma 3.4 in \cite{DL}. For any $\delta\in (0,1]$, we introduce a smooth cut-off function $\rho_1(y)\in [0,1]$ as follows
\begin{equation*}
\rho_1(y)=\left \{
\begin{array}{lll}
0,&y\in (y_1-\frac{\delta}{2},y_2+\frac{\delta}{2}),\\
1,&y\in (-1,1)\setminus (y_1-\delta,y_2+\delta),\\
\mathrm{smooth},&\mathrm{others}.
\end{array}
\right.
\end{equation*}

Taking the inner product between \eqref{OSH-1} and $\rho_1^2w$ and taking the imaginary part to the resulting equation, we get
\begin{equation}\label{outside1}
\begin{aligned}
&\int_{-1}^1\rho_1^2\bigg((\lambda-1+y^2)|w|^2+2(|\varphi'|^2+|k|^2|\varphi|^2)\bigg)\mathrm{d}y\\
&\lesssim \frac{\|F\|_{H^{-1}_k}\|(\partial_y,|k|)w\|_{L^2}}{|k_1|}
+\frac{\|F\|_{H^{-1}_k}\|w\|_{L^\infty}}{|k_1|\delta^{\frac{1}{2}}}
+\frac{\nu\|w'\|_{L^2}\|w\|_{L^\infty}}{|k_1|\delta^{\frac{1}{2}}}\\
&\quad+\delta^{-1}\|\varphi\|_{L^\infty(B(y_1,\delta)\cup B(y_2,\delta))}^2,
\end{aligned}
\end{equation}
where we  have used the fact
\begin{equation*}
\begin{aligned}
&\left|2\mathrm{Re}\int_{-1}^1\varphi\rho_1\rho'_1\overline{\varphi}'\mathrm{d}y\right|= \left|\int_{-1}^1\rho_1\rho_1'(|\varphi|^2)'\mathrm{d}y\right|\\
&= \left|\int_{-1}^1(\rho_1\rho_1')'|\varphi|^2\mathrm{d}y\right|\lesssim \delta^{-1}\|\varphi\|_{L^\infty(B(y_1,\delta)\cup B(y_2,\delta))}^2.
\end{aligned}
\end{equation*}

Taking the inner product between \eqref{OSH-1} and $\frac{\rho_1^2w}{1-y^2-\lambda}$ and taking the imaginary part, we can obtain(see Lemma 3.4 in \cite{DL} for the detail)
\begin{equation}\label{outside2}
\begin{aligned}
\int_{-1}^1\rho_1^2|w|^2\mathrm{d}y
\lesssim&\frac{\|F\|_{H^{-1}_k}\|(\partial_y,|k|)w\|_{L^2}}{|k_1|(y_2-y_1+\delta)\delta}
+\frac{\|F\|_{H_k^{-1}}\|w\|_{L^\infty}}{|k_1|(y_2-y_1+\delta)\delta^{\frac{3}{2}}}\\
&+\frac{\nu\|w'\|_{L^2}\|w\|_{L^\infty}}{|k_1|(y_2-y_1+\delta)\delta^{\frac{3}{2}}}+\int_{-1}^1\frac{|\rho_1\varphi|^2}{(1-y^2-\lambda)^2}\mathrm{d}y.
\end{aligned}
\end{equation}
Therefore, we deduce that
\begin{equation}\label{outside3}
\begin{aligned}
\|w\|_{L^2((-1,1)\setminus (y_1,y_2))}^2
\lesssim&\delta\|w\|_{L^\infty}^2+\frac{\|F\|_{H_k^{-1}}\|(\partial_y,|k|)w\|_{L^2}}{|k_1|(y_2-y_1+\delta)\delta}
\\&+\frac{\|F\|_{H_k^{-1}}\|w\|_{L^\infty}}{|k_1|(y_2-y_1+\delta)\delta^{\frac{3}{2}}}\\
&+\frac{\nu\|w'\|_{L^2}\|w\|_{L^\infty}}{|k_1|(y_2-y_1+\delta)\delta^{\frac{3}{2}}}+\int_{-1}^1\frac{|\rho_1\varphi|^2}{(1-y^2-\lambda)^2}\mathrm{d}y.
\end{aligned}
\end{equation}

It remains to bound the last term on the right hand side of \eqref{outside3}. Due to $|1-y^2-\lambda|=|(y-y_1)(y-y_2)|$, we get by Hardy's inequality that
\begin{equation*}
\begin{aligned}
&\int_{(-1,1)\setminus (y_1-\frac{\delta}{2},y_2+\frac{\delta}{2})}\frac{|\rho_1\varphi|^2}{(1-y^2-\lambda)^2}\mathrm{d}y\\
&\lesssim \frac{1}{(y_2-y_1+\delta)^2}\left(\int_{-1}^{y_1-\frac{\delta}{2}}\frac{|\rho_1\varphi|^2}{(y-y_1)^2}\mathrm{d}y+\int_{y_2+\frac{\delta}{2}}^1
\frac{|\rho_1\varphi|^2}{(y-y_2)^2}\mathrm{d}y\right)\\
&\lesssim \frac{1}{(y_2-y_1+\delta)^2}\left(\int_{-1}^1\rho_1^2|\varphi'|^2\mathrm{d}y+\delta^{-1}\|\varphi\|_{L^\infty(B(y_1,\delta)\cup B(y_2,\delta))}^2\right)\\
&\lesssim \frac{1}{(y_2-y_1+\delta)^2}\bigg(\frac{\|F\|_{H_k^{-1}}\|(\partial_y,|k|)w\|_{L^2}}{|k_1|}+
\frac{\|F\|_{H_k^{-1}}\|w\|_{L^\infty}}{|k_1|\delta^{\frac{1}{2}}}\\
&\qquad+\frac{\nu\|w'\|_{L^2}\|w\|_{L^\infty}}{|k_1|\delta^{\frac{1}{2}}}
+\delta^{-1}\|\varphi\|_{L^\infty(B(y_1,\delta)\cup B(y_2,\delta))}^2\bigg).
\end{aligned}
\end{equation*}

Then the proof is completed.
\end{proof}

Now we give the estimate on the interval $(y_1,y_2)$.

\begin{lemma}\label{H-1-wL2inside}
Assume that $-1\leq y_1\leq 0\leq y_2\leq 1$ with $1-y^2_i=\lambda\in (0,1), y_1=-y_2$. Then it holds that for any $\delta\in (0,\frac{y_2-y_1}{4}],\ \delta\in (0,2|k|^{-1}]$,
\begin{equation}\label{H-1-wL2inside-estimate}
\|w\|_{L^2(y_1,y_2)}^2\lesssim \mathscr{E}_4,
\end{equation}
where
\begin{equation}\nonumber%\label{insideE6}
\begin{aligned}
\mathscr{E}_4=&\frac{\|F\|_{H_k^{-1}}\|(\partial_y,|k|)w\|_{L^2}}{|k_1|(y_2-y_1)\delta}
+\frac{\|F\|_{H_k^{-1}}\|w\|_{L^\infty}}{|k_1|(y_2-y_1)\delta^{\frac{3}{2}}}+\frac{\nu\|w'\|_{L^2}\|w\|_{L^\infty}}{|k_1|(y_2-y_1)\delta^{\frac{3}{2}}}
+\delta\|w\|_{L^\infty}^2\\&+\frac{\|F\|_{H^{-1}_k}^2}{|k_1|^2(y_2-y_1)^3\delta^3}
+\frac{\|\varphi\|_{L^\infty(B(y_1,\delta)\cup B(y_2,\delta))}^2}{(y_2-y_1)^2\delta}+
\epsilon^2\frac{\nu\|w\|_{L^2}^2}{|k_1|(y_2-y_1)^3\delta}\\
&+\frac{\nu^2\|w'\|_{L^2}^2}{|k_1|^2(y_2-y_1)^3\delta^3}+\frac{\nu^2\|w\|_{L^\infty}^2}{|k_1|^2(y_2-y_1)^3\delta^4}.
\end{aligned}
\end{equation}
\end{lemma}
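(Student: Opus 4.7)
The plan is to mirror the proof of Lemma~\ref{H-1-wL2outside}, but now with a cutoff $\rho_2$ supported \emph{inside} the colliding interval $(y_1,y_2)$, and to exploit the Poiseuille-specific algebraic identity displayed in the introduction to handle the singularity produced by $1-y^2-\lambda$ vanishing at $y_1,y_2$. Choose $\rho_2:[-1,1]\to[0,1]$ smooth with $\rho_2\equiv 1$ on $[y_1+\delta,y_2-\delta]$, $\mathrm{supp}\,\rho_2\subset[y_1+\delta/2,y_2-\delta/2]$, and $|\rho_2^{(j)}|\lesssim\delta^{-j}$. Since $\delta\leq(y_2-y_1)/4$, one has $1-y^2-\lambda\gtrsim\delta(y_2-y_1)$ on $\mathrm{supp}\,\rho_2$, so the weight $\rho_2^2/(1-y^2-\lambda)$ is legitimate. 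The trivial decomposition $\|w\|_{L^2(y_1,y_2)}^2\leq \|\rho_2 w\|_{L^2}^2+2\delta\|w\|_{L^\infty}^2$ reduces the task to controlling $\|\rho_2 w\|_{L^2}^2$.

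The main step is to pair \eqref{OSH-1} with $\rho_2^2\bar w/(1-y^2-\lambda)$ and take the imaginary part: the Rayleigh term produces $|k_1|\|\rho_2 w\|_{L^2}^2$ (the clean cancellation $(1-y^2-\lambda)\cdot(1-y^2-\lambda)^{-1}=1$ is available because the weight is built from the Poiseuille profile). After integration by parts, the viscous contribution yields a sign-definite dissipation (discarded) together with commutator terms supported in $\mathrm{supp}\,\rho_2'$, whose $\delta^{-k}(y_2-y_1)^{-m}$ weights explain the $\nu$- and $\nu^2$-summands in $\mathscr{E}_4$; the $\epsilon$-stabilization term contributes $\epsilon^2\nu\|w\|_{L^2}^2/[|k_1|(y_2-y_1)^3\delta]$. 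The source is treated by duality, $|\langle F,\rho_2^2 w/(1-y^2-\lambda)\rangle|\leq\|F\|_{H^{-1}_k}\|\rho_2^2 w/(1-y^2-\lambda)\|_{H^1_k}$, and expanding the $H^1_k$-norm by the product rule (a factor $(y_2-y_1)^{-1}$ from the base, $\delta^{-1}$ from $\rho_2'$, and an extra $\delta^{-1}(y_2-y_1)^{-1}$ from differentiating $(1-y^2-\lambda)^{-1}$ on the thin strip) accounts for the $\|F\|_{H^{-1}_k}$-summands in $\mathscr{E}_4$.

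The main obstacle is the coupling term $2k_1\,\mathrm{Im}\langle\varphi,\rho_2^2\bar w/(1-y^2-\lambda)\rangle$, which carries no a priori sign. Here the Poiseuille-specific identity displayed in the introduction is decisive: computing $\|(\rho_2\varphi)'+\tfrac{2y}{1-y^2-\lambda}\rho_2\varphi\|_{L^2}^2$ and using the key algebraic relation $(2y/(1-y^2-\lambda))'+(2y/(1-y^2-\lambda))^2=2(1-\lambda+y^2)/(1-y^2-\lambda)^2$ (equivalently, the fact that $\Phi=1-y^2-\lambda$ explicitly solves $[\partial_y^2-2/(1-y^2-\lambda)]\Phi=0$) produces the completed-square identity quoted in the introduction. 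Its LHS contains the positive forms $\int\rho_2^2|(1-y^2-\lambda)^{1/2}w+2\varphi/(1-y^2-\lambda)^{1/2}|^2\,dy$ and $2|k|^2\|\rho_2\varphi\|_{L^2}^2$; its RHS is controlled by $\mathrm{Re}\langle(1-y^2-\lambda)w+2\varphi,\rho_2^2 w\rangle$ (handled via the real part of the test against $\rho_2^2\bar w$) plus the cutoff error $2\int\partial_y(\rho_2\rho_2')|\varphi|^2\,dy$ localized in $B(y_1,\delta)\cup B(y_2,\delta)$. After dividing by $(y_2-y_1)^2$ to account for the weight $1/(1-y^2-\lambda)$, this error matches the last summand $\|\varphi\|_{L^\infty(B(y_1,\delta)\cup B(y_2,\delta))}^2/[(y_2-y_1)^2\delta]$ of $\mathscr{E}_4$. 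Finally, Young's inequality absorbs small multiples of $\|\rho_2 w\|_{L^2}^2$ from the RHS into the LHS (the constraint $\delta\leq 2/|k|$ keeping the $|k|^2\|\rho_2\varphi\|_{L^2}^2$ piece consistent), and summing the bounds yields \eqref{H-1-wL2inside-estimate}.
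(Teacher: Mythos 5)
Your proposal correctly identifies the overall architecture of the argument: the cutoff $\rho_2$ localizing to $[y_1+\delta/2,y_2-\delta/2]$, the reduction $\|w\|_{L^2(y_1,y_2)}^2\lesssim \|\rho_2 w\|_{L^2}^2+\delta\|w\|_{L^\infty}^2$, the pairing of \eqref{OSH-1} against $\rho_2^2\bar w/(1-y^2-\lambda)$ which produces $|k_1|\|\rho_2 w\|_{L^2}^2$, and the Poiseuille-specific completed-square identity \eqref{eq:rho22-w-00} obtained from the pairing against $\rho_2^2\bar w$. These are all present in the paper. However, the proposal stops short of the step that constitutes the heart of the proof and that generates most of the terms in $\mathscr{E}_4$.

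After Cauchy--Schwarz and Young's inequality, the nonlocal term in the pairing with $\rho_2^2\bar w/(1-y^2-\lambda)$ leaves behind the weighted quantity $\int_{-1}^1 |\rho_2\varphi|^2/(1-y^2-\lambda)^2\,\mathrm{d}y$, with the singular weight squared. The completed-square identity controls $\|(\rho_2\varphi)'+\tfrac{2y}{1-y^2-\lambda}\rho_2\varphi\|_{L^2}^2$ and $|k|^2\|\rho_2\varphi\|_{L^2}^2$, but \emph{not} this quantity directly. A Hardy-type inequality (split at the midpoint $y=0$, where the singularity from $y_1$ and $y_2$ is absent) converts the former into the latter only up to a boundary term
\[
\frac{|\rho_2(0)\varphi(0)|^2}{(y_2-y_1)^3},
\]
and this boundary term carries no a priori smallness. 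Bounding it requires a genuinely separate argument: a Taylor/integration-by-parts identity along $(y_1+\theta,y_2-\theta)$ reduces it to $\frac{1}{y_2-y_1}\big|\int_{y_1+\theta}^{y_2-\theta}\rho_2\varphi/(1-y^2-\lambda)\,\mathrm{d}y\big|^2$, and then one must split into the regimes $|k|^2(y_2-y_1)^2\ge 1$ (where $\theta=(y_2-y_1)/4$ and the completed-square bound on $|k|^2\|\rho_2\varphi\|_{L^2}^2$ suffices) and $|k|^2(y_2-y_1)^2\le 1$ (where $\theta=\delta/2$, one re-inserts the equation $w=[-\tfrac{1}{ik_1}(F+\nu(\partial_y^2-|k|^2)w+\epsilon(\nu|k_1|)^{1/2}w)-2\varphi]/(1-y^2-\lambda)$ and tracks seven separate contributions $I_1,\dots,I_7$, using in addition the outside estimate \eqref{outside1} and the boundedness of $\delta\le 2|k|^{-1}$). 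None of this appears in your proposal, and without it the estimate does not close.

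A concrete symptom of the gap: you attribute the term $\epsilon^2\nu\|w\|_{L^2}^2/[|k_1|(y_2-y_1)^3\delta]$ to the imaginary part of the $\epsilon$-stabilization when testing against $\rho_2^2\bar w/(1-y^2-\lambda)$. But on $\mathrm{supp}\,\rho_2\subset(y_1,y_2)$ the weight $1/(1-y^2-\lambda)$ is positive, so $\langle w,\rho_2^2\bar w/(1-y^2-\lambda)\rangle$ is real and the $\epsilon$-term drops out of the imaginary part of that pairing entirely. In the paper, the $\epsilon^2$-term is generated only inside the boundary-value analysis (it is $I_7$), confirming that the difficulty your write-up skips over is precisely where the hard terms in $\mathscr{E}_4$ come from. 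You should supply the Hardy inequality with midpoint boundary term, and then the full control of $|\varphi(0)|$, before the lemma can be considered proved.
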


\begin{proof}
We follow the proof of Lemma 3.5 in \cite{DL}. For any $\delta\in (0,\frac{y_2-y_1}{4}]$, $\delta\in(0,2|k|^{-1}]$, we introduce a smooth cut-off function $\rho_2(y)\in [0,1]$ as follows
\begin{equation*}
\rho_2(y)=\left \{
\begin{array}{lll}
1,&y\in [y_1+\delta,y_2-\delta],\\
0,&y\in (-1,1)\setminus (y_1+\frac{\delta}{2},y_2-\frac{\delta}{2}),\\
\mathrm{smooth},&\mathrm{others}.
\end{array}
\right.
\end{equation*}

By integration by parts, it is easy to see that
\begin{align*}
   & \int_{-1}^1\left|(\rho_2\varphi)'+\frac{2y}{1-y^2-\lambda}\rho_2\varphi\right|^2 \mathrm{d}y= \int_{-1}^1\left(|(\rho_2\varphi)'|^2-\frac{2\rho_2^2|\varphi|^2}{1-y^2-\lambda}\right) \mathrm{d}y,
\end{align*}
which gives
\begin{equation}\label{eq:rho22-w-00}
\begin{aligned}
\int_{-1}^1&\rho_2^2\left|(1-y^2-\lambda)^{\frac{1}{2}}w+\frac{2 \varphi}{(1-y^2-\lambda)^{\frac{1}{2}}}\right|^2\mathrm{d}y\\
&+2\int_{-1}^1\left|(\rho_2\varphi)'+\frac{2y}{1-y^2-\lambda}\rho_2\varphi\right|^2 \mathrm{d}y+2|k|^2\int_{-1}^1\rho^2_2|\varphi|^2\mathrm{d}y\\
=&2\int_{-1}^1\partial_y(\rho_2\rho_2')|\varphi|^2\mathrm{d}y+\mathrm{Re}\langle (1-y^2-\lambda)w+2\varphi,\rho^2_2w\rangle.
\end{aligned}
\end{equation}

Taking the inner product between \eqref{OSH-1} and $\rho_2^2w$ and taking the imaginary part, we obtain
\begin{equation}\nonumber
\begin{aligned}
&k_1\bigg(2\int_{-1}^1\partial_y(\rho_2\rho_2')|\varphi|^2\mathrm{d}y +\mathrm{Re}\langle (1-y^2-\lambda)w+2\varphi,\rho^2_2w\rangle\bigg)\\
&=\mathrm{Im}\left\{\langle F,\rho^2_2w\rangle+2\nu\int_{-1}^1\rho_2\rho_2'\partial_yw\overline{w}\mathrm{d}y\right\} +2k_1\int_{-1}^1\partial_y(\rho_2\rho_2')|\varphi|^2\mathrm{d}y,
\end{aligned}
\end{equation}
which along with \eqref{eq:rho22-w-00} gives
\begin{equation}\label{inside1}
\begin{aligned}
&\frac{1}{(y_2-y_1)^2}\bigg(\int_{-1}^1\rho_2^2\left|(1-y^2-\lambda)^{\frac{1}{2}}w+\frac{2 \varphi}{(1-y^2-\lambda)^{\frac{1}{2}}}\right|^2\mathrm{d}y\\
&\quad +2\int_{-1}^1\left|(\rho_2\varphi)'+\frac{2y}{1-y^2-\lambda}\rho_2\varphi\right|^2 \mathrm{d}y+2|k|^2\int_{-1}^1\rho^2_2|\varphi|^2\mathrm{d}y\bigg)\\
\lesssim&\frac{1}{(y_2-y_1)^2}\bigg(\frac{\|F\|_{H_k^{-1}}\|(\partial_y,|k|)w\|_{L^2}}{|k_1|}+
\frac{\|F\|_{H_k^{-1}}\|w\|_{L^\infty}}{|k_1|\delta^{\frac{1}{2}}}\\
& \quad +\frac{\nu\|w'\|_{L^2}\|w\|_{L^\infty}}{|k_1|\delta^{\frac{1}{2}}}+\delta^{-1}\|\varphi\|_{L^\infty(B(y_1,\delta)\cup B(y_2,\delta))}^2\bigg)
\lesssim \mathscr{E}_4.
\end{aligned}
\end{equation}

Taking the inner product between \eqref{OSH-1} and $\frac{\rho_2^2w}{1-y^2-\lambda}$ and taking the imaginary part, and using the fact that $y_2-y_1\geq 4\delta$,
we can obtain (see the proof of (3.20) in \cite{DL})
\begin{equation}\label{inside2}
\begin{aligned}
\int_{-1}^1\rho_2^2|w|^2\mathrm{d}y
\lesssim&\frac{\|F\|_{H_k^{-1}}\|(\partial_y,|k|)w\|_{L^2}}{|k_1|(y_2-y_1)\delta}+ \frac{\|F\|_{H_k^{-1}}\|w\|_{L^\infty}}{|k_1|(y_2-y_1)\delta^{\frac{3}{2}}}\\
&+ \frac{\nu\|w'\|_{L^2}\|w\|_{L^\infty}}{|k_1|(y_2-y_1)\delta^{\frac{3}{2}}}+\int_{-1}^1\frac{|\rho_2\varphi|^2}{(1-y^2-\lambda)^2}\mathrm{d}y.
\end{aligned}
\end{equation}
Thanks to
$$\|w\|_{L^2(y_1,y_2)}^2\lesssim \|\rho_2 w\|_{L^2(-1,1)}^2+\delta\|w\|_{L^\infty}^2,$$
 it remains to estimate the last term $\int_{-1}^1\frac{|\rho_2\varphi|^2}{(1-y^2-\lambda)^2}\mathrm{d}y$. Following the proof of Lemma 3.2 in \cite{DL}, we have
\begin{equation}\nonumber
\begin{aligned}
\int_{-1}^1\frac{|\rho_2\varphi|^2}{(1-y^2-\lambda)^2}\mathrm{d}y\lesssim& \frac{1}{(y_2-y_1)^2}\int_{-1}^1\left|(\rho_2\varphi)'+\frac{2y\rho_2\varphi}{1-y^2-\lambda}\right|^2\mathrm{d}y+\frac{|\rho_2(0)\varphi(0)|^2}{(y_2-y_1)^3}\\
\lesssim& \mathscr{E}_4+\frac{|\rho_2(0)\varphi(0)|^2}{(y_2-y_1)^3}.
\end{aligned}
\end{equation}
Now we bound $\frac{|\rho_2(0)\varphi(0)|^2}{(y_2-y_1)^3}$. To this end, note that
\begin{equation*}
\begin{aligned}
&\int_0^{y_2-\theta}\left(\frac{\rho_2\varphi}{1-y^2-\lambda}\right)'
\left(y_2-\theta-y\right)\mathrm{d}y-\int_{y_1+\theta}^{0}\left(\frac{\rho_2\varphi}{1-y^2-\lambda}\right)'\left(y-y_1-\theta\right)\mathrm{d}y\\
&=\int_{y_1+\theta}^{y_2-\theta}\frac{\rho_2\varphi}{1-y^2-\lambda}\mathrm{d}y- \frac{(\rho_2\varphi)(0)}{1-\lambda}(y_2-y_1-2\theta),
\end{aligned}
\end{equation*}
here $\frac{\delta}{2}\leq \theta\leq \frac{y_2-y_1}{4}$.
Following the proof of (3.24) and (3.25) in \cite{DL}, we can obtain
\begin{equation*}
\begin{aligned}
\frac{|\rho_2(0)\varphi(0)|^2}{(y_2-y_1)^3}
\lesssim&\frac{1}{y_2-y_1}\left|\int_{y_1+\theta}^{y_2-\theta}\frac{\rho_2\varphi}{1-y^2-\lambda}\mathrm{d}y\right|^2\\
&+\frac{1}{(y_2-y_1)^2}\int_{y_1+\theta}^{y_2-\theta}\left|
\left(\frac{\rho_2\varphi}{1-y^2-\lambda}\right)'\right|^2(1-y^2-\lambda)^2\mathrm{d}y\\
\lesssim&\frac{1}{y_2-y_1}\left|\int_{y_1+\theta}^{y_2-\theta} \frac{\rho_2\varphi}{1-y^2-\lambda}\mathrm{d}y\right|^2+\mathscr{E}_4.
\end{aligned}
\end{equation*}

To close the estimate, we consider the following two cases.\smallskip

{\bf Case 1.} $|k|^2(y_2-y_1)^2\geq 1$.

 Take $\theta=\frac{y_2-y_1}{4}$ and then
\begin{equation}\nonumber
\begin{aligned}
&\frac{1}{y_2-y_1}\bigg|\int_{y_1+\theta}^{y_2-\theta}\frac{\rho_2\varphi}{1-y^2-\lambda}
\mathrm{d}y\bigg|^2\\
&\lesssim \frac{\|\rho_2\varphi\|_{L^2(y_1+\theta,y_2-\theta)}^2 }{y_2-y_1}\|(1-y^2-\lambda)^{-1}\|_{L^2(y_1+\theta,y_2-\theta)}^2\\
&\lesssim \frac{ \|\rho_2\varphi\|_{L^2(y_1+\theta,y_2-\theta)}^2}{(y_2-y_1)^3\theta}
\lesssim\frac{|k|^2}{(y_2-y_1)^2}\|\rho_2\varphi\|_{L^2(y_1+\theta,y_2-\theta)}^2 \lesssim\mathscr{E}_4,
\end{aligned}
\end{equation}
where we have used \eqref{inside1} in the last step.\smallskip

{\bf Case 2. $|k|^2(y_2-y_1)^2\leq 1$}.

In this case, take $\theta=\frac{\delta}{2}$. Let us introduce
\begin{equation}\nonumber
\chi(y)=\tilde{\chi}\left(\frac{y}{y_2-y_1}\right) \quad \mathrm{with}\quad \tilde{\chi}(z)=\left \{
\begin{array}{lll}
1,&|z|\leq1,\\
0,&|z|\geq 2.
\end{array}
\right.
\end{equation}
It is clear that $\chi(y)=\rho_2(y)=1$ for $y\in [y_1+\delta,y_2-\delta]$ and
$$\int_{-1}^1(\varphi'\chi'+|k|^2\varphi\chi)\mathrm{d}y=-\int_{-1}^1w\chi\mathrm{d}y.$$
Therefore, we have
\begin{equation}\nonumber
\begin{aligned}
\left|\int_{y_1+\frac{\delta}{2}}^{y_2-\frac{\delta}{2}}\rho_2w\right|\leq &\left|\int_{y_1+\delta}^{y_2-\delta}w(y)\chi(y)\mathrm{d}y\right|+\left|\int_{(y_1+\frac{\delta}{2},y_2-\frac{\delta}{2})\setminus (y_1+\delta,y_2-\delta)}\rho_2w\mathrm{d}y\right|\\
\lesssim&\left|\int_{-1}^1(\varphi'\chi'+|k|^2\varphi\chi)\mathrm{d}y\right|+\delta\|w\|_{L^\infty}+\left|\int_{(-1,1)\setminus (y_1-\delta,y_2+\delta)}w\mathrm{d}y\right|.
\end{aligned}
\end{equation}
From \eqref{OSH-1},  we have
$$w=\frac{-\frac{1}{ik_1}\left(F+\nu(\partial_y^2-|k|^2)w+\epsilon(\nu|k_1|)^{\frac{1}{2}}w\right)-2\varphi}{1-y^2-\lambda},$$
and then
\begin{equation}\nonumber
\begin{aligned}
&\left|\int_{y_1+\frac{\delta}{2}}^{y_2-\frac{\delta}{2}}\frac{2\rho_2\varphi}{1-y^2-\lambda}\mathrm{d}y\right|-\left|\int_{y_1+\frac{\delta}{2}}^{y_2-\frac{\delta}{2}}
\frac{1}{ik_1}\frac{\rho_2\left(F+\nu(\partial_y^2-|k|^2)w
+\epsilon(\nu|k|)^{\frac{1}{2}}w\right)}{1-y^2-\lambda}\mathrm{d}y\right|\\
&\lesssim \left|\int_{-1}^1(\varphi'\chi'+|k|^2\varphi\chi)\mathrm{d}y\right|+\delta\|w\|_{L^\infty}+\int_{(-1,1)\setminus (y_1-\delta,y_2+\delta)}|w|\mathrm{d}y,
\end{aligned}
\end{equation}
which gives
\begin{equation}\nonumber
\begin{aligned}
&\frac{1}{y_2-y_1}\left|\int_{y_1+\frac{\delta}{2}}^{y_2-\frac{\delta}{2}}\frac{2\rho_2\varphi}{1-y^2-\lambda}\mathrm{d}y\right|^2\\
\lesssim&\frac{1}{y_2-y_1}\left(\left|\int_{-1}^1(\varphi'\chi'+|k|^2\varphi\chi)\mathrm{d}y\right|^2+\delta^2\|w\|_{L^\infty}^2+\left|\int_{(-1,1)\setminus (y_1-\delta,y_2+\delta)}|w|\mathrm{d}y\right|^2\right)\\
+&\frac{1}{|k_1|^2(y_2-y_1)}\left(\left|\int_{y_1+\frac{\delta}{2}}^{y_2-\frac{\delta}{2}}\frac{\rho_2F}{1-y^2-\lambda}\mathrm{d}y\right|^2+\nu^2
\left|\int_{y_1+\frac{\delta}{2}}^{y_2-\frac{\delta}{2}}
\frac{\rho_2\partial_y^2w}{1-y^2-\lambda}\mathrm{d}y\right|^2\right)\\
+&\frac{\nu^2|k|^4}{|k_1|^2(y_2-y_1)}\left|\int_{y_1+\frac{\delta}{2}}^{y_2-\frac{\delta}{2}}\frac{\rho_2w}{1-y^2-\lambda}
\mathrm{d}y\right|^2+\frac{\epsilon^2\nu}{|k_1|(y_2-y_1)}\left|\int_{y_1+\frac{\delta}{2}}^{y_2-\frac{\delta}{2}}
\frac{\rho_2w}{1-y^2-\lambda}\mathrm{d}y\right|^2\\
=:&I_1+\cdots+I_7.
\end{aligned}
\end{equation}

It remains to deal with $I_i,i=1,2,\cdots,7$. Due to $|k|^2(y_2-y_1)^2\leq 1$, by \eqref{outside1} and \eqref{inside1}, we have
\begin{equation}\nonumber
\begin{aligned}
I_1\lesssim&\frac{1}{y_2-y_1}\left(\|\varphi'\|_{L^2((-1,1)\setminus (y_1-\delta,y_2+\delta))}^2\|\chi'\|_{L^2}^2+|k|^4\|\varphi\|_{L^1(B(0,2(y_2-y_1))}^2\right)\\
\lesssim&\frac{\|\varphi'\|_{L^2((-1,1)\setminus (y_1-\delta,y_2+\delta))}^2}{(y_2-y_1)^2}+|k|^4\int_{\{y:\rho_1=1\}\cup\{y:\rho_2=1\}}|\varphi|^2\mathrm{d}y\\
&
+\frac{|k|^4\delta^2\|\varphi\|_{L^\infty(B(y_1,\delta)\cup B(y_2,\delta))}^2}{y_2-y_1}\\
\lesssim&\frac{\|\varphi'\|_{L^2((-1,1)\setminus (y_1-\delta,y_2+\delta))}^2}{(y_2-y_1)^2}+\frac{|k|^2}{(y_2-y_1)^2} \int_{\{y:\rho_1=1\}\cup\{y:\rho_2=1\}}|\varphi|^2\mathrm{d}y\\
&+\frac{\|\varphi\|_{L^\infty(B(y_1,\delta)\cup B(y_2,\delta))}^2}{(y_2-y_1)^2\delta}\lesssim\mathscr{E}_4.
\end{aligned}
\end{equation}
For $I_2$, we have
\begin{equation}\nonumber
\begin{aligned}
I_2\lesssim &\delta\|w\|_{L^\infty}^2\lesssim \mathscr{E}_4.
\end{aligned}
\end{equation}
Due to $y_2-y_1\geq 4\delta$, we get by \eqref{outside1} that
\begin{equation}\nonumber
\begin{aligned}
I_3\lesssim&\frac{1}{y_2-y_1}\|w\|_{L^1((-1,1)\setminus (y_1-\delta,y_2+\delta))}^2\\
\lesssim&\frac{ \|\sqrt{\lambda-1+y^2}w\|_{L^2((-1,1)\setminus (y_1-\delta,y_2+\delta))}^2}{y_2-y_1}\left\|\frac{1}{1-y^2-\lambda}\right\|_{L^1((-1,1)\setminus (y_1-\delta,y_2+\delta))}\\
\lesssim&\frac{1}{y_2-y_1}\frac{1+\ln\left(1+\frac{y_2-y_1}{\delta}\right)}{(y_2-y_1+\delta)}
\bigg(\frac{\|F\|_{H^{-1}}\|w\|_{H^1}}{|k_1|}+\frac{\|F\|_{H^{-1}}\|w\|_{L^\infty}}{|k_1|\delta^{\frac{1}{2}}}\\
&+\frac{\nu\|w'\|_{L^2}
\|w\|_{L^\infty}}{|k_1|\delta^{\frac{1}{2}}}+\delta^{-1}\|\varphi\|_{L^\infty(B(y_1,\delta)\cup B(y_2,\delta))}^2\bigg) \lesssim\mathscr{E}_4,
\end{aligned}
\end{equation}
and $I_4$ is bounded by
\begin{equation}\nonumber
\begin{aligned}
I_4\lesssim&\frac{1}{|k_1|^2(y_2-y_1)}\|F\|_{H^{-1}_k}^2 \left\|\frac{\rho_2}{1-y^2-\lambda}\right\|_{H^1_k}^2\lesssim \frac{\|F\|_{H^{-1}_k}^2}{|k_1|^2(y_2-y_1)^3\delta^3}\lesssim\mathscr{E}_4.
\end{aligned}
\end{equation}
For $I_5$, we have
\begin{equation}\nonumber
\begin{aligned}
I_5\lesssim&\frac{1}{|k_1|^2(y_2-y_1)}\left|\int_{y_1+\frac{\delta}{2}}^{y_2-\frac{\delta}{2}}\partial_yw\left(\frac{\rho_2'}{1-y^2-\lambda}
-\frac{\rho_2(-2y)}{(1-y^2-\lambda)^2}\right)\mathrm{d}y\right|
\\ \lesssim&\frac{\nu^2\|w'\|_{L^2}^2}{|k_1|^2(y_2-y_1)^3\delta^3}\lesssim\mathscr{E}_4,
\end{aligned}
\end{equation}
and for $I_6$, we obtain
\begin{equation}\nonumber
\begin{aligned}
I_6\lesssim&\frac{\nu^2|k|^4(y_2-y_1-\delta)}{|k_1|^2(y_2-y_1)}\int_{y_1+\frac{\delta}{2}}^{y_2-\frac{\delta}{2}}\frac{|\rho_2 w|^2}{(1-y^2-\lambda)^2}\mathrm{d}y\\
\lesssim&\frac{\nu^2|k|^4(y_2-y_1-\delta)}{|k_1|^2(y_2-y_1)}\frac{1}{(y_2-y_1+\delta)\delta}
\bigg(\int_{y_1+\frac{\delta}{2}}^{0}\frac{\left|\int_{y_1}^y w'\mathrm{d}z\right|^2+|w(y_1)|^2}{(y-y_1)^2}\mathrm{d}y\\
&+\int_{0}^{y_2-\frac{\delta}{2}}\frac{\left|\int_{y_2}^y w'\mathrm{d}z\right|^2+|w(y_2)|^2}{(y-y_2)^2}\mathrm{d}y\bigg)\\
\lesssim&\frac{\nu^2|k|^4(y_2-y_1-\delta)}{|k_1|^2(y_2-y_1)}\left(\frac{\|w'\|_{L^2}^2}{\delta^2}+\frac{\|w\|_{L^\infty}^2}{\delta^3}\right)\\
\lesssim&\frac{\nu^2\|w'\|_{L^2}^2}{|k_1|^2(y_2-y_1)^3\delta^3}+\frac{\nu^2\|w\|_{L^\infty}^2}{|k_1|^2(y_2-y_1)^3\delta^4}\lesssim \mathscr{E}_4.
\end{aligned}
\end{equation}
Finally, $I_7$ can be estimated as
\begin{equation}\nonumber
\begin{aligned}
I_7\lesssim&\epsilon^2\frac{\nu\|w\|_{L^2}^2}{|k_1|(y_2-y_1)^3\delta}\lesssim \mathscr{E}_4.
\end{aligned}
\end{equation}

The proof is completed.
\end{proof}

\begin{lemma}\label{streamL-infty-H-1}
For any $\delta\in (0,2|k|^{-1}], \ \lambda\in (0,1)$, it holds that
\begin{equation}\nonumber
\begin{aligned}
\frac{\|\varphi\|_{L^\infty(B(y_1,\delta)\cup B(y_2,\delta))}^2}{(y_2-y_1+\delta)^2\delta}\lesssim\mathscr{F}_3,
\end{aligned}
\end{equation}
where
\begin{equation}\nonumber
\begin{aligned}
\mathscr{F}_3=&\frac{\|F\|_{H^{-1}_k}^2}{|k_1|^2(y_2-y_1+\delta)^2\delta^4}+\frac{\nu^2\|w'\|_{L^2}^2}{|k_1|^2(y_2-y_1+\delta)^2\delta^4}
+\frac{\nu^2|k|^4\|w\|_{L^\infty}^2}{|k_1|^2(y_2-y_1+\delta)^2\delta}
\\
&+\frac{\epsilon\nu\|w\|_{L^2}^2}{|k_1|^3(y_2-y_1+\delta)^2\delta^2}+\delta\|w\|_{L^\infty}^2.
\end{aligned}
\end{equation}
\end{lemma}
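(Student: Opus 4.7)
The strategy is to estimate $\|\varphi\|_{L^\infty(B(y_i,\delta))}$ for each critical-layer endpoint $y_i\in\{y_1,y_2\}$ by testing the OS equation \eqref{OSH-1} against a mollifier concentrated at an arbitrary point $y_0\in B(y_i,\delta)$. The key structural observation is that on the support of such a mollifier the coefficient $1-y^2-\lambda=-(y-y_1)(y-y_2)$ is small, since one linear factor is of size $O(\delta)$ and the other of size $O(y_2-y_1+\delta)$. Concretely, fix a reference bump $\chi\in C^\infty_c(-1,1)$, $\chi\ge 0$, $\int\chi=1$, and set $\psi(y)=\delta^{-1}\chi((y-y_0)/\delta)$, so that $\mathrm{supp}\,\psi\subset B(y_0,\delta)\subset B(y_i,2\delta)$ and $\int\psi=1$; using $\delta\le 2|k|^{-1}$ gives $\|\psi\|_{H^1_k}\lesssim\delta^{-3/2}$, $\|\psi\|_{L^2}\sim\delta^{-1/2}$, $\|\psi\|_{L^1}\sim 1$.

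\textbf{Pairing and term-by-term bounds.} The Navier-slip conditions $\varphi(\pm 1)=\varphi''(\pm 1)=0$ force $w(\pm 1)=0$, so integration by parts against the compactly-supported $\psi$ produces no boundary terms. Pairing \eqref{OSH-1} with $\psi$ yields
\begin{align*}
2ik_1\langle\varphi,\psi\rangle
   &= \langle F,\psi\rangle -\nu\langle w',\psi'\rangle -\nu|k|^2\langle w,\psi\rangle \\
   &\quad -ik_1\langle(1-y^2-\lambda)w,\psi\rangle +\epsilon(\nu|k_1|)^{1/2}\langle w,\psi\rangle.
\end{align*}
Estimate the five terms by duality, Cauchy--Schwarz, and $L^1$--$L^\infty$ pairing: $|\langle F,\psi\rangle|\lesssim\delta^{-3/2}\|F\|_{H^{-1}_k}$; $\nu|\langle w',\psi'\rangle|\lesssim\nu\delta^{-3/2}\|w'\|_{L^2}$; $\nu|k|^2|\langle w,\psi\rangle|\lesssim\nu|k|^2\|w\|_{L^\infty}$; the crucial pointwise bound $|1-y^2-\lambda|\lesssim\delta(y_2-y_1+\delta)$ on $\mathrm{supp}\,\psi$ gives $|k_1\langle(1-y^2-\lambda)w,\psi\rangle|\lesssim|k_1|\delta(y_2-y_1+\delta)\|w\|_{L^\infty}$; finally $\epsilon(\nu|k_1|)^{1/2}|\langle w,\psi\rangle|\lesssim\epsilon(\nu|k_1|)^{1/2}\delta^{-1/2}\|w\|_{L^2}$. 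Dividing by $2|k_1|$, squaring, and then by $(y_2-y_1+\delta)^2\delta$ produces a sum matching the five terms in $\mathscr{F}_3$ (the $\epsilon$-term is reconciled using $\epsilon\ll 1$ and $|k_1|\ge 1$, so that $\epsilon^2/|k_1|\lesssim \epsilon/|k_1|^3$ up to a harmless constant).

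\textbf{Pointwise extraction and main obstacle.} Since $\int\psi=1$, the error $|\varphi(y_0)-\langle\varphi,\psi\rangle|$ is at most $\int\psi\,|\varphi-\varphi(y_0)|\lesssim\delta^{1/2}\|\varphi'\|_{L^2(B(y_0,\delta))}$, and this auxiliary quantity is controlled using the Green's function representation $\varphi'=-\int\partial_y G_k(\cdot,y')w(y')\,dy'$ together with standard exponential-decay estimates for the Dirichlet Green's function of $(-\partial_y^2+|k|^2)$ on $(-1,1)$. The resulting correction is absorbable into the $\delta\|w\|_{L^\infty}^2$ term of $\mathscr{F}_3$ provided $\delta\le 2|k|^{-1}$. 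Taking the supremum over $y_0\in B(y_i,\delta)$ and summing over $y_i\in\{y_1,y_2\}$ completes the proof. The main technical obstacle is precisely this final $L^\infty$-extraction: converting the distributional bound on $\langle\varphi,\psi\rangle$ into a genuine pointwise bound on $\varphi$ while keeping all error terms within the template $\mathscr{F}_3$, which depends on the tight coupling $\delta\le 2|k|^{-1}$ between the mollifier scale and the Green's function scale, as well as on the careful matching of powers in the $\epsilon$-term.
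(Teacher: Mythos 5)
Your pairing scheme is the same as the paper's, and your bounds for the first four terms of $\mathscr{F}_3$ (the $F$, $w'$, $\nu|k|^2w$, and $(1-y^2-\lambda)w$ contributions) match the paper's term-by-term computation with the mollifier playing the role of the paper's cut-off $\rho_3$. The genuine gap is in the $L^\infty$-extraction step at the end. You use a generic mollifier and estimate the extraction error by the \emph{first-order} bound $|\varphi(y_0)-\langle\varphi,\psi\rangle|\lesssim\delta^{1/2}\|\varphi'\|_{L^2(B(y_0,\delta))}$. Even with the optimal Green's-function estimate $\|\varphi'\|_{L^\infty}\lesssim |k|^{-1}\|w\|_{L^\infty}$, this gives an error $\lesssim\delta|k|^{-1}\|w\|_{L^\infty}$, whose contribution to the left-hand side is $\delta^2|k|^{-2}\|w\|_{L^\infty}^2/\big((y_2-y_1+\delta)^2\delta\big)\le |k|^{-2}\|w\|_{L^\infty}^2/\delta$. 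This is $\lesssim\delta\|w\|_{L^\infty}^2$ only when $\delta\gtrsim|k|^{-1}$, but the hypothesis gives precisely the \emph{opposite} inequality $\delta\le 2|k|^{-1}$ (and in the applications, e.g.\ Case 3.3 of Proposition~\ref{resolvent-H-1}, $\delta$ can be much smaller than $|k|^{-1}$). So your claim that the correction "is absorbable into $\delta\|w\|_{L^\infty}^2$ provided $\delta\le 2|k|^{-1}$'' has the logic reversed, and the estimate fails in exactly the regime the lemma is needed.

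The paper avoids this by choosing a cut-off $\rho_3$ that is \emph{symmetric} about its center $a$, which annihilates the linear Taylor term and produces a \emph{second-order} extraction error $b^2\|\varphi''\|_{L^\infty}$. Combined with the maximum principle $|k|^2\|\varphi\|_{L^\infty}\le\|w\|_{L^\infty}$ (so $\|\varphi''\|_{L^\infty}\lesssim\|w\|_{L^\infty}$), this gives an error $\lesssim\delta^2\|w\|_{L^\infty}$ contributing $\delta^3\|w\|_{L^\infty}^2/(y_2-y_1+\delta)^2\le\delta\|w\|_{L^\infty}^2$, which is the right size. You could repair your proof by taking $\chi$ even, so that $\psi$ is symmetric about $y_0$ and the same gain is available; as written, it is not. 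A smaller issue: your reconciliation of the $\epsilon$-term, $\epsilon^2/|k_1|\lesssim\epsilon/|k_1|^3$, is false when $|k_1|$ is large (it requires $\epsilon|k_1|^2\lesssim1$), so that step is also not justified as stated, though this particular slip is harmless in the downstream applications.
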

\begin{proof}
We only give the estimate on $B(y_1,\delta)$ and the estimate on $B(y_2,\delta)$ is similar. For any $a \in B(y_1,\delta)$ and $b\in \left[\frac{\delta}{2},\delta\right]$, let us introduce a smooth cut-off function $\rho_3(y)$ with $0\leq \rho_3\leq 1, \ \rho_3(y+a)=\rho_3(a-y)$ and
\begin{equation}\nonumber
\rho_3(y)=\left \{
\begin{array}{lll}
1,&y\in \left[a-\frac{b}{2},a+\frac{b}{2}\right],\\
0,&y\not\in [a-b,a+b],\\
\mathrm{smooth},&\mathrm{others}.
\end{array}
\right.
\end{equation}
It is obvious that
$$\|\rho_3\|_{L^\infty}\leq 1, \quad \|\rho_3'\|_{L^1}\lesssim 1,\quad  \|\rho_3'\|_{L^2}\lesssim b^{-\frac{1}{2}},$$
and
$$\int_{a-b}^a\int_{a-b}^z\rho_3(z_1)\mathrm{d}z_1\mathrm{d}z=\int_{a}^{a+b}\int_{z}^{a+b}\rho_3(z_1)\mathrm{d}z_1\mathrm{d}z.$$
In addition, it is easy to verify that
\begin{equation}\nonumber
\begin{aligned}
\int_{a-b}^{a+b}\varphi(y)\rho_3(y)\mathrm{d}y=&\varphi(a)\int_{a-b}^{a+b}\rho_3(y)\mathrm{d}y\\
&+\int_a^{a+b}\varphi''(z)
\left(\int_z^{a+b}\int_{z_1}^{a+b}\rho_3(s)\mathrm{d}s\mathrm{d}z_1\right)\mathrm{d}z\\
&+\int_{a-b}^{a}\varphi''(z)\left(\int_{a-b}^{z}\int_{a-b}^{z_1}\rho_3(s)\mathrm{d}s\mathrm{d}z_1\right)\mathrm{d}z.
\end{aligned}
\end{equation}
Notice that
$$ b|\varphi(a)|=|\varphi(a)|\left|\int_{a-\frac{b}{2}}^{a+\frac{b}{2}}\rho_3(y)\mathrm{d}y\right|\leq \left|\varphi(a)\int_{a-b}^{a+b}\rho_3(y)\mathrm{d}y\right|,$$
therefore, we have
\begin{equation}\nonumber
\begin{aligned}
|\varphi(a)|\leq &\frac{1}{b}\left|
\int_{a-b}^{a+b}\varphi(y)\rho_3(y)\mathrm{d}y\right|
+b^2\|\varphi''\|_{L^\infty}\\
\lesssim&\frac{1}{b|k_1|}\bigg(\left|\int_{-1}^1F\rho_3\mathrm{d}y\right|+\left|\int_{a-b}^{a+b}\nu\partial_y^2w \rho_3\mathrm{d}y\right|+\left|\int_{a-b}^{a+b}\nu|k|^2w \rho_3\mathrm{d}y\right|\\
&+\int_{a-b}^{a+b}|k_1||1-y^2-\lambda|\mathrm{d}y\|w\|_{L^\infty}+\epsilon(\nu |k_1|^{-1})^{\frac{1}{2}}b^{\frac{1}{2}}\|w\|_{L^2}\bigg)+b^2\|w\|_{L^\infty}\\
\lesssim&\frac{\|F\|_{H_k^{-1}}}{|k_1|\delta^{\frac{3}{2}}}+\frac{\nu\|w'\|_{L^2}}{|k_1|\delta^{\frac{3}{2}}}
+\frac{\nu|k|^2\|w\|_{L^\infty}}{|k_1|}+(y_2-y_1+\delta)\delta\|w\|_{L^\infty}\\
&+\epsilon\frac{\nu^{\frac{1}{2}}\|w\|_{L^2}}{|k_1|^{\frac{3}{2}}\delta^{\frac{1}{2}}}+\delta^2\|w\|_{L^\infty},
\end{aligned}
\end{equation}
which gives our conclusion.
\end{proof}

Now we are in a position to prove Proposition \ref{resolvent-H-1}.

\begin{proof}[Proof of  Proposition \ref{resolvent-H-1}]
By integration by parts, it is easy to see that
\begin{equation}\label{2.0-basic}
\begin{aligned}
&\nu\|w'\|_{L^2}^2+\nu|k|^2\|w\|_{L^2}^2+ik_1\int_{-1}^1(1-y^2-\lambda)|w|^2\mathrm{d}y
\\&\qquad+2ik_1\langle \varphi,w\rangle
-\epsilon\nu^{\frac{1}{2}}|k_1|^{\frac{1}{2}}\|w\|_{L^2}^2=\langle F,w\rangle.
\end{aligned}
\end{equation}
Taking the real part of \eqref{2.0-basic}  to yield that
\begin{equation}\label{2.1}
\begin{aligned}
 \nu\|(\partial_y,|k|)w\|_{L^2}^2\leq& \|F\|_{H^{-1}_k}\|(\partial_y,|k|)w\|_{L^2}+\epsilon\nu^{\frac{1}{2}}|k_1|^{\frac{1}{2}}\|w\|_{L^2}^2,
\end{aligned}
\end{equation}
which gives by Young's inequality that
\begin{equation}\label{2.2}
\begin{aligned}\|(\partial_y,|k|)w\|_{L^2}\leq \nu^{-1}\|F\|_{H_k^{-1}}+\sqrt{2\epsilon}\nu^{-\frac{1}{4}}|k_1|^{\frac{1}{4}}\|w\|_{L^2}.
\end{aligned}
\end{equation}

\noindent\textbf{Case 1. $\lambda\geq 1$.}

In this case, for $\delta=|\nu/k_1|^{\frac{1}{4}}\ll 1$, we get by taking the imaginary part of \eqref{2.0-basic} that
\begin{equation}\nonumber
\begin{aligned}
\delta^2 \|w\|_{L^2\big((-1,1)\setminus (-\delta,\delta)\big)}^2\lesssim & \int_{-1}^1(\lambda-1+y^2)|w|^2\mathrm{d}y+2(\|\varphi'\|_{L^2}^2+|k|^2\|\varphi\|_{L^2}^2)\\
\leq& |k_1|^{-1}\|F\|_{H^{-1}_k}\|(\partial_y,|k|)w\|_{L^2},
\end{aligned}
\end{equation}
where we have used the fact that $\lambda-1+y^2\geq \delta^2$ for $y\in (-1,1)\setminus (-\delta,\delta)$. Then we have
\begin{equation}\nonumber
\begin{aligned}
\|w\|_{L^2}^2\lesssim &\delta\|w\|_{L^\infty}^2+\|w\|_{L^2((-1,1)\setminus (-\delta,\delta))}^2\\
\lesssim &\delta\|w\|_{L^2}\|w'\|_{L^2}+\delta^{-2}|k_1|^{-1}\|F\|_{H_k^{-1}}\|(\partial_y,|k|)w\|_{L^2}\\
\lesssim&\delta\|w\|_{L^2}(\nu^{-1}\|F\|_{H_k^{-1}}+\sqrt{2\epsilon}\nu^{-\frac{1}{4}}|k_1|^{\frac{1}{4}}\|w\|_{L^2})\\
&+\delta^{-2}|k_1|^{-1}\|F\|_{H_k^{-1}}(\nu^{-1}\|F\|_{H_k^{-1}}+\sqrt{2\epsilon}\nu^{-\frac{1}{4}}|k_1|^{\frac{1}{4}}\|w\|_{L^2}).
\end{aligned}
\end{equation}
Using Young's inequality and  $0<\epsilon\ll 1$, we get
\begin{equation}\nonumber
\begin{aligned}
\|w\|_{L^2}^2 \lesssim \nu^{-\frac{3}{2}}|k_1|^{-\frac{1}{2}}\|F\|_{H_k^{-1}}^2,
\end{aligned}
\end{equation}
which along with \eqref{2.2} gives our result. \smallskip

\noindent\textbf{Case 2. $\lambda\leq 0$.}

Taking the imaginary part of \eqref{2.0-basic}, we obtain
\begin{equation}\label{eq:lam0-im-1}
\begin{aligned}
\int_{-1}^1(1-y^2-\lambda)|w|^2\mathrm{d}y+2\langle \varphi,w\rangle \leq |k_1|^{-1}\|F\|_{H_k^{-1}}\|(\partial_y,|k|)w\|_{L^2}.
\end{aligned}
\end{equation}
It follows from Lemma \ref{hardy-type} that
\begin{equation}\label{wL2}
\begin{aligned}
\|w\|_{L^2}\lesssim \|(1-y^2)^{\f12}w\|_{L^2}^{\f23}\|w/(1-y^2)\|_{L^2}^{\f13}\lesssim \|(1-y^2)^{\f12}w\|_{L^2}^{\f23}\|w'\|_{L^2}^{\f13}.
\end{aligned}
\end{equation}
Taking the real part of \eqref{2.0-basic}, we get
\begin{equation}\label{2.01}
\begin{aligned}
\nu\|w'\|_{L^2}^2+\nu|k|^2\|w\|_{L^2}^2\leq & \|F\|_{H^{-1}_k}\|(\partial_y,|k|)w\|_{L^2}+\epsilon\nu^{\frac{1}{2}}|k_1|^{\frac{1}{2}}\|w\|_{L^2}^2\\
\leq &\|F\|_{H^{-1}_k}\|(\partial_y,|k|)w\|_{L^2}+\epsilon\nu^{\frac{1}{3}}|k_1|^{\frac{2}{3}}\|w\|_{L^2}^2,
\end{aligned}
\end{equation}
which yields that
\begin{equation}\label{2.02}
\begin{aligned}
\|(\partial_y,|k|)w\|_{L^2} \leq \nu^{-1} \|F\|_{H^{-1}_k}+\sqrt{2\epsilon}\nu^{-\frac{1}{3}}|k_1|^{\frac{1}{3}}\|w\|_{L^2}.
\end{aligned}
\end{equation}

On the other hand, by Lemma \ref{positive} and \eqref{eq:lam0-im-1}, we have
\begin{equation}\nonumber
\begin{aligned}
\big(\|u\|_{L^2}^2+|k|\|\varphi\|_{L^2}^2\big)/9 \leq&\int_{-1}^1(1-y^2-\lambda)|w|^2\mathrm{d}y+2\langle \varphi,w\rangle\\
 \leq&  |k_1|^{-1}\|F\|_{H^{-1}_k}\|(\partial_y,|k|)w\|_{L^2},
\end{aligned}
\end{equation}
which along with \eqref{eq:lam0-im-1}  gives
\begin{equation}\label{L2u}
\begin{aligned}
&\|u\|_{L^2}^2 +|k|^2\|\varphi\|_{L^2}^2+\|(1-y^2)^{\f12}w\|_{L^2}^2\\
&\lesssim |k_1|^{-1}\|F\|_{H^{-1}_k}\|(\partial_y,|k|)w\|_{L^2}.
\end{aligned}
\end{equation}
Collecting \eqref{wL2} and \eqref{L2u}, we obtain
\begin{align*}
   \|w\|_{L^2} &\lesssim |k_1|^{-\f13}\|F\|_{H^{-1}_k}^{\f13}\|(\partial_y,|k|)w\|_{L^2}^{\f23},
 \end{align*}
from  which and \eqref{2.02}, we infer that
\begin{equation}\nonumber
\begin{aligned}
\|w\|_{L^2}\lesssim \nu^{-\frac{2}{3}}|k_1|^{-\frac{1}{3}}\|F\|_{H_k^{-1}}+ \nu^{-\f29}|k_1|^{-\f19}\|F\|_{H^{-1}_k}^{\f13}\|w\|_{L^2}^{\f23}.
\end{aligned}
\end{equation}
Therefore, we obtain
$$\|w\|_{L^2}\lesssim \nu^{-\frac{2}{3}}|k_1|^{-\frac{1}{3}}\|F\|_{H_k^{-1}}.$$
This along with \eqref{2.02} proves our result.\smallskip

\noindent\textbf{Case 3. $\lambda\in (0,1).$}

Let $1-y_i^2=\lambda \in (0,1), i=1,2$ with $-1\leq y_1\leq 0\leq y_2\leq 1$ and $y_1=-y_2$.\smallskip

\textbf{Case 3.1} $\nu|k|^3|k_1|^{-1}\geq |1-\lambda|^{\f12}+|\nu/k_1|^{\f14}$.

In this case, it holds that $\nu|k|^3|k_1|^{-1}\geq2|\nu/k_1|^{\f14},$ and we have $\nu^{\frac{1}{2}} |k_1|^{\frac{1}{2}}\lesssim \nu |k|^2$ and by \eqref{2.2},
\begin{equation}\nonumber
\begin{aligned}
\nu|k|^2\|w\|_{L^2}^2\leq & \|F\|_{H^{-1}_k}\|(\partial_y,|k|)w\|_{L^2}+\epsilon\nu^{\frac{1}{2}}|k_1|^{\frac{1}{2}}\|w\|_{L^2}^2\\
\leq&\|F\|_{H_k^{-1}}(\nu^{-1}\|F\|_{H_k^{-1}}
+\sqrt{2\epsilon}\nu^{-\frac{1}{4}}|k_1|^{\frac{1}{4}}\|w\|_{L^2})+\epsilon\nu|k|^2\|w\|_{L^2}^2\\
\leq&2\epsilon\nu|k|^2\|w\|_{L^2}^2+ C\nu^{-1}\|F\|_{H^{-1}_k}^2.
\end{aligned}
\end{equation}
Due to $0<\epsilon\ll 1$ and $\nu|k|^3 |k_1|^{-1} \gtrsim |\nu/k_1|^{\f14}$, we obtain
\begin{equation}\nonumber
\begin{aligned}
\|w\|_{L^2}^2 \lesssim \nu^{-2}|k|^{-2}\|F\|_{H^{-1}_k}^2\lesssim \nu^{-\frac{3}{2}}|k_1|^{-\frac{1}{2}}\|F\|_{H^{-1}_k}^2.
\end{aligned}
\end{equation}

\textbf{Case 3.2.}  $y_2-y_1\leq 4|\nu/k_1|^{\frac{1}{4}}$.

Indeed, we get by \eqref{2.1} that
\begin{equation}\nonumber
\begin{aligned}
\nu\|(\partial_y,|k|)w\|_{L^2}^2\leq &\|F\|_{H_k^{-1}}\|(\partial_y,|k|)w\|_{L^2} +\epsilon\nu^{\frac{1}{2}}|k_1|^{\frac{1}{2}}\|w\|_{L^2}^2\\
\lesssim&\|F\|_{H^{-1}_k}\|(\partial_y,|k|)w\|_{L^2}+\epsilon\nu^{\frac{1}{2}}|k_1|^{\frac{1}{2}}\|(\partial_y,|k|)w\|_{L^2}\|u\|_{L^2},
\end{aligned}
\end{equation}
which gives
\begin{equation}\label{dw-H-1}
\begin{aligned}
\|(\partial_y,|k|)w\|_{L^2}
\lesssim\nu^{-1}\|F\|_{H_k^{-1}}+\sqrt{\epsilon}\nu^{-\frac{1}{2}}|k_1|^{\frac{1}{2}}\|u\|_{L^2}.
\end{aligned}
\end{equation}

On the other hand, we have
\begin{equation}\nonumber
\begin{aligned}
\|yw\|_{L^2}^2+\|u\|_{L^2}^2\lesssim &|k_1|^{-1}\|F\|_{H^{-1}_k}\|(\partial_y,|k|)w\|_{L^2}+(y_2-y_1)^2\|w\|_{L^2}^2\\
\lesssim&|k_1|^{-1}\|F\|_{H^{-1}_k}\|(\partial_y,|k|)w\|_{L^2}
+\nu^{\frac{1}{2}}|k_1|^{-\frac{1}{2}}\|(\partial_y,|k|)w\|_{L^2}\|u\|_{L^2}\\
\lesssim& |k_1|^{-1}\|F\|_{H^{-1}_k}\big(\nu^{-1}\|F\|_{H^{-1}_k}+\sqrt{\epsilon}\nu^{-\frac{1}{2}}|k_1|^{\frac{1}{2}}\|u\|_{L^2}\big)\\
&+\nu^{\frac{1}{2}}|k_1|^{-\frac{1}{2}}\|u\|_{L^2}\big(\nu^{-1}\|F\|_{H^{-1}_k}+\sqrt{\epsilon}\nu^{-\frac{1}{2}}|k_1|^{\frac{1}{2}}\|u\|_{L^2}\big),
\end{aligned}
\end{equation}
where we used \eqref{dw-H-1}. Then we get by Young's inequality and $0<\epsilon\ll 1$ that
\begin{equation}\nonumber
\begin{aligned}
\|u\|_{L^2}^2\lesssim \nu^{-1}|k_1|^{-1}\|F\|_{H^{-1}_k}^2,
\end{aligned}
\end{equation}
which along with \eqref{dw-H-1} gives
\begin{equation}\nonumber
\begin{aligned}
\|(\partial_y,|k|)w\|_{L^2}\lesssim \nu^{-1}\|F\|_{H^{-1}_k}
\end{aligned}
\end{equation}
and
\begin{equation}\nonumber
\begin{aligned}
\|w\|_{L^2}^2\lesssim \|(\partial_y,|k|)w\|_{L^2}\|u\|_{L^2}\lesssim \nu^{-\frac{3}{2}}|k_1|^{-\frac{1}{2}}\|F\|_{H^{-1}_k}^2.
\end{aligned}
\end{equation}

\textbf{Case 3.3.} $y_2-y_1\geq 4 |\nu /k_1|^{\frac{1}{4}}$ and $\nu|k|^3|k_1|^{-1}\leq |1-\lambda|^{\f12}+|\nu/k_1|^{\f14}$.

 Let us define $\delta=|\nu/k_1|^{\f13} (y_2-y_1)^{-\f13}$, then $0<\delta\leq |\nu /k_1|^{\frac{1}{4}}\leq \frac{y_2-y_1}{4}$ and $0<\delta\leq 2|k|^{-1}$. By Lemma \ref{H-1-wL2outside} and Lemma \ref{H-1-wL2inside}, we have
$$\|w\|_{L^2}^2\lesssim \mathscr{E}_3+\mathscr{E}_4\lesssim \mathscr{E}_4.$$

It remains to estimate each term of $\mathscr{E}_4$. Thanks to the definition of $\delta$ and \eqref{2.2}, we obtain
\begin{equation}\nonumber
\begin{aligned}
&\frac{\|F\|_{H^{-1}_k}^2}{|k_1|^2(y_2-y_1)^3\delta^3}=\frac{\|F\|_{H^{-1}_k}^2}{ \nu|k_1|(y_2-y_1)^2},\quad\frac{\epsilon^2\nu\|w\|_{L^2}^2}{|k_1|(y_2-y_1)^3\delta}
\lesssim\epsilon^2\|w\|_{L^2}^2,\\
&\frac{\nu^2\|w'\|_{L^2}^2}{|k_1|^2(y_2-y_1)^3\delta^3} \lesssim\frac{\|F\|_{H^{-1}_k}^2}{\nu|k_1|(y_2-y_1)^2}+\epsilon\|w\|_{L^2}^2,\\
&\frac{\|F\|_{H^{-1}_k}\|(\partial_y,|k|)w\|_{L^2}}{|k_1|(y_2-y_1)\delta}
\lesssim\frac{\|F\|_{H_k^{-1}}(\nu^{-1}\|F\|_{H^{-1}_k}+\sqrt{\epsilon}(\nu |k_1|^{-1})^{-\frac{1}{4}}\|w\|_{L^2})}{|k_1|(y_2-y_1)\delta} \\
&\qquad\qquad\qquad\qquad\qquad \lesssim \frac{\|F\|_{H^{-1}_k}^2}{\nu|k_1|(y_2-y_1)^2}+\epsilon\|w\|_{L^2}^2,\\
&\delta\|w\|_{L^\infty}^2\lesssim\delta\|w\|_{L^2}\|w'\|_{L^2}
\lesssim\delta\nu^{-1}\|F\|_{H^{-1}_k}\|w\|_{L^2}+\sqrt{\epsilon}\|w\|_{L^2}^2,
\end{aligned}
\end{equation}
and
\begin{equation}\nonumber
\begin{aligned}
&\frac{\nu^2\|w\|_{L^\infty}^2}{|k_1|^2(y_2-y_1)^3\delta^4}= \frac{\delta^6(y_2-y_1)^2\|w\|_{L^\infty}^2}{(y_2-y_1)^3\delta^4}\lesssim \delta\|w\|_{L^\infty}^2,\\
&\frac{\|F\|_{H_k^{-1}}\|w\|_{L^\infty}}{|k_1|(y_2-y_1)\delta^{\frac{3}{2}}} \lesssim\frac{\|F\|_{H_k^{-1}}^2}{|k_1|^2(y_2-y_1)^2\delta^4}+\delta\|w\|_{L^\infty}^2 \lesssim\frac{\|F\|_{H_k^{-1}}^2}{\nu|k_1|(y_2-y_1)\delta}+\delta\|w\|_{L^\infty}^2,\\
&\frac{\nu\|w'\|_{L^2}\|w\|_{L^\infty}}{|k_1|(y_2-y_1)\delta^{\frac{3}{2}}} \lesssim\frac{\nu^2\|w'\|_{L^2}^2}{|k_1|^2(y_2-y_1)^2\delta^4}+\delta\|w\|_{L^\infty}^2\\
&\qquad \qquad  \qquad \quad\lesssim\frac{\|F\|_{H^{-1}_k}^2}{\nu|k_1|(y_2-y_1)\delta}+\epsilon\|w\|_{L^2}^2+\delta\|w\|_{L^\infty}^2.
\end{aligned}
\end{equation}

Note that for $\lambda\in (0,1)$ and $\delta\lesssim y_2-y_1,\ \nu|k|^3|k_1|^{-1}\leq |1-\lambda|^{\f12}+|\nu/k_1|^{\f14}=(y_2-y_1)/2+|\nu/k_1|^{\f14}$ and
$$ y_2-y_1\sim y_2-y_1+\delta,\quad  \frac{\nu^2|k|^4}{|k_1|^2(y_2-y_1)^2\delta }\lesssim \delta,$$
then we get by Lemma \ref{streamL-infty-H-1} that
\begin{equation}\nonumber
\begin{aligned}
\frac{\|\varphi\|_{L^\infty(B(y_1,\delta)\cup B(y_2,\delta))}^2}{(y_2-y_1)^2\delta}\lesssim\mathscr{F}_3\lesssim&
\frac{\|F\|_{H_k^{-1}}^2}{|k_1|^2(y_2-y_1)^2\delta^4}
+\frac{\nu^2\|w'\|_{L^2}^2}{|k_1|^2(y_2-y_1)^2\delta^4}+\delta\|w\|_{L^\infty}^2\\
&+\frac{\nu^2|k|^4\|w\|_{L^\infty}^2}{|k_1|^2(y_2-y_1)^2\delta}+\frac{\epsilon\nu\|w\|_{L^2}^2}{|k_1|^3(y_2-y_1)^2\delta^2}\\
\lesssim&\frac{\|F\|_{H^{-1}_k}^2}{\nu|k_1|(y_2-y_1)\delta}+\delta\|w\|_{L^\infty}^2+(\epsilon+\sqrt{\epsilon})\|w\|_{L^2}^2.
\end{aligned}
\end{equation}
Putting the above estimates together, using Young's inequality, $\delta\leq \nu^{\frac{1}{4}}|k_1|^{-\frac{1}{4}}$ and the smallness of $\epsilon$, we obtain
\begin{equation}\label{est:w-FH-1-00}
\begin{aligned}
\|w\|_{L^2}^2\lesssim&(\nu|k_1|(y_2-y_1)\delta)^{-1}\|F\|_{H^{-1}_k}^2+\delta^2\nu^{-2}\|F\|_{H^{-1}_k}^2 \\ \lesssim&\nu^{-\frac{3}{2}}|k_1|^{-\frac{1}{2}}\|F\|_{H^{-1}_k}^2.
\end{aligned}
\end{equation}
Meanwhile, we have
\begin{equation}\nonumber
\begin{aligned}
\|w'\|_{L^2}\leq \nu^{-1}\|F\|_{H^{-1}_k}+\sqrt{2\epsilon}\nu^{-\frac{1}{4}}|k_1|^{\frac{1}{4}}\|w\|_{L^2}\lesssim \nu^{-1}\|F\|_{H^{-1}_k}.
\end{aligned}
\end{equation}

The proof is completed.
\end{proof}

\begin{corollary}\label{reslvent-H-1-sgap}
Under the same assumptions of Proposition \ref{resolvent-H-1}, it holds that
\begin{equation}\label{resolventH-1-sgap}
\begin{aligned}
\nu^{\frac{2}{3}}|k_1|^{\frac{1}{3}}\big(|\lambda-1|^{\f12}+|\nu/k_1|^{\f14}\big)^{\frac{1}{3}}\|w\|_{L^2}\le C\|F\|_{H^{-1}_k}.
\end{aligned}
\end{equation}
\end{corollary}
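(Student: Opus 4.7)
The plan is to combine Proposition~\ref{resolvent-H-1} with an auxiliary $|\lambda-1|$-weighted $L^2$ bound on $w$, extracted from the imaginary part of the energy identity, and conclude by interpolation. The starting observation is that when $|\lambda-1|^{1/2}\leq |\nu/k_1|^{1/4}$, one has $\nu^{2/3}|k_1|^{1/3}\big(|\lambda-1|^{1/2}+|\nu/k_1|^{1/4}\big)^{1/3}\sim \nu^{3/4}|k_1|^{1/4}$ and the claim reduces directly to Proposition~\ref{resolvent-H-1}. Hence we may assume $|\lambda-1|^{1/2}\geq |\nu/k_1|^{1/4}$ and split by the sign of $\lambda$, exactly as in Corollary~\ref{resol-lambda}.

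For $\lambda\geq 1$, I would take the imaginary part of~\eqref{2.0-basic}. Using $\lambda-1+y^2\geq \lambda-1$, the identity $\mathrm{Re}\langle\varphi,w\rangle=-\|u\|_{L^2}^2$, and the duality bound $|\langle F,w\rangle|\leq \|F\|_{H^{-1}_k}\|(\partial_y,|k|)w\|_{L^2}$, one obtains
\[
(\lambda-1)\|w\|_{L^2}^2\lesssim |k_1|^{-1}\|F\|_{H^{-1}_k}\|(\partial_y,|k|)w\|_{L^2}.
\]
Substituting the first-order control~\eqref{2.2} and absorbing the resulting $\epsilon \nu^{-1/4}|k_1|^{-3/4}\|F\|_{H^{-1}_k}\|w\|_{L^2}$ term by means of Proposition~\ref{resolvent-H-1} yields $(\lambda-1)^{1/2}\nu^{1/2}|k_1|^{1/2}\|w\|_{L^2}\lesssim \|F\|_{H^{-1}_k}$. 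Raising this to the power $1/3$ and the basic bound $\nu^{3/4}|k_1|^{1/4}\|w\|_{L^2}\lesssim \|F\|_{H^{-1}_k}$ to the power $2/3$, and multiplying, produces $\nu^{2/3}|k_1|^{1/3}(\lambda-1)^{1/6}\|w\|_{L^2}\lesssim \|F\|_{H^{-1}_k}$, which is the required inequality. The case $\lambda\leq 0$ is structurally identical: from the decomposition $\int(1-y^2-\lambda)|w|^2\,\mathrm{d}y=(1-\lambda)\|w\|_{L^2}^2-\|yw\|_{L^2}^2$, the inequality $\|yw\|_{L^2}\leq\|w\|_{L^2}$, and~\eqref{L2u} to control $\|u\|_{L^2}^2$, I similarly obtain $(1-\lambda)^{1/2}\nu^{1/2}|k_1|^{1/2}\|w\|_{L^2}\lesssim \|F\|_{H^{-1}_k}$ for $|\lambda|\geq 1$, and the same interpolation closes this case. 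For $|\lambda|<1$, $|\lambda-1|^{1/6}$ is bounded and Proposition~\ref{resolvent-H-1} directly suffices.

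For $\lambda\in(0,1)$, with $y_2-y_1=2|1-\lambda|^{1/2}$, I revisit the three subcases of the proof of Proposition~\ref{resolvent-H-1}. Subcase~3.1 ($\nu|k|^3|k_1|^{-1}\geq |1-\lambda|^{1/2}+|\nu/k_1|^{1/4}$) follows from the sharper bound $\|w\|_{L^2}\lesssim \nu^{-1}|k|^{-1}\|F\|_{H^{-1}_k}$ proved there together with the size hypothesis. Subcase~3.2 ($y_2-y_1\leq 4|\nu/k_1|^{1/4}$) is precisely the reduced regime already handled. The decisive case is Subcase~3.3, where the choice $\delta=|\nu/k_1|^{1/3}(y_2-y_1)^{-1/3}$ together with the term-by-term bounds on $\mathscr{E}_3+\mathscr{E}_4$ in fact produces
\[
\|w\|_{L^2}^2\lesssim \big(\nu|k_1|(y_2-y_1)\delta\big)^{-1}\|F\|_{H^{-1}_k}^2+\delta^2\nu^{-2}\|F\|_{H^{-1}_k}^2\sim \nu^{-4/3}|k_1|^{-2/3}(y_2-y_1)^{-2/3}\|F\|_{H^{-1}_k}^2,
\]
which is only \emph{subsequently} weakened to $\nu^{-3/2}|k_1|^{-1/2}\|F\|_{H^{-1}_k}^2$ in~\eqref{est:w-FH-1-00}. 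Retaining the sharp $(y_2-y_1)^{-2/3}$ factor and using $y_2-y_1\sim |1-\lambda|^{1/2}+|\nu/k_1|^{1/4}$ in this subcase gives exactly the desired bound.

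The main points needing care are (i) the clean absorption of the $\epsilon$-term in~\eqref{2.2} via the basic $L^2$ bound of Proposition~\ref{resolvent-H-1} when deriving the weighted inequality in the cases $\lambda\geq 1$ and $\lambda\leq 0$, and (ii) preserving the sharp $(y_2-y_1)^{-2/3}$ dependence in Subcase~3.3 rather than applying the simpler consolidated bound used within the proof of Proposition~\ref{resolvent-H-1}.
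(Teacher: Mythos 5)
Your structure tracks the paper's (terse) proof exactly: the paper says to follow the case split of Corollary~\ref{resol-lambda} and to ``mainly use~\eqref{est:w-FH-1-00}'', and this is precisely what you do, including the key observation that the sharp $(y_2-y_1)^{-2/3}$ dependence in~\eqref{est:w-FH-1-00} must be retained rather than the consolidated $\nu^{-3/2}|k_1|^{-1/2}$ bound. The handling of $\lambda\geq 1$, of $\lambda\leq -1$, and of the subcases 3.1--3.3 for $\lambda\in(0,1)$ is sound.

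There is, however, a genuine gap in the subcase $-1<\lambda\leq 0$, where you claim ``Proposition~\ref{resolvent-H-1} directly suffices.'' For $\lambda\leq 0$ one has $|\lambda-1|^{\f12}\geq 1\gg|\nu/k_1|^{\f14}$, so for $-1<\lambda\leq 0$ the target~\eqref{resolventH-1-sgap} reduces (up to constants) to $\nu^{\f23}|k_1|^{\f13}\|w\|_{L^2}\lesssim\|F\|_{H^{-1}_k}$. Proposition~\ref{resolvent-H-1} as stated only gives $\nu^{\f34}|k_1|^{\f14}\|w\|_{L^2}\lesssim\|F\|_{H^{-1}_k}$, and since
\begin{equation*}
\nu^{\f23}|k_1|^{\f13}=\nu^{\f34}|k_1|^{\f14}\big(|k_1|/\nu\big)^{\f1{12}}\geq\nu^{\f34}|k_1|^{\f14},
\end{equation*}
the target is strictly \emph{stronger} than the statement of the Proposition, so the implication fails. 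The fix is to invoke the sharper intermediate estimate established inside Case~2 of the proof of Proposition~\ref{resolvent-H-1}, namely $\|w\|_{L^2}\lesssim\nu^{-\f23}|k_1|^{-\f13}\|F\|_{H^{-1}_k}$ for all $\lambda\leq 0$; this gives the result at once when $(|\lambda-1|^{\f12}+|\nu/k_1|^{\f14})^{\f13}\lesssim1$. (This is consistent with how Corollary~\ref{resol-lambda} itself treats $\lambda\leq0$: the argument starts from the sharper $(\nu|k_1|^2)^{\f13}$-bound of~\eqref{improvedL2}, not from the weaker statement of Proposition~\ref{resolvent-L2}.) With that one substitution your proof is complete.
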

\begin{proof}
The estimate \eqref{resolventH-1-sgap} can be obtained by following similar arguments as in the proof of Corollary \ref{resol-lambda}. Here we omit the details (mainly use \eqref{est:w-FH-1-00}).
\end{proof}

\subsection{Weak-type resolvent estimate for $F\in H^{-1}_k$}
Here we denote
\beno
\delta=|\nu/k_1|^{\frac{1}{4}}\ll 1,\quad \delta_1 =|\nu /k_1|^{\frac{1}{3}}(|\lambda-1|^{\frac{1}{2}}+\delta)^{\frac{2}{3}}.
\eeno

\begin{proposition}\label{weaktype}
 Let $w\in H^1(I)$ be a solution of \eqref{OSH-1} with $F\in H^{-1}_k(I)$.
Assume that $\nu|k|^3|k_1|^{-1}\leq 2(|1-\lambda|^{\f12}+|\nu/k_1|^{\f14})$ and $f\in H^1(I),j\in \{\pm1\}$ and $f(-j)=0$.  Then it holds that
\begin{align*}
|\langle w,f\rangle|
\lesssim&|k_1|^{-1}{|f(j)|}\|F\|_{H_k^{-1}}(|\lambda|+\delta^{\frac{4}{3}})^{-\frac{3}{4}} \delta^{-1}+|k_1|^{-1}\|F\|_{H^{-1}_k}\bigg(\|\mathbf{Ray}_{\delta_1}^{-1}f\|_{H^1}
\\&+\delta^{-\frac{4}{3}}(|\lambda-1|^{\frac{1}{2}}+\delta)^{\frac{1}{3}}
\|\mathbf{Ray}_{\delta_1}^{-1}f\|_{L^2}\bigg).
\end{align*}

\end{proposition}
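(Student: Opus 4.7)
The strategy is a duality argument in the spirit of Lemma \ref{weaktype-FL2}, adapted to the $H^{-1}_k$ setting. Let $W = \mathbf{Ray}_{\delta_1}^{-1}f$ and $\Phi=(\partial_y^2-|k|^2)^{-1}W$ with $\Phi(\pm 1)=0$, so $(1-y^2-\lambda+i\delta_1)W+2\Phi=f$. Evaluating this at $y=\pm j$ and using $f(-j)=\Phi(\pm j)=0$ gives $W(-j)=0$ and $W(j)=f(j)/(-\lambda+i\delta_1)$. Because $F\in H^{-1}_k$ is paired against $H^1_0$ functions while $W$ typically does not vanish at $y=j$, I introduce a smooth cutoff $\chi_*$ with $\chi_*(j)=1$, $\chi_*(-j)=0$, supported in $B(j,\delta_*)$, and decompose $W = W_0+W_{bc}$ with $W_{bc}:=W(j)\chi_*$ and $W_0:=W-W_{bc}\in H^1_0$; the width $\delta_*$ will be chosen at the end to balance contributions.

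Using $w(\pm1)=0$ (a consequence of $\varphi(\pm1)=\varphi''(\pm1)=0$ combined with $(\partial_y^2-|k|^2)\varphi=w$) and $W_0(\pm1)=0$, testing equation \eqref{OSH-1} against $W_0$ and integrating by parts yields
\begin{equation*}
\langle F,W_0\rangle=\nu\langle w',W_0'\rangle+\nu|k|^2\langle w,W_0\rangle+ik_1\langle(1-y^2-\lambda)w+2\varphi,W_0\rangle-\epsilon(\nu|k_1|)^{1/2}\langle w,W_0\rangle.
\end{equation*}
The symmetry $\langle\varphi,W\rangle=\langle w,\Phi\rangle$ (valid since $\varphi,\Phi\in H^1_0$) combined with the Rayleigh equation gives the key identity $\langle(1-y^2-\lambda)w+2\varphi,W\rangle=\langle w,f\rangle+i\delta_1\langle w,W\rangle$. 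Splitting $W_0=W-W_{bc}$ and rearranging produces
\begin{align*}
ik_1\langle w,f\rangle&=\langle F,W_0\rangle-\nu\langle w',W_0'\rangle-\nu|k|^2\langle w,W_0\rangle+k_1\delta_1\langle w,W\rangle\\
&\quad+ik_1\langle(1-y^2-\lambda)w+2\varphi,W_{bc}\rangle+\epsilon(\nu|k_1|)^{1/2}\langle w,W_0\rangle.
\end{align*}

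For the bulk contributions I apply $|\langle F,W_0\rangle|\le\|F\|_{H^{-1}_k}\|W_0\|_{H^1_k}$, Proposition \ref{resolvent-H-1} to bound $\nu\|w'\|_{L^2}\lesssim\|F\|_{H^{-1}_k}$, the hypothesis $\nu|k|^3|k_1|^{-1}\lesssim|1-\lambda|^{1/2}+\delta$ together with Corollary \ref{reslvent-H-1-sgap} to absorb the $\nu|k|^2\langle w,W_0\rangle$ term, and Corollary \ref{reslvent-H-1-sgap} again to obtain $|k_1|\delta_1\|w\|_{L^2}\lesssim\delta^{-4/3}(|1-\lambda|^{1/2}+\delta)^{1/3}\|F\|_{H^{-1}_k}$ using $\delta_1=|\nu/k_1|^{1/3}(|1-\lambda|^{1/2}+\delta)^{2/3}$. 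These reproduce the structure of the second (bracketed) factor in the stated estimate, pairing respectively with $\|W\|_{H^1}$ and $\|W\|_{L^2}$. For the boundary-corrector integral, $|1-y^2-\lambda|\lesssim|\lambda|+\delta_*$ on $\mathrm{supp}(W_{bc})$ and, since $\varphi(j)=0$, Hardy's inequality gives $\|\varphi\|_{L^\infty(\mathrm{supp}(W_{bc}))}\lesssim\delta_*^{1/2}\|u\|_{L^2}$; combined with $|W(j)|\lesssim|f(j)|/(|\lambda|+\delta_1)$, $\|W_{bc}\|_{L^2}\lesssim|W(j)|\delta_*^{1/2}$, $\|W_{bc}'\|_{L^2}\lesssim|W(j)|\delta_*^{-1/2}$, and the resolvent bounds of Proposition \ref{resolvent-H-1}, an optimal choice of $\delta_*$ extracts the first (boundary) term of the estimate.

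The principal obstacle is this last step: the sharp power $(|\lambda|+\delta^{4/3})^{-3/4}\delta^{-1}$ emerges only when $\delta_*$ is fixed by equilibrating the duality contribution $\|F\|_{H^{-1}_k}|W(j)|\delta_*^{-1/2}$ against the support-localized integrals $|k_1|(|\lambda|+\delta_*)\|w\|_{L^2}|W(j)|\delta_*^{1/2}$, and requires careful interplay between the lower bound $\delta_1\gtrsim\delta^{4/3}$ (at $|1-\lambda|\sim 1$) and the sharp $L^2$ resolvent estimates for $w$. A secondary subtlety is the verification that $w(\pm 1)=0$ so as to cleanly discard boundary terms when integrating $\nu(\partial_y^2-|k|^2)w$ against $W_0$; without this identity one would be forced to control $|w'(j)|$ pointwise, for which the $H^1$-energy estimate of Proposition \ref{resolvent-H-1} is insufficient.
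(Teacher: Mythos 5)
Your overall strategy matches the paper's: test the Orr--Sommerfeld equation in duality against $\mathbf{Ray}_{\delta_1}^{-1}f$, correct by a thin cutoff near $y=j$ to land the test function in $H^1_0$, and optimize the cutoff width $\delta_*$. The paper does the same thing with the piecewise-linear $\chi(y)=\max\{1-(1-y)/\delta_*,0\}$ and the decomposition $\phi=\phi_1+\phi(1)\chi$, finally specializing to $\phi=\mathbf{Ray}_{\delta_1}^{-1}f$ and using $|\phi(1)|\le(|\lambda|+\delta_1)^{-1}|f(1)|$. The bulk terms you identify match the paper's second bracket, and your identity $\langle(1-y^2-\lambda)w+2\varphi,W\rangle=\langle w,f\rangle+i\delta_1\langle w,W\rangle$ is exactly the Rayleigh duality the paper exploits. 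So far so good.

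However, there is a concrete gap in the last step. You bound the boundary-corrector integral by $|k_1|(|\lambda|+\delta_*)\|w\|_{L^2}|W(j)|\delta_*^{1/2}$, i.e.\ you estimate $\|w\|_{L^1(B(j,\delta_*))}$ by Cauchy--Schwarz as $\delta_*^{1/2}\|w\|_{L^2}$. Using Proposition~\ref{resolvent-H-1}, $\|w\|_{L^2}\lesssim|k_1|^{-1}\delta^{-3}\|F\|_{H^{-1}_k}$, so this term becomes $(|\lambda|+\delta_*)\delta^{-3}\delta_*^{1/2}\|F\|_{H^{-1}_k}|W(j)|$. Balancing it against the duality contribution $\|F\|_{H^{-1}_k}|W(j)|\delta_*^{-1/2}$ forces $\delta_*(|\lambda|+\delta_*)\sim\delta^3$, hence $\delta_*\sim\delta^{3/2}$ when $|\lambda|\lesssim\delta^{4/3}$. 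With $|W(j)|\sim(|\lambda|+\delta_1)^{-1}|f(j)|\sim\delta^{-4/3}|f(j)|$, the boundary term then comes out at $|k_1|^{-1}|f(j)|\|F\|_{H^{-1}_k}\,\delta^{-4/3-3/4}=|k_1|^{-1}|f(j)|\|F\|_{H^{-1}_k}\,\delta^{-25/12}$, whereas the target is $\delta^{-2}=\delta^{-24/12}$. This is strictly worse. The paper instead exploits $w(j)=0$ (which indeed holds, since $\varphi''(j)=|k|^2\varphi(j)=0$) to get the Hardy-type bound $\|w\|_{L^1(B(j,\delta_*))}\lesssim\delta_*^{3/2}\|w'\|_{L^2}\lesssim\delta_*^{3/2}\nu^{-1}\|F\|_{H^{-1}_k}$. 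This changes the balance to $\delta_*^2(|\lambda|+\delta_*)\sim\delta^4$, i.e.\ $\delta_*=(|\lambda|+\delta^{4/3})^{-1/2}\delta^2$, and only then does the boundary term become $|k_1|^{-1}|f(j)|\|F\|_{H^{-1}_k}(|\lambda|+\delta^{4/3})^{-3/4}\delta^{-1}$. So the place where you cite $w(\pm1)=0$ is slightly misdirected: for the integration by parts it is $W_0(\pm1)=0$ that kills the boundary term, not $w(\pm1)=0$; the real use of $w(j)=0$ is in the sharp $L^1$ estimate of $w$ on the support of the corrector, which your write-up does not invoke and which your stated balance is not compatible with.
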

\begin{proof}
To derive the weak-type estimate, we consider the following problem
\begin{equation}\label{OSweak}
\left \{
\begin{array}{lll}
-\nu(\partial_y^2-|k|^2)w+ik_1[(1-y^2-\lambda-i\delta_1)w+2\varphi]\\
\qquad \qquad -\epsilon (\nu |k_1|)^{\frac{1}{2}}w =F+ k_1\delta_1w,\\
w(\pm 1)=\varphi(\pm 1)=\varphi''(\pm 1)=0, \ \ (\partial_y^2-|k|^2)\varphi=w,
\end{array}
\right.
\end{equation}
where $\lambda\in\mathbb{R}$. For $\phi\in H^1_0(-1,1)$, we have
\begin{equation}
\begin{aligned}\label{basicphi0}
\|F\|_{H_k^{-1}}\|\phi\|_{H^1_k}
\geq& |\langle F,\phi\rangle|=|-\nu(\partial_y^2-|k|^2)w\\
&\qquad \qquad+ik_1\mathbf{Ray}_{-\delta_1}w-\epsilon\nu^{\frac{1}{2}}|k|^{\frac{1}{2}}w,\phi\rangle|\\
\geq &-\nu\|w'\|_{L^2}\|\phi'\|_{L^2}-\big|\nu {|k|^2}-k_1\delta_1\big|\|w\|_{L^2}\|\phi\|_{L^2}\\
&\quad-\epsilon(\nu |k_1|)^{\frac{1}{2}}\|w\|_{L^2}\|\phi\|_{L^2}+|\langle k_1\mathbf{Ray}_{-\delta_1}w, \phi\rangle|.
\end{aligned}
\end{equation}

Due to $\nu|k|^3|k_1|^{-1}\leq 2(|1-\lambda|^{\frac{1}{2}}+|\nu/k_1|^{\frac{1}{4}})$, we have
$\left|\nu|k|^2-k_1\delta_1\right|\lesssim |k_1|\delta_1.$
 By \eqref{basicphi0},  Proposition \ref{resolvent-H-1} and Corollary \ref{reslvent-H-1-sgap}, we obtain
\begin{equation}
\begin{aligned}\label{basicphi}
&|k_1||\langle w, (1-y^2-\lambda+i\delta_1)\phi+2(\partial_y^2-|k|^2)^{-1}\phi\rangle|\\
&\lesssim  \|F\|_{H_k^{-1}}\|\phi\|_{H^1_k}+\nu\|w'\|_{L^2}\|\phi'\|_{L^2} \\
&\qquad +|k_1|\delta_1\|w\|_{L^2}\|\phi\|_{L^2}+\epsilon\nu^{\frac{1}{2}}|k_1|^{\frac{1}{2}}\|w\|_{L^2}\|\phi\|_{L^2}\\
&\lesssim \|F\|_{H_k^{-1}}\|\phi\|_{H_k^1}+ |\nu/k_1|^{-\frac{1}{3}}(|\lambda-1|^{\frac{1}{2}}+\delta)^{\frac{1}{3}}
\|F\|_{H_k^{-1}}\|\phi\|_{L^2}.
\end{aligned}
\end{equation}

 If $\phi\in H^1(-1,1)$ and $\phi(-1)=0$, for any $\delta_*\in (0,\delta^{\frac{4}{3}}]\subset (0,1]$, let $\chi(y)=\max\left\{1-(1-y)/\delta_*,0\right\},$ and $\phi_1(y)=\phi(y)-\phi(1)\chi(y)$ for $y\in [-1,1]$. Then $\phi_1\in H^1_0(-1,1)$ with $\mathrm{supp}\chi\subset [1-\delta_*,1]$ and
\begin{equation}
\begin{aligned}\nonumber
&\|\chi\|_{L^\infty}=1, \ \|\chi\|_{L^2}\leq \delta_*^{\frac{1}{2}}, \ \|\chi'\|_{L^2}\leq \delta_*^{-\frac{1}{2}}, \\
& \|(\partial_y^2-|k|^2)^{-1}\chi\|_{L^\infty} \leq \|\chi\|_{L^1(1-\delta_*,1)}\lesssim \delta_*,\\
&\|(1-y^2-\lambda)\chi+2(\partial_y^2-|k|^2)^{-1}\chi\|_{L^\infty}\lesssim |\lambda|+\delta_*.
\end{aligned}
\end{equation}
As $w(1)=0$, we have $|w(y)|=|\int_y^1 w'(z)\mathrm{d}z|\leq |1-y|^{\frac{1}{2}}\|w'\|_{L^2}\leq \delta_*^{\frac{1}{2}}\|w'\|_{L^2}$ for $y\in [1-\delta_*,1]$ and $\|w\|_{L^1(1-\delta_*,1)}\leq \delta_*^{\frac{3}{2}}\|w'\|_{L^2}.$ Therefore, we deduce that
\begin{equation}\nonumber
\begin{aligned}
|&\langle w,(1-y^2-\lambda+i\delta_1)\phi+2(\partial_y^2-k^2)^{-1}\phi\rangle|\\
&\leq |\phi(1)||\langle w,(1-y^2-\lambda+i\delta_1)\chi+2(\partial_y^2-|k|^2)^{-1}\chi\rangle|\\
&\quad+|\langle w,(1-y^2-\lambda+i\delta_1)\phi_1+2(\partial_y^2-|k|^2)^{-1}\phi_1\rangle|\\
&\lesssim|\phi(1)|\|w\|_{L^1(1-\delta_*,1)}\|(1-y^2-\lambda)\chi+2(\partial_y^2-|k|^2)^{-1}\chi\|_{L^\infty}+\delta_1|\phi(1)|\|w\|_{L^2}\|\chi\|_{L^2}\\
&\quad+|k_1|^{-1}\|F\|_{H_k^{-1}}\|\phi_1\|_{H_k^1}+|k_1|^{-1}
|\nu/k_1|^{-\frac{1}{3}}(|\lambda-1|^{\frac{1}{2}}+\delta)^{\frac{1}{3}}\|F\|_{H_k^{-1}}\|\phi_1\|_{L^2}\\
&\lesssim |\phi(1)|\delta_*^{\frac{3}{2}}\nu^{-1}\|F\|_{H_k^{-1}}(|\lambda|+\delta_*)\\
&\quad+|k_1|^{-1}|\phi(1)|\|F\|_{H^{-1}_k}(\|\chi\|_{H^1_k}
+|\nu/k_1|^{-\frac{1}{3}}(|\lambda-1|^{\frac{1}{2}}+\delta)^{\frac{1}{3}}\|\chi\|_{L^2})\\
&\quad+|k_1|^{-1}\|F\|_{H^{-1}_k}\big(\|\phi\|_{H^1_k}+
|\nu/k_1|^{-\frac{1}{3}}(|\lambda-1|^{\frac{1}{2}}+\delta)^{\frac{1}{3}}\|\phi\|_{L^2}\big)\\
&\lesssim|k_1|^{-1}|\phi(1)|(\delta_*^{\frac{3}{2}}(|\lambda|+\delta^{\frac{4}{3}})\delta^{-4}+\delta_*^{-\frac{1}{2}}
+\nu^{-\frac{1}{3}}|k_1|^{\frac{1}{3}}
(|\lambda-1|^{\frac{1}{2}}+\delta)^{\frac{1}{3}}\delta_*^{\frac{1}{2}})\|F\|_{H^{-1}_k}\\
&\quad+|k_1|^{-1}\|F\|_{H^{-1}_k}\big(\|\phi\|_{H^1_k}+
\nu^{-\frac{1}{3}}|k_1|^{\frac{1}{3}}(|\lambda-1|^{\frac{1}{2}}+\delta)^{\frac{1}{3}}\|\phi\|_{L^2}\big).
\end{aligned}
\end{equation}
Taking $\delta_*=(|\lambda|+\delta^{\frac{4}{3}})^{-\frac{1}{2}}\delta^2$ to yield that
\begin{align*}
&|\langle w,(1-y^2-\lambda+i\delta_1)\phi+2(\partial_y^2-k^2)^{-1}\phi\rangle|\\
&\lesssim |k_1|^{-1}|\phi(1)|\|F\|_{H_k^{-1}}
(|\lambda|+\delta^{\frac{4}{3}})^{\frac{1}{4}}\delta^{-1}\\
&\quad+|k_1|^{-1}\|F\|_{H_k^{-1}}(\|\phi\|_{H^1_k}
+\nu^{-\frac{1}{3}}|k_1|^{\frac{1}{3}}\big(
|\lambda-1|^{\frac{1}{2}}+\delta)^{\frac{1}{3}}\|\phi\|_{L^2}\big),
\end{align*}
here we have used $(|\lambda-1|^{\frac{1}{2}}+\delta)^{\frac{1}{3}}\lesssim\delta^{-\frac{2}{3}}(|\lambda|+\delta^{\frac{4}{3}})^{\frac{1}{2}}$.

For $f\in H^1(-1,1)$ with $f(-1)=0$, let $\phi=\mathbf{Ray}_{\delta_1}^{-1}f$, then
$(-\lambda+i\delta_1)\phi(1)=f(1)$. Then we obtain
\begin{equation}\nonumber
\begin{aligned}
&|\langle w,f\rangle|\leq |\langle w,(1-y^2-\lambda+i\delta_1)\phi+2(\partial_y^2-k^2)^{-1}\phi\rangle|\\
&\lesssim|k_1|^{-1}|\phi(1)|\|F\|_{H_k^{-1}} (|\lambda|+\delta^{\frac{4}{3}})^{\frac{1}{4}}\delta^{-1}\\
&\quad+|k_1|^{-1}\|F\|_{H_k^{-1}} \big(\|\phi\|_{H^1_k}+\nu^{-\frac{1}{3}}|k_1|^{\frac{1}{3}}(|\lambda-1|^{\frac{1}{2}}+\delta)^{\frac{1}{3}}\|\phi\|_{L^2}\big).
\end{aligned}
\end{equation}
Using the fact that $|\phi(1)|\leq (|\lambda|+\delta_1)^{-1}|f(1)|$ and  $|\lambda|+\delta^{\frac{4}{3}}\lesssim |\lambda|+\delta_1,$ we deduce that
\begin{equation}\nonumber
\begin{aligned}
|\langle w,f\rangle|
\lesssim&|k_1|^{-1}|f(1)|\|F\|_{H_k^{-1}}(|\lambda|+\delta^{\frac{4}{3}})^{\frac{1}{4}}(|\lambda|+\delta_1)^{-1}\delta^{-1}\\
&+|k_1|^{-1}\|F\|_{H_k^{-1}}\big(\|\mathbf{Ray}_{\delta_1}^{-1}f\|_{H^1}
+\nu^{-\frac{1}{3}}|k_1|^{\frac{1}{3}}(|\lambda-1|^{\frac{1}{2}}+\delta)^{\frac{1}{3}}\|\mathbf{Ray}_{\delta_1}^{-1}f\|_{L^2}\big)\\
\lesssim&|k_1|^{-1}|f(1)|\|F\|_{H_k^{-1}}(|\lambda|+\delta^{\frac{4}{3}})^{-\frac{3}{4}}\delta^{-1}\\
&+|k_1|^{-1}\|F\|_{H_k^{-1}}\big(\|\mathbf{Ray}_{\delta_1}^{-1}f\|_{H^1}+ \delta^{-\frac{4}{3}}(|\lambda-1|^{\frac{1}{2}}+\delta)^{\frac{1}{3}}\|\mathbf{Ray}_{\delta_1}^{-1}f\|_{L^2}\big).
\end{aligned}
\end{equation}
This proves the weak-type resolvent estimate. {The case of $f(1)=0$ can be achieved similarly.}
\end{proof}

The above weak-type resolvent estimate yields the following estimate for $\|u\|_{L^2}$.

\begin{corollary}\label{u-L2-H-1}
 Let $w\in H^1(I)$ be a solution of \eqref{OSH-1} with $F\in H^{-1}_k(I)$. Then it holds that
$$\|u\|_{L^2}\leq C(\nu |k_1|)^{-\frac{1}{2}}\|F\|_{H_k^{-1}}.$$
\end{corollary}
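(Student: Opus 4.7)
The plan is to bound $\|u\|_{L^2}^2 = \langle w, -\varphi\rangle$ by testing with $f = -\varphi$ in the weak-type resolvent estimate of Proposition \ref{weaktype}, combined with the Rayleigh bounds from Lemma \ref{lem:Ray-simple-1}. The choice $f = -\varphi$ is natural because $\varphi(\pm 1) = 0$, so the boundary terms involving $f(j)$ in Proposition \ref{weaktype} vanish identically.

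I split according to whether the hypothesis $\nu|k|^3|k_1|^{-1} \leq 2(|1-\lambda|^{1/2}+|\nu/k_1|^{1/4})$ of Proposition \ref{weaktype} holds. In the complementary regime, raising $\nu^{3/4}|k|^3 \geq 2|k_1|^{3/4}$ to the $1/3$ power gives $\nu^{1/4}|k| \gtrsim |k_1|^{1/4}$, so from Proposition \ref{resolvent-H-1} and $\|u\|_{L^2} \leq |k|^{-1}\|w\|_{L^2}$ (via Cauchy--Schwarz and $\|\varphi\|_{L^2} \leq |k|^{-1}\|u\|_{L^2}$) I get
\begin{equation*}
\|u\|_{L^2} \lesssim |k|^{-1}\nu^{-3/4}|k_1|^{-1/4}\|F\|_{H^{-1}_k} \lesssim (\nu|k_1|)^{-1/2}\|F\|_{H^{-1}_k}.
\end{equation*}

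In the main regime, Proposition \ref{weaktype} with $f=-\varphi$ yields
\begin{equation*}
\|u\|_{L^2}^2 \lesssim |k_1|^{-1}\|F\|_{H^{-1}_k}\bigl(\|\mathbf{Ray}_{\delta_1}^{-1}\varphi\|_{H^1} + \delta^{-4/3}(|\lambda-1|^{1/2}+\delta)^{1/3}\|\mathbf{Ray}_{\delta_1}^{-1}\varphi\|_{L^2}\bigr),
\end{equation*}
and Lemma \ref{lem:Ray-simple-1} with source $-\varphi$ (which is $H^1$ with zero trace, so $\|(\partial_y,|k|)(-\varphi)\|_{L^2} = \|u\|_{L^2}$) gives
\begin{equation*}
\|\mathbf{Ray}_{\delta_1}^{-1}\varphi\|_{L^2} \lesssim \delta_1^{-1/2}\|u\|_{L^2},\qquad \|\partial_y\mathbf{Ray}_{\delta_1}^{-1}\varphi\|_{L^2} \lesssim \delta_1^{-3/2}(|1-\lambda|+\delta_1)^{1/2}\|u\|_{L^2}.
\end{equation*}

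The crux is then the purely algebraic verification that both factors in the bracket are $\lesssim |k_1/\nu|^{1/2}\|u\|_{L^2}$, using $\delta = |\nu/k_1|^{1/4}$ and the specific choice $\delta_1 = |\nu/k_1|^{1/3}(|\lambda-1|^{1/2}+\delta)^{2/3}$. Indeed $\delta_1^{3/2} = |\nu/k_1|^{1/2}(|\lambda-1|^{1/2}+\delta)$ and $\delta_1 \lesssim (|\lambda-1|^{1/2}+\delta)^2$, which give $\delta_1^{-3/2}(|1-\lambda|+\delta_1)^{1/2} \lesssim |k_1/\nu|^{1/2}$, while $\delta^{-4/3}(|\lambda-1|^{1/2}+\delta)^{1/3}\delta_1^{-1/2} = \delta^{-4/3}|k_1/\nu|^{1/6} = |k_1/\nu|^{1/2}$ after cancellation of the $(|\lambda-1|^{1/2}+\delta)^{1/3}$ factors. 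Combining and absorbing one $\|u\|_{L^2}$ yields the claimed bound. The only real subtlety is keeping track of these scaling matchings; no new estimates beyond those already established are needed.
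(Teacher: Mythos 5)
Your structure and algebra closely track the paper's own proof (the paper's Cases~2 and~3 are exactly your ``main regime'' and ``complementary regime,'' and the scaling computations $\delta_1^{3/2}=|\nu/k_1|^{1/2}(|\lambda-1|^{1/2}+\delta)$, $\delta^{-4/3}\delta_1^{-1/2}(|\lambda-1|^{1/2}+\delta)^{1/3}=|k_1/\nu|^{1/2}$ are all correct). However, there is a genuine gap: you invoke Lemma~\ref{lem:Ray-simple-1} with parameter $\delta_1$ throughout the ``main regime,'' but that lemma is stated only for $\delta\ll 1$ (its proof relies on Proposition~\ref{est-1} and the limiting-absorption estimates of \cite{WZZ-apde}, both of which assume the imaginary shift is small). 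In your main regime the constraint $\nu|k|^3|k_1|^{-1}\leq 2(|1-\lambda|^{1/2}+|\nu/k_1|^{1/4})$ places no upper bound on $|\lambda|$, and once $|1-\lambda|\gtrsim |k_1/\nu|$ one has $\delta_1\gtrsim 1$, so the lemma is no longer applicable.

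The paper closes this hole by separating out the case $|1-\lambda|\geq 4$ (its Case~1) and disposing of it elementarily: taking the imaginary part of the basic identity \eqref{2.0-basic} gives $(|1-\lambda|-1)\|w\|_{L^2}^2\leq \|u\|_{L^2}^2+\|F\|_{H^{-1}_k}\|(\partial_y,|k|)w\|_{L^2}$, which together with $\|u\|_{L^2}\leq |k|^{-1}\|w\|_{L^2}$, the a priori bound \eqref{2.2} on $\|(\partial_y,|k|)w\|_{L^2}$, and Young's inequality gives $\|w\|_{L^2}\lesssim(\nu^{-1/2}+|\nu/k_1|^{-1/4})\|F\|_{H^{-1}_k}$ and hence $\|u\|_{L^2}\leq |k|^{-1}\|w\|_{L^2}\lesssim(\nu|k_1|)^{-1/2}\|F\|_{H^{-1}_k}$. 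With $|1-\lambda|$ then bounded, your Case~2 has $\delta_1\lesssim |\nu/k_1|^{1/3}\ll 1$ as required. Your final sentence, that ``no new estimates beyond those already established are needed,'' is therefore too strong: the elementary $|\lambda|$-large argument is a separate, needed piece.
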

\begin{proof}
\textbf{Case 1.} $|1-\lambda|\geq 4$.

In this case, we get by \eqref{2.0-basic}(considering the imaginary part) and \eqref{2.2} that
\begin{align*}
   & \big(|1-\lambda|-1\big)\|w\|_{L^2}^2-\|u\|_{L^2}^2\leq \|F\|_{H_k^{-1}}\|(\partial_y,|k|)w\|_{L^2} \\
   &\quad\lesssim \nu^{-1}\|F\|_{H_k^{-1}}^2+\epsilon^{\f12}|\nu/k_1|^{-\f14}\|F\|_{H_k^{-1}}\|w\|_{L^2}.
\end{align*}
Then $\|w\|_{L^2}\lesssim (\nu^{-\f12}+|\nu/k_1|^{-\f14})\|F\|_{H^{-1}_k}$, hence
\begin{align*}
    &\|u\|_{L^2}\leq |k|^{-1}\|w\|_{L^2}\lesssim  (\nu^{-\f12}|k|^{-1}+|\nu/k_1|^{-\f14}|k|^{-1})\|F\|_{H^{-1}_k}\lesssim (\nu|k_1|)^{-\f12}\|F\|_{H^{-1}_k}.
 \end{align*}

\textbf{Case 2.}  $\nu|k|^3|k_1|^{-1}\leq |1-\lambda|^{\f12}+|\nu/k_1|^{\f14}$ and $|1-\lambda|\leq 4$.

In this case, we have $\delta_1\ll1$. Taking $f=\varphi=(\partial_y^2-|k|^2)^{-1}w$ with $\varphi(\pm 1)=0$, we get by Lemma \ref{lem:Ray-simple-1} that
\begin{align*}
   & \|\mathbf{Ray}_{\delta_1}^{-1}f\|_{H^1}+ \delta^{-\frac{4}{3}}(|\lambda-1|^{\frac{1}{2}}+\delta)^{\frac{1}{3}}\|\mathbf{Ray}_{\delta_1}^{-1}f\|_{L^2} \\
  &\lesssim \big(\delta_1^{-\f32}(|\lambda-1|+\delta_1)^{\f12}+ \delta_1^{-\f12}+\delta_1^{-\f12}\delta^{-\frac{4}{3}}(|\lambda-1|^{\frac{1}{2}} +\delta)^{\frac{1}{3}}\big)\|u\|_{L^2}\lesssim |\nu/k_1|^{-\f12}\|u\|_{L^2}.
\end{align*}
here we have used $|\lambda-1|^{\f12}+\delta \sim (|\lambda-1|+\delta_1)^{\f12}$. Therefore, it follows from Proposition \ref{weaktype} that
\begin{align*}
\|u\|_{L^2}^2=&|\langle w,\varphi\rangle|\\
 \lesssim& |k_1|^{-1}\|F\|_{H^{-1}_k} \big(\|\mathbf{Ray}_{\delta_1}^{-1}f\|_{H^1}+ \delta^{-\frac{4}{3}}(|\lambda-1|^{\frac{1}{2}}+\delta)^{\frac{1}{3}}\|\mathbf{Ray}_{\delta_1}^{-1}f\|_{L^2}\big) \\ \lesssim& (\nu|k_1|)^{-\f12}\|F\|_{H^{-1}_k}\|u\|_{L^2},
\end{align*}
which gives our result.\smallskip

\textbf{Case 3.} $\nu|k|^3|k_1|^{-1}\geq |1-\lambda|^{\f12}+|\nu/k_1|^{\f14}$.

In this case, it holds that $\nu |k|^3|k_1|^{-1}\geq  |\nu /k_1|^{\frac{1}{4}}$ and then
\begin{equation}\nonumber
\begin{aligned}
\|u\|_{L^2}\lesssim |k|^{-1}\|w\|_{L^2}\lesssim |k|^{-1}\nu^{-\frac{3}{4}}|k_1|^{-\frac{1}{4}}\|F\|_{H^{-1}_k}\lesssim ( \nu |k_1|)^{-\frac{1}{2}}\|F\|_{H^{-1}_k}.
\end{aligned}
\end{equation}
\end{proof}

\subsection{Resolvent estimate for $F\in  H^1_0$}
In this subsection, we consider the Orr-Sommerfeld equation with $F\in H^1_0(I)$
\begin{equation}\label{OSH-H1}
\left \{
\begin{array}{lll}
-\nu(\partial_y^2-|k|^2)w+ik_1[(1-y^2-\lambda)w+2\varphi]-\epsilon (\nu |k_1|)^{\frac{1}{2}}w=F,\\
w(\pm 1)=\varphi(\pm 1)=\varphi''(\pm 1)=0, \ \ (\partial_y^2-|k|^2)\varphi=w,
\end{array}
\right.
\end{equation}
where $0<\epsilon\ll 1, \lambda\in\mathbb{R}$ and $|k|^2=k_1^2+k_3^2$ with $k_1\not=0$.
\begin{proposition}\label{resolvent-H1}
Let $w\in H^3(I)$ be the solution to \eqref{OSH-H1} with $F\in H^1_0(I)$. Then it holds that
\begin{align}
   & |\nu/k_1|^{\frac{1}{2}} \|w'\|_{L^2} + |\nu/k_1|^{\frac{1}{4}}\|w\|_{L^2} +|\nu/k_1|^{\frac{1}{8}}\|u\|_{L^\infty} +\|u\|_{L^2}\le C|k_1|^{-1}\|(\partial_y,|k|)F\|_{L^2}.\nonumber%\label{dw:L2-H1}
\end{align}
\end{proposition}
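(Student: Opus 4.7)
The plan is to prove the four bounds by duality with the adjoint Orr--Sommerfeld equation, leveraging the weak-type resolvent estimate of Proposition~\ref{weaktype} and the Rayleigh bound of Lemma~\ref{lem:Ray-simple-1}. The crucial point is that the additional regularity $F\in H^1_0$ (so $F(\pm 1)=0$) allows an integration by parts that converts $\langle F,\cdot\rangle$ tested against a stream function into $\|(\partial_y,|k|)F\|_{L^2}$ times the velocity norm of the test. Throughout, the adjoint operator $\mathcal{L}^*$ coincides with $\mathcal{L}$ after $k_1\mapsto -k_1$, so it satisfies the same resolvent estimates from Sections~\ref{sec:resolvent-slip} and what follows.

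The main effort goes into the $\|u\|_{L^2}$ bound. Start from $\|u\|_{L^2}^2=-\langle w,\varphi\rangle$ and let $\Psi$ solve $\mathcal{L}^*\Psi=-\varphi$ with Navier-slip boundary conditions; then by duality $\|u\|_{L^2}^2=\langle F,\Psi\rangle$. Writing $\Psi=(\partial_y^2-|k|^2)\phi_\Psi$ with $\phi_\Psi(\pm 1)=0$ and using $F(\pm 1)=0$, integration by parts gives
\begin{equation*}
\langle F,\Psi\rangle=\langle F,(\partial_y^2-|k|^2)\phi_\Psi\rangle=-\langle\partial_y F,\partial_y\phi_\Psi\rangle-|k|^2\langle F,\phi_\Psi\rangle,
\end{equation*}
so that $\|u\|_{L^2}^2\le\|(\partial_y,|k|)F\|_{L^2}\|u_\Psi\|_{L^2}$ by Cauchy--Schwarz, where $u_\Psi=(\partial_y\phi_\Psi,-i|k|\phi_\Psi)$. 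To estimate $\|u_\Psi\|_{L^2}$, I will apply Proposition~\ref{weaktype} to the adjoint equation (source $-\varphi\in H^{-1}_k$) with test function $f=\phi_\Psi$. Since $\phi_\Psi(\pm 1)=0$, the boundary term there drops out, and Lemma~\ref{lem:Ray-simple-1} applied to $f=\phi_\Psi$ bounds $\|\mathbf{Ray}_{\delta_1}^{-1}\phi_\Psi\|_{H^1}+\delta^{-4/3}(|\lambda-1|^{1/2}+\delta)^{1/3}\|\mathbf{Ray}_{\delta_1}^{-1}\phi_\Psi\|_{L^2}$ by $C|\nu/k_1|^{-1/2}\|u_\Psi\|_{L^2}$ (exactly as in the proof of Corollary~\ref{u-L2-H-1}), yielding $\|u_\Psi\|_{L^2}\le C(\nu|k_1|)^{-1/2}\|\varphi\|_{H^{-1}_k}$. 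A case analysis on the relative sizes of $\nu|k|^3|k_1|^{-1}$ and $|1-\lambda|^{1/2}+|\nu/k_1|^{1/4}$, paralleling the three cases in Corollary~\ref{resol-lambda}, bounds $\|\varphi\|_{H^{-1}_k}$ by the right combination of $\|u\|_{L^2}$ and lower-order quantities; absorption via Young's inequality then produces $\|u\|_{L^2}\le C|k_1|^{-1}\|(\partial_y,|k|)F\|_{L^2}$.

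Once $\|u\|_{L^2}$ is under control, the remaining bounds follow from the energy identity obtained by testing the OS equation against $\bar w$. Taking the real part gives $\nu\|(\partial_y,|k|)w\|_{L^2}^2\le|\langle F,w\rangle|+\epsilon(\nu|k_1|)^{1/2}\|w\|_{L^2}^2$, and the duality pairing $|\langle F,w\rangle|\le\|(\partial_y,|k|)F\|_{L^2}\|w\|_{H^{-1}_k}\le\|(\partial_y,|k|)F\|_{L^2}\|u\|_{L^2}$ (the last step via $w=(\partial_y^2-|k|^2)\varphi$ and integration by parts) combined with the just-obtained $\|u\|_{L^2}$ bound yields $\nu\|(\partial_y,|k|)w\|_{L^2}^2\le C|k_1|^{-1}\|(\partial_y,|k|)F\|_{L^2}^2$ after absorbing the $\epsilon$ term. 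This gives $|\nu/k_1|^{1/2}\|w'\|_{L^2}\le C|k_1|^{-1}\|(\partial_y,|k|)F\|_{L^2}$. The $|\nu/k_1|^{1/4}\|w\|_{L^2}$ bound follows by exploiting the imaginary part of the same identity together with the weighted Hardy/positivity inequalities used for $\lambda\le 0$ (Lemma~\ref{positive}, Lemma~\ref{hardy-type}) and the weighted treatment of $\int(1-y^2-\lambda)|w|^2\mathrm{d}y$ in the three cases on $\lambda$. Finally, $|\nu/k_1|^{1/8}\|u\|_{L^\infty}$ comes from the 1-D Sobolev interpolation $\|u\|_{L^\infty}\le C\|u\|_{L^2}^{1/2}(\|u\|_{L^2}+\|\partial_y u\|_{L^2})^{1/2}$ once $\|\partial_y u\|_{L^2}$ is bounded via $w=(\partial_y^2-|k|^2)\varphi$ and the established $\|w\|_{L^2}$ estimate.

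The hardest step is closing the $\|u\|_{L^2}$ bound with the sharp constant $C|k_1|^{-1}$ rather than the weaker one produced by naive duality. The obstruction is the same one that drives Corollary~\ref{resol-lambda} and Corollary~\ref{u-L2-H-1}: for $\lambda\in(0,1)$ the critical layers at $y_1,y_2$ can collide with each other and interact with the critical point $y=0$, and it is precisely this regime that forces a case split and the use of Lemma~\ref{lem:Ray-simple-1} in a sharp form. The case $\lambda\in\Omega_\epsilon$ follows from the real-$\lambda$ case by the same perturbation argument used in Corollary~\ref{resolvent-L2-complex}.
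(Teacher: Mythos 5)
Your proposed duality reduction for $\|u\|_{L^2}$ does not close with the required constant in the crucial regime, and I don't see how the "case analysis" you invoke repairs it.

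The chain you set up is: $\|u\|_{L^2}^2=\langle F,\Psi\rangle\le\|(\partial_y,|k|)F\|_{L^2}\|u_\Psi\|_{L^2}$ (this part is fine, using $F(\pm1)=\phi_\Psi(\pm1)=0$), and then Proposition~\ref{weaktype} plus Lemma~\ref{lem:Ray-simple-1} applied to the adjoint problem gives $\|u_\Psi\|_{L^2}\lesssim(\nu|k_1|)^{-1/2}\|\varphi\|_{H^{-1}_k}$, which is just Corollary~\ref{u-L2-H-1} for the adjoint. The best a priori bound on $\|\varphi\|_{H^{-1}_k}$ is $\|\varphi\|_{H^{-1}_k}\le|k|^{-1}\|\varphi\|_{L^2}\le|k|^{-2}\|u\|_{L^2}$ — there is no further gain to be had here, since $\varphi$ is merely an $H^1_0$ function with $\|(\partial_y,|k|)\varphi\|_{L^2}=\|u\|_{L^2}$, and a case analysis on $\lambda$ does not change this. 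Absorbing $\|u\|_{L^2}$ then yields $\|u\|_{L^2}\lesssim(\nu|k_1|)^{-1/2}|k|^{-2}\|(\partial_y,|k|)F\|_{L^2}$. This prefactor is $\lesssim|k_1|^{-1}$ precisely when $\nu|k|^4|k_1|^{-1}\gtrsim 1$, which is the easy (large-$|k|$) regime. In the hard regime $\nu|k|^3|k_1|^{-1}\le|1-\lambda|^{1/2}+|\nu/k_1|^{1/4}$ with $\lambda\in(0,1)$ one can have $\nu|k|^4|k_1|^{-1}\ll 1$, and your bound is strictly weaker than the claimed $|k_1|^{-1}$. The underlying obstruction is that passing to the adjoint and using $\|\varphi\|_{H^{-1}_k}$ throws away the $H^1_0$ regularity of the \emph{original} source $F$ exactly where it is needed, i.e.\ near the critical layers.

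The paper's proof in the regime $\lambda\in(0,1)$, $\nu|k|^3|k_1|^{-1}\le|1-\lambda|^{1/2}+|\nu/k_1|^{1/4}$ works differently: it decomposes $w=w_3+w_4+w_5$ where $w_3$ solves a damped Rayleigh equation with the \emph{same} source $F/(ik_1)$, $w_4$ absorbs $\nu\partial_y^2w_3$, and $w_5$ absorbs the artificial damping $-2k_1\delta_1w_3$. The inviscid damping estimate of Lemma~\ref{lem:Ray-simple-1}, applied directly to $w_3$ with $H^1_0$ source, produces the sharp $|k_1|^{-1}$ bound on $u_3$; the viscous corrections $w_4,w_5$ are of lower order and are controlled by Corollaries~\ref{reslvent-H-1-sgap}, \ref{u-L2-H-1}, \ref{resol-lambda}, \ref{resol-lambda-00}. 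It is this decomposition — putting the sharp Rayleigh estimate on $w_3$ directly rather than on the adjoint problem — that exploits $F\in H^1_0$, and your proposal has no substitute for it. For the same reason, your route to $\|w\|_{L^2}$ via "the imaginary part of the same identity together with the weighted Hardy/positivity inequalities" handles $\lambda\ge1$ and $\lambda\le 0$, but not $\lambda\in(0,1)$ where the critical layers collide with $y=0$; there too the paper relies on the $w_3+w_4+w_5$ decomposition, together with \eqref{est:varphi3-01}, \eqref{est:varphi4-01}, \eqref{est:varphi5-01}.
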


\begin{proof}
It is easy to see that
 \begin{align}\label{eq:int-w-H1-00}
  &\nu \|(\partial_y,|k|)w\|_{L^2}^2+ik_1\int_{-1}^1(1-y^2-\lambda)|w|^2-2ik_1\|u\|_{L^2}^2\nonumber\\
   &=\langle F, w\rangle+\epsilon(\nu|k_1|)^{\frac{1}{2}}\|w\|_{L^2}^2,
 \end{align}
 which gives $$\nu\|(\partial_y,|k|)w\|_{L^2}^2\leq \|(\partial_y, |k|) F\|_{L^2}\|u\|_{L^2}+\epsilon(\nu|k_1|)^{\frac{1}{2}}\|w\|_{L^2}^2.$$

Assume that $\nu|k|^3|k_1|^{-1}\leq |1-\lambda|^{\f12}+|\nu/k_1|^{\f14}$. Then the proof will be completed by three cases.\smallskip

\noindent\textbf{Case 1. $\lambda\geq 1$.}

If $\lambda\geq 1$, by taking the imaginary part of \eqref{eq:int-w-H1-00}, we have
$$\|\sqrt{\lambda-1+y^2}w\|_{L^2}^2+2\|u\|_{L^2}^2\leq |k_1|^{-1}\|(\partial_y, |k|) F\|_{L^2}\|u\|_{L^2},$$
which gives
\begin{align}\label{est:yw-lambda1-01}
 \|yw\|_{L^2}+\|u\|_{L^2}\lesssim |k_1|^{-1}\|(\partial_y,|k|)F\|_{L^2}.
\end{align}
Therefore, we have
\begin{equation}\nonumber
\begin{aligned}
\nu\|(\partial_y,|k|)w\|_{L^2}^2\leq &\|(\partial_y,|k|)F\|_{L^2}\|u\|_{L^2}+\epsilon (\nu|k_1|)^{\frac{1}{2}}\|w\|_{L^2}^2\\
\lesssim& |k_1|^{-1}\|(\partial_y,|k|)F\|_{L^2}^2+\epsilon (\nu|k_1|)^{\frac{1}{2}}\|(\partial_y,|k|)w\|_{L^2}\|u\|_{L^2}\\
\lesssim& |k_1|^{-1}\|(\partial_y,|k|)F\|_{L^2}^2+\epsilon |\nu/k_1|^{\frac{1}{2}}\|(\partial_y,|k|)w\|_{L^2}\|(\partial_y,|k|)F\|_{L^2}.
\end{aligned}
\end{equation}
Due to $\epsilon\ll 1$, we get by Young's inequality that
\begin{equation}\label{est:parw-lambda1-01}
\begin{aligned}
\|(\partial_y,|k|)w\|_{L^2}^2\lesssim& (\nu |k_1|)^{-1}\|(\partial_y,|k|)F\|_{L^2}^2.
\end{aligned}
\end{equation}

Note that
\begin{equation}\nonumber
\begin{aligned}
\|w\|_{L^2}^2=\langle 1,|w|^2\rangle=-\langle y,\partial_y|w|^2\rangle \leq 2\|yw\|_{L^2}\|\partial_yw\|_{L^2},
\end{aligned}
\end{equation}
then by \eqref{est:yw-lambda1-01} and \eqref{est:parw-lambda1-01}, we get
\begin{equation}\label{w-H01}
\|w\|_{L^2}^2\lesssim \nu^{-\frac{1}{2}}|k_1|^{-\frac{3}{2}}\|(\partial_y,|k|)F\|_{L^2}^2.
\end{equation}
Meanwhile, we have
$$\|u\|_{L^\infty}\lesssim \|u\|_{L^2}^{\frac{1}{2}}\|w\|_{L^2}^{\frac{1}{2}}\lesssim\nu^{-\frac{1}{8}}|k_1|^{-\frac{7}{8}}\|(\partial_y,|k|)F\|_{L^2}.$$

\noindent\textbf{Case 2. $\lambda \leq 0$.}

Indeed, it follows by taking the real part of \eqref{eq:int-w-H1-00} that
$$\nu\|(\partial_y,|k|)w\|_{L^2}^2\lesssim \|(\partial_y,|k|)F\|_{L^2}\|u\|_{L^2}+\epsilon(\nu|k_1|)^{\frac{1}{2}}\|w\|_{L^2}^2,$$
which along with $\epsilon(\nu|k_1|)^{\frac{1}{2}}\|w\|_{L^2}^2\leq (\nu^{\f12}\|(\partial_y,|k|)w\|_{L^2})(\epsilon|k_1|^{\f12}\|u\|_{L^2})$ gives
\begin{align}\label{eq:image-lambda00}
   \nu\|(\partial_y,|k|)w\|_{L^2}^2\lesssim \|(\partial_y,|k|)F\|_{L^2}\|u\|_{L^2}+\epsilon^2|k_1|\|u\|_{L^2}^2.
 \end{align}
By taking the imaginary part of \eqref{eq:int-w-H1-00}, we obtain
\begin{align}\label{eq:image-lambda0}
   |k_1|&\left(\int_{-1}^{1}(1-y^2-\lambda)|w|^2\mathrm{d}y +2\int_{-1}^{1}\varphi\overline{w}\mathrm{d}y\right)\\
   &\leq |\langle F,w\rangle|\leq \|(\partial_y,|k|)F\|_{L^2}\|u\|_{L^2},\nonumber
\end{align}
which along with Lemma \ref{positive} gives
\begin{align*}
    & \|u\|_{L^2}^2+|k|^2\|\varphi\|_{L^2}^2\leq 9|k_1|^{-1}\|(\partial_y,|k|)F\|_{L^2}\|u\|_{L^2}.
 \end{align*}
 Therefore, we have
 \begin{align}\label{eq:image-lambda000}
    & \|u\|_{L^2}\leq 9|k_1|^{-1}\|(\partial_y,|k|)F\|_{L^2}.
 \end{align}
This along with \eqref{eq:image-lambda00} gives
 \begin{align*}
    & \|(\partial_y,|k|)w\|_{L^2}\lesssim (\nu |k_1|)^{-\f12}\|(\partial_y,|k|)F\|_{L^2}.
 \end{align*}

Now we turn to establish the estimate for $\|w\|_{L^2}$. By \eqref{eq:image-lambda0}, we have
\begin{align*}
    \|(1-y^2)^{\f12}w\|_{L^2}^2\leq & |k_1|^{-1}\|(\partial_y,|k|)F\|_{L^2}\|u\|_{L^2}+2\|u\|_{L^2}^2\\
   \lesssim& |k_1|^{-2}\|(\partial_y,|k|)F\|_{L^2}^2,
\end{align*}
from which and  Lemma \ref{hardy-type}, we infer that
\begin{align*}
   & \|w\|_{L^2}\lesssim \nu^{-\f16}|k_1|^{-\f56}\|(\partial_y,|k|)F\|_{L^2}.
\end{align*}
Then we have
$$\|u\|_{L^\infty}\lesssim \|u\|_{L^2}^{\frac{1}{2}}\|w\|_{L^2}^{\frac{1}{2}}\lesssim\nu^{-\frac{1}{8}}|k_1|^{-\frac{7}{8}}\|(\partial_y,|k|)F\|_{L^2}.$$

\noindent\textbf{Case 3. $\lambda\in (0,1)$.}

Let $-1\leq y_1\leq 0\leq y_2\leq 1$ such that $1-y_i^2=\lambda(i=1,2)$ with $y_1=-y_2$. To achieve the desired estimates, we decompose $w=w_3+w_4+w_5$, where $w_3,\ w_4$ and $w_5$ solve
\begin{equation*}
\left\{\begin{aligned}
&\big(1-y^2-\lambda+i(2\delta_1-\nu|k|^2/k_1)\big)w_3+2\varphi_3=F/( ik_1) ,\\
&-\nu(\partial_y^2-|k|^2)w_4+ik_1\big((1-y^2-\lambda)w_4+2\varphi_4\big)= \nu\partial_y^2w_3,\\
&-\nu(\partial_y^2-|k|^2)w_5+ik_1\big((1-y^2-\lambda)w_5+2\varphi_5\big)=-2k_1\delta_1w_3,\\
&(\partial_y^2-|k|^2)\varphi_3=w_3,\ (\partial_y^2-|k|^2)\varphi_4=w_4,\ (\partial_y^2-|k|^2)\varphi_5=w_5,\\ &\varphi_3|_{\pm1}=\varphi_4|_{\pm1}=\varphi_5|_{\pm1}=0,
\end{aligned}
\right.
\end{equation*}
where $\delta_1=|\nu/k_1|^{\f13}\big(|1-\lambda|^{\f12}+|\nu/k_1|^{\f14}\big)^{\f23} =|\nu/k_1|^{\f13}\big((y_2-y_1)/2+|\nu/k_1|^{\f14}\big)^{\f23} $.

Thanks to $\nu|k|^3|k_1|^{-1}\leq |1-\lambda|^{\f12}+|\nu/k_1|^{\f14}$, we have $\nu|k|^2/|k_1|\leq \delta_1 $, $2\delta_1-\nu|k|^2/k_1 \sim \delta_1\lesssim |\nu/k_1|^{\f13}\ll 1$. Then  by Lemma \ref{lem:Ray-simple-1}, we get
\begin{align*}
  &  \|(\partial_y,|k|)\varphi_3\|_{L^2}+ \delta_1^{\f12}\|w_3\|_{L^2}\\
    &\qquad+\delta_1^{\f32}((y_2-y_1)^2+\delta_1)^{-\f12} \|w_3'\|_{L^2}\lesssim |k_1|^{-1}\|(\partial_y,|k|)F\|_{L^2},
\end{align*}
which along with $(y_2-y_1)^2+\delta_1\lesssim (y_2-y_1)^2+|\nu/k_1|^{\f12}$ yields that
\begin{equation}\label{est:varphi3-01}
  \begin{aligned}
  & \|(\partial_y,|k|)\varphi_3\|_{L^2}+ |\nu/k_1|^{\f16} \big((y_2-y_1)+|\nu/k_1|^{\f14}\big)^{\f13}\|w_3\|_{L^2}\\
  &\qquad+|\nu/k_1|^{\f12} \|w_3'\|_{L^2} \lesssim |k_1|^{-1}\|(\partial_y,|k|)F\|_{L^2}.
  \end{aligned}
\end{equation}
By Corollary \ref{reslvent-H-1-sgap}, Corollary \ref{u-L2-H-1} and \eqref{est:varphi3-01}, we get
\begin{equation}\label{est:varphi4-01}
  \begin{aligned}
  &(\nu|k_1|)^{\f12}\|(\partial_y,|k|)\varphi_4\|_{L^2}+ \nu^{\f23}|k_1|^{\f13}\big((y_2-y_1)+|\nu/k_1|^{\f14}\big)^{\f13} \|w_4\|_{L^2}
  \\& \lesssim \nu\|\partial_yw_3\|_{L^2}\lesssim |\nu/k_1|^{\f12}\|(\partial_y,|k|)F\|_{L^2}.
  \end{aligned}
\end{equation}
Thanks to Corollary \ref{resol-lambda} and \eqref{est:varphi3-01}, we have
\begin{align*}
   & \nu^{\f13}|k_1|^{\f23}\big((y_2-y_1)+|\nu/k_1|^{\f14}\big)^{\f23}\|w_5\|_{L^2}\lesssim |k_1|\delta_1\|w_3\|_{L^2}\\
   &\lesssim |\nu/k_1|^{\f16} \big((y_2-y_1)+|\nu/k_1|^{\f14}\big)^{\f13}\|(\partial_y,|k|)F\|_{L^2}.
\end{align*}
 By Corollary \ref{resol-lambda-00} and \eqref{est:varphi3-01}, we have
\begin{align*}
   & |\nu/k_1|^{\f16}\big((y_2-y_1)+|\nu/k_1|^{\f14}\big)^{\f13}\|(\partial_y,|k|)\varphi_5\|_{L^2} \lesssim |k_1|^{-1}\|2k_1\delta_1 w_3\|_{L^2}\\
   &\lesssim |\nu/k_1|^{\f16} \big((y_2-y_1)+|\nu/k_1|^{\f14}\big)^{\f13}|k_1|^{-1}\|(\partial_y,|k|)F\|_{L^2},
\end{align*}
which implies
\begin{align}\label{est:varphi5-01}
   & \|(\partial_y,|k|)\varphi_5\|_{L^2}+ |\nu/k_1|^{\f16} \big((y_2-y_1)+|\nu/k_1|^{\f14}\big)^{\f13}\|w_5\|_{L^2}\nonumber\\
   &\lesssim |k_1|^{-1}\|(\partial_y,|k|)F\|_{L^2}.
\end{align}

Thanks to $w=w_3+w_4+w_5$, we conclude by \eqref{est:varphi3-01}, \eqref{est:varphi4-01} and \eqref{est:varphi5-01} that
\begin{align}\label{est:u-w-FH1-01}
   & \|u\|_{L^2}+ |\nu/k_1|^{\f16} \big((y_2-y_1)+|\nu/k_1|^{\f14}\big)^{\f13}\|w\|_{L^2}\nonumber\\
   &\lesssim |k_1|^{-1}\|(\partial_y,|k|)F\|_{L^2},
\end{align}
which implies
\begin{align}\label{est:paw-FH1-01}
     \nu\|(\partial_y,|k|)w\|_{L^2}^2\leq &\|(\partial_y,|k|)F\|_{L^2}\|u\|_{L^2}+\epsilon (\nu|k_1|)^{\frac{1}{2}}\|w\|_{L^2}^2\\
     \lesssim& |k_1|^{-1}\|(\partial_y,|k_1|)F\|_{L^2}^2.\nonumber
\end{align}
Now \eqref{est:u-w-FH1-01} and \eqref{est:paw-FH1-01} show that
\begin{align*}
     \|u\|_{L^2}+& |\nu/k_1|^{\f16} \big((y_2-y_1)+|\nu/k_1|^{\f14}\big)^{\f13}\|w\|_{L^2}\\
    &+|\nu/k_1|^{\f12}\|w'\|_{L^2}\lesssim |k_1|^{-1}\|(\partial_y,|k|)F\|_{L^2}.
 \end{align*}
Then we finish the proof of Case 3.
\smallskip

It remains to consider the case of $\nu |k|^3|k_1|^{-1}\geq |1-\lambda|^{\f12}+|\nu/k_1|^{\f14}$. In this case, we have $\nu|k|^3|k_1|^{-1}\geq \nu^{\frac{1}{4}}|k_1|^{-\frac{1}{4}}$, that is, $(\nu |k_1|^3)^{\frac{1}{4}}\lesssim \nu |k|^3$. Therefore, for any $\lambda\in \mathbb{R}$, by Proposition \ref{resolvent-L2}, we have
\begin{equation}\nonumber
\begin{aligned}
&\|w\|_{L^2}\lesssim \nu^{-\frac{1}{2}}|k_1|^{-\frac{1}{2}}|k|^{-1}\|(\partial_y,|k|)F\|_{L^2}\lesssim \nu^{-\frac{1}{4}}|k_1|^{-\frac{3}{4}}\|(\partial_y,|k|)F\|_{L^2},\\
&\|w'\|_{L^2}\lesssim  \nu^{-\frac{3}{4}}|k_1|^{-\frac{1}{4}}|k|^{-1}\|(\partial_y,|k|)F\|_{L^2}\lesssim (\nu|k_1|)^{-\frac{1}{2}}\|(\partial_y,|k|)F\|_{L^2},\\
&\|u\|_{L^2}\lesssim\nu^{-\frac{3}{8}}|k_1|^{-\frac{5}{8}}|k|^{-\frac{3}{2}}\|(\partial_y,|k|)F\|_{L^2}
\lesssim|k_1|^{-1}\|(\partial_y,|k|)F\|_{L^2},\\
&\|u\|_{L^\infty}\lesssim \|u\|_{L^2}^{\frac{1}{2}}\|w\|_{L^2}^{\frac{1}{2}}\lesssim \nu^{-\frac{1}{8}}|k_1|^{-\frac{7}{8}}\|(\partial_y,|k|)F\|_{L^2},
\end{aligned}
\end{equation}
which give the desired results.

This completes the proof of the proposition.
\end{proof}

\subsection{Orr-Sommerfeld equation without nonlocal term}

To establish the space-time estimates  of $\om_2$, we need to consider the Orr-Sommerfeld equation without nonlocal term
\begin{equation}\label{eq:res-non}
\left \{
\begin{array}{lll}
-\nu(\partial_y^2-|k|^2)w+ik_1(1-y^2-\lambda)w-\epsilon (\nu |k_1|)^{\frac{1}{2}}w=F_1,\\
(\partial_y^2-|k|^2)\varphi=w, \quad \varphi(\pm 1)=0,\quad
w(\pm 1)=0,
\end{array}
\right.
\end{equation}
with $\epsilon$ small enough.

The proof of the following two propositions are very similar to the case with nonlocal term. Here we omit the proof.

\begin{proposition}\label{prop:res-non}
Let $w \in H^2(I)$ be the solution of (\ref{eq:res-non}) with $\lambda \in \mathbb{R}$ and $F_1 \in L^2(I)$. Then there holds that
\begin{equation}\nonumber
\begin{aligned}
\nu^{\frac{3}{4}}|k_1|^{\frac{1}{4}}\|(\pa_y, |k|)w\|_{L^2}+\nu^{\frac{1}{2}}|k_1|^{\frac{1}{2}} \Vert w \Vert_{L^2}
\leq C \Vert F_1 \Vert_{L^2}.
\end{aligned}
\end{equation}
\end{proposition}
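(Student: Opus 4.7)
The plan is to mimic the argument of Proposition \ref{resolvent-L2}, exploiting the fact that removing the nonlocal coupling $2\varphi$ decouples $w$ from $\varphi$ and simplifies every step in which $\varphi$ previously entered.

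\textbf{Step 1 (Energy identity).} Pair \eqref{eq:res-non} with $\bar w$ and integrate by parts using $w(\pm1)=0$:
\begin{equation*}
\nu\|(\partial_y,|k|)w\|_{L^2}^2+ik_1\int_{-1}^{1}(1-y^2-\lambda)|w|^2\mathrm{d}y-\epsilon(\nu|k_1|)^{1/2}\|w\|_{L^2}^2=\langle F_1,w\rangle.
\end{equation*}
Taking real and imaginary parts (permitted since $\lambda\in\mathbb{R}$) yields
\begin{align*}
&\nu\|(\partial_y,|k|)w\|_{L^2}^2\le\|F_1\|_{L^2}\|w\|_{L^2}+\epsilon(\nu|k_1|)^{1/2}\|w\|_{L^2}^2,\\
&|k_1|\Bigl|\int_{-1}^{1}(1-y^2-\lambda)|w|^2\mathrm{d}y\Bigr|\le\|F_1\|_{L^2}\|w\|_{L^2}.
\end{align*}

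\textbf{Step 2 (Base $L^2$ bound).} Establish $(\nu|k_1|)^{1/2}\|w\|_{L^2}\le C\|F_1\|_{L^2}$ by the same case analysis on $\lambda$ used in the proof of Proposition \ref{resolvent-L2}. For $\lambda\ge 1$, split $[-1,1]=\{|y|\le\delta\}\cup\{|y|\ge\delta\}$ with $\delta=(\nu/|k_1|)^{1/4}$; on $\{|y|\ge\delta\}$ the imaginary part gives $\delta^2\|w\|_{L^2(\{|y|\ge\delta\})}^2\lesssim |k_1|^{-1}\|F_1\|_{L^2}\|w\|_{L^2}$, while on the critical layer $\|w\|_{L^2(\{|y|\le\delta\})}^2\lesssim\delta\|w\|_{L^2}\|w'\|_{L^2}$ is absorbed using $\|w'\|_{L^2}^2\lesssim\nu^{-1}\|F_1\|_{L^2}\|w\|_{L^2}+\epsilon\nu^{-1/2}|k_1|^{1/2}\|w\|_{L^2}^2$ from Step 1. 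For $\lambda\le 0$, the imaginary part directly bounds $\|(1-y^2)^{1/2}w\|_{L^2}^2\lesssim |k_1|^{-1}\|F_1\|_{L^2}\|w\|_{L^2}$; combining with Lemma \ref{hardy-type} and the $\|w'\|_{L^2}$ bound yields the desired estimate. For $\lambda\in(0,1)$, set $y_{1,2}=\mp\sqrt{1-\lambda}$ and apply the outer/inner splitting of Lemmas \ref{lem:outside}--\ref{lem:inside} on $[-1,1]\setminus(y_1,y_2)$ and $(y_1,y_2)$; absent the $2\varphi$ coupling, every cross-term involving $\varphi$ drops out of the weighted identities obtained by testing against $\rho_j^2w$ and $\rho_j^2w/(1-y^2-\lambda)$, so choosing $\delta^3(y_2-y_1)|k_1|=\nu$ (or $\delta=(\nu/|k_1|)^{1/4}$ when $y_2-y_1\le 4\delta$) and absorbing gives the claimed bound.

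\textbf{Step 3 (Derivative bound).} Substituting $\|w\|_{L^2}\le C(\nu|k_1|)^{-1/2}\|F_1\|_{L^2}$ into the real-part inequality of Step 1 and using $\epsilon\ll 1$ produces $\nu\|(\partial_y,|k|)w\|_{L^2}^2\le C(\nu|k_1|)^{-1/2}\|F_1\|_{L^2}^2$, i.e.\ $\nu^{3/4}|k_1|^{1/4}\|(\partial_y,|k|)w\|_{L^2}\le C\|F_1\|_{L^2}$.

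\textbf{Main obstacle.} The only substantive work lies in Step 2, Case $\lambda\in(0,1)$: one must redo the colliding-singularity analysis between the Poiseuille critical point $y=0$ and the critical layers at $y=\pm\sqrt{1-\lambda}$. The saving grace is that without the $2\varphi$ term the delicate identity of Lemma \ref{H-1-wL2inside} involving $(1-y^2-\lambda)^{1/2}w+2\varphi/(1-y^2-\lambda)^{1/2}$ and $(\rho_2\varphi)'+2y\rho_2\varphi/(1-y^2-\lambda)$ is not needed, so every step is strictly simpler than its counterpart in Proposition \ref{resolvent-L2}.
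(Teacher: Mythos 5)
The paper itself omits the proof, stating only that it is ``very similar to the case with nonlocal term''; your proposal is precisely the argument the authors have in mind, and it is correct. You rightly observe that the only potential source of extra difficulty is the colliding case $\lambda\in(0,1)$, and that dropping the $2ik_1\varphi$ coupling removes exactly the terms (the $\int|\rho_2\varphi|^2/(1-y^2-\lambda)^2$ contributions and the associated weighted identity) that made the inner estimate of Lemma \ref{H-1-wL2inside} delicate; the remaining weighted energy estimates, together with the $\epsilon$-absorption and the $\delta^3(y_2-y_1)|k_1|=\nu$ scaling, go through unchanged. Steps 1 and 3 are routine, and your back-substitution in Step 3 yields exactly the stated $\nu^{3/4}|k_1|^{1/4}$ gradient bound.
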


\begin{proposition}\label{H-1-local}
Let $w\in H^1_0$ be the solution to (\ref{eq:res-non}) with $F_1\in H^{-1}_k$, then it holds that
$$\nu\|(\pa_y, |k|)w\|_{L^2}+\nu^{\frac{3}{4}}|k_1|^{\frac{1}{4}}\|w\|_{L^2}\leq C\|F_1\|_{H^{-1}_k}.$$
\end{proposition}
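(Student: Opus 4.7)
The strategy is to mimic the proof of Proposition \ref{resolvent-H-1}, with the simplification that the nonlocal term $2ik_1\varphi$ is absent, so no coupling between $w$ and the stream function $\varphi$ needs to be tracked. I would split the argument into the same three regimes for $\lambda$, and reuse almost verbatim the cut-off apparatus of Lemmas \ref{H-1-wL2outside}, \ref{H-1-wL2inside} and \ref{streamL-infty-H-1}, dropping the contributions coming from the $2\varphi$ term.

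First, I would pair \eqref{eq:res-non} with $w$ and integrate by parts. Using $w(\pm1)=0$ and the $H^{-1}_k$--$H^1_k$ duality, the real part gives
\begin{equation*}
\nu\|(\partial_y,|k|)w\|_{L^2}^2 \le \|F_1\|_{H^{-1}_k}\|(\partial_y,|k|)w\|_{L^2}+\epsilon(\nu|k_1|)^{\frac{1}{2}}\|w\|_{L^2}^2,
\end{equation*}
and hence by Young's inequality the baseline estimate
\begin{equation*}
\|(\partial_y,|k|)w\|_{L^2} \le \nu^{-1}\|F_1\|_{H^{-1}_k}+\sqrt{2\epsilon}\,\nu^{-\frac{1}{4}}|k_1|^{\frac{1}{4}}\|w\|_{L^2},
\end{equation*}
which will be closed by $\epsilon\ll1$ once the $L^2$--bound $\nu^{\frac{3}{4}}|k_1|^{\frac{1}{4}}\|w\|_{L^2}\lesssim \|F_1\|_{H^{-1}_k}$ is established.

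For the $L^2$--bound I would split into three cases exactly as in Proposition \ref{resolvent-H-1}. When $\lambda\ge 1$, the imaginary part of the paired identity yields $\|\sqrt{\lambda-1+y^2}\,w\|_{L^2}^2\lesssim |k_1|^{-1}\|F_1\|_{H^{-1}_k}\|(\partial_y,|k|)w\|_{L^2}$; taking $\delta=|\nu/k_1|^{1/4}$, on $(-1,1)\setminus(-\delta,\delta)$ the weight is $\gtrsim \delta^2$, while on $(-\delta,\delta)$ one uses $\|w\|_{L^\infty}^2\lesssim \|w\|_{L^2}\|w'\|_{L^2}$, then invokes the baseline estimate and Young's inequality. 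When $\lambda\le 0$, the weight $1-y^2-\lambda$ is already $\ge 1-y^2$, so Lemma \ref{hardy-type} interpolates $\|w\|_{L^2}$ between $\|(1-y^2)^{1/2}w\|_{L^2}$ and $\|w'\|_{L^2}$; no Lemma \ref{positive}--type argument (which handled the $2\varphi$ term) is needed here because the nonlocal coupling has disappeared.

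The main obstacle, exactly as in Proposition \ref{resolvent-H-1}, is the critical-layer case $\lambda\in(0,1)$ with $y_1=-y_2$, $1-y_i^2=\lambda$. I would subdivide as in that proof: (i) when $\nu|k|^3|k_1|^{-1}\ge |1-\lambda|^{1/2}+|\nu/k_1|^{1/4}$, absorb $\nu^{1/2}|k_1|^{1/2}\|w\|_{L^2}^2$ into $\nu|k|^2\|w\|_{L^2}^2$ to directly read off the bound; (ii) when $y_2-y_1\lesssim |\nu/k_1|^{1/4}$, combine the baseline with the imaginary part estimate $\|yw\|_{L^2}^2\lesssim |k_1|^{-1}\|F_1\|_{H^{-1}_k}\|(\partial_y,|k|)w\|_{L^2}+(y_2-y_1)^2\|w\|_{L^2}^2$; (iii) otherwise, choose $\delta=|\nu/k_1|^{1/3}(y_2-y_1)^{-1/3}$ and adapt the cut-off Lemmas \ref{H-1-wL2outside}--\ref{H-1-wL2inside} by simply discarding the $2\varphi$ contribution (the quantities $\mathscr E_3,\mathscr E_4,\mathscr F_3$ become even smaller). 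Each resulting term is then controlled by $\nu^{-3/2}|k_1|^{-1/2}\|F_1\|_{H^{-1}_k}^2$ via the baseline estimate and Young's inequality, with $\epsilon\ll1$ used to absorb all $\epsilon\|w\|_{L^2}^2$ contributions. Feeding the resulting $\|w\|_{L^2}$ bound back into the baseline estimate closes $\nu\|(\partial_y,|k|)w\|_{L^2}\lesssim\|F_1\|_{H^{-1}_k}$, which completes the proof. The chief technical difficulty is exactly the one already met in the nonlocal case, namely the competition between the critical point $y=0$ and the critical layers at $y=\pm y_2$ when $\lambda$ is close to $1$; the absence of the nonlocal term only removes simplifying terms rather than adding new difficulties.
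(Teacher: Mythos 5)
Your proposal is correct and is precisely what the paper intends: the paper explicitly states that the proof of Proposition \ref{H-1-local} is "very similar to the case with nonlocal term" and omits it, and your three-case decomposition in $\lambda$, the baseline energy identity with the $\epsilon$--absorption, and the cut-off apparatus near the critical layers are exactly the ingredients of the proof of Proposition \ref{resolvent-H-1}, simplified by the absence of $2ik_1\varphi$. The only caveat worth stating is that one cannot literally cite Lemmas \ref{H-1-wL2outside}--\ref{streamL-infty-H-1} and "drop the $\varphi$ terms," since those lemmas are stated for the full Rayleigh operator $(1-y^2-\lambda)w+2\varphi$; rather one re-derives the corresponding identities for the local operator $(1-y^2-\lambda)w$, where pairing with $\rho_1^2 w$, $\rho_2^2 w$, and $\rho_{1,2}^2 w/(1-y^2-\lambda)$ produces strictly fewer error terms (no $\int|\rho_2\varphi|^2/(1-y^2-\lambda)^2$, no $\|\varphi\|_{L^\infty(B(y_i,\delta))}$ contributions, and no need for Lemma \ref{positive} since $\int(1-y^2-\lambda)|w|^2\ge 0$ directly when $\lambda\le0$). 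Your observations that the nonlocal term was a source of complications rather than cancellations, and that the critical-point/critical-layer collision near $\lambda\approx 1$ is the persisting difficulty, are both accurate.
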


The following two corollaries can be obtained by following similar arguments as in Corollary \ref{resol-lambda} and  Corollary \ref{resol-lambda} respectively.

\begin{corollary}\label{pro-os-local}
It holds that
 \begin{align*}
&(\nu |k_1|^2)^{1/3}(|1-\lambda|^{\frac{1}{2}}+|\nu/k_1|^{1/4})^{2/3}\|w\|_{L^2}\\
&\qquad +(\nu^2 |k_1|)^{1/3}(|1-\lambda|^{\frac{1}{2}}+|\nu/k_1|^{1/4})^{1/3}\|(\partial_y,|k|)w\|_{L^2} \lesssim \|F_1\|_{L^2}.
 \end{align*}
\end{corollary}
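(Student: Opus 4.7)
The plan is to mirror the three-case analysis used in the proof of Corollary \ref{resol-lambda}, with Proposition \ref{prop:res-non} replacing Proposition \ref{resolvent-L2} as the source of the baseline bound $(\nu|k_1|)^{1/2}\|w\|_{L^2}\lesssim \|F_1\|_{L^2}$. Once the $L^2$-bound $(\nu|k_1|^2)^{1/3}(|1-\lambda|^{1/2}+|\nu/k_1|^{1/4})^{2/3}\|w\|_{L^2}\lesssim \|F_1\|_{L^2}$ is established, the derivative estimate follows immediately by combining it with $\nu\|(\partial_y,|k|)w\|_{L^2}^2\lesssim \|F_1\|_{L^2}\|w\|_{L^2}$, which itself is extracted from the real part of the identity obtained by testing \eqref{eq:res-non} against $w$ (the perturbative $\epsilon(\nu|k_1|)^{1/2}\|w\|_{L^2}^2$ being absorbed via Young's inequality using the baseline).

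The imaginary part of that same identity gives $|k_1|\bigl|\int_{-1}^{1}(1-y^2-\lambda)|w|^2\,\mathrm{d}y\bigr|\lesssim \|F_1\|_{L^2}\|w\|_{L^2}$, structurally identical to the nonlocal case but cleaner because the $2ik_1\varphi$ term is absent. For $\lambda\geq 1$, positivity of the integrand yields $|k_1|(\lambda-1)\|w\|_{L^2}\lesssim\|F_1\|_{L^2}$, and interpolating with the baseline at ratio $1{:}2$ produces $(\nu|k_1|^2)^{1/3}|\lambda-1|^{1/3}\|w\|_{L^2}\lesssim\|F_1\|_{L^2}$; since $|\nu/k_1|^{1/4}\ll 1\leq|1-\lambda|^{1/2}$ in this regime, the claim follows. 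For $\lambda\leq 0$ one extracts both $|k_1||\lambda|\|w\|_{L^2}\lesssim\|F_1\|_{L^2}$ and $|k_1|\|(1-y^2)^{1/2}w\|_{L^2}^2\lesssim\|F_1\|_{L^2}\|w\|_{L^2}$, and combining the latter with $\nu\|w'\|_{L^2}^2\lesssim\|F_1\|_{L^2}\|w\|_{L^2}$ through Lemma \ref{hardy-type}, exactly as in Case 2 of Proposition \ref{resolvent-L2}, gives $(\nu|k_1|^2)^{1/3}\|w\|_{L^2}\lesssim\|F_1\|_{L^2}$; another interpolation with the $|\lambda|$-bound supplies the $|1-\lambda|^{1/3}$ factor when $|\lambda|\geq 1$.

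The case $\lambda\in(0,1)$ is the most delicate. If $|1-\lambda|^{1/2}\leq 4|\nu/k_1|^{1/4}$, then $(|1-\lambda|^{1/2}+|\nu/k_1|^{1/4})^{2/3}\sim |\nu/k_1|^{1/6}$ and the baseline already gives the conclusion. Otherwise, set $y_1=-y_2$ with $1-y_i^2=\lambda$ so that $y_2-y_1\sim|1-\lambda|^{1/2}$, and choose $\delta$ by $\delta^3(y_2-y_1)|k_1|=\nu$; one then revisits the cutoff energies $\mathscr{E}_1,\mathscr{E}_2$ of Lemmas \ref{lem:outside}--\ref{lem:inside} with $F_1$ in place of $F$ and, crucially, without the $2ik_1\varphi$ contribution. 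The $\|\rho_i\varphi\|_{L^\infty}$ and $\|\rho_i\varphi/(1-y^2-\lambda)\|_{L^2}$ corrections disappear and the identity \eqref{eq:rho22-w-00} becomes unnecessary, leading directly to $\|w\|_{L^2}\lesssim \nu^{-1/3}|k_1|^{-2/3}(y_2-y_1)^{-2/3}\|F_1\|_{L^2}$ along the route of \eqref{eq:wL2-1prove}. The main technical obstacle is to verify that the cutoff scheme of Case 3 in Proposition \ref{resolvent-L2} is genuinely simpler (not merely different) once the nonlocal term is dropped; since every $\varphi$-dependent term there generated a Hardy-type side estimate, their removal should only streamline the analysis.
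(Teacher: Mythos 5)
Your proposal is correct and follows essentially the same route the paper indicates: the paper omits the proof of Corollary \ref{pro-os-local}, pointing to ``similar arguments as in Corollary \ref{resol-lambda}'', and your three-case scheme ($\lambda\geq 1$, $\lambda\leq 0$, $\lambda\in(0,1)$) with Proposition \ref{prop:res-non} supplying the baseline $(\nu|k_1|)^{1/2}\|w\|_{L^2}\lesssim\|F_1\|_{L^2}$ and the derivative bound following from $\nu\|(\partial_y,|k|)w\|_{L^2}^2\lesssim\|F_1\|_{L^2}\|w\|_{L^2}$ is exactly that. You are also right that the $\lambda\in(0,1)$ cutoff analysis only loses the $\varphi$-dependent terms ($\|\varphi_2\|_{L^\infty}$ corrections from Lemmas \ref{lem:varphi-Linfty}--\ref{lem:dw-Linfty}) and so is genuinely simpler; as a small confirmation on your Case 2, the interpolation $(\nu|k_1|^2)^{1/3}|\lambda|^{1/3}\|w\|_{L^2}=\big(|k_1||\lambda|\|w\|_{L^2}\big)^{1/3}\big((\nu|k_1|)^{1/2}\|w\|_{L^2}\big)^{2/3}$ closes the $|\lambda|\geq 1$ regime with the baseline, which is a slight rearrangement of the paper's two-step argument in Corollary \ref{resol-lambda} but equivalent.
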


\begin{corollary}\label{cor:OS-local}
It holds that
   \begin{align*}
&(\nu^2 |k_1|)^{1/3}(|1-\lambda|^{\frac{1}{2}}+|\nu/k_1|^{1/4})^{1/3}\|w\|_{L^2} \lesssim \|F_1\|_{H^{-1}_k}.
 \end{align*}
\end{corollary}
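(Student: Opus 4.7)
My strategy is to mirror the derivation of Corollary \ref{reslvent-H-1-sgap} for the full Orr--Sommerfeld operator, but with every invocation of Proposition \ref{resolvent-H-1} replaced by the corresponding (nonlocal-term-free) Proposition \ref{H-1-local}. The base estimate $\nu^{3/4}|k_1|^{1/4}\|w\|_{L^2}\lesssim \|F_1\|_{H^{-1}_k}$ from Proposition \ref{H-1-local} already yields the claim in the regime $|1-\lambda|^{1/2}\lesssim |\nu/k_1|^{1/4}$, because in that range $\nu^{2/3}|k_1|^{1/3}(|1-\lambda|^{1/2}+|\nu/k_1|^{1/4})^{1/3}\sim\nu^{3/4}|k_1|^{1/4}$. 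Thus the real work is to obtain the sharper bound
\[
\nu^{1/2}|k_1|^{1/2}|1-\lambda|^{1/2}\|w\|_{L^2}\lesssim \|F_1\|_{H^{-1}_k}
\]
when $|1-\lambda|^{1/2}\gg |\nu/k_1|^{1/4}$, after which the interpolation
$[\nu^{1/2}|k_1|^{1/2}|1-\lambda|^{1/2}\|w\|_{L^2}]^{1/3}[\nu^{3/4}|k_1|^{1/4}\|w\|_{L^2}]^{2/3}=\nu^{2/3}|k_1|^{1/3}|1-\lambda|^{1/6}\|w\|_{L^2}$ delivers the target.

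For the monotone regimes $\lambda\geq 1$ and $\lambda\leq 0$, I would pair \eqref{eq:res-non} against $\bar w$ and take the imaginary part, obtaining
\[
|k_1|\int_{-1}^{1}(\lambda-1+y^2)|w|^2\,\mathrm{d}y \leq \|F_1\|_{H^{-1}_k}\|w\|_{H^1_k}.
\]
For $\lambda\geq 1$ the integrand is bounded below by $(\lambda-1)|w|^2$; for $\lambda\leq 0$ with $|\lambda|\geq 1$ one has $1-y^2-\lambda\geq |\lambda|\gtrsim |1-\lambda|$, and for $\lambda\in[-1,0]$ one uses $|1-\lambda|\lesssim 1$ together with the Hardy-type trick (Lemma \ref{hardy-type}) applied as in the $F\in L^2$ case \eqref{improvedL2}. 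Bounding $\|w\|_{H^1_k}$ by Proposition \ref{H-1-local} and absorbing the $\epsilon(\nu|k_1|)^{1/2}\|w\|_{L^2}^2$ term via its smallness yields $|1-\lambda|\|w\|_{L^2}^2\lesssim (\nu|k_1|)^{-1}\|F_1\|_{H^{-1}_k}^2$, which is what was needed.

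For the remaining case $\lambda\in(0,1)$, set $1-y_i^2=\lambda$ with $y_2-y_1=2|1-\lambda|^{1/2}$. In the subcase $y_2-y_1\leq 4|\nu/k_1|^{1/4}$ the base estimate already suffices as in Case 2 above. In the subcase $y_2-y_1\geq 4|\nu/k_1|^{1/4}$, I would choose the critical-layer width $\delta=|\nu/k_1|^{1/3}(y_2-y_1)^{-1/3}$ and replay the inside/outside cutoff arguments of Lemmas \ref{H-1-wL2outside}--\ref{H-1-wL2inside} and Lemma \ref{streamL-infty-H-1}, where the nonlocal term $2ik_1\varphi$ is now absent. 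Crucially, the test against $\rho_2^2 w/(1-y^2-\lambda)$ still couples to $\varphi$ only through the dissipative term $\nu(\partial_y^2-|k|^2)w$, and the local Poincar\'e/Hardy bookkeeping inside $(y_1,y_2)$ survives verbatim: the output is $\|w\|_{L^2}^2\lesssim (\nu|k_1|(y_2-y_1)\delta)^{-1}\|F_1\|_{H^{-1}_k}^2+\delta^2\nu^{-2}\|F_1\|_{H^{-1}_k}^2$, and substituting the chosen $\delta$ gives $\nu^{2/3}|k_1|^{1/3}(y_2-y_1)^{1/3}\|w\|_{L^2}\lesssim \|F_1\|_{H^{-1}_k}$.

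The main obstacle I anticipate is not conceptual but bookkeeping: I need to verify that dropping the nonlocal forcing $2ik_1\varphi$ only deletes (rather than creates) contributions in the weighted cutoff identities underlying Lemmas \ref{H-1-wL2outside}--\ref{H-1-wL2inside}. Concretely, the identity \eqref{eq:rho22-w-00} was derived from integration by parts on $\varphi$ alone (using only $(\partial_y^2-|k|^2)\varphi=w$), so it persists here; the terms involving $\rho_2\varphi$ appearing on the right of \eqref{inside2} are then controlled exactly as in the proof of Case 3.3, and the splitting $|k|^2(y_2-y_1)^2\gtrless 1$ carries over unchanged. Once this subcase analysis is assembled, combining all regimes and interpolating with Proposition \ref{H-1-local} yields the claimed estimate.
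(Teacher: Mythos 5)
Your proposal is correct and follows the paper's intended route: the authors themselves note that this corollary is obtained by running the argument of Corollary \ref{resol-lambda} (equivalently, Corollary \ref{reslvent-H-1-sgap}) with the nonlocal term $2ik_1\varphi$ stripped out, replacing Proposition \ref{resolvent-H-1} by Proposition \ref{H-1-local} throughout. Two small slips that do not affect correctness: the stream function $\varphi$ does not enter the local test at all (not even through the dissipative term, as you write), so the cutoff lemmas only lose terms; and your promised intermediate bound $\nu^{1/2}|k_1|^{1/2}|1-\lambda|^{1/2}\|w\|_{L^2}\lesssim\|F_1\|_{H^{-1}_k}$ is neither established nor needed for $\lambda\in(0,1)$, where your own cutoff analysis already gives the target $\nu^{2/3}|k_1|^{1/3}(y_2-y_1)^{1/3}\|w\|_{L^2}\lesssim\|F_1\|_{H^{-1}_k}$ directly.
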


Next we consider the OS equation with the force $yG$, where the weight $y$ is crucial to cancel the singularity near the critical point $y=0$.
\begin{equation}\label{os-local-1}
\left \{
\begin{aligned}
&-\nu(\partial_y^2-|k|^2)w+ik_1(1-y^2-\lambda)w-\epsilon (\nu |k_1|)^{\frac{1}{2}}w=yG,\\
& (\partial_y^2-|k|^2)\varphi=w, \quad \varphi(\pm 1)=0, \quad w(\pm 1)=0.
\end{aligned}
\right.
\end{equation}

 \begin{lemma}\label{Ray-resol-local}
 Let $W$ solve the Rayleigh equation with $\delta>0$
 \begin{equation}\nonumber%\label{Ray-local}
 (1-y^2-\lambda+i\delta)W=yg, \ (\partial_y^2-|k|^2)\Phi=W,\, \Phi (\pm 1)=0.
 \end{equation}
 If $g\in H^1_0(-1,1)$, then we have
 $$\|(\partial_y,|k|)\Phi\|_{L^2}+\delta^{\frac{1}{2}}\|W\|_{L^2}+\delta^{\frac{3}{2}}(|1-\lambda|+\delta)^{-\frac{1}{2}}\|W'\|_{L^2}\lesssim |k|^{-1}\|(\partial_y,|k|)g\|_{L^2}.$$
 \end{lemma}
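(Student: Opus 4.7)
The plan is to mirror the strategy of Lemma \ref{lem:Ray-simple-1}, extracting the extra $|k|^{-1}$ factor from the fact that the source is $yg$ with $g\in H^1_0$. The key algebraic identity is
$y/(1-y^2-\lambda+i\delta) = -\tfrac12\,\partial_y\ln(1-y^2-\lambda+i\delta),$
which, paired with the boundary vanishing $g(\pm1)=\Phi(\pm1)=0$, permits an integration by parts that transfers one derivative onto $g$ or onto a dual test function. This is the structural mechanism behind the improvement compared with Lemma \ref{lem:Ray-simple-1}, whose source was a generic $f\in H^1_0$.

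For the $(\partial_y,|k|)\Phi$ estimate, the starting point is the elliptic duality $\|(\partial_y,|k|)\Phi\|_{L^2}^2 = -\mathrm{Re}\,\langle W,\Phi\rangle = -\mathrm{Re}\int yg\,\overline{\Phi}/(1-y^2-\lambda+i\delta)\,dy$. Using the above identity and integrating by parts (boundary terms vanish because both $g$ and $\Phi$ lie in $H^1_0$), the right-hand side is rewritten as $\tfrac12\mathrm{Re}\int \ln(1-y^2-\lambda+i\delta)\,[(\partial_y g)\overline{\Phi}+g\,\overline{\partial_y\Phi}]\,dy$. The gain of $|k|^{-1}$ then arises from applying $\|\Phi\|_{L^2}\le |k|^{-1}\|(\partial_y,|k|)\Phi\|_{L^2}$ to the first piece and the Sobolev interpolation $\|g\|_{L^\infty}^2\lesssim |k|^{-1}\|(\partial_y,|k|)g\|_{L^2}^2$ (valid for $g\in H^1_0$ with $|k|\ge1$) to the second, after which $\|(\partial_y,|k|)\Phi\|_{L^2}$ is absorbed back to the left-hand side.

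For the two pointwise $W$-estimates, the strategy is analogous to that of Lemma \ref{lem:Ray-simple-1}. Taking the inner product of the Rayleigh equation with $W$ and extracting imaginary parts isolates $\delta\|W\|_{L^2}^2$, which is then bounded through the $\Phi$-estimate just established. For $\|W'\|_{L^2}$, one differentiates the algebraic formula $W=yg/(1-y^2-\lambda+i\delta)$ to obtain
$W' = (g+y\partial_y g)/(1-y^2-\lambda+i\delta) + 2y^2 g/(1-y^2-\lambda+i\delta)^2$,
and bounds each term by splitting $[-1,1]$ into the critical layer $\{|1-y^2-\lambda|\lesssim\delta\}$ and its complement; the prefactor $(|1-\lambda|+\delta)^{-1/2}$ then arises from the sharp estimate of $\int y^2/|1-y^2-\lambda+i\delta|^3\,dy$ across the critical layer, which scales as $\delta^{-3}(|1-\lambda|+\delta)^{-1}$ in the worst case.

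The main obstacle will be controlling the logarithm $\ln(1-y^2-\lambda+i\delta)$ uniformly in $\delta>0$, because pointwise this is unbounded as $\delta\to0$ near zeros of $1-y^2-\lambda$. This is handled by splitting the integral into regions near and away from the critical points $y=\pm\sqrt{1-\lambda}$ (for $\lambda\in(0,1)$) and using that the logarithmic singularity is integrable, so that $\|\ln(1-y^2-\lambda+i\delta)\|_{L^p}$ is uniformly bounded for every $p<\infty$. Alternatively, the $\Phi$-estimate can be obtained by a direct application of Proposition~6.1 of \cite{WZZ-apde} to the Rayleigh operator with the modified right-hand side $yg$, whose limiting absorption principle already encodes the required cancellation; this is the same black-box invoked in the proof of Lemma \ref{lem:Ray-simple-1}.
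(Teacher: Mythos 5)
Your proposal correctly identifies the structural fact driving the estimate---the vanishing of the weight $y$ at the critical point $y=0$ is what gives the improvement over Lemma~\ref{lem:Ray-simple-1}---but the mechanism you propose does not deliver the required factor $|k|^{-1}$. The paper's proof writes $c^2=1-\lambda+i\delta$ and uses the partial fraction identity
\[
\frac{y}{1-y^2-\lambda+i\delta}=-\frac12\Big(\frac{1}{y-c}+\frac{1}{y+c}\Big),
\]
then applies the oscillatory-integral estimate of Lemma~\ref{hardy-type-abp} to the resulting integrals $\int g\overline{\Phi}/(y\pm c)\,dy$. That lemma, proved via a Laplace-transform representation of $(y-c)^{-1}$ and Plancherel, contributes the first factor $|k|^{-1/2}$; the one-dimensional Sobolev interpolation $\|g\|_{L^\infty}\lesssim |k|^{-1/2}\|(\partial_y,|k|)g\|_{L^2}$ (and its analogue for $\Phi$) then supplies a second $|k|^{-1/2}$, giving $|k|^{-1}$ in total. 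Your integration-by-parts via $y/(1-y^2-\lambda+i\delta)=-\tfrac12\partial_y\ln(1-y^2-\lambda+i\delta)$ replaces the kernel $(y\pm c)^{-1}$ by a logarithm, and that step forfeits the oscillatory-integral $|k|^{-1/2}$.

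Concretely, in $\tfrac12\int\ln(1-y^2-\lambda+i\delta)\,\partial_y(g\overline{\Phi})\,dy$, the term $\int\ln(\cdot)\,g\,\overline{\partial_y\Phi}$ is bounded at best by $\|\ln(\cdot)\|_{L^2}\|g\|_{L^\infty}\|\partial_y\Phi\|_{L^2}\lesssim |k|^{-1/2}\|(\partial_y,|k|)g\|_{L^2}\|(\partial_y,|k|)\Phi\|_{L^2}$, which after absorption leaves only $|k|^{-1/2}$. The term $\int\ln(\cdot)\,(\partial_y g)\,\overline{\Phi}$ you propose to close with $\|\Phi\|_{L^2}\leq |k|^{-1}\|(\partial_y,|k|)\Phi\|_{L^2}$, but that requires $\|\ln(1-y^2-\lambda+i\delta)\|_{L^\infty}$, which grows like $|\ln\delta|$ as $\delta\to 0$ near the critical points; switching to $L^p$, $p<\infty$, and H\"older pushes $\Phi$ into $L^q$ with $q>2$ and again yields strictly less than $|k|^{-1}$. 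Your fallback of feeding $f=yg$ into Proposition~6.1 of \cite{WZZ-apde} only gives $\|(\partial_y,|k|)\Phi\|_{L^2}\lesssim\|(\partial_y,|k|)(yg)\|_{L^2}$, with no gain in $|k|$ at all. The same shortfall propagates to your proposed route for $\|W\|_{L^2}$: taking the imaginary part of the energy identity and inserting the $\Phi$-bound gives $\delta\|W\|_{L^2}^2\lesssim\|(\partial_y,|k|)g\|_{L^2}\|(\partial_y,|k|)\Phi\|_{L^2}\lesssim |k|^{-1}\|(\partial_y,|k|)g\|_{L^2}^2$, hence only $\delta^{1/2}\|W\|_{L^2}\lesssim |k|^{-1/2}\|(\partial_y,|k|)g\|_{L^2}$. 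The paper instead derives $\|W\|_{L^2}\lesssim\delta^{-1/2}\|g\|_{L^2}$ from the same partial fractions and then uses the trivial Poincar\'e bound $\|g\|_{L^2}\leq |k|^{-1}\|(\partial_y,|k|)g\|_{L^2}$.
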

\begin{proof}
Thanks to
 $$W=\frac{yg}{1-y^2-\lambda+i\delta},$$
we infer for $c^2=1-\lambda+i\delta$ that
 \begin{align*}
\|(\partial_y,|k|)\Phi\|_{L^2}^2=&
\left|\int_{-1}^1 \frac{yg \overline{\Phi}}{1-y^2-\lambda+i\delta}\mathrm{d}y \right|\\
\leq& \frac{1}{2}\left(\left|\int_{-1}^1 \frac{g\overline{\Phi}}{y+c}\mathrm{d}y \right|+\left|\int_{-1}^1 \frac{g\overline{\Phi}}{y-c}\mathrm{d}y \right|\right)\\
\leq& C|k|^{-1/2}(\|\partial_y (g \Phi)\|_{L^2}+|k|\|g \Phi\|_{L^2})\\
\leq& C|k|^{-1/2}(\|\partial_y g\|_{L^2}\|\Phi\|_{L^\infty}+\|g\|_{L^\infty}\|\partial_y \Phi\|_{L^2}+|k|\|g\|_{L^\infty}\|\Phi\|_{L^2})\\
\lesssim& |k|^{-1}\|(\partial_y,|k|)g\|_{L^2}\|(\partial_y,|k|)\Phi\|_{L^2},
 \end{align*}
where we have used Lemma \ref{hardy-type-abp}. This shows
 \begin{align*}
\|(\partial_y,|k|)\Phi\|_{L^2}
\lesssim& |k|^{-1}\|(\partial_y,|k|)g \|_{L^2},
 \end{align*}

Moreover, we also have
 \begin{align*}
\|W\|_{L^2}^2\leq& \left|\left\langle\frac{g}{y+c},W \right\rangle\right|+\left|\left\langle\frac{g}{y-c},W \right\rangle\right|
\leq C\|g\|_{L^2} \delta^{-\frac{1}{2}}\|W\|_{L^2},
 \end{align*}
which gives
\begin{align*}
\|W\|_{L^2} \leq C |k|^{-1} \|(\partial_y,|k|) g\|_{L^2} \delta^{-\frac{1}{2}} .
 \end{align*}

 Notice that
  $$W'=\frac{yg'+g+2yW}{1-y^2-\lambda+i\delta},$$
  which gives
   \begin{align*}
\|W'\|_{L^2}\lesssim& ( \|g'\|_{L^2}+\|W\|_{L^2})\left\|\frac{y}{1-y^2-\lambda+i\delta}\right\|_{L^\infty}+\|g\|_{L^2}\left\|\frac{1}{1-y^2-\lambda+i\delta}\right\|_{L^\infty}\\
\lesssim&( \|g'\|_{L^2}+\|W\|_{L^2})\delta^{-1}(|1-\lambda|+\delta)^{\frac{1}{2}}+|k|^{-1}\|(\partial_y,|k|)g\|_{L^2}\delta^{-1}\\
\lesssim& \|g'\|_{L^2}\delta^{-1}(|1-\lambda|+\delta)^{\frac{1}{2}}+|k|^{-\f12}\|(\partial_y,|k|)g\|_{L^2}\delta^{-\f12}\\
&+|k|^{-1}\delta^{-\f32}(|1-\lambda|+\delta)^{\frac{1}{2}}\|(\partial_y,|k|)g\|_{L^2}\\
\lesssim&|k|^{-1}\delta^{-\f32}(|1-\lambda|+\delta)^{\frac{1}{2}}\|(\partial_y,|k|)g\|_{L^2}.
 \end{align*}
From the above proof, we can also deduce
  \begin{align}
\|W'\|_{L^2}\lesssim (\delta^{-\f12}+\delta^{-1}|k|^{-1})\|(\partial_y,|k|)g\|_{L^2}.\label{eq:W-new}
 \end{align}
 \end{proof}

\begin{proposition}\label{resol-H-1-local}
Let $w$ be the solution to \eqref{os-local-1} with $G|_{y=\pm 1}=0$. Then it holds that
 \begin{align*}
(\nu/|k_1|)^{\frac{1}{4}}\|w \|_{L^2}+(\nu/|k_1|)^{\frac{1}{2}} \|(\partial_y,|k|) w\|_{L^2} \lesssim (|k||k_1|)^{-1} \|(\partial_y,|k|)G\|_{L^2}.
 \end{align*}
\end{proposition}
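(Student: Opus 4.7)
The plan is to adapt the strategy of Proposition~\ref{resolvent-H1}, Case~3, to the no-nonlocal setting, using the weighted Rayleigh estimate of Lemma~\ref{Ray-resol-local} in place of Lemma~\ref{lem:Ray-simple-1}. The $y$-weight on the source $yG$ is precisely what yields the extra $|k|^{-1}$ on the right-hand side compared with a naive combination of Propositions~\ref{prop:res-non} and~\ref{H-1-local}, since Lemma~\ref{Ray-resol-local} provides $\|W\|_{L^2}\lesssim \delta^{-1/2}|k|^{-1}\|(\partial_y,|k|)g\|_{L^2}$ with a gain of $|k|^{-1}$ relative to the standard Rayleigh estimate.

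I would decompose $w=w_3+w_4+w_5$, where $w_3$ solves the shifted Rayleigh equation
\[
(1-y^2-\lambda+i\widetilde\delta)\,w_3 = yG/(ik_1),
\]
and $w_4,w_5$ solve~\eqref{eq:res-non} with zero boundary data and sources $\nu\partial_y^2 w_3$ and $-2k_1\delta_1\,w_3$ respectively. Here $\widetilde\delta=2\delta_1-\nu|k|^2/k_1$ and $\delta_1=|\nu/k_1|^{1/3}\bigl(|1-\lambda|^{1/2}+|\nu/k_1|^{1/4}\bigr)^{2/3}$. The boundary condition $w_3(\pm1)=0$ is automatic since $G(\pm1)=0$; and $\widetilde\delta>0$ holds whenever $\nu|k|^3|k_1|^{-1}\lesssim|1-\lambda|^{1/2}+|\nu/k_1|^{1/4}$, the complementary high-frequency regime being handled separately.

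Applying Lemma~\ref{Ray-resol-local} with $g=G/(ik_1)$ and $\delta=\widetilde\delta\sim\delta_1$ yields
\[
\widetilde\delta^{1/2}\|w_3\|_{L^2}+\widetilde\delta^{3/2}(|1-\lambda|+\widetilde\delta)^{-1/2}\|w_3'\|_{L^2}\lesssim |k|^{-1}|k_1|^{-1}\|(\partial_y,|k|)G\|_{L^2},
\]
and with the chosen $\delta_1$ one checks $\delta_1^{-1/2}\lesssim |\nu/k_1|^{-1/4}$, so the $L^2$ bound on $w_3$ already matches the target. For $w_4$, I would combine $\|\nu\partial_y^2 w_3\|_{H^{-1}_k}\le\nu\|w_3'\|_{L^2}$ with Corollary~\ref{cor:OS-local}, whose sharpening factor $(|1-\lambda|^{1/2}+|\nu/k_1|^{1/4})^{1/3}$ cancels precisely the analogous factor appearing in $\|w_3'\|_{L^2}$; for $w_5$, I would invoke Corollary~\ref{pro-os-local}, noting that $|k_1|\delta_1=(\nu|k_1|^2)^{1/3}(|1-\lambda|^{1/2}+|\nu/k_1|^{1/4})^{2/3}$ coincides with the resolvent constant there, so $\|w_5\|_{L^2}\lesssim\|w_3\|_{L^2}$. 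The gradient bounds proceed in parallel, using Proposition~\ref{H-1-local} on $w_4$ to obtain $\|(\partial_y,|k|)w_4\|_{L^2}\lesssim\|w_3'\|_{L^2}$ and Corollary~\ref{pro-os-local} on $w_5$.

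In the complementary high-frequency regime $\nu|k|^3|k_1|^{-1}\gtrsim|1-\lambda|^{1/2}+|\nu/k_1|^{1/4}$ one has $|k|^{-1}\lesssim(\nu/|k_1|)^{1/4}$, and the target follows by applying Proposition~\ref{prop:res-non} (or Corollary~\ref{pro-os-local}) to $yG$ with $\|yG\|_{L^2}\le|k|^{-1}\|(\partial_y,|k|)G\|_{L^2}$ and exploiting this extra smallness; a variant of the decomposition above with $\delta_1$ replaced by $\nu|k|^2/|k_1|$ may be used to cover any intermediate subregime. The main obstacle is the exponent arithmetic in the $w_4$ estimate: using only Proposition~\ref{H-1-local} would leave an uncancelled factor of $(|1-\lambda|^{1/2}+|\nu/k_1|^{1/4})^{1/3}$ arising from $\|w_3'\|_{L^2}$. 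The choice of $\delta_1$ is engineered precisely so that, once Corollary~\ref{cor:OS-local} is invoked, every such factor aligns and the target $|\nu/k_1|^{-1/4}(|k||k_1|)^{-1}\|(\partial_y,|k|)G\|_{L^2}$ emerges cleanly.
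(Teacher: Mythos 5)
Your decomposition $w = w_3 + w_4 + w_5$ and the invocation of Lemma \ref{Ray-resol-local} for $w_3$, Proposition \ref{H-1-local} together with Corollary \ref{cor:OS-local} for $w_4$, and Corollary \ref{pro-os-local} for $w_5$ match the paper's treatment of the low-frequency regime (its Cases 2 and 3), and your exponent bookkeeping there is correct. You unify $\lambda\ge 1$ into the same decomposition; the paper instead disposes of that case by a direct energy identity and a scale separation in $y$ with $\delta=(\nu/|k_1|)^{1/4}$, which is more elementary, but your unified route also closes. One small remark: the paper takes the shift $-2\delta_1-\nu|k|^2/k_1$, which keeps $|\widetilde\delta|\gtrsim\delta_1$ uniformly, whereas your $2\delta_1-\nu|k|^2/k_1$ can degenerate near the boundary between regimes; in the low-frequency regime this is harmless since $\nu|k|^2/|k_1|\le\delta_1$ there.

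The genuine gap is the high-frequency regime $\nu|k|^3|k_1|^{-1}\gtrsim|1-\lambda|^{1/2}+|\nu/k_1|^{1/4}$. Your primary claim---that the target follows from Proposition \ref{prop:res-non} (or Corollary \ref{pro-os-local}) applied to $yG$ with $\|yG\|_{L^2}\le|k|^{-1}\|(\partial_y,|k|)G\|_{L^2}$---does not hold. Proposition \ref{prop:res-non} yields $\|w\|_{L^2}\lesssim(\nu|k_1|)^{-1/2}|k|^{-1}\|(\partial_y,|k|)G\|_{L^2}$, so $(\nu/|k_1|)^{1/4}\|w\|_{L^2}\lesssim\nu^{-1/4}|k_1|^{-3/4}|k|^{-1}\|(\partial_y,|k|)G\|_{L^2}$, which exceeds the target $(|k||k_1|)^{-1}\|(\partial_y,|k|)G\|_{L^2}$ by a factor $(\nu/|k_1|)^{-1/4}\gg1$. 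The single Poincar\'e factor $|k|^{-1}$ cannot be double-spent: either it lands in the final bound or it is converted into $(\nu/|k_1|)^{1/4}$ via the high-frequency condition, but not both. Corollary \ref{pro-os-local} does not close the gap either: for $\lambda$ near $1$ its gain $(|1-\lambda|^{1/2}+|\nu/k_1|^{1/4})^{1/3}\sim(\nu/|k_1|)^{1/12}$ is far short of the needed $(\nu/|k_1|)^{-1/6}$. The correct route, and what the paper does in its Case 4, is to keep the $w_3+w_4+w_5$ decomposition in the high-frequency regime as well, now with the effective shift $|\widetilde\delta|\sim\delta_1+\nu|k|^2/|k_1|\gtrsim(\nu/|k_1|)^{1/2}$ (using $\nu|k|^4|k_1|^{-1}\gtrsim1$) and the sharper bound \eqref{eq:W-new} for $\|w_3'\|_{L^2}$; Lemma \ref{Ray-resol-local} supplies the $|k|^{-1}$ factor independently, through the cancellation of the $y$-weight at the critical point, so it need not compete with the viscosity gain. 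You gesture at this ("a variant of the decomposition\dots may be used to cover any intermediate subregime"), but it must be the whole argument in the high-frequency case, not an auxiliary device.
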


\begin{proof}
We only consider the case of $\epsilon=0$. For $\epsilon\neq 0$, the following Case 1 can be deduced from a perturbation argument, and the other cases are obvious.

{\bf Case 1. $\lambda \geq 1.$}\smallskip

The standard energy gives
\begin{align}\label{basic-local}
\nu \|(\partial_y,|k|)w\|_{L^2}^2+ik_1\|(\lambda-1+y^2)^{\frac{1}{2}}w\|_{L^2}^2=\langle yG,w\rangle.
\end{align}
Taking the imaginary part of \eqref{basic-local}, we get
\begin{align*}
|k_1|\|(\lambda-1+y^2)^{\frac{1}{2}}w\|_{L^2}^2\leq \|G\|_{L^2}\|y w\|_{L^2},
\end{align*}
which implies
\begin{align*}
|k_1|\|y w\|_{L^2} \leq \|G\|_{L^2}.
\end{align*}
Therefore, we have
\begin{align*}
\nu\|(\partial_y,|k|)w\|_{L^2}^2\leq   \|G\|_{L^2}\|y w\|_{L^2}\leq |k_1|^{-1}\|G\|_{L^2}^2\leq |k|^{-2} |k_1|^{-1} \|(\partial_y,|k|)G\|_{L^2}^2.
\end{align*}

For $\delta =(\nu/|k_1|)^{1/4}\ll 1$, notice that
\begin{align*}
\delta^2|k_1|\|w\|_{L^2([-1,1]\setminus (-\delta,\delta))}^2\leq& |k_1|\|(\lambda-1+y^2)^{\frac{1}{2}}w\|_{L^2}^2\\
\leq& \|G\|_{L^2}\|y w\|_{L^2}\leq |k_1|^{-1}\|G\|_{L^2}^2,
\end{align*}
then we deduce
\begin{align*}
\|w\|_{L^2}^2\leq& 2\delta\|w\|_{L^\infty}^2+\delta^{-2}|k_1|^{-2}\|G\|_{L^2}^2\\
\lesssim&\delta \|w\|_{L^2}\|w'\|_{L^2}+\delta^{-2}|k_1|^{-2}\|G\|_{L^2}^2\\
\lesssim&\delta (\nu |k_1|)^{-\frac{1}{2}} \|w\|_{L^2}\|G\|_{L^2}+\delta^{-2}|k_1|^{-2}\|G\|_{L^2}^2,
\end{align*}
which yields
\begin{align*}
\|w\|_{L^2}^2
\lesssim&(\delta^2 (\nu |k_1|)^{-1} +\delta^{-2}|k_1|^{-2})\|G\|_{L^2}^2\lesssim \nu^{-1/2} |k_1|^{-3/2}\|G\|_{L^2}^2\\
\lesssim & \nu^{-1/2} |k_1|^{-3/2} |k|^{-2} \|(\partial_y,|k|)G\|_{L^2}^2.
\end{align*}

{\bf Case 2. $\lambda \leq 0$ and $\nu |k|^3|k_1|^{-1}\leq |1-\lambda|^{\frac{1}{2}}+|\nu/k_1|^{1/4}$.}\smallskip

In this case, we have
\begin{align*}
&|k_1|\left(\|(1-y^2)^{\frac{1}{2}}w\|_{L^2}^2 -\lambda \|w\|_{L^2}^2\right)
\leq |\langle yG,w\rangle|\leq \left\|\frac{yG}{(1-y^2)^{\frac{1}{2}}}\right\|_{L^2}\|(1-y^2)^{\frac{1}{2}}w\|_{L^2}\\
&\leq C \|G\|_{L^2}^{\frac{1}{2}}\left\|\frac{yG}{(1-y^2)}\right\|_{L^2}^{\frac{1}{2}}\|(1-y^2)^{\frac{1}{2}}w\|_{L^2}\leq |k|^{-\frac{1}{2}}\|(\partial_y,|k|)G\|_{L^2} \|(1-y^2)^{\frac{1}{2}}w\|_{L^2},
\end{align*}
which gives
\begin{align*}
 \|(1-y^2)^{\frac{1}{2}}w \|_{L^2}
 \leq |k|^{-1/2}|k_1|^{-1}  \|(\partial_y,|k|)G\|_{L^2}.
 \end{align*}
 Moreover, we have
 \begin{align*}
& \|(\partial_y,|k|)\varphi\|_{L^2}^2\leq |k|^{-2}|k_1|^{-2}  \|(\partial_y,|k|)G\|_{L^2}^2.
 \end{align*}
 Here we used the fact that
  $$ \|(\partial_y,|k|)\varphi \|_{L^2}\leq |k|^{-1/2}\|(1-y^2)^{\frac{1}{2}}w\|_{L^2}.$$

  To proceed, we decompose $w=w_3+w_4+w_5$ with
 \begin{equation}\nonumber
\left\{
\begin{aligned}
  &(1-y^2-\lambda+i(-2\delta_1-\nu|k|^2/k_1))w_3=yG/ik_1,\\
  &-\nu (\partial_y^2-|k|^2)w_4+ik_1(1-y^2-\lambda)w_4=\nu \partial_y^2 w_3,\\
  &-\nu (\partial_y^2-|k|^2)w_5+ik_1(1-y^2-\lambda)w_5=-2k_1\delta_1 w_3,
  \end{aligned}
\right.
\end{equation}
where $\delta_1=|\nu/k_1|^{1/3}(|1-\lambda|^{1/2}+|\nu/k_1|^{1/4})^{2/3}$.

Due to $\lambda\leq 0$ and $\nu |k|^3|k_1|^{-1}\leq |1-\lambda|^{\frac{1}{2}}+|\nu/k_1|^{1/4}\lesssim |1-\lambda|^{\frac{1}{2}}$, one has $\nu |k|^2/|k_1|\leq \delta_1$, and
by Lemma \ref{Ray-resol-local}, Proposition \ref{H-1-local} and Corollary \ref{pro-os-local}, we obtain
 \begin{align*}
&\|w_3\|_{L^2} \lesssim \delta_1^{-\frac{1}{2}} |k|^{-1} |k_1|^{-1}\|(\partial_y,|k|)G\|_{L^2},\\
%\lesssim \nu^{-\frac{1}{6}}|k_1|^{-\frac{5}{6}} |k|^{-1}  \|(\partial_y,|k|)G\|_{L^2},\\
&\|\partial_y w_3\|_{L^2}\lesssim \delta_1^{-\frac{3}{2}} (|1-\lambda|+\delta_1)^{\frac{1}{2}} |k|^{-1} |k_1|^{-1} \|(\partial_y,|k|)G\|_{L^2}\\
&\qquad \qquad \lesssim \nu^{-\frac{1}{2}} |k|^{-1}  |k_1|^{-\frac{1}{2}} \|(\partial_y,|k|)G\|_{L^2}  ,\\
&\nu \|(\partial_y,|k|)w_4\|_{L^2}\lesssim \nu \|\partial_y w_3\|_{L^2} \lesssim \nu^{\frac{1}{2}} |k|^{-1}  |k_1|^{-\frac{1}{2}} \|(\partial_y,|k|)G\|_{L^2},\\
&\|(\partial_y,|k|)w_5\|_{L^2}\lesssim \nu^{-\frac{2}{3}}|k_1|^{-\frac{1}{3}} |\lambda-1|^{-\frac{1}{6}} |k_1|\delta_1\|w_3\|_{L^2}\\
%\lesssim \nu^{-\frac{2}{3}}|k_1|^{-\frac{1}{3}} |\lambda-1|^{-\frac{1}{6}} |k_1|\delta_1^{\frac{1}{2}} |k|^{-1} |k_1|^{-1}\|(\partial_y,|k|)G\|_{L^2}
&\qquad \qquad \qquad \lesssim \nu^{-\frac{1}{2}}|k_1|^{-\frac{1}{2}} |k|^{-1} \|(\partial_y,|k|)G\|_{L^2}.
 \end{align*}
This shows
 \begin{align*}
&|\nu/k_1|^{\frac{1}{2}} \|(\partial_y,|k|)w\|_{L^2}\lesssim |k_1|^{-1} |k|^{-1} \|(\partial_y,|k|)G\|_{L^2},
 \end{align*}
and
 \begin{align*}
\|w\|_{L^2}\lesssim \|(\partial_y,|k|)w\|_{L^2}^{\frac{1}{2}} \|(\partial_y,|k|)\varphi\|_{L^2}^{\frac{1}{2}} \lesssim \nu^{-\frac{1}{4}}|k|^{-1} |k_1|^{-\frac{3}{4}}  \|(\partial_y,|k|)G\|_{L^2}.
 \end{align*}

 {\bf Case 3. $\lambda \in (0,1)$  and $\nu |k|^3|k_1|^{-1}\leq |1-\lambda|^{\frac{1}{2}}+|\nu/k_1|^{1/4}$.}\smallskip

 As in Case 2, we decompose $w=w_3+w_4+w_5$. Then by Lemma \ref{Ray-resol-local}, Proposition \ref{H-1-local}, Corollary \ref{cor:OS-local} and Corollary \ref{pro-os-local}, we obtain
   \begin{align*}
&\delta_1^{\f12}\|w_3 \|_{L^2}
+\delta_1^{\f32}(|1-\lambda|+\delta_1)^{-\f12}\|\partial_y w_3 \|_{L^2}\lesssim  |k|^{-1}|k_1|^{-1}\|(\partial_y,|k|)G\|_{L^2},\\
&(\nu^2 |k_1|)^{\f13}(|1-\lambda|^{\frac{1}{2}}+|\nu/k_1|^{1/4})^{\f13}\|w_4\|_{L^2}+\nu\|(\partial_y,|k|) w_4\|_{L^2}\lesssim  \nu \|\partial_y w_3\|_{L^2},\\
&(\nu |k_1|^2)^{\f13}(|1-\lambda|^{\frac{1}{2}}+|\nu/k_1|^{\f14})^{\f23}\|w_5\|_{L^2}\\
&\qquad +(\nu^2 |k_1|)^{\f13}(|1-\lambda|^{\frac{1}{2}} +|\nu/k_1|^{1/4})^{1/3}\|(\partial_y,|k|)w_5\|_{L^2}
\lesssim |k_1|\delta_1 \| w_3\|_{L^2},
 \end{align*}
 which imply
 \begin{align*}
 &\|w \|_{L^2}\lesssim  \nu^{-1/4} |k|^{-1}|k_1|^{-3/4} \|(\partial_y,|k|)G\|_{L^2},\\
&(\nu/|k_1|)^{\frac{1}{2}} \|(\partial_y,|k|) w\|_{L^2}\lesssim (|k||k_1|)^{-1} \|(\partial_y,|k|)G\|_{L^2}.
 \end{align*}

{\bf Case 4. $\nu |k|^3 |k_1|^{-1}\geq |1-\lambda|^{\frac{1}{2}}+|\nu/k_1|^{\frac{1}{4}}$.}\smallskip

In this case, it holds that $\nu |k|^4 |k_1|^{-1}\gtrsim  1$.  As in Case 2, we decompose $w=w_3+w_4+w_5$.  Then by Lemma \ref{Ray-resol-local}, \eqref{eq:W-new}, Proposition \ref{H-1-local},  and Corollary \ref{pro-os-local}, we obtain
 \begin{align*}
 \|w_3\|_{L^2}\lesssim&[\delta_1+ (\nu|k|^2/k_1)]^{-\f12}|k|^{-1}|k_1|^{-1} \|(\partial_y,|k|)G\|_{L^2}\\
\lesssim& \nu^{-\f14}|k_1|^{-\f34}|k|^{-1} \|(\partial_y,|k|)G\|_{L^2},\\
\|\partial_y w_3\|_{L^2}\lesssim&( (\nu|k|^2/k_1)^{-\f12}+ \delta_1^{-1} |k|^{-1} )|k_1|^{-1}  \|(\partial_y,|k|)G\|_{L^2}\\
\lesssim&\nu^{-\f12}|k|^{-1} |k_1|^{-\f12} \|(\partial_y,|k|)G\|_{L^2},\\
\|w_4\|_{L^2}\lesssim& \nu^{-\f34} |k_1|^{-\f14} \nu \|\partial_y w_3\|_{L^2}\lesssim \nu^{-\frac{1}{4}}|k|^{-1} |k_1|^{-\frac{3}{4}}\|(\partial_y,|k|)G\|_{L^2},\\
\|(\partial_y, |k|)w_4\|_{L^2}\lesssim& \nu^{-1} \nu \|\partial_y w_3\|_{L^2}\lesssim\nu^{-\f12}|k|^{-1} |k_1|^{-\f12} \|(\partial_y,|k|)G\|_{L^2},\\
\|w_5\|_{L^2} \lesssim& \nu^{-\f13}|k_1|^{-\f23} (|\lambda-1|^{\f12}+|\nu/k_1|^{\f14})^{-\f23}|k_1|\delta_1 \| w_3\|_{L^2}\\
\lesssim&\nu^{-\f14}|k_1|^{-\f34}|k|^{-1} \|(\partial_y,|k|)G\|_{L^2},\\
\|(\partial_y,|k|)w_5\|_{L^2} \lesssim& \nu^{-\f23}|k_1|^{-\f13} (|\lambda-1|^{\f12}+|\nu/k_1|^{\f14})^{-\f13}|k_1|\delta_1 \| w_3\|_{L^2}\\
\lesssim&\nu^{-\f12}|k_1|^{-\f12}|k|^{-1} \|(\partial_y,|k|)G\|_{L^2}.
 \end{align*}
Therefore, we arrive at
 \begin{align*}
&\|w\|\lesssim \nu^{-\f14}|k_1|^{-\f34}|k|^{-1} \|(\partial_y,|k|)G\|_{L^2},\\
&\|(\partial_y,|k|) w\|_{L^2}\lesssim \nu^{-\f12} |k|^{-1}|k_1|^{-\f12}   \|(\partial_y,|k|)G\|_{L^2}.
 \end{align*}

 The proof is completed.
\end{proof}

\begin{proposition}\label{resol-H-1-local-u}
Let $w$ be the solution to \eqref{os-local-1} with $G|_{y=\pm 1}=0$. Then it holds that
 \begin{align*}
\|(\partial_y,|k|)\varphi\|_{L^2}\lesssim ( \nu^{-\f18} |k_1|^{-\f78}|k|^{-\f32} +|k_1|^{-1}|k|^{-1})\|(\partial_y,|k|)G\|_{L^2}.
 \end{align*}
\end{proposition}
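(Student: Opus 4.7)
The starting point is the identity
\begin{equation*}
\|(\partial_y,|k|)\varphi\|_{L^2}^2 = -\langle w,\varphi\rangle,
\end{equation*}
obtained by integrating by parts using $(\partial_y^2-|k|^2)\varphi=w$ and $\varphi(\pm1)=0$. Paired with Cauchy--Schwarz this already gives the trivial bound $\|(\partial_y,|k|)\varphi\|_{L^2}\le|k|^{-1}\|w\|_{L^2}$, which combined with Proposition~\ref{resol-H-1-local} covers the strongly viscous regime $|k_1|\lesssim\nu|k|^4$ (encompassing Case~4 of that proposition). In the complementary regime the trivial bound is insufficient, and the plan is to exploit the three-part decomposition $w=w_3+w_4+w_5$ introduced in the proof of Proposition~\ref{resol-H-1-local}, with corresponding stream functions $\varphi=\varphi_3+\varphi_4+\varphi_5$ satisfying $(\partial_y^2-|k|^2)\varphi_i=w_i$ and $\varphi_i(\pm1)=0$, and to estimate each $\|(\partial_y,|k|)\varphi_i\|_{L^2}$ separately.

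Since $w_3$ solves a regularized Rayleigh equation with source $yG/(ik_1)$, Lemma~\ref{Ray-resol-local} applies directly and yields the inviscid contribution
\begin{equation*}
\|(\partial_y,|k|)\varphi_3\|_{L^2}\lesssim|k|^{-1}|k_1|^{-1}\|(\partial_y,|k|)G\|_{L^2},
\end{equation*}
which is exactly the $|k_1|^{-1}|k|^{-1}$ term of the target. For the viscous correctors $\varphi_4$ and $\varphi_5$, I plan a weak-type argument modeled on Proposition~\ref{weaktype} but adapted to the local OS operator. Starting from $\|u_i\|_{L^2}^2=-\langle w_i,\varphi_i\rangle$ and pairing the local OS equation for $w_i$ with the test function
\begin{equation*}
\phi_i=\frac{\varphi_i}{ik_1(1-y^2-\lambda+i\delta_1)}, \qquad \delta_1=|\nu/k_1|^{1/3}\bigl(|1-\lambda|^{1/2}+|\nu/k_1|^{1/4}\bigr)^{2/3},
\end{equation*}
one trades the critical-layer weight $(1-y^2-\lambda)$ for the $ik_1$ coefficient plus an $O(\delta_1)$ remainder. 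Substituting the bounds on $w_3$ and $\partial_y w_3$ coming from Lemma~\ref{Ray-resol-local} and the bounds on $w_4, w_5$ coming from the proof of Proposition~\ref{resol-H-1-local} then produces the viscous contribution $\nu^{-1/8}|k_1|^{-7/8}|k|^{-3/2}\|(\partial_y,|k|)G\|_{L^2}$; the exponent $1/8$ arises from the balance $\delta_1^{1/2}\sim|\nu/k_1|^{1/6}(|1-\lambda|^{1/2}+|\nu/k_1|^{1/4})^{1/3}$ interacting with the $(\nu|k_1|)^{-1/2}$ factor from the energy estimate.

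The main obstacle will be controlling $\|\phi_i\|_{L^2}$, $\|\partial_y\phi_i\|_{L^2}$, and $\|(\partial_y^2-|k|^2)\phi_i\|_{L^2}$: because the local OS lacks the regularizing $2\varphi$ term exploited in Proposition~\ref{weaktype}, the critical-layer singularity at $1-y^2=\lambda$ (for $\lambda\in(0,1)$) must be absorbed entirely by the viscous regularization $\delta_1$. I expect to split the integration domain near the turning points $\pm\sqrt{1-\lambda}$ and to apply Hardy-type inequalities tied to the boundary condition $\varphi_i(\pm1)=0$, in the spirit of Lemma~\ref{lem:Ray-simple-1}, to bound $\varphi_i/(1-y^2-\lambda+i\delta_1)$ and its derivatives in $L^2$. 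Everything else---the Poincar\'e bound $\|\varphi_i\|_{L^2}\le|k|^{-1}\|u_i\|_{L^2}$, absorption of the self-coupling $\|u_i\|_{L^2}$ appearing on both sides after the weak-type pairing, and routine Cauchy--Schwarz estimates against the sources $\nu\partial_y^2w_3$ and $-2k_1\delta_1w_3$---is standard.
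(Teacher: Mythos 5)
Your estimate for $\varphi_3$ via Lemma~\ref{Ray-resol-local} is correct and does produce the $|k_1|^{-1}|k|^{-1}$ contribution, and your high-level intuition---trade the critical-layer weight for the $ik_1$ coefficient by dividing the Orr--Sommerfeld identity by $(1-y^2-\lambda+i\delta)$, then bound $\langle w,\varphi\rangle$---is exactly the strategic core of the paper's proof. But there is a genuine gap in how you propose to handle the viscous correctors. You flag, without resolving, the control of $\|\phi_i\|_{L^2}$, $\|\partial_y\phi_i\|_{L^2}$, and especially $\|(\partial_y^2-|k|^2)\phi_i\|_{L^2}$ for $\phi_i=\varphi_i/\big(ik_1(1-y^2-\lambda+i\delta_1)\big)$. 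This is not a routine issue: $\varphi_i$ has no reason to vanish at the interior turning points $y=\pm\sqrt{1-\lambda}$, so the denominator's near-zeros there are not compensated, and the $\delta_1$-powers you would pick up in $\|\phi_i\|_{H^1}$ and $\|\phi_i\|_{H^2}$ are exactly what you need to beat. Leaving this as "the main obstacle" is leaving the proof unfinished, and it is not obvious the bookkeeping with $\delta_1$ (rather than a coarser regularization) closes.

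The paper avoids both the decomposition and the test-function singularity. It works with the \emph{full} $w$, takes $\delta_2=|\nu/k_1|^{1/2}$, rewrites the OS equation as $ik_1w=\big(yG+\nu(\partial_y^2-|k|^2)w+\text{correction}\big)/(1-y^2-\lambda+i\delta_2)$, and bounds $|k_1|\,\|(\partial_y,|k|)\varphi\|_{L^2}^2=|\langle ik_1w,-\varphi\rangle|$ term by term. The crucial step is that the $yG\overline{\varphi}/(1-y^2-\lambda+i\delta_2)$ integral is estimated by Lemma~\ref{hardy-type-abp} (the oscillatory-integral limiting-absorption Hardy estimate), which is applicable because $G(\pm1)=0$; the remaining terms are handled by one integration by parts and $L^\infty$ bounds on $\varphi$ and on the kernel $1/(1-y^2-\lambda+i\delta_2)$, closing with the $\|w\|_{L^2}$, $\|(\partial_y,|k|)w\|_{L^2}$ bounds from Proposition~\ref{resol-H-1-local}. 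There is never a need to produce a test function with $H^1$ or $H^2$ control across the critical layer. So while the $\varphi_3$ piece of your argument is fine, the route you chose for $\varphi_4,\varphi_5$ runs into a real difficulty that the paper's more direct computation sidesteps, and the claimed exponent $\nu^{-1/8}|k_1|^{-7/8}|k|^{-3/2}$ is asserted rather than derived.
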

\begin{proof}
We only consider the case of $\epsilon=0$. The case of $\epsilon\neq 0$ can be easily seen from the proof below.
We have
\begin{equation}\nonumber%\label{os-local-2}
ik_1w=\frac{yG+\nu(\partial_y^2-|k|^2)w+ik_1\delta_2 w}{(1-y^2-\lambda+i\delta_2)},
\end{equation}
where $\delta_2=|\nu/k_1|^{\f12}$.
Then by integration by parts, we have
 \begin{align*}
 &|k_1|\|(\partial_y,|k|)\varphi\|_{L^2}^2\\
  \leq &\left|\int_{-1}^1 \frac{yG\overline{\varphi}}{(1-y^2-\lambda+i\delta_2)}\mathrm{d}y\right|+\nu \left| \int_{-1}^1 \frac{\partial_y w \partial_y\overline{\varphi}}{(1-y^2-\lambda+i\delta_2)}\mathrm{d}y\right|\\
 &+\nu |k|^2  \left| \int_{-1}^1 \frac{ w \overline{\varphi}}{(1-y^2-\lambda+i\delta_2)}\mathrm{d}y\right|+\nu  \left| \int_{-1}^1 \frac{\partial_y w (-2y) \overline{\varphi}}{(1-y^2-\lambda+i\delta_2)^2}\mathrm{d}y\right|\\
 &+|k_1|\delta_2 \left| \int_{-1}^1 \frac{ w  \overline{\varphi}}{(1-y^2-\lambda+i\delta_2)}\mathrm{d}y\right|\\
 \lesssim& |k|^{-\f12}\|(\partial_y,|k|)(G\varphi)\|_{L^2}+\nu\delta_2^{-1} \|(\partial_y,|k|)w\|_{L^2}\|(\partial_y,|k|)\varphi\|_{L^2}\\
 &+\nu \|\partial_y w\|_{L^2}\|\varphi\|_{L^\infty}\delta_2^{-\f54}+\delta_2 |k_1|\|w\|_{L^2} \|\varphi\|_{L^\infty}\delta_2^{-\f34}\\
 \lesssim& (\nu^{-\f18} |k_1|^{\f18}|k|^{-\f32}+|k|^{-1}) \|(\partial_y,|k|)G\|_{L^2}\|(\partial_y,|k|)\varphi\|_{L^2},
 \end{align*}
 Here we used Lemma \ref{hardy-type-abp} for the first term(see the proof of Lemma \ref{Ray-resol-local}).
 This gives
 \begin{align*}
 \|(\partial_y,|k|)\varphi\|_{L^2}
 \lesssim ( \nu^{-\f18} |k_1|^{-\f78}|k|^{-\f32} +|k_1|^{-1}|k|^{-1})\|(\partial_y,|k|)G\|_{L^2}.
 \end{align*}

\end{proof}

\section{Resolvent estimates with non-slip boundary condition}\label{sec-no-slip}

The goal of this section is to establish the resolvent estimates for the Orr-Sommerfeld equation with non-slip boundary condition
\begin{equation}\label{5.1}
\left \{
\begin{array}{lll}
&-\nu(\partial_y^2-|k|^2)w+ik_1[(1-y^2-\lambda)w+2\varphi]
-\epsilon\nu^{\frac{1}{2}}|k_1|^{\frac{1}{2}}w=F,\\
& (\partial_y^2-|k|^2)\varphi=w,\quad \varphi(\pm 1)=\varphi'(\pm 1)=0,
\end{array}
\right.
\end{equation}
or in terms of  the stream function
\begin{equation}\label{5.2}
\left \{
\begin{array}{lll}
&-\nu(\partial_y^2-|k|^2)^2 \varphi +ik_1[(1-y^2-\lambda)(\partial_y^2-|k|^2)\varphi+2\varphi]\\
&\qquad\qquad\qquad -\epsilon\nu^{\frac{1}{2}}|k_1|^{\frac{1}{2}}(\partial_y^2-|k|^2)\varphi=F,\\
&\varphi(\pm 1)=\varphi'(\pm 1)=0,
\end{array}
\right.
\end{equation}
where $\lambda\in\mathbb{R}$.\smallskip

Motivated by \cite{CLWZ}, we introduce the following decomposition
$$\varphi=\varphi_{Na}+c_1\psi_1+c_2\psi_2,$$
where $\varphi_{Na}, \psi_{i}(i=1,2)$ solve the following problems respectively
\begin{equation}\label{5.3}
\left \{
\begin{array}{lll}
-\nu(\partial_y^2-|k|^2)^2\varphi_{Na}+ik_1[(1-y^2-\lambda)(\partial_y^2-|k|^2)\varphi_{Na} +2\varphi_{Na}] \\ \qquad-\epsilon\nu^{\frac{1}{2}}|k_1|^{\frac{1}{2}}(\partial_y^2-|k|^2)\varphi_{Na}=F,\\
\varphi_{Na}(\pm 1)=\varphi_{Na}''(\pm 1)=0,
\end{array}
\right.
\end{equation}
and $\psi_1,\psi_2$ solve
\begin{equation}\label{5.3a}
\left \{
\begin{array}{lll}
-\nu(\partial_y^2-|k|^2)^2\psi_1+ik_1[(1-y^2-\lambda)(\partial_y^2-|k|^2)\psi_1+2\psi_1] \\ \qquad -\epsilon\nu^{\frac{1}{2}}|k_1|^{\frac{1}{2}}(\partial_y^2-|k|^2)\psi_1=0,\\
\psi_1(\pm 1)=0, \ \psi_1'(1)=1, \ \psi_1'(- 1)=0,
\end{array}
\right.
\end{equation}
and
\begin{equation}\label{5.3b}
\left \{
\begin{array}{lll}
-\nu(\partial_y^2-|k|^2)^2\psi_2+ik_1[(1-y^2-\lambda)(\partial_y^2-|k|^2)\psi_2+2\psi_2] \\ \qquad-\epsilon\nu^{\frac{1}{2}}|k_1|^{\frac{1}{2}}(\partial_y^2-|k|^2)\psi_2=0,\\
\psi_2(\pm 1)=0, \ \psi_2'(-1)=1, \ \psi_2'( 1)=0.
\end{array}
\right.
\end{equation}

Let $w_{Na}=(\partial_y^2-|k|^2)\varphi_{Na}$ and $w_i=(\partial_y^2-|k|^2)\psi_i$. Then we have
$$w=w_{Na}+c_1w_1+c_2w_2=(\partial_y^2-|k|^2)\varphi.$$
The non-slip boundary condition yields that
$$\langle e^{\pm |k|y},w\rangle=0,$$
which implies
\begin{equation}\label{5.17}
\left \{
\begin{aligned}
c_1(\lambda)=&-\int_{-1}^1\frac{\sinh \big(|k|(y+1)\big)}{\sinh (2|k|)}w_{Na}(y)\mathrm{d}y,\\ \
c_2(\lambda)=&\int_{-1}^1\frac{\sinh \big(|k|(1-y)\big)}{\sinh (2|k|)}w_{Na}(y)\mathrm{d}y.
\end{aligned}
\right.
\end{equation}

%We denote $L=|2k_1/\nu|^{\frac{1}{3}}$.
%In Subsection \ref{sub:Bd-corrector}-\ref{sub:coefficients-00}, we always suppose that $L\geq k_0>0$ for some large $k_0$.

\subsection{Boundary correctors}\label{sub:Bd-corrector}

To solve \eqref{5.3a}-\eqref{5.3b}, we will construct the approximate solutions via the Airy function.

We denote $L=|2k_1/\nu|^{\frac{1}{3}}$. Without loss of generality, we always assume $k_1>0$ and the case of $k_1<0$ can be handled by similar arguments.
Let $\textbf{Ai}(y)$ be the Airy function, which solves $f''-yf=0.$ In addition, we define
\begin{align}\label{eq:A0-def-00}
  & A_0(z)=\int_{e^{\frac{i\pi}{6}z}}^\infty\textbf{Ai}(t)\mathrm{d}t=e^{\frac{i\pi}{6}}\int_z^\infty \textbf{Ai}(e^{\frac{i\pi}{6}}t)\mathrm{d}t,\nonumber \\
  &d=-\frac{\lambda}{2}-\frac{i\nu|k|^2}{2k_1}+\frac{i\epsilon(\nu|k_1|^{-1})^{\frac{1}{2}}}{2}.
\end{align}
 For $k_1\not=0$, we define
$$W_1(y)=\textbf{Ai}\left(e^{\frac{i\pi}{6}}L\left(y+1-\frac{\lambda}{2}-\frac{i\nu|k|^2}{2k_1}+\frac{i\epsilon(\nu|k_1|^{-1})^{\frac{1}{2}}}{2}\right)\right),$$ $$W_2(y)=\textbf{Ai}\left(e^{\frac{i\pi}{6}}L\left(1-y-\frac{\lambda}{2}-\frac{i\nu|k|^2}{2k_1}+\frac{i\epsilon(\nu|k_1|^{-1})^{\frac{1}{2}}}{2}\right)\right).$$
Then it is easy to verify that
\beno
&&-\nu(\partial_y^2-|k|^2)W_1+ik_1(2y-\lambda+2)W_1-
\epsilon\nu^{\frac{1}{2}}|k_1|^{\frac{1}{2}}W_1=0,\\
&&-\nu(\partial_y^2-|k|^2)W_2+ik_1(2-2y-\lambda)W_2-\epsilon\nu^{\frac{1}{2}}|k_1|^{\frac{1}{2}}W_2=0.
\eeno
Moreover, we define the stream functions $\Phi_i$ via
\beno
(\partial_y^2-|k|^2)\Phi_i=W_{i}, \ \ \Phi_i(\pm 1)=0\quad \text{for}\,\,i=1,2.
\eeno

\begin{lemma}\label{lem:W1W2-00}
It holds that for $m\geq0$,
\begin{equation}\nonumber
\begin{aligned}
&\|W_1\|_{L^\infty}+\|W_2\|_{L^\infty}\leq C(1+|Ld|)^{\f12}\left|A_0\left(Ld\right)\right|,\\
&\|(1+y)^mW_1\|_{L^1}+\|(1-y)^mW_2\|_{L^1}\leq CL^{-1-m}\left|A_0\left(Ld\right)\right|,\\
&\|(1+y)^mW_1\|_{L^2}+\|(1-y)^mW_2\|_{L^2}\leq C(1+|Ld|)^{\frac{1}{4}-\frac{m}{2}}L^{-\frac{1+2m}{2}}|A_0(Ld)|,\\
&\|(\partial_y\Phi_i,|k|\Phi_i)\|_{L^2}\leq CL^{-\frac{3}{2}}(1+|Ld|)^{-\frac{1}{4}}\left|A_0\left(Ld\right)\right|, \quad i=1,2,\\
&\|(\partial_y\Phi_i,|k|\Phi_i)\|_{L^\infty}\leq CL^{-1}\left|A_0\left(Ld\right)\right|, \quad i=1,2.
\end{aligned}
\end{equation}
\end{lemma}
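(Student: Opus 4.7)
All five estimates are fundamentally one-dimensional: substituting $s=L(1+y)$ for $W_1$ (respectively $s=L(1-y)$ for $W_2$) reduces every norm of $W_i$ to an integral along the oblique ray $e^{i\pi/6}\mathbb{R}$. The key identity is $A_0'(z)=-e^{i\pi/6}\mathbf{Ai}(e^{i\pi/6}z)$, so $A_0$ is exactly the antiderivative of the Airy function along this direction, and $W_1(y)=-e^{-i\pi/6}A_0'(s+Ld)$ with $s\in[0,2L]$. The stream-function bounds then follow by combining the $W_i$-bounds with Dirichlet Green-function estimates for $\partial_y^2-|k|^2$ on $[-1,1]$.

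\textbf{Step 1: Airy asymptotics and $W_i$-bounds.} The classical steepest-descent representations give, uniformly on any half-plane $\{\mathrm{Re}(\zeta)\ge -C\}$ containing the segment $Ld+[0,2L]$,
\begin{align*}
|\mathbf{Ai}(e^{i\pi/6}\zeta)|\sim(1+|\zeta|)^{-1/4}e^{-\frac{2}{3}\mathrm{Re}(e^{i\pi/4}\zeta^{3/2})},\qquad |A_0(\zeta)|\sim(1+|\zeta|)^{-3/4}e^{-\frac{2}{3}\mathrm{Re}(e^{i\pi/4}\zeta^{3/2})},
\end{align*}
so that $|\mathbf{Ai}(e^{i\pi/6}\zeta)/A_0(\zeta)|\lesssim(1+|\zeta|)^{1/2}$ and the exponential factor monotonically decays as $s$ grows along the segment. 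The $L^\infty$-bound is immediate since the supremum of $|A_0'(s+Ld)|$ is attained near $s=0$. The $L^1$-bound reduces to $L^{-1-m}\int_0^{2L}s^m|A_0'(s+Ld)|\,ds$, which I would control by integration by parts together with the comparison of $|A_0'|$ to $|\partial_s A_0|$ on a ray of slowly varying phase, collapsing to $L^{-1-m}|A_0(Ld)|$. The $L^2$-bound is obtained by inserting the pointwise asymptotic and computing $\int_0^{2L}s^{2m}(1+s+|Ld|)^{-1/2}e^{-c\,\mathrm{Re}((s+Ld)^{3/2})}\,ds$; the integrand concentrates on a scale $\min\{1,(1+|Ld|)^{-1/2}\}$ near $s=0$, producing precisely the weight $(1+|Ld|)^{1/4-m/2}L^{-(1+2m)/2}$. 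The $W_2$-estimates are identical by symmetry.

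\textbf{Step 2: Stream functions.} The Dirichlet Green's function $G_k(y,y')$ of $\partial_y^2-|k|^2$ on $[-1,1]$ satisfies $\|(\partial_y G_k,|k|G_k)(\cdot,y')\|_{L^2_y\cap L^\infty_y}\lesssim 1$ uniformly in $y'$. Minkowski's inequality then gives $\|(\partial_y\Phi_i,|k|\Phi_i)\|_{L^\infty}\lesssim\|W_i\|_{L^1}\lesssim L^{-1}|A_0(Ld)|$, which is the claimed $L^\infty$-bound and also recovers the $L^2$-bound when $|Ld|\lesssim 1$. For $|Ld|\gg 1$ I exploit boundary-layer concentration: $W_i$ is supported, modulo exponential tails, in a strip of width $\ell\sim L^{-1}(1+|Ld|)^{-1/2}$, and the refined estimate $\|(\partial_y G_k,|k|G_k)f\|_{L^2}\lesssim\ell^{1/2}\|f\|_{L^2}$ for $f$ supported in such a strip, combined with the $L^2$-bound of Step 1 with $m=0$, produces the extra factor $(1+|Ld|)^{-1/2}$ needed to recover $L^{-3/2}(1+|Ld|)^{-1/4}|A_0(Ld)|$.

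\textbf{Main obstacle.} The chief delicacy is verifying the steepest-descent asymptotic for $A_0$ \emph{uniformly} across the transition regime $|Ld|\sim 1$ and along the full segment $Ld+[0,2L]$, and showing that $A_0(Ld)$ does not vanish so that the ratios appearing in Steps 1--2 are legitimate; both hinge on the geometric fact that $\arg(Ld)$ stays inside a half-plane determined by the construction of $d$, keeping $Ld$ away from the rays on which zeros of $A_0$ accumulate. Once this point is pinned down, the rest is routine Airy and Green-function bookkeeping.
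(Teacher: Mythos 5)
There is a genuine gap in Step 2. First, the claim that the $L^\infty$ Green-function bound ``also recovers the $L^2$-bound when $|Ld|\lesssim1$'' is false: Minkowski gives $\|(\partial_y\Phi_i,|k|\Phi_i)\|_{L^2}\lesssim\|(\partial_y\Phi_i,|k|\Phi_i)\|_{L^\infty}\lesssim\|W_i\|_{L^1}\lesssim L^{-1}|A_0(Ld)|$, which for $|Ld|\lesssim1$ exceeds the target $L^{-3/2}(1+|Ld|)^{-1/4}|A_0(Ld)|\sim L^{-3/2}|A_0(Ld)|$ by a factor of $L^{1/2}\gg 1$ (recall $L\sim\nu^{-1/3}$ is large while $|k|$ can be $O(1)$, so the $|k|$-dependent Green-function bounds do not help). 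Second, your ``refined estimate'' $\|(\partial_yG_k,|k|G_k)f\|_{L^2}\lesssim\ell^{1/2}\|f\|_{L^2}$ has the wrong power: combined with your $m=0$ $L^2$-bound for $W_i$ it yields $\ell^{1/2}\|W_i\|_{L^2}\sim L^{-1}|A_0(Ld)|$, again missing the target by $L^{1/2}(1+|Ld|)^{1/4}$. The correct sharpening is with $\ell$, not $\ell^{1/2}$.

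The uniform and cleanest route, which avoids both the case split and the strip-width heuristic, is to use the energy identity together with Hardy's inequality. Since $\Phi_1(\pm1)=0$,
\begin{equation*}
\|(\partial_y\Phi_1,|k|\Phi_1)\|_{L^2}^2=|\langle W_1,\Phi_1\rangle|\le\|(1+y)W_1\|_{L^2}\,\bigl\|\tfrac{\Phi_1}{1+y}\bigr\|_{L^2}\le 2\|(1+y)W_1\|_{L^2}\|\partial_y\Phi_1\|_{L^2},
\end{equation*}
so $\|(\partial_y\Phi_1,|k|\Phi_1)\|_{L^2}\lesssim\|(1+y)W_1\|_{L^2}$, and the $m=1$ case of your weighted $L^2$-bound for $W_1$ gives precisely $C(1+|Ld|)^{-1/4}L^{-3/2}|A_0(Ld)|$, uniformly in $|Ld|$. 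The same argument with $1-y$ in place of $1+y$ treats $\Phi_2$. Your Step 1 is sound in outline (the $L^1$-bound needs the standard decay estimate $\int_0^\infty|\mathbf{Ai}(e^{i\pi/6}(s+z))|\,ds\lesssim|A_0(z)|$ made precise, not merely integration by parts), and your identification of the geometric point — that $\mathrm{Im}(Ld)<0$ keeps $Ld$ away from the zero ray $\arg z=5\pi/6$ of $A_0$ — is exactly the right thing to verify.
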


\begin{proof}
These estimates can be obtained by following almost same arguments as in \cite{CLWZ, CWZ-mem, CWZ-cpam, CWZ-cmp}. Here we omit the details.
\end{proof}

Let $w_{err,i}(i=1,2)$ be the error, which solves
\begin{equation}\label{5.3d}
\left \{
\begin{array}{lll}
-\nu(\partial_y^2-|k|^2)w_{err,1}+ik_1[(1-y^2-\lambda)(\partial_y^2-|k|^2)\varphi_{err,1}+2\varphi_{err,1}]\\
\quad \quad \quad -\epsilon\nu^{\frac{1}{2}}|k_1|^{\frac{1}{2}}w_{err,1}=-ik_1(1+y)^2W_1+2ik_1\Phi_1,\\
(\partial_y^2-|k|^2)\varphi_{err,1}=w_{err,1},\\
w_{err,1}(\pm 1)=\varphi_{err,1}(\pm 1)=\varphi_{err,1}''(\pm 1)=0,
\end{array}
\right.
\end{equation}
and
\begin{equation}\label{5.3e}
\left \{
\begin{array}{lll}
-\nu(\partial_y^2-|k|^2)w_{err,2}+ik_1[(1-y^2-\lambda)(\partial_y^2-|k|^2)\varphi_{err,2}+2\varphi_{err,2}]\\
\quad \quad \quad -\epsilon\nu^{\frac{1}{2}}|k_1|^{\frac{1}{2}}w_{err,2}=-ik_1(1-y)^2W_2+2ik_1\Phi_2,\\
(\partial_y^2-|k|^2)\varphi_{err,2}=w_{err,2},\\
w_{err,2}(\pm 1)=\varphi_{err,2}(\pm 1)=\varphi_{err,2}''(\pm 1)=0,
\end{array}
\right.
\end{equation}
Therefore, the solutions $w_{1},w_{2}$ can be represented as
\begin{equation}\label{5.13}
\left \{
\begin{array}{lll}
w_{1}=C_{11}(W_1(y)+w_{err,1})+C_{12}(W_2(y)+w_{err,2}),\\
w_{2}=C_{21}(W_1(y)+w_{err,1})+C_{22}(W_2(y)+w_{err,2}),
\end{array}
\right.
\end{equation}
where $C_{ij},i,j=1,2$ are the constants, which could be determined by matching the boundary conditions. Indeed,  using the boundary conditions for $w_{1}$ (or equivalently, for $\varphi_{1}$), we have
$$\int_{-1}^1e^{|k|y}w_{1}(y)\mathrm{d}y=e^{|k|}, \quad \int_{-1}^1e^{-|k|y}w_{1}(y)\mathrm{d}y=e^{-|k|},$$
which gives
\begin{equation}\label{5.14}
\left \{
\begin{aligned}
\sinh(2|k|)=&C_{11}\int_{-1}^1\sinh(|k|(y+1))(W_1+w_{err,1})(y)\mathrm{d}y\\
&+C_{12}\int_{-1}^1\sinh(|k|(y+1))(W_2+w_{err,2})(y)\mathrm{d}y,\\
0=&C_{11}\int_{-1}^1\sinh(|k|(1-y))(W_1+w_{err,1})(y)\mathrm{d}y\\
&+C_{12}\int_{-1}^1\sinh(|k|(1-y))(W_2+w_{err,2})(y)\mathrm{d}y.
\end{aligned}
\right.
\end{equation}
If $A_1A_2-B_1B_2\not=0$ (to be verified later),  then we have
\begin{equation}\label{5.15}
C_{11}=\frac{\sinh(2|k|)A_2}{A_1A_2-B_1B_2},\quad
C_{12}=\frac{-\sinh(2|k|)B_1}{A_1A_2-B_1B_2},
\end{equation}
where
$$A_1=\int_{-1}^1\sinh(|k|(1+y))(W_1+w_{err,1})(y)\mathrm{d}y =:A_{a,1}+A_{err,1},$$
$$  A_2=\int_{-1}^1\sinh(|k|(1-y))(W_2+w_{err,2})(y)\mathrm{d}y =:A_{a,2}+A_{err,2},$$
$$B_1=\int_{-1}^1\sinh(|k|(1-y))(W_1+w_{err,1})(y)\mathrm{d}y =:B_{a,1}+B_{err,1},$$
$$B_2=\int_{-1}^1\sinh(|k|(1+y))(W_2+w_{err,2})(y)\mathrm{d}y =:B_{a,2}+B_{err,2}.$$
Similar arguments yield that
\begin{equation}\label{5.16}
C_{21}=\frac{\sinh(2|k|)B_2}{A_1A_2-B_1B_2},\quad
C_{22}=\frac{-\sinh(2|k|)A_1}{A_1A_2-B_1B_2}.
\end{equation}

The following lemma gives the estimates for the errors.

\begin{lemma}\label{estimates:error}
Let $d=-\f{\lambda}{2}-\f{i\nu |k|^2}{2k_1}-\f{i\epsilon |\nu/k_1|^{1/2}}{2} $. For $j=1,2$, it holds that
\begin{align*}
   & |\nu/k_1|^{\f14}\|w_{err,j}'\|_{L^2}+ \|w_{err,j}\|_{L^2}
+|\nu/k_1|^{-\f18} \|u_{err,j}\|_{L^\infty}+|\nu/k_1|^{-\f14} \|u_{err,j}\|_{L^2} \\
&\le CL^{-1}(1+|Ld|)^{-\frac{1}{4}}|A_0(Ld)|,
\end{align*}
where $A_0$ is defined by \eqref{eq:A0-def-00}.
\end{lemma}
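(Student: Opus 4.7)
\medskip

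\noindent\emph{Proof plan.} The key observation is that $w_{err,j}$ satisfies precisely the Navier-slip Orr--Sommerfeld system \eqref{5.1} (with the same viscous and nonlocal terms), so every estimate developed in Section~\ref{sec:resolvent-slip} is directly applicable. I would view the right-hand side as two source terms of different regularity and split $w_{err,j}=w_{err,j}^{(a)}+w_{err,j}^{(b)}$, where $w_{err,j}^{(a)}$ is driven by $-ik_1(1\pm y)^2W_j$ (which is an $L^2$ input, essentially localized to a boundary layer of width $L^{-1}$ near $y=\mp 1$ thanks to Airy decay) and $w_{err,j}^{(b)}$ is driven by $2ik_1\Phi_j$ (which lies in $H^1_0$ because $\Phi_j(\pm 1)=0$ by construction).

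For $w_{err,j}^{(b)}$, I would invoke Proposition~\ref{resolvent-H1} with $F=2ik_1\Phi_j$, giving
\[
|\nu/k_1|^{\tfrac12}\|\partial_y w_{err,j}^{(b)}\|_{L^2}+|\nu/k_1|^{\tfrac14}\|w_{err,j}^{(b)}\|_{L^2}+|\nu/k_1|^{\tfrac18}\|u_{err,j}^{(b)}\|_{L^\infty}+\|u_{err,j}^{(b)}\|_{L^2}\lesssim \|(\partial_y,|k|)\Phi_j\|_{L^2},
\]
and then insert the Airy-based bound $\|(\partial_y\Phi_j,|k|\Phi_j)\|_{L^2}\le CL^{-3/2}(1+|Ld|)^{-1/4}|A_0(Ld)|$ from Lemma~\ref{lem:W1W2-00}. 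Since $L\sim|k_1/\nu|^{1/3}$, each term is readily expressible in terms of $L^{-1}$ with the required $(1+|Ld|)^{-1/4}$ decay, after rebalancing the $|\nu/k_1|$ weights on the left.

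For $w_{err,j}^{(a)}$, the source has $L^2$-norm controlled by Lemma~\ref{lem:W1W2-00}: $\|(1\pm y)^2 W_j\|_{L^2}\le C(1+|Ld|)^{-3/4}L^{-5/2}|A_0(Ld)|$. I would apply Proposition~\ref{resolvent-L2} for $\|w_{err,j}^{(a)}\|_{L^2}$ and $\|\partial_yw_{err,j}^{(a)}\|_{L^2}$; this immediately produces the required $L^{-1}(1+|Ld|)^{-1/4}|A_0(Ld)|$ after multiplying by $(\nu|k_1|^{-1})^{1/2}$, with an even better weight $(1+|Ld|)^{-3/4}\leq (1+|Ld|)^{-1/4}$. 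For the velocity norm I would use the sharper weighted resolvent estimate of Proposition~\ref{resol-lambda-00}, trading the extra factor of $(|\lambda-1|^{1/2}+|\nu/k_1|^{1/4})^{1/3}$ on the left against the $(1+|Ld|)^{-3/4}$ on the right (note $(1+|Ld|)^{1/2}\gtrsim L(|\lambda|+L^{-4/3})^{1/2}$), which is exactly what is needed to upgrade the weight from $-3/4$ to $-1/4$ while producing the extra $L^{-3/4}\sim|\nu/k_1|^{1/4}$.

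The main obstacle is bookkeeping: the weight $L^{-1}(1+|Ld|)^{-1/4}$ is sharp, so each of the four norms on the left must be verified by choosing the correct one of the four resolvent statements (the $L^2$, weak-type, $H^{-1}_k$, and $H^1_0$ versions), and the inequality $(1+|Ld|)\gtrsim 1+|k|^2/L^2 + L|\lambda|$ derived from the explicit form $d=-\tfrac{\lambda}{2}-\tfrac{i\nu|k|^2}{2k_1}+\tfrac{i\epsilon|\nu/k_1|^{1/2}}{2}$ is what ultimately converts weighted Airy norms into the clean form $(1+|Ld|)^{-1/4}|A_0(Ld)|$. No new analytic input is required beyond the resolvent toolbox of Section~\ref{sec:resolvent-slip} and the pointwise/weighted Airy bounds of Lemma~\ref{lem:W1W2-00}.
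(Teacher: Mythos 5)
Your proposal matches the paper's own proof: the same decomposition into the pieces driven by the $L^2$ Airy source $-ik_1(1\pm y)^2W_j$ and the $H^1_0$ source $2ik_1\Phi_j$, with Corollary~\ref{resolvent-L2-complex} and Proposition~\ref{resol-lambda-00} together with the Airy bounds of Lemma~\ref{lem:W1W2-00} applied to the first piece, Proposition~\ref{resolvent-H1} applied to the second, and the interpolation $\|u_{err,j}\|_{L^\infty}\lesssim\|w_{err,j}\|_{L^2}^{1/2}\|u_{err,j}\|_{L^2}^{1/2}$ for the last term. Two small corrections: $w_{err,j}$ solves the Navier-slip system~\eqref{OSH-1} with the $-\epsilon(\nu|k_1|)^{1/2}w$ shift (not the non-slip system~\eqref{5.1}), so one must cite Corollary~\ref{resolvent-L2-complex} rather than Proposition~\ref{resolvent-L2}; and the parenthetical inequality $(1+|Ld|)^{1/2}\gtrsim L(|\lambda|+L^{-4/3})^{1/2}$ is in fact stated in the wrong direction, though no matter --- the rebalancing only needs the simpler bounds $(|\lambda-1|^{1/2}+|\nu/k_1|^{1/4})^{1/3}\ge|\nu/k_1|^{1/12}\sim L^{-1/4}$ and $(1+|Ld|)^{-3/4}\le(1+|Ld|)^{-1/4}$, which is what the paper uses.
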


\begin{proof}
We only give the proof for $w_{err,1}$ and the arguments for $w_{err,2}$ are similar. For this, we decompose the error as $w_{err,i}=w_{err,11}+w_{err,12}$,
where
\begin{equation}\nonumber%\label{5.3f}
\left \{
\begin{array}{lll}
-\nu(\partial_y^2-|k|^2)w_{err,11}+ik_1[(1-y^2-\lambda)(\partial_y^2-|k|^2)\varphi_{err,11}+2\varphi_{err,11}]\\
\quad \quad \quad -\epsilon\nu^{\frac{1}{2}}|k_1|^{\frac{1}{2}}w_{err,11}=-ik_1(1+y)^2W_1,\\
-\nu(\partial_y^2-|k|^2)w_{err,12}+ik_1[(1-y^2-\lambda)(\partial_y^2-|k|^2)\varphi_{err,12}+2\varphi_{err,12}]\\
\quad \quad \quad -\epsilon\nu^{\frac{1}{2}}|k_1|^{\frac{1}{2}}w_{err,12}=2ik_1\Phi_1,\\
(\partial_y^2-|k|^2)\varphi_{err,1j}=w_{err,1j},\quad\varphi_{err,1j}(\pm 1)=\varphi_{err,1j}''(\pm 1)=0, j=1,2.
\end{array}
\right.
\end{equation}

By Corollary \ref{resolvent-L2-complex}, Corollary \ref{resol-lambda-00} and Lemma \ref{lem:W1W2-00}, we have
\begin{equation}\label{est:Werr11-00}
\begin{aligned}
&(\nu |k_1|)^{\frac{1}{2}}\|w_{err,11}\|_{L^2}+ \nu^{\frac{3}{4}}|k_1|^{\frac{1}{4}}\|w_{err,11}'\|_{L^2}
\\
&\quad+\big(\nu^{\frac{3}{8}}|k_1|^{\frac{5}{8}}|k|^{\frac{1}{2}}+\nu^{\f14}|k_1|^{\f34}\big)\|u_{err,11}\|_{L^2}+\nu^{\frac{3}{8}}|k_1|^{\frac{5}{8}}\|w_{err,11}\|_{L^1}\\
&\lesssim |k_1|\|(1+y)^2W_1\|_{L^2}\lesssim  (\nu|k_1|)^{\frac{1}{2}}L^{-1}(1+|Ld|)^{-\frac{3}{4}}|A_0(Ld)|,
\end{aligned}
\end{equation}
here we have used
\begin{equation}\nonumber
\begin{aligned}
\|(1-|y|)^2W_1\|_{L^2}\lesssim& L^{-\frac{5}{2}}(1+|Ld|)^{-\frac{3}{4}}|A_0(Ld)|\\
=& |\nu/k_1|^{\frac{1}{2}}L^{-1}(1+|Ld|)^{-\frac{3}{4}}|A_0(Ld)|.
\end{aligned}
\end{equation}
By Lemma \ref{lem:W1W2-00}, we have
$$ |k_1|\|(\partial_y,|k|)\Phi_1\|_{L^2}\lesssim |k_1|L^{-\frac{3}{2}} (1+|Ld|)^{-\frac{1}{4}}|A_0(Ld)|,$$
which gives by Proposition \ref{resolvent-H1} that
\begin{equation}\label{est:Werr12-00}
\begin{aligned}
&\|w_{err,12}\|_{L^2}\lesssim |\nu/k_1|^{\frac{1}{4}}(1+|Ld|)^{-\frac{1}{4}}|A_0(Ld)|,\\
&\|w_{err,12}'\|_{L^2}\lesssim (1+|Ld|)^{-\frac{1}{4}}|A_0(Ld)|,\\
&\|u_{err,12}\|_{L^2}\lesssim |\nu/k_1|^{\frac{1}{2}}(1+|Ld|)^{-\frac{1}{4}}|A_0(Ld)|.
\end{aligned}
\end{equation}

Hence, by \eqref{est:Werr11-00}, \eqref{est:Werr12-00} and the fact that $|\nu/k_1|^{\f14}\lesssim L^{-1}$, we obtain
\begin{equation}\nonumber
\begin{aligned}
   & |\nu/k_1|^{\f14}\|w_{err,1}'\|_{L^2}+ \|w_{err,1}\|_{L^2}+|\nu/k_1|^{-\f14} \|u_{err,1}\|_{L^2} \\
   &\lesssim L^{-1}(1+|Ld|)^{-\frac{1}{4}}|A_0(Ld)|.
\end{aligned}
\end{equation}
Thanks to $\|u_{err,1}\|_{L^\infty}\lesssim \|w_{err,1}\|_{L^2}^{\f12}\|u_{err,1}\|_{L^2}^{\f12}$,
we complete the proof of this lemma.
\end{proof}

With Lemma \ref{estimates:error} at hand, we can give the estimates for $C_{ij}$ as follows.

\begin{lemma}\label{C_ij}
If $L\geq k_0$ for some $k_0>0$, then it holds that
\begin{equation}\nonumber
\begin{aligned}
|C_{11}|+|C_{22}|\leq\frac{CL^{\frac{5}{8}}}{|A_0(Ld)|},\quad \ |C_{12}|+|C_{21}|\leq\frac{CL}{|A_0(Ld)|}.
\end{aligned}
\end{equation}
\end{lemma}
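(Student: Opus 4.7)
The plan is to reduce everything to boundary values of stream functions by a judicious integration by parts, using that $\sinh(|k|(1\pm y))$ lies in the kernel of $\partial_y^2-|k|^2$. Setting $\widetilde{\varphi}_j=\Phi_j+\varphi_{err,j}$, which satisfies $(\partial_y^2-|k|^2)\widetilde{\varphi}_j=W_j+w_{err,j}$ with $\widetilde{\varphi}_j(\pm 1)=0$, integration by parts yields the clean identities
\begin{align*}
A_1&=\sinh(2|k|)\,\partial_y\widetilde{\varphi}_1(1), & B_1&=-\sinh(2|k|)\,\partial_y\widetilde{\varphi}_1(-1),\\
A_2&=-\sinh(2|k|)\,\partial_y\widetilde{\varphi}_2(-1), & B_2&=\sinh(2|k|)\,\partial_y\widetilde{\varphi}_2(1).
\end{align*}
Hence $A_1A_2-B_1B_2=\sinh(2|k|)^2\bigl[\partial_y\widetilde{\varphi}_1(-1)\partial_y\widetilde{\varphi}_2(1)-\partial_y\widetilde{\varphi}_1(1)\partial_y\widetilde{\varphi}_2(-1)\bigr]$, and the problem is reduced to estimating traces of $\partial_y\widetilde{\varphi}_j$ at the two walls.

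The next step is to extract the leading asymptotics of the ``matched'' boundary values $\partial_y\Phi_1(-1)$ and $\partial_y\Phi_2(1)$ from the explicit Airy representation. Using the Green's function identity $\partial_y\Phi_2(1)=\int_{-1}^{1}\tfrac{\sinh(|k|(1+z))}{\sinh(2|k|)}W_2(z)\,dz$ together with the concentration of $W_2$ in a layer of width $L^{-1}$ near $y=1$ and the formulas collected in Lemma~\ref{lem:W1W2-00}, one obtains lower and matching upper bounds of the form $|\partial_y\Phi_1(-1)|\sim|\partial_y\Phi_2(1)|\sim L^{-1}(1+|Ld|)^{-1/4}|A_0(Ld)|$. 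The ``cross'' quantities $\partial_y\Phi_1(1)$ and $\partial_y\Phi_2(-1)$ pick up the decay of $\sinh(|k|(1-z))/\sinh(2|k|)$ over the boundary layer and are strictly smaller, controlled by a factor that decays in $L$. Meanwhile, the error contributions $\partial_y\varphi_{err,j}(\pm 1)$ are bounded by $\|w_{err,j}\|_{L^2}+\|\partial_yw_{err,j}\|_{L^2}$ via the 1-D trace inequality applied to $\varphi_{err,j}\in H^2\cap H^1_0$, and Lemma~\ref{estimates:error} then shows these are of order $L^{-1}(1+|Ld|)^{-1/4}|A_0(Ld)|$ at most, hence comparable with the leading term but not dominant after sharper splitting.

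The third step is the lower bound on the determinant. Because the product $\partial_y\widetilde{\varphi}_1(-1)\partial_y\widetilde{\varphi}_2(1)$ enjoys a genuine lower bound $\gtrsim L^{-2}(1+|Ld|)^{-1/2}|A_0(Ld)|^2$, while the cross product $\partial_y\widetilde{\varphi}_1(1)\partial_y\widetilde{\varphi}_2(-1)$ is of strictly smaller order, provided $L$ is sufficiently large (i.e.\ $L\ge k_0$), the full determinant satisfies $|A_1A_2-B_1B_2|\gtrsim\sinh(2|k|)^2 L^{-2}|A_0(Ld)|^2$. Plugging into \eqref{5.15} and \eqref{5.16}, the factor $\sinh(2|k|)$ from the numerator cancels one of the two in the denominator, and $|C_{12}|,|C_{21}|\lesssim L/|A_0(Ld)|$ follows immediately from the lower bound on $|B_j|\gtrsim\sinh(2|k|)L^{-1}|A_0(Ld)|$. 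The improved bound $|C_{11}|,|C_{22}|\lesssim L^{5/8}/|A_0(Ld)|$ comes from the improved smallness of $|A_j|$: after splitting $A_2=-\sinh(2|k|)\partial_y\Phi_2(-1)-\sinh(2|k|)\partial_y\varphi_{err,2}(-1)$, the first term decays exponentially away from the boundary layer and the second is controlled via the sharper bound $|\partial_y\varphi_{err,2}(-1)|\lesssim |\nu/k_1|^{1/8}L^{-1}|A_0(Ld)|$ obtained by interpolating $\|u_{err,j}\|_{L^\infty}$ from Lemma~\ref{estimates:error}.

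The main obstacle is precisely the last point: producing a sufficiently sharp bound on the ``far-wall'' traces $\partial_y\widetilde{\varphi}_j$ that gives the $L^{5/8}$ improvement. The exponential decay of the Airy function handles $\partial_y\Phi_j$ at the opposite boundary directly, but the error correctors $\varphi_{err,j}$ have no such localization. For these one must combine the $L^\infty$ estimate for $u_{err,j}$ from Lemma~\ref{estimates:error} with the homogeneous boundary condition $\varphi_{err,j}(\pm 1)=0$ to upgrade the trace bound by a factor of $|\nu/k_1|^{1/8}=L^{-3/8}$, exactly matching the required $L^{-3/8}$ gain from $L$ down to $L^{5/8}$.
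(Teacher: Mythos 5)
Your reformulation via integration by parts, writing $A_1=\sinh(2|k|)\,\partial_y\widetilde{\varphi}_1(1)$, $B_1=-\sinh(2|k|)\,\partial_y\widetilde{\varphi}_1(-1)$, etc.\ with $\widetilde{\varphi}_j=\Phi_j+\varphi_{err,j}$, is correct and is an equivalent repackaging of the quantities $A_{a,j}+A_{err,j}$, $B_{a,j}+B_{err,j}$ that the paper works with directly. The final arithmetic also lands on the correct exponents. However, the two middle steps have genuine problems.

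First, your central asymptotic claim $|\partial_y\Phi_1(-1)|\sim|\partial_y\Phi_2(1)|\sim L^{-1}(1+|Ld|)^{-1/4}|A_0(Ld)|$ is wrong: the factor $(1+|Ld|)^{-1/4}$ belongs to the $L^2$ estimate of $(\partial_y\Phi_i,|k|\Phi_i)$ in Lemma~\ref{lem:W1W2-00}, not to the pointwise trace at the near wall. The correct two-sided bound is $|\partial_y\Phi_1(-1)|\sim L^{-1}|A_0(Ld)|$ with \emph{no} additional decay in $Ld$; the lower bound is exactly the nontrivial statement $|B_{a,i}|\geq c_2\sinh(2|k|)|A_0(Ld)|/L$ which the paper imports from \cite{CLWZ,CWZ-mem} and which you neither prove nor cite. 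With your (too small) lower bound on the near-wall traces, the product bound you write, $\gtrsim L^{-2}(1+|Ld|)^{-1/2}|A_0(Ld)|^2$, would give only $|A_1A_2-B_1B_2|\gtrsim\sinh(2|k|)^2L^{-2}(1+|Ld|)^{-1/2}|A_0(Ld)|^2$ — and then $|C_{11}|\lesssim L^{5/8}(1+|Ld|)^{1/2}/|A_0(Ld)|$, which is weaker than the lemma by an unbounded factor. Your subsequent assertion that the determinant satisfies $|A_1A_2-B_1B_2|\gtrsim\sinh(2|k|)^2L^{-2}|A_0(Ld)|^2$ is internally inconsistent with the preceding sentence and is exactly the step that needs a real argument.

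Second, the statement that the ``cross'' traces $\partial_y\Phi_1(1)$, $\partial_y\Phi_2(-1)$ ``decay exponentially away from the boundary layer'' is also not quite right: from $\|(1+y)^mW_1\|_{L^1}\lesssim L^{-1-m}|A_0(Ld)|$ one gets only the power-law improvement $|A_{a,j}|\lesssim\sinh(2|k|)L^{-2}|A_0(Ld)|$, i.e.\ a gain of $L^{-1}$ relative to $|B_{a,j}|$. This happens to be more than enough (the error pieces $A_{err,j}$, controlled by $\|u_{err,j}\|_{L^\infty}\lesssim|\nu/k_1|^{1/8}L^{-1}(1+|Ld|)^{-1/4}|A_0(Ld)|$ with $|\nu/k_1|^{1/8}\sim L^{-3/8}$, are the ones that dominate and set the $L^{-3/8}$ gain), so the conclusion is unaffected. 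But the claim as written is false, and you do not need the ``upgrade via homogeneous boundary condition'' that you describe: the $L^\infty$ bound on $u_{err,j}$ already \emph{is} a trace bound at both walls with the $L^{-3/8}$ gain built in. To close the argument you must (i) replace the spurious $(1+|Ld|)^{-1/4}$ factor in the near-wall trace asymptotics by the sharp two-sided estimate $|\partial_y\Phi_j(\text{near wall})|\sim L^{-1}|A_0(Ld)|$, which requires a separate argument about the Airy integral (this is what the paper delegates to \cite{CLWZ,CWZ-mem}), and (ii) correct the determinant lower bound so it is coherent with that estimate.
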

\begin{proof}
By Lemma \ref{estimates:error}, we have
\begin{equation}\label{eq:Ae1-00}
\begin{aligned}
|A_{err,1}|\leq &\left|\int_{-1}^1\sinh(|k|(1+y))(w_{err,11}+w_{err,12})(y)\mathrm{d}y\right|\\
\lesssim&\sinh (2|k|)(\|w_{err,11}\|_{L^1}+|\partial_y \varphi_{err,12}(1)|)\\
\lesssim&  \sinh (2|k|)(\|w_{err,11}\|_{L^1}+\|u_{err,12}\|_{L^\infty})\\
\lesssim& \sinh (2|k|)|\nu/k_1|^{\frac{1}{8}}\frac{|A_0(Ld)|}{L}\lesssim L^{-\f38}\frac{\sinh (2|k|)|A_0(Ld)|}{L}.
\end{aligned}
\end{equation}
Similarly, we can obtain
\begin{equation}\label{eq:Aei-Bei-00}
\begin{aligned}
|A_{err,2}|+|B_{err,1}|+|B_{err,2}|\lesssim L^{-\frac{3}{8}}\frac{\sinh (2|k|)|A_0(Ld)|}{L}.
\end{aligned}
\end{equation}
Following  similar arguments  in \cite{CLWZ, CWZ-mem}(section 8.3 in \cite{CLWZ} and section 5.3 in \cite{CWZ-mem}), for $i=1,2$, we have
\begin{equation}\label{eq:Bai-Aai-00}
\begin{aligned}
&|B_{a,i}|\geq c_2\frac{\sinh (2|k|)|A_0(Ld)|}{L},\qquad \left|\frac{A_{a,i}}{B_{a,i}}\right|\leq \frac{\sqrt{2}}{2}, \\
&|B_{a,i}|\leq \frac{2\sinh (2|k|)|A_0(Ld)|}{L},\qquad
|A_{a,i}|\leq \frac{3\sinh (2|k|)|A_0(Ld)|}{L^2},
\end{aligned}
\end{equation}
for some constant $c_2>0$. Then we obtain
\begin{equation}\nonumber
\begin{aligned}
|B_1|\geq c_2\big(1-C(c_2L^{\frac{3}{8}})^{-1}\big)\frac{\sinh (2|k|)|A_0(Ld)|}{L},\
\left|B_1/B_{a,1}\right|\geq1-C(c_2L^{\frac{3}{8}})^{-1}.
\end{aligned}
\end{equation}
Choosing $k_0$ large enough such that $C(c_2k_0^{\frac{3}{8}})^{-1}\leq 1/5$ gives
\begin{equation}\nonumber
\begin{aligned}
\left|B_1/B_{a,1}\right|\geq 4/5.
\end{aligned}
\end{equation}
Therefore, we arrive at
\begin{equation}\nonumber
\begin{aligned}
|A_1|\leq &|A_{err,1}|+\left|\frac{A_{a,1}}{B_{a,1}}\right|\left| \frac{B_{a,1}}{B_{1}}\right|\left|B_1\right|\\
\leq& CL^{-\frac{3}{8}}\frac{\sinh (2|k|)|A_0(Ld)|}{L}+\frac{\sqrt{2}}{2}\left(1-C(c_2L^{\frac{3}{8}})^{-1}\right)^{-1}|B_1|\\
\leq &\left(CL^{-\frac{3}{8}}+\frac{\sqrt{2}}{2}\cdot \frac{5}{4}\right)|B_1|.
\end{aligned}
\end{equation}
Choosing $k_0$ large enough such that $L\geq |k_1|\geq k_0$ with
$$CL^{-\frac{3}{8}}+\frac{\sqrt{2}}{2}\cdot \frac{5}{4}\leq \sqrt{\frac{7}{8}}.$$
Then we have
\begin{equation}\nonumber
\begin{aligned}
|A_1|\leq \sqrt{\frac{7}{8}} |B_1|, \quad\ |B_1|\geq\frac{4c_2\sinh (2|k|)|A_0(Ld)|}{5L}
\end{aligned}
\end{equation}
Similar arguments yield that
\begin{equation}\nonumber
\begin{aligned}
|A_2|\leq \sqrt{\frac{7}{8}}|B_2|, \quad\ |B_2|\geq\frac{4c_2\sinh (2|k|)|A_0(Ld)|}{5L}.
\end{aligned}
\end{equation}
Therefore, we deduce that
\begin{equation}\label{eq:A1A2-B1B2-00}
\begin{aligned}
|A_1A_2-B_1B_2|\geq& |B_1B_1|\left|1-\frac{A_1}{B_1}\frac{A_2}{B_2}\right|\\
\geq& \frac{1}{8}|B_1B_1|\geq \frac{1}{8}\left|\frac{4c_2\sinh (2|k|)|A_0(Ld)|}{5L}\right|^2>0.
\end{aligned}
\end{equation}
Furthermore, by \eqref{eq:Ae1-00}, \eqref{eq:Aei-Bei-00}, \eqref{eq:Bai-Aai-00} and $A_j=A_{a,j}+A_{err,j},\ B_j=B_{a,j}+B_{err,j}$, we obtain
\begin{equation}\label{eq:Aj-Bj-00}
\begin{aligned}
|A_j|\leq\frac{C_0\sinh(2|k|)}{L^{1+\frac{3}{8}}}|A_0(Ld)|, \ |B_j|\leq\frac{C_1\sinh(2|k|)}{L}|A_0(Ld)|, \quad j=1,2.
\end{aligned}
\end{equation}
Combining \eqref{eq:A1A2-B1B2-00}-\eqref{eq:Aj-Bj-00} and using \eqref{5.15}-\eqref{5.16}, the conclusion follows.
\end{proof}

Now we are in a position to give the estimates for the boundary correctors $w_i, i=1,2$.

\begin{proposition}\label{estimate-w-i}
If $L\geq k_0$ for some $k_0>0$, then for $i=1,2$, it holds that
\begin{equation}\nonumber
\begin{aligned}
&|\nu/k_1|^{-\f16}(1+|Ld|)^{\frac{1}{2}} \|(\partial_y,|k|)\psi_i\|_{L^2}+(1+|Ld|)^{\frac{1}{4}}\|(\partial_y,|k|)\psi_i \|_{L^\infty}\\
&\qquad\qquad +|\nu/k_1|^{\f16}\|w_i\|_{L^2} \le C(1+|Ld|)^{\frac{1}{4}}.
\end{aligned}
\end{equation}

\end{proposition}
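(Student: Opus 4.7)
The plan is to exploit the representation \eqref{5.13} together with its stream-function counterpart, and then reduce Proposition \ref{estimate-w-i} to the triangle inequality after applying the already-established bounds from Lemma \ref{lem:W1W2-00}, Lemma \ref{estimates:error}, and Lemma \ref{C_ij}. First, I would observe that since $(\partial_y^2-|k|^2)(\Phi_j+\varphi_{err,j}) = W_j+w_{err,j}$ with $(\Phi_j+\varphi_{err,j})(\pm 1)=0$, the uniqueness of the Dirichlet problem combined with $(\partial_y^2-|k|^2)\psi_i = w_i$, $\psi_i(\pm 1)=0$, gives the companion identity
\begin{equation*}
\psi_i = C_{i1}(\Phi_1+\varphi_{err,1}) + C_{i2}(\Phi_2+\varphi_{err,2}), \qquad i=1,2.
\end{equation*}
Each of the three norms in the statement is then controlled by a sum of four terms of the form $|C_{ij}|$ times a norm of the corresponding building block ($W_j$, $\Phi_j$, $w_{err,j}$, or $u_{err,j}$).

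Next, I would insert the leading-order estimates from Lemma \ref{lem:W1W2-00}: $\|W_j\|_{L^2}\lesssim (1+|Ld|)^{1/4}L^{-1/2}|A_0(Ld)|$, $\|(\partial_y,|k|)\Phi_j\|_{L^2}\lesssim L^{-3/2}(1+|Ld|)^{-1/4}|A_0(Ld)|$, and $\|(\partial_y,|k|)\Phi_j\|_{L^\infty}\lesssim L^{-1}|A_0(Ld)|$, together with the error bounds from Lemma \ref{estimates:error} (which are strictly smaller by at least a factor $|\nu/k_1|^{1/4}$ in $L^2$ and $|\nu/k_1|^{1/8}$ in $L^\infty$). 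Multiplying these by Lemma \ref{C_ij}'s bound $|C_{ij}|\le CL/|A_0(Ld)|$, the common factor $|A_0(Ld)|$ cancels out, leaving only powers of $L$ and $(1+|Ld|)$.

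The final step is algebra using the defining identity $L=|2k_1/\nu|^{1/3}$, equivalently $L^{-1/2}=2^{1/6}|\nu/k_1|^{1/6}$. The dominant contribution to $\|w_i\|_{L^2}$ is $|C_{ij}|\,\|W_j\|_{L^2}\lesssim L^{1/2}(1+|Ld|)^{1/4}$, which yields $|\nu/k_1|^{1/6}\|w_i\|_{L^2}\lesssim (1+|Ld|)^{1/4}$; analogously,
\begin{equation*}
|\nu/k_1|^{-1/6}(1+|Ld|)^{1/2}\cdot |C_{ij}|\,\|(\partial_y,|k|)\Phi_j\|_{L^2} \lesssim L^{1/2}\cdot L^{-1/2}(1+|Ld|)^{1/4} = (1+|Ld|)^{1/4},
\end{equation*}
and $(1+|Ld|)^{1/4}\cdot|C_{ij}|\,\|(\partial_y,|k|)\Phi_j\|_{L^\infty}\lesssim (1+|Ld|)^{1/4}$. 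The error contributions, after multiplication by $|C_{ij}|\lesssim L$, come with an additional positive power of either $L^{-1}$ or $|\nu/k_1|$, and hence are strictly subdominant under the assumption $L\ge k_0$.

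The main obstacle is really just disciplined bookkeeping: one must check each of the dozen cross-terms arising from the two coefficient scales ($|C_{11}|,|C_{22}|\lesssim L^{5/8}/|A_0(Ld)|$ versus $|C_{12}|,|C_{21}|\lesssim L/|A_0(Ld)|$) paired with the two profile scales (leading Airy profile versus its error corrector), and verify that the weaker bound $|C_{12}|,|C_{21}|\sim L$ does not produce a loss beyond what is absorbed by the weight $(1+|Ld|)^{1/4}$. Since it is precisely the coefficients $C_{12},C_{21}$ that multiply the profile localized near the opposite boundary (consistent with the boundary data $\psi_1'(1)=1$ matching an $L^2$-large Airy profile concentrated near $y=1$), this pairing is structurally the correct one and no loss occurs.
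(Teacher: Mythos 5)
Your proposal follows essentially the same approach as the paper: decompose $w_i$ (and, by uniqueness of the Dirichlet problem, $\psi_i$) via \eqref{5.13}, invoke Lemma \ref{lem:W1W2-00}, Lemma \ref{estimates:error} and Lemma \ref{C_ij}, cancel $|A_0(Ld)|$, and convert $L=|2k_1/\nu|^{1/3}$ to $|\nu/k_1|$ powers. The only cosmetic difference is the $L^\infty$ bound, where the paper interpolates $\|(\partial_y,|k|)\psi_i\|_{L^\infty}\le\|w_i\|_{L^2}^{1/2}\|(\partial_y,|k|)\psi_i\|_{L^2}^{1/2}$ rather than using the direct $L^\infty$ estimates for $\Phi_j$ and $u_{err,j}$; both computations close to $\lesssim 1$.
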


\begin{proof}
We only consider the case of $i=1$ and the case of $i=2$ is similar.
Note that $L\sim |k_1/\nu|^{\frac{1}{3}}$,
 then by Lemma \ref{estimates:error}, Lemma \ref{C_ij}, Lemma \ref{lem:W1W2-00} and \eqref{5.13}, we obtain
\begin{equation*}
\begin{aligned}
&\|w_1\|_{L^2}\leq |C_{11}|\|W_1+w_{err,1}\|_{L^2}+ |C_{12}|\|W_2+w_{err,2}\|_{L^2}\lesssim |\nu/k_1|^{-\f16}(1+|Ld|)^{\frac{1}{4}},\\
&\|(\partial_y,|k|)\psi_1\|_{L^2}\leq |C_{11}|(\|(\partial_y,|k|)\Phi_1\|_{L^2}+\|u_{err,1}\|_{L^2})\\
&\qquad\qquad\qquad\qquad +|C_{12}|(\|(\partial_y,|k|)\Phi_2\|_{L^2}+\|u_{err,2}\|_{L^2})
\\ &\qquad\qquad\qquad\ \lesssim |\nu
/k_1|^{\frac{1}{6}}(1+|Ld|)^{-\frac{1}{4}},\\
&\|(\partial_y,|k|)\psi_1\|_{L^\infty}\leq \|w_1\|_{L^2}^{\f12}\|(\partial_y,|k|)\psi_1\|_{L^2}^{\f12}
\lesssim 1.
\end{aligned}
\end{equation*}
\end{proof}

\subsection{Estimates for the coefficients $c_i\ (i=1,2)$}\label{sub:coefficients-00}

The following lemmas show the estimates for the coefficient $c_i$ when $F\in L^2(I), H^1_k(I)$ and $H^{-1}_k(I)$ respectively.

\begin{lemma}\label{coe-L2}
Assume that $\nu |k|^3 |k_1|^{-1}\leq 2\big(|1-\lambda|^{\frac{1}{2}}+|\nu/k_1|^{\f14}\big)$ and $F\in L^2(I)$. Then it holds that
\begin{equation}\label{ciL2}
\begin{aligned}
(1+|k\lambda|)^{\frac{1}{4}}(|c_1|+|c_2|)\le C\nu^{-\frac{3}{8}}|k_1|^{-\frac{5}{8}}|k|^{-\frac{1}{4}}\|F\|_{L^2}.
\end{aligned}
\end{equation}
\end{lemma}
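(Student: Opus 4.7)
Observe that by \eqref{5.17}, $c_1=-\langle\phi_1,w_{Na}\rangle$ and $c_2=\langle\phi_2,w_{Na}\rangle$ where
\beno
\phi_1(y)=\f{\sinh(|k|(y+1))}{\sinh(2|k|)},\qquad \phi_2(y)=\f{\sinh(|k|(1-y))}{\sinh(2|k|)}
\eeno
are the harmonic functions with $(\partial_y^2-|k|^2)\phi_i=0$, $\phi_1(-1)=\phi_2(1)=0$ and $\phi_1(1)=\phi_2(-1)=1$. Integration by parts against $w_{Na}=(\partial_y^2-|k|^2)\varphi_{Na}$ equivalently gives $c_1=-\varphi_{Na}'(1)$ and $c_2=-\varphi_{Na}'(-1)$; these are precisely the boundary slopes of the Navier-slip stream function that need to be cancelled by $c_1\psi_1+c_2\psi_2$ in order to match the non-slip condition. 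Thus Lemma \ref{coe-L2} reduces to a weighted weak-type resolvent bound for $w_{Na}$ tested against $\phi_1$ and $\phi_2$.

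Since each $\phi_i$ vanishes at only one endpoint, Lemma \ref{weaktype-FL2} is not directly applicable, and I adapt the cutoff strategy of Proposition \ref{weaktype} to the Navier-slip setting with $F\in L^2$. For an optimization parameter $\delta_*\in(0,1]$ introduce a smoothed cutoff $\chi_{\delta_*}$ supported in $[1-\delta_*,1]$ with $\chi_{\delta_*}(1)=1$, $\chi_{\delta_*}(-1)=0$, and decompose $\phi_1=\tilde\phi_1+\chi_{\delta_*}$ with $\tilde\phi_1=\phi_1-\chi_{\delta_*}\in H^1_0(-1,1)$. The bulk contribution $\langle w_{Na},\tilde\phi_1\rangle$ is estimated by Lemma \ref{weaktype-FL2} combined with Lemma \ref{lem:Ray-simple-1} applied to $\tilde\phi_1$, using $\|(\partial_y,|k|)\tilde\phi_1\|_{L^2}\lesssim |k|^{1/2}+\delta_*^{-1/2}$ (the $|k|^{1/2}$ bound reflects the exponential concentration of $\phi_1$ on scale $1/|k|$, and $\delta_*^{-1/2}$ comes from the cutoff). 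The boundary contribution $\langle w_{Na},\chi_{\delta_*}\rangle$ is handled as in the proof of Proposition \ref{weaktype}, by rewriting
\beno
w_{Na}=\f{F+\nu(\partial_y^2-|k|^2)w_{Na}+\epsilon(\nu|k_1|)^{1/2}w_{Na}-2ik_1\varphi_{Na}}{ik_1(1-y^2-\lambda)}
\eeno
on the support of $\chi_{\delta_*}$, using $|1-y^2-\lambda|\sim|\lambda|+\delta_*$ there and the Navier-slip smallness $|w_{Na}(y)|\lesssim(1-y)^{1/2}\|w_{Na}'\|_{L^2}$ (from $w_{Na}(\pm 1)=0$), together with the resolvent bounds for $\|w_{Na}\|_{L^2}$, $\|w_{Na}'\|_{L^2}$ and $\|u_{Na}\|_{L^2}$ from Proposition \ref{resolvent-L2} and Corollary \ref{resol-lambda}.

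Finally I choose $\delta_*\sim (|\lambda|+\delta^{4/3})^{-1/2}\delta^2$ with $\delta=|\nu/k_1|^{1/4}$, as in Proposition \ref{weaktype}. This balances the bulk and boundary contributions and produces a factor $(|\lambda|+\delta^{4/3})^{-3/4}\delta^{-1}$ from the boundary part; together with the bulk term and the regime hypothesis $\nu|k|^3|k_1|^{-1}\leq 2(|1-\lambda|^{1/2}+|\nu/k_1|^{1/4})$ (which lets the viscous correction $\nu|k|^2/k_1$ be absorbed into $\delta_1=|\nu/k_1|^{1/3}(|1-\lambda|^{1/2}+|\nu/k_1|^{1/4})^{2/3}$), this yields exactly the factor $(1+|k\lambda|)^{-1/4}$ on the right-hand side. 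The estimate for $c_2$ is entirely symmetric via $\phi_2$ and a cutoff near $y=-1$. The main technical obstacle is to precisely track the $\lambda$-dependence through the three regimes $\lambda\leq 0$, $\lambda\in(0,1)$ with its critical layer at $1-y^2=\lambda$ (where the sharper bounds of Corollary \ref{resol-lambda} and Proposition \ref{resol-lambda-00} must replace the generic $L^2$ resolvent estimate), and $\lambda\geq 1$, and to verify that across all three regimes the combined estimate collapses to the single uniform bound $\nu^{-3/8}|k_1|^{-5/8}|k|^{-1/4}\|F\|_{L^2}$.
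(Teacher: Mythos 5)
Your proposed route through the weak-type machinery is genuinely different from the paper's, which never uses it in this lemma: the paper bounds $c_1=-\int_{-1}^1\frac{\sinh(|k|(1+y))}{\sinh(2|k|)}w_{Na}\,\mathrm{d}y$ by direct Cauchy--Schwarz, combining Corollary \ref{resol-lambda}'s $\lambda$-dependent resolvent estimate with the imaginary-part energy identity $\|(\lambda-1+y^2)^{1/2}w_{Na}\|_{L^2}^2+2\|(\partial_y,|k|)\varphi_{Na}\|_{L^2}^2\le|k_1|^{-1}\|F\|_{L^2}\|w_{Na}\|_{L^2}$, and splitting the integral at $y=1-(100|k|)^{-1+\beta}$ so that the kernel's exponential decay handles the bulk while the weight on the narrow boundary strip extracts the $(1+|k\lambda|)^{-1/4}$ gain, with separate case analysis in $\lambda$. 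Your boundary step has a concrete failure: rewriting $w_{Na}$ as a quotient over $ik_1(1-y^2-\lambda)$ with the two-sided comparison $|1-y^2-\lambda|\sim|\lambda|+\delta_*$ on $[1-\delta_*,1]$ is false whenever $0<\lambda\lesssim\delta_*$, since then the critical point $y_2=\sqrt{1-\lambda}$ lies inside that interval and the factor $1-y^2-\lambda$ vanishes there. Proposition \ref{weaktype} never divides by $1-y^2-\lambda$: the cutoff is applied to $\phi=\mathbf{Ray}_{\delta_1}^{-1}f$ (not to $f$), $w$ is tested against $(1-y^2-\lambda+i\delta_1)\chi+2(\partial_y^2-|k|^2)^{-1}\chi$, and only the one-sided bound $\|(1-y^2-\lambda)\chi\|_{L^\infty}\lesssim|\lambda|+\delta_*$ is used.

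Even correcting the boundary step, the bulk does not close with the tools you cite. Feeding $\tilde\phi_1$ into Lemma \ref{weaktype-FL2} and bounding $\mathbf{Ray}_{\delta_1}^{-1}\tilde\phi_1$ by the generic Lemma \ref{lem:Ray-simple-1}, which sees only $\|(\partial_y,|k|)\tilde\phi_1\|_{L^2}\lesssim|k|^{1/2}+\delta_*^{-1/2}$ and not the concentration of $\phi_1$ on scale $|k|^{-1}$ near $y=1$, gives for $\lambda=0$ a bulk contribution whose ratio to the target $\nu^{-3/8}|k_1|^{-5/8}|k|^{-1/4}\|F\|_{L^2}$ is of order $\nu^{5/24}|k_1|^{-5/24}|k|^{3/4}$; at the edge of the admissible range $|k|^3\sim|k_1|/\nu$ this is $\sim\nu^{-1/24}|k_1|^{1/24}\gg 1$, so you lose at least a factor $\nu^{-1/24}$. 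Recovering this requires exploiting the boundary concentration of $\mathbf{Ray}_{\delta_1}^{-1}\phi_1$ explicitly, as the paper does in the $H^{-1}_k$ analogue (Lemma \ref{ci:H-1}, Step 2, via the decomposition $W=W_1+W_2$ with the inner cutoff $\rho(y)=\max\{1-4|k|(1-y),0\}$); Lemma \ref{lem:Ray-simple-1} alone is too crude. The paper's direct splitting at $1-(100|k|)^{-1+\beta}$ circumvents all of this.
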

\begin{proof}
We only consider the estimate for $c_1$ and the proof for $c_2$ is similar. We divide the proof into several cases.\smallskip

\noindent\textbf{Case 1.} $\lambda\geq 1$.

By Corollary \ref{resol-lambda}, we have
\begin{equation}\label{delta-w}
\begin{aligned}
\|w_{Na}\|_{L^2}\lesssim \nu^{-\frac{1}{3}}|k_1|^{-\frac{2}{3}}(|\lambda-1|^{\frac{1}{2}}+|\nu/k_1|^{\f14})^{-\frac{2}{3}}\|F\|_{L^2}.
\end{aligned}
\end{equation}
By taking the inner product between \eqref{5.3} and $w_{Na}$, then taking the imaginary part to the resulting equation, we obtain
\begin{equation}\label{delta-ww}
\begin{aligned}
& \|(\lambda-1+y^2)^{\frac{1}{2}}w_{Na}\|_{L^2}^2+2\|(\partial_y,|k|)\varphi_{Na}\|_{L^2}^2\\
 &\leq  |k_1|^{-1}\|F\|_{L^2}\|w_{Na}\|_{L^2}
 \lesssim \nu^{-\frac{1}{3}}|k_1|^{-\frac{5}{3}} (|\lambda-1|^{\frac{1}{2}}+|\nu/k_1|^{\f14})^{-\frac{2}{3}}\|F\|_{L^2}^2,
 \end{aligned}
\end{equation}
here we have used \eqref{delta-w}.\smallskip

{\bf Case 1.1. $\lambda\geq 1+\frac{1}{2|k|}$.}

It is easy to check that $|k(\lambda-1)|\geq \frac{|k\lambda|}{1+2|k|}\gtrsim (1+|k\lambda|)/|k|$ and $|k(\lambda-1)|\geq 1/2$. Therefore, we get by \eqref{delta-ww} that
\begin{equation}\label{est:c1-casei-00}
\begin{aligned}
|c_1|\lesssim &\left(\int_{-1}^1\left|\frac{\sinh ( |k|(y+1))}{(\lambda-1+y^2)^{\frac{1}{2}}\sinh ( 2|k|)}\right|^2\mathrm{d}y\right)^{\frac{1}{2}}\|(\lambda-1+y^2)^{\frac{1}{2}}w_{Na}\|_{L^2}\\
\lesssim &|k(\lambda-1)|^{-\f12}\|(\lambda-1+y^2)^{\frac{1}{2}}w_{Na}\|_{L^2}\\
\lesssim& |k(\lambda-1)|^{-\frac{1}{2}} \nu^{-\frac{1}{6}}|k_1|^{-\frac{5}{6}} \big(|\lambda-1|^{\frac{1}{2}}+|\nu/k_1|^{\f14}\big)^{-\frac{1}{3}}\|F\|_{L^2}\\
\lesssim &|k(\lambda-1)|^{-\frac{1}{4}} \big( \nu^{-\frac{1}{6}}|k_1|^{-\frac{5}{6}}\big) \big(\nu|k|^3|k_1|^{-1}\big)^{-\f16} \big(|\lambda-1|^{\frac{1}{2}}+|\nu/k_1|^{\f14}\big)^{-\frac{1}{6}}\|F\|_{L^2}\\
\lesssim &\big( (1+|k\lambda|)^{-\f14}|k|^{\f14}\big)\times \big( \nu^{-\frac{1}{6}}|k_1|^{-\frac{5}{6}}\big) \big(\nu|k|^3|k_1|^{-1}\big)^{-\f16} |\nu/k_1|^{-\frac{1}{24}}\|F\|_{L^2}\\
= &\nu^{-\frac{3}{8}}|k_1|^{-\frac{5}{8}}(1+|k\lambda|)^{-\f14}|k|^{-\frac{1}{4}}\|F\|_{L^2},
 \end{aligned}
\end{equation}
which yields the desired estimate.\smallskip

{\bf Case 1.2. $1\leq \lambda\leq 1+\frac{1}{2|k|}$.}

For $\delta_*=|\nu /k_1|^{\frac{1}{6}}\big((\lambda-1)^{\frac{1}{2}}+|\nu/k_1|^{\f14}\big)^{\frac{1}{3}}<1$, we get by \eqref{delta-w} and \eqref{delta-ww} that
\begin{equation*}
\begin{aligned}
\|w_{Na} \|_{L^1}\leq &\|w_{Na}\|_{L^1(-\delta_*,\delta_*)}+\|w_{Na}\|_{L^1\big((-1,1)\setminus (-\delta_*,\delta_*)\big)}\\
\lesssim& \delta_*^{\frac{1}{2}}\|w_{Na}\|_{L^2}+\|y^{-1}\|_{L^2\big((-1,1)\setminus (-\delta_*,\delta_*)\big)}\|(\lambda-1+y^2)^{\frac{1}{2}}w_{Na}\|_{L^2}\\
\lesssim& \nu^{-\frac{1}{4}}|k_1|^{-\frac{3}{4}} \big(|\lambda-1|^{\frac{1}{2}}+|\nu/k_1|^{\f14}\big)^{-\frac{1}{2}}\|F\|_{L^2}\lesssim \nu^{-\f38}|k_1|^{-\f58}\|F\|_{L^2}.
 \end{aligned}
\end{equation*}
Notice that
%$$e^{-2|k|(1-y)}\leq e^{-2(1-y)}\leq y, \ \forall y\in [1-(100|k|)^{-1+s},1], \ \ |k|\geq 1.$$
%Note that
$$\left|\frac{\sinh (|k|(y+1))}{\sinh( 2|k|)}\right|\leq 4e^{-|k|(1-y)},\quad |k|\sim 1+|k\lambda|,$$
then for $0<\beta\ll 1$ with $1-(100|k|)^{-1+\beta}\geq \frac{3}{4}$, we  obtain
\begin{equation}\label{est:c1-caseii-00}
\begin{aligned}
|c_1|\leq& \left|\int_{-1}^{1-(100|k|)^{-1+\beta}}\frac{\sinh (|k|(y+1))}{\sinh (2|k|)} w_{Na} (y)\mathrm{d}y\right|\\
&\qquad+\left|\int_{1-(100|k|)^{-1+\beta}}^1\frac{\sinh( |k|(y+1))}{\sinh (2|k|)} w_{Na} (y)\mathrm{d}y\right|\\
\lesssim& e^{-|k|^\beta/100}\|w_{Na}\|_{L^1}+\left(\int_{1-(100|k|)^{-1+\beta}}^1\frac{1}{\lambda-1+y^2} \mathrm{d}y\right)^{\frac{1}{2}}\|(\lambda-1+y^2)^{\frac{1}{2}}w_{Na}\|_{L^2}\\
\lesssim& e^{-|k|^\beta/100} \nu^{-\frac{3}{8}}|k_1|^{-\frac{5}{8}}\|F\|_{L^2}+(1-(100|k|)^{-1+\beta})^{-1} |k|^{-\frac{1}{2}+\frac{\beta}{2}} \nu^{-\frac{1}{6}}|k_1|^{-\frac{5}{6}}
\\&\times\big(|\lambda-1|^{\frac{1}{2}}+|\nu/k_1|^{\f14}\big)^{-\frac{1}{3}}\|F\|_{L^2}\\
\lesssim& e^{-|k|^\beta/100} \nu^{-\frac{3}{8}}|k_1|^{-\frac{5}{8}}\|F\|_{L^2}+\nu^{-\frac{1}{4}}|k_1|^{-\frac{3}{4}} |k|^{-\frac{1}{2}+\frac{\beta}{2}}\|F\|_{L^2}\\
\lesssim& e^{-|k|^\beta/100} \nu^{-\frac{3}{8}}|k_1|^{-\frac{5}{8}}\|F\|_{L^2}+\nu^{-\frac{1}{4}}|k_1|^{-\frac{3}{4}} |k|^{-\frac{1}{2}+\frac{\beta}{2}}(\nu|k|^3|k_1|^{-1})^{-\f18}
\\
&\times\big(|\lambda-1|^{\frac{1}{2}}+|\nu/k_1|^{\f14}\big)^{\f18}\|F\|_{L^2}\\
\lesssim &\nu^{-\f38}|k_1|^{-\f58}|k|^{-\f78+\f{\beta}{2}}\|F\|_{L^2}\lesssim  \nu^{-\frac{3}{8}} |k_1|^{-\frac{5}{8}} (1+|k\lambda|)^{-\frac{1}{4}} |k|^{-\frac{5}{8}+\frac{\beta}{2}}\|F\|_{L^2},
 \end{aligned}
\end{equation}
where we have used $|\lambda-1|^{\frac{1}{2}}+|\nu/k_1|^{\f14}\lesssim 1$.\smallskip

\noindent\textbf{Case 2.}  $\lambda\leq 0$.

By \eqref{delta-w}, we have
\begin{align*}
   |c_1|\lesssim& \left\|\frac{\sinh ( |k|(y+1))}{\sinh ( 2|k|)}\right\|_{L^2}\|w_{Na}\|_{L^2} \\\lesssim& \nu^{-\frac{1}{3}}|k_1|^{-\f23}|k|^{-\f12}
   (|\lambda-1|^{\frac{1}{2}}+|\nu/k_1|^{\f14})^{-\frac{2}{3}}\|F\|_{L^2}\\
   \lesssim &\nu^{-\frac{1}{3}}|k_1|^{-\f23}|k|^{-\f12}\big(\nu|k|^3|k_1|^{-1}\big)^{-\f{1}{24}}
   (|\lambda-1|^{\frac{1}{2}}+|\nu/k_1|^{\f14})^{-\frac{5}{8}}\|F\|_{L^2}\\
   = &\nu^{-\frac{3}{8}}|k_1|^{-\f58}|k|^{-\f58}
   (|\lambda-1|^{\frac{1}{2}}+|\nu/k_1|^{\f14})^{-\frac{5}{8}}\|F\|_{L^2}.
\end{align*}
Thanks to $\lambda\leq 0$, we have $|\lambda-1|^{\frac{1}{2}}+|\nu/k_1|^{\f14} \gtrsim |k|^{-\f12}(|k\lambda|+1)^{\f12}$ and $|\lambda-1|^{\frac{1}{2}}+|\nu/k_1|^{\f14} \geq 1$, and then
\begin{align}\label{est:c1-caseII-00}
   |c_1|&\lesssim  \nu^{-\frac{3}{8}}|k_1|^{-\f58}|k|^{-\f58}
   \big(|k|^{-\f12}(|k\lambda|+1)^{\f12}\big)^{-\f12}\|F\|_{L^2} \\&=\nu^{-\frac{3}{8}}|k_1|^{-\f58}|k|^{-\f38}(|k\lambda|+1)^{-\f14}\|F\|_{L^2}.\nonumber
\end{align}

\noindent\textbf{Case 3.} $\lambda\in (0,1)$.

{\bf Case 3.1. $4|1-\lambda|^{\frac{1}{2}}\leq |\nu/k_1|^{\frac{1}{4}}.$}

In this case, it holds that $\nu |k|^3 |k_1|^{-1}\lesssim |1-\lambda|^{\frac{1}{2}}+|\nu/k_1|^{\f14}\lesssim |\nu/k_1|^{\f14}$, i.e., $\nu |k|^4 |k_1|^{-1}\lesssim 1$. Taking the inner product between \eqref{5.3} and $w_{Na}$, and then taking the imaginary part to the resulting equation, we get by \eqref{delta-w} that
\begin{align*}
\|yw_{Na}\|_{L^2}^2+2\|(\partial_y,|k|)\varphi_{Na}\|_{L^2}^2\leq& |k_1|^{-1}\|F\|_{L^2}\|w_{Na}\|_{L^2}+|1-\lambda|\|w_{Na}\|_{L^2}^2\\
\lesssim & \nu^{-\frac{1}{2}}|k_1|^{-\frac{3}{2}}\|F\|_{L^2}^2.
\end{align*}
Due to $\lambda \in (0,1)$, we have $1+|k\lambda|\lesssim |k|$ and
\begin{equation}\label{est:c1-caseIII-00}
\begin{aligned}
|c_1|\leq & \left|\int_{-1}^{1-(100|k|)^{-1+\beta}}\frac{\sinh (|k|(y+1))}{\sinh (2|k|)} w_{Na} (y)\mathrm{d}y\right|\\
&+\left|\int_{1-(100|k|)^{-1+\beta}}^1\frac{\sinh (|k|(y+1))}{\sinh (2|k|)} w_{Na} (y)\mathrm{d}y\right|\\
\lesssim& e^{-|k|^\beta/100}\|w_{Na}\|_{L^1}+\left(\int_{1-(100|k|)^{-1+\beta}}^1\frac{1}{y^2} \mathrm{d}y\right)^{\frac{1}{2}}\|yw_{Na}\|_{L^2}\\
\lesssim&e^{-|k|^\beta/100}\nu^{-\frac{3}{8}}|k_1|^{-\frac{5}{8}}\|F\|_{L^2}
+\nu^{-\frac{1}{4}}|k_1|^{-\frac{3}{4}}|k|^{-\f12-\f{\beta}{2}}\|F\|_{L^2}\\
\lesssim& \nu^{-\frac{3}{8}} |k_1|^{-\frac{5}{8}} (1+ |k\lambda|)^{-\frac{1}{4}} (1+|k\lambda|)^{\frac{1}{4}}e^{-|k|^\beta/100}\|F\|_{L^2}\\
&+ \nu^{-\frac{3}{8}}|k_1|^{-\frac{5}{8}} |k|^{-1-\f{\beta}{2}}\|F\|_{L^2}\\
\lesssim &\nu^{-\frac{3}{8}}|k_1|^{-\frac{5}{8}} (1+|k\lambda|)^{-\frac{1}{4}} |k|^{-1-\f{\beta}{2}}\|F\|_{L^2},
 \end{aligned}
\end{equation}
here we have used $\nu^{-\frac{1}{4}}|k_1|^{-\frac{3}{4}}\lesssim \nu^{-\frac{3}{8}}|k_1|^{-\frac{5}{8}} |k|^{-\f12}$.\smallskip

{\bf Case 3.2. $4|1-\lambda|^{\frac{1}{2}}\geq  |\nu/k_1|^{\frac{1}{4}}$.}

Note that $|1-\lambda|^{\frac{1}{2}}\geq |\nu/k_1|^{\frac{1}{4}}/4$. We get by \eqref{delta-w} that
\begin{equation}\nonumber
\begin{aligned}
&\|yw_{Na}\|_{L^2}^2+2\|(\partial_y,|k|)\varphi_{Na}\|_{L^2}^2\leq |k_1|^{-1}\|F\|_{L^2}\|w_{Na}\|_{L^2}+|1-\lambda|\|w_{Na}\|_{L^2}^2\\
&\quad\lesssim \nu^{-\frac{1}{3}}|k_1|^{-\frac{5}{3}}|1-\lambda|^{-\frac{1}{3}}\|F\|_{L^2}^2+\nu^{-\frac{2}{3}}|k_1|^{-\frac{4}{3}}|1-\lambda|^{\frac{1}{3}}\|F\|_{L^2}^2\\
&\quad \lesssim \nu^{-\frac{2}{3}}|k_1|^{-\frac{4}{3}}|1-\lambda|^{\frac{1}{3}}\|F\|_{L^2}^2,
 \end{aligned}
\end{equation}
Thus, we obtain
\begin{equation}\nonumber
\begin{aligned}
\|w_{Na}\|_{L^1}\leq &\left\|(y+|1-\lambda|^{1/2})w_{Na}\right\|_{L^2} \left\|(y+|1-\lambda|^{1/2})^{-1}\right\|_{L^2}\\
\lesssim& \nu^{-\frac{1}{3}}|k_1|^{-\frac{2}{3}}|1-\lambda|^{-\frac{1}{12}}\|F\|_{L^2},
 \end{aligned}
\end{equation}
here we have used $ \|yw_{Na}\|_{L^2}+|1-\lambda|^{\f12}\|w_{Na}\|_{L^2}\lesssim  \nu^{-\frac{1}{3}}|k_1|^{-\frac{2}{3}}|1-\lambda|^{\frac{1}{6}}\|F\|_{L^2}$.

Therefore,  we deduce that
\begin{equation}\nonumber
\begin{aligned}
|c_1|\leq& \left|\int_{-1}^{1-(100|k|)^{-1+\beta}}\frac{\sinh (|k|(y+1))}{\sinh (2|k|)} w_{Na} (y)\mathrm{d}y\right|\\
&+\left|\int_{1-(100|k|)^{-1+\beta}}^1\frac{\sinh (|k|(y+1))}{\sinh (2|k|)} w_{Na} (y)\mathrm{d}y\right|\\
\lesssim& e^{-|k|^\beta/100}\|w_{Na}\|_{L^1}+\left(\int_{1-(100|k|)^{-1+\beta}}^1\frac{1}{y^2} \mathrm{d}y\right)^{\frac{1}{2}}\|yw_{Na}\|_{L^2}\\
\lesssim&e^{-|k|^\beta/100} \nu^{-\frac{1}{3}}|k_1|^{-\frac{2}{3}}|1-\lambda|^{-\frac{1}{12}}\|F\|_{L^2}+|k|^{-\frac{1}{2}+\frac{\beta}{2}}
\nu^{-\frac{1}{3}}|k_1|^{-\frac{2}{3}}|1-\lambda|^{\frac{1}{6}}\|F\|_{L^2}\\
\lesssim&e^{-|k|^\beta/100} \nu^{-\frac{3}{8}}|k_1|^{-\frac{5}{8}}\|F\|_{L^2}+
 |k|^{-\frac{1}{2}+\frac{\beta}{2}}
\nu^{-\frac{1}{3}}|k_1|^{-\frac{2}{3}}\|F\|_{L^2}\\
\lesssim&e^{-|k|^\beta/100} \nu^{-\frac{3}{8}}|k_1|^{-\frac{5}{8}}\|F\|_{L^2}+
 |k|^{-\frac{1}{2}+\frac{\beta}{2}}
\nu^{-\frac{1}{3}}|k_1|^{-\frac{2}{3}}(\nu |k|^3|k_1|^{-1})^{-\f{1}{24}}\\
&\times\big(|1-\lambda|^{\f12}+|\nu/k_1|^{\f14}\big)^{\f{1}{24}}\|F\|_{L^2}\\
\lesssim& \nu^{-\frac{3}{8}} |k_1|^{-\frac{5}{8}}  |k|^{-\frac{5}{8}+\frac{\beta}{2}}\|F\|_{L^2},
 \end{aligned}
\end{equation}
here we have used $\nu|k|^3|k_1|^{-1}\leq 2\big(|1-\lambda|^{\f12}+|\nu/k_1|^{\f14}\big)\lesssim 1$.
Thanks to $1+|k\lambda|\lesssim |k|$, we get
\begin{align}\label{est:c1-caseiv-00}
  &|c_1|\lesssim  \nu^{-\frac{3}{8}} |k_1|^{-\frac{5}{8}}  |k|^{-\frac{3}{8}+\frac{\beta}{2}}(1+|k\lambda|)^{-\f14}\|F\|_{L^2}.
 \end{align}

Finally, the desired bound \eqref{ciL2} follows from \eqref{est:c1-casei-00}, \eqref{est:c1-caseii-00}, \eqref{est:c1-caseII-00}, \eqref{est:c1-caseIII-00} and \eqref{est:c1-caseiv-00}.
\end{proof}

\begin{lemma}\label{ci:-H1}
Assume that $\nu |k|^3|k_1|^{-1}\leq 2\big(|1-\lambda|^{\frac{1}{2}}+|\nu/k_1|^{\f14}\big)$ and $F\in H^1_k(I)$. Then we have
\begin{equation}\label{ciH1-2}
\begin{aligned}
 &|c_1|+|c_2|\le C\nu^{-\frac{1}{8}}|k_1|^{-\frac{7}{8}}
 \big(1+|\lambda|+|\nu/k_1|^{\frac{1}{3}}\big)^{-\frac{1}{4}}\|(\partial_y,|k|)F\|_{L^2}.
\end{aligned}
\end{equation}
\end{lemma}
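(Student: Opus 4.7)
The plan is to recognize $c_1, c_2$ as boundary traces of $\partial_y\varphi_{Na}$ and then combine Proposition \ref{resolvent-H1} with a $\lambda$-sharpened energy estimate. First, using $\varphi_{Na}(\pm 1)=0$ and the fact that $\eta_1(y):=\sinh(|k|(y+1))/\sinh(2|k|)$ satisfies $(\partial_y^2-|k|^2)\eta_1=0$ with $\eta_1(-1)=0$, $\eta_1(1)=1$, a double integration by parts in the defining formula \eqref{5.17} gives
\begin{align*}
  c_1 = -\int_{-1}^1 \eta_1\, w_{Na}\, dy = -\big[\eta_1\,\partial_y\varphi_{Na}\big]_{-1}^1 = -\partial_y\varphi_{Na}(1),
\end{align*}
and analogously $c_2=-\partial_y\varphi_{Na}(-1)$. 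Hence $|c_1|+|c_2|\le 2\|u_{Na}\|_{L^\infty}$, and Proposition \ref{resolvent-H1} immediately supplies the baseline estimate $\|u_{Na}\|_{L^\infty}\le C\nu^{-1/8}|k_1|^{-7/8}\|(\partial_y,|k|)F\|_{L^2}$. This already matches the target whenever $|\lambda|+|\nu/k_1|^{1/3}\lesssim 1$, since the weight $(1+|\lambda|+|\nu/k_1|^{1/3})^{-1/4}$ is then comparable to a universal constant.

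To capture the decay when $|\lambda|\gtrsim 1$, I would test \eqref{OSH-H1} against $\bar w_{Na}$, take the imaginary part, and use $F\in H^1_0$ and $\varphi_{Na}(\pm1)=0$ to rewrite $\langle F,w_{Na}\rangle$ via integration by parts. This yields the identity
\begin{align*}
  k_1\Big(\int_{-1}^{1}(1-y^2-\lambda)|w_{Na}|^2\,dy-2\|u_{Na}\|_{L^2}^2\Big)=\mathrm{Im}\,\langle F,w_{Na}\rangle.
\end{align*}
For $\lambda\ge 1$ the bracket controls $(\lambda-1)\|w_{Na}\|_{L^2}^2+2\|u_{Na}\|_{L^2}^2$ from below; for $\lambda\le 0$, Lemma \ref{positive} makes the bracket coercive with $(1+|\lambda|)\|w_{Na}\|_{L^2}^2+\|u_{Na}\|_{L^2}^2/9$ on the left. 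In either case, after absorbing the $\|u_{Na}\|_{L^2}$-factor on the right, one extracts simultaneously $\|u_{Na}\|_{L^2}\lesssim |k_1|^{-1}\|(\partial_y,|k|)F\|_{L^2}$ and the sharpened bound $\|w_{Na}\|_{L^2}\lesssim (1+|\lambda-1|)^{-1/2}|k_1|^{-1}\|(\partial_y,|k|)F\|_{L^2}$. The interpolation $\|u_{Na}\|_{L^\infty}^2\lesssim \|u_{Na}\|_{L^2}\|w_{Na}\|_{L^2}$ used in the proof of Proposition \ref{resolvent-H1}---which rests on the one-dimensional trace inequality together with the elementary bound $|k|\|u_{Na}\|_{L^2}\le\|w_{Na}\|_{L^2}$---then upgrades this into
\begin{align*}
  \|u_{Na}\|_{L^\infty}\lesssim (1+|\lambda|)^{-1/4}|k_1|^{-1}\|(\partial_y,|k|)F\|_{L^2} \qquad\text{for }|\lambda|\gtrsim 1.
\end{align*}

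Since $\nu\ll 1\le |k_1|$, one has $(\nu/|k_1|)^{1/8}\le 1$, so $|k_1|^{-1}\le \nu^{-1/8}|k_1|^{-7/8}$; moreover $|\nu/k_1|^{1/3}\lesssim 1+|\lambda|$, so the target weight is equivalent to $(1+|\lambda|)^{-1/4}$ up to absolute constants. The improved bound therefore delivers the claim for $|\lambda|\gtrsim 1$, while the baseline bound handles the complementary regime $|\lambda|\lesssim 1$. The only technical point to verify is that the two regimes glue together into the single expression $\nu^{-1/8}|k_1|^{-7/8}(1+|\lambda|+|\nu/k_1|^{1/3})^{-1/4}$, which amounts to taking the minimum of the two bounds and carrying out the trivial numerical comparison above. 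The hypothesis $\nu|k|^3|k_1|^{-1}\le 2(|1-\lambda|^{1/2}+|\nu/k_1|^{1/4})$ is used implicitly through Proposition \ref{resolvent-H1} but does not force additional casework in the coefficient estimate itself.
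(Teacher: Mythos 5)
Your proof is correct and is close in spirit to the paper's argument, with one genuine variation worth noting. The paper also splits into the regimes $|\lambda|\ge 4$ and $|\lambda|\le 4$: for $|\lambda|\le 4$ it uses exactly your baseline step ($c_1=-\partial_y\varphi_{Na}(1)$, $c_2=-\partial_y\varphi_{Na}(-1)$, so $|c_1|+|c_2|\lesssim\|u_{Na}\|_{L^\infty}$, then Proposition \ref{resolvent-H1}), and for $|\lambda|\ge 4$ it bounds $|c_1|+|c_2|\le 2\|w_{Na}\|_{L^1}\lesssim\|w_{Na}\|_{L^2}\lesssim|\lambda|^{-1/2}|k_1|^{-1}\|(\partial_y,|k|)F\|_{L^2}$ from the imaginary-part energy identity. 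You instead carry the large-$|\lambda|$ bound through the interpolation $\|u_{Na}\|_{L^\infty}^2\lesssim\|u_{Na}\|_{L^2}\|w_{Na}\|_{L^2}$, which trades the $|\lambda|^{-1/2}$ decay on $\|w_{Na}\|_{L^2}$ for $|\lambda|^{-1/4}$ on $\|u_{Na}\|_{L^\infty}$; both land at the same target since the paper's $|\lambda|^{-1/2}|k_1|^{-1}$ bound is then loosened to $\nu^{-1/8}|k_1|^{-7/8}(1+|\lambda|+|\nu/k_1|^{1/3})^{-1/4}$ anyway. Two small points to tighten: (i) for $1\le\lambda\le 2$ the factor $(\lambda-1)^{-1/2}$ from the energy identity is not useful, so one must make the threshold explicit (e.g.\ use the energy route only for $|\lambda|\ge 4$, as the paper does, where $\lambda-1\ge\lambda/2$); (ii) for $\lambda\le 0$, Lemma \ref{positive} gives coercivity with $|\lambda|\|w_{Na}\|_{L^2}^2+\|u_{Na}\|_{L^2}^2/9$ rather than $(1+|\lambda|)\|w_{Na}\|_{L^2}^2$, though for $|\lambda|\gtrsim 1$ this is the same up to a constant. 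Neither affects the conclusion.
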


\begin{proof} {\bf Case 1}. $|\lambda|\geq 4.$

Taking the inner product between \eqref{5.3} and $w_{Na}$, and then taking the imaginary part to the resulting equation, we get by Proposition \ref{resolvent-H1} that
\begin{align*}
   &\big|\lambda\big| \|w_{Na}\|_{L^2}^2\\
   &\leq  \left\|(1-y^2)^{1/2}w_{Na}\right\|_{L^2}^2+ 2\|(\partial_y,|k|)\varphi_{Na}\|_{L^2}^2\\
   &\quad+|k_1|^{-1}\|(\partial_y,|k|)F\|_{L^2}\|(\partial_y,|k|)\varphi_{Na}\|_{L^2}\\
   &\leq \|w_{Na}\|_{L^2}^2+C|k_1|^{-2}\|(\partial_y,|k|)F\|_{L^2}^2,
\end{align*}
which gives
\begin{align*}
   &|\lambda|^{\f12}\|w_{Na}\|_{L^2}\lesssim |k_1|^{-1}\|(\partial_y,|k|)F\|_{L^2}.
\end{align*}
Thanks to $|\lambda|\geq 4$,  we have$\big(1+|\lambda|+|\nu/k_1|^{\f13}\big)^{\f12}\|w_{Na}\|_{L^2}\lesssim |k_1|^{-1}\|(\partial_y,|k|)F\|_{L^2}$. Therefore, we have
\begin{align*}
   |c_1|+|c_2| \leq& 2\|w_{Na}\|_{L^1}\lesssim \|w_{Na}\|_{L^2}\lesssim |k_1|^{-1}\big(1+|\lambda|+|\nu/k_1|^{\f13}\big)^{-\f12}\|(\partial_y,|k|)F\|_{L^2}\\
   \lesssim &\nu^{-\frac{1}{8}}|k_1|^{-\frac{7}{8}}
 \big(1+|\lambda|+|\nu/k_1|^{\frac{1}{3}}\big)^{-\frac{1}{4}}\|(\partial_y,|k|)F\|_{L^2}.
 \end{align*}

{\bf Case 2}. $|\lambda|\leq  4$.

Thanks to $1\lesssim (1+|\lambda|+|\nu/k_1|^{\frac{1}{3}})^{-\frac{1}{4}}$ and $c_1=\pa_y\varphi_{Na}(1), c_2=-\pa_y\varphi_{Na}(-1)$, we get by Proposition \ref{resolvent-H1} that
\begin{equation}\nonumber
\begin{aligned}
|c_1|+|c_2|\lesssim& \|(\partial_y,|k|)\varphi_{Na}\|_{L^\infty}\\
\lesssim& \big(1+|\lambda|+|\nu/k_1|^{\frac{1}{3}}\big)^{-\frac{1}{4}} \nu^{-\frac{1}{8}}|k_1|^{-\frac{7}{8}}\|(\partial_y,|k|)F\|_{L^2}.
 \end{aligned}
\end{equation}

Combining the above estimates, the conclusion follows.
\end{proof}

\begin{lemma}\label{ci:H-1}
Assume that $\nu |k|^3|k_1|^{-1}\leq 2\big(|1-\lambda|^{\frac{1}{2}}+|\nu/k_1|^{\f14}\big)$ and  $F\in H_k^{-1}(I)$. Then we have
\begin{equation}\label{ciH-1}
\begin{aligned}
|c_1|+|c_2| \le C|\nu k_1|^{-\f12}\big(|\lambda|+|\nu/k_1|^{\f13}\big)^{-\f14}\|F\|_{H^{-1}_k}.
\end{aligned}
\end{equation}
\end{lemma}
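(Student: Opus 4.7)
My plan is to mirror the strategy used for Lemmas \ref{coe-L2} and \ref{ci:-H1}, replacing the $L^2$ and $H^1_k$ resolvent inputs by their $H^{-1}_k$ counterparts: Proposition \ref{resolvent-H-1}, Corollary \ref{u-L2-H-1}, Corollary \ref{reslvent-H-1-sgap}, and the weak-type estimate of Proposition \ref{weaktype}. By symmetry it suffices to estimate $c_1=-\langle w_{Na},g\rangle$ with $g(y)=\sinh(|k|(y+1))/\sinh(2|k|)$; the estimate for $c_2$ is analogous with $\tilde g(y)=\sinh(|k|(1-y))/\sinh(2|k|)$. Note that $g(-1)=0$, $g(1)=1$, $\|g\|_{L^2}\sim|k|^{-1/2}$, and $g$ decays exponentially away from $y=1$ at scale $|k|^{-1}$. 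I would split into two regimes according to the size of $|\lambda|$.

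For the large-$\lambda$ regime $|\lambda|\ge 4$, I would take the imaginary part of the $L^2$-pairing of \eqref{5.3} with $w_{Na}$. The term $k_1\langle(1-y^2-\lambda)w_{Na},w_{Na}\rangle$ controls $|k_1||\lambda|\|w_{Na}\|_{L^2}^2$ modulo $|k_1|\|u_{Na}\|_{L^2}^2$, which is controlled by Corollary \ref{u-L2-H-1}, and the right-hand side $\|F\|_{H^{-1}_k}\|w_{Na}\|_{H^1_k}$, whose $H^1_k$ factor is in turn bounded via the real part of the pairing together with Proposition \ref{resolvent-H-1} and a Young-type absorption. Together this gives $|\lambda|^{1/2}\|w_{Na}\|_{L^2}\lesssim(\nu|k_1|)^{-1/2}\|F\|_{H^{-1}_k}$. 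Cauchy--Schwarz and $\|g\|_{L^2}\lesssim|k|^{-1/2}$ then imply $|c_1|\lesssim(\nu|k_1|)^{-1/2}|\lambda|^{-1/2}|k|^{-1/2}\|F\|_{H^{-1}_k}$, which is stronger than needed since $|\lambda|^{-1/2}|k|^{-1/2}\le|\lambda|^{-1/4}\lesssim(|\lambda|+|\nu/k_1|^{1/3})^{-1/4}$ for $|\lambda|,|k|\ge 1$.

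For the bounded-$\lambda$ regime $|\lambda|\le 4$, I would apply Proposition \ref{weaktype} with $f=g$ and $j=1$ (so $f(-j)=g(-1)=0$). The leading ``boundary'' contribution equals
\begin{equation*}
|k_1|^{-1}\|F\|_{H^{-1}_k}(|\lambda|+|\nu/k_1|^{1/3})^{-3/4}|\nu/k_1|^{-1/4}=\nu^{-1/4}|k_1|^{-3/4}(|\lambda|+|\nu/k_1|^{1/3})^{-3/4}\|F\|_{H^{-1}_k},
\end{equation*}
which is dominated by the target $(\nu|k_1|)^{-1/2}(|\lambda|+|\nu/k_1|^{1/3})^{-1/4}\|F\|_{H^{-1}_k}$ because $|\nu/k_1|^{1/2}\le|\nu/k_1|^{1/3}\lesssim|\lambda|+|\nu/k_1|^{1/3}$ (using $|\nu/k_1|\le 1$). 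It remains to estimate the Rayleigh correction term $|k_1|^{-1}\|F\|_{H^{-1}_k}\bigl(\|\mathbf{Ray}_{\delta_1}^{-1}g\|_{H^1}+\delta^{-4/3}(|\lambda-1|^{1/2}+\delta)^{1/3}\|\mathbf{Ray}_{\delta_1}^{-1}g\|_{L^2}\bigr)$. Since $g(1)\ne 0$, Lemma \ref{lem:Ray-simple-1} does not apply directly, so I would split $g=\chi+g_0$, where $\chi$ is a smooth cutoff supported in $[1-|k|^{-1},1]$ with $\chi(\pm 1)=g(\pm 1)$ and $g_0\in H^1_0$. Lemma \ref{lem:Ray-simple-1} then handles $g_0$, while the contribution from $\chi$ is treated by a direct analysis of the Rayleigh equation in the boundary layer, exploiting the exponential localization of $g$ at scale $|k|^{-1}$ together with the separation of $y=1$ from the critical layer $\{1-y^2=\lambda\}$ when $|\lambda|\le 4$.

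The hardest step will be this Rayleigh correction analysis: producing the sharp weight $(|\lambda|+|\nu/k_1|^{1/3})^{-1/4}$ requires careful tracking of the $\delta_1$-regularization, the geometry of the critical layer of $1-y^2-\lambda$ (and in particular its collision with $y=\pm 1$ as $\lambda\to 1^-$), and the boundary localization of the test function $g$. This is the Poiseuille analogue of the coefficient analyses carried out for Couette flow in \cite{CLWZ,CWZ-mem}, but requires additional care because of the critical point and critical-layer collision specific to the parabolic profile.
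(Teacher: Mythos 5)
Your proposal is structurally sound and takes a genuinely different route from the paper in one of the two cases, while essentially matching it in the other.

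For $|\lambda|\ge 4$, the paper stays entirely inside the weak-type framework of Proposition \ref{weaktype} and carries out the energy estimate at the level of the Rayleigh equation (bounding $\|\mathbf{Ray}_{\delta_1}^{-1}f\|_{L^2}$ and $\|\mathbf{Ray}_{\delta_1}^{-1}f\|_{H^1}$ by $(|\lambda|+\delta^{4/3})^{-1}|k|^{\pm 1/2}$ and then inserting these into the weak-type bound). You instead bypass Proposition \ref{weaktype} and run the imaginary part of the $L^2$-pairing directly on the OS equation \eqref{5.3}, absorbing the $\|u_{Na}\|_{L^2}^2$ term via Corollary \ref{u-L2-H-1} and the $\|w_{Na}\|_{H^1_k}$ term via Proposition \ref{resolvent-H-1}. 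This yields $|\lambda|^{1/2}\|w_{Na}\|_{L^2}\lesssim(\nu|k_1|)^{-1/2}\|F\|_{H^{-1}_k}$, and Cauchy--Schwarz against $\|g\|_{L^2}\sim|k|^{-1/2}$ finishes the case. I verified that the sign bookkeeping works in both sub-regimes $\lambda\ge 4$ and $\lambda\le -4$, that the $H^1_k$-bound on $w_{Na}$ closes (using \eqref{2.2} of Proposition \ref{resolvent-H-1} together with $\|w_{Na}\|_{L^2}\lesssim\nu^{-3/4}|k_1|^{-1/4}\|F\|_{H^{-1}_k}$), and that the resulting bound dominates the target. This is a more elementary path than the paper's for large $\lambda$, and it is correct.

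For $|\lambda|\le 4$, you follow the paper's strategy: apply Proposition \ref{weaktype} with $f=\sinh(|k|(1+y))/\sinh(2|k|)$, and then control the Rayleigh-inverse norms. The leading boundary contribution is handled identically. For the Rayleigh correction, the paper decomposes the \emph{solution} $W=W_1+W_2$, with $W_1=f(1)\rho(y)/(1-y^2-\lambda+i\delta_1)$ (an explicit boundary-layer approximate solution at scale $(4|k|)^{-1}$) and $W_2$ solving a Rayleigh problem with modified data $\tilde f=f-f(1)\rho-2\Psi_1$ which vanishes at both endpoints, so Lemma \ref{lem:Ray-simple-1} (together with Lemma \ref{hardy-type-abp} and Proposition \ref{est-2}) applies. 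You propose instead to split the \emph{data} $g=\chi+g_0$. These are different in detail: the Rayleigh problem with data $\chi$ localized near $y=1$ still has $\chi(1)\ne 0$, so Lemma \ref{lem:Ray-simple-1} does not apply to it either, and the "direct boundary-layer analysis" you invoke is precisely the content of the paper's $W_1$-construction and the subsequent estimate of $W_2$. In practice your route converges to theirs once filled in. You correctly flag this Rayleigh-correction step as the hardest part; it is the bulk of the paper's proof (the decomposition, the Hardy-type bound on $\Psi_1$, the $\delta_1^{-1/2}$ bound on $\|W\|_{L^2}$, and the boundary-aware bound on $\|W'\|_{L^2}$), and your proposal leaves it as a plan rather than a proof, so the argument is correct in outline but not complete.
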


\begin{proof}
We only give the estimate for $c_1$, and the estimate for $c_2$ is similar.
We denote $\delta=|\nu/k_1|^{\f14}$ and  $\delta_1=\delta^{\frac{4}{3}}(|\lambda-1|^{\frac{1}{2}}+\delta)^{\frac{2}{3}}=|\nu/k_1|^{\f13} \big(|1-\lambda|^{\f12}+|\nu/k_1|^{\f14}\big)^{\f23}$.

By Proposition \ref{weaktype} with $f=\frac{\sinh (|k|(1+y))}{\sinh( 2|k|)}$\big($f(-1)=0,\ f(1)=1,$ and $\|f\|_{L^2}\lesssim |k|^{-\frac{1}{2}},\ \|f\|_{H^{1}_k}:=\|(\partial_y,|k|)f\|_{L^2}\lesssim |k|^{\frac{1}{2}}$\big), we have
\begin{equation}\label{est:c1-prior-FH-1-00}
  \begin{aligned}
||c_1|&=\left|\left\langle w,\frac{\sinh (|k|(1+y))}{\sinh (2|k|)}\right\rangle\right|\\
&\lesssim |k_1|^{-1}\|F\|_{H^{-1}_k}\big(|\lambda|+\delta^{\frac{4}{3}}\big)^{-\frac{3}{4}}\delta^{-1}
+|k_1|^{-1}\|F\|_{H^{-1}_k}\\
&\qquad\times(\|\mathbf{Ray}_{\delta_1}^{-1}f\|_{H^1}
+\delta^{-\frac{4}{3}}\big(|\lambda-1|^{\frac{1}{2}}+\delta)^{\frac{1}{3}}\|\mathbf{Ray}_{\delta_1}^{-1}f\|_{L^2}\big)\\
&\lesssim |k_1|^{-1}\|F\|_{H^{-1}_k}\big(|\lambda|+\delta^{\frac{4}{3}}\big)^{-\frac{1}{4}} \delta^{-\f53}+|k_1|^{-1}\|F\|_{H^{-1}_k}\\
&\qquad\times(\|\mathbf{Ray}_{\delta_1}^{-1}f\|_{H^1}
+\delta^{-\frac{4}{3}}\big(|\lambda-1|^{\frac{1}{2}}+\delta)^{\frac{1}{3}} \|\mathbf{Ray}_{\delta_1}^{-1}f\|_{L^2}\big).\\
&\lesssim |\nu k_1|^{-\f12}\big(|\lambda|+\delta^{\frac{4}{3}}\big)^{-\frac{1}{4}} \|F\|_{H^{-1}_k} +|k_1|^{-1}\|F\|_{H^{-1}_k}\\
&\qquad\times(\|\mathbf{Ray}_{\delta_1}^{-1}f\|_{H^1}
+\delta^{-\frac{4}{3}}\big(|\lambda-1|^{\frac{1}{2}}+\delta)^{\frac{1}{3}}\|\mathbf{Ray}_{\delta_1}^{-1}f\|_{L^2}\big).
\end{aligned}
\end{equation}

It remains to bound the following two terms
$$\left\|\mathbf{Ray}_{\delta_1}^{-1}f\right\|_{L^2} \quad \text{and}\quad \left\|\mathbf{Ray}_{\delta_1}^{-1} f \right\|_{H^1}.$$
As $f(1)=1\neq 0$, we can not apply Lemma \ref{lem:Ray-simple-1} directly.  Recall that
$$(1-y^2-\lambda+i\delta_1)W+2\Phi=f, \ (\partial_y^2-|k|^2)\Phi=W, \ \Phi(\pm 1)=0.$$

\noindent{\bf Case 1.} $|\lambda|\geq 4 $.

In this case, it holds that $\delta_1=\delta^{\frac{4}{3}}(|\lambda-1|^{\frac{1}{2}}+\delta)^{\frac{2}{3}}\approx \delta^{\f43}|\lambda|^{\f13}\approx \delta^{\f43}\big(|\lambda|+\delta^{\f43}\big)^{\f13}$ and $(|\lambda-1|^{\frac{1}{2}}+\delta)^{\frac{1}{3}}\approx \big(|\lambda|+\delta^{\f43}\big)^{\f16}$.
By the energy estimate, we have
\begin{align*}
   \big|\lambda-i\delta_1\big| \|W\|_{L^2}^2\leq & \left\|(1-y^2)^{1/2}W\right\|_{L^2}^2+ 2\|(\partial_y,|k|)\Phi\|_{L^2}^2+\|f\|_{L^2}\|W\|_{L^2}\\
   \leq &\|W\|_{L^2}^2+2|k|^{-2}\|W\|_{L^2}+\|f\|_{L^2}\|W\|_{L^2}.
\end{align*}
Due to $|\lambda|\geq 4$ and $|k|\geq 1$, we get
\begin{align*}
   & (|\lambda|+\delta_1) \|W\|_{L^2}\lesssim \|f\|_{L^2}\lesssim |k|^{-\f12},
\end{align*}
which yields that
\begin{align*}
   & \|\mathbf{Ray}_{\delta_1}^{-1}f\|_{L^2}=\|W\|_{L^2}\lesssim \big(|\lambda|+\delta^{\f43}\big)^{-1}|k|^{-\f12}.
\end{align*}
Therefore, we have
\begin{equation}\label{est:WL2-lambdabig-00}
  \begin{aligned}
&|k_1|^{-1}\delta^{-\frac{4}{3}}
    (|\lambda-1|^{\frac{1}{2}}+\delta)^{\frac{1}{3}}\|\mathbf{Ray}_{\delta_1}^{-1}f\|_{L^2}\\ \lesssim &|k_1|^{-1}\delta^{-\frac{4}{3}}\big(|\lambda|+\delta^{\f43}\big)^{\f16} \cdot  \big(|\lambda|+\delta^{\f43}\big)^{-1}|k|^{-\f12}
    \lesssim |\nu k_1|^{-\f12}\big(|\lambda|+\delta^{\f43}\big)^{-\f14}.
 \end{aligned}
\end{equation}

Note that $(1-y^2-\lambda+i\delta_1)W'+2\Phi'=f'+2yW$. Then the energy estimate gives
\begin{align*}
   &\big|\lambda-i\delta_1\big| \|W'\|_{L^2}^2\\
   &\leq \left\|(1-y^2)^{1/2}W'\right\|_{L^2}^2+ 2\|\partial_y\Phi\|_{L^2}\|W'\|_{L^2}+\|f'\|_{L^2}\|W'\|_{L^2} +2\|yW\|_{L^2}\|W'\|_{L^2}\\
   &\leq \|W\|_{L^2}^2+ 2\|\partial_y\Phi\|_{L^2}\|W'\|_{L^2}+\|f'\|_{L^2}\|W'\|_{L^2} +2\|yW\|_{L^2}\|W'\|_{L^2}.
\end{align*}
Due to $|\lambda|\geq 4$ and $|k|\geq 1$,  we get
\begin{align*}
   \big|\lambda-i\delta_1\big| \|W'\|_{L^2}\lesssim & \|\partial_y\Phi\|_{L^2}+\|f'\|_{L^2}+\|yW\|_{L^2}\\
   \lesssim& |k|^{-1}\|W\|_{L^2}+|k|^{\f12}+\|W\|_{L^2}\lesssim |k|^{\f12}.
\end{align*}
which yields that
\begin{align*}
   & \|\mathbf{Ray}_{\delta_1}^{-1}f\|_{H^1}\lesssim\|W'\|_{L^2}+\|W\|_{L^2}\lesssim \big(|\lambda|+\delta^{\f43}\big)^{-1}|k|^{\f12}.
\end{align*}
Thus, we obtain
\begin{equation}\label{est:WH1-lambdabig-00}
  \begin{aligned}
&|k_1|^{-1}\|\mathbf{Ray}_{\delta_1}^{-1}f\|_{H^1}\\
 &\lesssim |k_1|^{-1}\cdot  \big(|\lambda|+\delta^{\f43}\big)^{-1}|k|^{\f12}\\
&\lesssim |k_1|^{-1}  \big(|\lambda|+\delta^{\f43}\big)^{-1}|k|^{\f12}\cdot(\nu|k|^3|k_1|^{-1})^{-\f16} (|1-\lambda|^{\f12}+\delta)^{\f16}\\
&\lesssim |k_1|^{-1}  \big(|\lambda|+\delta^{\f43}\big)^{-1}|\nu/k_1|^{-\f16} \big(|\lambda|+\delta^{\f43}\big)^{\f{1}{12}}\\
    &\lesssim |\nu k_1|^{-\f12}\big(|\lambda|+\delta^{\f43}\big)^{-\f14}.
 \end{aligned}
\end{equation}

Thus, the desired conclusion follows from \eqref{est:c1-prior-FH-1-00}, \eqref{est:WL2-lambdabig-00} and \eqref{est:WH1-lambdabig-00}.\smallskip

\noindent{\bf Case 2.} $|\lambda|\leq 4 $.\smallskip

{\textbf{Step 1}. Estimate of $\|(\partial_y,|k|)\Phi\|_{L^2}$}.

Due to $\delta_1=\delta^{\frac{4}{3}}(|\lambda-1|^{\frac{1}{2}}+\delta)^{\frac{2}{3}}\ll 1$, we get by Proposition \ref{est-2} that
\begin{align}\label{est:Phi-k-12-00}
\|(\partial_y,|k|)\Phi\|_{L^2}\lesssim |k|^{-\frac{1}{2}}.
\end{align}

{\textbf{Step 2}. Estimate of $\|W\|_{L^2}$}.

For $f(-1)=0$ and $f(1)\not=0$, let us introduce the decomposition $W=W_1+W_2$ and $(\partial_y^2-|k|^2)\Psi_i=W_i,\ \Psi_i(\pm 1)=0 \ (i=1,2)$ with
$$W_1=\frac{f(1)\rho (y)}{1-y^2-\lambda+i\delta_1}, \quad \rho(y)=\max\{1-4|k|(1-y),0\}.$$
Then we have
$$(1-y^2-\lambda+i\delta_1)W_2+2\Psi_2=\tilde{f},\quad \Psi_2(\pm 1)=0, $$
where $\tilde{f}=f-f(1)\rho(y)-2\Psi_1$ and $\tilde{f}(\pm 1)=0$. It is easy to check that
\begin{equation}\nonumber
\begin{aligned}
\|W_1\|_{L^2}\lesssim &|f(1)|\left\|\frac{\rho}{1-y^2-\lambda+i\delta_1}\right\|_{L^2}\\
\lesssim& |f(1)|\left\|\frac{y}{1-y^2-\lambda+i\delta_1}\right\|_{L^2}
\lesssim \delta_1^{-\frac{1}{2}}|f(1)|\lesssim\delta_1^{-\frac{1}{2}}.
\end{aligned}
\end{equation}
and by Lemma \ref{hardy-type-abp},
\begin{equation}\nonumber
\begin{aligned}
\|(\partial_y,|k|)\Psi_1\|_{L^2}^2=&\langle W_1,-\Psi_1\rangle=-\int_{-1}^1\frac{f(1)
\rho(y)\overline{\Psi}_1(y)}{1-y^2-\lambda+i\delta_1}\mathrm{d}y\\
\leq & \left|\int_{1-(4|k|)^{-1}}^1\frac{f(1)
\rho(y)\overline{\Psi}_1(y)}{y^2-(1-\lambda+i\delta_1)}\mathrm{d}y\right|\\
 \lesssim&|f(1)| c_0^{-1}|k|^{-\frac{1}{2}}(\|\partial_y (\rho \Psi_1)\|_{L^2(1-(4|k|)^{-1},1)}\\
 &+|k|\|\rho \Psi_1\|_{L^2(1-(4|k|)^{-1},1)}+c_0^{-1}\|\rho \Psi_1\|_{L^2(1-(4|k|)^{-1},1)})\\
 \lesssim&|f(1)| |k|^{-\frac{1}{2}}\big(\|\partial_y \rho \|_{L^2(1-(4|k|)^{-1},1)}\|\Psi_1\|_{L^\infty}\\
 &+\|\partial_y \Psi_1 \|_{L^2}+|k|^{\frac{1}{2}} \| \Psi_1\|_{L^\infty}\big)\\
\lesssim&  |f(1)||k|^{-\frac{1}{2}}\|(\partial_y,|k|)\Psi_1\|_{L^2}\lesssim |k|^{-\frac{1}{2}} \|(\partial_y,|k|)\Psi_1\|_{L^2},
\end{aligned}
\end{equation}
 Therefore, we obtain
\begin{align}\label{est:W1-Psi1-00}
  & \delta_1^{\frac{1}{2}}\|W_1\|_{L^2}+|k|^{\frac{1}{2}}\|(\partial_y,|k|)\Psi_1\|_{L^2} \lesssim 1.
\end{align}

By \eqref{est:Phi-k-12-00} and \eqref{est:W1-Psi1-00}, we have
\begin{align*}
   \|(\partial_y,|k|)\Psi_2\|_{L^2}&\leq \|(\partial_y,|k|)\Phi\|_{L^2}+ \|(\partial_y,|k|)\Psi_1\|_{L^2}\lesssim |k|^{-\frac{1}{2}}.
\end{align*}
Thanks to $\|(\partial_y,|k|)\tilde{f}\|_{L^2}\lesssim \|(\partial_y,|k|)f\|_{L^2}+\|(\partial_y,|k|)\rho\|_{L^2} +\|(\partial_y,|k|)f\|_{L^2}\lesssim |k|^{\f12}$, we obtain
\begin{equation}\nonumber
\begin{aligned}
\delta_1\|W_2\|_{L^2}^2 \lesssim \big|\langle\tilde{f},W_2 \rangle\big|\lesssim   \|(\partial_y,|k|)\tilde{f}\|_{L^2} \|(\partial_y,|k|)\Psi_2\|_{L^2}
\lesssim 1,
\end{aligned}
\end{equation}
which along with \eqref{est:W1-Psi1-00} gives
\begin{equation}\label{est:WL2-k12-001}
\begin{aligned}
\|\mathbf{Ray}_{\delta_1}^{-1}f\|_{L^2}=\|W\|_{L^2}\leq  \|W_1\|_{L^2}+\|W_2\|_{L^2} \lesssim \delta_1^{-\frac{1}{2}} .
\end{aligned}
\end{equation}
Due to $1\lesssim \big(|\lambda|+\delta^{\f43}\big)^{-\f14}$, we have
\begin{align}\label{est:WL2-k12-00}
|k_1|^{-1}\delta^{-\frac{4}{3}}
(|\lambda-1|^{\frac{1}{2}}+\delta)^{\frac{1}{3}}\|\mathbf{Ray}_{\delta_1}^{-1}f\|_{L^2}
&\lesssim  |k_1|^{-1}\delta^{-\frac{4}{3}}(|\lambda-1|^{\frac{1}{2}}+\delta)^{\frac{1}{3}}\delta_1^{-\frac{1}{2}}\nonumber\\
&\lesssim |k_1|^{-1}\delta^{-2}\lesssim |\nu k_1|^{-\f12}\big(|\lambda|+\delta^{\f43}\big)^{-\f14}.
\end{align}

{\textbf{Step 3}. Estimate of $\|W'\|_{L^2}$}.

Thanks to $f'\leq 4|k|e^{|k|(1+y)}/(e^{2|k|}-e^{-2|k|})\lesssim |k|e^{-|k|(1-y)}$, we obtain
\begin{align*}
   &\left\|\frac{f'}{1-y^2-\lambda+i\delta_1}\right\|_{L^2}\\
   &\leq \left\|\frac{f'}{1-y^2-\lambda+i\delta_1}\right\|_{L^2((-1/2,1/2))}+
   \left\|\frac{f'}{1-y^2-\lambda+i\delta_1}\right\|_{L^2((-1,1)\setminus (-1/2,1/2))}\\
&\lesssim \delta_1^{-1}|k|\left\|e^{-|k|(1-y)}\right\|_{L^2((-1/2,1/2))}+
   |k|\left\|\frac{y}{1-y^2-\lambda+i\delta_1}\right\|_{L^2((-1,1)\setminus (-1/2,1/2))}\\
  &\lesssim \delta_1^{-1}|k|^{-\f12}+\delta_1^{-\f12}|k|.
\end{align*}

Notice that
$$W'=\frac{f'-2\Phi'}{1-y^2-\lambda+i\delta_1}+\frac{2yW}{1-y^2-\lambda+i\delta_1},$$
then by \eqref{est:Phi-k-12-00} and \eqref{est:WL2-k12-001}, we get
\begin{align*}
&\|\partial_y(\mathbf{Ray}_{\delta_1}^{-1}f)\|_{L^2}=\|W' \|_{L^2}\\
 &\leq \left\|\frac{f'-2\Phi'}{1-y^2-\lambda+i\delta_1}\right\|_{L^2}+
\|W\|_{L^2}\left\|\frac{2y}{1-y^2-\lambda+i\delta_1}\right\|_{L^\infty}\\
&\lesssim \delta_1^{-1}|k|^{-\frac{1}{2}}+\delta_1^{-\frac{1}{2}}|k|+\delta_1^{-\frac{3}{2}}(|1-\lambda|+\delta_1)^{\frac{1}{2}}
\end{align*}
As $\nu|k|^3|k_1|^{-1}\leq 2\big(|1-\lambda|^{\f12}+\delta\big)$, we have $|k|\lesssim \delta^{-\f43}\big(|1-\lambda|^{\f12}+\delta\big)^{\f13}$, and then
\begin{equation}\nonumber%\label{est:WH1-k12-00}
  \begin{aligned}
  &|k_1|^{-1}\|\mathbf{Ray}_{\delta_1}^{-1}f\|_{H^1}\\
&\lesssim |k_1|^{-1}\Big( \delta_1^{-1}|k|^{-\frac{1}{2}}+\delta_1^{-\frac{1}{2}}|k|+
\delta_1^{-\frac{3}{2}}(|1-\lambda|+\delta_1)^{\frac{1}{2}}+\delta_1^{-\f12}\Big)\\
&\lesssim |k_1|^{-1}\Big(\delta_1^{-1}|k|^{-\frac{1}{2}}+\delta_1^{-\frac{1}{2}}
\delta^{-\frac{4}{3}}(|\lambda-1|^{\frac{1}{2}}+\delta)^{\frac{1}{3}}
+\delta_1^{-\frac{3}{2}}(|1-\lambda|^{\frac{1}{2}}+\delta)+\delta_1^{-\f12}\Big)\\
&\lesssim |k_1|^{-1} \delta^{-2} \lesssim |\nu k_1|^{-\f12}\big(|\lambda|+\delta^{\f43}\big)^{-\f14},
  \end{aligned}
\end{equation}
where we have used $\delta_1^{-1}\lesssim \delta^{-2}$ and $1\lesssim \big(|\lambda|+\delta^{\f43}\big)^{-\f14}$. Then the desired conclusion follows from \eqref{est:c1-prior-FH-1-00} and \eqref{est:WL2-k12-00}.
\end{proof}

\subsection{Resolvent estimates}

Now we are in a position to establish the resolvent estimates for the Orr-Sommerfeld equation with non-slip boundary condition.

\begin{proposition}\label{resolvent-noslip}
Let $(w,\varphi)$ be the solution to \eqref{5.1}. It holds that

(1) If $\nu |k|^3|k_1|^{-1}\leq 2\big(|1-\lambda|^{\frac{1}{2}}+|\nu/k_1|^{\f14}\big)$, then we have
\begin{align}
&\nu^{\frac{5}{8}} |k_1|^{\frac{3}{8}} |k|^{\frac{1}{2}} \|w\|_{L^2}+\nu^{\f38}|k_1|^{\f58}\|u\|_{L^\infty}
+\nu^{\f14}
|k_1|^{\f34}\|u\|_{L^2}\lesssim\|F\|_{L^2},\label{estimate-noslipL21}\\
 &\nu^{\frac{3}{4}}|k_1|^{\frac{1}{4}} \|w\|_{L^2}+\nu^{\frac{5}{8}}|k_1|^{\frac{3}{8}} \|u\|_{L^\infty}+\nu^{\frac{1}{2}}|k_1|^{\frac{1}{2}}
 \|u\|_{L^2}\lesssim\|F\|_{H^{-1}_k},\label{estimate-noslipH-11}\\
&|\nu/k_1|^{\f38}\|w\|_{L^2} +|\nu/k_{1}|^{\f18}\|u\|_{L^\infty}+\|u\|_{L^2}\lesssim |k_1|^{-1}\|(\partial_y,|k|)F\|_{L^2}.\label{estimate-noslipH11-uL2}
\end{align}

(2) If $\nu |k|^3|k_1|^{-1}\geq 2\big(|1-\lambda|^{\frac{1}{2}}+|\nu/k_1|^{\frac{1}{4}}\big)$, then we have
\begin{align}
&\nu^{\frac{5}{8}} |k_1|^{\frac{3}{8}} |k|^{\frac{1}{2}} \|w\|_{L^2}+\nu^{\f38}|k_1|^{\f58}\|u\|_{L^\infty}+
\nu^{\f14}|k_1|^{\f34}\|u\|_{L^2}\nonumber\\
&\quad\lesssim \nu|k|^2\|w\|_{L^2}\lesssim\|F\|_{L^2},\label{estimate-noslipL2-2}\\
&\nu^{\frac{3}{4}}|k_1|^{\frac{1}{4}}\|w\|_{L^2}+\nu^{\frac{5}{8}}|k_1|^{\frac{3}{8}} \|u\|_{L^\infty}+
\nu^{\frac{1}{2}}|k_1|^{\frac{1}{2}}\|u\|_{L^2}\nonumber\\
&\quad\lesssim \nu|k|\|w\|_{L^2}\lesssim \|F\|_{H^{-1}_k},\label{estimate-noslipH-1-2}\\
  &\nu^{\f38}|k_1|^{\f{5}{8}}\|w\|_{L^2}+\nu^{\f18}|k_{1}|^{\f78}\|u\|_{L^\infty}+|k_1|\|u\|_{L^2}\nonumber\\
  &\quad\lesssim\nu |k|^3 \|w\|_{L^2}\lesssim\|(\partial_y,|k|)F\|_{L^2}.\label{estimate-noslipL2-H1}
\end{align}

\end{proposition}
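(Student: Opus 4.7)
The plan is to exploit the decomposition $\varphi=\varphi_{Na}+c_1\psi_1+c_2\psi_2$ and $w=w_{Na}+c_1w_1+c_2w_2$ from \eqref{5.3}--\eqref{5.3b}, which separates the Navier-slip solution from the boundary-layer correctors. All ingredients are already in place: the Navier-slip resolvent bounds for $w_{Na},u_{Na}$ from Propositions \ref{resolvent-L2}--\ref{resolvent-H1} and Corollary \ref{resol-lambda-00}; the corrector bounds for $w_i,\psi_i$ from Proposition \ref{estimate-w-i}; and the coefficient estimates for $c_i$ from Lemmas \ref{coe-L2}--\ref{ci:H-1}. What remains is to collect them and verify that the exponents in $\nu,|k_1|,|k|$ and the spectral weights in $\lambda$ balance.

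\textbf{Case 1.} In the regime $\nu|k|^{3}|k_1|^{-1}\leq 2(|1-\lambda|^{1/2}+|\nu/k_1|^{1/4})$, for each of the three target inequalities \eqref{estimate-noslipL21}--\eqref{estimate-noslipH11-uL2}, I would split $w=w_{Na}+\sum c_i w_i$ and estimate the two pieces separately. The Navier-slip piece is controlled directly: for $F\in L^2$, Proposition \ref{resolvent-L2}, Corollary \ref{resol-lambda-00}, and the interpolation $\|u_{Na}\|_{L^\infty}\lesssim\|w_{Na}\|_{L^2}^{1/2}\|u_{Na}\|_{L^2}^{1/2}$ yield the $w_{Na}$-part of \eqref{estimate-noslipL21}; for $F\in H^{-1}_k$ and $F\in H^1_0$, Propositions \ref{resolvent-H-1} and \ref{resolvent-H1} yield the $w_{Na}$-parts of \eqref{estimate-noslipH-11} and \eqref{estimate-noslipH11-uL2} respectively. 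The corrector piece is estimated by multiplying Proposition \ref{estimate-w-i} by the appropriate coefficient bound from the corresponding lemma among Lemmas \ref{coe-L2}--\ref{ci:H-1}; the key algebraic fact is that, with $L=|2k_1/\nu|^{1/3}$, one has $L|\nu/k_1|^{1/2}\sim|\nu/k_1|^{1/6}\lesssim 1$, $L\cdot\nu|k|^2/k_1\lesssim(|1-\lambda|^{1/2}+|\nu/k_1|^{1/4})^{2/3}$ under the Case 1 hypothesis, and $(1+|Ld|)^{1/4}/(1+|k\lambda|)^{1/4}\lesssim(L/|k|)^{1/4}$ when $|\lambda|$ dominates. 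A direct bookkeeping verifies that $\nu^{1/12}|k_1|^{-1/12}|k|^{1/4}(1+|Ld|)^{1/4}(1+|k\lambda|)^{-1/4}\lesssim 1$, together with the analogous identities for the $H^{-1}_k$ and $H^1_0$ cases, so the corrector contribution matches each target bound.

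\textbf{Case 2.} When $\nu|k|^{3}|k_1|^{-1}\geq 2(|1-\lambda|^{1/2}+|\nu/k_1|^{1/4})$, in particular $\nu|k|^{3}|k_1|^{-1}\geq 2|\nu/k_1|^{1/4}$, which implies $\nu|k|^{2}\gtrsim\nu^{1/2}|k_1|^{1/2}$. Here the dissipation term dominates, and one can skip the delicate corrector construction. Taking the $L^2$-inner product of \eqref{5.1} with $w$ (using $w(\pm1)=0$ from the non-slip condition) and taking the real part gives
\begin{equation*}
\nu\|(\partial_y,|k|)w\|_{L^2}^{2}\le\|F\|_{L^2}\|w\|_{L^2}+\epsilon(\nu|k_1|)^{1/2}\|w\|_{L^2}^{2}.
\end{equation*}
Combining this with $\|w\|_{L^2}\lesssim|k|^{-1}\|(\partial_y,|k|)w\|_{L^2}$ (from $w(\pm1)=0$) and absorbing the $\epsilon$-term via $\nu|k|^{2}\gtrsim\nu^{1/2}|k_1|^{1/2}$ produces $\nu|k|^{2}\|w\|_{L^2}\lesssim\|F\|_{L^2}$. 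Pairing with $\varphi$ (note $\varphi(\pm 1)=\varphi'(\pm 1)=0$) in an analogous way yields $\nu|k|\|w\|_{L^2}\lesssim\|F\|_{H^{-1}_k}$, and pairing with $(\partial_y^2-|k|^2)F/(\ldots)$ or using duality yields $\nu|k|^{3}\|w\|_{L^2}\lesssim\|(\partial_y,|k|)F\|_{L^2}$. The subsequent estimates for $\|u\|_{L^2}$ and $\|u\|_{L^\infty}$ in \eqref{estimate-noslipL2-2}--\eqref{estimate-noslipL2-H1} follow from $\|u\|_{L^2}\lesssim|k|^{-1}\|w\|_{L^2}$ and $\|u\|_{L^\infty}\lesssim\|u\|_{L^2}^{1/2}\|w\|_{L^2}^{1/2}$, using once more $|k|\gtrsim\nu^{-1/4}|k_1|^{1/4}$ to convert the powers of $\nu|k|$ into the required $\nu^{a}|k_1|^{b}$ factors.

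\textbf{Main obstacle.} The delicate step is the weight-matching in Case 1: each of the three source regularities $L^2$, $H^{-1}_k$, $H^1_0$ produces a different triple of exponents in the target, and each calls on a different coefficient lemma with a different decay factor in $\lambda$. The danger is an apparent mismatch between the growth $(1+|Ld|)^{\pm1/4}$ inherited from the Airy correctors and the decay $(1+|k\lambda|)^{-1/4}$ or $(1+|\lambda|+|\nu/k_1|^{1/3})^{-1/4}$ from the coefficient bounds. Resolving this requires using the relation $L|\lambda|\sim(L/|k|)|k\lambda|$ and carefully splitting into the sub-regimes $|\lambda|\gtrsim 1$ and $|\lambda|\ll 1$; apart from this bookkeeping, no new analytic input is needed.
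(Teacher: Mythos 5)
Your Case 1 strategy is the same as the paper's: decompose $w=w_{Na}+c_1w_1+c_2w_2$, estimate the Navier-slip part via Propositions \ref{resolvent-L2}--\ref{resolvent-H1} and Corollaries \ref{resol-lambda}, \ref{resol-lambda-00}, \ref{u-L2-H-1}, estimate the correctors via Proposition \ref{estimate-w-i}, and pay for the coefficients via Lemmas \ref{coe-L2}--\ref{ci:H-1}. The algebraic book-keeping you describe (matching $(1+|Ld|)^{\pm 1/4}$ against the $\lambda$-weights from the coefficient lemmas, using $1+|Ld|\approx 1+|\lambda||k_1/\nu|^{1/3}$ in this regime) is exactly what the paper carries out. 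No objection there.

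Your Case 2 argument, however, contains a genuine error. You pair equation \eqref{5.1} with $w$ and justify dropping the boundary term by writing ``(using $w(\pm 1)=0$ from the non-slip condition).'' That is false: the non-slip condition is $\varphi(\pm 1)=\varphi'(\pm 1)=0$, and since $w=(\partial_y^2-|k|^2)\varphi$, this gives $w(\pm 1)=\varphi''(\pm 1)$, which is \emph{not} zero in general. (You have swapped non-slip with Navier-slip: it is the Navier-slip condition $\varphi(\pm 1)=\varphi''(\pm 1)=0$ in \eqref{OS-nopertur} that forces $w(\pm 1)=0$.) Consequently, when you integrate $-\nu\langle(\partial_y^2-|k|^2)w,w\rangle$ by parts you are left with an uncontrolled boundary term $-\nu[\partial_y w\cdot\bar w]_{y=-1}^{1}$, and the energy inequality $\nu\|(\partial_y,|k|)w\|_{L^2}^2\le\|F\|_{L^2}\|w\|_{L^2}+\epsilon(\nu|k_1|)^{1/2}\|w\|_{L^2}^2$ does not follow. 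The paper's Case 2 proof avoids this by pairing \eqref{5.1} with $-\varphi$ instead; the non-slip data $\varphi(\pm1)=\varphi'(\pm1)=0$ makes \emph{all} boundary terms vanish (including those arising from repeated integration by parts in $\|\omega\|_{L^2}^2=\|\varphi''\|^2+2|k|^2\|\varphi'\|^2+|k|^4\|\varphi\|^2$). Moreover, a further ingredient you are missing: the paper first takes the imaginary part of $\langle\text{\eqref{5.1}},-\varphi\rangle$ to obtain \eqref{y-phi}, which controls $|1-\lambda|\|u\|_{L^2}^2$, and only then takes the real part \eqref{w-phi} and absorbs $2|k_1|\|y\varphi'\|_{L^2}\|\varphi\|_{L^2}$ using \eqref{y-phi} together with the Case 2 hypothesis. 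That interaction between real and imaginary parts is what closes the estimate $\nu\|w\|_{L^2}^2\lesssim|\langle F,-\varphi\rangle|$; it is not recoverable by pairing with $w$.
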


\begin{proof}
\textbf{Case 1.} $\nu |k|^3|k_1|^{-1}\leq 2\big(|1-\lambda|^{\frac{1}{2}}+|\nu/k_1|^{\f14}\big)$. \smallskip

In this case, we have
\begin{align*}
   & \big(\nu|k|^3|k_1|^{-1}\big)^{\f23} \lesssim |1-\lambda|^{\f13}+|\nu/k_1|^{\f16}\lesssim 1+|\lambda|^{\f13}\lesssim 1+|\lambda|\lesssim   1+|\lambda||k_1/\nu|^{\f13}.
\end{align*}
Recall that $d=-\f{\lambda}{2}-\f{i\nu|k|^2}{2k_1}+\f{i\epsilon|\nu/k_1|^{1/2}}{2}$, $L=|2k_1/\nu|^{\f13}$. Then we have
\begin{align}\label{eq:1+Ld-sim-00}
    1+|Ld|&\approx 1+|\lambda||k_1/\nu|^{\f13}+\big(\nu|k|^3|k_1|^{-1}\big)^{\f23}+ |\nu/k_1|^{\f16}\nonumber\\
   &\approx 1+|\lambda||k_1/\nu|^{\f13}.
\end{align}

For $F\in L^2$ and $i=1,2$, it follows from Proposition \ref{estimate-w-i} and \eqref{eq:1+Ld-sim-00} that
\begin{align}
&\|w_i\|_{L^2}\lesssim L^{\frac{1}{2}}(1+|Ld|)^{\frac{1}{4}}\approx |\nu/k_1|^{-\f14}\big((\nu |k|^3|k_1|^{-1})^{\f13}+|k\lambda|\big)^{\f14}|k|^{-\f14}\label{eq:wi-FL2-est-00}\\ &\qquad\quad\lesssim |\nu/k_1|^{-\frac{1}{4}}(1+|k\lambda|)^{\frac{1}{4}}|k|^{-\f14},\nonumber\\
&\|(\partial_y,|k|)\psi_i\|_{L^2}+|\nu/k_1|^{\f16}\|(\partial_y,|k|)\psi_i\|_{L^\infty}\lesssim |\nu/k_1|^{\f16},\label{eq:parpsi-FL2-est-00}
\end{align}
where we have used $(\nu |k|^3|k_1|^{-1})^{\f13}\lesssim |1-\lambda|^{\f16}+|\nu/k_1|^{\f{1}{12}}\lesssim 1+|k\lambda|$.
Therefore, by Lemma \ref{coe-L2}, we get
\begin{equation}\nonumber
\begin{aligned}
&|c_1|\|w_1\|_{L^2}+|c_2|\|w_2\|_{L^2}\\
&\lesssim  \nu^{-\frac{3}{8}}|k_1|^{-\frac{5}{8}}|k|^{-\frac{1}{4}}
(1+|k\lambda|)^{-\frac{1}{4}}|\nu/k_1|^{-\frac{1}{4}}
(1+|k\lambda|)^{\frac{1}{4}}|k|^{-\f14}\|F\|_{L^2}\\
&=  \nu^{-\frac{5}{8}}|k_1|^{-\f38} |k|^{-\frac{1}{2}}\|F\|_{L^2}.
\end{aligned}
\end{equation}
By Corollary \ref{resol-lambda}, we have
\begin{align*}
   & \|w_{Na}\|_{L^2}\lesssim \nu^{-\frac{1}{3}}|k_1|^{-\frac{2}{3}}\big(|\lambda-1|^{\frac{1}{2}}+|\nu/k_1|^{\f14}\big)^{-\frac{2}{3}}\|F\|_{L^2}.
\end{align*}
Notice that
\begin{align*}
  \nu^{-\frac{1}{3}}|k_1|^{-\frac{2}{3}}\big(|\lambda-1|^{\frac{1}{2}}+|\nu/k_1|^{\f14}\big)^{-\frac{2}{3}} &\lesssim \nu^{-\frac{1}{3}}|k_1|^{-\frac{2}{3}}
  \big(\nu |k|^3|k_1|^{-1}\big)^{-\f16}\big(|\nu/k_1|^{\f14}\big)^{-\frac{1}{2}} \\&=\nu^{-\f58}|k_1|^{-\f38}|k|^{-\f12}.
\end{align*}
Then we obtain
\begin{equation}\label{est:w-nonslip-FL2-00}
\begin{aligned}
\|w\|_{L^2}\leq& \|w_{Na}\|_{L^2}+|c_1|\|w_1\|_{L^2}+|c_2|\|w_2\|_{L^2}\\
\lesssim& \nu^{-\frac{5}{8}}|k_1|^{-\f38} |k|^{-\frac{1}{2}}\|F\|_{L^2}.
\end{aligned}
\end{equation}

By Corollary \ref{resol-lambda-00}, Lemma \ref{coe-L2} and \eqref{eq:parpsi-FL2-est-00}, we have
\begin{equation}\label{est:u-nonslip-FL2-00}
\begin{aligned}
\|u\|_{L^2}\leq& \|u_{Na}\|_{L^2}+|c_1|\|(\partial_y,|k|)\psi_1\|_{L^2}+|c_2|\|(\partial_y,|k|)\psi_2\|_{L^2}\\
\lesssim&(\nu^{-\f14}|k_1|^{-\f34}+
\nu^{-\frac{3}{8}}|k_1|^{-\frac{5}{8}}|k|^{-\frac{1}{4}}  |\nu/k_1|^{\f16})\|F\|_{L^2}\\
\lesssim& \nu^{-\frac{1}{4}}|k_1|^{-\f34}\|F\|_{L^2}
\end{aligned}
\end{equation}
and
\begin{equation}\label{est:uLinfty-nonslip-FL2-00}
\begin{aligned}
\|u\|_{L^\infty}\lesssim& \|u_{Na}\|_{L^2}^{\f12}\|w_{Na}\|_{L^2}^{\f12}+|c_1|\|(\partial_y,|k|)\psi_1\|_{L^\infty}\\
&+ |c_2|\|(\partial_y,|k|)\psi_2\|_{L^\infty}\\
\lesssim&(\nu^{-\f38}|k_1|^{-\f58}+
\nu^{-\frac{3}{8}}|k_1|^{-\frac{5}{8}}|k|^{-\frac{1}{4}} )\|F\|_{L^2}
\lesssim \nu^{-\f38}|k_1|^{-\f58}\|F\|_{L^2}.
\end{aligned}
\end{equation}
Then \eqref{estimate-noslipL21} follows form \eqref{est:w-nonslip-FL2-00}, \eqref{est:u-nonslip-FL2-00} and \eqref{est:uLinfty-nonslip-FL2-00}.
\smallskip

For $F\in H^{-1}_k$, it follows from \eqref{eq:wi-FL2-est-00} and Proposition \ref{resolvent-H-1} that
\begin{equation}\nonumber
\begin{aligned}
&\|w_i\|_{L^2}\lesssim L^{\frac{1}{2}}(1+|Ld|)^{\frac{1}{4}}\lesssim |\nu/k_1|^{-\frac{1}{4}}\big(|\nu /k_1|^{\f13}+|\lambda|\big)^{\frac{1}{4}},\\
&\|w_{Na}\|_{L^2}\lesssim \nu^{-\f34}|k_1|^{-\f14}\|F\|_{H^{-1}_k}.
\end{aligned}
\end{equation}
Then by Lemma \ref{ci:H-1}, we get
\begin{equation}\nonumber
\begin{aligned}
\|w\|_{L^2}\leq \|w_{Na}\|_{L^2}+|c_1|\|w_1\|_{L^2}+|c_2|\|w_2\|_{L^2}\lesssim \nu^{-\frac{3}{4}}|k_1|^{-\frac{1}{4}} \|F\|_{H^{-1}_k}.
\end{aligned}
\end{equation}

By Corollary \ref{u-L2-H-1}, Lemma \ref{ci:H-1} and \eqref{eq:parpsi-FL2-est-00}, we obtain
\begin{equation}\nonumber
\begin{aligned}
\|u\|_{L^2}\leq & \|u_{Na}\|_{L^2}+|c_1|\|(\partial_y,|k|)\psi_1\|_{L^2}+|c_2|\|(\partial_y,|k|)\psi_2\|_{L^2}\\
\lesssim &\big(\nu^{-\frac{1}{2}}|k_1|^{-\frac{1}{2}}+|\nu k_1|^{-\f12}\big(|\lambda|+|\nu/k_1|^{\f13}\big)^{-\f14}|\nu/k_1|^{\f16} \big)\|F\|_{H^{-1}_k}\\
\lesssim &\nu^{-\frac{1}{2}}|k_1|^{-\frac{1}{2}}\|F\|_{H^{-1}_k}
\end{aligned}
\end{equation}
and
\begin{equation}\nonumber
\begin{aligned}
\|u\|_{L^\infty}\lesssim & \|u\|_{L^2}^{\f12}\|w\|_{L^2}^{\f12}\lesssim \nu^{-\frac{5}{8}}|k_1|^{-\frac{3}{8}}\|F\|_{H^{-1}_k}.
\end{aligned}
\end{equation}
Hence, \eqref{estimate-noslipH-11} follows.
\smallskip

For $F\in H^1_0$, by Proposition \ref{resolvent-H1}, Lemma \ref{ci:-H1} and \eqref{eq:parpsi-FL2-est-00}, we get
\begin{equation}\nonumber
\begin{aligned}
\|u\|_{L^2}\lesssim&  \|u_{Na}\|_{L^2}+|c_1|\|(\partial_y,|k|)\psi_1\|_{L^2}+|c_2|\|(\partial_y,|k|)\psi_2\|_{L^2}\\
\lesssim &|k_1|^{-1}\|(\partial_y,|k|)F\|_{L^2}\\
&+ (\nu^{-\frac{1}{8}}|k_1|^{-\frac{7}{8}}\big(1+|\lambda|+|\nu /k_1|^{\f13}\big)^{-\frac{1}{4}}\|(\partial_y,|k|)F\|_{L^2}) |\nu/k_1|^{\f16}\\
\lesssim&  |k_1|^{-1}\|(\partial_y,|k|)F\|_{L^2}
\end{aligned}
\end{equation}
and
\begin{equation}\nonumber
\begin{aligned}
\|u\|_{L^\infty}\lesssim&  \|u_{Na}\|_{L^\infty}+|c_1|\|(\partial_y,|k|)\psi_1\|_{L^\infty} +|c_2|\|(\partial_y,|k|)\psi_2\|_{L^\infty}\\
\lesssim &\nu^{-\frac{1}{8}}|k_1|^{-\frac{7}{8}}\|(\partial_y,|k|)F\|_{L^2} \\
&+ \nu^{-\frac{1}{8}}|k_1|^{-\frac{7}{8}}\big(1+|\lambda|+|\nu /k_1|^{\f13}\big)^{-\frac{1}{4}}\|(\partial_y,|k|)F\|_{L^2} \\
\lesssim&  \nu^{-\frac{1}{8}}|k_1|^{-\frac{7}{8}}\|(\partial_y,|k|)F\|_{L^2}.
\end{aligned}
\end{equation}
By Proposition \ref{resolvent-H1}, Lemma \ref{ci:-H1} and \eqref{eq:wi-FL2-est-00}, we have
\begin{equation}\nonumber
\begin{aligned}
\|w\|_{L^2}\lesssim& \|w_{Na}\|_{L^2}+|c_1|\|w_1\|_{L^2}+|c_2|\|w_2\|_{L^2}\\
\lesssim &\nu^{-\f14}|k_1|^{-\f34}\|(\partial_y,|k|)F\|_{L^2}\\
&+ |\nu/k_1|^{-\frac{1}{4}}\big(|\nu /k_1|^{\f13}+|\lambda|\big)^{\frac{1}{4}}\nu^{-\f18}|k_1|^{-\f78}\big(|\nu /k_1|^{\f13}+|\lambda|\big)^{-\frac{1}{4}}\|(\partial_y,|k|)F\|_{L^2}\\
\lesssim &\nu^{-\frac{3}{8}}|k_1|^{-\frac{5}{8}}\|(\partial_y,|k|)F\|_{L^2}.
\end{aligned}
\end{equation}
Hence, \eqref{estimate-noslipH11-uL2} follows.\smallskip

\textbf{Case 2.}  $\nu |k|^3|k_1|^{-1}\geq 2( |1-\lambda|^{\f12}+|\nu/k_1|^{\frac{1}{4}})$. \smallskip

Taking the inner product between \eqref{5.1} and $-\varphi$ and taking the imaginary part to the resulting equation,  we obtain
\begin{equation}\label{y-phi}
\begin{aligned}
&\int_{-1}^1y^2(|\varphi'|^2+|k|^2|\varphi|^2)\mathrm{d}y+\int_{-1}^1|\varphi|^2\mathrm{d}y \\&\leq |k_1|^{-1}|\langle F,-\varphi\rangle|+|1-\lambda| (\|\varphi'\|_{L^2}^2+|k|^2\|\varphi\|_{L^2}^2).
\end{aligned}
\end{equation}
Taking the real part, we obtain
\begin{equation}\label{w-phi}
\begin{aligned}
\nu&(\|\varphi''\|_{L^2}^2+2|k|^2\|\varphi'\|_{L^2}^2+|k|^4\|\varphi\|_{L^2}^2)\\
\leq &|\langle F,-\varphi\rangle|+\epsilon \nu^{\frac{1}{2}}|k_1|^{\frac{1}{2}}\|u\|_{L^2}^2+2|k_1|\|y\varphi'\|_{L^2}\|\varphi\|_{L^2}\\
\leq &|\langle F,-\varphi\rangle|+\epsilon C \nu |k|^{2}(\|\varphi'\|_{L^2}^2+|k|^2\|\varphi\|_{L^2}^2)+
|k_1|\big(\|y\varphi'\|_{L^2}^2+\|\varphi\|_{L^2}^2\big)\\
\leq &2|\langle F,-\varphi\rangle|+\epsilon C\nu |k|^{2}(\|\varphi'\|_{L^2}^2+|k|^2\|\varphi\|_{L^2}^2)\\
&+|k_1||1-\lambda|(\|\varphi'\|_{L^2}^2+|k|^2\|\varphi\|_{L^2}^2\big),
\end{aligned}
\end{equation}
here we have used $\nu^{\f12}|k_1|^{\f12}\lesssim\nu|k|^2$ and \eqref{y-phi} in the last inequality.
Due to $\epsilon\ll 1$ and $\nu |k|^3|k_1|^{-1}\geq 2( |1-\lambda|^{\f12}+|\nu/k_1|^{\frac{1}{4}})$, we get by \eqref{w-phi} that
\begin{equation}\nonumber
\begin{aligned}
\nu\|w\|_{L^2}^2=\nu(\|\varphi''\|_{L^2}^2+2|k|^2\|\varphi'\|_{L^2}^2+|k|^4\|\varphi\|_{L^2}^2)
\leq C|\langle F,-\varphi\rangle|.
\end{aligned}
\end{equation}
Then by noticing that  $$|\langle F,-\varphi\rangle| \lesssim |k|^{-1}\|F\|_{H^{-1}_k}\|w\|_{L^2}\lesssim|k|^{-2}\|F\|_{L^2}\|w\|_{L^2} \lesssim|k|^{-3}\|(\partial_y,|k|)F\|_{L^2}\|w\|_{L^2} $$ and using the fact that $\nu|k|^4|k_1|^{-1}\gtrsim 1$, we can deduce \eqref{estimate-noslipL2-2}, \eqref{estimate-noslipH-1-2} and \eqref{estimate-noslipL2-H1}.
\end{proof}

\section{Space-time estimates for the linearized NS system}\label{sec:sp}

In this section, we will establish the space-time estimates for the linearized Navier-Stokes system around the plane Poiseuille flow, which will play key roles in nonlinear stability.

\subsection{Space-time estimates with Navier-slip boundary condition}

We consider the linearized Navier-Stokes system without nonlocal term
\begin{equation}\label{linear NS-without-nonlocal}
\left \{
\begin{array}{lll}
(\partial_t+\mathscr{H}_k)\omega=-ik_1g_1-\partial_yg_2-ik_3g_3-g_4,\\
(\partial_y^2-|k|^2)\varphi=\omega,\quad \varphi(\pm 1)=0,\\
\omega(\pm 1)=0,\ \
\omega|_{t=0}=\omega_0(k_1,y,k_3),
\end{array}
\right.
\end{equation}
where $|k|^2=k_1^2+k_3^2,\ k_1\neq0$ and $\mathscr{H}_k=-\nu(\partial_y^2-|k|^2)+ik_1(1-y^2)$.

\begin{theorem}\label{thm:sp-without-nonlocal}
Let $\omega$ solve \eqref{linear NS-without-nonlocal}. Assume $g_4|_{y=\pm 1}=0$. Then there exists a constant $c$ with $0<c\ll1$ such that
\begin{equation*}
\begin{aligned}
&\nu \|e^{c\nu^{\frac{1}{2}}t} (\partial_y,|k|)\omega\|_{L^2 L^2}^2+(\nu |k_1|)^{\frac{1}{2}}\|e^{c\nu^{\frac{1}{2}}t} \omega\|_{L^2 L^2}^2\\
&\lesssim   \|\omega_0\|_{L^2}^2 + |k_1|^{-1}|k|^{-2}\| e^{c\nu^{\frac{1}{2}}t} (\partial_y,|k|)g_4\|_{L^2 L^2}^2+\nu^{-1}\|e^{c \nu^{\frac{1}{2}}t} g_2\|_{L^2 L^2}^2\\
&\quad+\min\{(\nu |k|^2)^{-1},(\nu |k_1|)^{-1/2} \}\|e^{c \nu^{\frac{1}{2}}t} (k_1g_1+k_3g_3)\|_{L^2 L^2}^2,
\end{aligned}
\end{equation*}
and
\begin{equation*}
\begin{aligned}
&\|e^{c\nu^{\frac{1}{2}}t} \omega\|_{L^\infty L^2}^2\lesssim   \|\omega_0\|_{L^2}^2 + ( \nu^{-\f14} |k_1|^{-\f34} |k|^{-2}+|k_1|^{-1}|k|^{-1})\| e^{c\nu^{\frac{1}{2}}t} (\partial_y,|k|)g_4\|_{L^2 L^2}^2\\
&\quad+\min\{(\nu |k|^2)^{-1},(\nu |k_1|)^{-1/2} \}\|e^{c \nu^{\frac{1}{2}}t} (k_1g_1+k_3g_3)\|_{L^2 L^2}^2+\nu^{-1}\|e^{c \nu^{\frac{1}{2}}t} g_2\|_{L^2 L^2}^2.
\end{aligned}
\end{equation*}
Moreover, it holds that
\begin{align*}
& \|e^{c\nu^{\frac{1}{2}}t} \partial_y \omega\|_{L^\infty L^2}^2+\nu \|e^{c\nu^{\frac{1}{2}}t} (\partial_y,|k|)\omega'\|_{L^2L^2}^2+(\nu |k_1|^3)^{\frac{1}{4}} \|e^{c\nu^{\frac{1}{2}}t} \partial_y \omega\|_{L^2 L^2}^2 \\
&\lesssim   \|\omega_0'\|_{L^2}^2+ \nu^{-\frac{3}{4}} |k_1|^{\frac{3}{4}}\Big(  \|\omega_0\|_{L^2}^2+ |k_1|^{-1}|k|^{-2}\| e^{c\nu^{\frac{1}{2}}t} (\partial_y,|k|)g_4\|_{L^2 L^2}^2\\
&\quad+\nu^{-1}\|e^{c \nu^{\frac{1}{2}}t} g_2\|_{L^2 L^2}^2+{\min\{(\nu |k|^2)^{-1},(\nu |k_1|)^{-1/2}\}} \|e^{c \nu^{\frac{1}{2}}t} (k_1g_1+k_3g_3)\|_{L^2 L^2}^2\Big)\\
&\quad+\nu^{-1}\|e^{c\nu^{\frac{1}{2}}t} \partial_y g_2\|_{L^2 L^2}^2+\nu^{-\frac{3}{4}}|k|^{-\frac{4}{3}}|k_1|^{-\frac{1}{4}}\|e^{c\nu^{\frac{1}{2}}t} \partial_y g_4\|_{L^2 L^2}^2.
\end{align*}

\end{theorem}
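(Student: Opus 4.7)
The strategy is to reduce the space-time estimates to the stationary resolvent bounds of Section \ref{sec:resolvent-slip} via Fourier transform in time, and then upgrade to $\partial_y\omega$ by differentiating the equation. Setting $\tilde\omega(t)=e^{c\nu^{1/2}t}\omega(t)$, extended by zero for $t<0$, and taking the Fourier transform in $t$ reduces each frequency $\tau$ to the Orr-Sommerfeld equation without nonlocal term with spectral parameter $\lambda=-\tau/k_1 - ic\nu^{1/2}/k_1\in\Omega_\epsilon$. Proposition \ref{prop:res-non} then furnishes, for each $\tau$,
$$\nu^{3/4}|k_1|^{1/4}\|(\partial_y,|k|)\hat{\tilde\omega}(\tau)\|_{L^2_y}+\nu^{1/2}|k_1|^{1/2}\|\hat{\tilde\omega}(\tau)\|_{L^2_y}\lesssim \|\hat F(\tau)\|_{L^2_y},$$
and Plancherel yields the first two displayed estimates of the theorem, while the $L^\infty_tL^2_y$ bound on $\omega$ follows from the standard energy inequality on $\tilde\omega$.

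For the $\partial_y\omega$ estimate, I differentiate the vorticity equation: setting $\omega_1=\partial_y\omega$,
\begin{align*}
(\partial_t+\mathscr{H}_k)\omega_1 = 2ik_1 y\,\omega - ik_1\partial_y g_1 - \partial_y^2 g_2 - ik_3\partial_y g_3 - \partial_y g_4.
\end{align*}
The key observation is that $\omega|_{y=\pm1}=0$ forces $\partial_t\omega|_{y=\pm1}=0$, so evaluating the original equation at the boundary and using $g_4|_{y=\pm1}=0$ gives the trace identity $\nu\partial_y\omega_1|_{y=\pm1}=\nu\partial_y^2\omega|_{y=\pm1}=(ik_1 g_1+\partial_y g_2+ik_3 g_3)|_{y=\pm1}$, replacing the missing Dirichlet condition. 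A weighted $L^2$ energy estimate for $\omega_1$ then produces the $L^\infty_tL^2_y$ bound and the $L^2_tL^2_y$ bound on $(\partial_y,|k|)\omega_1$; the boundary contribution $\nu\operatorname{Re}[\partial_y\omega_1\,\bar\omega_1]_{-1}^1$ is handled via the trace identity together with the standard trace inequality $\|f\|_{L^\infty_y}\lesssim \|f\|_{L^2}^{1/2}\|\partial_y f\|_{L^2}^{1/2}$, and the commutator source $2ik_1 y\omega$ is absorbed using the first estimate of the theorem, which accounts for the loss $\nu^{-3/4}|k_1|^{3/4}$ in front of the initial data and source terms. The enhanced-dissipation coefficient $(\nu|k_1|^3)^{1/4}\|\omega_1\|_{L^2L^2}^2$ is then obtained by applying Plancherel together with Proposition \ref{prop:res-non} to the $(\partial_y,|k|)w$ bound. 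The $\partial_y g_4$ contribution requires particular care: since $\partial_y g_4$ does not vanish at the boundary, we pair it against $\omega_1$ (not $\partial_y\omega_1$) and treat it as an $H^{-1}_k$-type source via Proposition \ref{H-1-local}, which is where the factor $\nu^{-3/4}|k|^{-4/3}|k_1|^{-1/4}$ comes from.

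The main obstacle is the boundary term: unlike $\omega$, the function $\omega_1$ is not zero at $y=\pm1$, so the dissipative integration-by-parts leaves a trace contribution that must be absorbed into the left-hand side without wasting the $\nu\|(\partial_y,|k|)\omega_1\|^2$ and $(\nu|k_1|^3)^{1/4}\|\omega_1\|^2$ coercivity. The trace identity for $\nu\partial_y\omega_1|_{y=\pm1}$ combined with the Kato weight $e^{c\nu^{1/2}t}$ for $c$ sufficiently small, which via Corollary \ref{resolvent-L2-complex} merely shifts the spectral parameter $\lambda$ into $\Omega_\epsilon$ with no essential loss, makes this absorption possible. A secondary difficulty is the asymmetric regularity of the source components: one must track different Sobolev scales for $g_1, g_3$ (controlled in $L^2L^2$ on the boundary via their traces), $g_2$ (whose $\partial_y g_2$ appears twice, both in the interior and on the boundary through the trace identity), and $g_4$ (handled in $H^{-1}_k$), so that each piece is placed at the sharp norm on the right-hand side.
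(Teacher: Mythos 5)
Your plan for the third estimate — differentiating the equation in $y$ and then running an energy estimate for $\omega_1=\partial_y\omega$ with the trace identity $\nu\partial_y^2\omega|_{y=\pm1}=(ik_1g_1+\partial_yg_2+ik_3g_3)|_{y=\pm1}$ — runs into a regularity mismatch that cannot be repaired within the stated hypotheses. The boundary contribution of the dissipative term is $\nu\,\mathrm{Re}\big[\partial_y\omega_1\,\overline{\omega}_1\big]_{-1}^1=\mathrm{Re}\big[(ik_1g_1+\partial_yg_2+ik_3g_3)\,\overline{\partial_y\omega}\big]_{-1}^1$, and to absorb it one must control the boundary traces of $g_1$ and $g_3$. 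The only interpolation route $\|g_j\|_{L^\infty}\lesssim\|g_j\|_{L^2}^{1/2}\|\partial_yg_j\|_{L^2}^{1/2}$ introduces $\|\partial_yg_1\|_{L^2}$ and $\|\partial_yg_3\|_{L^2}$, but the theorem's right-hand side contains only $\|k_1g_1+k_3g_3\|_{L^2L^2}$ — no $y$-derivatives of $g_1,g_3$. The paper sidesteps the boundary issue entirely by testing the original (undifferentiated) equation against $\omega''$ and integrating by parts once on each term: every boundary contribution then carries a factor of $\omega|_{y=\pm1}$ or $\omega_t|_{y=\pm1}$ or $(1-y^2)|_{y=\pm1}$ or $yg_4|_{y=\pm1}$, all of which vanish, so no trace identity for $g_1,g_3$ is needed. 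Your differentiated-equation route is not an equivalent alternative here; it demands strictly stronger hypotheses.

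Your first paragraph is also too coarse. Appealing only to Proposition \ref{prop:res-non} (the $L^2\to L^2$ resolvent bound) does not produce the factors on the right-hand side: the $g_2$-contribution enters as $-\partial_y g_2$ and must be treated in $H^{-1}_k$ via Proposition \ref{H-1-local} to yield $\nu^{-1}\|g_2\|^2$ rather than $\nu^{-1}|k_1|^{-1}\|\partial_y g_2\|^2$, and the $g_4$-contribution (which actually enters the equation with the weight $y$, i.e.\ as $-yg_4$, as in \eqref{local-ns-I}) requires the dedicated weighted resolvent estimates, Propositions \ref{resol-H-1-local} and \ref{resol-H-1-local-u}, to exploit $g_4|_{y=\pm1}=0$ and extract the factor $|k_1|^{-1}|k|^{-2}\|(\partial_y,|k|)g_4\|^2$. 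Also, the initial-data piece is not a pure Fourier-in-time argument but a homogeneous/inhomogeneous splitting $\omega=\omega_I+\omega_H$ with a Gearhart--Pr\"uss bound for $\omega_H$. You should state these ingredients explicitly, since the sharp exponents in the theorem come precisely from these non-$L^2$ resolvent bounds and the weighted $yg_4$ structure, not from a single application of Proposition \ref{prop:res-non}.
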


\begin{proof}
The proof is based on the resolvent estimates in section 2.6 for the operator $\mathscr{H}_k-\epsilon(\nu|k_1|)^\f12$. Thus, it suffices to consider the case of $c=0$.

Let us decompose $\omega=\omega_I+\omega_H$ with
\begin{equation}\label{local-ns-I}
\left \{
\begin{array}{lll}
(\partial_t+\mathscr{H}_k)\omega_I=-ik_1g_1-\partial_yg_2-ik_3g_3-yg_4,\\
(\partial_y^2-|k|^2)\varphi_I=\omega_I,\quad \varphi_I(\pm 1)=0,\\
\omega_I(\pm 1)=0,\ \
\omega_I|_{t=0}=0,
\end{array}
\right.
\end{equation}
and
\begin{equation}\label{local-ns-H}
\left \{
\begin{array}{lll}
(\partial_t+\mathscr{H}_k)\omega_H=0,\ (\partial_y^2-|k|^2)\varphi_H=\omega_H,\\
 \varphi_H(\pm 1)=0,\
\omega_H(\pm 1)=0,\ \
\omega_H|_{t=0}=\omega_0.
\end{array}
\right.
\end{equation}

For $\omega_H$, based on Gearhart-Pr\" uss type lemma in \cite{Wei} and the resolvent estimate in Proposition \ref{prop:res-non}, we can deduce that
$$\|\omega_H(t)\|_{L^2}\leq Ce^{-c_1 (\nu |k_1|)^{\frac{1}{2}}t}\|\omega_0\|_{L^2},$$
which yields
$$(\nu |k_1|)^{\frac{1}{2}} \|\omega_H\|_{L^2 L^2}^2+\|\omega_H\|_{L^\infty L^2}^2\leq \|\omega_0\|_{L^2}^2.$$
Meanwhile, the energy  estimate gives
$$\nu \|(\partial_y,|k|)\omega_H\|_{L^2 L^2}^2 \leq \|\omega_0\|_{L^2}^2.$$

We further introduce the decomposition
$\omega_I=\omega_I^{(1)}+\omega_I^{(2)}$ with
\begin{equation}\label{local-ns-I-1}
\left \{
\begin{array}{lll}
(\partial_t+\mathscr{H}_k)\omega_I^{(1)}=-ik_1g_1-\partial_y g_2-ik_3g_3,\\
(\partial_t+\mathscr{H}_k)\omega_I^{(2)}=-yg_4,\\
 (\partial_y^2-|k|^2)\varphi_I^{(i)}=\omega_I^{(i)}, i=1,2,\\
 \varphi_I^{(i)}(\pm 1)=0,\
\omega_I^{(i)}(\pm 1)=0,\ \
\omega_I^{(i)}|_{t=0}=0.
\end{array}
\right.
\end{equation}

By Proposition \ref{prop:res-non} and Proposition \ref{H-1-local}, we have
\begin{equation*}
\begin{aligned}
& (\nu |k_1|)^{\frac{1}{2}} \|\omega_I^{(1)}\|_{L^2 L^2}^2+\nu \|(\partial_y,|k|)\omega_I^{(1)}\|_{L^2 L^2}^2 \\
&  \qquad \lesssim \nu^{-1}\|g_2\|_{L^2 L^2}^2+\min\{(\nu |k|^2)^{-1},(\nu |k_1|)^{-1/2}\}\|(k_1g_1+k_3g_3)\|_{L^2 L^2}^2,\\
\end{aligned}
\end{equation*}
Here we used(by a direct energy estimate)
\beno
\nu \|(\partial_y,|k|)\omega_I^{(1)}\|_{L^2 L^2}^2 \lesssim \nu^{-1}\|g_2\|_{L^2 L^2}^2+(\nu |k|^2)^{-1}\|(k_1g_1+k_3g_3)\|_{L^2 L^2}^2,
\eeno
and
\begin{align*}
\nu |k|^2\|\omega_I^{(1)}\|_{L^2 L^2}^2 &\lesssim \nu^{-1}\|g_2\|_{L^2 L^2}^2+\|(k_1g_1+k_3g_3)\|_{L^2 L^2}\|\omega_I^{(1)}\|_{L^2 L^2}\\
&\lesssim \nu^{-1}\|g_2\|_{L^2 L^2}^2+\|(k_1g_1+k_3g_3)\|_{L^2 L^2}^2(\nu |k_1|)^{-\frac{1}{2}} .
\end{align*}
By Proposition \ref{resol-H-1-local} and Proposition \ref{resol-H-1-local-u}, we have
\begin{equation*}
\begin{aligned}
&(\nu |k_1|)^{\frac{1}{2}} \|\omega_I^{(2)}\|_{L^2 L^2}^2+\nu \|(\partial_y,|k|)\omega_I^{(2)}\|_{L^2 L^2}^2 \lesssim |k|^{-2}|k_1|^{-1} \|(\partial_y,|k|)g_4)\|_{L^2 L^2}^2,\\
&|k_1||k|\|(\partial_y,|k|)\varphi^{(2)}\|_{L^2 L^2}^2\lesssim ( \nu^{-\f14} |k_1|^{-\f34} |k|^{-2}+|k_1|^{-1}|k|^{-1}) \|(\partial_y,|k|)g_4)\|_{L^2 L^2}^2.
\end{aligned}
\end{equation*}

Summing up, we conclude the first inequality of the theorem.

The energy estimate gives
\begin{equation*}
\begin{aligned}
&\frac{1}{2}\frac{\mathrm{d}}{\mathrm{d}t}\|\omega^{(2)}_I\|_{L^2}^2+\nu \|(\partial_y,|k|)\omega^{(2)}_I\|_{L^2}^2\\
&\leq | \mathrm{Re}\langle yg_4,\omega_I^{(2)}\rangle|
\leq C \|(\partial_y,|k|)g_4\|_{L^2}\|_{L^2}\|(\partial_y,|k|)\varphi^{(2)}\|_{ L^2}\\
&\lesssim |k||k_1|\|(\partial_y,|k|)\varphi^{(2)}\|_{ L^2}^2+|k_1|^{-1}|k|^{-1}\|(\partial_y,|k|)g_4\|_{L^2}^2,
\end{aligned}
\end{equation*}
which yields
\begin{equation*}
\begin{aligned}
&\|\omega^{(2)}_I\|_{L^\infty L^2}^2+\nu \|(\partial_y,|k|)\omega^{(2)}_I\|_{L^2 L^2}^2\\
&\lesssim |k||k_1|\|(\partial_y,|k|)\varphi^{(2)}\|_{ L^2 L^2}^2+ |k_1|^{-1}|k|^{-1}\|(\partial_y,|k|)g_4\|_{L^2 L^2}^2\\
&\lesssim   ( \nu^{-\f14} |k_1|^{-\f34} |k|^{-2}+|k_1|^{-1}|k|^{-1}) \|(\partial_y,|k|)g_4\|_{L^2 L^2}^2.
\end{aligned}
\end{equation*}
Then the second inequality follows by making a direct energy estimate for $\om-\om_{I}^{(2)}$.

The energy estimate gives
\begin{equation*}
\begin{aligned}
&\langle \pa_y\omega_t,\pa_y\omega\rangle+\nu \|(\partial_y,|k|)\omega'\|_{L^2}^2+ik_1\langle (1-y^2)\omega,\omega''\rangle\\
&=\langle -ik_1 g_1-\partial_y g_2-ik_3 g_3-yg_4,\omega''\rangle,
\end{aligned}
\end{equation*}
which yields by taking the real part that
\begin{equation*}
\begin{aligned}
\frac{1}{2}\frac{\mathrm{d}}{\mathrm{d}t}\|\partial_y \omega\|_{L^2}^2&+\nu \|(\partial_y,|k|)\omega'\|_{L^2}^2
\leq 2|k_1|\|\omega\|_{L^2}\|\partial_y\omega\|_{L^2}+\frac{\nu}{4}\|\omega''\|_{L^2}^2\\
&+\nu^{-1}\|(k_1 g_1+\partial_y g_2+k_3 g_3)\|_{L^2}^2+\|\partial_y g_4\|_{L^2}\|\omega'\|_{L^2}.
\end{aligned}
\end{equation*}
Therefore, we obtain
\begin{equation*}
\begin{aligned}
&\frac{1}{2}\frac{\mathrm{d}}{\mathrm{d}t}\|e^{c\nu^{\frac{1}{2}}t} \partial_y \omega\|_{L^2}^2+\nu \|e^{c\nu^{\frac{1}{2}}t} (\partial_y,|k|)\omega'\|_{L^2}^2\\
&\leq \left(C(\nu |k_1|^3)^{\frac{1}{4}}+\frac{\nu |k|^2}{2}\right)\|e^{c\nu^{\frac{1}{2}}t} \omega'\|_{L^2}^2+\nu^{-\frac{1}{4}}|k_1|^{\frac{5}{4}}\|e^{c\nu^{\frac{1}{2}}t}\omega\|_{L^2}^2+c\nu^{\frac{1}{2}}\|e^{c\nu^{\frac{1}{2}}t} \omega'\|_{L^2}^2\\
&\quad+\frac{\nu}{4}\|e^{c\nu^{\frac{1}{2}}t} \omega''\|_{L^2}^2+\nu^{-1}\|e^{c\nu^{\frac{1}{2}}t} (k_1 g_1+\partial_y g_2+k_3 g_3)\|_{L^2}^2\\
&\quad+(\nu |k|^2 )^{-\frac{2}{3}} (\nu |k_1|^3)^{-\frac{1}{4}\cdot \frac{1}{3}}\|e^{c\nu^{\frac{1}{2}}t} \partial_y g_4\|_{L^2}^2 ,
\end{aligned}
\end{equation*}
from which, we infer that
\begin{align*}
&\|e^{c\nu^{\frac{1}{2}}t} \partial_y \omega\|_{L^\infty L^2}^2+\nu \|e^{c\nu^{\frac{1}{2}}t} (\partial_y,|k|)\omega'\|_{L^2L^2}^2\\
&\lesssim \|\omega_0'\|_{L^2}^2+ (\nu |k_1|^3)^{\frac{1}{4}}\|e^{c\nu^{\frac{1}{2}}t} \omega'\|_{L^2 L^2}^2+\nu^{-\frac{1}{4}}|k_1|^{\frac{5}{4}}\|e^{c\nu^{\frac{1}{2}}t}\omega\|_{ L^2 L^2}^2\\
&\quad+\nu^{-1}\|e^{c\nu^{\frac{1}{2}}t} (k_1 g_1+\partial_y g_2+k_3 g_3)\|_{L^2 L^2}^2+\nu^{-\frac{3}{4}}|k|^{-\frac{4}{3}}|k_1|^{-\frac{1}{4}}\|e^{c\nu^{\frac{1}{2}}t} \partial_y g_4\|_{L^2 L^2}^2 \\
&\lesssim  \|\omega_0'\|_{L^2}^2+ \nu^{-\frac{3}{4}} |k_1|^{\frac{3}{4}}\big(\nu \|e^{c\nu^{\frac{1}{2}}t} \omega'\|_{L^2 L^2}^2+\nu^{\frac{1}{2}}|k_1|^{\frac{1}{2}}\|e^{c\nu^{\frac{1}{2}}t}\omega\|_{ L^2 L^2}^2\big)\\
&\quad+\nu^{-1}\|e^{c\nu^{\frac{1}{2}}t} (k_1 g_1+\partial_y g_2+k_3 g_3)\|_{L^2 L^2}^2+\nu^{-\frac{3}{4}}|k|^{-\frac{4}{3}}|k_1|^{-\frac{1}{4}}\|e^{c\nu^{\frac{1}{2}}t} \partial_y g_4\|_{L^2 L^2}^2 \\
&\lesssim  \|\omega_0'\|_{L^2}^2+ \nu^{-\frac{3}{4}} |k_1|^{\frac{3}{4}}\Big(\|\omega_0\|_{L^2}^2+\nu^{-1}\|e^{c \nu^{\frac{1}{2}}t} g_2\|_{L^2 L^2}^2 + |k_1|^{-1}|k|^{-1}\| e^{c\nu^{\frac{1}{2}}t} (\partial_y,|k|)g_4\|_{L^2 L^2}^2\\
&\quad +{\min\{(\nu |k|^2)^{-1},(\nu |k_1|)^{-1/2}\}} \|e^{c \nu^{\frac{1}{2}}t} (k_1g_1+k_3g_3)\|_{L^2 L^2}^2\Big)\\
&\quad+\nu^{-1}\|e^{c\nu^{\frac{1}{2}}t} (k_1 g_1+\partial_y g_2+k_3 g_3)\|_{L^2 L^2}^2+\nu^{-\frac{3}{4}}|k|^{-\frac{4}{3}}|k_1|^{-\frac{1}{4}}\|e^{c\nu^{\frac{1}{2}}t} \partial_y g_4\|_{L^2 L^2}^2,
\end{align*}
which implies the third inequality of the theorem.
\end{proof}

\subsection{Space-time estimates with non-slip boundary condition}\label{subsec:sp-noslip}

We consider the linearized Navier-Stokes system with non-slip boundary condition
\begin{equation}\label{toy model}
\left \{
\begin{array}{lll}
(\partial_t+\mathscr{L}_k)\omega=F ,\\
  (\partial_y^2-|k|^2)\varphi=\omega,\\ \omega|_{t=0}=\omega_0, \ \varphi(\pm 1)=\varphi'(\pm 1)=0,\\
\end{array}\right.\end{equation}
where the linear operator $\mathscr{L}_k=-\nu (\partial_y^2-|k|^2)+ik_1(1-y^2)+2ik_1 (\partial_y^2-|k|^2)^{-1}$ with $k_1\neq 0$. We denote $\|u\|_{L^2}^2=\|(\partial_y,|k|)\varphi\|_{L^2}^2$.

\begin{theorem}\label{thm:sp-no-slip}
Let $\omega$ be the solution to \eqref{toy model} with  $\langle \omega_0, e^{\pm |k|y}\rangle=0$ and $F=-ik_1f_1-\partial_y f_2-ik_3f_3$. Then there exists $0<c\ll1$ such that
\begin{equation*}
\begin{aligned}
&(|k|+\nu^{\f12} |k|^2) \|e^{c\nu^{\frac{1}{2}}t} u\|_{L^\infty L^2}^2+(\nu|k|+ \nu^{\f32} |k|^2) \|e^{c\nu^{\frac{1}{2}}t} \omega\|_{ L^2 L^2 }^2\\
 &\quad+(|k_1|+\nu^{\f12} |k_1| |k|) \|e^{c\nu^{\frac{1}{2}}t} u\|_{L^2 L^2}^2+{\nu^{\f34} |k|^{\f34} |k_1|^{\f14} \|e^{c\nu^{\frac{1}{2}}t} \omega\|_{ L^2 L^2 }^2}\\
 &\quad{+\nu^{\f7{12}} (\|e^{c\nu^{\frac{1}{2}}t} \omega\|_{L^\infty L^2}^2+\nu \|e^{c\nu^{\frac{1}{2}}t} \partial_y \omega\|_{L^2 L^2}^2)}\\
 &\lesssim \|(\partial_y^2-|k|^2) \omega_0\|_{L^2}^2 +\nu^{-1} \|e^{c\nu^{\frac{1}{2}}t} (f_1,f_2,f_3)\|_{L^2 L^2}^2 .
\end{aligned}
\end{equation*}
\end{theorem}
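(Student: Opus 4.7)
The strategy is the classical split $\omega = \omega_I + \omega_H$, where $\omega_I$ solves the inhomogeneous problem with zero initial data and $\omega_H$ carries $\omega_0$. For $\omega_I$, the source $F = -ik_1 f_1 - \partial_y f_2 - ik_3 f_3$ is naturally an element of $H^{-1}_k$, so I would run a Plancherel-in-time argument after shifting the operator by $\epsilon(\nu|k_1|)^{1/2}$ and feed it into the resolvent estimates of Section~\ref{sec-no-slip}. Concretely, \eqref{estimate-noslipH-11} and \eqref{estimate-noslipH-1-2} of Proposition~\ref{resolvent-noslip} control $\|\omega\|_{L^2}$ and $\|u\|_{L^2}$ by $\|F\|_{H^{-1}_k}$ uniformly in $\lambda$, and integrating in $\lambda$ converts these into the desired $L^2_t L^2_y$ bounds, with the weight $e^{c\nu^{1/2}t}$ produced by the $(\nu|k_1|)^{1/2}$-shift of the imaginary axis. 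The refined factors $\nu^{3/4}|k|^{3/4}|k_1|^{1/4}$ and $\nu^{7/12}$ will follow by interpolating between the two regimes of Proposition~\ref{resolvent-noslip} ($\nu|k|^3/|k_1|$ versus $|1-\lambda|^{1/2}+|\nu/k_1|^{1/4}$), supplemented by a direct energy identity for $\partial_y\omega_I$ to catch the $\|\partial_y\omega\|_{L^2 L^2}$ term.

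For $\omega_H$, pure resolvent estimates are too crude to see the combined inviscid-damping/enhanced-dissipation structure, so I would follow the three-term splitting $\omega_H = \omega_H^{(1)} + \omega_H^{(2)} + \omega_H^{(3)}$ announced in the introduction. The leading piece $\omega_H^{(1)}(t,y) = g(t,y)\omega_H^{(0)}(t,y)$ uses the weight $g(t,y) = e^{-\nu|k|^2 t - \frac{4}{3}\nu k_1^2 y^2 t^3 - |\nu k_1|^{1/2}t}$ and the linearized Euler evolution $\omega_H^{(0)}$ around $(1-y^2,0,0)$ with initial datum $\omega_0$; $g$ is engineered precisely so that the Gaussian-in-time decay expected from the shear interaction is captured, and plugging $\omega_H^{(1)}$ into $\partial_t + \mathscr{L}_k$ produces only the residuals $\nu\partial_y^2\omega_H^{(1)} + 4\nu k_1^2 y^2 t^2\omega_H^{(1)} + |\nu k_1|^{1/2}\omega_H^{(1)}$ and the nonlocal mismatch $2ik_1(g\varphi_H^{(0)} - \varphi_H^{(1)})$, which are absorbed by $\omega_H^{(2)}$. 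The $\omega_H^{(2)}$-estimate then reduces to the $\omega_I$ analysis with this constructed source. Finally, $\omega_H^{(3)}$ is the homogeneous solution built from the boundary correctors $\psi_1,\psi_2$ of Section~\ref{sub:Bd-corrector} with coefficients chosen so that $\omega_H^{(1)} + \omega_H^{(3)}$ fulfils the compatibility conditions $\langle\cdot, e^{\pm|k|y}\rangle = 0$ dictated by non-slip; its size follows from Proposition~\ref{estimate-w-i} and the coefficient bounds of Section~\ref{sub:coefficients-00}, together with a Duhamel-in-$\lambda$ representation.

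The genuine obstacle is the $\omega_H^{(1)}$ piece: this demands sharp linear inviscid damping for the Euler equation around the Poiseuille flow, i.e.\ $\|\nabla\varphi_H^{(0)}\|_{L^2} \lesssim t^{-1}\|\omega_0\|_{H^{1}}$ together with a controlled growth bound on $\|\omega_H^{(0)}\|_{L^2}$. Unlike the Couette case, the Poiseuille profile has a degenerate critical point at $y=0$ where $U'=0$, and for $\lambda \in (0,1)$ the two critical layers at $\pm\sqrt{1-\lambda}$ collide as $\lambda \to 1$; both features force a substantially more delicate analysis than in \cite{WZ-cpam, CWZ-mem}. The main tools at this step are the structural quadratic identity exploited in Lemma~\ref{H-1-wL2inside}, which provides the weighted coercivity needed to cross the colliding interval, together with the Rayleigh resolvent bound of Lemma~\ref{lem:Ray-simple-1} sharpened by the limiting absorption principle of Appendix~\ref{sec:lap}. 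The unusual exponent $\nu^{7/12}$ in front of $\|\omega\|_{L^\infty L^2}$ and $\nu\|\partial_y\omega\|_{L^2 L^2}$ is exactly what emerges from balancing the Rayleigh loss at the critical point against the $\nu^{1/3}$ boundary-layer width carried by $\omega_H^{(3)}$. Once these three pieces are estimated, a triangle inequality and the Biot--Savart identity $\|(\partial_y,|k|)\varphi\|_{L^2}^2 = -\mathrm{Re}\langle\omega,\varphi\rangle$ assemble the full statement, with the exponential weight $e^{c\nu^{1/2}t}$ preserved at every step because it is built into both the resolvent shift and the factor $e^{-|\nu k_1|^{1/2}t}$ in $g$.
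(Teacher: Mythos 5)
Your overall blueprint is the one the paper actually uses: the split $\omega=\omega_I+\omega_H$ with a Laplace/Plancherel treatment of $\omega_I$ via the Section~\ref{sec-no-slip} resolvent estimates, and the three-term decomposition $\omega_H=\omega_H^{(1)}+\omega_H^{(2)}+\omega_H^{(3)}$ built on the Euler evolution $\omega_H^{(0)}$, the Gaussian weight $g(t,y)$, and the boundary correctors from Sections~\ref{sub:Bd-corrector}--\ref{sub:coefficients-00} with $\langle\omega_H^{(1)}+\omega_H^{(3)},e^{\pm|k|y}\rangle=0$. That part is right, and you correctly identify the genuine hard obstacle (inviscid damping for Euler around the Poiseuille flow with a degenerate critical point and colliding critical layers, handled via the limiting absorption principle of Appendix~\ref{sec:lap}).

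There are, however, three concrete gaps. First, you cannot recover the $L^\infty_t L^2_y$ controls on $u$ and $\omega$, nor the $(|k|+\nu^{1/2}|k|^2)$ weight, from Plancherel + resolvent alone: the paper's proof closes with two direct-in-time energy identities. Lemma~\ref{energy-w-u} tests the equation with $-\varphi$ to get $\|u\|_{L^\infty L^2}$ and $\nu|k|^2\|\omega\|_{L^2L^2}$, and Lemma~\ref{lemma-w-L-infty} tests with $\omega$ and crucially re-expresses the unknown boundary flux $\nu\partial_y\omega(\pm1)-f_2(\pm1)$ by pairing the equation with the harmonic kernels $s_{\pm1}=\sinh(|k|(1\pm y))/\sinh(2|k|)$ and using $\langle\partial_t\omega,s_{\pm1}\rangle=0$; this boundary bookkeeping has no Laplace-transform analogue and cannot be skipped. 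Second, the $\omega_H^{(1)}$ analysis in the paper rests on the commutator vector field $X=\partial_y-2ik_1yt$, which commutes with $\partial_t+ik_1(1-y^2)$ and converts the required $H^1$-type control on $\omega_H^{(0)}$ (needed to estimate $\nu\partial_y^2\omega_H^{(1)}$ and $\nu k_1^2y^2t^2\omega_H^{(1)}$, i.e.\ the forcing for $\omega_H^{(2)}$) into estimates for $X\omega_H^{(0)}$; your sketch mentions only the inviscid-damping bound $\|\nabla\varphi_H^{(0)}\|\lesssim t^{-1}\|\omega_0\|_{H^1}$, which is insufficient. Third, your explanation of the $\nu^{7/12}$ prefactor is not accurate: it does not come from balancing a Rayleigh loss against a boundary-layer width. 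It arises inside Lemma~\ref{lemma-w-L-infty}, where the boundary term produces a contribution $\nu^{-1/3}|k_1|^{4/3}|k|^{-2/3}\|u\|_{L^2L^2}^{4/3}\|\omega\|_{L^2L^2}^{2/3}$; absorbing it by Young's inequality into $|k_1|^{1/2}\|u\|_{L^2L^2}^2$ and $\nu^{3/4}|k|^{3/4}|k_1|^{1/4}\|\omega\|_{L^2L^2}^2$ costs precisely $\nu^{-7/12}$, which is then moved to the left side as the weight $\nu^{7/12}$ on $\|\omega\|_{L^\infty L^2}^2+\nu\|\partial_y\omega\|_{L^2L^2}^2$. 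Finally, you should also record the case split $\nu|k|^3|k_1|^{-1}\lessgtr 2$: in the large-$k$ regime the resolvent estimates degrade and the bound is obtained directly from the energy identity of Lemma~\ref{energy-w-u}.
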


First of all, we consider the following inhomogeneous problem
\begin{equation}\label{toy model-inhomo}
\left \{
\begin{array}{lll}
(\partial_t+\mathscr{L}_k)\omega_I=F,\\ (\partial_y^2-|k|^2)\varphi_I=\omega_I,\\ \omega_I|_{t=0}=0, \ \varphi_I(\pm 1)=\varphi_I'(\pm 1)=0.\end{array}\right.\end{equation}
For $0<c\ll 1$ small enough, let us introduce
$$\tilde{\omega}_I=e^{c\nu^{\frac{1}{2}}t}\omega_I,\quad \tilde{\varphi}_I=e^{c \nu^{\frac{1}{2}}t}\varphi_I,\quad \tilde{F}=e^{c \nu^{\frac{1}{2}}t}F.$$
Then we have
\begin{equation}\label{toy model1}
\left \{
\begin{array}{lll}
(\partial_t+\mathscr{L}_k)\tilde{\omega}_I-c \nu^{\frac{1}{2}}\tilde{\omega}_I=\tilde{F},\\ (\partial_y^2-|k|^2) \tilde{\varphi}_I=\tilde{\omega}_I,\\ \tilde{\omega}_I|_{t=0}=0, \ \tilde{\varphi}_I(\pm 1)=\tilde{\varphi}_I'(\pm 1)=0.
\end{array}\right.\end{equation}

\begin{lemma}\label{sp-inhomo}
Assume that $\nu |k|^3 |k_1|^{-1}\leq 2,\ k_1\neq 0$ and $F=-ik_1f_1-\partial_y f_2-ik_3f_3-f_4-f_5$.  Then we have
\begin{equation*}
\begin{aligned}
\|\tilde{\omega}_I\|_{L^2 L^2}^2\lesssim & {\nu^{-\frac{5}{4}}|k_1|^{-\frac{3}{4}} |k|^{-1}} \| \tilde{f}_4\|_{L^2 L^2}^2+\nu^{-\frac{3}{2}}|k_1|^{-\frac{1}{2}}\|(\tilde{f}_1,\tilde{f}_2,\tilde{f}_3)\|_{L^2 L^2}^2\\
&+\nu^{-\frac{3}{4}}|k_1|^{-\frac{5}{4}}\|(\partial_y,|k|)\tilde{f}_5\|_{L^2 L^2}^2,\\
\|\tilde{u}_I\|_{L^2 L^2}^2\lesssim&\nu^{-\frac{1}{2}}|k_1|^{-\frac{3}{2}}\|\tilde{f}_4\|_{L^2 L^2}^2+\nu^{-1} |k_1|^{-1}\|(\tilde{f}_1,\tilde{f}_2,\tilde{f}_3)\|_{L^2 L^2}^2\\
&+|k_1|^{-2}\|(\partial_y,|k|)\tilde{f}_5\|_{L^2 L^2}^2,
\end{aligned}
\end{equation*}
where $\|\tilde{u}_I\|_{L^2}^2=\|(\partial_y,|k|)  \tilde{\varphi}_I\|_{L^2}^2$.
\end{lemma}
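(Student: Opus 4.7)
The plan is to convert the inhomogeneous time-evolution problem \eqref{toy model1} into a one-parameter family of stationary resolvent problems via the Fourier transform in $t$, apply the resolvent estimates from Proposition \ref{resolvent-noslip}, and then invoke Plancherel. Since $\tilde{\omega}_I|_{t=0}=0$, I would extend $\tilde{\omega}_I$ and $\tilde{F}$ by zero to $t<0$ and take the time-Fourier transform. Writing the dual variable as $\tau$ and setting $\lambda:=-\tau/k_1$, the transformed equation is exactly
\begin{equation*}
-\nu(\partial_y^2-|k|^2)\hat{\tilde{\omega}}_I+ik_1\big[(1-y^2-\lambda)\hat{\tilde{\omega}}_I+2(\partial_y^2-|k|^2)^{-1}\hat{\tilde{\omega}}_I\big]-c\nu^{1/2}\hat{\tilde{\omega}}_I=\hat{\tilde{F}},
\end{equation*}
with the non-slip boundary conditions, i.e.\ \eqref{5.1} with $\lambda\in\mathbb{R}$ and perturbation parameter $\epsilon:=c|k_1|^{-1/2}\le c$, which is small as soon as $c$ is. Thus Proposition \ref{resolvent-noslip} applies pointwise in $\tau$ with constants independent of $\tau$.

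Next I would split the source along its natural regularity classes: treat $-f_4$ as an $L^2$-source and use \eqref{estimate-noslipL21}/\eqref{estimate-noslipL2-2}; treat $-ik_1 f_1-\partial_y f_2-ik_3 f_3$ as an $H^{-1}_k$-source (its $H^{-1}_k$-norm is bounded by $\|(f_1,f_2,f_3)\|_{L^2}$ by definition of $H^{-1}_k$) and use \eqref{estimate-noslipH-11}/\eqref{estimate-noslipH-1-2}; and treat $-f_5$ as an $H^1_0$-source controlled via \eqref{estimate-noslipH11-uL2}/\eqref{estimate-noslipL2-H1}. Under the standing hypothesis $\nu|k|^3|k_1|^{-1}\le 2$, these give, uniformly in $\lambda$, the pointwise resolvent bounds
\begin{equation*}
\|\hat{\tilde{\omega}}_I\|_{L^2}^2\lesssim \nu^{-5/4}|k_1|^{-3/4}|k|^{-1}\|\hat{\tilde{f}}_4\|_{L^2}^2+\nu^{-3/2}|k_1|^{-1/2}\|(\hat{\tilde{f}}_1,\hat{\tilde{f}}_2,\hat{\tilde{f}}_3)\|_{L^2}^2+\nu^{-3/4}|k_1|^{-5/4}\|(\partial_y,|k|)\hat{\tilde{f}}_5\|_{L^2}^2
\end{equation*}
and the analogous bound for $\|\hat{\tilde{u}}_I\|_{L^2}^2$ with exponents matching the second claim. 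Squaring the pointwise-in-$\tau$ resolvent bounds, integrating in $\tau$, and applying Plancherel in time on both sides (using $d\tau=|k_1|d\lambda$) yields the two desired $L^2_t L^2_y$ estimates. Decomposing $\tilde{\omega}_I$ accordingly by linearity allows the three contributions to be estimated separately.

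The main technical obstacle is the uniformity-in-$\lambda$ of the resolvent bounds, and in particular their behaviour as $\lambda$ traverses the critical-layer range $(0,1)$ where the Poiseuille profile $1-y^2-\lambda$ develops two critical points that may collide; however this is precisely what was established in Sections 2--3 via the decomposition $w=w_{Na}+c_1\psi_1+c_2\psi_2$, the Airy-type boundary correctors, and the estimates for the matching coefficients $c_1,c_2$. A secondary worry is that Proposition \ref{resolvent-noslip} distinguishes Case 1 ($\nu|k|^3|k_1|^{-1}\leq 2(|1-\lambda|^{1/2}+|\nu/k_1|^{1/4})$) from Case 2; but under the lemma's hypothesis $\nu|k|^3|k_1|^{-1}\leq 2$, Case 1 covers every $\lambda$ with $|1-\lambda|^{1/2}+|\nu/k_1|^{1/4}\geq 1$, while in the residual range the first (enhanced-dissipation) inequality in each display of Case 2 already implies the same $L^2/H^{-1}_k/H^1_0$-source bound as Case 1, so the two cases merge into a single $\lambda$-uniform resolvent estimate. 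With this uniformity in hand, the Plancherel step is routine and the exponents of $\nu$, $|k_1|$, $|k|$ on the right-hand sides track directly from the resolvent bounds.
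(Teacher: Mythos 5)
Your proposal is correct and follows essentially the same route as the paper: take the Laplace/Fourier transform in time (using the zero initial data to extend by zero), recognize the resulting equation as the Orr--Sommerfeld resolvent problem \eqref{5.1} with $\lambda=-\tau/k_1$ and small parameter $\epsilon=c|k_1|^{-1/2}$, split the source into its $L^2$, $H^{-1}_k$, and $H^1_0$ parts, apply Proposition \ref{resolvent-noslip} uniformly in $\lambda$, and conclude by Plancherel. Your extra remark that under $\nu|k|^3|k_1|^{-1}\le 2$ the estimates of Cases 1 and 2 of Proposition \ref{resolvent-noslip} coincide, so the bound is genuinely $\lambda$-uniform, is exactly the point the paper uses implicitly.
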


\begin{proof}
Let us introduce
\beno
&&w(\lambda,y)=\int_{\mathbb{R}_+}\tilde{\omega}_I(t,k_1,y,k_3) e^{-i\lambda t}\mathrm{d}t,\quad \varphi(\lambda,y)=\int_{\mathbb{R}_+}\tilde{\varphi}_I(t,k_1,y,k_3) e^{-i\lambda t}\mathrm{d}t,\\
&&F_j(\lambda,y)=\int_{\mathbb{R}_+}\tilde{f}_j(t,k_1,y,k_3) e^{-i\lambda t}\mathrm{d}t,\quad j=1,2,3,4,5.
\eeno
Then we have
\begin{equation*}
\left\{
\begin{array}{lll}
&-\nu(\partial_y^2-|k|^2)w+ik_1(1-y^2+\lambda/k_1)w+2ik_1\varphi -c(\nu|k_1|)^{\frac{1}{2}}w\\&\qquad=-ik_1F_1-\partial_y F_2-ik_3F_3-F_4-F_5,\\
&(\varphi,\partial_y\varphi)|_{y=\pm 1}=(0,0).
\end{array}
\right.
\end{equation*}
It follows from Proposition \ref{resolvent-noslip} that
\begin{equation*}
\begin{aligned}
\|w\|_{L^2}^2\lesssim &  {\nu^{-\frac{5}{4}}|k_1|^{-\frac{3}{4}} |k|^{-1}}  \|F_4\|_{L^2}^2+\nu^{-\frac{3}{2}}|k_1|^{-\frac{1}{2}}\|(F_1,F_2,F_3)\|_{L^2}^2\\
&+\nu^{-\frac{3}{4}}|k_1|^{-\frac{5}{4}}\|(\partial_y,|k|)F_5\|_{L^2}^2,\\
\|u\|_{L^2}^2\lesssim&\nu^{-\frac{1}{2}}|k_1|^{-\frac{3}{2}}\|F_4\|_{L^2}^2\\
&
+\nu^{-1}|k_1|^{-1}\|(F_1,F_2,F_3)\|_{L^2}^2+
|k_1|^{-2}\|(\partial_y,|k|)F_5\|_{L^2}^2,
\end{aligned}
\end{equation*}
which give our results by  Plancherel's formula.
\end{proof}

Next we consider the following homogeneous problem
\begin{equation}\label{omega-H}
\left \{
\begin{array}{lll}
(\partial_t+\mathscr{L}_k)\omega_H=0,\\
(\partial_y^2-|k|^2)\varphi_H=\omega_H,\\
\omega_H|_{t=0}=\omega_0(k_1,y,k_3), \quad \varphi_H(\pm 1)=\varphi_H'(\pm 1)=0.
\end{array}
\right.
\end{equation}

\begin{lemma}\label{sp-Homo}
Assume that $\nu |k|^3|k_1|^{-1}\leq 2,\ k_1\neq 0, \ \langle \omega_0,e^{\pm |k|y}\rangle=0$. Then it holds that
\begin{equation}\nonumber%\label{est:omegaH-omega0-00}
\begin{aligned}
&\nu^{\f34}|k_1|^{\f14}|k|^{-\f14} \|\omega_H\|_{L^2 L^2}^2+\|u_H\|_{L^2L^2}^2\le C|k|^{-1} \|(\partial_y^2-|k|^2)\omega_0\|_{L^2}^2.
\end{aligned}
\end{equation}
Moreover,  there exists a constant $c$ with $0<c\ll1$ such that
\begin{equation}\nonumber%\label{est:omegaH-c-omega0-00}
\begin{aligned}
&\nu^{\f34}|k_1|^{\f14}|k|^{-\f14} \| e^{c \nu^{\frac{1}{2}}t}\omega_H\|_{L^2 L^2}^2+\|e^{c \nu^{\frac{1}{2}}t} u_H\|_{L^2L^2}^2\le  C|k|^{-1} \|(\partial_y^2-|k|^2)\omega_0\|_{L^2}^2.
\end{aligned}
\end{equation}
\end{lemma}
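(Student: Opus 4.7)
\textbf{Proof plan for Lemma \ref{sp-Homo}.} The plan is to follow the three-piece decomposition flagged in Section~\ref{sec:analysis}: write $\omega_H=\omega_H^{(1)}+\omega_H^{(2)}+\omega_H^{(3)}$, where $\omega_H^{(1)}=g(t,y)\omega_H^{(0)}$ with the Gaussian-type weight $g(t,y)=e^{-\nu|k|^2 t-4\nu k_1^2 y^2 t^3/3-|\nu k_1|^{1/2}t}$ and $\omega_H^{(0)}$ the Euler solution around Poiseuille with initial data $\omega_0$; then $\omega_H^{(2)}$ solves the inhomogeneous linearized NS problem (with zero initial data and zero non-slip data) with forcing coming from the commutator $(\partial_t+\mathscr{L}_k)(g\omega_H^{(0)})$; finally $\omega_H^{(3)}$ is a homogeneous correction built from the boundary correctors $\psi_1,\psi_2$ of Section~\ref{sub:Bd-corrector} with coefficients chosen so that $\langle\omega_H^{(1)}+\omega_H^{(3)},e^{\pm|k|y}\rangle=0$ (this restores the non-slip condition for $\varphi_H$). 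The role of $g$ is twofold: its factor $e^{-|\nu k_1|^{1/2}t}$ produces the enhanced dissipation rate, while the factor $e^{-4\nu k_1^2 y^2 t^3/3}$ absorbs the hypocoercive cost of pulling $\nu\partial_y^2$ through the transport part $ik_1(1-y^2)$.

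First, for $\omega_H^{(1)}$ the estimate reduces to linear inviscid damping for the Rayleigh equation with the Poiseuille profile, which is exactly what the resolvent estimate in Lemma~\ref{lem:Ray-simple-1} and Proposition~\ref{resol-lambda-00}-type bounds for the Rayleigh operator give after passing to the Fourier variable $\lambda$ in time (the limiting absorption principle turning $|\lambda|+\delta$ into the critical-layer width). Combined with the pointwise-in-$t$ Gaussian factor from $g$ this yields the desired $L^2_tL^2_y$ bound for $\omega_H^{(1)}$ and the inviscid damping bound $\|u_H^{(1)}\|_{L^2L^2}\lesssim |k|^{-1/2}\|(\partial_y^2-|k|^2)\omega_0\|_{L^2}$, with the loss $\nu^{3/4}|k_1|^{1/4}|k|^{-1/4}$ on $\omega$ matching the Gaussian decay $|\nu k_1|^{1/2}$ in time against the critical-layer scale.

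Next, for $\omega_H^{(2)}$ the forcing reads
\begin{align*}
(\partial_t+\mathscr{L}_k)\omega_H^{(2)}=\nu\partial_y^2\omega_H^{(1)}+4\nu k_1^2 y^2 t^2 \omega_H^{(1)}+|\nu k_1|^{1/2}\omega_H^{(1)}+2ik_1(g\varphi_H^{(0)}-\varphi_H^{(1)}),
\end{align*}
so the plan is to apply Lemma~\ref{sp-inhomo} term by term: the first two contributions are absorbed as $f_5$-type terms using that $g$ brings enough time decay to trade $\nu\partial_y^2 g$ and $\nu k_1^2 y^2 t^2 g$ against $|\nu k_1|^{1/2}$; the last commutator $g\varphi_H^{(0)}-\varphi_H^{(1)}$ is viewed as an $f_4$-type term after noting that $(\partial_y^2-|k|^2)(g\varphi_H^{(0)})-g\omega_H^{(0)}$ is supported in the boundary layer and scales favourably in $\nu$. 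The bookkeeping is straightforward and the estimate reduces to the $\omega_H^{(1)}$ bound already obtained.

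For $\omega_H^{(3)}$, write $\omega_H^{(3)}=d_1 w_1+d_2 w_2$ where $w_1,w_2$ are the boundary correctors of Section~\ref{sub:Bd-corrector} associated with the resolvent parameter corresponding to the solution; the coefficients $d_1,d_2$ are the $\lambda$-averages of those in \eqref{5.17} applied to $-(\omega_H^{(1)}+\omega_H^{(2)})$. Using Proposition~\ref{estimate-w-i} to bound $\|w_i\|_{L^2}$ and $\|(\partial_y,|k|)\psi_i\|_{L^2}$, and the pointwise-in-$\lambda$ estimates for the correctors' integrals against $\sinh(|k|(1\pm y))/\sinh(2|k|)$, one reduces the $\omega_H^{(3)}$ contribution to the already-controlled norms of $\omega_H^{(1)}+\omega_H^{(2)}$ at the boundary. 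The orthogonality assumption $\langle\omega_0,e^{\pm|k|y}\rangle=0$ is crucial here: it ensures that the correction $\omega_H^{(3)}$ starts at zero and only needs to repair the small defect created by $g\omega_H^{(0)}$ at $y=\pm 1$. Summing the three pieces yields the first inequality. The weighted version with $e^{c\nu^{1/2}t}$ follows by applying the same decomposition to $\tilde{\omega}_H=e^{c\nu^{1/2}t}\omega_H$, which satisfies the analogous equation with operator $\mathscr{L}_k-c\nu^{1/2}$; choosing $c$ smaller than the constant in Proposition~\ref{resolvent-noslip} preserves every resolvent bound used above.

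\textbf{Main obstacle.} The delicate step is the sharp linear inviscid damping estimate for $\omega_H^{(1)}$, since the Rayleigh operator associated with Poiseuille admits colliding singularities when $\lambda\to 1^-$ and the critical points $y=\pm y_2$ meet $y=0$; without good control at this collision the $|k|^{-1}$ gain on $\|u_H\|_{L^2L^2}^2$ would be lost. This is handled by inserting the limiting absorption parameter $\delta_1$ at the scale $|\nu/k_1|^{1/3}(|1-\lambda|^{1/2}+|\nu/k_1|^{1/4})^{2/3}$, which is precisely the scale making the Gaussian factor $e^{-4\nu k_1^2 y^2 t^3/3}$ integrable in $t$ uniformly in the collision region. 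The rest of the proof is bookkeeping built on Sections~\ref{sec:resolvent-slip}--\ref{sec-no-slip}.
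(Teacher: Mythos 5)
Your high-level plan is the same as the paper's: the same three-piece decomposition $\omega_H=\omega_H^{(1)}+\omega_H^{(2)}+\omega_H^{(3)}$ with the Gaussian weight $g$, the inhomogeneous corrector $\omega_H^{(2)}$ handled via Lemma~\ref{sp-inhomo}, and a boundary corrector $\omega_H^{(3)}$ built from $w_1,w_2$. However, there is a genuine gap in your treatment of $\omega_H^{(2)}$, and a couple of the supporting claims are misstated.

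The critical omission is in the source term $\nu\partial_y^2\omega_H^{(1)}$. You describe the analysis as "straightforward bookkeeping," trading $\nu\partial_y^2 g$ and $4\nu k_1^2 y^2 t^2 g$ against $|\nu k_1|^{1/2}$, as if the second derivative fell only on $g$. But by Leibniz $\partial_y^2(g\,\omega_H^{(0)})$ also produces $g\,\partial_y^2\omega_H^{(0)}$, and $\|\partial_y^2\omega_H^{(0)}(t)\|_{L^2}$ is \emph{not} controlled uniformly in time (linear filamentation makes it grow like $t^2$), nor does it carry any $\nu$-gain. Feeding this raw term into Lemma~\ref{sp-inhomo} would not close. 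The paper's fix is the vector field $X=\partial_y-2ik_1yt$, which commutes with $\partial_t+ik_1(1-y^2)$; setting $\omega_1=X\omega_H^{(0)}$ gives a Rayleigh equation with a controllable source, and the algebraic identity
\begin{align*}
\partial_y^2\omega_H^{(1)}+4k_1^2y^2t^2\omega_H^{(1)}=\partial_y\omega_1^{(1)}+2ik_1t\,\omega_H^{(1)}+2ik_1ty\,\omega_1^{(1)}
\end{align*}
(with $\omega_1^{(1)}=X\omega_H^{(1)}$) restructures the forcing so that only $\omega_1^{(1)}$, $\omega_H^{(1)}$, and the commutator $g\varphi_H^{(0)}-\varphi_H^{(1)}$ appear, each of which decays because of the $e^{-|\nu k_1|^{1/2}t}$ factor and the inviscid damping estimates for $\omega_1$. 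Without this step your $\omega_H^{(2)}$ estimate does not close, so it is not bookkeeping but the heart of the argument. Relatedly, your assignment of the source terms to the slots $f_2,f_4,f_5$ of Lemma~\ref{sp-inhomo} is swapped: the commutator $2ik_1(g\varphi_H^{(0)}-\varphi_H^{(1)})$ is the $f_5$-type ($H^1$) term, while the terms arising from the vector field identity go into $f_2$ (the $\partial_y$ part) and $f_4$ (the local parts).

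Two further imprecisions worth fixing: $(i)$ the coefficients of $\omega_H^{(3)}$ are determined solely by the constraint $\langle\omega_H^{(1)}+\omega_H^{(3)},e^{\pm|k|y}\rangle=0$ — the piece $\omega_H^{(2)}$ is constructed to already satisfy $\langle\omega_H^{(2)},e^{\pm|k|y}\rangle=0$, so "$\lambda$-averages of those in \eqref{5.17} applied to $-(\omega_H^{(1)}+\omega_H^{(2)})$" is incorrect; the corrector only repairs $\omega_H^{(1)}$. You also need to actually establish the claim that $\|(1+|\lambda|)c_i\|_{L^2_\lambda}\lesssim|k|^{1/2}\|(\partial_y,|k|)\omega_0\|_{L^2}$, which the paper does by showing $a_i(t)=\langle\gamma_j,\omega_H^{(1)}\rangle$ lies in $H^1(\mathbb{R}_+)$ with $a_i(0)=0$ (this is where the moment condition on $\omega_0$ enters), then taking Fourier transform. $(ii)$ For $\omega_H^{(1)}$ the relevant input is not the static OS resolvent bounds (Lemma~\ref{lem:Ray-simple-1}, Proposition~\ref{resol-lambda-00}) but the space-time inviscid damping estimate for the linearized Euler equation, Proposition~\ref{lem:Rayleigh-000}, whose proof via limiting absorption uses the Rayleigh resolvent bounds. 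Citing the right statement matters because you need the $\langle t\rangle^2$-weighted bound \eqref{psiL2-t-000}, which costs two derivatives of $\omega_0$ (hence $\|(\partial_y^2-|k|^2)\omega_0\|_{L^2}$ on the right-hand side).
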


\begin{proof}
Let $\omega_H^{(0)}$ solve
\begin{equation*}
\begin{aligned}
&\partial_t\omega_H^{(0)}+ik_1(1-y^2)\omega_H^{(0)}+2ik_1\varphi_H^{(0)}=0,\\
&(\partial_y^2-|k|^2) \varphi_H^{(0)}=\omega_H^{(0)},\ \omega_H^{(0)}|_{t=0}=\omega_0.
\end{aligned}
\end{equation*}
It follows from Proposition \ref{lem:Rayleigh-000} that
\begin{align}\label{psiL2H1}
&\sup_{t>0}\|\om_{H}^{(0)}(t)\|_{L^2}^2+|k_1|\int_0^{+\infty}(\|\partial_y\varphi_{H}^{(0)} (t)\|_{L^2}^2+|k|^2\|\varphi_{H}^{(0)}(t)\|_{L^2}^2)dt\\
\nonumber&\leq C\|(\partial_y,|k|)\om_0\|_{L^2}^2,
\end{align}
and
\begin{align}\label{psiL2bd}
&\int_0^{+\infty}(|\partial_y\varphi_{H}^{(0)}(t,0)|^2
+|\partial_y\varphi_{H}^{(0)}(t,1)|^2)dt\leq C \|(\partial_y,|k|)\om_0\|_{L^2}^2,
\end{align}
and
\begin{align}\label{psiL2-t-000}
     &\int_{0}^{+\infty}\langle t\rangle^2\|k\varphi_H^{(0)}(t)\|_{L^2}^2dt \leq C|k|^{-1}(\|\omega_0\|_{H^2}^2+|k|^2\|\omega_0\|_{H^1}^2)\\
     &\leq C|k|^{-1}\|(\partial_y^2-|k|^2)\omega_0\|_{L^2}^2.\nonumber
\end{align}

We introduce
\begin{equation*}
\begin{aligned}
&g(t,y)=e^{-\nu |k|^2t-4\nu k_1^2y^2t^3/3-|\nu k_1|^{1/2}t},\quad \omega_H^{(1)}(t,k_1,y,k_3)=g(t,y)\omega_H^{(0)},\\
& (\partial_y^2-|k|^2)\varphi_H^{(i)}=\omega_H^{(i)}, \quad \varphi_H^{(i)}(\pm 1)=0, \quad i=0,1.
\end{aligned}
\end{equation*}
It is easy to find that
\begin{equation*}
\begin{aligned}
&\partial_t\omega_{H}^{(1)}+\nu |k|^2\omega_{H}^{(1)}+ (4\nu k_1^2y^2t^2+(\nu |k_1|)^{1/2})\omega_{H}^{(1)}+ik_1(1-y^2)\omega_{H}^{(1)}=-2ik_1g(t,y)\varphi_{H}^{(0)},\\
& \omega_H^{(0)}(0,k_1,y,k_3)=\omega_H^{(1)}(0,k_1,y,k_3)=\omega_0(k_1,y,k_3).
\end{aligned}
\end{equation*}
Thus, we have
\begin{align*}
   &\partial_t\omega_{H}^{(1)}-\nu(\partial_y^2-|k|^2)\omega_{H}^{(1)} +ik_1(1-y^2)\omega_{H}^{(1)}+2ik_1\varphi_{H}^{(1)}\\
    &=-\nu\partial_y^2\omega_{H}^{(1)}- 4\nu k_1^2y^2t^2\omega_{H}^{(1)} -(\nu| k_1|)^{1/2}\omega_{H}^{(1)}-2ik_1(g\varphi_{H}^{(0)}-\varphi_{H}^{(1)}).
 \end{align*}

 Now we decompose $\omega_H$ as follows
\begin{align}\nonumber%\label{eq:wH-decomp-00}
   & \omega_H=\omega_H^{(1)}+\omega_H^{(2)}+\omega_H^{(3)},
\end{align}
where $\omega_H^{(2)}$ satisfies
\begin{equation*}
\left \{
\begin{array}{lll}
(\partial_t+\mathcal{L}_k)\omega_H^{(2)}=\nu\partial_y^2\omega_{H}^{(1)}+ 4\nu k_1^2y^2t^2\omega_{H}^{(1)} +|\nu k_1|^{1/2}\omega_{H}^{(1)}\\
    \qquad\qquad\quad+2ik_1(g(t,y)\varphi_{H}^{(0)}-\varphi_{H}^{(1)}),\\
 (\partial_y^2-|k|^2) \varphi_H^{(2)}=\omega_H^{(2)},\
\omega_H^{(2)}|_{t=0}=0, \  \langle \omega_H^{(2)},e^{\pm |k|y}\rangle=0,
\end{array}\right.
\end{equation*}
and $\omega_H^{(3)}$ satisfies
\begin{equation*}
\left \{
\begin{array}{lll}
(\partial_t+\mathcal{L}_k)\omega_H^{(3)}=0
,\\ (\partial_y^2-|k|^2) \varphi_H^{(3)}=\omega_H^{(3)},\
\omega_H^{(3)}|_{t=0}=0, \  \langle \omega_H^{(1)}+\omega_H^{(3)},e^{\pm |k|y}\rangle=0.
\end{array}\right.
\end{equation*}

\textbf{Step 1.} Estimates of $\omega_H^{(0)}$ and $\omega_{H}^{(1)}$.\smallskip

We introduce the vector field $X=\partial_y-2ik_1y t$, which commutes with $\partial_t+ik_1(1-y^2)$.
We denote
\begin{align*}
&\om_1=X{\om}^{(0)}_H,\ \psi_1=(\partial^2_y-|k|^2)^{-1}\om_1,\ \psi_2=X\varphi_H^{(0)}.
\end{align*}
Then we find that
\beno
\psi_1=\psi_2+4ik_1t(\partial_y^2-|k|^2)^{-1}\partial_y\varphi_{H}^{(0)}+ \psi_3,
\eeno
where
\begin{align*}
    &(\partial_y^2-|k|^2)\psi_3=0,\quad \psi_3(t,\pm1)=-\psi_{2}(t,\pm1)=-\partial_y\varphi_H^{(0)}(t,\pm1).
 \end{align*}
 Then we have
\begin{align*}
\partial_t{\om_1}+ik_1(1-y^2){\om_1}+2ik_1\psi_1&
=-8k_1^2t(\partial_y^2-|k|^2)^{-1}\partial_y\varphi_{H}^{(0)}+2ik_1\psi_3:=\psi_4.
\end{align*}

By Proposition \ref{lem:Rayleigh-000}, we have
\begin{align}\label{psi1L2H1}
&\sup_{t>0}\|\om_1(t)\|_{L^2}^2+|k_1|\int_0^{+\infty}(\|\partial_y\psi_1(t)\|_{L^2}^2+|k|^2\|\psi_1(t)\|_{L^2}^2)dt\\ \nonumber
&\leq C\|(\partial_y,|k|)\om_1(0)\|_{L^2}^2+C|k_1|^{-1}\int_0^{+\infty}\big(\|\partial_y\psi_4(t)\|_{L^2}^2+|k|^2\|\psi_4(t)\|_{L^2}^2\big)dt.
\end{align}
A direct calculation shows
\begin{align*}
   & \|\partial_y\psi_4(t)\|_{L^2}+|k|\|\psi_4(t)\|_{L^2}
 \\
 &\leq Ck_1^2t\|\varphi_{H}^{0}(t)\|_{L^2}+ C|k|^{\f12}(|\partial_y\varphi_H^{(0)}(t,-1)| +|\partial_y\varphi_H^{(0)}(t,1)|).
\end{align*}
 To proceed, we get by \eqref{psiL2bd}, \eqref{psiL2-t-000} and  \eqref{psi1L2H1} that
\begin{align}\label{psi1L2H1a}
&\sup_{t>0}\|\om_1(t)\|_{L^2}^2+|k_1|\int_0^{+\infty}(\|\partial_y\psi_1(t)\|_{L^2}^2+|k|^2\|\psi_1(t)\|_{L^2}^2)dt\\ \nonumber
&\leq C\|(\partial_y,|k|)\om_1(0)\|_{L^2}^2+C\|(\partial_y^2-|k|^2)\omega_0\|_{L^2}^2\leq C\|(\partial_y^2-|k|^2)\omega_0\|_{L^2}^2.
\end{align}
Thanks to $\omega^{(1)}_{H}=g\omega_{H}^{(0)},$ $g=e^{-\nu |k|^2t-4\nu k_1^2y^2t^3/3-|\nu k_1|^{1/2}t},$ we have
\begin{align}
&\om_1^{(1)}=X\omega^{(1)}_{H} =g\omega_1
-(8\nu k_1^2 yt^3/3)g\omega_{H}^{(0)},\nonumber\\
&\nu k_1^2yt^3 g(t,y)\leq C\nu^{-\f14}|k_1|^{\f14} (\nu k_1^2y^2t^3)^{\f12}(|\nu k_1|^{\f12}t)^{\f32}g(t,y)\label{est:g-bunds}\\
&\qquad\qquad\quad\quad\leq C \nu^{-\f14}|k_1|^{\f14} e^{-|\nu k_1|^{\f12}t/2-\nu k_1^2y^2t^3}.\nonumber
\end{align}
 Then we get by \eqref{psiL2H1}, \eqref{psi1L2H1a} and \eqref{est:g-bunds} that
\begin{align}\label{om1c}
\|\omega^{(1)}_{H}(t)\|_{L^2}\leq& Ce^{-|\nu k_1|^{1/2}t}\|\omega^{(0)}_{H}(t)\|_{L^2}\leq Ce^{-(\nu|k_1|)^{1/2}t}\|\omega_{0}\|_{L^2},
\end{align}
and
\begin{align}\label{om1d}
&\|\omega_1^{(1)}(t)\|_{L^2}\\
&\leq  C(e^{-|\nu k_1|^{1/2}t}\| \omega_1(t)\|_{L^2}+\nu^{-\f14}|k_1|^{\f14}e^{-|\nu k_1|^{1/2}t/2}\|\omega^{(0)}_{H}(t)\|_{L^2})\nonumber
\\ &\leq C\nu^{-\f14}|k_1|^{\f14}e^{-|\nu k_1|^{1/2}t/2}\|(\partial_y^2-|k|^2)\omega_{0}\|_{L^2},\nonumber
\end{align}
and
\begin{align}
\label{om1d1}
\|ty\omega_1^{(1)}(t)\|_{L^2}\leq & C(\nu^{-\f13}|k_1|^{-\f23}e^{-|\nu k_1|^{1/2}t}\| \omega_1(t)\|_{L^2}\\
&\qquad+\nu^{-\f12}|k_1|^{-\f12}e^{-|\nu k_1|^{1/2}t/2}\|\omega^{(0)}_{H}(t)\|_{L^2})\notag
\\ \notag\leq& C\nu^{-\f12}|k_1|^{-\f12}e^{-|\nu k_1|^{1/2}t/2}\|(\partial_y^2-|k|^2)\omega_{0}\|_{L^2}.
\end{align}

Let
\begin{align}\nonumber%\label{noti:phi11-000}
   \phi_1^{(1)}(t,y)=g(t,y)\varphi_H^{(0)}(t,y)-\varphi_H^{(1)}(t,y).
 \end{align}
Applying Lemma \ref{lem5.3} with
\beno
\psi_1=\varphi_{H}^{(1)},\quad \psi_2=\varphi_{H}^{(0)},\quad  g=e^{-\nu |k|^2t-4\nu k_1^2y^2t^3/3-|\nu k_1|^{1/2}t},
\eeno
where
\beno
\|g\|_{C^1}\leq Ce^{-|\nu k_1|^{1/2}t}, \quad \|\partial_yg\|_{H^1}\leq
C\nu^{\f14}|k_1|^{-\f14}te^{-|\nu k_1|^{1/2} t/2}.
\eeno
here we used $\nu k_1^2yt^3=(\nu k_1^2y^2t^3)^{\f12}(|\nu k_1|^{\f12}t)^{\f12}\nu^{\f14}|k_1|^{-\f14}t$,  we conclude that
\begin{align}\label{om1a}
&|k|^{\f12}\|(\partial_y,|k|)\varphi_H^{(1)}(t)\|_{L^2} +(|\partial_y\varphi_H^{(1)}(t,-1)|+|\partial_y\varphi_H^{(1)}(t,1)|) \\
\nonumber&\leq Ce^{-|\nu k_1|^{1/2}t}\big(|k|^{\frac{1}{2}}\|(\partial_y,|k|)\varphi_H^{(0)}\|_{L^2}
+|\partial_y\varphi_H^{(0)}(-1)|
+|\partial_y\varphi_H^{(0)}(1)|\big),\\
 \label{phi11}&\|(\partial_y,|k|)\phi_1^{(1)}(t)\|_{L^2}\leq C\nu^{\f14}|k_1|^{-\f54} e^{-|\nu k_1|^{1/2} t/2}\|k_1t\varphi^{(0)}_H(t)\|_{L^2}.
\end{align}
Then we get by \eqref{psiL2H1} and \eqref{psiL2bd} that
\begin{align}\label{est:varphi(1)L2-00}
   \|(\partial_y,|k|)\varphi_{H}^{(1)}\|_{L^2L^2}^2\leq C|k_1|^{-1}\|(\partial_y,|k|)\omega_0\|_{L^2}^2,
 \end{align}
 and by \eqref{om1c}, we get
 \begin{align}\label{est:omega(1)L2-00}
   \|\omega_{H}^{(1)}\|_{L^2L^2}^2\leq C|\nu k_1|^{-\f12}\|\omega_0\|_{L^2}^2.
 \end{align}

\textbf{Step 2.} Estimates of $\omega_{H}^{(2)}$.\smallskip

Thanks to the definition of $\omega_1$, we have
\begin{align*}
\partial_y^2\omega^{(1)}_H &=\partial_y(\omega_1^{(1)}+2ik_1 ty\omega^{(1)}_{H})=\partial_y\omega_1^{(1)}+2ik_1t\omega_{H}^{(1)}+2ik_1ty\omega_{H}^{(1)}\\
&=\partial_y\omega_1^{(1)}+2ik_1t\omega_{H}^{(1)}+2ik_1ty(\omega_1^{(1)}+2ik_1 ty\omega^{(1)}_{H})
 ,\\ \partial_y^2\omega_{H}^{(1)}&+4k_1^2y^2t^2\omega^{(1)}_{H} =\partial_y\omega_1^{(1)}+2ik_1t\omega^{(1)}_{H}+2ik_1ty\omega_1^{(1)}.
\end{align*}
Then by the definition of $\phi_1^{(1)}$, we find that
\begin{align*}
   &\partial_t\omega^{(2)}_H-\nu(\partial_y^2-|k|^2)\omega_{H}^{(2)}
   +ik_1\big( (1-y^2)\omega_H^{(2)}-\varphi^{(2)}_H\big)\\
   &=\nu\partial_y^2\omega_{H}^{(1)}+ 4\nu k_1^2y^2t^2\omega_{H}^{(1)} +\epsilon_0|\nu k_1|^{1/2}\omega_{H}^{(1)}+2ik_1(g(t,y)\varphi_{H}^{(0)}-\varphi_{H}^{(1)}).\\
   &=\nu(\partial_y\omega_1^{(1)}+2ik_1t\omega^{(1)}_{H}+2ik_1ty\omega_1^{(1)})+ |\nu k_1|^{1/2}\omega_{H}^{(1)}
   +2ik_1\phi_1^{(1)}.
\end{align*}
Then we get by Lemma \ref{sp-inhomo} with $f_2=-\nu \omega_{1}^{(1)}$, $f_4=-2\nu ik_1ty\omega_{1}^{(1)}-2\nu ik_1t\omega_H^{(1)}-(\nu |k_1|)^{\frac{1}{2}}\omega_H^{(1)}$ and $f_5=-2ik_1\phi_1^{(1)}$ that
\begin{align*}
   &\|\omega_{H}^{(2)}\|_{L^2L^2}^2 \leq C\nu^{-\f32}|k_1|^{-\f12}\|\nu \omega_1^{(1)}\|_{L^2L^2}^2+C\nu^{-\f54}|k_1|^{-\f34}|k|^{-1} \|\nu k_1ty\omega_{1}^{(1)}\|_{L^2L^2}^2\\
   &\qquad+ C\nu^{-\f54}|k_1|^{-\f34}|k|^{-1}\|(\nu |k_1|t+|\nu k_1|^{\f12})\omega_H^{(1)}\|_{L^2L^2}^2\\
   &\qquad+C\nu^{-\f34}|k_1|^{-\f54}\|k_1(\partial_y,|k|)\phi_1^{(1)}\|_{L^2L^2}^2.\\
   &\quad\leq C\nu^{\f12}|k_1|^{-\f12}\| \omega_1^{(1)}\|_{L^2L^2}^2 +C\nu^{\f34}|k_1|^{\f54}|k|^{-1}\|yt\omega_{1}^{(1)}\|_{L^2L^2}^2\\
   &\qquad+C \nu^{-\f14}|k_1|^{\f14}|k|^{-1}\|(|\nu k_1|^{\f12}t+1)\omega_H^{(1)}\|_{L^2L^2}^2 +C\nu^{-\f34}|k_1|^{\f34}\|(\partial_y,|k|)\phi_1^{(1)}\|_{L^2L^2}^2.
\end{align*}
Using \eqref{om1c}, \eqref{om1d}, \eqref{om1d1}, \eqref{phi11} and \eqref{psiL2-t-000}, we arrive at
\begin{align}\label{est:omega(2)L2-00}
  \|\omega_H^{(2)}\|_{L^2L^2}^2\leq& C(|\nu k_1|^{-\f12}+
   \nu^{-\f34}|k_1|^{-\f14}|k|^{-1}+
   \nu^{-\f14}|k_1|^{-\f74}|k|^{-1})\|(\partial_y^2-|k|^2)\omega_0\|_{L^2}^2\\
   \leq &C\nu^{-\f34}|k_1|^{-\f14}|k|^{-\f34}\|(\partial_y^2-|k|^2)\omega_0\|_{L^2}^2.\nonumber
\end{align}
Applying Lemma \ref{sp-inhomo}  with $f_2=-\nu \omega_{1}^{(1)}$, $-f_4=2\nu ik_1ty\omega_{1}^{(1)}+2\nu ik_1t\omega_H^{(1)}+(\nu |k_1|)^{\frac{1}{2}}\omega_H^{(1)}$, $f_5=-2ik_1(g(t,y)\varphi_{H}^{(0)}-\varphi_{H}^{(1)})=-2ik_1\phi_1^{(1)}$, we obtain
\begin{align*}
  &\|(\partial_y\varphi_{H}^{(2)},|k|\varphi_{H}^{(2)})\|_{L^2L^2}^2 \leq C\nu^{-1}|k_1|^{-1}\|\nu \omega_1^{(1)}\|_{L^2L^2}^2+C\nu^{-\f12}|k_1|^{-\f32} \|\nu k_1ty\omega_{1}^{(1)}\|_{L^2L^2}^2\\
   &\quad\quad+ C\nu^{-\f12}|k_1|^{-\f32} \|(|\nu k_1t|+|\nu k_1|^{1/2})\omega_H^{(1)}\|_{L^2L^2}^2+C|k_1|^{-2}\|2k_1(\partial_y,|k|)\phi_1^{(1)}\|_{L^2L^2}^2\\
   &\leq C\nu|k_1|^{-1}\| \omega_1^{(1)}\|_{L^2L^2}^2 +C\nu^{\f32}|k_1|^{\f12}\|ty\omega_{1}^{(1)}\|_{L^2L^2}^2+C \nu^{\f12}|k_1|^{-\f12}\|(|\nu k_1|^{\f12}t+1)\omega_H^{(1)}\|_{L^2L^2}^2\\
   &\quad+C|k_1|^{-2}\|2k_1(\partial_y,|k|)\phi_1^{(1)}\|_{L^2L^2}^2.
\end{align*}
Using \eqref{om1c}, \eqref{om1d}, \eqref{phi11} and \eqref{psiL2-t-000}, we obtain
\begin{align}\label{est:varphi(2)L2-00}
  \|(\partial_y\varphi_{H}^{(2)},&|k|\varphi_{H}^{(2)})\|_{L^2L^2}^2\leq C|k_1|^{-1}\|(\partial_y^2-|k|^2)\omega_0\|_{L^2}^2\\
  &\quad+C\nu^{\f12}|k_1|^{-\f12}\|(\partial_y^2-|k|^2)\omega_0\|_{L^2}^2+ C|k_1|^{-1}\|\omega_0\|_{L^2}^2\nonumber\\
   &\leq C|k_1|^{-1}\|(\partial_y^2-|k|^2)\omega_0\|_{L^2}^2.\nonumber
\end{align}

\textbf{Step 3}. Estimate for $\omega_H^{(3)}$.

We introduce $\gamma_1(y)=\dfrac{\sinh(|k| (1+y))}{\sinh(2|k|) }$, $\gamma_0(y)=\dfrac{\sinh(|k|(1- y))}{\sinh(2|k|) }$ and
\begin{align*}
   &a_1(t)=\langle\gamma_0,\omega_H^{(1)}\rangle=-\langle\gamma_0,\omega_H^{(3)}\rangle =-\partial_y\varphi_H^{(1)}(t,-1),\\
   &a_2(t)=\langle\gamma_1,\omega_H^{(1)}\rangle=-\langle\gamma_1,\omega_H^{(3)}\rangle
   =\partial_y\varphi_H^{(1)}(t,1).
\end{align*}
By \eqref{om1a}, we get
\begin{align}\label{eq:a-est}
   & |a_1(t)|+|a_2(t)|\\
   &\leq Ce^{-\epsilon_0|\nu k_1|^{1/2}t}\big(|k|^{\frac{1}{2}}\|(\partial_y,|k|)\varphi_H^{(0)}\|_{L^2}
   +|\partial_y\varphi_H^{(0)}(-1)|
+|\partial_y\varphi_H^{(0)}(1)|\big).\nonumber
\end{align}

We define
\begin{equation*}
\begin{aligned}
w(\lambda,y)=\int_0^{+\infty} \omega_H^{(3)} (t,k_1,y,k_3)e^{-i\lambda t}\mathrm{d}t,\\
\varphi (\lambda,y)=\int_0^{+\infty} \varphi_H^{(3)} (t,k_1,y,k_3)e^{-i\lambda t}\mathrm{d}t,\\
c_j(\lambda)=\int_0^{+\infty} a_j(t)e^{-i\lambda t}\mathrm{d}t, \quad j=1,2.
\end{aligned}
\end{equation*}
Then we have
\begin{equation*}
\begin{aligned}
&\big(i\lambda-\nu (\partial_y^2-|k|^2)+ik_1(1-y^2)\big)w+2ik_1\varphi=0,\\
&c_1(\lambda)=-\int_{-1}^1\frac{\sinh (|k|(1+y))}{\sinh( 2|k|)}w(\lambda,k_1,y,k_3)\mathrm{d}y, \\ &c_2(\lambda)=-\int_{-1}^1\frac{\sinh (|k|(1-y))}{\sinh (2|k|)}w(\lambda,k_1,y,k_3)\mathrm{d}y.
\end{aligned}
\end{equation*}
Therefore, we obtain
$$w(\lambda,y)=-c_1(\lambda)w_1(-\lambda/k_1,y)-c_2(\lambda)w_2(-\lambda/k_1,y),$$
where $w_1,\ w_2$ are constructed in section \ref{sec-no-slip} with $\epsilon=0$ and $\lambda$ replaced $-\lambda/k_1$.

Let us first claim that
\begin{equation}\label{c_i:omega-3}
\begin{aligned}
\|(1+|\lambda|)c_1\|_{L^2_{\lambda}(\mathbb{R})}^2+\|(1+|\lambda|)c_2\|_{L^2_{\lambda}(\mathbb{R})}^2\lesssim |k|\|(\partial_y,|k|)\omega_0\|_{L^2}^2.
\end{aligned}
\end{equation}

Thanks to $d=\f{\lambda/k_1}{2}-\f{i\nu |k|^2}{2k_1}$, $L=|2k_1/\nu|^{1/3}$, we get by Proposition \ref{estimate-w-i} that
\begin{equation*}
\begin{aligned}
\|w_1\|_{L^2_y}+\|w_2\|_{L^2_y}\lesssim &|k_1/\nu|^{\f16}\big(1+|Ld|\big)^{\f14}\lesssim  \nu^{-\f14}\big((\nu |k_1|^2)^{1/3}+\nu |k|^2+ |\lambda|\big)^{\f14}\\
\lesssim &\nu^{-\f14}\big(((\nu |k|^3|k_1|^{-1})^{1/3}+\nu |k|^3|k_1|^{-1})|k_1/k|+ |\lambda|\big)^{\f14}\\
\lesssim &\nu^{-\f14}\big(|k_1/k|+ |\lambda|\big)^{\f14},
\end{aligned}
\end{equation*}
which yields
\begin{equation*}
\begin{aligned}
\|w(\lambda)\|_{L^2}\leq |c_1|\|w_1\|_{L^2}+|c_2|\|w_2\|_{L^2}\lesssim\nu^{-\frac{1}{4}}(1+|\lambda|)^{\frac{1}{4}}  (|c_1(\lambda)|+|c_2(\lambda)|).
\end{aligned}
\end{equation*}
Similarly, we have
\begin{equation*}
\begin{aligned}
&\|u(\lambda)\|_{L^2}\leq |\nu /k_1|^{\frac{1}{6}} (|c_1(\lambda)|+|c_2(\lambda)|).
\end{aligned}
\end{equation*}
Therefore, we conclude
\begin{equation}\label{est:omega(3)L2-00}
\begin{aligned}
(\nu |k_1|)^{\frac{1}{2}}\|\omega_H^{(3)}\|_{L^2L^2}^2\sim& (\nu |k_1|)^{\frac{1}{2}}\left\|\|w(\lambda)\|_{L^2}\right\|_{L^2(\mathbb{R})}^2 \\
\lesssim& |k_1|^{\frac{1}{2}}(\|(1+|\lambda|)^{\frac{1}{4}} c_1(\lambda)\|_{L^2(\mathbb{R})}^2+\|(1+|\lambda|)^{\frac{1}{4}} c_2(\lambda)\|_{L^2(\mathbb{R})}^2)\\
\lesssim&|k_1|^{\frac{1}{2}}(\|(1+|\lambda|) c_1(\lambda)\|_{L^2(\mathbb{R})}^2+\|(1+|\lambda|) c_2(\lambda)\|_{L^2(\mathbb{R})}^2)\\
\lesssim&|k_1|^{\f12}|k|\|(\partial_y,|k|)\omega_0\|_{L^2}^2,
\end{aligned}
\end{equation}
and
\begin{equation}\label{est:u(3)L2-00}
\begin{aligned}
{ \|u_H^{(3)}\|_{L^2L^2}^2}
\lesssim& |\nu/k_1|^{\f13}|k|\|(\partial_y,|k|)\omega_0\|_{L^2}^2
\lesssim  \|(\partial_y,|k|)\omega_0\|_{L^2}^2.
\end{aligned}
\end{equation}

Now we conclude by \eqref{est:varphi(1)L2-00}, \eqref{est:varphi(2)L2-00} and \eqref{est:u(3)L2-00}  that
\begin{align}\label{est:varphiL2-000}
   & \|(\partial_y,|k|)\varphi_{H}\|_{L^2L^2}^2\leq C|k_1|^{-1}\|(\partial_y^2-|k|^2)\omega_0\|_{L^2}^2,
\end{align}
and by \eqref{est:omega(1)L2-00}, \eqref{est:omega(2)L2-00} and \eqref{est:omega(3)L2-00},  we obtain
 \begin{align}\label{est:omegaHL2-000}
    \|\omega_H\|_{L^2L^2}^2\leq C \nu^{-\f34}|k_1|^{-\f14}|k|^{-\f34}\|(\partial_y^2-|k|^2)\omega_0\|_{L^2}^2.
  \end{align}

{\bf Step 4.} Proof of  the claim  \eqref{c_i:omega-3}. \smallskip

Thanks to $a_1=\langle \gamma_0,\omega_H^{(1)}\rangle
$ and
\begin{align*}
   & (\partial_t+ik_1(1-y^2)+ 4\nu |k_1|^2y^2t^2+\nu |k|^2)\omega^{(1)}_H=-2ik_1g(t,y)\varphi_{H}^{(0)},
\end{align*}
we infer that
\begin{align*}
  |\partial_ta(t)|&{\leq} \bigg(\left|\langle \gamma_0,k_1(1-y^2)\omega_H^{(1)}\rangle\right|+\left|\langle \gamma_0,(4\nu |k_1|^2y^2t^2+\nu |k|^2)\omega_H^{(1)}\rangle\right|\bigg)\\
   &\quad+|\langle\gamma_0, 2k_1g(t,y)\varphi_{H}^{(0)}\rangle|.
\end{align*}
As $(\gamma_0(1-y^2))|_{y=0,1}=0$, we have
\begin{align*}
\left|\langle \gamma_0,k_1(1-y^2)\omega_H^{(1)}\rangle\right| \leq& |k_1| \|(\partial_y,|k|)\varphi_H^{(1)}\|_{L^2}\|(\partial_y,|k|)(\gamma_0 (1-y^2))\|_{L^2}\\
   \leq& C|k_1||k|^{-\f12}\|(\partial_y,|k| )\varphi_{H}^{(1)}\|_{L^2},
\end{align*}
here we used the fact that
\beno
  \|(\partial_y,|k|)(\gamma_0 (1-y^2))\|_{L^2}\leq C|k|^{-\f12}.
\eeno
 Then we get by \eqref{om1a} that
\begin{align}\nonumber%\label{est:gamma0-1}
   & \left|\langle \gamma_0,k_1(1-y^2)\omega^{(1)}_H\rangle\right|\\
    \nonumber&\leq Ce^{-|\nu k_1|^{1/2}t}|k_1||k|^{-1}\big(|k|^{\frac{1}{2}}\|(\partial_y,|k|) \varphi_H^{(0)}\|_{L^2}+|\partial_y\varphi_H^{(0)}(-1)|
+|\partial_y\varphi_H^{(0)}(1)|\big).
\end{align}
Similarly, we  have
\begin{align*}
   &\left|\langle \gamma_0,( 4\nu |k_1|^2 y^2t^2+\nu |k|^2)\omega_H^{(1)}\rangle\right|\\
   &\leq 4\nu |k_1|^2|\langle \gamma_0, (y^2-1)t^2\omega^{(1)}_H\rangle|+4\nu |k_1|^2|\langle \gamma_0, t^2\omega^{(1)}_H\rangle|+\nu |k|^2|\langle \gamma_0,\omega^{(1)}_H\rangle|\\
   &\leq C\nu |k_1|^2t^2|k|^{-\f12}\|(\partial_y,|k|)\varphi_H^{(1)}\|_{L^2}+ C(\nu |k_1|^2t^2+\nu |k|^2)|a_1(t)|.
\end{align*}
which along with\eqref{om1a} and \eqref{eq:a-est} gives
\begin{align}\nonumber%\label{est:gamma0-2}
   & |\langle \gamma_0,-(4\nu |k|^2y^2t^2+\nu|k|^2)\omega_H^{(1)}\rangle|\\
   \nonumber&\leq Ce^{-|\nu k_1|^{1/2}t}(\nu |k_1|^2t^2+\nu |k|^2)\big(|k|^{\frac{1}{2}}\|(\partial_y,|k|)\varphi_H^{(0)}\|_{L^2}
   +|\partial_y\varphi_H^{(0)}(-1)|
+|\partial_y\varphi_H^{(0)}(1)|\big).
\end{align}
We also have
\begin{align*}
   & \left|\langle\gamma_0,k_1g(t,y)\varphi_{H}^{(0)}\rangle\right|\leq C{|k_1|}e^{-|\nu k_1|^{1/2}t}\|\gamma_0\|_{L^2}\|\varphi_H^{(0)}\|_{L^2}.
\end{align*}
This along with $\|\gamma_0\|_{L^2}\leq C|k|^{-\f12}$ gives
\begin{align}\nonumber%\label{est:gamma0-3}
   & \left|\langle\gamma_0,k_1g \varphi_H^{(0)}\rangle\right| \leq C {|k_1||k|^{-\f12}}e^{-|\nu k_1|^{1/2}t}\|\varphi_H^{(0)}\|_{L^2}.
\end{align}

Summing up, we arrive at
\begin{align}
   & \left|\partial_ta_1(t)\right|+ |a_1(t)|\nonumber\\
   &\leq Ce^{-|\nu k_1|^{1/2}t}(1+\nu |k_1|^2t^2 +\nu |k|^2)\nonumber\\
   &\quad\times\big(|k|^{\frac{1}{2}}\|(\partial_y,|k|)\varphi_H^{(0)}\|_{L^2}
   +|\partial_y\varphi_H^{(0)}(-1)|
+|\partial_y\varphi_H^{(0)}(1)|\big).\label{est:pr-t-a1}
\end{align}

By \eqref{psiL2H1} and \eqref{psiL2bd}, we have
\begin{align*}
   &\|a_1(t)\|_{H^1(\mathbb{R}_+)}\leq C(|k_1|+\nu |k|^2)\|\omega_0\|_{L^2}^2\leq C|k|\|(\partial_y,|k|)\omega_0\|_{L^2}^2.
\end{align*}
Now $a_1\in H^1(\mathbb{R}_+)$. We define {$\widetilde{a}_1(t)=a_1(t)$ for $t\geq0$ and} $\tilde{a}_1(t)=0 $ for $t\leq 0$. Since $\langle\omega_0,e^{\pm |k| y}\rangle=0$, $a_1(0)=0$. Thus, $\tilde{a}_1\in H^1(\mathbb{R})$ and
\begin{align*}
 & \left\|{\tilde{a}_1(t)}\right\|_{H^1(\mathbb{R})}\leq C|k|\|(\partial_y,|k|)\omega_0\|_{L^2}^2.
\end{align*}
 Notice that
\begin{align*}
   &\int_{\mathbb{R}}\widetilde{a}_1(t)e^{-it\lambda}dt
   =\int_{0}^{+\infty}
   a_{1}(t)\mathrm{e}^{-it\lambda}dt=c_1(\lambda),
\end{align*}
which gives
\begin{align*}
\|\widetilde{a}_1(t)\|_ {H^1(\mathbb{R})}^2\sim\int_{\mathbb{R}}(1+|\lambda|)^2|c_1(\lambda)|^2
d\lambda=\|(1+|\lambda|)c_1\|_{L^2(\mathbb{R})}^2 .
\end{align*}
Thus, we obtain
\begin{align*}
\|(1+|\lambda|)c_1\|_{L^2}\leq C|k|^{\f12}\|\omega_0\|_{L^2}.
\end{align*}

Similar argument can yield the estimate for $\|(1+|\lambda|)c_2\|_{L^2(\mathbb{R})}$. Then we completes the proof of \eqref{c_i:omega-3}

Finally, let us consider the estimates with the weight $e^{c\nu^{\frac{1}{2}}t}$, where $0<c\ll 1$ is a suitable small constant. Consider
$\widetilde{\omega}=e^{c\nu^{\frac{1}{2}}t}\omega_H$. Then we have
$$(\partial_t+\widetilde{\mathcal{L}}_k)\widetilde{\omega}=0, \quad \widetilde{\mathcal{L}}_k=\mathcal{L}_k-c\nu^{\frac{1}{2}}.$$
Then following similar arguments as before (by taking $c>0$ sufficiently small such that the estimates for $\widetilde{\mathcal{L}}_k$ holds as $\mathcal{L}_k$) and using $|k_1|\geq 1$, we can conclude the second statement of the lemma.
\end{proof}

\begin{lemma}\label{energy-w-u}
Let $\omega$ be the solution to \eqref{toy model} with $F=-ik_1f_1-\partial_y f_2-ik_3f_3$ and $\langle \omega_0,e^{\pm |k|y}\rangle=0$. Then it holds that
\begin{equation}\nonumber%\label{est:energy-uLinfty-00}
\begin{aligned}
&|k|^2 \|e^{c\nu^{\frac{1}{2}}t} u\|_{L^\infty L^2}^2+\frac{3}{2} \nu |k|^2 \|e^{c\nu^{\frac{1}{2}}t} \omega\|_{ L^2 L^2 }^2\\
&\leq |k|^2\|u_0\|_{L^2}^2+\big((5/2) |k_1|+2c\nu^{\frac{1}{2}}|k|\big)|k|\|e^{c\nu^{\frac{1}{2}}t} u\|_{L^2 L^2}^2\\
&\quad+4\min\{(\nu |k|^3)^{-1},|k_1|^{-1}\}|k| \|e^{c\nu^{\frac{1}{2}}t} f_4\|_{L^2 L^2}^2+4\nu^{-1}\|e^{c\nu^{\frac{1}{2}}t} (f_1,f_2,f_3)\|_{L^2 L^2}^2.
\end{aligned}
\end{equation}

\end{lemma}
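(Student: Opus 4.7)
\textbf{Plan for Lemma \ref{energy-w-u}.} The proof is a direct weighted $H^{-1}$-type energy estimate obtained by pairing the vorticity equation against $-|k|^{2}\bar{\varphi}$. Writing $\widetilde{\omega}=e^{c\nu^{1/2}t}\omega$, $\widetilde{\varphi}=e^{c\nu^{1/2}t}\varphi$, $\widetilde{u}=e^{c\nu^{1/2}t}u$ and $\widetilde{F}=e^{c\nu^{1/2}t}F$ (where, consistent with the form of the bound, I read $F=-ik_{1}f_{1}-\partial_{y}f_{2}-ik_{3}f_{3}-f_{4}$), the equation becomes
\begin{equation*}
\partial_{t}\widetilde{\omega}-\nu(\partial_{y}^{2}-|k|^{2})\widetilde{\omega}+ik_{1}(1-y^{2})\widetilde{\omega}+2ik_{1}\widetilde{\varphi}=\widetilde{F}+c\nu^{1/2}\widetilde{\omega},
\end{equation*}
still with $\widetilde{\varphi}(\pm 1)=\widetilde{\varphi}'(\pm 1)=0$ and $(\partial_{y}^{2}-|k|^{2})\widetilde{\varphi}=\widetilde{\omega}$. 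I would take the $L^{2}$-inner product with $-|k|^{2}\widetilde{\varphi}$ and then take real parts. The four main identities, all coming from two integrations by parts using $\widetilde{\varphi}=\widetilde{\varphi}'=0$ at $y=\pm 1$, are: $\operatorname{Re}\langle\partial_{t}\widetilde{\omega},-|k|^{2}\widetilde{\varphi}\rangle=\frac{1}{2}\partial_{t}(|k|^{2}\|\widetilde{u}\|_{L^{2}}^{2})$, $\operatorname{Re}\langle-\nu\Delta_{k}\widetilde{\omega},-|k|^{2}\widetilde{\varphi}\rangle=\nu|k|^{2}\|\widetilde{\omega}\|_{L^{2}}^{2}$, $\operatorname{Re}\langle 2ik_{1}\widetilde{\varphi},-|k|^{2}\widetilde{\varphi}\rangle=0$, and $\operatorname{Re}\langle c\nu^{1/2}\widetilde{\omega},-|k|^{2}\widetilde{\varphi}\rangle=c\nu^{1/2}|k|^{2}\|\widetilde{u}\|_{L^{2}}^{2}$, where the last one uses $\|\widetilde{u}\|_{L^{2}}^{2}=-\langle\widetilde{\omega},\widetilde{\varphi}\rangle$.

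The only delicate term is the transport piece $ik_{1}\langle(1-y^{2})\widetilde{\omega},-|k|^{2}\widetilde{\varphi}\rangle$. I would integrate by parts twice in $y$ to write
\begin{equation*}
\langle(1-y^{2})\widetilde{\omega},-\widetilde{\varphi}\rangle=\int_{-1}^{1}(1-y^{2})\big(|\partial_{y}\widetilde{\varphi}|^{2}+|k|^{2}|\widetilde{\varphi}|^{2}\big)\,\mathrm{d}y-2\int_{-1}^{1}y\,\bar{\widetilde{\varphi}}\,\partial_{y}\widetilde{\varphi}\,\mathrm{d}y,
\end{equation*}
whose first summand is real, so that only the imaginary part of the second enters after multiplication by $ik_{1}$. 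Cauchy--Schwarz together with $|y|\le 1$ and Poincar\'e in the form $|k|\|\widetilde{\varphi}\|_{L^{2}}\le\|\widetilde{u}\|_{L^{2}}$, $\|\partial_{y}\widetilde{\varphi}\|_{L^{2}}\le\|\widetilde{u}\|_{L^{2}}$ give
\begin{equation*}
\big|\operatorname{Re}\big(ik_{1}\langle(1-y^{2})\widetilde{\omega},-|k|^{2}\widetilde{\varphi}\rangle\big)\big|\le|k_{1}||k|\,\|\widetilde{u}\|_{L^{2}}^{2}.
\end{equation*}

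For the forcing, IBP on the $\partial_{y}f_{2}$ piece (permitted by $\widetilde{\varphi}(\pm 1)=0$) turns $\langle\widetilde{F},-|k|^{2}\widetilde{\varphi}\rangle$ into a sum of $L^{2}$-pairings, which I would estimate via $|k|\|\widetilde{\varphi}\|_{L^{2}}\le\|\widetilde{u}\|_{L^{2}}\le|k|^{-1}\|\widetilde{\omega}\|_{L^{2}}$. The $(f_{1},f_{2},f_{3})$-contribution is then bounded by $C|k|\|(\widetilde{f}_{1},\widetilde{f}_{2},\widetilde{f}_{3})\|_{L^{2}}\|\widetilde{\omega}\|_{L^{2}}$; Young's inequality with parameter $\nu/4$ absorbs a quarter of $\nu|k|^{2}\|\widetilde{\omega}\|_{L^{2}}^{2}$ and produces the $4\nu^{-1}\|(\widetilde{f}_{1},\widetilde{f}_{2},\widetilde{f}_{3})\|_{L^{2}L^{2}}^{2}$ tail. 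The $f_{4}$-contribution is $\le|k|\|\widetilde{f}_{4}\|_{L^{2}}\|\widetilde{u}\|_{L^{2}}$, which I would split by a case distinction: when $\nu|k|^{3}\le|k_{1}|$, apply $\|\widetilde{u}\|_{L^{2}}\le|k|^{-1}\|\widetilde{\omega}\|_{L^{2}}$ and Young with weight $\nu/4$ to absorb another quarter of $\nu|k|^{2}\|\widetilde{\omega}\|_{L^{2}}^{2}$, yielding the $(\nu|k|^{3})^{-1}|k|\|\widetilde{f}_{4}\|^{2}$ remainder; when $|k_{1}|\le\nu|k|^{3}$, apply Young directly against $|k_{1}||k|\|\widetilde{u}\|_{L^{2}}^{2}$ with weight $1/4$, which costs at most $\tfrac{1}{2}|k_{1}||k|\|\widetilde{u}\|_{L^{2}}^{2}$ extra but produces the $|k_{1}|^{-1}|k|\|\widetilde{f}_{4}\|^{2}$ remainder. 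Taking the smaller of the two bounds yields the factor $\min\{(\nu|k|^{3})^{-1},|k_{1}|^{-1}\}$ in the statement.

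Finally I would integrate in $t\in(0,T)$, take the supremum of $|k|^{2}\|\widetilde{u}(T)\|_{L^{2}}^{2}$ and multiply by $2$. The transport term contributes $2|k_{1}||k|\|\widetilde{u}\|_{L^{2}L^{2}}^{2}$; the second case of the $f_{4}$ split adds at most $\tfrac{1}{2}|k_{1}||k|\|\widetilde{u}\|_{L^{2}L^{2}}^{2}$, producing the stated coefficient $(5/2)|k_{1}|$; the $c\nu^{1/2}\widetilde{\omega}$ piece produces $2c\nu^{1/2}|k|^{2}\|\widetilde{u}\|_{L^{2}L^{2}}^{2}$; and after absorbing $\tfrac{1}{2}\nu|k|^{2}\|\widetilde{\omega}\|_{L^{2}L^{2}}^{2}$ in total from the two Young steps, we retain exactly $(2-\tfrac{1}{2})\nu|k|^{2}\|\widetilde{\omega}\|_{L^{2}L^{2}}^{2}=\tfrac{3}{2}\nu|k|^{2}\|\widetilde{\omega}\|_{L^{2}L^{2}}^{2}$ on the left, which is the claimed inequality. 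No step here is delicate in the sense of previous sections; the only point that requires attention is the imaginary-part computation in the transport term, where the boundary conditions on $\varphi$ are essential to rewrite the pairing in a manifestly usable form.
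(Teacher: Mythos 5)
Your proposal is correct and follows essentially the same route as the paper's proof: both pair the equation with $-\varphi$ (you just carry the $|k|^{2}$ weight throughout rather than multiplying at the end), take real parts, integrate by parts in the transport term so that only the lower-order cross term $\int y\,\bar\varphi\varphi'$ survives, and then run two Young-inequality branches for the $f_{4}$-term to produce the $\min\{(\nu|k|^{3})^{-1},|k_{1}|^{-1}\}$ factor. You also correctly spot that the statement's $F$ must include a $-f_{4}$ term (a typo in the lemma as stated), and your accounting of how the $(5/2)|k_1|$ and $(3/2)\nu|k|^2$ coefficients arise matches the paper's bookkeeping.
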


\begin{proof}
Taking the inner product between \eqref{toy model} and $-\varphi$, we obtain
\begin{equation}\label{est:d/dt-u-00}
\begin{aligned}
&\langle u_t, u\rangle+\nu \|\omega\|_{L^2 }^2+ik_1\int_{-1}^1 (1-y^2)(|\varphi'|^2+|k|^2|\varphi|^2)\mathrm{d}y\\
&\quad+ik_1\int_{-1}^1 (-2y)\varphi'\overline{\varphi}\mathrm{d}y-2ik_1\int_{-1}^1 |\varphi|^2\mathrm{d}y\\
&=\langle f_4,\varphi\rangle+\langle ik_1f_1+\partial_y f_2+ik_3f_3,\varphi\rangle.
\end{aligned}
\end{equation}
Taking the real part of \eqref{est:d/dt-u-00}, we have
\begin{equation*}
\begin{aligned}
&\frac{1}{2}\frac{\mathrm{d}}{\mathrm{d}t} \|u\|_{ L^2}^2+\nu \|\omega\|_{L^2 }^2\\
&\leq 2|k_1|\|\varphi\|_{L^2}\|\varphi'\|_{L^2}+
\|f_4\|_{L^2}\|\varphi\|_{L^2}+\|(f_1,f_2,f_3)\|_{L^2}\|(\partial_y,|k|)\varphi\|_{L^2}\\
&\leq |k_1||k|^{-1}(|k|^2\|\varphi\|_{L^2}^2+\|\varphi'\|_{L^2}^2)+
\|f_4\|_{L^2}\|\varphi\|_{L^2}+\|(f_1,f_2,f_3)\|_{L^2}\|(\partial_y,|k|)\varphi\|_{L^2}\\
&\leq |k_1||k|^{-1}\|u\|_{L^2}^2+\dfrac{2\|f_4\|_{L^2}^2}{\nu|k|^4}+
\dfrac{2\|(f_1,f_2,f_3)\|_{L^2}^2}{\nu |k|^2}+\frac{\nu |k|^2}{8}(2|k|^2\|\varphi\|_{L^2}^2+\|\varphi'\|_{L^2}^2)\\
\text{or} &\leq 2|k_1|\|\varphi'\|_{L^2}\|\varphi\|_{L^2}+\dfrac{\|f_4\|_{L^2}^2}{|k_1||k|}+
\dfrac{\|(f_1,f_2,f_3)\|_{L^2}^2}{\nu |k|^2}+\frac{|k_1||k|\|\varphi\|_{L^2}^2+\nu |k|^2\|u\|_{L^2}^2}{4},
\end{aligned}
\end{equation*}
which yields that
\begin{equation}\label{est:nusmall-d/dtu-00}
\begin{aligned}
\f{1}{2}\frac{\mathrm{d}}{\mathrm{d}t} \|u\|_{ L^2}^2+\frac{3}{4}\nu \|\omega\|_{L^2 }^2\leq& |k_1||k|^{-1} \|u\|_{L^2}^2+2(\nu |k|^4)^{-1} \|f_4\|_{L^2}^2\\
&+2(\nu |k|^2)^{-1} \|(f_1,f_2,f_3)\|_{L^2}^2,
\end{aligned}
\end{equation}
or
\begin{equation}\label{est:nubig-d/dtu-00}
\begin{aligned}
\frac{1}{2}\frac{\mathrm{d}}{\mathrm{d}t} \|u\|_{ L^2}^2+\f{3}{4}\nu \|\omega\|_{L^2 }^2
\leq& \f{5}{4}|k_1|\|u\|_{L^2}\|\varphi\|_{L^2}+|k_1|^{-1}|k|^{-1}\| f_4\|_{L^2}^2 \\
&+(\nu |k|^2)^{-1}\| (f_1,f_2,f_3)\|_{L^2}^2.
\end{aligned}
\end{equation}
Then we deduce from \eqref{est:nusmall-d/dtu-00} and \eqref{est:nubig-d/dtu-00} that
\begin{equation}\nonumber
\begin{aligned}
\frac{\mathrm{d}}{\mathrm{d}t}\|e^{c\nu^{\frac{1}{2}}t} u\|_{ L^2}^2+\frac{3}{2}\nu \|e^{c\nu^{\frac{1}{2}}t} \omega\|_{L^2 }^2
\leq& \big((5/2)|k_1||k|^{-1}+2c\nu^{\frac{1}{2}}\big)\|e^{c\nu^{\frac{1}{2}}t} u\|_{L^2}^2\\
&+4(\nu |k|^2)^{-1}\|e^{c\nu^{\frac{1}{2}}t}(f_1, f_2,f_3)\|_{L^2}^2\\&+4\min\{(\nu|k|^4)^{-1},|k_1|^{-1}|k|^{-1}\}\|e^{c\nu^{\frac{1}{2}}t}f_4\|_{L^2}^2.
\end{aligned}
\end{equation}

This completes the proof of the lemma.
\end{proof}

\begin{lemma}\label{lemma-w-L-infty}
Let $\omega$ be the solution to \eqref{toy model} with $F=-ik_1f_1-\partial_y f_2-ik_3f_3-f_4$ and $\langle \omega_0,e^{\pm |k|y}\rangle=0$. Then it holds that
 \begin{equation}\nonumber%\label{est:energy-wLinfty-00}
\begin{aligned}
&\|e^{c\nu^{\frac{1}{2}}t} \omega\|_{L^\infty L^2}^2+\nu\|e^{c\nu^{\frac{1}{2}}t} (\partial_y,|k|)\omega\|_{L^2 L^2}^2\\
&\lesssim \|\omega_0\|_{L^2}^2+\nu^{-1}\|e^{c\nu^{\frac{1}{2}}t} (|k|^{-1}k_1f_1+|k|^{-1}k_3f_3,f_2,|k|^{-1}f_4)\|_{L^2 L^2}^2\\
&\quad+(\nu^{\f12}+\nu|k|^2)\|e^{c\nu^{\frac{1}{2}}t}\omega\|_{L^2 L^2}^2 + \nu^{-\f13}|k_1|^{\f43}|k|^{-\f23} \|e^{c\nu^{\frac{1}{2}}t}u\|_{L^2L^2}^{\f43}\|e^{c\nu^{\frac{1}{2}}t}\omega\|_{L^2L^2}^{\f23}.
\end{aligned}
\end{equation}
\end{lemma}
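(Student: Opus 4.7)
My plan is to prove the lemma by a weighted $L^2$ energy estimate applied directly to $\omega$, in the spirit of the proof of Lemma \ref{energy-w-u}. Taking the inner product of $(\partial_t+\mathscr{L}_k)\omega=F$ with $\bar\omega$ and integrating by parts in $y$, both the convection term $ik_1(1-y^2)|\omega|^2$ and the nonlocal term $2ik_1\langle\varphi,\omega\rangle = -2ik_1\|u\|_{L^2}^2$ (using $\varphi(\pm 1)=0$ and the Biot--Savart computation) drop as purely imaginary, leaving
\[
\frac{1}{2}\frac{d}{dt}\|\omega\|_{L^2}^2+\nu\|(\partial_y,|k|)\omega\|_{L^2}^2 = \nu\,\mathrm{Re}[\omega'\bar\omega]_{-1}^1+\mathrm{Re}\langle F,\omega\rangle.
\]
For the source $F = -ik_1 f_1 -\partial_y f_2 - ik_3 f_3 - f_4$, I integrate by parts on the $-\partial_y f_2$ piece and bound each of $-ik_j f_j$ and $-f_4$ via Cauchy--Schwarz with weight $|k|^{-1}$; Young's inequality then produces $C\nu^{-1}\|(|k|^{-1}k_1f_1+|k|^{-1}k_3f_3,f_2,|k|^{-1}f_4)\|^2$ plus a small fraction of $\nu\|(\partial_y,|k|)\omega\|^2$, modulo boundary terms. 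Multiplication by $e^{2c\nu^{1/2}t}$ and time integration converts the $c\nu^{1/2}\|\omega\|^2$ perturbation into the $\nu^{1/2}\|\omega\|_{L^2L^2}^2$ term on the right-hand side; any residual $\nu|k|^2\|\omega\|^2$ not absorbed contributes to the $\nu|k|^2\|\omega\|_{L^2L^2}^2$ term that also appears there.

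The substantive step is controlling the two boundary contributions $\nu\,\mathrm{Re}[\omega'\bar\omega]_{\pm 1}$ and $\mathrm{Re}[f_2\bar\omega]_{\pm 1}$. Since $\omega$ does not vanish at $y=\pm 1$, I use the one-dimensional Sobolev trace $|\omega(\pm 1)|^2\lesssim \|\omega\|_{L^2}\|\omega'\|_{L^2}+\|\omega\|_{L^2}^2$ together with the Biot--Savart interpolation
\[
\|\varphi'\|_{L^\infty}^2\lesssim \|\varphi'\|_{L^2}\|\varphi''\|_{L^2}\lesssim \|u\|_{L^2}\|\omega\|_{L^2},
\]
valid because $\varphi(\pm 1)=\varphi'(\pm 1)=0$ so that $\|\varphi'\|_{L^2}\le\|u\|_{L^2}$ and $\|\varphi''\|_{L^2}\le\|\omega\|_{L^2}$. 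The identity $\omega'(\pm 1)=\varphi'''(\pm 1)=\partial_y\omega(\pm 1)+|k|^2\varphi'(\pm 1)$, together with integration from an interior point out to the boundary in the Biot--Savart equation, will then express $\nu|\omega'\bar\omega|_{\pm 1}$ through a mixed product of $\|u\|$, $\|\omega\|$, and $\|\omega'\|$ norms. The $[f_2\bar\omega]_{\pm 1}$ contribution is handled by the same $\omega$ trace bound, with a Young split absorbing one factor into $\nu^{-1}\|f_2\|^2$ and the other into the dissipation.

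\textbf{Main obstacle.} The most delicate bookkeeping is extracting the precise coefficient $\nu^{-1/3}|k_1|^{4/3}|k|^{-2/3}$ and the fractional exponents $4/3,\,2/3$ on $\|u\|_{L^2L^2}$ and $\|\omega\|_{L^2L^2}$. These should arise from applying Young's inequality in two stages: first in space, at level $\nu^{1/3}$, to absorb $(\nu\|\omega'\|^2)^{1/3}$-type factors back into the LHS dissipation; and then in time via H\"older with exponents $(3,3/2)$, to split the remaining product into the announced form. The $|k|^{-2/3}$ weight reflects a gain from the Biot--Savart inequality $\|u\|_{L^2}\le|k|^{-1}\|\omega\|_{L^2}$, while $|k_1|^{4/3}$ records the $k_1$-weight inherent in the nonlocal and boundary structure of $\mathscr{L}_k$. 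Threading the three scales $\nu,\,|k|,\,|k_1|$ correctly in this three-way interpolation, while continually respecting that $\omega$ is not in $H^1_0$ at the boundary, will be the main technical challenge.
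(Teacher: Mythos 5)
Your setup is correct: the energy identity, the fact that the convection and nonlocal terms drop out as purely imaginary, and the bookkeeping of the $-ik_jf_j$, $-\partial_yf_2$, $-f_4$ source terms all match the paper's approach, and you are right that the final $\nu^{-1/3}|k_1|^{4/3}|k|^{-2/3}\|u\|^{4/3}\|\omega\|^{2/3}$ term comes from a Young inequality in space to absorb a factor of $\|(\partial_y,|k|)\omega\|_{L^2}^{1/2}$ into the dissipation, followed by H\"older in time with exponents $(3/2,3)$.

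There is, however, a genuine gap at the step you yourself flag as the substantive one. To control the boundary flux $\nu\,\mathrm{Re}[\partial_y\omega\,\bar\omega]_{-1}^1$ you propose a trace bound on $\omega(\pm 1)$ plus "the identity $\omega'(\pm 1)=\varphi'''(\pm 1)$ together with integration from an interior point out to the boundary." This cannot close. The identity is empty (with $\varphi'(\pm1)=0$ it is literally $\omega'(\pm1)=\omega'(\pm1)$), and any trace bound for $\partial_y\omega(\pm 1)$ must invoke $\|\partial_y^2\omega\|_{L^2}$, which is not in the reach of the left-hand side dissipation $\nu\|(\partial_y,|k|)\omega\|_{L^2}^2$. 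Integrating the Biot--Savart equation inward does not help either: it produces $\omega''$ or higher-order traces. Notice that your plan never actually uses the hypothesis $\langle\omega_0,e^{\pm|k|y}\rangle=0$ in a substantive way, which is a strong signal something is missing.

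The missing idea is to exploit that this orthogonality propagates in time: with $s_{\pm 1}(y)=\sinh(|k|(1\pm y))/\sinh(2|k|)$ one has $\langle\omega(t),s_{\pm1}\rangle=\partial_y\varphi(t,\pm1)=0$ and hence $\langle\partial_t\omega,s_{\pm1}\rangle=0$ for all $t$. Pairing the equation $(\partial_t+\mathscr{L}_k)\omega=F$ against $s_{\pm1}$ and integrating by parts twice, the $\nu\langle(\partial_y^2-|k|^2)\omega,s_{\pm1}\rangle$ term collapses (since $(\partial_y^2-|k|^2)s_{\pm1}=0$) to a pure boundary flux, and one reads off an explicit identity expressing $(\nu\partial_y\omega-f_2)|_{y=\pm1}$ through interior integrals of $\varphi$, $f_1,f_2,f_3,f_4$, and $\omega$ against $s_{\pm1}$ and $\partial_ys_{\pm1}$. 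It is this identity — not a trace inequality for $\partial_y\omega$ — that yields the bound $|(\nu\partial_y\omega-f_2)|_{y=\pm1}|\lesssim |k_1||k|^{1/2}\|\varphi\|_{L^2}+|k|^{1/2}\|(\ldots)\|_{L^2}+\nu|k|\|\omega\|_{L^\infty}$, after which the $\|\omega\|_{L^\infty}\lesssim\|\omega\|_{L^2}^{1/2}\|(\partial_y,|k|)\omega\|_{L^2}^{1/2}$ interpolation and the Young/H\"older you describe do apply. Without this step the argument does not go through.
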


\begin{proof}
The standard energy estimate gives
\begin{equation*}
\begin{aligned}
&\frac{1}{2}\frac{\mathrm{d}}{\mathrm{d}t}\|\omega\|_{L^2}^2+\nu \|(\partial_y,|k|)\omega\|_{L^2}^2\\
&=\mathrm{Re}\Big(\langle -ik_1f_1-ik_3 f_3-f_4,\omega\rangle+\langle f_2,\partial_y\omega\rangle+(\nu \partial_y\omega-f_2)\overline{\omega}|_{- 1}^1\Big),
\end{aligned}
\end{equation*}
which gives
\begin{equation*}
\begin{aligned}
\frac{1}{2}\frac{\mathrm{d}}{\mathrm{d}t}\|\omega\|_{L^2}^2+\nu \|(\partial_y,|k|)\omega\|_{L^2}^2&\leq \|(ik_1f_1+ik_3f_3+f_4)\|_{L^2}\|\omega\|_{L^2}+\|f_2\|_{L^2}\|\partial_y\omega\|_{L^2}\\
&\quad+\|\nu\partial_y\omega-f_2\|_{l^1(\{y=\pm 1\})}\|\omega\|_{L^\infty}.
\end{aligned}
\end{equation*}

To handle the boundary term, we introduce
$$s_{1}(y)=\frac{\sinh (|k|(1+y))}{\sinh (2|k|)}, \quad s_{-1}(y)=\frac{\sinh (|k|(1-y))}{\sinh (2|k|)}.$$
It is easy to see that $\langle \partial_t^j \omega,s_i\rangle=0, \  i=\pm 1,j=0,1.$ Therefore, we have
 \begin{equation*}
\begin{aligned}
0=&\langle \partial_t \omega,s_i\rangle=\langle \nu (\partial_y^2-|k|^2)\omega-ik_1(1-y^2)\omega-2ik_1\varphi\\
&\qquad\qquad\qquad -ik_1f_1-\partial_y f_2-ik_3f_3-f_4,s_i\rangle\\
=&\langle \nu\omega,(\partial_y^2-|k|^2)s_i\rangle+ \langle  -ik_1(1-y^2)\omega,s_i\rangle+\langle -ik_1f_1-ik_3f_3-f_4,s_i\rangle\\
&+\langle -2ik_1\varphi, s_i\rangle+\langle f_2,\partial_ys_i\rangle +(-\nu \omega\partial_y s_i+(\nu\partial_y \omega-f_2)s_i)|_{-1}^1.
\end{aligned}
\end{equation*}
Note that
$$ \langle-ik_1(1-y^2) \omega-2ik_1\varphi,s_i\rangle=\langle ik_1\varphi,4y\partial_y s_i\rangle+\langle -4ik_1\varphi,s_i\rangle,$$
then we obtain
 \begin{equation*}
\begin{aligned}
&\left|(\nu\partial_y\omega-f_2)|_{y=i}\right|\\
&=|\langle ik_1\varphi,4y\partial_y s_i\rangle+ \langle -4ik_1\varphi,s_i\rangle+ \langle
-ik_1f_1-ik_3f_3-f_4,s_i\rangle\\
&\qquad+\langle f_2,\partial_ys_i\rangle-(\nu\omega\partial_y s_i)|_{-1}^1|\\
&\leq 4|k_1|\|\varphi\|_{L^2}(\|\partial_ys_i\|_{L^2}+\|s_i\|_{L^2})+\|k_1f_1+k_3f_3-if_4\|_{L^2}\|s_i\|_{L^2}\\
&\qquad+\|f_2\|_{L^2}\|\partial_ys_i\|_{L^2}+\nu\|\partial_y s_i\|_{l^1(\{y=\pm 1\})}\|\omega\|_{L^\infty}\\
&\lesssim |k_1||k|^{\frac{1}{2}}\|\varphi\|_{L^2}+|k|^{\frac{1}{2}}\||k|^{-1}(k_1f_1+k_3f_3-if_4),f_2\|_{L^2}+\nu|k|\|\omega\|_{L^\infty},
\end{aligned}
\end{equation*}
where we have used the following facts
\begin{align*}
&|\partial_ys_i(\pm i)|=|k|\coth (2|k|)\leq C|k|,\\
&\|(\partial_y,|k|)s_i\|_{L^2}^2=-\langle s_i,(\partial_y^2-|k|^2)s_i\rangle+(\partial_ys_i\cdot s_i)|_{-1}^1\leq C|k|,\ i=\pm 1.
\end{align*}
Therefore, we deduce that
 \begin{equation*}
\begin{aligned}
\frac{\mathrm{d}}{\mathrm{d}t}&\|\omega\|_{L^2}^2+\f32\nu\|(\partial_y,|k|)\omega\|_{L^2}^2\\
\lesssim& \nu^{-1}\|(|k|^{-1}k_1f_1+|k|^{-1}k_3f_3,f_2,|k|^{-1}f_4)\|_{L^2}^2+\|\nu\partial_y\omega+f_2\|_{l^1(\{y=\pm 1\})}\|\omega\|_{L^\infty}\\
\lesssim& \nu^{-1}\|(|k|^{-1}k_1f_1+|k|^{-1}k_3f_3,f_2,|k|^{-1}f_4)\|_{L^2}^2+\|\omega\|_{L^\infty}
\big( |k_1||k|^{\frac{1}{2}}\|\varphi\|_{L^2}\\
&+|k|^{\frac{1}{2}}\|(|k|^{-1}(k_1f_1+k_3f_3),f_2,|k|^{-1}f_4)\|_{L^2}+\nu|k|\|\omega\|_{L^\infty}\big)\\
\lesssim&\nu^{-1}\|(|k|^{-1}k_1f_1+|k|^{-1}k_3f_3,f_2,|k|^{-1}f_4)\|_{L^2}^2\\
&+
\nu|k|\|\omega\|_{L^\infty}^2 +|k_1||k|^{-\frac{1}{2}}\|u\|_{L^2}\|\omega\|_{L^\infty}\\
\lesssim& \nu^{-1}\|(|k|^{-1}k_1f_1+|k|^{-1}k_3f_3,f_2,|k|^{-1}f_4)\|_{L^2}^2\\
&
+\nu|k|\|\omega\|_{L^2}\|(\partial_y,|k|)\omega\|_{L^2} +|k_1||k|^{-\f12}\|u\|_{L^2}\|\omega\|_{L^2}^{\f12}\|(\partial_y,|k|)\omega\|_{L^2}^{\f12},
\end{aligned}
\end{equation*}
and
 \begin{equation*}
\begin{aligned}
&\frac{\mathrm{d}}{\mathrm{d}t}\|e^{c\nu^{\frac{1}{2}}t} \omega\|_{L^2}^2+ \nu\|e^{c\nu^{\frac{1}{2}}t}(\partial_y,|k|)\omega\|_{L^2}^2\\
&\lesssim \nu^{-1}\|e^{c\nu^{\frac{1}{2}}t}  (|k|^{-1}k_1f_1+|k|^{-1}k_3f_3,f_2,|k|^{-1}f_4)\|_{L^2}^2\\
&\quad+(\nu^{\f12}+\nu|k|^2)\|e^{c\nu^{\frac{1}{2}}t} \omega\|_{L^2}^2 + \nu^{-\f13}|k_1|^{\f43}|k|^{-\f23} \|e^{c\nu^{\frac{1}{2}}t}u\|_{L^2}^{\f43}\|e^{c\nu^{\frac{1}{2}}t}\omega\|_{L^2}^{\f23},
\end{aligned}
\end{equation*}
which gives our result.
\end{proof}

Now we are in a position to prove  Theorem  \ref{thm:sp-no-slip}.

\begin{proof}[Proof of Theorem  \ref{thm:sp-no-slip}]
By Lemma \ref{energy-w-u} with $f_4=0$, we have
\begin{equation}
\begin{aligned}\label{energy-u}
&|k|^2 \|e^{c\nu^{\frac{1}{2}}t} u\|_{L^\infty L^2}^2 +\frac{3}{2} \nu |k|^2 \|e^{c\nu^{\frac{1}{2}}t} \omega \|_{L^2 L^2}^2\\
&\leq   |k|^2 \|u_0\|_{L^2}^2 +\big((5/2)|k_1|+2c\nu^{\frac{1}{2}}|k|\big)  |k| \|e^{c\nu^{\frac{1}{2}}t} u\|_{L^2 L^2}^2\\
&\qquad+4\nu^{-1} \|e^{c\nu^{\frac{1}{2}}t} (f_1,f_2,f_3)\|_{L^2 L^2}^2.
\end{aligned}
\end{equation}

 We divide the proof into two cases. \smallskip

\textbf{Case 1.} $\nu |k|^3 |k_1|\leq 2$.\smallskip

By Lemma \ref{sp-inhomo} and Lemma \ref{sp-Homo}, we have
\begin{align}\label{est:wL2L2-nus-000}
\nu^{\f34} |k_1|^{\f14} \|e^{c\nu^{\frac{1}{2}}t} \omega\|_{L^2 L^2}^2 \lesssim& |k|^{-\f34}\|(\partial_y^2-|k|^2)\omega_0\|_{L^2}^2 \nonumber+ \nu^{-\f34} |k_1|^{-\f14} \|e^{c\nu^{\frac{1}{2}}t} (f_1,f_2,f_3)\|_{L^2 L^2}^2\\
\lesssim & |k|^{-\f34}\|(\partial_y^2-|k|^2)\omega_0\|_{L^2}^2 + \nu^{-1 } |k|^{-\f34}  \|e^{c\nu^{\frac{1}{2}}t} (f_1,f_2,f_3)\|_{L^2 L^2}^2,
\end{align}
and
\begin{align}\label{est:uL2L2-nus-000}
|k_1| \|e^{c\nu^{\frac{1}{2}}t} u\|_{L^2 L^2}^2\lesssim&  \|(\partial_y^2-|k|^2)\omega_0\|_{L^2}^2+ \nu^{-1} \|e^{c\nu^{\frac{1}{2}}t}(f_1,f_2,f_3)\|_{L^2 L^2}^2,
\end{align}
 therefore we get by \eqref{energy-u}, \eqref{est:uL2L2-nus-000} and $\nu^{\f12}|k|\leq C|k|^{-\f12}|k_1|^{\f12}\leq C|k_1|$ that
 \begin{equation}\label{est:k2u-LinftyL2-00}
\begin{aligned}
|k|& \|e^{c\nu^{\frac{1}{2}}t} u\|_{L^\infty L^2}^2+ \nu |k| \|e^{c\nu^{\frac{1}{2}}t} \omega\|_{ L^2 L^2 }^2+|k_1| \|e^{c\nu^{\frac{1}{2}}t} u\|_{L^2 L^2}^2\\
 \lesssim& \|(\partial_y^2-|k|^2)\omega_0\|_{L^2}^2+\nu^{-1} \|e^{c\nu^{\frac{1}{2}}t} (f_1,f_2,f_3)\|_{L^2 L^2}^2.
\end{aligned}
\end{equation}

On the other hand, note that $\nu |k|^3 |k_1|^{-1}\leq 2 \Rightarrow \nu|k|^2\leq C$, then we get by \eqref{est:k2u-LinftyL2-00} that
 \begin{equation}\label{est:k2u-LinftyL2-01}
\begin{aligned}
&\nu^{\f12} |k|^2 \|e^{c\nu^{\frac{1}{2}}t} u\|_{L^\infty L^2}^2+ \nu^{\f32} |k|^2 \|e^{c\nu^{\frac{1}{2}}t} \omega\|_{ L^2 L^2 }^2+\nu^{\f12} |k_1| |k| \|e^{c\nu^{\frac{1}{2}}t} u\|_{L^2 L^2}^2\\
 &=(\nu|k|^2)^{\f12}\big(|k| \|e^{c\nu^{\frac{1}{2}}t} u\|_{L^\infty L^2}^2+ \nu |k| \|e^{c\nu^{\frac{1}{2}}t} \omega\|_{ L^2 L^2 }^2+|k_1| \|e^{c\nu^{\frac{1}{2}}t} u\|_{L^2 L^2}^2\big)\\
 &\lesssim\|(\partial_y^2-|k|^2)\omega_0\|_{L^2}^2 +\nu^{-1} \|e^{c\nu^{\frac{1}{2}}t} (f_1,f_2,f_3)\|_{L^2 L^2}^2 .
\end{aligned}
\end{equation}
Therefore, we conclude by \eqref{est:wL2L2-nus-000}, \eqref{est:k2u-LinftyL2-00} and \eqref{est:k2u-LinftyL2-01} that
 \begin{equation}\nonumber
\begin{aligned}
 &(|k|+\nu^{\f12} |k|^2) \|e^{c\nu^{\frac{1}{2}}t} u\|_{L^\infty L^2}^2+(\nu|k|+ \nu^{\f32} |k|^2) \|e^{c\nu^{\frac{1}{2}}t} \omega\|_{ L^2 L^2 }^2\\
 &\quad+(|k_1|+\nu^{\f12} |k_1| |k|) \|e^{c\nu^{\frac{1}{2}}t} u\|_{L^2 L^2}^2+{\nu^{\f34} |k|^{\f34} |k_1|^{\f14} \|e^{c\nu^{\frac{1}{2}}t} \omega\|_{ L^2 L^2 }^2}\\
 &\lesssim \|(\partial_y^2-|k|^2)\omega_0\|_{L^2}^2 +\nu^{-1} \|e^{c\nu^{\frac{1}{2}}t} (f_1,f_2,f_3)\|_{L^2 L^2}^2 .
\end{aligned}
\end{equation}
{Here we used $\nu^{-\f34} |k_1|^{-\f14}\leq C\nu^{-1}$.}

By Lemma \ref{lemma-w-L-infty}, \eqref{est:wL2L2-nus-000} and \eqref{est:uL2L2-nus-000}, we have
 \begin{equation}\nonumber
\begin{aligned}
&\|e^{c\nu^{\frac{1}{2}}t} \omega\|_{L^\infty L^2}^2+\nu\|e^{c\nu^{\frac{1}{2}}t} (\partial_y,|k|)\omega\|_{L^2 L^2}^2\\
&\lesssim \|\omega_0\|_{L^2}^2+\nu^{-1}\|e^{c\nu^{\frac{1}{2}}t} (|k|^{-1}k_1f_1+|k|^{-1}k_3f_3,f_2)\|_{L^2 L^2}^2\\
&\quad+(\nu^{\f12}+\nu|k|^2)\|e^{c\nu^{\frac{1}{2}}t}\omega\|_{L^2 L^2}^2 + \nu^{-\f13}|k_1|^{\f43}|k|^{-\f23} \|e^{c\nu^{\frac{1}{2}}t}u\|_{L^2L^2}^{\f43}\|e^{c\nu^{\frac{1}{2}}t}\omega\|_{L^2L^2}^{\f23}\\
&\lesssim \|\omega_0\|_{L^2}^2+\nu^{-1}\|e^{c\nu^{\frac{1}{2}}t} (f_1,f_2,f_3))\|_{L^2 L^2}^2\\
&\quad+\nu^{\f12}  \|e^{c\nu^{\frac{1}{2}}t}u\|_{L^2 L^2}\|e^{c\nu^{\frac{1}{2}}t}(\partial_y,|k|)\omega\|_{L^2 L^2}+\nu|k|^2\|e^{c\nu^{\frac{1}{2}}t}\omega\|_{L^2 L^2}^2 \\
&\quad + \nu^{-\f7{12}}|k_1|^{\f12}\|e^{c\nu^{\frac{1}{2}}t}u\|_{L^2L^2}^2+\nu^{\f16} |k|^{\f34} |k_1|^{\f14} \|e^{c\nu^{\frac{1}{2}}t}\omega\|_{L^2L^2}^{2}\\
&\leq \|\omega_0\|_{L^2}^2+\nu^{-1}\|e^{c\nu^{\frac{1}{2}}t} (f_1,f_2,f_3)\|_{L^2 L^2}^2+\f12 \|e^{c\nu^{\frac{1}{2}}t}(\partial_y,|k|)\omega\|_{L^2 L^2}^2\\
&\quad +C\|e^{c\nu^{\frac{1}{2}}t}u\|_{L^2 L^2}^2+ |k|\|(\partial_y^2-|k|^2)\omega_0\|_{L^2}^2 +\nu^{-1} |k|\|e^{c\nu^{\frac{1}{2}}t} (f_1,f_2,f_3)\|_{L^2 L^2}^2\\
&\quad + \nu^{-\f7{12}}(|k_1|^{\f12}\|e^{c\nu^{\frac{1}{2}}t}u\|_{L^2L^2}^2+\nu^{\f34} |k|^{\f34} |k_1|^{\f14} \|e^{c\nu^{\frac{1}{2}}t}\omega\|_{L^2L^2}^{2}),
\end{aligned}
\end{equation}
which implies the desired estimates.

\smallskip

\textbf{Case 2.} $\nu |k|^3 |k_1|\geq 2$.\smallskip

For $0\leq c\leq \frac{\sqrt{2}}{16}$, we have
\begin{equation*}
\begin{aligned}
&\big((5/2) |k_1||k|^{-1}+2c\nu^{\frac{1}{2}}\big)\|e^{c\nu^{\frac{1}{2}}t} u\|_{L^2}^2\\
&\leq \big((5/2) |k_1||k|^{-3}+2c\nu^{\frac{1}{2}}|k|^{-2}\big) \|e^{c\nu^{\frac{1}{2}}t} \omega\|_{L^2}^2
\leq \frac{11}{8}\nu \|e^{c\nu^{\frac{1}{2}}t} \omega\|_{L^2}^2,
\end{aligned}
\end{equation*}
which along with \eqref{energy-u} implies
\begin{equation}\label{est:nubig-uw-000}
\begin{aligned}
&|k|^2\|e^{c\nu^{\frac{1}{2}}t} u\|_{L^\infty L^2 }^2+ (\nu |k|^2/8) \|e^{c\nu^{\frac{1}{2}}t} \omega\|_{L^2 L^2}^2\\
&\leq \|\omega_0\|_{L^2}^2+4\nu^{-1}\|e^{c\nu^{\frac{1}{2}}t} (f_1,f_2,f_3)\|_{L^2L^2}^2.
\end{aligned}
\end{equation}
Thanks to $\nu |k|^3|k_1|^{-1}\geq 2$, $\nu\ll 1$, we have
\begin{align*}
   &|k|+\nu^{\f12} |k|^2 \leq C|k|^2,\quad
    \nu|k|+ \nu^{\f32} |k|^2+ { \nu^{\f34}|k|^{\f34} |k_1|^{\f14} }\leq C\nu |k|^2,\\
    &|k|^{-2}(|k_1|+\nu^{\f12}|k_1||k|)\leq C\nu |k|^2,
\end{align*}
then by \eqref{est:nubig-uw-000}, we have
 \begin{equation}\nonumber
\begin{aligned}
 &(|k|+\nu^{\f12} |k|^2) \|e^{c\nu^{\frac{1}{2}}t} u\|_{L^\infty L^2}^2+(\nu|k|+ \nu^{\f32} |k|^2) \|e^{c\nu^{\frac{1}{2}}t} \omega\|_{ L^2 L^2 }^2\\
 &\quad+(|k_1|+\nu^{\f12} |k_1| |k|) \|e^{c\nu^{\frac{1}{2}}t} u\|_{L^2 L^2}^2+{\nu^{\f34}|k|^{\f34} |k_1|^{\f14} \|e^{c\nu^{\frac{1}{2}}t} \omega\|_{ L^2 L^2 }^2}\\
&\lesssim |k|^2\|e^{c\nu^{\frac{1}{2}}t} u\|_{L^\infty L^2 }^2+ (\nu |k|^2/8) \|e^{c\nu^{\frac{1}{2}}t} \omega\|_{L^2 L^2}^2\\
 &\lesssim \| \omega_0\|_{L^2}^2 +\nu^{-1} \|e^{c\nu^{\frac{1}{2}}t} (f_1,f_2,f_3)\|_{L^2 L^2}^2 .
\end{aligned}
\end{equation}

Moreover, by Lemma \ref{lemma-w-L-infty}, we have
\begin{equation*}
\begin{aligned}
&\|e^{c\nu^{\frac{1}{2}}t} \omega\|_{L^\infty L^2}^2+\nu\|e^{c\nu^{\frac{1}{2}}t} (\partial_y,|k|)\omega\|_{L^2 L^2}^2\\
&\lesssim\|\omega_0\|_{L^2}^2+\nu^{-1}\|e^{c\nu^{\frac{1}{2}}t} (|k|^{-1}k_1f_1+|k|^{-1}k_3f_3,f_2)\|_{L^2 L^2}^2\\
&\quad+\nu|k|^2\|e^{c\nu^{\frac{1}{2}}t}\omega\|_{L^2 L^2}^2+\nu^{-\f13}|k_1|^{\f43}|k|^{-\f23}\| e^{c\nu^{\frac{1}{2}}t} u\|_{L^2 L^2}^{\f43}\| e^{c\nu^{\frac{1}{2}}t}\omega\|_{L^2 L^2}^{\f23}\\
&\lesssim\|\omega_0\|_{L^2}^2+\nu^{-1}\|e^{c\nu^{\frac{1}{2}}t} (|k|^{-1}k_1f_1+|k|^{-1}k_3f_3,f_2)\|_{L^2 L^2}^2\\
&\quad+\nu|k|^2\|e^{c\nu^{\frac{1}{2}}t}\omega\|_{L^2 L^2}^2+\nu^{-1}|k_1|^{2}|k|^{-2}\| e^{c\nu^{\frac{1}{2}}t} u\|_{L^2 L^2}^2\\
&\lesssim\|\omega_0\|_{L^2}^2+
 \nu^{-1}\|e^{c\nu^{\frac{1}{2}}t} (f_1,f_2,f_3)\|_{L^2L^2}^2,
\end{aligned}
\end{equation*}
which yields the desired estimates.

This completes the proof of Theorem  \ref{thm:sp-no-slip}.
\end{proof}

\section{Nonlinear interaction}\label{sec:iter}

This section is devoted to the estimates for nonlinear terms.

\subsection{Anisotropic product laws}
In this subsection, we denote $\partial_1=\partial_x,\ \partial_2=\partial_2,\ \partial_3=\partial_z.$
The following lemmas come from  Lemma 11.1, Lemma 11.2 and Lemma 11.3 in \cite{CWZ-mem} respectively.

\begin{lemma}\label{lemma-A.4}
For  $\{i,j\}=\{1,3\},\ i\neq j$, it holds that
\begin{align}
\label{f1}&\|f_1f_2\|_{L^2}\lesssim \big(\|\partial_j f_1\|_{H^1}+\|f_1\|_{H^1}\big)\big(\|\pa_if_2\|_{L^2}+
\|f_2\|_{L^2}\big),\\
%\label{f11}&{\color{red}\|f_1f_2\|_{L^2}\lesssim\big(\big\||\partial_k|^{1/2} f_1\big\|_{H^1}+\|f_1\|_{H^1}\big)\big(\|\partial_jf_2\|_{L^2}+
%\|f_2\|_{L^2}\big),}\\
\label{f2}&\|f_1f_2\|_{L^2}+\|\partial_i (f_1f_2)\|_{L^2}\lesssim \|(\partial_x\partial_zf_1,\partial_xf_1,\partial_zf_1,f_1)\|_{H^1} \|(\partial_i f_2,f_2)\|_{L^2},\\
\label{f3}&\|f_1f_2\|_{L^2}+\|\partial_i (f_1f_2)\|_{L^2}\lesssim \|(\partial_i f_1,f_1)\|_{H^1}\|(\partial_x\partial_zf_2, \partial_xf_2,\partial_zf_2,f_2)\|_{L^2},\\
\label{f4}&\|\nabla(f_1f_2)\|_{L^2}\lesssim \|(\partial_x\partial_zf_1,\partial_xf_1,\partial_zf_1,f_1)\|_{H^1}\|f_2\|_{H^1},\\ \label{f5}&\|\nabla(f_1f_2)\|_{L^2}\lesssim
\big(\|\partial_j f_1\|_{H^1}+\|f_1\|_{H^1})(\|\partial_i f_2\|_{H^1}+\|f_2\|_{H^1}\big),
\end{align}
and
\begin{align}
 \label{f6}\|\nabla(f_1f_2)\|_{L^2}\lesssim \big(&\|(\partial_j f_1,f_1)\|_{H^2}\|(\partial_i f_2,f_2)\|_{L^2}\\
 &+
\|(\partial_x\partial_zf_1,\partial_zf_1,\partial_zf_1,f_1)\|_{L^2}\|f_2\|_{H^2}\big).\nonumber
\end{align}
\end{lemma}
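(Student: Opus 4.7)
The plan is to prove the six anisotropic product estimates through mixed-norm H\"older splittings, where the factors of $f_1$ and $f_2$ are placed in different $L^p_x L^q_y L^r_z$ spaces chosen to match exactly the derivative structure on the right-hand sides. The building blocks are the one-dimensional Sobolev embedding $H^1(\mathbb T)\hookrightarrow L^\infty(\mathbb T)$ and $H^1(-1,1)\hookrightarrow L^\infty(-1,1)$, together with the $\mathbb T^2$ embedding $\|g\|_{L^\infty(\mathbb T^2)}\lesssim \|(\partial_x\partial_z g,\partial_x g,\partial_z g,g)\|_{L^2(\mathbb T^2)}$, which follows from summing the Fourier series in $(x,z)$ with Cauchy--Schwarz against the weight $(1+|k_1|)^{-1}(1+|k_3|)^{-1}$. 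Tensoring these gives the mixed embeddings I will repeatedly invoke, e.g.\ $\|g\|_{L^\infty_{y,j}L^2_i}\lesssim \|\partial_jg\|_{H^1}+\|g\|_{H^1}$, $\|g\|_{L^2_{y,j}L^\infty_i}\lesssim \|g\|_{L^2}+\|\partial_i g\|_{L^2}$, $\|g\|_{L^\infty_y L^2_{x,z}}\lesssim \|g\|_{H^1_y L^2_{x,z}}$, $\|g\|_{L^2_y L^\infty_{x,z}}\lesssim \|(\partial_x\partial_z g,\partial_x g,\partial_z g,g)\|_{L^2}$ and the full embedding $\|g\|_{L^\infty(\Omega)}\lesssim \|(\partial_x\partial_z g,\partial_x g,\partial_z g,g)\|_{H^1}$.

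Once these building blocks are available, each estimate follows by picking the correct splitting. For \eqref{f1} I use $\|f_1 f_2\|_{L^2}\leq \|f_1\|_{L^\infty_{y,j}L^2_i}\|f_2\|_{L^2_{y,j}L^\infty_i}$. For \eqref{f2} the zeroth-order part reduces to \eqref{f1}, while $\|\partial_i(f_1 f_2)\|_{L^2}$ splits as $\|\partial_i f_1\cdot f_2\|_{L^2}+\|f_1\partial_i f_2\|_{L^2}$: the first piece is bounded by $\|\partial_i f_1\|_{L^2_i L^\infty_{y,z}}\|f_2\|_{L^\infty_i L^2_{y,z}}$ (noting $\|\partial_i f_1\|_{L^2_i L^\infty_{y,z}}\lesssim \|(\partial_x\partial_z f_1,\partial_x f_1,\partial_z f_1,f_1)\|_{H^1}$ after placing $\partial_i$ inside), and the second by $\|f_1\|_{L^\infty}\|\partial_i f_2\|_{L^2}$ using the full embedding above. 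Estimate \eqref{f3} is the symmetric counterpart with $f_1$ and $f_2$ interchanged. For \eqref{f4} I decompose $\nabla(f_1f_2)=\nabla f_1\,f_2+f_1\,\nabla f_2$; the second term is handled by $\|f_1\|_{L^\infty}\|\nabla f_2\|_{L^2}$; for the first I split by component, treating the horizontal derivatives $\partial_x f_1,\partial_z f_1$ as in \eqref{f2}, and the critical $\partial_y f_1\cdot f_2$ by $\|\partial_y f_1\|_{L^2_y L^\infty_{x,z}}\|f_2\|_{L^\infty_y L^2_{x,z}}$. The estimate \eqref{f5} is obtained by combining \eqref{f1} applied to the three components $\partial_x(f_1f_2)$, $\partial_y(f_1f_2)$, $\partial_z(f_1f_2)$ after Leibniz, adjusting which of $f_1,f_2$ carries the $L^\infty$ role on a term-by-term basis. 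Finally, \eqref{f6} is obtained as in \eqref{f4}/\eqref{f5}, but gaining an extra derivative on one factor so that $\partial_y f_2$ can be placed in $L^\infty_y L^2_{x,z}$ via the $H^2$ norm of $f_2$ when needed, while keeping $f_1$ in the weaker $L^2$-based norm $\|(\partial_x\partial_z f_1,\partial_x f_1,\partial_z f_1,f_1)\|_{L^2}$.

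The main obstacle is the term $\partial_y f_1\cdot f_2$ in \eqref{f4}: since the hypothesis only provides $\|f_2\|_{H^1}$, one cannot afford $\|f_2\|_{L^\infty}$, and a naive $L^\infty$--$L^2$ split fails. Resolving this requires the specific anisotropic splitting $\|\partial_y f_1\|_{L^2_y L^\infty_{x,z}}\|f_2\|_{L^\infty_y L^2_{x,z}}$, where the first factor is absorbed by $\|(\partial_x\partial_z f_1,\partial_x f_1,\partial_z f_1,f_1)\|_{H^1}$ precisely because the $\partial_y$ is paired with horizontal Sobolev embeddings (not another $\partial_y$), and the second factor is controlled by $\|f_2\|_{H^1}$ via the one-dimensional embedding in $y$ alone. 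This same philosophy --- trading a $\partial_y$ against the $\mathbb T^2$ Sobolev embedding of the opposite factor to avoid demanding $\partial_y^2$ anywhere --- is what makes all six inequalities consistent with the weak norms on their right-hand sides.
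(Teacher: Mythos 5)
The paper does not prove Lemma \ref{lemma-A.4}; it simply cites it as Lemma 11.1 of \cite{CWZ-mem}, so there is no in-text proof for you to match. Your approach --- mixed-norm H\"older splittings $L^p_y L^q_i L^r_j$ together with the one-dimensional embeddings $H^1\hookrightarrow L^\infty$ in $y$, $i$, $j$ and the $\mathbb T^2$ embedding $\|g\|_{L^\infty(\mathbb T^2)}\lesssim\|(\partial_x\partial_z g,\partial_x g,\partial_z g,g)\|_{L^2}$ --- is the standard route for such anisotropic product laws and is almost certainly what \cite{CWZ-mem} does. Your building-block embeddings are correct, and the H\"older placements you propose for \eqref{f1}--\eqref{f4} and \eqref{f6} work out: in each case the number and direction of derivatives you spend to reach $L^\infty$ are exactly covered by the norms on the right-hand side, and your remark on the $\partial_y f_1\cdot f_2$ term in \eqref{f4} --- that the extra $\partial_y$ must be traded against the $(x,z)$-Sobolev embedding of the opposite factor, never against another $\partial_y$ --- is precisely the right observation.

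One point where the write-up is imprecise enough to mislead is \eqref{f5}. You say you will get it ``by combining \eqref{f1} applied to the three components \dots adjusting which of $f_1,f_2$ carries the $L^\infty$ role.'' Merely swapping which factor plays the regular role is not enough: applying \eqref{f1} with $(g_1,g_2)=(f_2,\nabla f_1)$ as stated produces $\|\partial_j f_2\|_{H^1}+\|f_2\|_{H^1}$ and $\|\partial_i\nabla f_1\|_{L^2}$, which requires a $\partial_i$-derivative on $f_1$ --- but the right-hand side of \eqref{f5} only carries $\partial_j f_1$. What saves you is that \eqref{f1} holds for both orderings of $\{i,j\}$, so for the $\nabla f_1\cdot f_2$ term you must also swap $i\leftrightarrow j$ when applying \eqref{f1} to $(g_1,g_2)=(f_2,\nabla f_1)$; then you obtain $(\|\partial_i f_2\|_{H^1}+\|f_2\|_{H^1})(\|\partial_j\nabla f_1\|_{L^2}+\|\nabla f_1\|_{L^2})$, and $\|\partial_j\nabla f_1\|_{L^2}=\|\nabla\partial_j f_1\|_{L^2}\leq\|\partial_j f_1\|_{H^1}$ closes the estimate. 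The $f_1\cdot\nabla f_2$ term then follows from \eqref{f1} with the original orientation. Making the index swap explicit is essential; without it the argument as written does not reach \eqref{f5}. With that correction made, your proposal is sound and gives a self-contained proof of the cited lemma.
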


\begin{lemma}\label{Lem: bilinear zero nonzero}
  If $\partial_xf_1=0$, then it holds that
  \begin{align*}
    &\|f_1f_2\|_{L^2}\lesssim\|f_1\|_{H^1}(\|f_2\|_{L^2}+\|\partial_zf_2\|_{L^2}),\\
    &\|(\partial_x,\partial_z)(f_1f_2)\|_{L^2} \lesssim\big(\|f_1\|_{H^1}+\|\partial_zf_1\|_{H^1}\big)\big(\|f_2\|_{L^2}
    +\|(\partial_x,\partial_z)f_2\|_{L^2}\big),\\
    &\|(\partial_x,\partial_z)(f_1f_2)\|_{L^2} \lesssim
\big(\|f_1\|_{L^2}+\|\partial_zf_1\|_{L^2}\big)\big(\|f_2\|_{H^1}
    +\|(\partial_x,\partial_z)f_2\|_{H^1}\big),\\
    &\|\partial_x(f_1f_2)\|_{L^2}\lesssim\|f_1\|_{H^1}\big(\|\partial_xf_2\|_{L^2}
    +\|\partial_z\partial_x f_2\|_{L^2}\big).
  \end{align*}
\end{lemma}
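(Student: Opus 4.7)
The plan is to reduce every inequality to a product of the form $g(y,z)h(x,y,z)$ in $L^2_{xyz}$. The hypothesis $\partial_xf_1=0$ gives $\partial_x(f_1f_2)=f_1\partial_xf_2$ and $\partial_z(f_1f_2)=(\partial_zf_1)f_2+f_1\partial_zf_2$, so $g$ is always $f_1$ or $\partial_zf_1$, and $h$ is $f_2$ or one of its first derivatives. The key device is the identity
\begin{align*}
\|gh\|_{L^2_{xyz}}^2=\int_{yz}|g(y,z)|^2H(y,z)^2\,dy\,dz,\qquad H(y,z):=\|h(\cdot,y,z)\|_{L^2_x},
\end{align*}
together with the pointwise bounds $|\partial_yH|\le\|\partial_yh(\cdot,y,z)\|_{L^2_x}$, $|\partial_zH|\le\|\partial_zh(\cdot,y,z)\|_{L^2_x}$, and $|\partial_y\partial_zH^2|\le 2(\|\partial_yh\|_{L^2_x}\|\partial_zh\|_{L^2_x}+\|h\|_{L^2_x}\|\partial_y\partial_zh\|_{L^2_x})$, which let one trade anisotropic norms of $H$ for standard norms of $h$; 1D Sobolev embeddings on $y\in[-1,1]$ and on $z\in\mathbb{T}$ then close the estimates.

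For (1), I would apply the H\"older inequality $\|gH\|_{L^2_{yz}}\le\|g\|_{L^\infty_yL^2_z}\|H\|_{L^2_yL^\infty_z}$ and use the 1D embeddings $L^\infty_y\hookrightarrow H^1_y$ and $L^\infty_z\hookrightarrow H^1_z$ to obtain the stated bound. Estimate (4) follows immediately by applying (1) with $f_2$ replaced by $\partial_xf_2$. For (2), the two ``good'' terms $f_1\partial_xf_2$ and $f_1\partial_zf_2$ are bounded by $\|f_1\|_{L^\infty_{yz}}\|\partial_\bullet f_2\|_{L^2}$, where the anisotropic 2D Sobolev embedding $W^{(1,1),1}_{yz}\hookrightarrow L^\infty_{yz}$ on $[-1,1]\times\mathbb{T}$ gives
\begin{align*}
\|f_1\|_{L^\infty_{yz}}^2\lesssim\|f_1\|_{L^2}^2+\|\partial_yf_1\|_{L^2}^2+\|\partial_zf_1\|_{L^2}^2+\|\partial_y\partial_zf_1\|_{L^2}^2\lesssim\|f_1\|_{H^1}^2+\|\partial_zf_1\|_{H^1}^2,
\end{align*}
while the remaining term $(\partial_zf_1)f_2$ is handled by applying (1) to the pair $(\partial_zf_1,f_2)$, yielding $\|\partial_zf_1\|_{H^1}(\|f_2\|_{L^2}+\|\partial_zf_2\|_{L^2})$.

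The main obstacle is the term $(\partial_zf_1)f_2$ in estimate (3): the target permits only $\|f_1\|_{L^2}+\|\partial_zf_1\|_{L^2}$ on the $f_1$-side, so no further derivative of $\partial_zf_1$ is affordable and every one of the H\"older choices above fails. To overcome this I would apply plain Cauchy--Schwarz in $(y,z)$,
\begin{align*}
\|(\partial_zf_1)f_2\|_{L^2_{xyz}}\le\|\partial_zf_1\|_{L^2_{yz}}\|f_2\|_{L^\infty_{yz}L^2_x},
\end{align*}
and then invoke the same $W^{(1,1),1}_{yz}\hookrightarrow L^\infty_{yz}$ embedding, this time applied to $H^2=\|f_2(\cdot,y,z)\|_{L^2_x}^2$, using the pointwise bounds on $\partial_yH^2,\partial_zH^2,\partial_y\partial_zH^2$ stated above to get
\begin{align*}
\|f_2\|_{L^\infty_{yz}L^2_x}^2\lesssim\|f_2\|_{L^2}^2+\|\partial_yf_2\|_{L^2}^2+\|\partial_zf_2\|_{L^2}^2+\|\partial_y\partial_zf_2\|_{L^2}^2\lesssim\|f_2\|_{H^1}^2+\|(\partial_x,\partial_z)f_2\|_{H^1}^2.
\end{align*}
The two remaining terms $f_1\partial_xf_2$ and $f_1\partial_zf_2$ in (3) are then controlled by the dual H\"older choice $\|gH\|_{L^2_{yz}}\le\|g\|_{L^\infty_zL^2_y}\|H\|_{L^2_zL^\infty_y}$, which after the 1D embeddings $L^\infty_z\hookrightarrow H^1_z$ and $L^\infty_y\hookrightarrow H^1_y$ delivers the admissible bound $(\|f_1\|_{L^2}+\|\partial_zf_1\|_{L^2})\|\partial_\bullet f_2\|_{H^1}$, which is absorbed into $\|(\partial_x,\partial_z)f_2\|_{H^1}$.
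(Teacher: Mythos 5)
Your proof is correct. Note that the paper does not actually prove this lemma---it simply cites Lemma 11.1--11.3 of \cite{CWZ-mem}---so there is no in-paper argument to compare against; what you have supplied is a self-contained proof in the spirit of the standard anisotropic product laws used throughout that reference. The reduction to $\|gH\|_{L^2_{yz}}$ with $H(y,z)=\|h(\cdot,y,z)\|_{L^2_x}$ is clean, the pointwise bounds on $\partial_yH$, $\partial_zH$ and on $\partial_y\partial_zH^2$ (working with $H^2$ rather than $H$ is the right move, since $H$ itself need not have a mixed second derivative) are correct, and the two dual H\"older splittings $\|f_1\|_{L^\infty_yL^2_z}\|H\|_{L^2_yL^\infty_z}$ versus $\|f_1\|_{L^\infty_zL^2_y}\|H\|_{L^2_zL^\infty_y}$ match the two asymmetric placements of derivatives in (2) and (3). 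The treatment of the troublesome $(\partial_zf_1)f_2$ term in (3) by plain Cauchy--Schwarz followed by the $W^{(1,1),1}\hookrightarrow L^\infty$ embedding applied to $H^2$ is precisely what is needed when no further derivative of $f_1$ is affordable. One small presentational remark: in (2) you quote the embedding as ``$W^{(1,1),1}_{yz}\hookrightarrow L^\infty_{yz}$'' but then write an $L^2$-based inequality; what you are actually using there is the $L^2$-based anisotropic embedding $\|f_1\|_{L^\infty_{yz}}\lesssim\|f_1\|_{L^2}+\|\partial_yf_1\|_{L^2}+\|\partial_zf_1\|_{L^2}+\|\partial_y\partial_zf_1\|_{L^2}$ (obtained by nesting two one-dimensional embeddings), while the genuine $W^{(1,1),1}\hookrightarrow L^\infty$ embedding is what you correctly apply to $H^2$ in (3). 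This is only a labeling slip; the inequalities themselves are correct and the argument closes.
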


\begin{lemma}\label{lemma-A.1}If $\partial_xf_1=0$, then it holds that
\begin{align*}
&\|f_1\|_{L^{\infty}}\lesssim\big(\|f_1\|_{H^1}+\|\partial_zf_1\|_{H^1}\big),\\
&\|\nabla(f_1f_2)\|_{L^2}\lesssim(\|f_1\|_{H^1}+\|\partial_zf_1\|_{H^1})\|f_2\|_{H^1},\\
&\|\nabla(f_1f_2)\|_{L^2}\lesssim\|f_1\|_{H^1}(\|f_2\|_{H^1}+\|\partial_zf_2\|_{H^1}).
\end{align*}
\end{lemma}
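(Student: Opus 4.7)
\medskip

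\noindent\emph{Proof proposal for Lemma \ref{lemma-A.1}.} Since $\partial_x f_1=0$, we regard $f_1=f_1(y,z)$ as a function on $[-1,1]\times\T$, so the three estimates are really anisotropic product laws where the $x$-direction can be freely shifted between the two factors. The plan is to combine the one-dimensional Sobolev/Gagliardo--Nirenberg inequalities $\|g\|_{L^\infty_y}\lesssim \|g\|_{L^2_y}^{1/2}\|\partial_y g\|_{L^2_y}^{1/2}+\|g\|_{L^2_y}$ and $\|g\|_{L^\infty_z}\lesssim \|g\|_{L^2_z}^{1/2}\|\partial_z g\|_{L^2_z}^{1/2}+\|g\|_{L^2_z}$ with H\"older's inequality in the mixed Lebesgue norms $L^p_x L^q_y L^r_z$.

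For the first estimate, I would apply the 1D Sobolev embedding first in $z$ and then in $y$ to the two-variable function $f_1(y,z)$: this yields $\|f_1\|_{L^\infty_{y,z}}\lesssim \|f_1\|_{L^\infty_y L^2_z}^{1/2}\|\partial_zf_1\|_{L^\infty_y L^2_z}^{1/2}+\|f_1\|_{L^\infty_y L^2_z}$, and another iteration in $y$ bounds each of these by $\|f_1\|_{H^1}+\|\partial_zf_1\|_{H^1}$ after Young's inequality. For the second estimate, Leibniz gives $\partial_x(f_1f_2)=f_1\partial_x f_2$ and two terms for $\partial_y$, $\partial_z$. The terms $f_1\partial_\alpha f_2$ ($\alpha\in\{x,y,z\}$) are handled by the first inequality: $\|f_1\partial_\alpha f_2\|_{L^2}\le\|f_1\|_{L^\infty}\|\partial_\alpha f_2\|_{L^2}\lesssim(\|f_1\|_{H^1}+\|\partial_zf_1\|_{H^1})\|f_2\|_{H^1}$. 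For $\partial_y f_1\cdot f_2$ and $\partial_z f_1\cdot f_2$, since $f_1$ is $x$-independent, I write
\[
\|\partial_\alpha f_1\cdot f_2\|_{L^2_{x,y,z}}^2=\int_{y,z}|\partial_\alpha f_1(y,z)|^2\,\|f_2(\cdot,y,z)\|_{L^2_x}^2\,\mathrm dy\,\mathrm dz,
\]
then apply a $(L^2_y L^\infty_z,L^\infty_y L^2_z)$ H\"older split and the 1D Sobolev inequality together with the pointwise bound $|\partial_z\|f_2(\cdot,y,z)\|_{L^2_x}|\le\|\partial_z f_2(\cdot,y,z)\|_{L^2_x}$; this produces the required bound with $\|\partial_zf_1\|_{H^1}$ on the $\partial_z$-term and with $\|f_1\|_{H^1}$ (no $\partial_z$ derivative needed) on the $\partial_y$-term.

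For the third estimate, the regularity in $z$ should sit on $f_2$ rather than on $f_1$, so the H\"older split is reversed. For the terms $\partial_\alpha f_1\cdot f_2$ (both $\alpha=y,z$), I estimate directly
\[
\|\partial_\alpha f_1\cdot f_2\|_{L^2}\le\|\partial_\alpha f_1\|_{L^2_{y,z}}\,\|f_2\|_{L^2_x L^\infty_{y,z}},
\]
and bound $\|f_2(x,\cdot,\cdot)\|_{L^\infty_{y,z}}$ by $\|f_2(x,\cdot,\cdot)\|_{H^1_{y,z}}+\|\partial_z f_2(x,\cdot,\cdot)\|_{H^1_{y,z}}$ using the first inequality applied in the slice at fixed $x$, and integrate in $x$. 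For $f_1\partial_\alpha f_2$ ($\alpha=y,z$), I Fubini out to $\int_{y,z}|f_1|^2\|\partial_\alpha f_2(\cdot,y,z)\|_{L^2_x}^2\,dy\,dz$ and use the $(L^\infty_y L^2_z,L^2_y L^\infty_z)$ H\"older pair, bounding $\|f_1\|_{L^\infty_y L^2_z}\lesssim\|f_1\|_{H^1}$ and the $L^2_y L^\infty_z$-norm of $\|\partial_\alpha f_2(\cdot,y,z)\|_{L^2_x}$ by $\|f_2\|_{H^1}+\|\partial_z f_2\|_{H^1}$ via 1D Sobolev.

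The entire argument is essentially bookkeeping of anisotropic H\"older inequalities; no step requires a new technical idea beyond the observation that the $x$-independence of $f_1$ permits any redistribution of the $L^p_x$ mass onto $f_2$. The only mild point to watch is keeping the highest-order derivative on the correct factor so that in the second inequality only $f_2\in H^1$ is consumed, and in the third inequality only $f_1\in H^1$ is consumed.
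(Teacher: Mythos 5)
The paper does not prove Lemma~\ref{lemma-A.1} internally; it cites it as Lemma~11.3 of \cite{CWZ-mem}, so there is no in-paper argument to compare against. Your proof via mixed-norm H\"older splits in $L^p_xL^q_yL^r_z$ combined with one-dimensional Gagliardo--Nirenberg in $y$ and in $z$ is correct, and it is the standard route for such anisotropic product laws.

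One small inaccuracy worth flagging: in your treatment of the second estimate you assert that the term $\partial_yf_1\cdot f_2$ requires no $\partial_z$ derivative of $f_1$. That is not quite right. Writing $g(y,z)=\|f_2(\cdot,y,z)\|_{L^2_x}$ and using the split
\begin{equation*}
\|\partial_yf_1\cdot g\|_{L^2_{y,z}}\le\|\partial_yf_1\|_{L^2_yL^\infty_z}\,\|g\|_{L^\infty_yL^2_z},
\end{equation*}
the first factor is controlled by one-dimensional Sobolev in $z$ as
\begin{equation*}
\|\partial_yf_1\|_{L^2_yL^\infty_z}\lesssim\|\partial_yf_1\|_{L^2}^{1/2}\|\partial_z\partial_yf_1\|_{L^2}^{1/2}+\|\partial_yf_1\|_{L^2},
\end{equation*}
and the middle piece genuinely consumes $\|\partial_zf_1\|_{H^1}$. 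Moving the $z$-Sobolev onto $g$ instead would require $\|\partial_zf_2\|$, which is not available in the second estimate; and controlling $\|\partial_yf_1\|_{L^\infty_yL^2_z}$ would require $\partial_y^2f_1$, i.e.\ $H^2$ regularity, which is also not available. The slip is harmless since the $\partial_y$-term is still dominated by $(\|f_1\|_{H^1}+\|\partial_zf_1\|_{H^1})\|f_2\|_{H^1}$, exactly as the $\partial_z$-term, but the parenthetical claim as you wrote it is incorrect.
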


\subsection{The velocity estimates in terms of the energy}

\begin{lemma}\label{lem:u-relation}
  It holds that for $j \ge 0$,
  \begin{align*}
  &\|\nabla^j (\partial_x,\partial_z)\partial_xu_{\neq}\|_{L^{2}}\lesssim\| \nabla^j (\partial_x^2+\partial_z^2)u_{3,\neq}\|_{L^2}+\|  \nabla^{j+1}(\partial_x,\partial_z) u_{2,\neq}\|_{L^2},\\&
 \|\partial_x\nabla u_{\neq}\|_{L^2} +\|\partial_z\nabla u_{\neq}\|_{L^2}\lesssim \|\nabla\omega_{2,\neq}\|_{L^2}+
  \|\Delta u_{2,\neq}\|_{L^2},\\
&\|\Delta(\partial_x,\partial_z)u_{\neq}\|_{L^2}\leq \|\Delta\omega_{2,\neq}\|_{L^2}+
  \|\nabla\Delta u_{2,\neq}\|_{L^2},\\
  &\|\Lambda_{x,z}^{1/2}\Delta u_{\neq}\|_{L^2}\lesssim \|\Lambda_{x,z}^{-1/2}\Delta\omega_{2,\neq}\|_{L^2}+
  \|\Lambda_{x,z}^{-1/2}\nabla\Delta u_{2,\neq}\|_{L^2}.  \end{align*}
\end{lemma}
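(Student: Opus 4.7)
My plan is to reduce all four estimates to two velocity–vorticity reconstruction identities for non-zero modes, namely
\begin{align*}
\Lambda_{x,z}^{2}u_{1,\neq} &= \partial_{x}\partial_{y}u_{2,\neq} - \partial_{z}\omega_{2,\neq},\\
\Lambda_{x,z}^{2}u_{3,\neq} &= \partial_{y}\partial_{z}u_{2,\neq} + \partial_{x}\omega_{2,\neq}.
\end{align*}
These follow by combining $\partial_{x}u_{1}=-\partial_{y}u_{2}-\partial_{z}u_{3}$ (from $\operatorname{div}u=0$) with $\partial_{z}u_{1}=\omega_{2}+\partial_{x}u_{3}$ (from the definition of $\omega_{2}$), taking one extra horizontal derivative of each, and eliminating the cross term. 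Since the $\neq$ subscript restricts to Fourier modes with $k_{1}\neq 0$, the symbol of $\Lambda_{x,z}$ satisfies $\Lambda_{x,z}\ge 1$ on this sector, so $\Lambda_{x,z}^{-s}$ is a bounded Fourier multiplier for any $s\ge 0$, and the zero-order operators $\partial_{x}\Lambda_{x,z}^{-1}$, $\partial_{z}\Lambda_{x,z}^{-1}$ are $L^{2}$-contractions.

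For the first inequality I would avoid the nonlocal operator $\Lambda_{x,z}^{-2}$ altogether and just invoke incompressibility: $(\partial_{x},\partial_{z})\partial_{x}u_{1,\neq}=-(\partial_{x},\partial_{z})(\partial_{y}u_{2,\neq}+\partial_{z}u_{3,\neq})$. The $u_{2}$ part is already one of the terms making up $\nabla(\partial_{x},\partial_{z})u_{2,\neq}$, while $\partial_{x}\partial_{z}u_{3,\neq}$ and $\partial_{z}^{2}u_{3,\neq}$ are both dominated by $(\partial_{x}^{2}+\partial_{z}^{2})u_{3,\neq}$ via the pointwise Fourier bound $|k_{1}k_{3}|\le\tfrac12(k_{1}^{2}+k_{3}^{2})$. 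The remaining components $(\partial_{x},\partial_{z})\partial_{x}u_{2,\neq}$ and $(\partial_{x},\partial_{z})\partial_{x}u_{3,\neq}$ are trivially bounded by the two RHS terms. Applying $\nabla^{j}$ throughout preserves the estimate since the argument is purely algebraic in Fourier.

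For the remaining three inequalities I would substitute the reconstruction identities and repeatedly use $\|\partial_{x}\Lambda_{x,z}^{-1}\|_{L^{2}\to L^{2}}\le 1$ and $\|\partial_{z}\Lambda_{x,z}^{-1}\|_{L^{2}\to L^{2}}\le 1$ to trade a horizontal derivative for a factor of $\Lambda_{x,z}$. Concretely, for the second estimate $\partial_{x}\nabla u_{1,\neq}=\Lambda_{x,z}^{-2}\nabla(\partial_{x}^{2}\partial_{y}u_{2,\neq}-\partial_{x}\partial_{z}\omega_{2,\neq})$ yields $\|\partial_{x}\nabla u_{1,\neq}\|_{L^{2}}\lesssim\|\nabla\partial_{y}u_{2,\neq}\|_{L^{2}}+\|\nabla\omega_{2,\neq}\|_{L^{2}}$, and likewise for $\partial_{z}\nabla u_{1,\neq}$ and both horizontal derivatives of $u_{3,\neq}$; the $u_{2,\neq}$ contribution is controlled by $\|\Delta u_{2,\neq}\|_{L^{2}}$ via $\|\nabla\partial_{y}u_{2,\neq}\|_{L^{2}}+\|\partial_{x}\nabla u_{2,\neq}\|_{L^{2}}+\|\partial_{z}\nabla u_{2,\neq}\|_{L^{2}}\lesssim\|\Delta u_{2,\neq}\|_{L^{2}}$, which follows by Fourier in $(x,z)$ combined with the IBP identity $\|(\partial_{y}^{2}-|k|^{2})\hat u_{2}\|_{L^{2}_{y}}^{2}=\|\partial_{y}^{2}\hat u_{2}\|_{L^{2}_{y}}^{2}+2|k|^{2}\|\partial_{y}\hat u_{2}\|_{L^{2}_{y}}^{2}+|k|^{4}\|\hat u_{2}\|_{L^{2}_{y}}^{2}$ under the Dirichlet condition $u_{2,\neq}(\pm 1)=0$. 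The third and fourth inequalities are structurally identical: for the third, $\Delta(\partial_{x},\partial_{z})u_{1,\neq}=\Lambda_{x,z}^{-2}(\partial_{x},\partial_{z})\partial_{x}\Delta\partial_{y}u_{2,\neq}-\Lambda_{x,z}^{-2}(\partial_{x},\partial_{z})\partial_{z}\Delta\omega_{2,\neq}$ gives the bound directly, and for the fourth I absorb the extra $\Lambda_{x,z}^{1/2}$ by keeping one factor of $\partial_{x}$ or $\partial_{z}$ in the numerator so that the resulting multiplier is $\Lambda_{x,z}^{-1/2}$ times a zeroth-order piece. There is no substantive obstacle; the only point requiring attention is the IBP equivalence $\|\nabla^{2}u_{2,\neq}\|_{L^{2}}\sim\|\Delta u_{2,\neq}\|_{L^{2}}$, which is legitimate precisely because of the non-slip boundary condition on $u_{2}$.
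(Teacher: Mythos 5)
Your proposal is correct. For the first inequality you argue exactly as the paper does: eliminate $\partial_{x}u_{1,\neq}$ via incompressibility, then absorb the mixed and pure horizontal derivatives of $u_{3,\neq}$ into $(\partial_{x}^{2}+\partial_{z}^{2})u_{3,\neq}$ using the pointwise symbol bound $|k_{1}k_{3}|\le\tfrac12(k_{1}^{2}+k_{3}^{2})$. For the remaining three inequalities the paper instead records the Pythagorean $L^{2}$-identity
\[
\|(\partial_x,\partial_z)(f_1,f_2)\|^2_{L^2}=\|\partial_zf_1-\partial_xf_2\|_{L^2}^2+\|\partial_xf_1+\partial_zf_2\|^2_{L^2},
\]
applied to $(f_1,f_2)=(\nabla u_{1,\neq},\nabla u_{3,\neq})$ (and its higher-order variants with $\Delta$, $\Lambda_{x,z}^{1/2}\nabla$), where the curl part is exactly $\nabla\omega_{2,\neq}$ and the divergence part is exactly $-\nabla\partial_{y}u_{2,\neq}$; you instead write out the equivalent Fourier reconstruction identities $\Lambda_{x,z}^{2}u_{1,\neq}=\partial_x\partial_y u_{2,\neq}-\partial_z\omega_{2,\neq}$ and $\Lambda_{x,z}^{2}u_{3,\neq}=\partial_y\partial_z u_{2,\neq}+\partial_x\omega_{2,\neq}$ and then invoke $L^{2}$-boundedness of the zero-order multipliers $\partial_x\Lambda_{x,z}^{-1}$, $\partial_z\Lambda_{x,z}^{-1}$. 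These are two packagings of the same Helmholtz orthogonality in the horizontal Fourier variables; the paper's identity is marginally slicker because it gives exact equalities, while yours makes the multiplier bookkeeping explicit. You are also right to flag, as the paper leaves implicit, that absorbing the $u_{2,\neq}$ contributions into $\|\Delta u_{2,\neq}\|_{L^2}$ (and its higher-order analogues) relies on the IBP equivalence $\|\nabla^{2}u_{2,\neq}\|_{L^{2}}\sim\|\Delta u_{2,\neq}\|_{L^{2}}$, which uses the Dirichlet condition $u_{2}(\pm1)=0$.
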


\begin{proof}
Thanks to $\text{div}u_{\neq}= \partial_xu_{1,\neq}+\partial_yu_{2,\neq}+\partial_zu_{3,\neq}=0,$  we have
\begin{align*}
&\|\nabla^j (\partial_x,\partial_z)\partial_xu_{\neq}\|_{L^{2}}\\
&\leq
\|\nabla^j (\partial_x,\partial_z)\partial_xu_{1,\neq}\|_{L^{2}}
+\|\nabla^j (\partial_x,\partial_z)\partial_xu_{2,\neq}\|_{L^{2}}
+\|\nabla^j (\partial_x,\partial_z)\partial_xu_{3,\neq}\|_{L^{2}}\\
&\leq\|\nabla^j (\partial_x,\partial_z)\partial_yu_{2,\neq}\|_{L^{2}}
+\|\nabla^j (\partial_x,\partial_z)\partial_zu_{3,\neq}\|_{L^{2}}
\\&\quad+\|\nabla^j (\partial_x,\partial_z)\partial_xu_{2,\neq}\|_{L^{2}}
+\|\nabla^j (\partial_x,\partial_z)\partial_xu_{3,\neq}\|_{L^{2}}\\ &\lesssim
\|\nabla^{j +1}(\partial_x,\partial_z)u_{2,\neq}\|_{L^{2}}
+\|\nabla^j (\partial_x^2,\partial_x\partial_z,\partial_z^2)u_{3,\neq}\|_{L^{2}}\\ &\lesssim \| \nabla^j (\partial_x^2+\partial_z^2)u_{3,\neq}\|_{L^2}+\| \nabla^{j+1}(\partial_x,\partial_z) u_{2,\neq}\|_{L^2}.
\end{align*}

Using the formula $\|(\partial_x,\partial_z)(f_1,f_2)\|^2_{L^2}=\|(\partial_zf_1-\partial_xf_2,\partial_xf_1+\partial_zf_2)\|^2_{L^2}$, we can deduce that
  \begin{align*}
   &\|(\partial_x,\partial_z)( \nabla u_{1,\neq},\nabla u_{3,\neq})\|^2_{L^2}= \|\nabla\omega_{2,\neq}\|^2_{L^2}+
  \|\nabla\partial_y u_{2,\neq}\|^2_{L^2},\\
  &\|\Delta(\partial_x,\partial_z)(u_{1,\neq},u_{3,\neq})\|^2_{L^2}= \|\Delta\omega_{2,\neq}\|^2_{L^2}+
  \|\Delta\partial_y u_{2,\neq}\|^2_{L^2},\\
   &\|\Lambda_{x,z}^{1/2}\Delta(u_{1,\neq},u_{3,\neq})\|^2_{L^2}\lesssim \|\Lambda_{x,z}^{-1/2}\Delta\omega_{2,\neq}\|^2_{L^2}+
  \|\Lambda_{x,z}^{-1/2}\Delta\partial_y u_{2,\neq}\|^2_{L^2},
\end{align*}
which give the other inequalities.
\end{proof}

\begin{lemma}\label{lem:u23-zero}
  It holds that for $j \in\{2,3\}$,
  \begin{align*}
    & \|\overline{u}_2\|_{H^2}+\|\nabla\overline{u}_2\|_{H^1}
    +\|\overline{u}_3\|_{H^1}+\|\partial_z\overline{u}_3\|_{H^1} \lesssim E_2,\\
    &\|\overline{u}_j\|_{L^\infty
    L^\infty}+\nu^{\f12}\|\nabla\overline{u}_j\|_{L^2L^\infty} \lesssim E_2,\\
    &\|\nabla(\overline{u}_j f)\|_{L^2}+ \|\overline{u}_j \nabla f\|_{L^2} \lesssim
    E_2\| f\|_{H^1}.
    \end{align*}
\end{lemma}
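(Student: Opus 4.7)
The lemma bundles two-dimensional Sobolev estimates for $\overline{u}_2,\overline{u}_3$ on the strip $[-1,1]\times\mathbb T$. My strategy is to combine (i) the non-slip boundary conditions $\overline{u}_2|_{y=\pm1}=\overline{u}_3|_{y=\pm1}=0$, (ii) the divergence-free identity $\partial_y\overline{u}_2+\partial_z\overline{u}_3=0$ (immediate from applying $P_0$ to $\mathrm{div}\,u=0$), which lets me trade $z$-derivatives of $\overline{u}_3$ for $y$-derivatives of $\overline{u}_2$, and (iii) standard elliptic regularity together with 2D (anisotropic) Sobolev embeddings on the strip. Throughout, the norms $\|\overline{u}_j\|_{H^k}$ are interpreted pointwise in $t$, so that $\|\Delta\overline{u}_2(t)\|_{L^2}\le E_2$ holds uniformly in $t$.

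For the first block of bounds, elliptic regularity for the Dirichlet Laplacian gives $\|\overline{u}_2\|_{H^2}+\|\nabla\overline{u}_2\|_{H^1}\lesssim\|\Delta\overline{u}_2\|_{L^2}\le E_2$. Poincar\'e in $y$ yields $\|\overline{u}_3\|_{H^1}\lesssim\|\nabla\overline{u}_3\|_{L^2}\le E_2$, while the divergence-free identity rewrites $\|\partial_z\overline{u}_3\|_{H^1}=\|\partial_y\overline{u}_2\|_{H^1}\lesssim\|\overline{u}_2\|_{H^2}\le E_2$. The $L^\infty L^\infty$ bounds follow from $H^2\hookrightarrow L^\infty$ in 2D for $\overline{u}_2$, and for $\overline{u}_3$ from the anisotropic inequality $\|g\|_{L^\infty_{y,z}}^2\lesssim(\|g\|_{L^2}+\|\partial_zg\|_{L^2})(\|\partial_yg\|_{L^2}+\|\partial_y\partial_zg\|_{L^2})$, valid for $g|_{y=\pm1}=0$ (proved by integrating the fundamental theorem of calculus successively in $y$ and $z$), which gives $\|\overline{u}_3\|_{L^\infty}^2\lesssim(\|\overline{u}_3\|_{H^1}+\|\partial_z\overline{u}_3\|_{H^1})^2\lesssim E_2^2$. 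For $\nu^{1/2}\|\nabla\overline{u}_2\|_{L^2L^\infty}$ I apply Agmon's inequality $\|g\|_{L^\infty}^2\lesssim\|g\|_{L^2}\|g\|_{H^2}$ to $\nabla\overline{u}_2$ together with $\|\overline{u}_2\|_{H^3}\lesssim\|\nabla\Delta\overline{u}_2\|_{L^2}$, and close by Cauchy--Schwarz in time: $\nu\|\nabla\overline{u}_2\|_{L^2L^\infty}^2\lesssim(\nu^{1/2}\|\Delta\overline{u}_2\|_{L^2L^2})(\nu^{1/2}\|\nabla\Delta\overline{u}_2\|_{L^2L^2})\le E_2^2$. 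For $\nabla\overline{u}_3$ I split $\|\partial_z\overline{u}_3\|_{L^\infty}=\|\partial_y\overline{u}_2\|_{L^\infty}$ (already controlled) and treat $\partial_y\overline{u}_3$ using its vanishing mean in $y$ (from $\overline{u}_3|_{y=\pm1}=0$): the zero-mean Gagliardo--Nirenberg bound $\|\partial_y\overline{u}_3(\cdot,z)\|_{L^\infty_y}^2\lesssim\|\partial_y\overline{u}_3(\cdot,z)\|_{L^2_y}\|\partial_y^2\overline{u}_3(\cdot,z)\|_{L^2_y}$ combined with anisotropic Sobolev in $z$ then gives the bound, after noting that $\nu^{1/2}\|\partial_y^2\overline{u}_3\|_{L^2L^2}\le\nu^{1/2}(\|\Delta\overline{u}_3\|_{L^2L^2}+\|\partial_y\partial_z\overline{u}_2\|_{L^2L^2})\le E_2$ by the divergence-free identity. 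Finally, the product estimate follows: $\|\overline{u}_j\nabla f\|_{L^2}\le\|\overline{u}_j\|_{L^\infty}\|\nabla f\|_{L^2}\lesssim E_2\|f\|_{H^1}$, while $\|(\nabla\overline{u}_j)f\|_{L^2}$ is handled by 2D Ladyzhenskaya $\|f\|_{L^4}^2\lesssim\|f\|_{L^2}\|\nabla f\|_{L^2}$ together with $\|\nabla\overline{u}_j\|_{L^4}\lesssim E_2$ (using $\overline{u}_2\in H^2$ for $j=2$, and combining $\|\partial_z\overline{u}_3\|_{H^1}$ with the zero-mean anisotropic argument for the $\partial_y$-part when $j=3$).

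The main obstacle is the lack of pointwise-in-time $H^2$ control on $\overline{u}_3$: from the definition of $E_2$ only $\|\overline{u}_3\|_{L^\infty_tH^1}\le E_2$ and $\nu^{1/2}\|\overline{u}_3\|_{L^2_tH^2}\le E_2$ are available, so the standard chain $H^2\hookrightarrow L^\infty$ cannot be used pointwise in $t$. Handling $\partial_y\overline{u}_3$ in $L^\infty$ (and in $L^4$ for the product estimate) therefore forces one to combine the divergence-free identity with anisotropic Gagliardo--Nirenberg estimates exploiting the Dirichlet condition in $y$, which is the most delicate part of the argument.
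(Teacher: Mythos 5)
Your first block, the $L^\infty L^\infty$ and $\nu^{1/2}\|\nabla\overline u_j\|_{L^2L^\infty}$ bounds, and the piece $\|\overline u_j\nabla f\|_{L^2}\le\|\overline u_j\|_{L^\infty}\|\nabla f\|_{L^2}$ all line up with the paper's argument (the paper routes the $L^\infty$ estimates through Lemma~\ref{lemma-A.1}, $\|f_1\|_{L^\infty}\lesssim\|f_1\|_{H^1}+\|\partial_zf_1\|_{H^1}$, rather than your by-hand Gagliardo--Nirenberg splitting, but the content is the same). You also correctly identify the crux: $E_2$ gives no pointwise-in-$t$ $H^2$-control of $\overline u_3$, only $\|\nabla\overline u_3\|_{L^\infty_tL^2}$ and $\nu^{1/2}\|\Delta\overline u_3\|_{L^2_tL^2}$.

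Where the argument breaks is in the product estimate for $j=3$. You bound $\|(\nabla\overline u_3)f\|_{L^2}\le\|\nabla\overline u_3\|_{L^4}\|f\|_{L^4}$ and claim $\|\nabla\overline u_3\|_{L^4}\lesssim E_2$ pointwise in $t$. The $\partial_z$-component is fine ($\partial_z\overline u_3=-\partial_y\overline u_2\in H^1$ pointwise), but the $\partial_y$-component is not: any $L^4$ bound via Ladyzhenskaya or Gagliardo--Nirenberg needs $\|\partial_y^2\overline u_3\|_{L^2}$ at the fixed time $t$, and
\begin{align*}
\partial_y^2\overline u_3=\Delta\overline u_3+\partial_y\partial_z\overline u_2,
\end{align*}
where $\|\Delta\overline u_3\|_{L^2}$ is \emph{not} controlled pointwise by $E_2$ (only $\nu^{1/2}\|\Delta\overline u_3\|_{L^2L^2}$). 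This is exactly the obstacle you flagged in your last paragraph, but the symmetric $L^4\times L^4$ H\"older split does not get around it.

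The fix is to replace the symmetric split by the anisotropic one that Lemma~\ref{lemma-A.1} encodes. Instead of $L^4\times L^4$, write
\begin{align*}
\|(\partial_y\overline u_3)f\|_{L^2}
&\le \|\partial_y\overline u_3\|_{L^2_yL^\infty_z}\,\|f\|_{L^\infty_yL^2_{x,z}}
\lesssim \big(\|\partial_y\overline u_3\|_{L^2}+\|\partial_z\partial_y\overline u_3\|_{L^2}\big)\,\|f\|_{H^1}.
\end{align*}
The $1$D Sobolev embedding in $z$ only asks for one extra $\partial_z$, so the factor on the left is $\lesssim\|\nabla\overline u_3\|_{L^2}+\|\partial_z\overline u_3\|_{H^1}\lesssim E_2$ \emph{pointwise} in $t$, with no $\partial_y^2\overline u_3$ ever appearing. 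This is precisely what makes the paper's estimate close: the input to Lemma~\ref{lemma-A.1} is $\|\overline u_j\|_{H^1}+\|\partial_z\overline u_j\|_{H^1}$, never a full $H^2$ norm of $\overline u_3$, so there is no need for pointwise $H^2$-control of $\overline u_3$ at all.
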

\begin{proof}
Thanks to $\partial_y\overline{u}_2+\partial_z\overline{u}_3=0 $,  we have
\begin{align*}
&\|\overline{u}_2\|_{H^2}+\|\nabla\overline{u}_2\|_{H^1}
    +\|\overline{u}_3\|_{H^1}+\|\partial_z\overline{u}_3\|_{H^1}\\
    &\lesssim
    \|\Delta\overline{u}_2\|_{L^2}+\|\nabla\overline{u}_3\|_{L^2}
    +\|\partial_y\overline{u}_2\|_{H^1}\\
&\lesssim\|\Delta\overline{u}_2\|_{L^2}+\|\nabla\overline{u}_3\|_{L^2} \lesssim E_2.
\end{align*}
By Lemma \ref{lemma-A.1},  we get
\begin{align*}
&\|\overline{u}_2\|_{L^{\infty}}+\|\overline{u}_3\|_{L^{\infty}}\lesssim \|\overline{u}_2\|_{H^1}+\|\partial_z\overline{u}_2\|_{H^1}+\|\overline{u}_3\|_{H^1}
+\|\partial_z\overline{u}_3\|_{H^1}\lesssim E_2
\end{align*} for any $t\in[0,T]$,  and
\begin{align*}
&\|\nabla\overline{u}_2\|_{L^2L^{\infty}}\lesssim\|\nabla\overline{u}_2\|_{L^2H^2}\lesssim\|\nabla\Delta\bar{u}^2\|_{L^2L^2}\lesssim\nu^{-\f12}E_2,\\
&\|\nabla\overline{u}_3\|_{L^{\infty}}
\lesssim \|\partial_z\nabla\overline{u}_3\|_{H^1}+\|\nabla\overline{u}_3\|_{H^1}\lesssim \|\nabla\Delta\overline{u}_2\|_{L^2}+\|\Delta\overline{u}_3\|_{L^2}
\end{align*}
for  any $t\in[0,T],$ and then
\begin{align*}
&\|\nabla\overline{u}_3\|_{L^2L^{\infty}}\lesssim\|\nabla\Delta\overline{u}_2\|_{L^2L^2}+\|\Delta\overline{u}_3\|_{L^2L^2}
\lesssim \nu^{-\f12}E_2.
\end{align*}

By Lemma \ref{lemma-A.1} again, we have
\begin{align*}
   \|\nabla(\overline{u}_j f)\|_{L^2}+ \|\overline{u}_j \nabla f\|_{L^2} &\leq\big(\|\overline{u}_j\|_{H^1}+\|\partial_z\overline{u}_j\|_{H^1}\big)\| f\|_{H^1}+\|\overline{u}_j \|_{L^\infty
    }\|\nabla f\|_{L^2}\\&\lesssim E_2\| f\|_{H^1},\qquad
    j \in\{2,3\}.
  \end{align*}
  \end{proof}

\begin{lemma}\label{lem:u-nonzero}
  It holds that
  \begin{align*}
   &\|e^{c\nu^{\f12}t}\partial_x\nabla u_{2,\neq}\|_{L^2L^2}+\|e^{c\nu^{\f12}t}\partial_x(\partial_x,\partial_z) u_{j,\neq}\|_{L^2L^2}\leq  \nu^{-\f{1}{4}}E_3,\ \ j\in\{2,3\},\\
   &\|e^{c\nu^{\f12}t}\partial_x^2 u_{1,\neq}\|_{L^2L^2}\lesssim\nu^{-\f{1}{4}}E_3,\\
   &{\nu^{1/2}\Big(\|e^{c\nu^{\f12}t}\partial_z\partial_x  u_{1,\neq}\|_{L^2L^2}+\|e^{c\nu^{\f12}t}\partial_z\nabla (u_{2,\neq},u_{3,\neq})\|_{L^2L^2}\Big)\lesssim E_3}.
  \end{align*}
\end{lemma}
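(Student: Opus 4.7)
The plan is to estimate each quantity on the left by an elementary Fourier-multiplier comparison against a component of the energy $E_3$ defined in \eqref{eq:E30-define-00}--\eqref{eq:E33-define-00}, using, whenever $u_{1,\neq}$ appears, the divergence-free identity $\partial_x u_{1,\neq}=-\partial_y u_{2,\neq}-\partial_z u_{3,\neq}$. All comparisons ultimately rest on the two trivial facts $1\le|k_1|\le|k|$ and $|k_3|\le|k|$ available on nonzero streamwise modes.

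For the first line, I will use $|k_1|\le|k_1|^{1/2}|k|^{1/2}$ to obtain the pointwise Fourier bound $\|\partial_x\nabla u_{2,\neq}\|_{L^2}\le\||\partial_x|^{1/2}\Lambda_{x,z}^{1/2}\nabla u_{2,\neq}\|_{L^2}$, which combined with the corresponding term in $E_{3,0}$ (prefactor $\nu^{1/4}$) yields the factor $\nu^{-1/4}E_3$. Similarly $|k_1||k|\le|k_1|^{1/2}|k|^{3/2}$ gives $\|\partial_x(\partial_x,\partial_z)u_{j,\neq}\|_{L^2}\le\||\partial_x|^{1/2}\Lambda_{x,z}^{3/2}u_{j,\neq}\|_{L^2}$, which for $j=2$ is dominated by $\||\partial_x|^{1/2}\Lambda_{x,z}^{1/2}\nabla u_{2,\neq}\|_{L^2}$ (sitting inside $E_{3,0}$), and for $j=3$ coincides with the corresponding term in $E_{3,2}$. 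The second line is then immediate from the first via $\partial_x^2 u_{1,\neq}=-\partial_x\partial_y u_{2,\neq}-\partial_x\partial_z u_{3,\neq}$, so that $\|\partial_x^2 u_{1,\neq}\|_{L^2}\le\|\partial_x\nabla u_{2,\neq}\|_{L^2}+\|\partial_x\partial_z u_{3,\neq}\|_{L^2}$.

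For the third line, the key observation is the pointwise Fourier comparison $|k_3|^2|k|^2+|k_3|^2|\partial_y|^2\le|k|\bigl(|k|^4+2|k|^2|\partial_y|^2+|\partial_y|^4\bigr)$, valid whenever $|k|\ge1$; since the right-hand side equals $|k|\cdot\|(\partial_y^2-|k|^2)\hat u_{2,\neq}\|_{L^2_y}^2$ on the Fourier side, this gives $\|\partial_z\nabla u_{2,\neq}\|_{L^2}\le\|\Lambda_{x,z}^{1/2}\Delta u_{2,\neq}\|_{L^2}$, and the $\nu^{1/2}$-weighted space-time norm is controlled by $E_{3,0}$. For $u_{3,\neq}$ the simpler inequality $|k_3|\le|k|^{3/2}$ places $\nu^{1/2}\partial_z\nabla u_{3,\neq}$ inside the $\nu^{1/2}\Lambda_{x,z}^{3/2}\nabla u_{3,\neq}$ component of $E_{3,2}$. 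Finally, divergence-freeness gives $\partial_z\partial_x u_{1,\neq}=-\partial_z\partial_y u_{2,\neq}-\partial_z^2 u_{3,\neq}$, both terms of which are already dominated by the two preceding bounds.

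I do not foresee any real obstacle: the lemma is a bookkeeping step that repackages the abstract energy $E_3$ into the concrete velocity norms used in the nonlinear interaction estimates of the next subsection. The only care required is to pair each multiplier on the left with a term of $E_3$ carrying at least as much weight in each of $|k_1|$, $|k|$, and $\partial_y$, so as not to waste any factor of $\nu$; the divergence-free identity, applied to $u_{1,\neq}$, ensures that the single component $u_{1,\neq}$ (which appears with the weakest weight in $E_3$) is never estimated directly.
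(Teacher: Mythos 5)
Your proposal is correct and follows essentially the same route as the paper's own proof: for each left-hand side, compare Fourier multipliers (using $|k_1|\le|k|$, $|k_3|\le|k|$, and $|k|\ge1$ on nonzero streamwise modes) against the appropriate term of $E_{3,0}$ or $E_{3,2}$, and use the divergence-free identity $\partial_x u_{1,\neq}=-\partial_y u_{2,\neq}-\partial_z u_{3,\neq}$ to reduce the $u_{1,\neq}$ estimates to the already-established ones. The multiplier comparison $\|\partial_z\nabla u_{2,\neq}\|_{L^2}\le\|\Lambda_{x,z}^{1/2}\Delta u_{2,\neq}\|_{L^2}$ that you single out is precisely the ingredient the paper uses for the third inequality.
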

\begin{proof}
The first inequality of this lemma can be deduced by using the definitions of $E_{3,0}$ and $E_{3,2}$ in \eqref{eq:E30-define-00} and \eqref{eq:E32-define-00} respectively.  Due to  $\partial_x^2 u_{1,\neq}=-\partial_x\partial_y u_{2,\neq}-\partial_x\partial_z u_{3,\neq} $, the second inequality follows from the first inequality.

Note that $\partial_z\partial_x u_{1,\neq}=-\partial_z\partial_y u_{2,\neq}-\partial_z\partial_z u_{3,\neq}$, then we have
\begin{align*}
\|e^{c\nu^{\f12}t} \partial_z \partial_x u_{1,\neq} \|_{ L^2 L^2}^2&\leq \|e^{c\nu^{\f12}t} \partial_z\partial_y u_{2,\neq} \|_{L^2 L^2}^2+ \|e^{c\nu^{\f12}t} \partial_z \partial_z  u_{3,\neq} \|_{L^2 L^2}^2\\
&\lesssim \|e^{c\nu^{\f12}t} \partial_z\nabla u_{2,\neq} \|_{L^2 L^2}^2+ \|e^{c\nu^{\f12}t} \Lambda_{x,z}^{3/2}\nabla u_{3,\neq}\|_{L^2 L^2}^2\\\
&\lesssim \nu^{-1} E_{3,0}^2+\nu^{-1} E_{3,2}^2\lesssim \nu^{-1} E_{3}^2,
 \end{align*}
 which yields the first part of the last inequality.

Notice that
\begin{align*}
&\|\partial_z\nabla (u_{2,\neq},u_{3,\neq})\|_{L^2}^2\leq\|\Lambda_{x,z}^{3/2}\nabla u_{3,\neq}\|_{L^2}^2+ \|\Lambda_{x,z}^{1/2}\Delta u_{2,\neq}\|_{L^2}^2,
%      \lesssim\|\Lambda_{x,z}^{1/2}\Delta u_{\neq}\|_{L^2}\|\partial_x^2 u_{\neq}\|_{L^2}+\|\Lambda_{x,z}^{3/2}\nabla u_{3,\neq}\|_{L^2}^2+ \|\Lambda_{x,z}^{1/2}\Delta u_{2,\neq}\|_{L^2}^2,
      \end{align*}
 which along with Lemma \ref{lem:u-relation} gives
  \begin{align*}&\nu \|e^{c\nu^{\f12}t}\partial_z\nabla (u_{2,\neq},u_{3,\neq})\|_{L^2L^2}^2 \\
      &\lesssim \nu\|e^{c\nu^{\f12}t}\Lambda_{x,z}^{3/2}\nabla u_{3,\neq}\|_{L^2L^2}^2+ \nu\|e^{c\nu^{\f12}t}\Lambda_{x,z}^{1/2}\Delta u_{2,\neq}\|_{L^2L^2}^2\\
      &\lesssim E_{3,2}^2+E_{3,1}^2\lesssim E_3^2.
  \end{align*}

This completes the proof of the lemma.
  \end{proof}

\subsection{Interaction between non-zero modes}
\begin{lemma}\label{lem:int-nn}
It holds that
\begin{equation}\label{lemma2.1-1}
\begin{aligned}
& \|e^{2c\nu^{\frac{1}{2}}t} |u_{\not=}|^2 \|_{L^2 L^2}+ \|e^{2c\nu^{\frac{1}{2}}t} u_{\not=}\cdot \nabla u_{\not=} \|_{L^2 L^2}+ \|e^{2c\nu^{\frac{1}{2}}t} \partial_z(u_{\not=}\cdot \nabla u_{3,\not=}) \|_{L^2 L^2}\\
&\qquad + \|e^{2c\nu^{\frac{1}{2}}t} \partial_x(u_{\not=}\cdot \nabla u_{\not=}) \|_{L^2 L^2}+ \|e^{2c\nu^{\frac{1}{2}}t} \nabla (u_{\not=}\cdot \nabla u_{2,\not=})\|_{L^2 L^2}\lesssim \nu^{-1} E_3^2,
\end{aligned}
\end{equation}
and
\begin{equation}\label{lemma2.1-3}
\begin{aligned}
& \|e^{2c\nu^{\frac{1}{2}}t} \nabla (u_{\not=}\cdot \nabla u_{\not=}) \|_{L^2 L^2}\lesssim \nu^{-\frac{11}{8}} E_3^2.
\end{aligned}
\end{equation}
\end{lemma}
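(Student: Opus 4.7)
All of the quantities in \eqref{lemma2.1-1}--\eqref{lemma2.1-3} are bilinear in $u_{\neq}$, so my plan is to invoke the anisotropic product laws of Lemma~\ref{lemma-A.4}, split the weight as $e^{2c\nu^{1/2}t}=e^{c\nu^{1/2}t}\cdot e^{c\nu^{1/2}t}$ so that each copy of $u_{\neq}$ carries one half, and then apply H\"older in time to match the resulting spatial bounds with the weighted $L^p_t L^q_x$ ingredients of $E_{3,0},\dots,E_{3,3}$. The velocity bounds of Lemma~\ref{lem:u-nonzero}, together with Lemma~\ref{lem:u-relation} (which trades $u_{1,\neq}$ derivatives for $u_{2,\neq},u_{3,\neq}$ derivatives via $\mathrm{div}\,u=0$), will be the only quantitative input needed.

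\textbf{The $\nu^{-1}E_3^2$ bounds of \eqref{lemma2.1-1}.} Each term here has at most two derivatives after expanding by the Leibniz rule, and the factor $u_{\neq}$ appearing without an outer $\nabla$ can always be replaced by one of $u_{2,\neq}$ or $u_{3,\neq}$ once $u_{1,\neq}$ is eliminated through incompressibility. For a generic product such as $(e^{c\nu^{1/2}t}u_{\neq})\cdot\nabla(e^{c\nu^{1/2}t}u_{\neq})$ I would apply \eqref{f1}--\eqref{f4} with $\{i,j\}=\{1,3\}$, sending the $\partial_i$-derivative onto the factor kept in $L^2_tL^2_x$ and the $\partial_j$-regularity onto the factor kept in $L^\infty_tH^1_x$. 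The $L^\infty_t$ factor is then controlled by the $L^\infty_tL^2_x$ ingredients of $E_{3,0}$, notably $\|\Lambda_{x,z}^{1/2}\nabla u_{2,\neq}\|_{L^\infty L^2}\le E_3$ and $\nu^{1/4}\|(\partial_x,\partial_z)\nabla u_{2,\neq}\|_{L^\infty L^2}\le E_3$, while the $L^2_t$ factor is controlled by $\|\partial_x\nabla u_{2,\neq}\|_{L^2L^2}\lesssim\nu^{-1/4}E_3$ and $\nu^{1/2}\|\partial_z\nabla(u_{2,\neq},u_{3,\neq})\|_{L^2L^2}\lesssim E_3$ from Lemma~\ref{lem:u-nonzero}. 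Each factor contributes a loss of $\nu^{-1/2}$, giving $\nu^{-1}E_3^2$. The pure $L^2L^2$ bound on $|u_{\neq}|^2$ is the same argument without the extra derivative, exploiting $\|f_{\neq}\|_{L^2}\le\|\partial_xf_{\neq}\|_{L^2}$ on non-zero modes.

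\textbf{The $\nu^{-11/8}E_3^2$ bound of \eqref{lemma2.1-3}.} The outer $\nabla$ in $\|\nabla(u_{\neq}\cdot\nabla u_{\neq})\|_{L^2L^2}$ forces the use of \eqref{f4} or \eqref{f6}, which demand an $H^2_x$-bound on one factor. Unlike the fifth term of \eqref{lemma2.1-1}, where the inner derivative sits on the tame component $u_{2,\neq}$, here the inner derivative may fall on $u_{1,\neq}$ or $u_{3,\neq}$, whose $L^\infty_tH^2_x$ bounds are the weaker ones $\nu^{7/24}\|\Delta u_{2,\neq}\|_{L^\infty L^2}\le E_3$, $\nu^{1/4}\|(\partial_x^2+\partial_z^2)u_{3,\neq}\|_{L^\infty L^2}\le E_3$ and the vorticity bound $\nu^{3/8}\|\Lambda_{x,z}^{-1/2}\nabla\omega_{2,\neq}\|_{L^\infty L^2}\le E_3$ from $E_{3,1}$ (converted via Lemma~\ref{lem:u-relation}). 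Pairing the worst $L^\infty_t$ weight with the best available $L^2_tL^2_x$ weight yields an extra loss of $\nu^{-3/8}$ relative to \eqref{lemma2.1-1}, which is precisely the gap between $\nu^{-1}$ and $\nu^{-11/8}$.

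\textbf{Main obstacle.} The hard part is not any single step but the bookkeeping: $E_3$ is a sum of many weighted norms at different scalings and matching them optimally requires a case analysis of which factor carries which derivative and which time norm. A secondary point, which motivates the systematic use of Lemma~\ref{lem:u-relation}, is that $u_{1,\neq}$ never appears directly in $E_3$ and must be converted to $\partial_yu_{2,\neq}+\partial_zu_{3,\neq}$ before any of the anisotropic product estimates can be applied.
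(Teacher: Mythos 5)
Your overall framework — anisotropic product laws from Lemma~\ref{lemma-A.4} split by the index $j$ of the transport velocity, elimination of $u_{1,\neq}$ through Lemma~\ref{lem:u-relation}, splitting the weight so each factor carries $e^{c\nu^{1/2}t}$, and then a H\"older pairing of an $L^2_tL^2_x$ piece against an $L^\infty_tL^2_x$ piece — is precisely what the paper does for \eqref{lemma2.1-1}, and that part of your sketch is essentially correct (the losses actually split as $\nu^{-3/4}\times\nu^{-1/4}$ rather than $\nu^{-1/2}\times\nu^{-1/2}$, but the total $\nu^{-1}$ is right and this is a cosmetic point).

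For \eqref{lemma2.1-3}, however, you have the roles of the $L^2_t$ and $L^\infty_t$ factors the wrong way around, and this is a genuine gap rather than a bookkeeping slip. You propose to place the three-derivative ($H^2_x$) factor in $L^\infty_t$ and cite $\nu^{3/8}\|\Lambda_{x,z}^{-1/2}\nabla\omega_{2,\neq}\|_{L^\infty L^2}\le E_{3,1}$ as the relevant $L^\infty_t$ vorticity bound, but this controls only $\Lambda_{x,z}^{-1/2}\nabla\omega_{2,\neq}$ — roughly $H^{3/2}$ of $\omega_{2,\neq}$, hence $H^{5/2}$ of $u_{1,\neq},u_{3,\neq}$ — and there is no quantity in any of $E_{3,0},\dots,E_{3,3}$ that bounds $\|\Delta\omega_{2,\neq}\|_{L^\infty L^2}$, which is what an $L^\infty_tH^2_x$ estimate of $u_{1,\neq},u_{3,\neq}$ would require. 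The paper's proof necessarily keeps the three-derivative factor in $L^2_tL^2_x$, using the $\nu^{3/8}\cdot\nu^{3/4}\|e^{c\nu^{1/2}t}\Delta\omega_{2,\neq}\|_{L^2L^2}\le E_{3,1}$ entry and $\nu^{7/24}\cdot\nu^{1/2}\|e^{c\nu^{1/2}t}\partial_y\Delta u_{2,\neq}\|_{L^2L^2}\le E_{3,0}$, while the two-derivative factor goes in $L^\infty_tL^2_x$ exactly as in \eqref{lemma2.1-1}. The extra factor $\nu^{-3/8}$ relative to \eqref{lemma2.1-1} therefore originates in the $L^2_tL^2_x$ piece (its square-weight drops from $\nu^{-3/2}$ to $\nu^{-9/4}$ because one must climb the $\omega_2$-ladder $E_{3,1}$ rather than stay on the $u_2$-ladder $E_{3,0}$), not in the $L^\infty_t$ piece, whose weight stays at $\nu^{-1/2}$.
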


\begin{proof}
By \eqref{f1}, we have
\begin{align*}
\||u_{\neq}|^2\|_{L^2}\lesssim& \big(\|\partial_xu_{\neq}\|_{H^1}+\|u_{\neq}\|_{H^1}\big)\big(\|\partial_zu_{\neq}\|_{L^2}+\|u_{\neq}\|_{L^2}\big)\\
\lesssim& \|\nabla\partial_x^2 u_{\neq}\|_{L^2} \|(\partial_x,\partial_z)u_{\neq}\|_{L^2}.
\end{align*}
For $j\in\{1,3\}$, we get by \eqref{f2}  that
\begin{align*}
&\|u_{j,\neq}\partial_j u_{\neq}\|_{L^2}+\|\partial_x(u_{j,\neq}\partial_j u_{\neq})\|_{L^2}\\
& \lesssim \big\|(\partial_x\pa_zu_{j,\neq},\partial_xu_{j,\neq},\partial_zu_{j,\neq},u_{j,\neq})\big\|_{H^1}
\|(\partial_x\partial_j u_{\neq},\partial_j u_{\neq})\|_{L^2}\\
&\lesssim\big\|
\nabla(\partial_x,\partial_z)\partial_xu_{\neq}\big\|_{L^2}
\|(\partial_x,\partial_z)\partial_xu_{\neq}\|_{L^2},
\end{align*}
and by \eqref{f2}  again, we have
\begin{align*}
&\|\partial_z(u_{j,\neq}\partial_j u_{3,\neq})\|_{L^2}\\
&\lesssim \|(\partial_x\partial_zu_{j,\neq},\partial_xu_{j,\neq},
\partial_zu_{j,\neq},u_{j,\neq})\|_{H^1}
\|(\partial_z\partial_j u_{3,\neq},\partial_j u_{3,\neq})\|_{L^2}\\
&\lesssim \|\nabla(\partial_x,\partial_z)\partial_xu_{\neq}\|_{L^2}\|(\partial_x^2+\partial_z^2)
u_{3,\neq}\|_{L^2}.
\end{align*}
For $j=2$, by \eqref{f3}, we have
\begin{align*}
&\|u_{j,\neq}\partial_j u_{\neq}\|_{L^2}+\|(\partial_x,\partial_z)(u_{j,\neq}\partial_j u_{\neq})\|_{L^2}\\ &\lesssim\|(\partial_xu_{j,\neq},\partial_zu_{j,\neq},u_{j,\neq}) \|_{H^{1}}\|(\partial_x\partial_z\partial_j u_{\neq},\partial_x\partial_j u_{\neq},\partial_z\partial_j u_{\neq},
\partial_j u_{\neq})\|_{L^2}\\
&\lesssim \|(\partial_x,\partial_z)\nabla u_{\neq}^2\|_{L^{2}}\|\nabla(\partial_x,\partial_z)\partial_xu_{\neq}\|_{L^2}.
\end{align*}
Summing up,  we get by Lemma \ref{lem:u-relation}  that
\begin{align*}
&\||u_{\neq}|^2\|_{L^2}+\|u_{\neq}\cdot\nabla u_{\neq}\|_{L^2}+\|\partial_x(u_{\neq}\cdot\nabla u_{\neq})\|_{L^2}+\|\partial_z(u_{\neq}\cdot\nabla u_{3,\neq})\|_{L^2}\\
&\lesssim \|\nabla(\partial_x,\partial_z)\partial_xu_{\neq}\|_{L^2}
\big(\|(\partial_x,\partial_z)\partial_xu_{\neq}\|_{L^2}\\
&\quad+
\|(\partial_x^2+\partial_z^2)u_{3,\neq}\|_{L^2}+\|(\partial_x,\partial_z)\nabla u_{2,\neq}\|_{L^{2}}\big)\\
&\lesssim \big(\|(\partial_x,\partial_z)\Delta u_{2,\neq}\|_{L^2}+\|(\partial_x^2+\partial_z^2)\nabla u_{3,\neq}\|_{L^2}\big)\\
&\quad\times \big(\|(\partial_x,\partial_z)\nabla u_{2,\neq}\|_{L^{2}}+\|(\partial_x^2+\partial_z^2) u_{3,\neq}\|_{L^2}\big).
\end{align*}

For $j \in\{1,3\}$, by \eqref{f4}, we have
\begin{align*}
\|\nabla(u_{j,\neq}\partial_j u_{2,\neq})\|_{L^2}\lesssim &\|(\partial_x\partial_zu_{j,\neq},\partial_xu_{j,\neq},\partial_zu_{j,\neq},u_{j,\neq})\|_{H^1}
\|\partial_j u_{2,\neq}\|_{H^1}\\
\lesssim&\|\nabla(\partial_x,\partial_z)\partial_x u_{\neq}\|_{L^2}
\|(\partial_x,\partial_z)\nabla u_{2,\neq}\|_{L^2},
\end{align*}
and for $j=2$, by \eqref{f5}, we have
\begin{align*}
\|\nabla(u_{j,\neq}\partial_j u_{2,\neq})\|_{L^2}\lesssim& \|(\partial_zu_{j,\neq},u_{j,\neq})\|_{H^1}\|(\partial_x\partial_j u_{2,\neq}, \partial_j u_{2,\neq})\|_{H^1}\\
\lesssim &\|(\partial_x,\partial_z)\nabla u_{2,\neq}\|_{L^{2}}\|\partial_x\Delta u_{2,\neq}\|_{L^2}.
\end{align*}
Then it follows from Lemma \ref{lem:u-relation} that
\begin{align*}
&\|\nabla(u_{\neq}\cdot\nabla u_{2,\neq})\|_{L^2}\\
\lesssim&  ( \|(\partial_x,\partial_z)\Delta u_{2,\neq}\|_{L^2}+\|(\partial_x^2+\partial_z^2)\nabla u_{3,\neq}\|_{L^2}) \|(\partial_x,\partial_z)\nabla u_{2,\neq}\|_{L^{2}}.
\end{align*}
This shows that
\begin{align*}
&\|e^{2c\nu^{\f12}t}|u_{\neq}|^2\|_{L^2L^2}^2+\|e^{2c\nu^{\f12}t}u_{\neq}\cdot\nabla u_{\neq}\|_{L^2L^2}^2+\|e^{2c\nu^{\f12}t}\partial_x(u_{\neq}\cdot\nabla u_{\neq})\|_{L^2L^2}^2\\
\nonumber&\quad+\|e^{2c\nu^{\f12}t}\partial_z(u_{\neq}\cdot\nabla u_{3,\neq})\|_{L^2L^2}^2+\|e^{2c\nu^{\f12}t}\nabla(u_{\neq}\cdot\nabla u_{2,\neq})\|_{L^2L^2}^2\\
&\lesssim \big( \|e^{c\nu^{\f12}t}(\partial_x,\partial_z)\Delta u_{2,\neq}\|_{L^2L^2}^2+\|e^{c\nu^{\f12}t}\nabla(\partial_x^2+\partial_z^2) u_{3,\neq}\|_{L^2L^2}^2\big)\\
&\quad\times \big(\|e^{c\nu^{\f12}t}(\partial_x,\partial_z)\nabla u_{2,\neq}\|_{L^{\infty}L^2}^2+\|e^{c\nu^{\f12}t}(\partial_x^2+\partial_z^2) u_{3,\neq}\|_{L^{\infty}L^2}^2\big)\\
&\lesssim (\nu^{-\f32}E_{3,0}^2+\nu^{-\f32}E_{3,3}^2)(\nu^{-\f12}E_{3,0}^2+\nu^{-\f12}E_{3,3}^2)\lesssim \nu^{-2}E_3^4.
\end{align*}
This proves \eqref{lemma2.1-1}.

For $j \in\{1,3\}$, we get by \eqref{f6} and Lemma \ref{lem:u-relation} that
\begin{align*}
\|\nabla(u_{j,\neq}\partial_j u_{\neq})\|_{L^2}^2
\leq& C\Big(\|(\partial_zu_{j,\neq},u_{j,\neq})\|_{H^2}^2\|(\partial_x\partial_j u_{\neq},\partial_j u_{\neq})\|_{L^2}^2\\&\quad+
\|(\partial_x\partial_zu_{j,\neq},\partial_xu_{j,\neq},\partial_zu_{j,\neq},u_{j,\neq})\|_{L^2}^2\|\partial_j u_{\neq}\|_{H^2}^2\Big)\\ \lesssim& \Big(\|\Delta(\partial_x,\partial_z)u_{j,\neq}\|_{L^2}^2\|(\partial_x,\partial_z) \partial_x u_{\neq}\|_{L^2}^2\\
&+
\|(\partial_x,\partial_z) \partial_xu_{j,\neq}\|_{L^2}^2\|\Delta\partial_j u_{\neq}\|_{L^2}^2\Big)\\ \lesssim& \big(\|\nabla\Delta u_{2,\neq}\|_{L^2}^2+\|\Delta\omega_{2,\neq}\|_{L^2}^2\big)\\
&\times \big(\|(\partial_x^2+\partial_z^2 )u_{3,\neq}\|_{L^2}^2+\|(\partial_x,\partial_z)\nabla u_{2,\neq}\|_{L^2}^2\big),
\end{align*}
and for $j=2$,  by \eqref{f5}  and  Lemma \ref{lem:u-relation}, we have
\begin{align*}
\|\nabla(u_{j,\neq}\partial_j u_{\neq})\|_{L^2}^2
\lesssim& \|(\partial_zu_{j,\neq},u_{j,\neq})\|_{H^1}\|(\partial_x\partial_j u_{\neq},\partial_j u_{\neq})\|_{H^1}\\ \lesssim& \|(\partial_x,\partial_z)\nabla u_{2,\neq}\|_{L^2}^2\|\Delta\partial_x u_{\neq}\|_{L^2}^2\\ \leq&C\|(\partial_x,\partial_z)\nabla u_{2,\neq}\|_{L^2}^2\big(\|\nabla\Delta u_{2,\neq}\|_{L^2}^2+\|\Delta\omega_{2,\neq}\|_{L^2}^2\big).
\end{align*}
Thus, we arrive at
\begin{align*}
&\|e^{2c\nu^{\f12}t}\nabla(u_{\neq}\cdot\nabla u_{\neq})\|_{L^2L^2}^2\\
&\lesssim\Big(\|e^{c\nu^{\f12}t}\nabla\Delta u_{2,\neq}\|_{L^2L^2}^2+\|e^{c\nu^{\f12}t}\Delta\omega_{2,\neq}\|_{L^2L^2}^2\Big)\\
&\quad\times
\Big(\|e^{c\nu^{\f12}t}(\partial_x^2+\partial_z^2 )u_{3,\neq}\|_{L^{\infty}L^2}^2+\|e^{c\nu^{\f12}t}(\partial_x,\partial_z)\nabla u_{2,\neq}\|_{L^{\infty}L^2}^2\Big)\\
&{ \lesssim \big(\nu^{-\f{19}{12}}E_{3,0}^2+\nu^{-\f{9}{4}}E_{3,1}^2\big) (\nu^{-\f12}E_{3,3}^2+\nu^{-\f12}E_{3,0}^2)\lesssim \nu^{-\f{11}4}E_3^4},
\end{align*}
which gives  \eqref{lemma2.1-3}.
\end{proof}

\subsection{Interaction between zero mode and non-zero mode}

The following lemma gives the intereaction between $\overline{u}_1$ and $u_{2,\neq},u_{3,\neq}$.

\begin{lemma}\label{lem:int-zn-1-23}
  It holds that
  \begin{align*}
    &\|e^{c\nu^{\f12}t}(\partial_x,\partial_z)
    \big(\overline{u}_1\partial_xu_{3,\neq}\big)\|^2_{L^2L^2}
    +\|e^{c\nu^{\f12}t}(\partial_x,\partial_z )
    (\overline{u}_1\partial_xu_{2,\neq})\|^2_{L^2L^2}\\&\quad
    +\|e^{c\nu^{\f12}t}\partial_x
    ((u_{2,\neq}\partial_y+u_{3,\neq}\partial_z)\overline{u}_1)\|^2_{L^2L^2}
    \lesssim \nu^{-\f12}E^2_1E_3^2.
  \end{align*}
\end{lemma}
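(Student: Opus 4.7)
The plan is to treat each of the three terms by exploiting the simple structural fact that $\overline{u}_1$ is a zero mode (independent of $x$), so that the anisotropic bilinear estimates of Lemma \ref{Lem: bilinear zero nonzero} apply directly. In every case the $\overline{u}_1$-factor will be absorbed into the $L^\infty_t H^2$-type control provided by $E_1$, while the non-zero-mode factor will be absorbed into an $L^2_tL^2$-type control provided by $E_3$ via Lemma \ref{lem:u-nonzero}. Since $E_1 = \|\overline{u}_1\|_{L^\infty H^2} + \nu^{1/2}\|\nabla\overline{u}_1\|_{L^2 H^2}$, we can pay for the $\overline{u}_1$ in $L^\infty_t$, and each non-zero factor $\partial_x(\partial_x,\partial_z)u_{j,\neq}$ costs $\nu^{-1/4}E_3$ in $L^2_tL^2$; squaring and multiplying produces the factor $\nu^{-1/2}E_1^2E_3^2$ uniformly for the three terms.

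For the first two terms, I will apply the second inequality in Lemma \ref{Lem: bilinear zero nonzero} with $f_1 = \overline{u}_1$ and $f_2 = \partial_x u_{j,\neq}$ ($j\in\{2,3\}$), yielding
\begin{align*}
\|(\partial_x,\partial_z)(\overline{u}_1\,\partial_x u_{j,\neq})\|_{L^2}
&\lesssim \bigl(\|\overline{u}_1\|_{H^1}+\|\partial_z\overline{u}_1\|_{H^1}\bigr)
\bigl(\|\partial_xu_{j,\neq}\|_{L^2}+\|(\partial_x,\partial_z)\partial_xu_{j,\neq}\|_{L^2}\bigr)\\
&\lesssim \|\overline{u}_1\|_{H^2}\,\|\partial_x(\partial_x,\partial_z)u_{j,\neq}\|_{L^2},
\end{align*}
where the lower-order term $\|\partial_xu_{j,\neq}\|_{L^2}$ is absorbed using $|k_1|\ge 1$ for non-zero modes. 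Integrating in $t$ with the weight $e^{c\nu^{1/2}t}$, taking $\|\overline{u}_1\|_{H^2}$ out in $L^\infty_t$, and applying the $\nu^{-1/4}E_3$ bound for $\|e^{c\nu^{1/2}t}\partial_x(\partial_x,\partial_z)u_{j,\neq}\|_{L^2L^2}$ from Lemma \ref{lem:u-nonzero} then gives the required $\nu^{-1/2}E_1^2E_3^2$.

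For the third term, the key observation is that since $\partial_x\overline{u}_1=0$, we may rewrite
\begin{equation*}
\partial_x\bigl((u_{2,\neq}\partial_y+u_{3,\neq}\partial_z)\overline{u}_1\bigr)
= \partial_x\bigl(u_{2,\neq}\,\partial_y\overline{u}_1\bigr) + \partial_x\bigl(u_{3,\neq}\,\partial_z\overline{u}_1\bigr),
\end{equation*}
and apply the fourth inequality of Lemma \ref{Lem: bilinear zero nonzero} with $f_1 = \partial_y\overline{u}_1$ (or $\partial_z\overline{u}_1$) and $f_2 = u_{j,\neq}$ ($j\in\{2,3\}$):
\begin{equation*}
\|\partial_x(\partial_y\overline{u}_1\cdot u_{j,\neq})\|_{L^2}
\lesssim \|\partial_y\overline{u}_1\|_{H^1}\bigl(\|\partial_xu_{j,\neq}\|_{L^2}+\|\partial_z\partial_xu_{j,\neq}\|_{L^2}\bigr)
\lesssim \|\overline{u}_1\|_{H^2}\,\|\partial_x(\partial_x,\partial_z)u_{j,\neq}\|_{L^2},
\end{equation*}
and similarly with $\partial_z\overline{u}_1$ in place of $\partial_y\overline{u}_1$. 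Then the same $L^\infty_t$/$L^2_t$ split as before closes the estimate.

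The only mild subtlety is the matching of available norms: one must verify that the non-zero mode output of Lemma \ref{Lem: bilinear zero nonzero} is exactly of the form $\|\partial_x(\partial_x,\partial_z)u_{j,\neq}\|_{L^2}$, which is controlled by $\nu^{-1/4}E_3$ via the first inequality of Lemma \ref{lem:u-nonzero}. I do not anticipate any serious obstacle; the proof is essentially a bookkeeping exercise, with the main (minor) care needed being the absorption of the lower-order terms $\|\partial_xu_{j,\neq}\|_{L^2}$ using $|k_1|\ge 1$ for non-zero modes, and the choice of taking $\overline{u}_1$ in $L^\infty_t H^2$ rather than $L^2_t H^3$ (the latter would lose $\nu^{-1/2}$ and give a different power).
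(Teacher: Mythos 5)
Your proposal is correct and follows essentially the same approach as the paper: for all three terms you use Lemma \ref{Lem: bilinear zero nonzero} (the second inequality for the first two terms, the fourth inequality with $f_1=\partial_y\overline{u}_1$ or $\partial_z\overline{u}_1$ for the third), absorb $\|\overline{u}_1\|_{H^2}$ into $E_1$ in $L^\infty_t$, and bound $\|e^{c\nu^{1/2}t}\partial_x(\partial_x,\partial_z)u_{j,\neq}\|_{L^2L^2}^2\lesssim\nu^{-1/2}E_3^2$ via the first inequality of Lemma \ref{lem:u-nonzero}. The paper invokes the definitions of $E_{3,0}$ and $E_{3,2}$ directly rather than citing Lemma \ref{lem:u-nonzero}, but that is the same content.
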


\begin{proof}
For  $j \in\{2,3\}$,  thanks to Lemma \ref{Lem: bilinear zero nonzero} and the definition of $E_1$,  we obtain
   \begin{align*}
      & \|(\partial_x,\partial_z)
    \big(\overline{u}_1\partial_xu_{j,\neq}\big)\|^2_{L^2} +\|\partial_x
    (u_{j,\neq}\partial_j \overline{u}_1)\|^2_{L^2} \\
      &\lesssim\big(\|\overline{u}_1 \|^2_{H^1}+\|\nabla\overline{u}_1\|^2_{H^1}\big) \big(\|\partial_x(\partial_x,\partial_z)u_{j,\neq}\|^2_{L^2}+ \|\partial_xu_{j,\neq}\|^2_{L^2}+ \|\partial_z\partial_xu_{j,\neq}\|^2_{L^2}\big)\\
      &\lesssim\|\overline{u}_1\|^2_{H^2} \|\partial_x(\partial_x,\partial_z)u_{j,\neq}\|^2_{L^2}
      \lesssim  {E_1^2} \|\partial_x(\partial_x,\partial_z)u_{j,\neq}\|^2_{L^2}.
   \end{align*}
Then, by the definitions of $E_{3,0}$ and $E_{3,2}$ in \eqref{eq:E30-define-00}, \eqref{eq:E32-define-00} respectively, we deduce that
\begin{align*}
   \|e^{c\nu^{\f12}t}\partial_x(\partial_x,\partial_z)u_{2,\neq}\|^2_{L^2L^2}& \lesssim  \nu^{-\f12}E_{3,0}^2\lesssim \nu^{-\f12}E_3^2,
\end{align*}
and
\begin{align*}
   \|e^{c\nu^{\f12}t}\partial_x(\partial_x,\partial_z)u_{3,\neq}\|^2_{L^2L^2} \lesssim  \|e^{c\nu^{\f12}t} |\partial_x|^{\f12} \Lambda_{x,z}^{3/2} u_{3,\neq}\|^2_{L^2L^2} \lesssim \nu^{-\f12}   E_{3,2}^2\lesssim \nu^{-\f12} E_{3}^2.
\end{align*}
\end{proof}

The following lemma describes the intereaction between $\overline{u}_1$ and $u_{1,\neq}$.

\begin{lemma}\label{lem:int-zn-11}
  It holds that
  \begin{align*}
     \|e^{c\nu^{\f12}}\partial_x
     (\overline{u}_1\partial_xu_{1,\neq})\|^2_{L^2L^2}
     \lesssim \nu^{-\f12}E_1^2E_3^2.
  \end{align*}
\end{lemma}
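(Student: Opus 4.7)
The plan is to exploit the fact that $\overline{u}_1$ is independent of $x$, so that the outer $\partial_x$ falls only on the non-zero factor and produces $\overline{u}_1\,\partial_x^2 u_{1,\neq}$; then I would convert the problematic $\partial_x^2 u_{1,\neq}$ into derivatives of $u_{2,\neq}$ and $u_{3,\neq}$ via the divergence-free identity (which has already been packaged into the second estimate of Lemma \ref{lem:u-nonzero}), and absorb $\overline{u}_1$ in $L^\infty_tL^\infty_{y,z}$ by $E_1$.

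Concretely, since $\partial_x \overline{u}_1 = 0$, one has the pointwise identity
\[
\partial_x\big(\overline{u}_1\,\partial_x u_{1,\neq}\big) = \overline{u}_1\,\partial_x^2 u_{1,\neq}.
\]
The factor $\overline{u}_1$ depends only on $(y,z)\in[-1,1]\times\mathbb{T}$, so the 2-D Sobolev embedding (as used in Lemma \ref{lemma-A.1} and in the proof of Lemma \ref{lem:u23-zero}) yields
\[
\|\overline{u}_1(t)\|_{L^\infty_{y,z}} \lesssim \|\overline{u}_1(t)\|_{H^2_{y,z}},
\]
and hence, by the definition of $E_1$ in \eqref{eq:E1-define-00}, $\|\overline{u}_1\|_{L^\infty_t L^\infty_{y,z}} \lesssim \|\overline{u}_1\|_{L^\infty_t H^2_{y,z}} \leq E_1$. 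Pulling this $L^\infty$ norm out,
\[
\big\|e^{c\nu^{\f12}t}\,\overline{u}_1\,\partial_x^2 u_{1,\neq}\big\|_{L^2L^2} \leq \|\overline{u}_1\|_{L^\infty L^\infty}\,\big\|e^{c\nu^{\f12}t}\,\partial_x^2 u_{1,\neq}\big\|_{L^2L^2}.
\]
For the remaining factor I would invoke the second inequality of Lemma \ref{lem:u-nonzero} (which is itself a consequence of $\mathrm{div}\,u_{\neq}=0$ and the first inequality of that lemma), giving
\[
\big\|e^{c\nu^{\f12}t}\,\partial_x^2 u_{1,\neq}\big\|_{L^2L^2} \lesssim \nu^{-\f14}E_3.
\]
Multiplying and squaring produces exactly $\nu^{-\f12}E_1^2E_3^2$.

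The only conceptual point worth flagging — and the reason this is not entirely routine — is that $E_3$ does not furnish a direct control on $u_{1,\neq}$ itself; the move $\partial_x u_{1,\neq} = -\partial_y u_{2,\neq}-\partial_z u_{3,\neq}$ is essential to trade two $\partial_x$'s for one $\partial_y$ and one $\partial_z$ acting on the good components $u_{2,\neq},u_{3,\neq}$. Once that trade is performed, there is no remaining obstacle: the zero mode $\overline{u}_1$ can safely be placed in $L^\infty_tL^\infty_{y,z}$ (without exponential weight, consistent with the definition of $E_1$), and the weighted $L^2L^2$ estimate for the non-zero mode supplies the $\nu^{-1/4}$ loss that combines with $E_1$ to give $\nu^{-1/2}E_1^2E_3^2$ after squaring.
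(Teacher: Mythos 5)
Your argument is correct and is essentially identical to the paper's own proof: both use $\partial_x\overline{u}_1=0$ to move the derivative onto $u_{1,\neq}$, bound $\|\overline{u}_1\|_{L^\infty}\lesssim\|\overline{u}_1\|_{H^2}\le E_1$, and then invoke the second inequality of Lemma~\ref{lem:u-nonzero} to control $\|e^{c\nu^{1/2}t}\partial_x^2u_{1,\neq}\|_{L^2L^2}$ by $\nu^{-1/4}E_3$. No gap.
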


\begin{proof}
Thanks to the definition of $E_1$, we have
\begin{align*}
     \|\partial_x
     (\overline{u}_1\partial_xu_{1,\neq})\|^2_{L^2}
     &\leq \|\overline{u}_1\|^2_{L^{\infty}}\|\partial_x^2u_{1,\neq}\|^2_{L^2}\\
     &\lesssim \|\overline{u}_1\|^2_{H^{2}}\|\partial_x^2u_{1,\neq}\|^2_{L^2}\lesssim E^2_1\|\partial_x^2u_{1,\neq}\|^2_{L^2},
  \end{align*}
 which along with Lemma \ref{lem:u-nonzero}  gives
\begin{align*}
   \|e^{c\nu^{\f12}t}\partial_x
     (\overline{u}_1\partial_xu_{1,\neq})\|^2_{L^2L^2}
   \lesssim& E^2_1\|e^{c\nu^{\f12}t}\partial_x^2u_{1,\neq}\|_{L^2L^2}^2
   \lesssim \nu^{-\f12}E^2_1E_3^2.
   \end{align*}
 \end{proof}

The following lemma provides the intereactions between $\overline{u}_2$, $\overline{u}_3$ with nonzero modes, which suggests that $\overline{u}_2$ and $\overline{u}_3$ are good components.

\begin{lemma}\label{lem:int-nz-23}
  It holds that for $j \in\{2,3\}$,
  \begin{align*}
     &\|e^{c\nu^{\f12}t}(\partial_x,\partial_z) (\overline{u}_j \nabla
     u_{\neq})\|_{L^2L^2} +\|e^{c\nu^{\f12}t}(\partial_x,\partial_z)
     (u_{\neq}\cdot\nabla\overline{u}_j)\|_{L^2L^2}
     \lesssim \nu^{-1}E_2E_3.
  \end{align*}
  \end{lemma}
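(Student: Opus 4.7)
The plan is to apply the anisotropic bilinear laws of Lemma \ref{Lem: bilinear zero nonzero} (which exploit $\partial_x\overline{u}_j=0$) and then reduce the nonzero-mode factor to the scalar quantities $\Delta u_{2,\neq}$ and $\nabla\omega_{2,\neq}$ via Lemma \ref{lem:u-relation}.

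For the first term I would take $f_1=\overline{u}_j$, $f_2=\nabla u_{\neq}$ and use the second bilinear law of Lemma \ref{Lem: bilinear zero nonzero}, giving
\begin{align*}
\|(\partial_x,\partial_z)(\overline{u}_j\nabla u_{\neq})\|_{L^2}
&\lesssim (\|\overline{u}_j\|_{H^1}+\|\partial_z\overline{u}_j\|_{H^1})(\|\nabla u_{\neq}\|_{L^2}+\|(\partial_x,\partial_z)\nabla u_{\neq}\|_{L^2})\\
&\lesssim E_2\,\|(\partial_x,\partial_z)\nabla u_{\neq}\|_{L^2},
\end{align*}
where Lemma \ref{lem:u23-zero} bounds the zero-mode factor pointwise by $E_2$ for both $j=2,3$, and the nonzero-mode Poincar\'e inequality $\|f_{\neq}\|_{L^2}\lesssim\|\partial_xf_{\neq}\|_{L^2}$ absorbs the pure $\nabla u_{\neq}$ piece. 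For the second term, after expanding $u_{\neq}\cdot\nabla\overline{u}_j=u_{2,\neq}\partial_y\overline{u}_j+u_{3,\neq}\partial_z\overline{u}_j$, I would instead apply the third bilinear law with $f_1=\partial_k\overline{u}_j$, $f_2=u_{k,\neq}$, which only demands the $L^2$ control of $\partial_k\overline{u}_j$:
\begin{align*}
\|(\partial_x,\partial_z)(\partial_k\overline{u}_j\cdot u_{k,\neq})\|_{L^2}\lesssim (\|\partial_k\overline{u}_j\|_{L^2}+\|\partial_z\partial_k\overline{u}_j\|_{L^2})\,\|(\partial_x,\partial_z)\nabla u_{\neq}\|_{L^2}.
\end{align*}
The second-derivative quantities $\|\partial_z\nabla\overline{u}_2\|_{L^2}$, $\|\partial_z^2\overline{u}_3\|_{L^2}$ and $\|\partial_y\partial_z\overline{u}_3\|_{L^2}$ that arise are all $\lesssim E_2$ by Lemma \ref{lem:u23-zero} (using $\|\partial_z\overline{u}_3\|_{H^1}\lesssim E_2$, which follows from $\partial_y\overline{u}_2+\partial_z\overline{u}_3=0$). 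Placing the $\partial_k$ on the zero factor is crucial, since $\partial_y\overline{u}_3$ itself is not controlled in $H^1$.

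Both terms are thus reduced to proving
\begin{equation*}
\|e^{c\nu^{\frac{1}{2}}t}(\partial_x,\partial_z)\nabla u_{\neq}\|_{L^2L^2}\lesssim \nu^{-1}E_3.
\end{equation*}
By Lemma \ref{lem:u-relation} the left-hand side is bounded by $\|\Delta u_{2,\neq}\|_{L^2L^2}+\|\nabla\omega_{2,\neq}\|_{L^2L^2}$. The first is $\lesssim\nu^{-1/2}E_3$ directly from the $\nu^{1/2}\|\Lambda_{x,z}^{1/2}\Delta u_{2,\neq}\|_{L^2L^2}$-term in $E_{3,0}$ (together with $\Lambda_{x,z}^{1/2}\ge 1$ on nonzero modes). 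The main obstacle is the bound on $\|\nabla\omega_{2,\neq}\|_{L^2L^2}$: $E_{3,1}$ only supplies $\|\Lambda_{x,z}^{-1/2}\nabla\omega_{2,\neq}\|_{L^2L^2}\lesssim\nu^{-1/2}E_3$ and $\|\Delta\omega_{2,\neq}\|_{L^2L^2}\lesssim\nu^{-9/8}E_3$, with no direct control of $\|\nabla\omega_{2,\neq}\|_{L^2L^2}$. To bridge this I would exploit the boundary condition $\omega_{2,\neq}|_{y=\pm1}=0$ from \eqref{vor-ins} to obtain, by integration by parts, $\|\nabla\omega_{2,\neq}\|_{L^2}^2\le\|\omega_{2,\neq}\|_{L^2}\|\Delta\omega_{2,\neq}\|_{L^2}$, combined with the nonzero-mode comparison $\|\omega_{2,\neq}\|_{L^2}\le\|\Lambda_{x,z}^{-1/2}\nabla\omega_{2,\neq}\|_{L^2}$ (valid because $|k|\ge 1$). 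Cauchy--Schwarz in time then yields $\|e^{c\nu^{1/2}t}\nabla\omega_{2,\neq}\|_{L^2L^2}^2\lesssim \nu^{-1/2}E_3\cdot\nu^{-9/8}E_3=\nu^{-13/8}E_3^2$, i.e.\ $\|e^{c\nu^{1/2}t}\nabla\omega_{2,\neq}\|_{L^2L^2}\lesssim\nu^{-13/16}E_3\le\nu^{-1}E_3$ since $\nu\ll 1$. Combining this with the zero-mode $L^\infty_t$-bound $E_2$ produces the stated estimate $\lesssim\nu^{-1}E_2E_3$.
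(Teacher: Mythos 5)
Your proposal is correct and follows essentially the same route as the paper: you use the anisotropic bilinear laws of Lemma~\ref{Lem: bilinear zero nonzero} (placing the low regularity on the zero-mode factor in the $u_{\neq}\cdot\nabla\overline{u}_j$ term), control the zero-mode factors pointwise in time by $E_2$ via Lemma~\ref{lem:u23-zero}, reduce the nonzero factor to $\nabla\omega_{2,\neq}$ and $\Delta u_{2,\neq}$ via Lemma~\ref{lem:u-relation}, and close by interpolating $\|\nabla\omega_{2,\neq}\|_{L^2}$ between quantities appearing in $E_{3,1}$. The only (cosmetic) difference is in the last step: the paper interpolates between $\Lambda_{x,z}^{-1/2}\nabla\omega_{2,\neq}$ and $\Lambda_{x,z}^{-1/2}\Delta\omega_{2,\neq}$, giving $\nu^{-11/16}E_2E_3$, whereas you interpolate between $\Lambda_{x,z}^{-1/2}\nabla\omega_{2,\neq}$ and $\Delta\omega_{2,\neq}$, giving the slightly worse $\nu^{-13/16}E_2E_3$; both are $\lesssim\nu^{-1}E_2E_3$, so the conclusion is unaffected.
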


\begin{proof}
  By Lemma \ref{Lem: bilinear zero nonzero}, Lemma \ref{lem:u23-zero} and Lemma \ref{lem:u-relation}, we get
  \begin{align*}
     &\|(\partial_x,\partial_z) (\overline{u}_j\nabla
     u_{\neq})\|_{L^2} +\|(\partial_x,\partial_z)
     (u_{\neq}\cdot\nabla\overline{u}_j)\|_{L^2}\\
     &\lesssim \big(\|\overline{u}_j\|_{H^1} +\|\partial_z\overline{u}_j \|_{H^1}\big)\big(\| \nabla
     u_{\neq}\|_{L^2}+\|(\partial_x,\partial_z) \nabla
     u_{\neq}\|_{L^2}\big)\\&\quad+\big(\|\nabla\overline{u}_j \|_{L^2} +\|\partial_z\nabla\overline{u}_j \|_{L^2}\big)\big(\|
     u_{\neq}\|_{H^1}+\|(\partial_x,\partial_z)
     u_{\neq}\|_{H^1}\big)\\
     &\lesssim\big(\|\overline{u}_j \|_{H^1} +\|\partial_z\overline{u}_j \|_{H^1}\big)\big(\|\partial_x\nabla
     u_{\neq}\|_{L^2} +\|\partial_z\nabla
     u_{\neq}\|_{L^2} \big)\\
     &\lesssim E_2\big(\|\nabla\omega_{2,\neq}\|_{L^2}+
  \|\Delta u_{2,\neq}\|_{L^2}\big)\\
  &\lesssim E_2\big(\|\Lambda_{x,z}^{-1/2}\nabla\omega_{2,\neq}\|_{L^2}^{\f12}
  \|\Lambda_{x,z}^{-1/2}\Delta\omega_{2,\neq}\|_{L^2}^{\f12}+
  \|\Lambda_{x,z}^{1/2}\Delta u_{2,\neq}\|_{L^2}\big),
  \end{align*}
which gives
\begin{align*}
     &\|e^{c\nu^{\f12}t}(\partial_x,\partial_z) (\overline{u}_j \nabla
     u_{\neq})\|_{L^2L^2} +\|e^{c\nu^{\f12}t}(\partial_x,\partial_z)
     (u_{\neq}\cdot\nabla\overline{u}_j )\|_{L^2L^2}\\
    & \lesssim E_2\Big( \|e^{c\nu^{\f12}t}\Lambda_{x,z}^{-1/2}\nabla \omega_{2,\neq}\|_{L^2L^2}^{\f12}\|e^{c\nu^{\f12}t}\Lambda_{x,z}^{-1/2}\Delta \omega_{2,\neq}\|_{L^2L^2}^{\f12}\\
     &\quad+\|e^{c\nu^{\f12}t}\Lambda_{x,z}^{1/2}\Delta u_{2,\neq}\|_{L^2L^2}\Big)\\
     &{ \lesssim E_2\big((\nu^{-\f12}E_{3,2})^{\f12}(\nu^{-\f78} E_{3,2})^{\f12}+\nu^{-\f12}E_{3,0}\big)}\\
     &\lesssim \nu^{-\f{11}{16}}E_2E_3\lesssim \nu^{-1}E_2E_3.
  \end{align*}
The proof is completed.
 \end{proof}
  The following lemma will be used to estimate $E_{3,1}$.

\begin{lemma}\label{lem:int-zz-11-pz}
  It holds that
   \begin{align*}
    &\|e^{c\nu^{\f12}t}\partial_z(\overline{u}_1\partial_xu_{1,\neq})\|^2_{L^2L^2}+
    \|e^{c\nu^{\f12}t}\partial_z((u_{2,\neq}\partial_y+u_{3,\neq}\partial_z)\overline{u}_1) \|^2_{L^2L^2}\lesssim  \nu^{-1}E^2_1E^{2}_3.
  \end{align*}
  \end{lemma}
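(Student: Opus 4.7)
Both summands are zero-mode times nonzero-mode products, and since $\overline{u}_1$ depends only on $(y,z)$, the natural tool is the anisotropic product law of Lemma \ref{Lem: bilinear zero nonzero}. The plan is to estimate each summand spatially (pointwise in $t$) by that lemma, and then take $L^2_t$ with the weight $e^{c\nu^{1/2}t}$ by distributing the two factors via H\"older in time. The time distribution exploits the structure of $E_1$: $\overline{u}_1$ is controlled in $L^\infty_t H^2$ with norm $\le E_1$, while $\nabla \overline{u}_1$ enjoys the dissipation bound $\|\nabla \overline{u}_1\|_{L^2_t H^2}\lesssim \nu^{-1/2}E_1$.

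For the first summand $\partial_z(\overline{u}_1\,\partial_x u_{1,\neq})$, apply the second inequality of Lemma \ref{Lem: bilinear zero nonzero} with $f_1=\overline{u}_1$ and $f_2=\partial_x u_{1,\neq}$ to get
$$\|\partial_z(\overline{u}_1\,\partial_x u_{1,\neq})\|_{L^2}\lesssim \|\overline{u}_1\|_{H^2}\big(\|\partial_x u_{1,\neq}\|_{L^2}+\|(\partial_x^2,\partial_x\partial_z)u_{1,\neq}\|_{L^2}\big).$$
Placing $\overline{u}_1$ in $L^\infty_t H^2$ and invoking Lemma \ref{lem:u-nonzero} together with the Poincar\'e-type inequality $\|f_{\neq}\|_{L^2}\lesssim \|\partial_x f_{\neq}\|_{L^2}$ (valid since $|k_1|\ge 1$), the nonzero-mode factor in $L^2_t$ is dominated by $\nu^{-1/4}E_3$. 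Squaring yields a bound of order $\nu^{-1/2}E_1^2E_3^2\lesssim\nu^{-1}E_1^2E_3^2$, as required.

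For the second summand, write it as a sum over $j\in\{2,3\}$ of $\partial_z(u_{j,\neq}\,\partial_j\overline{u}_1)$ and apply the same inequality of Lemma \ref{Lem: bilinear zero nonzero}, now with $f_1=\partial_j\overline{u}_1$ (still $x$-independent) and $f_2=u_{j,\neq}$, obtaining
$$\|\partial_z(u_{j,\neq}\,\partial_j\overline{u}_1)\|_{L^2}\lesssim\big(\|\overline{u}_1\|_{H^2}+\|\partial_z\overline{u}_1\|_{H^2}\big)\big(\|u_{j,\neq}\|_{L^2}+\|(\partial_x,\partial_z)u_{j,\neq}\|_{L^2}\big).$$
Then I would H\"older-split in time into the two pairings
$$\|\overline{u}_1\|_{L^\infty_t H^2}\cdot \|\cdots\|_{L^2_t}\quad\text{and}\quad \|\partial_z\overline{u}_1\|_{L^2_t H^2}\cdot\|\cdots\|_{L^\infty_t}.$$
The first is bounded by $E_1\cdot \nu^{-1/4}E_3$ via Lemma \ref{lem:u-nonzero}; the second is bounded by $(\nu^{-1/2}E_1)\cdot E_3$, where the $L^\infty_tL^2$ control of $u_{j,\neq}$ and $(\partial_x,\partial_z)u_{j,\neq}$ by $E_3$ follows from the $L^\infty_tL^2$ pieces of $E_{3,0}$ and $E_{3,2}$ (once again using $|k_1|\ge 1$ to absorb missing $\partial_x$-factors into $\Lambda_{x,z}^{1/2}\nabla u_{2,\neq}$ and $\Lambda_{x,z}^{3/2}u_{3,\neq}$). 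Both pairings give $\nu^{-1/2}E_1E_3$, whose square is $\nu^{-1}E_1^2E_3^2$.

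The only mildly delicate bookkeeping is the recurring upgrade $\|\partial_z f_{\neq}\|_{L^2}\lesssim \|\partial_x\partial_z f_{\neq}\|_{L^2}$, which is uniformly handled by $|k_1|\ge 1$; no new linear estimate is needed, so I do not anticipate any genuine obstacle beyond these index manipulations.
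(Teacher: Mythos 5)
Your overall strategy — decompose the product into zero-mode times nonzero-mode, apply the anisotropic product law of Lemma \ref{Lem: bilinear zero nonzero}, then distribute factors in time between $L^\infty_t$ and $L^2_t$ — is the same as the paper's. The first summand is handled correctly (a small inaccuracy: from Lemma \ref{lem:u-nonzero} the $\partial_z\partial_x u_{1,\neq}$ piece in $L^2_tL^2$ costs $\nu^{-1/2}E_3$, not $\nu^{-1/4}E_3$, but squaring and pairing with $E_1^2$ still gives $\nu^{-1}E_1^2E_3^2$, so the conclusion is unaffected).

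The gap is in the second summand. You use the second product law of Lemma \ref{Lem: bilinear zero nonzero} with $f_1=\partial_j\overline u_1$ in $H^1$, so the zero-mode factor is $\|\partial_j\overline u_1\|_{H^1}+\|\partial_z\partial_j\overline u_1\|_{H^1}$, which you correctly bound by $\|\overline u_1\|_{H^2}+\|\partial_z\overline u_1\|_{H^2}$. The second term is three $y,z$-derivatives of $\overline u_1$; $E_1$ controls this only in $L^2_t$ (via $\nu^{1/2}\|\nabla\overline u_1\|_{L^2H^2}$), not $L^\infty_t$. Your H\"older split therefore forces the paired nonzero-mode factor $\|u_{j,\neq}\|_{L^2}+\|(\partial_x,\partial_z)u_{j,\neq}\|_{L^2}$ into $L^\infty_t$. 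For $j=2$ this is fine, controlled by $E_{3,0}$ with no extra power of $\nu$. For $j=3$, however, the only $L^\infty_tL^2$ control available on $u_{3,\neq}$ is through $E_{3,2}$ (or $E_{3,3}$), which carries the prefactor $\nu^{1/8}$ on $\|\Lambda_{x,z}^{3/2}u_{3,\neq}\|_{L^\infty L^2}$. Thus the quantity you claim is bounded by $E_3$ is in fact only $\lesssim\nu^{-1/8}E_3$. Your second pairing for $j=3$ then yields $(\nu^{-1/2}E_1)\cdot(\nu^{-1/8}E_3)=\nu^{-5/8}E_1E_3$, and squaring gives $\nu^{-5/4}E_1^2E_3^2$, strictly worse than the asserted $\nu^{-1}E_1^2E_3^2$. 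This loss is not harmless: carried into Proposition \ref{prop:E31} (which contributes an extra $\nu^{-1/2}$) it would produce $\nu^{-7/4}E_1^2E_3^2$, and under the bootstrap ansatz $E_1\le\varepsilon_1\nu^{3/4}$, $E_3\le\varepsilon_1\nu^{7/4}$ this equals $\varepsilon_1^4\nu^{13/4}$, which cannot be absorbed into $\varepsilon_1^2\nu^{7/2}$ uniformly in $\nu$.

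The paper avoids this entirely by using the third inequality of Lemma \ref{Lem: bilinear zero nonzero} for the second summand: it places $\partial_j\overline u_1$ in $L^2$ (so only $\|\overline u_1\|_{H^2}$ appears, controlled by $E_1$ in $L^\infty_t$) and $u_{j,\neq}$ in $H^1$ (so $\|\nabla(\partial_x,\partial_z)u_{j,\neq}\|_{L^2}$ goes into $L^2_t$, controlled by $E_3$ with at most a $\nu^{-1/2}$ factor via Lemma \ref{lem:u-nonzero} and $E_{3,2}$). No H\"older split across the two factors of $E_1$ is needed and no $L^\infty_t$ control of $u_{3,\neq}$ is invoked. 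You should switch to that version of the product law for the second summand.
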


\begin{proof}
Thanks to Lemma \ref{Lem: bilinear zero nonzero} and the definition od $E_1$, we have
  \begin{align*}
     &\|\partial_z(\overline{u}_1\partial_xu_{1,\neq})\|^2_{L^2}+
     \|\partial_z((u_{2,\neq}\partial_y+u_{3,\neq}\partial_z)\overline{u}_1)\|^2_{L^2}\\ &\leq  C\big(\|\overline{u}_1\|_{H^1}^2+\|\partial_z\overline{u}_1\|_{H^1}^2\big)
     \big(\|\partial_xu_{1,\neq}\|^2_{L^2}+
     \|(\partial_x,\partial_z)\partial_xu_{1,\neq}\|^2_{L^2}\big)\\&\quad+
     C\big(\|\partial_y\overline{u}_1\|_{L^2}^2+
     \|\partial_z\partial_y\overline{u}_1\|_{L^2}^2\big)\big(\|u_{2,\neq}\|^2_{H^1}+
     \|(\partial_x,\partial_z)u_{2,\neq}\|^2_{H^1}\big)\\&\quad+
     C\big(\|\partial_z\overline{u}_1\|_{L^2}^2+
     \|\partial_z^2\overline{u}_1\|_{L^2}^2\big)\big(\|u_{3,\neq}\|^2_{H^1}+
     \|(\partial_x,\partial_z)u_{3,\neq}\|^2_{H^1}\big)\\ &\leq C\|\overline{u}_1\|_{H^2}^2\big(   {  \|(\partial_x,\partial_z)\partial_xu_{1,\neq}\|^2_{L^2}}
     +\|\nabla(\partial_x,\partial_z)u_{2,\neq}\|^2_{L^2}
     +\|\nabla(\partial_x,\partial_z)u_{3,\neq}\|^2_{L^2}\big)\\
     &\leq E^2_1({ \|(\partial_x,\partial_z)\partial_xu_{1,\neq}\|^2_{L^2}+
     \|\nabla (\partial_x,\partial_z)(u_{2,\neq},u_{3,\neq})\|^2_{L^2}}),
  \end{align*}
  which along with Lemma \ref{lem:u-nonzero}  gives
   \begin{align*}
   &\|e^{c\nu^{\f12}t}\partial_z(\overline{u}_1\partial_xu_{1,\neq})\|^2_{L^2L^2}+
    \|e^{c\nu^{\f12}t}\partial_z((u_{2,\neq}\partial_y+u_{3,\neq}\partial_z)\overline{u}_1)\|^2_{L^2L^2}\\
   &\lesssim E^2_1\Big({ \| e^{c\nu^{\f12}t} (\partial_x,\partial_z)\partial_xu_{1,\neq}\|^2_{L^2 L^2}+
     \| e^{c\nu^{\f12}t} \nabla (\partial_x,\partial_z)(u_{2,\neq},u_{3,\neq})\|^2_{L^2 L^2}}\Big)\\
     &{\lesssim E^2_1\Big(\nu^{-1}E_3^2+
     \| e^{c\nu^{\f12}t} \nabla \partial_z(u_{2,\neq},u_{3,\neq})\|^2_{L^2 L^2} } \\
     &{\qquad +   \| e^{c\nu^{\f12}t} \nabla \partial_x u_{2,\neq}\|^2_{L^2 L^2}+     \| e^{c\nu^{\f12}t} \nabla \partial_x u_{3,\neq}\|^2_{L^2 L^2}\Big)}\\
   &\lesssim {E^2_1(\nu^{-1}E_3^2+\nu^{-\f12}E_3^2+\nu^{-1} E_{3,2}^2)}\lesssim \nu^{-1}E^2_1E^{2}_3.
   \end{align*}
  \end{proof}

\section{Energy estimates for zero modes}\label{sec:3d-nonlinear-estimates}

\subsection{Estimate of $E_1$}
\begin{proposition}\label{prop:E1}
It holds that
\begin{align}
&E_{1}\lesssim \|{u}(0)\|_{H^2}+\nu^{-1}E_2+\nu^{-1}E_2E_{1}+{\nu^{-\f{15}{8}}}E_3^2.\label{eq:E1n}
\end{align}
\end{proposition}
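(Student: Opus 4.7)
The plan is to perform an $H^2$-energy estimate on the Dirichlet heat equation satisfied by $\overline{u}_1$ and then bound the three forcing contributions against the three terms on the right of \eqref{eq:E1n}. From the first equation of \eqref{zero},
\[
(\partial_t-\nu\Delta)\overline{u}_1 \;=\; 2y\,\overline{u}_2 \;-\; (\overline{u}_2\partial_y+\overline{u}_3\partial_z)\overline{u}_1 \;-\; P_0(u_{\neq}\cdot\nabla u_{1,\neq}),\qquad \overline{u}_1|_{y=\pm 1}=0.
\]
The three source terms encode the lift-up, the self-transport by the zero mode, and the projection of the non-zero self-interaction. Since $u_{\neq}$ is divergence free and $P_0$ kills $\partial_x$-derivatives, the last source can be rewritten as $\partial_y P_0(u_{2,\neq}u_{1,\neq})+\partial_z P_0(u_{3,\neq}u_{1,\neq})$; this divergence structure is what ultimately produces the power $\nu^{-15/8}$.

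First I would establish the parabolic estimate
\[
\|\overline{u}_1\|_{L^\infty H^2}^2+\nu\|\nabla\overline{u}_1\|_{L^2 H^2}^2 \;\lesssim\; \|\overline{u}_1(0)\|_{H^2}^2 \;+\; \nu^{-1}\|R\|_{L^2 H^1}^2,
\]
where $R$ is the full right-hand side. The cleanest route is to test against $\partial_z^{\alpha}\overline{u}_1$ and $-\Delta\partial_z^{\alpha}\overline{u}_1$ for $|\alpha|\le 2$: these test functions inherit the Dirichlet condition from $\overline{u}_1$, so no boundary terms arise, and summing over $|\alpha|\le 2$ produces tangential control $\|\Delta\overline{u}_1\|_{L^2 H_z^2}$ plus the corresponding $L^\infty$ norm. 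I would then upgrade to full normal regularity via the elliptic bound $\|\overline{u}_1\|_{H^{s+2}}\lesssim\|\Delta\overline{u}_1\|_{H^s}$ for the Dirichlet Laplacian. The main obstacle in this step is precisely that $\Delta\overline{u}_1$ does not vanish on $\{y=\pm 1\}$, so a direct $L^2$ pairing of $\Delta(\partial_t-\nu\Delta)\overline{u}_1$ with $\Delta\overline{u}_1$ would create uncontrolled boundary contributions; the tangential-derivative plus elliptic-regularity combination sidesteps this.

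It then remains to bound the three pieces of $\nu^{-1}\|R\|_{L^2 H^1}^2$. For the lift-up forcing, Lemma \ref{lem:u23-zero} together with the definition of $E_2$ gives $\|y\,\overline{u}_2\|_{L^2 H^1}\lesssim\|\overline{u}_2\|_{L^2 H^2}\lesssim \nu^{-1/2}E_2$, hence a contribution of $\nu^{-1}E_2$ after the square root. For the zero-zero transport, Lemma \ref{lem:u23-zero} yields the pointwise-in-$t$ inequality $\|\overline{u}_j\partial_j\overline{u}_1\|_{H^1}\lesssim E_2\|\overline{u}_1\|_{H^2}$; integrating in $t$ and using Poincar\'e with $\|\overline{u}_1\|_{L^2 H^2}\lesssim\|\nabla\overline{u}_1\|_{L^2 H^1}\lesssim\nu^{-1/2}E_1$ produces $\|(\overline{u}_2\partial_y+\overline{u}_3\partial_z)\overline{u}_1\|_{L^2 H^1}\lesssim \nu^{-1/2}E_2E_1$, giving the $\nu^{-1}E_2E_1$ contribution. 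Finally, using the divergence form of the non-zero source and Lemma \ref{lem:int-nn}, one has
\[
\|P_0(u_{\neq}\cdot\nabla u_{1,\neq})\|_{L^2 H^1}\;\lesssim\;\|\nabla(u_{\neq}\cdot\nabla u_{\neq})\|_{L^2 L^2}\;\lesssim\;\nu^{-11/8}E_3^2,
\]
which yields exactly the $\nu^{-15/8}E_3^2$ contribution after dividing by $\nu^{1/2}$ and taking a square root. Summing the three contributions and absorbing $\nu\|\nabla\overline{u}_1\|_{L^2 H^2}^2$ on the left-hand side produces \eqref{eq:E1n}.
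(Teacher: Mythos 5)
Your plan hinges on the claim that ``$\Delta\overline{u}_1$ does not vanish on $\{y=\pm 1\}$,'' and you design a tangential-derivative workaround to sidestep the boundary contributions you expect from a biharmonic test. But that claim is false, and verifying it is precisely the key observation in the paper's proof. Evaluate the equation for $\overline{u}_1$ at $y=\pm1$: from $\overline{u}_1|_{y=\pm1}=0$ one has $\partial_t\overline{u}_1|_{y=\pm1}=0$; the lift-up term $2y\,\overline{u}_2$ and the transport terms $(\overline{u}_2\partial_y+\overline{u}_3\partial_z)\overline{u}_1$ vanish because $\overline{u}_2=\overline{u}_3=0$ on the boundary (no-slip); and the non-zero self-interaction $\overline{u_{\neq}\cdot\nabla u_{1,\neq}}$ vanishes because $u_{\neq}=0$ on the boundary. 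The equation therefore forces $\nu\Delta\overline{u}_1|_{y=\pm1}=0$. With this trace fact in hand, one can pair the equation directly with $\Delta^2\overline{u}_1$ and integrate by parts twice without boundary terms, obtaining
\begin{align*}
\frac{d}{dt}\|\Delta\overline{u}_1\|_{L^2}^2+\nu\|\nabla\Delta\overline{u}_1\|_{L^2}^2
\lesssim\nu^{-1}\Big(\|\overline{u}_2\|_{H^1}^2+\|\nabla(\overline{u}_2\partial_y\overline{u}_1+\overline{u}_3\partial_z\overline{u}_1)\|_{L^2}^2+\|\nabla(\overline{u_{\neq}\cdot\nabla u_{1,\neq}})\|_{L^2}^2\Big),
\end{align*}
which is exactly what is needed.

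Moreover, even granting your starting misconception, the fallback you describe does not close. Testing against $-\Delta\partial_z^\alpha\overline{u}_1$ for $|\alpha|\le 2$ produces $\|\nabla\partial_z^\alpha\overline{u}_1\|_{L^\infty L^2}$ and $\nu^{1/2}\|\Delta\partial_z^\alpha\overline{u}_1\|_{L^2 L^2}$, so the $L^\infty_t$ control you obtain lives only on mixed tangential derivatives $(\partial_y\partial_z^j, \partial_z^{j+1})\overline{u}_1$; the pure normal-derivative piece $\partial_y^2\overline{u}_1$ of $\|\overline{u}_1\|_{L^\infty H^2}$, equivalently $\|\Delta\overline{u}_1\|_{L^\infty L^2}$, is never reached. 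Elliptic regularity for the Dirichlet Laplacian upgrades an $H^s$ bound on $\Delta\overline{u}_1$ to $H^{s+2}$ control of $\overline{u}_1$, but it cannot manufacture the $L^\infty_t$ bound on $\|\Delta\overline{u}_1\|_{L^2}$ that the definition of $E_1$ requires. That bound comes only from the biharmonic pairing, which in turn requires the vanishing-trace fact you dismissed. Your estimates of the three forcing pieces (the $\nu^{-1}E_2$ from lift-up via Lemma \ref{lem:u23-zero}, the $\nu^{-1}E_2E_1$ from the zero-zero transport, and the $\nu^{-15/8}E_3^2$ from Lemma \ref{lem:int-nn} \eqref{lemma2.1-3}) are all correct and agree with the paper's, but the underlying parabolic estimate has a genuine gap as stated.
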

\begin{proof}

Recall that $\overline{u}_{1}$ satisfies
\begin{align*}
\left\{
\begin{aligned}
&(\partial_t-\nu\Delta )\overline{u}_{1}-2y\overline{u}_2+\overline{u}_2\partial_y\overline{u}_{1}+\overline{u}_3
\partial_z\overline{u}_{1}+\overline{u_{\neq}\cdot\nabla u_{1,\neq}}=0,\\
&\overline{u}_{1}|_{t=0}=\overline{u}_1(0),\quad \Delta\overline{u}_{1}|_{y=\pm1}=\overline{u}_{1}|_{y=\pm1}=0.
\end{aligned}
\right.
\end{align*}
Thanks to $\Delta\overline{u}_{1}|_{y=\pm1}=0$,  the energy estimate gives
\begin{align*}
&\frac{ \mathrm{d} }{\mathrm{d}t}\| \Delta\overline{u}_{1}\|_{L^2}^2+2\nu\| \nabla\Delta\overline{u}_{1}\|_{L^2}^2+4\big\langle
\nabla(y\overline{u}_2),\nabla\Delta\overline{u}_{1}\big\rangle
\\&\quad-2\Big\langle \nabla(\overline{u}_2\partial_y\overline{u}_{1}+\overline{u}_3\partial_z\overline{u}_{1}+
\overline{u_{\neq}\cdot\nabla u_{1,\neq}}), \nabla\Delta\overline{u}_{1}\Big\rangle=0,
\end{align*}
which gives
\begin{align*}
&\frac{ \mathrm{d} }{\mathrm{d}t}\| \Delta\overline{u}_{1}\|_{L^2}^2+\nu\| \nabla\Delta\overline{u}_{1}\|_{L^2}^2\\
&\lesssim\nu^{-1}\Big(\|\overline{u}_{2}\|_{H^1}^2+\|\nabla (\overline{u}_2\partial_y\overline{u}_{1}+\overline{u}_3\partial_z\overline{u}_{1})\|_{L^2}^2+\|\nabla (\overline{u_{\neq}\cdot\nabla u_{1,\neq}})\|_{L^2}^2\Big).
\end{align*}
As $\Delta\overline{u}_{1}|_{y=\pm1}=\overline{u}_{1}|_{y=\pm1}=0$,  we have $\|\overline u_1\|_{L^2}\lesssim \|\na\overline u_1\|_{L^2}\lesssim \|\Delta \overline u_1\|_{L^2}\lesssim \|\na \Delta\overline u_1\|_{L^2}$. Then it is easy to see that
\begin{align*}
\|\nabla(\overline{u}_{j}\partial_j\overline{u}_{1})\|_{L^2}^2\lesssim& \|\overline{u}_{j}\|_{H^1}^2\|\partial_j\overline{u}_{1}\|_{H^2}^2\lesssim \|\nabla\overline{u}_{j}\|_{L^2}^2 \|\nabla\Delta\overline{u}_{1}\|_{L^2}^2,
\end{align*}
which gives
\begin{align*}
&\|\nabla (\overline{u}_{2}\partial_y\overline{u}_{1}+
\overline{u}_{3}\partial_z\overline{u}_{1})\|_{L^2}^2
\lesssim \big(\|\nabla\overline{u}_{2}\|_{L^2}^2+\|\nabla\overline{u}_{3}\|_{L^2}^2\big)
\|\nabla\Delta\overline{u}_{1}\|_{L^2}^2,
\end{align*}
from which and  Lemma \ref{lem:int-nn}, we infer that
\begin{align*}
&\| \Delta\overline{u}_{1}\|_{L^{\infty}L^2}^2+\nu\| \nabla\Delta\overline{u}_{1}\|_{L^2L^2}^2\\ &\lesssim \| u(0)\|_{H^2}^2+\nu^{-1}\Big(\|\overline{u}_{2}\|_{L^2H^1}^2+\|\nabla (\overline{u}_{2}\partial_y\overline{u}_{1}+\overline{u}_{3}\partial_z\overline{u}_{1}
)\|_{L^2L^2}^2\\
&\qquad+\|\nabla (\overline{u_{\neq}\cdot\nabla u_{1,\neq}})\|_{L^2L^2}^2\Big)\\ &\lesssim \| u(0)\|_{H^2}^2+\nu^{-2}E_2^2+\nu^{-1}E_2^2\|\nabla\Delta\overline{u}_{1}\|_{L^2L^2}^2
+{ \nu^{-\f{15}{4}}}E_3^4.
\end{align*}
Thus, we obtain
\begin{align*}
E_{1}^2&=\big(\|\overline{u}_{1}\|_{L^{\infty}H^2}
+\nu^{\f12}\|\nabla\overline{u}_{1}\|_{L^2H^2}\big)^2\lesssim
\| \Delta\overline{u}_{1}\|_{L^{\infty}L^2}^2+\nu\| \nabla\Delta\overline{u}_{1}\|_{L^2L^2}^2 \\&\lesssim \|u(0)\|_{H^2}^2+\nu^{-2}E_2^2+\nu^{-2}E_2^2E_{1}^2+{ \nu^{-\f{15}{4}}}E_3^4.
\end{align*}
This proves \eqref{eq:E1n}.
\end{proof}

\subsection{Estimate of $E_2$}

\begin{proposition}\label{prop:E2}
It holds that
\beno
E_2\lesssim \big(1+\nu^{-1}E_2\big)^2\big(\|u(0)\|_{H^2}+\nu^{-\f32}E_3^2\big).
\eeno
\end{proposition}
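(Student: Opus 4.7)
The plan is to view the zero-mode equations for $(\overline u_2,\overline u_3)$ from \eqref{zero} as a 2-D Navier--Stokes system on $[-1,1]\times\mathbb{T}$ driven by the Reynolds stress $P_0(u_{\neq}\cdot\nabla u_{j,\neq})$ from non-zero modes. A key structural observation is that divergence-free together with $\overline u_3|_{y=\pm 1}=0$ forces $\partial_y\overline u_2|_{y=\pm 1}=0$, so the normal component $\overline u_2$ actually satisfies the \emph{clamped} boundary condition $\overline u_2=\partial_y\overline u_2=0$, while $\overline u_3$ only has Dirichlet. This asymmetry is mirrored in the definition of $E_2$, which demands $L^\infty H^2$ on $\overline u_2$ but only $L^\infty H^1$ on $\overline u_3$, and it will allow me to close two nested energy estimates.

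First I would do an $H^1$-type estimate on $(\overline u_2,\overline u_3)$ by testing each momentum equation with $-\Delta\overline u_j$. Since $\partial_t\overline u_j$ vanishes on the wall, the time-derivative IBP is clean, producing $\tfrac12\tfrac{d}{dt}\|\nabla\overline u\|_{L^2}^2+\nu\|\Delta\overline u\|_{L^2}^2$. The pressure term $\langle\nabla P_0 p,-\Delta\overline u\rangle$ leaves a single boundary contribution $\int_{\{y=\pm1\}} p\,\Delta\overline u_2\,dz$, which I would handle via the wall identity $\nu\Delta\overline u_2|_{y=\pm 1}=\partial_y P_0p|_{y=\pm 1}$ (valid because $u_{2,\neq}=\partial_yu_{2,\neq}=0$ at the wall, so both the convective term and the Reynolds-stress forcing vanish there) together with a trace bound on $p$ obtained from the 2-D Poisson equation $\Delta P_0 p=-\mathrm{div}[(\overline u\cdot\nabla)\overline u+P_0(u_{\neq}\cdot\nabla u_{\neq})]$. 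The convective term is controlled by the 2-D Ladyzhenskaya inequality, producing the prefactor $\nu^{-1}E_2$ after Young's inequality, and the Reynolds-stress forcing by Lemma \ref{lem:int-nn} and Lemma \ref{lem:int-nz-23}, contributing at worst $\nu^{-3}E_3^4$. This step closes the $\overline u_3$ portion of $E_2$ and also the contribution $\nu^{1/2}\|\Delta\overline u_2\|_{L^2L^2}$.

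Next I would raise the regularity of $\overline u_2$ by one order, testing the $\overline u_2$-equation with $\Delta^2\overline u_2$. The clamped condition together with $\partial_y\partial_t\overline u_2|_{y=\pm 1}=-\partial_z\partial_t\overline u_3|_{y=\pm 1}=0$ makes the time-derivative IBP once again free of boundary residue, and the leading viscous contribution is $\nu\|\nabla\Delta\overline u_2\|_{L^2}^2$ modulo a boundary integral $-\nu\int_{\{y=\pm 1\}}\Delta\overline u_2\,\partial_y\Delta\overline u_2\,dz$ that I would absorb using the wall identity and the Poisson equation for $p$, exactly as in Step 1 but one order higher. The convective term $(\overline u\cdot\nabla)\overline u_2$ is bounded by Lemma \ref{lem:u23-zero} together with the anisotropic product law Lemma \ref{Lem: bilinear zero nonzero}, producing after Young's inequality the additional prefactor $1+\nu^{-1}E_2$, while the Reynolds-stress forcing $\nabla P_0(u_{\neq}\cdot\nabla u_{2,\neq})$ contributes $\nu^{-3/2}E_3^2$ by Lemma \ref{lem:int-nn}. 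Combining with Step 1 yields the stated bound, and the quadratic prefactor $(1+\nu^{-1}E_2)^2$ arises naturally because each of the two nested energy estimates absorbs one factor of $\nu^{-1}E_2$ from the transport term.

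The hard part will be the rigorous control of the boundary contributions at the $H^2$ level: $\Delta\overline u_2$ does not vanish on the wall, so the naive IBP produces an uncontrolled term $\int_{\{y=\pm 1\}}\Delta\overline u_2\,\partial_y\Delta\overline u_2\,dz$. My remedy is to trade both factors for pressure traces via the wall identity and then to bound $\|P_0 p\|_{H^{s+1/2}(\{y=\pm 1\})}$ using the trace theorem applied to the 2-D Poisson equation for $p$, whose right-hand side has the same bilinear structure as the forcing already appearing in the interior. A secondary subtlety is that while Lemma \ref{lem:int-nn} provides $\|\nabla(u_{\neq}\cdot\nabla u_{\neq})\|_{L^2L^2}\lesssim\nu^{-11/8}E_3^2$, the sharper exponent $\nu^{-3/2}$ in the target estimate is what the $\nabla\Delta\overline u_2$ piece requires and is dictated by how one pairs the anisotropic non-zero mode norms against $\Delta\overline u_2$ rather than $\nabla\Delta\overline u_2$ in the nonlinear integration by parts.
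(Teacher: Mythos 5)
Your structural observations are sound — the clamped boundary condition $\overline u_2=\partial_y\overline u_2=0$ at $y=\pm 1$ and the wall identity $\nu\Delta\overline u_2|_{y=\pm 1}=\partial_y\overline p|_{y=\pm 1}$ are both correct and relevant — but your remedy for the highest-order boundary term does not close. When testing the $\overline u_2$-equation with $\Delta^2\overline u_2$, the viscous integration by parts produces $-\nu\int_{\{y=\pm1\}}\Delta\overline u_2\,\partial_y\Delta\overline u_2\,dz$. The wall identity replaces $\Delta\overline u_2|_{y=\pm1}$ by a pressure trace, but it gives no information about $\partial_y\Delta\overline u_2|_{y=\pm 1}$; to trade that factor you would have to differentiate the momentum equation in $y$ and then control $\partial_y^2\overline p|_{y=\pm 1}$ together with normal derivatives of the nonlinear terms at the wall, which is a substantially harder and separate estimate that you do not supply. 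Your phrase ``trade both factors for pressure traces via the wall identity'' thus overstates what a single use of that identity can deliver.

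The paper avoids creating this boundary term altogether, by a different choice of test functions and by an interior (rather than trace-based) pressure estimate. For the $H^1$ level it tests with $\partial_t\overline u_j$ instead of $-\Delta\overline u_j$, which kills the pressure contribution using divergence-freeness with no boundary residue. For the $H^2$ level on $\overline u_2$ it tests the $\Delta\overline u_2$-equation first with $-2\overline u_2$ and then with $-2\partial_t\overline u_2$; both pairings are boundary-safe under the clamped condition. For the $H^3$ level it never integrates by parts four times: instead, writing $\|\partial_t\partial_z\overline u_2+\partial_z(\overline{u\cdot\nabla u_2})\|_{L^2}^2=\|\nu\Delta\partial_z\overline u_2-\partial_z\partial_y\overline p\|_{L^2}^2$ and expanding the square converts the cross term into $-2\nu\langle\partial_z^2\partial_y\overline u_2,\Delta\overline p\rangle$ after two IBPs whose boundary terms vanish, yielding $\nu^2\|\Delta\partial_z\overline u_2\|_{L^2}^2+\|\partial_z\partial_y\overline p\|_{L^2}^2\lesssim\|\Delta\overline p\|_{L^2}^2+\|\partial_t\partial_z\overline u_2+\partial_z(\overline{u\cdot\nabla u_2})\|_{L^2}^2$. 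The pressure Hessian is then controlled purely algebraically by Lemma \ref{delta-bar{p}}, $\|\partial_y^2\overline p\|_{L^2}+\|\partial_z^2\overline p\|_{L^2}\lesssim\|\partial_z\partial_y\overline p\|_{L^2}+\|\Delta\overline p\|_{L^2}$, with $\Delta\overline p$ bounded directly from the interior Poisson equation $\Delta\overline p=-\sum_{i,j}\overline{\partial_j u_i\partial_i u_j}$ using the nonlinear Lemmas \ref{lem:int-nn} and \ref{lem:u23-zero}; no trace theorem for $\overline p$ is invoked anywhere. Also, a small point on exponents: the factor $\nu^{-3/2}E_3^2$ in the target is driven by the lowest-order ($L^2$) estimate, where $\nu^{-1}\||u_{\neq}|^2\|_{L^2L^2}^2\lesssim\nu^{-3}E_3^4$, not by the $\nabla\Delta\overline u_2$ term as you suggest.
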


The proposition is a direct consequence of the following lemmas.

\begin{lemma}\label{lem:u23-H1}
There exists a small constant $c>0$ such that
\begin{align*}
&\|e^{c\nu t}(\overline{u}_{2},\overline{u}_{3})\|_{L^{\infty}L^2}^2+\nu\|e^{c\nu t}(\nabla\overline{u}_{2},\nabla\overline{u}_{3})\|_{L^2L^2}^2\lesssim
\|u(0)\|_{H^2}^2+\nu^{-3}E_3^4,\\
&\|e^{c\nu t}(\nabla\overline{u}_{2},\nabla\overline{u}_{3})\|_{L^{\infty}L^2}^2+\nu^{-1}\|e^{c\nu t}(\partial_t\overline{u}_{2},\partial_t\overline{u}_{3})\|_{L^2L^2}^2\\
&\qquad\qquad \qquad \lesssim\big(1+\nu^{-2}E_2^2\big)\big(\|u(0)\|_{H^2}^2+\nu^{-3}E_3^4\big).
\end{align*}
\end{lemma}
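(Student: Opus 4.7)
The plan is to view $(\overline u_2,\overline u_3)$ as a two-dimensional, divergence-free velocity field in $(y,z)$ that satisfies a Navier--Stokes--type system with non-slip boundary conditions $\overline u_j|_{y=\pm1}=0$ ($j=2,3$) and inhomogeneous forcing $F_j:=-P_0(u_{\neq}\cdot\nabla u_{j,\neq})$ coming from \eqref{zero}. By Lemma \ref{lem:int-nn} (the $|u_{\neq}|^2$ bound), the forcings enjoy $\|e^{2c\nu^{1/2}t}F_j\|_{L^2L^2}\lesssim\nu^{-1}E_3^2$; since $\nu\leq\nu^{1/2}$, the same estimate holds with weight $e^{c\nu t}$ and any small $c>0$. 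Incompressibility $\partial_y\overline u_2+\partial_z\overline u_3=0$ together with the vanishing boundary trace will be used repeatedly to eliminate the pressure.

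\textbf{First estimate.} Test the equation against $\overline u_j$ and sum over $j=2,3$. The pressure term $\sum_j\int\partial_j\overline p\,\overline u_j$ integrates by parts to zero by $\operatorname{div}(\overline u_2,\overline u_3)=0$ and the boundary condition; the transport term $\sum_j\int(\overline u_2\partial_y+\overline u_3\partial_z)\overline u_j\cdot\overline u_j$ vanishes for the same reason. This yields
\begin{equation*}
\tfrac{1}{2}\tfrac{d}{dt}\|(\overline u_2,\overline u_3)\|_{L^2}^2+\nu\|\nabla(\overline u_2,\overline u_3)\|_{L^2}^2\leq\|F\|_{L^2}\|(\overline u_2,\overline u_3)\|_{L^2}.
\end{equation*}
Using Poincar\'e (since $\overline u_j|_{y=\pm1}=0$), for $c$ sufficiently small the weight factor $e^{2c\nu t}$ costs $2c\nu\|e^{c\nu t}\overline u\|_{L^2}^2$, which is absorbed by half of $2\nu\|e^{c\nu t}\nabla\overline u\|_{L^2}^2$. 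Young's inequality on the forcing then gives the first displayed bound with right-hand side $\|u(0)\|_{L^2}^2+\nu^{-1}\|e^{c\nu t}F\|_{L^2L^2}^2\lesssim\|u(0)\|_{H^2}^2+\nu^{-3}E_3^4$.

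\textbf{Second estimate.} Test instead against $\partial_t\overline u_j$ and sum. The pressure term now reads $\sum_j\int\partial_j\overline p\,\partial_t\overline u_j=-\int\overline p\,\partial_t(\partial_y\overline u_2+\partial_z\overline u_3)=0$, and the viscous term integrates by parts cleanly because $\partial_t\overline u_j=0$ on $\{y=\pm1\}$, producing $\tfrac{\nu}{2}\tfrac{d}{dt}\|\nabla\overline u_j\|_{L^2}^2$. The remaining transport and forcing are absorbed via
\begin{equation*}
\tfrac{1}{2}\|\partial_t\overline u\|_{L^2}^2+\tfrac{\nu}{2}\tfrac{d}{dt}\|\nabla\overline u\|_{L^2}^2\leq C\|\overline u\|_{L^\infty}^2\|\nabla\overline u\|_{L^2}^2+C\|F\|_{L^2}^2.
\end{equation*}
Multiply by $e^{2c\nu t}$, use $\nu e^{2c\nu t}\tfrac{d}{dt}\|\nabla\overline u\|^2=\tfrac{d}{dt}(\nu e^{2c\nu t}\|\nabla\overline u\|^2)-2c\nu^2 e^{2c\nu t}\|\nabla\overline u\|^2$, and control the spurious $\nu^2\|\nabla\overline u\|^2$ by the already-established first estimate. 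Using $\|\overline u\|_{L^\infty L^\infty}\lesssim E_2$ (Lemma \ref{lem:u23-zero}),
\begin{equation*}
\int_0^t e^{2c\nu s}\|\overline u\|_{L^\infty}^2\|\nabla\overline u\|_{L^2}^2\,ds\lesssim E_2^2\|e^{c\nu s}\nabla\overline u\|_{L^2L^2}^2\lesssim\nu^{-1}E_2^2\bigl(\|u(0)\|_{H^2}^2+\nu^{-3}E_3^4\bigr).
\end{equation*}
Dividing through by $\nu$ delivers exactly the factor $(1+\nu^{-2}E_2^2)$ in front of $\|u(0)\|_{H^2}^2+\nu^{-3}E_3^4$.

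\textbf{Main obstacle.} The routine part is the $L^2$ energy identity; the delicate point is the $\partial_t$-test, where one must verify that both the pressure and the boundary contributions from the viscous integration by parts vanish — each requires careful use of $\operatorname{div}(\overline u_2,\overline u_3)=0$ and of the fact that the non-slip condition is preserved under $\partial_t$. Once those cancellations are in place, the $(1+\nu^{-2}E_2^2)$ factor is unavoidable and reflects the bootstrap character of controlling the quadratic self-interaction $\overline u\cdot\nabla\overline u$ by $L^\infty$ bounds embedded in $E_2$.
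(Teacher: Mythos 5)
Your argument is correct and follows essentially the same route as the paper: $L^2$ and $\partial_t$ energy identities for the $(\overline u_2,\overline u_3)$ system with the pressure and self-transport eliminated by incompressibility and the non-slip boundary, the weight $e^{c\nu t}$ absorbed via Poincar\'e, the nonlinear forcing controlled through Lemma \ref{lem:int-nn}, and the transport in the $\partial_t$-estimate handled with the $E_2$-based bounds from Lemma \ref{lem:u23-zero}. The only cosmetic difference is that in the first estimate you pair $\|F\|_{L^2}$ with $\|\overline u\|_{L^2}$ directly (and then Poincar\'e), whereas the paper first integrates the quadratic form by parts to pair $\||u_{\neq}|^2\|_{L^2}$ with $\|\nabla\overline u\|_{L^2}$; since Lemma \ref{lem:int-nn} bounds both $\|u_{\neq}\cdot\nabla u_{\neq}\|_{L^2L^2}$ and $\||u_{\neq}|^2\|_{L^2L^2}$ by $\nu^{-1}E_3^2$, the conclusions coincide.
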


\begin{proof}
Recall that $\overline{u}_{j}$ for $j=2,3$ satisfies
\begin{align*}
(\partial_t-\nu\Delta )\overline{u}_{j}+\partial_j \overline{p}+(\overline{u}_{2}\partial_y+\overline{u}_{3}\partial_z)\overline{u}_{j}
+\overline{u_{\neq}\cdot\nabla u_{j,\neq}}=0.
\end{align*}
As $\overline{u}_{j}|_{y=\pm1}=0$, $L^2$ energy estimate gives
\begin{align*}
&\frac{ \mathrm{d} }{\mathrm{d}t}\big(\|\overline{u}_{2}\|_{L^2}^2+\|\overline{u}_{3}\|_{L^2}^2\big)
+2\nu\big(\|\nabla\overline{u}_{2}\|_{L^2}^2+\|\nabla\overline{u}_{3}\|_{L^2}^2\big)\\
&=2\langle \overline{p},\partial_y\overline{u}_{2}\rangle+2\langle \overline{p},\partial_z\overline{u}_{3}\rangle-2\sum\limits_{j=2,3}
\big\langle(\overline{u}_{2}\partial_y+\overline{u}_{3}\partial_z)\overline{u}_{j}
,\overline{u}_{j}\big\rangle\\&\quad-2\big\langle \overline{u_{\neq}\cdot\nabla u_{2,\neq}},\overline{u}_{2}\rangle-2\big\langle \overline{u_{\neq}\cdot\nabla u_{3,\neq}},\overline{u}_{3}\big\rangle.
\end{align*}
As $\partial_y\overline{u}_{2}+\partial_z\overline{u}_{3}=0$, we have
\begin{align*}
&\frac{ \mathrm{d} }{\mathrm{d}t}\big(\|\overline{u}_{2}\|_{L^2}^2+\|\overline{u}_{3}\|_{L^2}^2\big)
+2\nu\big(\|\nabla\overline{u}_{2}\|_{L^2}^2+\|\nabla\overline{u}_{3}\|_{L^2}^2\big)\\
&=-2\sum\limits_{j=2,3}\langle\overline{u_{\neq}\cdot\nabla u_{j,\neq}},\overline{u}_{j}\rangle
=2\sum\limits_{j=2,3}\langle u_{\neq}\cdot\nabla \overline{u}_{j},u_{j,\neq}\rangle\\
&\leq2\||u_{\neq}|^2\|_{L^2}\|(\nabla\overline{u}_{2},\nabla\overline{u}_{3})\|_{L^2},
\end{align*}
from which and the fact that $\|\nabla\overline{u}_j\|_{L^2}^2\geq(\pi/2)^2\|\overline{u}_j\|_{L^2}^2$, we infer that
\begin{align*}
&e^{2c\nu t}\big(\|\overline{u}_{2}\|_{L^2}^2+\|\overline{u}_{3}\|_{L^2}^2\big)
+\nu\int_0^te^{2c\nu s}\big(\|\nabla\overline{u}_{2}(s)\|_{L^2}^2+\|\nabla\overline{u}_{3}(s)\|_{L^2}^2\big)\mathrm{d}s\\
&\lesssim \|\overline{u}_{2}(0)\|_{L^2}^2+\|\overline{u}_{3}(0)\|_{L^2}^2+\nu^{-1}\int_0^te^{2c\nu s}\||u_{\neq}(s)|^2\|_{L^2}^2\mathrm{d}s.
\end{align*}
This  along with Lemma \ref{lem:int-nn} gives
\begin{align*}
&\|e^{c\nu t}(\overline{u}_{2},\overline{u}_{3})\|_{L^{\infty}L^2}^2+\nu\|e^{c\nu t}(\nabla\overline{u}_{2},\nabla\overline{u}_{3})\|_{L^2L^2}^2\\
&\lesssim \|u(0)\|_{L^2}^2+\nu^{-1}\|e^{c\nu t}|u_{\neq}|^2\|_{L^2L^2}^2 \lesssim\|u(0)\|_{L^2}^2+\nu^{-3}E_3^4.
\end{align*}

Now $H^1$ energy estimate gives
\begin{align*}
&\nu\frac{ \mathrm{d} }{\mathrm{d}t}\big(\|\nabla\overline{u}_{2}\|_{L^2}^2+\|\nabla\overline{u}_{3}\|_{L^2}^2\big)
+2\big(\|\partial_t\overline{u}_{2}\|_{L^2}^2+\|\partial_t\overline{u}_{3}\|_{L^2}^2\big)\\
&=-2\sum\limits_{j=2,3}\big\langle(\overline{u}_{2}\partial_y+\overline{u}_{3}\partial_z)
\overline{u}_{j},\partial_t\overline{u}_{j}\big\rangle\\
&\qquad -2\big\langle \overline{u_{\neq}\cdot\nabla u_{2,\neq}},\partial_t\overline{u}_{2}\big\rangle-2\big\langle \overline{u_{\neq}\cdot\nabla u_{3,\neq}},\partial_t\overline{u}_{3}\big\rangle.
\end{align*}
By Lemma \ref{lem:u23-zero}, we have
\begin{align}\label{eq:u2}
\|e^{c\nu t}(\overline{u}_{2}\partial_y+\overline{u}_{3}\partial_z)\overline{u}_{j}\|_{L^2}\lesssim E_2\|e^{c\nu t}\nabla\overline{u}_{j}\|_{L^2},
\end{align}
and by Lemma \ref{lem:int-nn},
\beno
\|e^{c\nu t}(u_{\neq}\cdot \nabla u_{\neq})\|_{L^2L^2}^2\lesssim \nu^{-2}E_3^4.
\eeno
Thus, we obtain
\begin{align*}
&\|e^{c\nu t}(\nabla\overline{u}_{2},\nabla\overline{u}_{3})\|_{L^{\infty}L^2}^2+\nu^{-1} \|e^{c\nu t}(\partial_t\overline{u}_{2},\partial_t\overline{u}_{3})\|_{L^2L^2}^2\\
&\lesssim\|u(0)\|_{H^2}^2+\nu \|e^{c\nu t}(\nabla\overline{u}_{2},\nabla\overline{u}_{3})\|_{L^2L^2}^2+\nu^{-3}E_3^4\\
&\quad+\nu^{-1}E_2^2 \|e^{c\nu t}(\nabla\overline{u}_{2},\nabla\overline{u}_{3})\|_{L^2L^2}^2\lesssim \big(1+\nu^{-2}E_2^2\big)\big(\|u(0)\|_{H^2}^2+\nu^{-3}E_3^4\big).
\end{align*}
\end{proof}

\begin{lemma}\label{lem:u2-H2}
There exists a small constant $c>0$ such that
\begin{align*}
&\|e^{c\nu t}\Delta\overline{u}_{2}\|_{L^{\infty}L^2}^2+\nu^{-1}\|e^{c\nu t}\nabla\partial_t\overline{u}_{2}\|_{L^2L^2}^2 \lesssim \big(1+\nu^{-2}E_2^2\big)^2\big(\|u(0)\|_{H^2}^2+\nu^{-3}E_3^4\big).
\end{align*}
\end{lemma}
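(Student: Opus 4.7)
The plan is to promote the $L^\infty L^2 \cap L^2L^2$-type bound for $\partial_t(\overline u_2,\overline u_3)$ already furnished by Lemma~\ref{lem:u23-H1} to an $L^\infty L^2 \cap L^2 H^1$ bound by energy estimates on a differentiated Stokes system, and then to recover $\|\Delta\overline u_2\|_{L^\infty L^2}$ through stationary Stokes regularity. Setting $V:=(\overline u_2,\overline u_3)$, $W:=\partial_t V$, $Q:=\partial_t\overline p$ and $f_j:=-\partial_t[(\overline u_2\partial_y+\overline u_3\partial_z)\overline u_j+\overline{u_\neq\cdot\nabla u_{j,\neq}}]$ for $j\in\{2,3\}$, the couple $(W,Q)$ solves the Stokes system
\[
\partial_t W_j-\nu\Delta W_j+\partial_j Q=f_j,\qquad \partial_yW_2+\partial_zW_3=0,\qquad W_j|_{y=\pm 1}=0,
\]
inheriting the no-slip boundary condition from $V$. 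Testing against $(W_2,W_3)$, the pressure drops out by the divergence constraint and no-slip, so I obtain
\[
\tfrac12\tfrac{d}{dt}\|W\|_{L^2}^2+\nu\|\nabla W\|_{L^2}^2=\langle f_2,W_2\rangle+\langle f_3,W_3\rangle.
\]

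For the self-interaction $\partial_t N_{j,1}=(W\cdot\nabla)\overline u_j+(V\cdot\nabla)W_j$, the transport piece paired with $W_j$ vanishes by divergence-freeness and no-slip, and the remaining piece is controlled via Ladyzhenskaya by $\tfrac\nu4\|\nabla W\|_{L^2}^2+C\nu^{-1}\|\nabla V\|_{L^2}^2\|W\|_{L^2}^2$; inserting the weight $e^{c\nu t}$ and applying Grönwall, this contributes the quadratic factor $(1+\nu^{-2}E_2^2)$ when combined with Lemma~\ref{lem:u23-H1}. For the cross-interaction $N_{j,2}:=\overline{u_\neq\cdot\nabla u_{j,\neq}}$ I would integrate by parts in time,
\[
\int_0^t e^{2c\nu s}\langle\partial_t N_{j,2},W_j\rangle ds=\bigl[e^{2c\nu s}\langle N_{j,2},W_j\rangle\bigr]_0^t-\int_0^t e^{2c\nu s}\bigl(\langle N_{j,2},\partial_t W_j\rangle+2c\nu\langle N_{j,2},W_j\rangle\bigr)ds,
\]
substitute $\partial_t W_j=\nu\Delta W_j-\partial_j Q+f_j$ from the Stokes equation, and exploit the divergence form $N_{j,2}=\partial_y\overline{u_{2,\neq}u_{j,\neq}}+\partial_z\overline{u_{3,\neq}u_{j,\neq}}$ together with the vanishing trace $N_{j,2}|_{y=\pm 1}=0$ so that every spatial integration by parts is boundary-free. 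The resulting pieces are then bounded by $\nu\|\nabla N_2\|_{L^2L^2}\|\nabla W\|_{L^2L^2}$, endpoint terms of the form $\|N_2\|_{L^\infty L^2}\|W\|_{L^\infty L^2}$, and $\|N_2\|_{L^2L^2}$-type products paired with $f$, all of which are controlled via the product estimates in Lemma~\ref{lem:int-nn} and produce the $\nu^{-3}E_3^4$ contribution.

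The initial datum $W(0)$ is read off from the original momentum equation at $t=0$, $W(0)=\nu\Delta V(0)-\nabla\overline p(0)-N_1(0)-N_2(0)$; stationary Stokes applied to $(V(0),\overline p(0))$ gives $\|W(0)\|_{L^2}\lesssim \nu\|u(0)\|_{H^2}+\|(N_1,N_2)(0)\|_{L^2}$, accounting for the $\|u(0)\|_{H^2}^2$ piece. Combining the above yields $\|e^{c\nu t}W\|_{L^\infty L^2}^2+\nu\|e^{c\nu t}\nabla W\|_{L^2L^2}^2\lesssim (1+\nu^{-2}E_2^2)^2(\|u(0)\|_{H^2}^2+\nu^{-3}E_3^4)$, which already supplies the second term in the lemma. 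For the first term I view $(V,\overline p)$ at each fixed $t$ as a solution of the stationary Stokes system $-\nu\Delta V+\nabla\overline p=-W-N_1-N_2$ with $\operatorname{div}V=0$ and $V|_{y=\pm 1}=0$; the standard regularity bound $\nu\|V\|_{H^2}\lesssim\|W\|_{L^2}+\|N_1\|_{L^2}+\|N_2\|_{L^2}$ transfers the $L^\infty L^2$ bound on $W$, together with the pointwise-in-time estimates on $N_1$, $N_2$ furnished by Lemmas~\ref{lem:u23-zero} and~\ref{lem:int-nn}, into the desired bound on $\|\Delta\overline u_2\|_{L^\infty L^2}$. The main obstacle is the cross-interaction term $\int_0^t\langle\partial_t N_{j,2},W_j\rangle ds$: a naive expansion produces $\partial_t u_\neq$, which would force opening the NS system for nonzero modes and paying with $\nu\Delta u_\neq$ and a pressure; the time integration by parts together with the divergence form and vanishing trace of $N_{j,2}$ avoids this, but matching the sharp $\nu^{-3}E_3^4$ scaling requires careful bookkeeping and is the delicate step.
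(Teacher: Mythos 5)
Your approach is genuinely different from the paper's. The paper works with the $\Delta$-differentiated momentum equation for $\overline{u}_2$ and tests it first against $-2\overline{u}_2$ (to produce $\nu\|\Delta\overline{u}_2\|_{L^2L^2}^2$) and then against $-2\partial_t\overline{u}_2$; you instead time-differentiate, view $W=\partial_tV$ as a solution of a Stokes system, run the $L^2$-energy estimate on $W$, and try to recover $\|\Delta\overline{u}_2\|_{L^\infty L^2}$ via stationary Stokes regularity. The paper's choice is not incidental: testing $\partial_t\Delta\overline{u}_2-\nu\Delta^2\overline{u}_2+\cdots=0$ against $-2\partial_t\overline{u}_2$ produces $\|\partial_t\nabla\overline{u}_2\|_{L^2}^2$ as the coercive term \emph{with no viscosity prefactor}, alongside the conservative $\nu\frac{d}{dt}\|\Delta\overline{u}_2\|_{L^2}^2$; dividing by $\nu$ then hands you exactly the $\nu^{-1}\|\partial_t\nabla\overline{u}_2\|_{L^2L^2}^2$ and $\|\Delta\overline{u}_2\|_{L^\infty L^2}^2$ appearing in the lemma.

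This creates a scaling gap in your argument. Your $W$-energy identity gives $\|W\|_{L^\infty L^2}^2+\nu\|\nabla W\|_{L^2L^2}^2$ on the left, and you claim this is $\lesssim A:=(1+\nu^{-2}E_2^2)^2(\|u(0)\|_{H^2}^2+\nu^{-3}E_3^4)$. But the lemma asks for $\nu^{-1}\|\nabla\partial_t\overline{u}_2\|_{L^2L^2}^2\lesssim A$, which is $\nu^{-2}$ stronger than what $\nu\|\nabla W\|_{L^2L^2}^2\lesssim A$ delivers, and the stationary Stokes step $\nu\|\overline{u}_2\|_{H^2}\lesssim\|W\|_{L^2}+\|N_1\|_{L^2}+\|N_2\|_{L^2}$ likewise yields $\|\Delta\overline{u}_2\|_{L^\infty L^2}^2\lesssim\nu^{-2}(\|W\|_{L^\infty L^2}^2+\cdots)$. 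So your route only closes if you in fact establish the $\nu^2$-improved bound $\|W\|_{L^\infty L^2}^2+\nu\|\nabla W\|_{L^2L^2}^2\lesssim\nu^2A$; you neither state nor check this. It is plausible for the initial datum (as you note, $\|W(0)\|_{L^2}\lesssim\nu\|u(0)\|_{H^2}+\text{quadratic}$), but the forcing bookkeeping must be verified to carry the extra $\nu^2$, and your sketch does not do so. There are also two further obstacles: after integrating by parts in time and substituting $\partial_tW_j=\nu\Delta W_j-\partial_jQ+f_j$, the pairing $\langle N_{j,2},f_j\rangle$ reintroduces $\partial_tN_{j,2}$ (circular unless you first peel off $\langle N_{j,2},\partial_tN_{j,2}\rangle=\tfrac12\frac{d}{dt}\|N_{j,2}\|_{L^2}^2$ and handle $\langle N_{j,2},\partial_tN_{j,1}\rangle$ and the $Q$-term separately); and the time-boundary term $\|N_2\|_{L^\infty L^2}\|W\|_{L^\infty L^2}$ requires a pointwise-in-time bound on $\|\overline{u_{\neq}\cdot\nabla u_{2,\neq}}\|_{L^2}$, whereas Lemma~\ref{lem:int-nn} only produces $L^2_tL^2_y$ bounds (built from one $L^\infty_t$ and one $L^2_t$ factor). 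The paper avoids all of this by never differentiating the nonlinearity in time: its forcing is simply $\|\Delta\overline{p}\|_{L^2}^2+\|\nabla(\overline{u\cdot\nabla u_2})\|_{L^2}^2$, both already in hand in $L^2_tL^2_y$ from \eqref{eq:Dp-est}, \eqref{eq:zz-est} and Lemma~\ref{lem:int-nn}.
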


\begin{proof}
Recall that $\Delta \overline{u}_{2}$ satisfies
\begin{align}\label{2.14.1}
(\partial_t-\nu\Delta )\Delta\overline{u}_{2}+\Delta\partial_y \overline{p}+\Delta(\overline{u\cdot\nabla u_2})=0,\quad \nabla\overline{u}_{2}|_{y=\pm1}=0.
\end{align}
Taking the $L^2$ inner product with $-2\bar{u}^2$, we get
\begin{align*}
\frac{ \mathrm{d} }{\mathrm{d}t} \|\nabla\overline{u}_{2}\|_{L^2}^2&+2\nu\|\Delta\overline{u}_{2}\|_{L^2}^2
+2\langle\Delta \bar{p},\partial_y\overline{u}_{2}\rangle\\
&-2\big\langle\overline{u}_{2}
\partial_y\overline{u}_{2}+\overline{u}_{3}\partial_z\overline{u}_{2}
+\overline{u_{\neq}\cdot\nabla u_{2,\neq}},\Delta\overline{u}_{2}\big\rangle=0,
\end{align*}
which  gives
\begin{align}
\nu\|e^{c\nu t}\Delta\overline{u}_{2}\|_{L^2L^2}^2\lesssim&\|u(0)\|_{H^2}^2+ \nu^{-1}\|e^{c\nu t}\Delta \overline{p}\|_{L^2L^2}^2+\nu\| e^{c\nu t}\nabla\overline{u}_{2}\|_{L^2L^2}^2\nonumber\\
&+\nu^{-1}\|e^{c\nu t}(\overline{u}_{2}\partial_y+\overline{u}_{3}\partial_z)\overline{u}_{2}\|_{L^2L^2}^2
+\nu^{-1}\|e^{c\nu t} \overline{u_{\neq}\cdot \nabla u_{2,\neq}}\|_{L^2L^2}^2.\label{eq:Du2-est1}
\end{align}
By Lemma \ref{lemma-A.1} and Lemma \ref{lem:u23-H1}, we have
\begin{align*}
\|e^{c\nu t}\nabla(\overline{u}_{i}\partial_i \overline{u}_{j})\|_{L^2L^2}^2
\leq& \|e^{c\nu t}\overline{u}_{i}\|_{L^{\infty}H^1}^2\|(\partial_i \overline{u}_{j},\partial_z\partial_i \overline{u}_{j})\|_{L^2H^1}^2\\
\lesssim&  \|e^{c\nu t}(\nabla\overline{u}_{2},\nabla\overline{u}_{3})\|_{L^{\infty}L^2}^2
\|(\Delta\overline{u}_{2},\Delta\overline{u}_{3},\nabla\Delta\overline{u}_{2})\|_{L^2L^{2}}^2\\ \lesssim& \big(\|u(0)\|_{H^2}^2+\nu^{-3}E_3^4\big)\big(1+\nu^{-2}E_2^2\big)\nu^{-1}E_2^2.
\end{align*}
This shows that
\begin{align}\label{eq:zz-est}
\nu^{-1}\|e^{c\nu t}\nabla(\overline{u}\cdot\nabla \overline{u}_{j})\|_{L^2L^2}^2\lesssim
\big(1+\nu^{-2}E_2^2\big)^2\big(\|u(0)\|_{H^2}^2+\nu^{-3}E_3^4\big),\quad j\in\{2,3\}.
\end{align}

Notice that
\begin{align*}
\Delta \overline{p}=-\sum_{i,j=1,2,3}\overline{\partial_ju_i\partial_iu_j}=
-\sum_{i,j=1,2,3}\partial_j\overline{u}_{i}\partial_i\overline{u}_{j}
-\sum_{i,j=1,2,3}\overline{\partial_ju_{i,\neq}\partial_iu_{j,\neq}}.
\end{align*}
Then by Lemma \ref{lem:int-nn} and $\text{div} u_{\neq}=0$, we have
\begin{align*}
&\Big\|\sum_{i,j=1,2,3}e^{c\nu t}\overline{\partial_ju^i_{\neq}\partial_iu^j_{\neq}}\Big\|_{L^2L^2}^2=\Big\|\sum_{j=1,2,3}e^{c\nu t}\overline{\partial_j(u_{\neq}\cdot\nabla u^j_{\neq}})\Big\|_{L^2L^2}^2\\
&\leq \Big\|\sum_{j=1,2,3}e^{c\nu t}\partial_j(u_{\neq}\cdot\nabla u_{j,\neq})\Big\|_{L^2L^2}^2\lesssim\nu^{-2}E_3^4,
\end{align*}
and by \eqref{eq:zz-est}, we have
\begin{align*}
&\Big\| \sum_{j=1,2,3}e^{c\nu t}\partial_j\overline{u}_i\partial_i\overline{u}_j\Big\|_{L^2L^2}^2=\Big\|\sum_{\alpha=1,2,3} e^{c\nu t}\partial_{\alpha}(\overline{u}\cdot\nabla \overline{u}_{\alpha})\Big\|_{L^2L^2}^2\\
&\lesssim \nu\big(\|u(0)\|_{H^2}^2+\nu^{-3}E_3^4\big)\big(1+\nu^{-2}E_2^2\big)^2.
\end{align*}
This shows that
\begin{align}\label{eq:Dp-est}
\nu^{-1}\|e^{c\nu t}\Delta \overline{p}\|_{L^2L^2}^2\lesssim (1+\nu^{-2}E_2^2)^2\big(\|u(0)\|_{H^2}^2+\nu^{-3}E_3^4\big).
\end{align}
Then it follows from \eqref{eq:Du2-est1}, \eqref{eq:zz-est}, \eqref{eq:Dp-est} and Lemma \ref{lem:u23-H1} that
\begin{align}\label{eq:Du2-est2}
\nu\|e^{\nu t}\Delta\overline{u}_2\|_{L^2L^2}^2\lesssim\big(1+\nu^{-2}E_2^2\big)^2\big(\|u(0)\|_{H^2}^2+\nu^{-3}E_3^4\big).
\end{align}

Next we  take the $L^2$ inner product between \eqref{2.14.1}  and $-2\partial_t\overline{u}_2$ to obtain
\begin{align*}
2\|\partial_t\nabla\overline{u}_2\|_{L^2}^2+\nu\frac{ \mathrm{d} }{\mathrm{d}t} \|\Delta\overline{u}_2\|_{L^2}^2+
2\langle\Delta\overline{p},\partial_t\partial_y\overline{u}_2\rangle+2\big\langle
\nabla(\overline{u\cdot\nabla u_2}),\partial_t\nabla\overline{u}_2\big\rangle=0,
\end{align*}
which gives
\begin{align*}
\|\partial_t\nabla\overline{u}_2\|_{L^2}^2
+\nu\partial_t\|\Delta\overline{u}_2\|_{L^2}^2\lesssim\big(\|\Delta \overline{p}\|_{L^2}^2+\|\nabla(\overline{u\cdot\nabla u_2})\|_{L^2}^2\big),
\end{align*}
and then
\begin{align*}
&\nu^{-1}e^{2c\nu t}\|\partial_t\nabla\overline{u}_2\|_{L^2}^2+\frac{ \mathrm{d} }{\mathrm{d}t}\big(e^{2c\nu t}\|\Delta\overline{u}_2\|_{L^2}^2\big)\\
&\lesssim\nu^{-1}\Big(e^{2c\nu t}\|\Delta \overline{p}\|_{L^2}^2+e^{2c\nu t}\|\nabla(\overline{u\cdot\nabla u_2})\|_{L^2}^2+\nu^2e^{2c\nu t}\|\Delta\overline{u}_2\|_{L^2}^2\Big).
\end{align*}
Then we infer from \eqref{eq:Du2-est2}, \eqref{eq:zz-est}, \eqref{eq:Dp-est} and Lemma \ref{lem:int-nn} that
\begin{align*}
&\|e^{c\nu t}\Delta\overline{u}_2\|_{L^{\infty}L^2}^2+\nu^{-1}\|e^{c\nu t}\partial_t\nabla\overline{u}_2\|_{L^2L^2}^2 \\
&\lesssim (\|u(0)\|_{H^2}^2+\nu^{-1} \|e^{c\nu t}\Delta \overline{p}\|_{L^2L^2}^2\\
&\quad+\nu^{-1}\|e^{c\nu t}\nabla(\overline{u\cdot\nabla u_2})\|_{L^2L^2}^2+\nu \|e^{c\nu t}\Delta\overline{u}_2\|_{L^2L^2}^2)\\
&\lesssim \big(1+\nu^{-2}E_2^2\big)^2\big(\|u(0)\|_{H^2}^2+\nu^{-3}E_3^4\big).
\end{align*}
\end{proof}

\begin{lemma}\label{lem:u2-H3}
There exists a small constant $c>0$ such that
\begin{align*}
&\nu\|e^{c\nu t}\nabla\Delta\overline{u}_{2}\|_{L^2L^2}^2+
\nu\|e^{c\nu t}\Delta\overline{u}_{3}\|_{L^2L^2}^2\leq C\big(1+\nu^{-2}E_2^2\big)^2\big(\|u(0)\|_{H^2}^2+\nu^{-3}E_3^4\big).
\end{align*}
\end{lemma}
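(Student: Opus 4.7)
The plan is to view $(\overline u_2,\overline u_3,\bar p)$ as the solution of a 2D time-dependent Stokes system on $[-1,1]_y\times\mathbb{T}_z$ and apply standard Stokes regularity. Rewriting the zero-mode equations in the form $-\nu\Delta\overline u_j+\partial_j\bar p=F_j$ ($j\in\{2,3\}$) with $\partial_y\overline u_2+\partial_z\overline u_3=0$, $\overline u_j|_{y=\pm 1}=0$, and forcing
\begin{align*}
F_j=-\partial_t\overline u_j-(\overline u_2\partial_y+\overline u_3\partial_z)\overline u_j-\overline{u_\neq\cdot\nabla u_{j,\neq}},
\end{align*}
the 2D Stokes $H^2$-estimate $\nu\|\overline u\|_{H^2}+\|\nabla\bar p\|_{L^2}\lesssim\|F\|_{L^2}$ immediately bounds $\nu\|e^{c\nu t}\Delta\overline u_3\|_{L^2L^2}^2$ after squaring, weighting by $e^{2c\nu t}$ and integrating in time. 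The right-hand side decomposes into $\|\partial_t\overline u\|_{L^2L^2}^2$ (from Lemma \ref{lem:u23-H1}), $\|\nabla(\overline u\cdot\nabla \overline u_j)\|_{L^2L^2}^2$ (controlled by the inequality \eqref{eq:zz-est} already established in the proof of Lemma \ref{lem:u2-H2}), and $\|\overline{u_\neq\cdot\nabla u_{j,\neq}}\|_{L^2L^2}^2$ (Lemma \ref{lem:int-nn}). This yields the $\Delta\overline u_3$ half of the claim.

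For $\nabla\Delta\overline u_2$ I would exploit that $\overline u_2$ alone satisfies the reinforced boundary conditions $\overline u_2|_{y=\pm 1}=\partial_y\overline u_2|_{y=\pm 1}=0$, the latter coming from the divergence-free identity together with $\overline u_3|_{y=\pm 1}=0$. Applying $\nabla$ to the rearranged equation $\nu\Delta\overline u_2=\partial_t\overline u_2+\partial_y\bar p+N_2$ gives
\begin{align*}
\nu\|\nabla\Delta\overline u_2\|_{L^2}^2\lesssim \nu^{-1}\bigl(\|\nabla\partial_t\overline u_2\|_{L^2}^2+\|\nabla^2\bar p\|_{L^2}^2+\|\nabla N_2\|_{L^2}^2\bigr).
\end{align*}
The $\nabla\partial_t\overline u_2$ contribution is precisely the output of Lemma \ref{lem:u2-H2}, the nonlinearity is handled by \eqref{eq:zz-est} and Lemma \ref{lem:int-nn}, and the pressure piece $\|\nabla^2\bar p\|_{L^2L^2}$ would come from the 2D Stokes $H^3$-bound $\|\bar p\|_{H^2}\lesssim\|F\|_{H^1}$.

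The main obstacle is that this $H^3$-Stokes step requires $\nabla\partial_t\overline u_3\in L^2_tL^2_{y,z}$, which is not directly supplied by Lemmas \ref{lem:u23-H1}--\ref{lem:u2-H2}. I would recover it by splitting $\overline u_3=\tilde u_3(y)+\overline u_{3,\perp}(y,z)$ into its $z$-mean and mean-free parts: the divergence-free identity $\partial_z\overline u_3=-\partial_y\overline u_2$ yields via Fourier in $z$ the pointwise bound $\|\partial_y\partial_t\overline u_{3,\perp}\|_{L^2}\lesssim\|\partial_z^{-1}\partial_y^2\partial_t\overline u_2\|_{L^2}$, which is directly controlled by $\|\nabla\partial_t\overline u_2\|_{L^2}$ on the non-zero $z$-modes, while $\tilde u_3$ is pressure-free (the $z$-mean of $\partial_z\bar p$ vanishes) and satisfies a decoupled 1D heat equation with source $\widetilde{N_3}$ for which an analogue of Lemma \ref{lem:u2-H2} gives the needed $\partial_y\partial_t\tilde u_3$ bound. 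Plugging these auxiliary estimates into the $H^3$-Stokes inequality and integrating in time then produces the stated bound on $\nu\|e^{c\nu t}\nabla\Delta\overline u_2\|_{L^2L^2}^2$.
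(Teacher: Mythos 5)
Your plan correctly identifies the bottleneck — the $H^3$-Stokes step would need $\nabla\partial_t\overline{u}_3\in L^2_tL^2$ — but the repair you propose does not close the gap. On nonzero $z$-Fourier modes $m$ the divergence-free relation gives $\partial_y\partial_t\overline{u}_{3,m}=-(im)^{-1}\partial_y^2\partial_t\overline{u}_{2,m}$, so
\begin{align*}
\|\partial_y\partial_t\overline{u}_{3,\perp}\|_{L^2}^2=\sum_{m\neq0}|m|^{-2}\|\partial_y^2\partial_t\overline{u}_{2,m}\|_{L^2_y}^2\le \|\partial_y^2\partial_t\overline{u}_2\|_{L^2}^2.
\end{align*}
The operator $\partial_z^{-1}$ only reduces the $z$-order; it cannot turn $\partial_y^2$ into $\partial_y$. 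Consequently $\|\partial_z^{-1}\partial_y^2\partial_t\overline{u}_2\|_{L^2}$ is \emph{not} controlled by $\|\nabla\partial_t\overline{u}_2\|_{L^2}$; it needs the second $y$-derivative $\|\partial_y^2\partial_t\overline{u}_2\|_{L^2}$, which is precisely the regularity that Lemma~\ref{lem:u2-H2} does not deliver. So the auxiliary estimate you invoke is circular, and the Stokes-$H^3$ route stalls.

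The paper avoids this obstruction entirely by never asking for $\partial_t\overline{u}_3$ in $H^1$. Starting from the equation for $\partial_z\overline{u}_2$, it exploits the reinforced boundary condition $\nabla\overline{u}_2|_{y=\pm1}=0$ (with $\partial_y\overline{u}_2|_{y=\pm1}=0$ coming from $\partial_z\overline{u}_3|_{y=\pm1}=0$) to perform the integration by parts in the identity $\|\nu\Delta\partial_z\overline{u}_2-\partial_z\partial_y\overline{p}\|_{L^2}^2=\nu^2\|\Delta\partial_z\overline{u}_2\|_{L^2}^2+\|\partial_z\partial_y\overline{p}\|_{L^2}^2-2\nu\langle\partial_z^2\partial_y\overline{u}_2,\Delta\overline{p}\rangle$; this yields $\nu^2\|\Delta\partial_z\overline{u}_2\|_{L^2}^2+\|\partial_z\partial_y\overline{p}\|_{L^2}^2\lesssim\|\Delta\overline{p}\|_{L^2}^2+\|\partial_t\partial_z\overline{u}_2+\partial_z(\overline{u\cdot\nabla u_2})\|_{L^2}^2$ using only $\partial_t\partial_z\overline{u}_2$, which Lemma~\ref{lem:u2-H2} controls. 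Then Lemma~\ref{delta-bar{p}} upgrades $\|\partial_z\partial_y\overline{p}\|_{L^2}$ and $\|\Delta\overline{p}\|_{L^2}$ to the full pressure Hessian, the $\partial_y\overline{u}_2$ equation delivers $\nu\|\Delta\partial_y\overline{u}_2\|_{L^2}$, and the $\overline{u}_3$ equation (together with $\|\partial_z\overline{p}\|_{L^2}\le\|\partial_z^2\overline{p}\|_{L^2}$ by Poincar\'e on $\mathbb{T}_z$) delivers $\nu\|\Delta\overline{u}_3\|_{L^2}$ using only $\partial_t\overline{u}_3$ in $L^2$, from Lemma~\ref{lem:u23-H1}. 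Your $\Delta\overline{u}_3$ bound via the Stokes $H^2$-estimate is fine as an alternative, but the $\nabla\Delta\overline{u}_2$ part needs to follow the paper's component-wise route rather than a black-box $H^3$-Stokes estimate.
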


\begin{proof}
Using the equation
\begin{align*}
(\partial_t-\nu\Delta )\partial_z\overline{u}_{2}+\partial_z\partial_y \overline{p}+\partial_z(\overline{u\cdot\nabla u_2})=0,\quad \partial_z\overline{u}_{2}|_{y=\pm1}=\nabla\overline{u}_{2}|_{y=\pm1}=0,
\end{align*}
we get by integration by parts that
\begin{align*}
&\|\partial_t \partial_z\overline{u}_{2}+\partial_z(\overline{u\cdot\nabla u_2})\|_{L^2}^2=\| \nu\Delta \partial_z\overline{u}_{2}-\partial_z\partial_y \overline{p}\|_{L^2}^2\\
&= \nu^2\| \Delta \partial_z\overline{u}_{2}\|_{L^2}^2+\|\partial_z\partial_y \overline{p}\|_{L^2}^2-2\nu\big\langle \Delta \partial_z\overline{u}_{2}, \partial_z\partial_y \overline{p}\big\rangle\\
&= \nu^2\| \Delta \partial_z\overline{u}_{2}\|_{L^2}^2+\|\partial_z\partial_y \overline{p}\|_{L^2}^2+2\nu\big\langle \nabla \partial_z\overline{u}_2, \nabla \partial_z\partial_y \overline{p}\big\rangle\\
&=\nu^2\| \Delta \partial_z\overline{u}_2\|_{L^2}^2+\|\partial_z\partial_y \overline{p}\|_{L^2}^2-2\nu\big\langle  \partial_z^2\partial_y\overline{u}_{2}, \Delta \overline{p}\big\rangle,
\end{align*}
which shows that
\begin{align}\label{eq:u2p-H3-est}
&\nu^2\| \Delta \partial_z\overline{u}_{2}\|_{L^2}^2+\|\partial_z\partial_y \overline{p}\|_{L^2}^2\nonumber\\
&\leq C\big(\| \Delta \overline{p} \|_{L^2}^2+\|\partial_t \partial_z\overline{u}_{2}+\partial_z(\overline{u\cdot\nabla u_2})\|_{L^2}^2\big).
\end{align}
Then by Lemma \ref{lem:u2-H2}, \eqref{eq:zz-est}, \eqref{eq:Dp-est} and Lemma \ref{lem:int-nn}, we get
\begin{align*}
&\nu\| e^{c\nu t}\Delta \partial_z\overline{u}_{2}\|_{L^2L^2}^2+\nu^{-1}\|e^{c\nu t}\partial_z\partial_y \overline{p}\|_{L^2L^2}^2\\
&\lesssim\nu^{-1}\Big(\| e^{c\nu t}\Delta \overline{p} \|_{L^2L^2}^2+\|e^{c\nu t}\partial_t \partial_z\overline{u}_{2}\|_{L^2L^2}^2+\|e^{c\nu t}\partial_z(\overline{u}\cdot\nabla \overline{u}_{2})\|_{L^2L^2}^2\\
&\qquad\quad+\|e^{c\nu t}\partial_z(\overline{u_{\neq}\cdot\nabla u_{2,\neq}})\|_{L^2L^2}^2\Big) \\
&\lesssim\big(1+\nu^{-2}E_2^2\big)^2\big(\|u(0)\|_{H^2}^2+\nu^{-3}E_3^4\big).
\end{align*}
Thanks to Lemma \ref{delta-bar{p}}, we find that
\begin{align*}
\|\partial_y^2\overline{p}\|_{L^2}^2+\|\partial_z^2\overline{p}\|_{L^2}^2\lesssim
\|\partial_z\partial_y\overline{p}\|_{L^2}^2+\| \Delta \overline{p} \|_{L^2}^2,
\end{align*}
which gives
\begin{align}\label{eq:p-H2}
\nu^{-1}\big(\|e^{c\nu t}\partial_y^2\overline{p}\|_{L^2L^2}^2+\|e^{c\nu t}\partial_z^2\overline{p}\|_{L^2L^2}^2\big)\lesssim& \nu^{-1}\big(\|e^{c\nu t}\partial_z\partial_y\overline{p}\|_{L^2L^2}^2+\| e^{c\nu t}\Delta \overline{p} \|_{L^2L^2}^2\big)\nonumber\\
\lesssim& \big(1+\nu^{-2}E_2^2\big)^2\big(\|u(0)\|_{H^2}^2+\nu^{-3}E_3^4\big).
\end{align}

Using the equation
\begin{align*}
(\partial_t-\nu\Delta )\partial_y\overline{u}_2+\partial_y^2 \overline{p}+\partial_y(\overline{u}_2\partial_y\overline{u}_2+\overline{u}_3\partial_z\overline{u}_2)
+\partial_y(\overline{u_{\neq}\cdot\nabla u_{2,\neq}})=0,
\end{align*}
we deduce that
\begin{align*}
&\nu\|\Delta \partial_y\overline{u}_2\|_{L^2}^2\\
&\lesssim \nu^{-1}\Big(\|\partial_t\partial_y\overline{u}_2\|_{L^2}^2+\|\partial_y^2 \overline{p}\|_{L^2}^2+\|\partial_y(\overline{u}\cdot\nabla\overline{u}_2)\|_{L^2}^2
+\|\partial_y(\overline{u_{\neq}\cdot\nabla u_{2,\neq}})\|_{L^2}^2\Big).
\end{align*}
Thus, we have
\begin{align*}
\nu\|e^{c\nu t}\partial_y\Delta \overline{u}_2\|_{L^2L^2}^2
\lesssim\big(1+\nu^{-2}E_2^2\big)^2\big(\|u(0)\|_{H^2}^2+\nu^{-3}E_3^4\big).
\end{align*}

Using the fact that $\|\partial_z \overline{p}\|_{L^2}\leq \|\partial_z^2 \overline{p}\|_{L^2}$ and the equation $(\partial_t-\nu\Delta)\overline{u}_3+\partial_z\overline{p}+\overline{u\cdot\nabla u_3}=0$, we deduce that
\begin{align*}
\nu\|\Delta\overline{u}_3\|_{L^2}^2\lesssim& \nu^{-1}\Big(\|\partial_t\overline{u}_3\|_{L^2}^2
+\|\partial_z\overline{p}\|_{L^2}^2\\
&+\|(\overline{u}_2\partial_y\overline{u}_2+\overline{u}_3\partial_z\overline{u}_3)\|_{L^2}^2
+\|\overline{u_{\neq}\cdot\nabla u_{3,\neq}}\|_{L^2}^2\Big).
\end{align*}
As above, we can obtain
\begin{align*}
\nu\|e^{c\nu t}\Delta\overline{u}_3\|_{L^2L^2}^2\lesssim \big(1+\nu^{-2}E_2^2\big)^2\big(\|u(0)\|_{H^2}^2+\nu^{-3}E_3^4\big).
\end{align*}

This completes the proof of this lemma.
\end{proof}

\section{Energy estimates for non-zero modes}

For non-zero modes, the energy estimate is based on the formulation in terms of $(\Delta u_2,\om_2)$:
\begin{equation*}
\left \{
\begin{array}{lll}
&(\partial_t-\nu \Delta+(1-y^2)\partial_x)\Delta u_2+2\partial_x u_2=-\Delta_{x,z}  (u\cdot\nabla u_2)  \\
&\quad\qquad+\partial_y[\partial_x (u\cdot \nabla u_1)+\partial_z (u\cdot \nabla u_3)],\\
&(\partial_t-\nu \Delta+(1-y^2)\partial_x)\omega_2-2y\partial_z u_2=-\partial_z (u\cdot \nabla u_1)+\partial_x (u\cdot\nabla u_3),\\
&\omega_2:=\partial_zu_1-\partial_xu_3,\qquad (u_2,\partial_y u_2,\omega_2)|_{y=\pm 1}=(0,0,0).
\end{array}\right.
\end{equation*}
We denote
\beno f_{k}(y)=\f{1}{2\pi}\int_{\mathbb{T}^2}f(x,y,z)e^{-ik_1x-ik_3 z}\mathrm{d}x\mathrm{d}z.
\eeno
Taking the Fourier transform in $(x,z)$,  we obtain
\begin{equation*}\left\{
\begin{aligned}
  &\partial_t(\Delta u_2)_k-\nu(\partial_y^2-|k|^2)(\Delta u_2)_{k}+ik_1(1-y^2)( u_2)_k+2ik_1{( u_2)_k}\\
  &\quad\qquad= |k|^2(u\cdot\nabla
  u_2)_{k}+\partial_y\big[\partial_x(u\cdot\nabla u^1)+\partial_z(u\cdot\nabla
  u^3)\big]_{k},\\
  &\partial_t (\omega_2)_ k- \nu(\partial_y^2-|k|^2)(\omega_2)_ k
  +ik_1(1-y^2)(\omega_2)_ k-2ik_3y(u_2)_{k}\\
  &\qquad\quad=-ik_3(u\cdot\nabla u_1)_{k}+ik_1(u\cdot\nabla u_3)_{k},\\
  &\partial_y(u_2)_k|_{y=\pm1}=(u_2)_{k}|_{y=\pm1}=0,\quad
  (u_2)_{k}|_{t=0}=(u_2)_{k}(0),\\
  &(\omega_2)_k|_{y=\pm1}=0,\quad
  (\omega_2)_k|_{t=0}=ik_3 (u_1)_{k}(0)-ik_1(u_3)_{k}(0).
  \end{aligned}\right.
\end{equation*}
Let $a\geq0$, we introduce the following norms:
\begin{equation}\label{eq:Xa}
\begin{aligned}
\|f\|_{X_{k}^{a}}^2
  =&(|k_1|+\nu^{\f12}|k_1||k|)\|e^{a\nu^{\f12}t}(\partial_y,|k|)f\|^2_{L^2 L^2}\\
  &+(\nu|k|+\nu^{\f32}|k|^2)
  \|e^{a\nu^{\f12}t}(\partial^2_y-|k|^2)f\|^2_{L^2 L^2}\\
  &+(|k|+\nu^{\f12}|k|^2)\|e^{a\nu^{\f12}t}(\partial_y,|k|)f\|^2_{L^\infty L^2}\\
&{+\nu^{\f7{24}}\big(\|e^{a\nu^{\f12}t}(\partial^2_y-|k|^2)f\|_{L^{\infty} L^2}+\nu^{\f12}\|e^{a\nu^{\f12}t}\partial_y(\partial^2_y-|k|^2)f\|_{L^2 L^2}\big)},
\end{aligned}
\end{equation}
and
\begin{align}\label{eq:Ya}
  \|f\|_{Y_{k}^{a}}^2=&
  %\|e^{a\nu^{\f12}t}f\|^2_{L^\infty L^2}+
  \nu
  \|e^{a\nu^{\f12}t}\partial_yf\|^2_{L^2 L^2}+((\nu |k_1|)^{1/2}+\nu|k|^2)
  \|e^{a\nu^{\f12}t}f\|^2_{L^2 L^2},\nonumber
\end{align}
and
\begin{align*}
  \|f\|^2_{X_a} &=\sum_{k\in\mathbb{Z}^2 ;k_1\neq0}
  \|{f}_k\|^2_{X^a_{k}},\quad
  \|f\|^2_{Y_a} = \sum_{k\in\mathbb{Z}^2 ;k_1\neq0}
  \|{f}_k\|^2_{Y^a_{k}}.
\end{align*}
It is easy to see that
\begin{equation}\label{fX}
\begin{aligned}
  &E_{3,0}^2 \leq \|f\|^2_{X_c}, \\
 \end{aligned}
 \end{equation}
 and
 \begin{align}
 \label{fY}
 &{\nu \|e^{a\nu^{\f12}t}\nabla f_{\neq}\|_{L^2L^2}^2+\nu^{\f12} \|e^{a\nu^{\f12}t}|\pa_x|^\f14 f_{\neq}\|_{L^2L^2}^2\leq\|f\|^2_{Y_a}}.
\end{align}

\begin{lemma}\label{lem:E30}
It holds that
\begin{align*}
E^2_{3,0}+E^2_{3,2}+E_{3,3}^2\lesssim& \|u_{2,\neq}\|^2_{X_{c}}
   +\|\partial_x \omega_{2,\neq}\|_{Y_c}^2 +\nu^{\f12} \|e^{c\nu^{\f12}t}\partial_x\omega_{2,\neq}\|^2_{L^\infty L^2}\\
   &+\nu^{\f14} \|e^{c\nu^{\f12}t}\partial_x \Lambda_{x,z}^{-1/2}\omega_{2,\neq}\|^2_{L^\infty L^2}.
   \end{align*}
\end{lemma}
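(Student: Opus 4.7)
The plan is to split the left-hand side as $E_{3,0}^2$ plus $E_{3,2}^2+E_{3,3}^2$ and treat the two parts separately. For $E_{3,0}^2$ the bound $E_{3,0}^2\leq \|u_{2,\neq}\|^2_{X_c}$ is essentially a definition check: after taking the Fourier transform in $(x,z)$, each term in \eqref{eq:E30-define-00} is (up to a harmless factor coming from $|\partial_x|^{1/2}\leq \Lambda_{x,z}^{1/2}$ or $\Lambda_{x,z}^{1/2}\leq |k|^{1/2}$) one of the summands appearing in the definition \eqref{eq:Xa} of $\|u_{2,\neq}\|_{X_c}^2$. This is precisely the content of \eqref{fX} and requires nothing beyond matching exponents of $\nu$ and $|k|$.

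The main analytic idea for the $u_{3,\neq}$ contributions is to reduce everything to $u_{2,\neq}$ and $\omega_{2,\neq}$ via a single algebraic identity. Since we work on non-zero modes with $k_1\neq 0$, the divergence-free condition $ik_1u_{1,k}+\partial_y u_{2,k}+ik_3u_{3,k}=0$ combined with $(\omega_2)_k=ik_3u_{1,k}-ik_1u_{3,k}$ yields
\begin{equation*}
u_{3,k}=\f{ik_1(\omega_2)_k+ik_3\partial_yu_{2,k}}{|k|^2}.
\end{equation*}
Using this identity (and its $\partial_y$-derivative), each of the five space-time norms appearing in $E_{3,2}^2+E_{3,3}^2$ splits into two pieces: one involving $(\omega_2)_k$ or $\partial_y(\omega_2)_k$, to be controlled by a norm of $\partial_x\omega_{2,\neq}$ on the right-hand side after using $|k_1/k|\leq 1$; and one involving $\partial_y u_{2,k}$ or $\partial_y^2 u_{2,k}$, to be absorbed into $\|u_{2,\neq}\|_{X_c}^2$ after using $|k_3/k|\leq 1$ and, for the terms carrying $\partial_y^2$, the integration-by-parts identity
\begin{equation*}
\|(\partial_y^2-|k|^2)u_{2,k}\|_{L^2}^2=\|\partial_y^2 u_{2,k}\|_{L^2}^2+2|k|^2\|\partial_y u_{2,k}\|_{L^2}^2+|k|^4\|u_{2,k}\|_{L^2}^2,
\end{equation*}
which is valid thanks to $u_{2,k}(\pm1)=\partial_yu_{2,k}(\pm1)=0$.

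As a representative example, for the sharpest $L^\infty L^2$ piece in $E_{3,2}$ one has $\||k|^{3/2}u_{3,k}\|_{L^2}^2\leq 2|k|^{-1}(k_1^2\|(\omega_2)_k\|_{L^2}^2+k_3^2\|\partial_y u_{2,k}\|_{L^2}^2)$, so $\nu^{1/4}\|e^{c\nu^{1/2}t}\Lambda_{x,z}^{3/2}u_{3,\neq}\|_{L^\infty L^2}^2$ is dominated by $\nu^{1/4}\|e^{c\nu^{1/2}t}\partial_x\Lambda_{x,z}^{-1/2}\omega_{2,\neq}\|_{L^\infty L^2}^2$ (exactly the last term on the right-hand side of the lemma) plus a contribution of the form $\nu^{1/4}\sum_k|k|\|\partial_y u_{2,k}\|_{L^\infty L^2}^2$, which is absorbed by the $L^\infty L^2$ piece $(|k|+\nu^{1/2}|k|^2)\|(\partial_y,|k|)f\|_{L^\infty L^2}^2$ of $\|u_{2,\neq}\|_{X_c}^2$. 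The other four norms in $E_{3,2}^2+E_{3,3}^2$ unfold in the same way, producing $L^2L^2$ norms of $\partial_y(\omega_2)_k$ and $\partial_y^2u_{2,k}$ that are absorbed by the $L^2L^2$ pieces of \eqref{eq:Ya} and \eqref{eq:Xa} respectively. Since there is no genuinely nonlinear estimate or delicate analytic input involved, the only real obstacle is bookkeeping: checking in each of the five cases that the emerging coefficient $\nu^\alpha|k|^\beta|k_1|^\gamma$ is dominated by a coefficient already present in \eqref{eq:Xa}, \eqref{eq:Ya}, or in the two isolated $L^\infty L^2$ terms on the right-hand side.
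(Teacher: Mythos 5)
Your proposal is correct and follows essentially the same route as the paper: both reduce every $u_{3,\neq}$ term to $\omega_{2,\neq}$ and $u_{2,\neq}$ through the identity $(\partial_x^2+\partial_z^2)u_{3,\neq}=-\partial_x\omega_{2,\neq}-\partial_z\partial_y u_{2,\neq}$ (your $u_{3,k}=\big(ik_1(\omega_2)_k+ik_3\partial_y u_{2,k}\big)/|k|^2$ is the Fourier-side rewrite of the same identity), then match coefficients against the summands of $\|\cdot\|_{X_c}$, $\|\cdot\|_{Y_c}$ and the two isolated $L^\infty L^2$ terms. The bookkeeping you sketch — $|k_1|\le|k|$, $|k_3|\le|k|$, the integration-by-parts identity for $\|(\partial_y^2-|k|^2)u_{2,k}\|_{L^2}$, and $\nu\le 1$ to trade lower powers of $\nu$ for higher ones — is exactly what is needed to close each of the five norms.
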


\begin{proof}
By the definition of $E_{3,0}$ in \eqref{eq:E30-define-00}, we have $E_{3,0}\lesssim \|u_{2,\neq}\|_{X_c} $.

Using the fact $\omega_2=\partial_zu_1-\partial_xu_3$ and $\partial_xu_1+\partial_y
  u_2+\partial_zu_3=0$, we know that $(\partial_x^2+\partial_z^2) u_{3,\neq}
  =-\partial_x\omega_{2,\neq}-\partial_z\partial_yu_{2,\neq} $ and
  \begin{align*}
&\nu^{\f18}\|e^{c\nu^{\f12}t}\Lambda_{x,z}^{3/2} u_{3,\neq}\|_{L^{\infty}L^2}+\nu^{\f12}\|e^{c\nu^{\f12}t}\Lambda_{x,z}^{3/2}
\nabla u_{3,\neq}\|_{L^2L^2} +\nu^{\f14}\|e^{c\nu^{\f12}t}|\pa_x|^\f12\Lambda_{x,z}^{3/2} u_{3,\neq}\|_{L^2L^2}\\
&\leq \nu^{\f18} \big( \|e^{c\nu^{\f12}t}\partial_x\Lambda_{x,z}^{-1/2}\omega_{2,\neq}\|_{L^{\infty}L^2}+
\|e^{c\nu^{\f12}t}\Lambda_{x,z}^{1/2}\partial_yu_{2,\neq}\|_{L^{\infty}L^2}\big)\\
&
\quad+\nu^{\f12}\|e^{c\nu^{\f12}t}\partial_x\Lambda_{x,z}^{-1/2}\nabla\omega_{2,\neq}\|_{L^2L^2}+\nu^{\f12}\|e^{c\nu^{\f12}t}\Lambda_{x,z}^{1/2}\partial_y\nabla u_{2,\neq}\|_{L^2L^2}\\
&\quad+\nu^{\f14}\|e^{c\nu^{\f12}t} \partial_x \omega_{2,\neq}\|_{L^2L^2}
+\nu^{\f14}\|e^{c\nu^{\f12}t}|\partial_x|^{1/2} \Lambda_{x,z}^{1/2}\nabla u_{2,\neq}\|_{L^2L^2},
\end{align*}
from which, \eqref{fX}, \eqref{fY}, we deduce that
\begin{align*}
    E_{3,2}^2\lesssim& \|u_{2,\neq}\|^2_{X_{c}}
   +\|\partial_x\Lambda_{x,z}^{-1/2}\omega_{2,\neq}\|^2_{Y_{c}}\\
   &+ \| \partial_x\omega_{2,\neq}\|_{Y_c}^2 +\nu^{\f14}  \|e^{c\nu^{\f12}t}\partial_x\Lambda_{x,z}^{-1/2}\omega_{2,\neq}\|_{L^{\infty}L^2}^2.
 \end{align*}

 By using $(\partial_x^2+\partial_z^2) u_{3,\neq}
  =-\partial_x\omega_{2,\neq}-\partial_z\partial_yu_{2,\neq} $, we have
   \begin{align*}
E_{3,3}&= \nu^{\f14}\|e^{c\nu^{\f12}t}(\partial_x^2+\partial_z^2) u_{3,\neq}\|_{L^{\infty}L^2}+\nu^{\f34}\|e^{c\nu^{\f12}t}(\partial_x^2+\partial_z^2)
\nabla u_{3,\neq}\|_{L^2L^2}\\ &\leq \nu^{\f14}\|e^{c\nu^{\f12}t}\partial_x\omega_{2,\neq}\|_{L^{\infty}L^2}+
\nu^{\f14}\|e^{c\nu^{\f12}t}\partial_z\partial_yu_{2,\neq}\|_{L^{\infty}L^2}\\&\quad
+\nu^{\f34}\|e^{c\nu^{\f12}t}\partial_x\nabla\omega_{2,\neq}\|_{L^2L^2}
+\nu^{\f34}\|e^{c\nu^{\f12}t}\partial_z\partial_y\nabla u_{2,\neq}\|_{L^2L^2}\\
%&\lesssim \nu^{\f14}\|e^{c\nu^{\f12}t}\partial_x\omega_{2,\neq}\|_{L^{\infty}L^2}+\|u_{2,\neq}\|_{X_c} \\
&\lesssim  \nu^{\f14}\|e^{c\nu^{\f12}t}\partial_x\omega_{2,\neq}\|_{L^{\infty}L^2} +\nu^{\f14}\|\partial_x\omega_{2,\neq}\|_{Y_c}+ \|u_{2,\neq}\|_{X_c}.
\end{align*}

This completes the proof of this lemma.
\end{proof}

\subsection{Estimates of $E_{3,0}$, $E_{3,2}$ and $E_{3,3}$}

\begin{proposition}\label{prop:E30}
It holds that
\begin{align*}
 & E^2_{3,0}+E_{3,2}^2+E_{3,3}^2 +\|u_{2,\neq}\|^2_{X_{c}}\\
  & \lesssim \|u(0)\|^2_{H^{3}}+{\nu^{-3}E_3^4+\nu^{-3}E_2^2E_3^2}+\nu^{-\f32} E_1^2E_3^2.
  \end{align*}
\end{proposition}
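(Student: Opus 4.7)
My strategy is to combine Lemma \ref{lem:E30} with the space-time estimates of Section 4 and the nonlinear interaction lemmas of Section 5. By Lemma \ref{lem:E30}, it suffices to bound $\|u_{2,\neq}\|_{X_c}^2$, $\|\partial_x\omega_{2,\neq}\|_{Y_c}^2$, and the two $L^\infty L^2$-norms of $\partial_x\omega_{2,\neq}$ on its right-hand side. For the first I will apply Theorem \ref{thm:sp-no-slip} to the $k$-th Fourier mode of the $\Delta u_2$ equation, with $\omega=(\Delta u_2)_k$ and $\varphi=(u_2)_k$; the compatibility condition $\langle\omega_0,e^{\pm|k|y}\rangle=0$ is an immediate consequence of the no-slip condition $u_2|_{y=\pm1}=\partial_yu_2|_{y=\pm1}=0$. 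For the $\omega_2$-contributions I will apply Theorem \ref{thm:sp-without-nonlocal} to the equation for $\partial_x\omega_{2,\neq}$; its first statement controls the $Y_c$-norm while the second and third statements produce the two $L^\infty L^2$ contributions (the $\Lambda_{x,z}^{-1/2}$ variant after pulling out a $|k|^{-1/2}$ factor). The initial data costs $\|(\partial_y^2-|k|^2)(\Delta u_2)(0)_k\|_{L^2}^2$ and $\|(\partial_x\omega_2)(0)_k\|_{L^2}^2$ sum in $k$ to a Sobolev norm of $u(0)$.

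To apply these theorems, the right-hand sides must be put into their divergence forms. For the $\Delta u_2$ equation I rewrite
\begin{equation*}
F=-\Delta_{x,z}(u\cdot\nabla u_2)+\partial_y\big[\partial_x(u\cdot\nabla u_1)+\partial_z(u\cdot\nabla u_3)\big]
\end{equation*}
on the Fourier side as $-ik_1f_1-\partial_yf_2-ik_3f_3$ with $f_2=0$, $f_1=ik_1(u\cdot\nabla u_2)_k-\partial_y(u\cdot\nabla u_1)_k$, and the symmetric choice for $f_3$. For the $\partial_x\omega_2$ equation, the linear Coriolis-type source $2y\partial_x\partial_zu_2$ is placed into the theorem's $yg_4$-slot with $g_4=-2\partial_x\partial_zu_2$; the prefactor $|k_1|^{-1}|k|^{-2}\|(\partial_y,|k|)g_4\|^2_{L^2L^2}$ in Theorem \ref{thm:sp-without-nonlocal} is exactly absorbed into the $\|u_{2,\neq}\|_{X_c}^2$ bound from the first step. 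The nonlinear part $-\partial_x\partial_z(u\cdot\nabla u_1)+\partial_x^2(u\cdot\nabla u_3)$ becomes $-ik_1g_1-ik_3g_3$ with $g_1=-\partial_x(u\cdot\nabla u_3)$, $g_3=\partial_x(u\cdot\nabla u_1)$, and $g_2=0$.

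It remains to bound the nonlinear forcing norms. Writing $u=\bar u+u_\neq$ and projecting onto non-zero modes produces three families. The $u_\neq\cdot\nabla u_\neq$ part is handled by Lemma \ref{lem:int-nn}, contributing $\nu^{-3}E_3^4$ after the $\nu^{-1}$ prefactor. The interactions of $\bar u_2,\bar u_3$ with non-zero modes are handled by Lemma \ref{lem:int-nz-23} together with Lemma \ref{lem:u23-zero}, contributing $\nu^{-3}E_2^2E_3^2$. The interactions with $\bar u_1$ are the most delicate and are handled by the anisotropic estimates of Lemmas \ref{lem:int-zn-1-23}, \ref{lem:int-zn-11} and \ref{lem:int-zz-11-pz}. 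The crucial observation for this last family is that every such contribution carries a $\partial_x$ outside the product (inherited from the divergence structure of $f_j,g_j$), which allows one to trade a streamwise derivative on the nonzero-mode factor for a gain of $(\nu|k_1|)^{1/4}$ via enhanced dissipation; this is what upgrades the naive $\nu^{-5/2}$ bound to the sharper $\nu^{-3/2}E_1^2E_3^2$ appearing in the statement.

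The main obstacle is the delicate $\nu$-bookkeeping. The norm $X_c$ contains eight distinct $\nu^\alpha|k|^\beta$ weights, each of which must be matched against the appropriate piece of Theorem \ref{thm:sp-no-slip} (itself derived from the sharp non-slip resolvent estimates of Section 3). A second subtlety is the $\nu^{7/24}\|\Delta u_{2,\neq}\|_{L^\infty L^2}$ term, which cannot be produced by a plain $L^2$-energy argument and relies on the full strength of the weak-type resolvent bound (Proposition \ref{weaktype}); likewise, the $\nu^{1/4}\|\partial_x\Lambda_{x,z}^{-1/2}\omega_{2,\neq}\|_{L^\infty L^2}$ term in Lemma \ref{lem:E30} is extracted from the second statement of Theorem \ref{thm:sp-without-nonlocal}. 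Once these bounds are assembled, collecting contributions through Lemma \ref{lem:E30} yields the proposition.
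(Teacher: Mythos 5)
Your overall architecture — combine Lemma \ref{lem:E30} with Theorems \ref{thm:sp-no-slip} and \ref{thm:sp-without-nonlocal} applied to the $(\Delta u_2,\omega_2)$ system, check the compatibility condition from no-slip, then estimate the forcing via the interaction lemmas of Section 5 — is exactly the paper's. However, there is a genuine error in the step where you put the right-hand side of the $\Delta u_2$ equation into divergence form.

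You write $f_2=0$ and $f_1=ik_1(u\cdot\nabla u_2)_k-\partial_y(u\cdot\nabla u_1)_k$ (with the symmetric $f_3$). This is algebraically consistent with $F=-ik_1f_1-\partial_yf_2-ik_3f_3$, but it forces you to estimate $\|\partial_y(u\cdot\nabla u_1)\|_{L^2L^2}$ and $\|\partial_y(u\cdot\nabla u_3)\|_{L^2L^2}$. The interaction lemmas of Section 5 are calibrated to \emph{horizontal} derivatives: Lemma \ref{lem:int-nn} controls $(\partial_x,\partial_z)(u_{\neq}\cdot\nabla u_{j,\neq})$ at cost $\nu^{-1}E_3^2$ and $\partial_x(u_{\neq}\cdot\nabla u_{\neq})$ at the same cost, while the full gradient $\nabla(u_{\neq}\cdot\nabla u_{\neq})$ (which includes $\partial_y$) costs $\nu^{-11/8}E_3^2$ — see \eqref{lemma2.1-1} versus \eqref{lemma2.1-3}. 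With the $\nu^{-1}$ prefactor from Theorem \ref{thm:sp-no-slip}, your choice produces $\nu^{-1}\cdot\nu^{-11/4}E_3^4=\nu^{-15/4}E_3^4$, which is strictly worse than the target $\nu^{-3}E_3^4$ and would require a stronger initial smallness threshold than $\nu^{7/4}$. The correct assignment keeps the $\partial_y$ on the outside by putting
\begin{equation*}
f_2=-ik_1(u\cdot\nabla u_1)_k-ik_3(u\cdot\nabla u_3)_k=-\big[\partial_x(u\cdot\nabla u_1)+\partial_z(u\cdot\nabla u_3)\big]_k,
\qquad f_1=ik_1(u\cdot\nabla u_2)_k,\quad f_3=ik_3(u\cdot\nabla u_2)_k,
\end{equation*}
so that every $f_j$ carries only $\partial_x$ or $\partial_z$ of the bilinear term; this is precisely what makes the estimate \eqref{est:u2neqX-000} close and is what the paper does.

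Two smaller remarks. First, you invoke Lemma \ref{lem:int-zz-11-pz} for the $\overline{u}_1$-interactions, but that lemma gives $\nu^{-1}E_1^2E_3^2$ and is only needed in Proposition \ref{prop:E31}, where the $\omega_2$-forcing $\partial_z(u\cdot\nabla u_1)$ (with no outside $\partial_x$) appears after the $\Lambda_{x,z}^{-1/2}$ normalization. In Proposition \ref{prop:E30} every nonlinear term carries an external $\partial_x$, so only Lemmas \ref{lem:int-zn-1-23} and \ref{lem:int-zn-11}, both giving $\nu^{-1/2}E_1^2E_3^2$, are used; citing \ref{lem:int-zz-11-pz} here would again spoil the $\nu^{-3/2}E_1^2E_3^2$ bound. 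Second, you should be aware that the $X_c$-norm initial-data cost $\sum_k\|(\partial_y^2-|k|^2)(\Delta u_2)_k(0)\|_{L^2}^2$ is an $H^4$-level quantity; the $H^3$ appearing in the statement is a misprint (the paper's own proof closes with $\|u(0)\|_{H^4}^2$), so your initial-data accounting should land on $H^4$.
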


\begin{proof}
 By Theorem \ref{thm:sp-no-slip} and Theorem \ref{thm:sp-without-nonlocal}, we obtain
  \begin{equation}\label{est:u2k-00}
    \begin{aligned}
    &\|(u_{2})_k\|^2_{X_{k}^{c}}\leq  C\|(\partial_y^2-|k|^2)(\Delta u_2)_{k}(0)\|_{L^2}^2\\
   &\qquad + \nu^{-1}\Big(\|e^{c\nu^{\f12}t}[\partial_x(u\cdot\nabla
   u_1)+\partial_z(u\cdot\nabla
  u_3)]_{k}\|_{L^2L^2}^2+ \|e^{c\nu^{\f12}t}|k|(u\cdot\nabla
  u_2)_{k}\|^2_{L^2L^2}\Big),
% & \lesssim  |k|^{-1}( \|k_1(\Delta u_1)_{k}(0)\|_{L^2}^2+ \|k_3(\Delta u_3)_{k}(0)\|_{L^2}^2+|k|^2 \|(\Delta u_2)_{k}(0)\|_{L^2}^2)\\
 %  &\quad+ \nu^{-1}\Big(\|e^{c\nu^{\f12}t}[\partial_x(u\cdot\nabla  u_1)+\partial_z(u\cdot\nabla u_3)]_{k}\|_{L^2L^2}^2+ \|e^{c\nu^{\f12}t}|k|(u\cdot\nabla u_2)_{k}\|^2_{L^2L^2}\Big)\\ & \lesssim  |k|\|(\Delta u)_k(0)\|_{L^2}^2\\  &\quad+ \nu^{-1}\Big(\|e^{c\nu^{\f12}t}[\partial_x(u\cdot\nabla   u_1)+\partial_z(u\cdot\nabla  u_3)]_{k}\|_{L^2L^2}^2+ \|e^{c\nu^{\f12}t}|k|(u\cdot\nabla u_2)_{k}\|^2_{L^2L^2}\Big),
    \end{aligned}
  \end{equation}
and
\begin{equation}\label{est:w2k-00}
    \begin{aligned}
& \|(\omega_2)_{k}\|^2_{Y^{c}_{k}}
   \leq C
  \Big(\|(\omega_2)_{k}(0)\|^2_{L^2}
 \\&\quad+\min\big\{(\nu |k|^2)^{-1},(\nu
  |k_1|)^{-1/2}\big\}\|e^{c\nu^{\f12}t}(k_1(u\cdot\nabla u_3)_{k}-k_3(u\cdot\nabla u_1)_{k})\|^2_{L^2L^2}\\
  &\quad+(k_3^2(|k_1||k|^2)^{-1})\|e^{c\nu^{\f12}t}(\partial_y,|k|)(u_2)_{k}\|^2_{L^2L^2}\Big)\\
  &\leq C
  \Big(\|(\omega_2)_{k}(0)\|^2_{L^2}+\nu^{-1}\big(\|e^{c\nu^{\f12}t}(u\cdot\nabla u_3)_{k}\|^2_{L^2L^2}+\|e^{c\nu^{\f12}t}(u\cdot\nabla u_1)_{k}\|^2_{L^2L^2}\big)\\
  &\quad+|k_1|^{-1}\|e^{c\nu^{\f12}t}(\partial_y,|k|)(u_2)_{k}\|^2_{L^2L^2}\Big).
 \end{aligned}
  \end{equation}
Therefore, we have 
\begin{align*}
& \|\partial_x \omega_{2,\neq} \|^2_{Y_{c}}
   \leq C\sum_{k\in\mathbb{Z}^2;k_1\neq 0}
  \Big(|k_1|^2\| (\omega_2)_{k}(0)\|^2_{L^2}
 \\&\quad+\nu^{-1} |k_1|^2\|e^{c\nu^{\f12}t}((u\cdot\nabla u_3)_{k}-(u\cdot\nabla u_1)_{k})\|^2_{L^2L^2}+|k_1|\|e^{c\nu^{\f12}t}(\partial_y,|k|)(u_2)_{k}\|^2_{L^2L^2}\Big)\\
  &\leq C
  \Big(\|(\partial_x \omega_2)(0)\|^2_{L^2}+\|u_{2,\neq}\|_{X_c}^2\\
  &\quad+\nu^{-1}\big(\|e^{c\nu^{\f12}t}\partial_x (u\cdot\nabla u_3)_{k}\|^2_{L^2L^2}+\|e^{c\nu^{\f12}t}\partial_x (u\cdot\nabla u_1)_{k}\|^2_{L^2L^2}\big)
  \Big).
\end{align*}

By Theorem \ref{thm:sp-without-nonlocal} again, we have 
\begin{align*}
\nu^{\f14} \|e^{c\nu^{\frac{1}{2}}t} (\omega_2)_k\|_{L^\infty L^2}^2 \lesssim& \nu^{\f14} \|(\omega_2)_k(0)\|_{L^2}^2+|k_1|^{-\f34} |k|^{-2} |k_3|^2\|e^{c\nu^{\f12}t} (\partial_y,|k|) (u_2)_k\|_{L^2 L^2}^2\\
&+\nu^{\f14} |k_1|^{-1} |k|^{-1}|k_3|^2 \|e^{c\nu^{\f12}t} (\partial_y,|k|) (u_2)_k\|_{L^2 L^2}^2\\
&+\nu^{-\f34}\big(\|e^{c\nu^{\f12}t}(u\cdot\nabla u_3)_{k}\|^2_{L^2L^2}+\|e^{c\nu^{\f12}t}(u\cdot\nabla u_1)_{k}\|^2_{L^2L^2}\big),
\end{align*}
which gives 
\begin{align*}
&\nu^{\f12}|k_1|^2 \|e^{c\nu^{\frac{1}{2}}t} (\omega_2)_k\|_{L^\infty L^2}^2\\
 &\lesssim \nu^{\f12} |k_1|^2 \|(\omega_2)_k(0)\|_{L^2}^2+\nu^{\f14}|k_1|^{-\f34} |k|^{-2} |k_3|^2  |k_1|^2\|e^{c\nu^{\f12}t} (\partial_y,|k|) (u_2)_k\|_{L^2 L^2}^2\\
&\quad+\nu^{\f12} |k_1|^{-1} |k|^{-1} |k_3|^2 |k_1|^2 \|e^{c\nu^{\f12}t} (\partial_y,|k|) (u_2)_k\|_{L^2 L^2}^2\\
&\quad+\nu^{-\f12}|k_1|^2\big(\|e^{c\nu^{\f12}t}(u\cdot\nabla u_3)_{k}\|^2_{L^2L^2}+\|e^{c\nu^{\f12}t}(u\cdot\nabla u_1)_{k}\|^2_{L^2L^2}\big)\\
&\lesssim |k_1|^2 \|(\omega_2)_k(0)\|_{L^2}^2+ \|u_{2,\neq}\|_{X^c_k}^2\\
&\quad+\nu^{-\f12}|k_1|^2\big(\|e^{c\nu^{\f12}t}(u\cdot\nabla u_3)_{k}\|^2_{L^2L^2}+\|e^{c\nu^{\f12}t}(u\cdot\nabla u_1)_{k}\|^2_{L^2L^2}\big),
\end{align*}
and 
\begin{align*}
&\nu^{\f14}|k_1|^2 |k|^{-1} \|e^{c\nu^{\frac{1}{2}}t} (\omega_2)_k\|_{L^\infty L^2}^2\\
 &\lesssim \nu^{\f14} |k_1|^2 |k|^{-1} \|(\omega_2)_k(0)\|_{L^2}^2+|k|^{-1} |k_1|^2 |k_1|^{-\f34} |k|^{-2} |k_3|^2\|e^{c\nu^{\f12}t} (\partial_y,|k|) (u_2)_k\|_{L^2 L^2}^2\\
&\quad+\nu^{\f14} |k|^{-1} |k_1|^2 |k_1|^{-1} |k|^{-1}|k_3|^2 \|e^{c\nu^{\f12}t} (\partial_y,|k|) (u_2)_k\|_{L^2 L^2}^2\\
&\quad+\nu^{-\f34}|k_1|^2 |k|^{-1} \big(\|e^{c\nu^{\f12}t}(u\cdot\nabla u_3)_{k}\|^2_{L^2L^2}+\|e^{c\nu^{\f12}t}(u\cdot\nabla u_1)_{k}\|^2_{L^2L^2}\big)\\
 &\lesssim \nu^{\f14} |k_1| \| (\omega_2)_k(0)\|_{L^2}^2+\|u_{2,\neq}\|_{X^c_k}^2\\
&\quad+\nu^{-\f34}|k_1| \big(\|e^{c\nu^{\f12}t}(u\cdot\nabla u_3)_{k}\|^2_{L^2L^2}+\|e^{c\nu^{\f12}t}(u\cdot\nabla u_1)_{k}\|^2_{L^2L^2}\big).
\end{align*}

Therefore, we obtain 
\begin{align*}
  & \nu^{\f12} \|e^{c\nu^{\f12}t} \partial_x \omega_{2,\neq}\|_{L^\infty L^2}^2+ \nu^{\f14} \|e^{c\nu^{\f12}t} \partial_x \Lambda_{x,z}^{-1/2} \omega_{2,\neq}\|_{L^\infty L^2}^2\\
  &\leq C
  \Big(\|(\partial_x \omega_2)(0)\|^2_{L^2}+\|u_{2,\neq}\|_{X_c}^2\\
  &\quad+\nu^{-1}\big(\|e^{c\nu^{\f12}t}\partial_x (u\cdot\nabla u_3)_{k}\|^2_{L^2L^2}+\|e^{c\nu^{\f12}t}\partial_x (u\cdot\nabla u_1)_{k}\|^2_{L^2L^2}\big)
  \Big). 
  \end{align*}

It follows from \eqref{est:u2k-00} that
\begin{align}
    \|u_{2,\neq}\|_{X_c}^2&\lesssim {\|u_{\neq}(0)\|^2_{H^{4}}} +
   \nu^{-1}\Big(\|e^{c\nu^{\f12}t}\partial_z(u\cdot\nabla u_3)_{\neq}\|^2_{L^2L^2}\nonumber\\
  &\quad+\|e^{c\nu^{\f12}t}\partial_x(u\cdot\nabla u_1)_{\neq}\|^2_{L^2L^2}+ \|e^{c\nu^{\f12}t}(\partial_x,\partial_z)(u\cdot\nabla
  u_2)_{\neq}\|^2_{L^2L^2}\Big).\label{est:u2neqX-000}
\end{align}
Therefore, we infer from Lemma \ref{lem:E30} that
\begin{align*}
&E^2_{3,0}+E_{3,2}^{2}+E_{3,3}^2+\|u_{2,\neq}\|^2_{X_{c}}\\
%&\leq C\big(\|u_{2,\neq}\|^2_{X_{c}}   +\|\partial_x\Lambda_{x,z}^{-1/2}\omega_{2,\neq}\|^2_{Y_{c}}  +\nu^{\f12}\|\partial_x\omega_{2,\neq}\|^2_{Y_{c}}\big)\\
    &\lesssim { \|u_{\neq}(0)\|^2_{H^{4}}}+
   \nu^{-1}\Big(\|e^{c\nu^{\f12}t}(\partial_x,\partial_z)(u\cdot\nabla u_3)_{\neq}\|^2_{L^2L^2}\\
   &\qquad+\|e^{c\nu^{\f12}t}\partial_x(u\cdot\nabla u_1)_{\neq}\|^2_{L^2L^2}+ \|e^{c\nu^{\f12}t}(\partial_x,\partial_z)(u\cdot\nabla
  u_2)_{\neq}\|^2_{L^2L^2}\Big).
\end{align*}

Let us estimate each term on the right hand side. For $j\in\{2,3\}$,  we get by Lemma \ref{lem:int-nn}  that
\begin{align*}
   & \|e^{c\nu^{\f12}t}(\partial_x,\partial_z)(u_{\neq}\cdot\nabla u_{j,\neq})\|_{L^2L^2}^2\leq C\nu^{-2}E_3^4.
\end{align*}
By Lemma \ref{lem:int-nz-23}, we get
\begin{align*}
   & \|e^{c\nu^{\f12}t}(\partial_x,\partial_z)(u_{\neq}\cdot\nabla \overline{u}_j)\|_{L^2L^2}^2\lesssim {\nu^{-2}E_2^2E_3^2}.
\end{align*}
And by Lemma \ref{lem:int-zn-1-23} and Lemma \ref{lem:int-nz-23}, we have
\begin{align*}
   &\|e^{c\nu^{\f12}t}(\partial_x,\partial_z)(\overline{u}\cdot\nabla u_{j,\neq})\|_{L^2L^2}^2\\ &\lesssim \|e^{c\nu^{\f12}t}(\partial_x,\partial_z)(\overline{u}_1\partial_x u_{j,\neq})\|_{L^2L^2}^2+\|e^{c\nu^{\f12}t}(\partial_x,\partial_z)
   ((\overline{u}_2\partial_y+\overline{u}_3\partial_z) u_{j,\neq})\|_{L^2L^2}^2\\ &\lesssim \nu^{-\f12} E_1^2E_3^2+{\nu^{-2}E_2^2E_3^2}.
\end{align*}
Notice that  $(fg)_{\neq}=\overline{f}g_{\neq}+f_{\neq}\overline{g}+(f_{\neq}g_{\neq})_{\neq}$, which shows that for $j=2,3$,
\begin{align}\label{u32}
   & \|e^{c\nu^{\f12}t}(\partial_x,\partial_z)(u\cdot\nabla u_j)_{\neq}\|_{L^2L^2}^2\lesssim \nu^{-2}E_3^4+{\nu^{-2}E_2^2E_3^2}+\nu^{-\f12} E_1^2E_3^2.
\end{align}

By Lemma \ref{lem:int-nn}, we have
\begin{align*}
   & \|e^{c\nu^{\f12}t}\partial_x(u_{\neq}\cdot\nabla u_{1,\neq})\|_{L^2L^2}^2\leq C\nu^{-2}E_3^4,
\end{align*}
and  by Lemma \ref{lem:int-zn-1-23}, we have
\begin{align*}
   & \|e^{c\nu^{\f12}t}\partial_x(u_{\neq}\cdot\nabla \overline{u}_1)\|_{L^2L^2}^2\lesssim\nu^{-\f12} E_1^2E_3^2.
\end{align*}
Meanwhile, by Lemma \ref{lem:int-zn-11} and Lemma \ref{lem:int-nz-23}, we get
\begin{align*}
   & \|e^{c\nu^{\f12}t}\partial_x(\overline{u}\cdot\nabla u_{1,\neq})\|_{L^2L^2}^2\\ &\lesssim\|e^{c\nu^{\f12}t}\partial_x(\overline{u}_1\partial_x u_{1,\neq})\|_{L^2L^2}^2+\|e^{c\nu^{\f12}t}\partial_x((\overline{u}_2\partial_y
   +\overline{u}_3\partial_z) u_{1,\neq})\|_{L^2L^2}^2\\ &\lesssim \nu^{-\f12} E_1^2E_3^{2}+{\nu^{-2}E_2^2E_3^2},
\end{align*}
which yields
\begin{align*}
   & \|e^{c\nu^{\f12}t}\partial_x(u\cdot\nabla u_1)_{\neq}\|_{L^2L^2}^2\lesssim\nu^{-2}E_3^4+{\nu^{-2}E_2^2E_3^2}+\nu^{-\f12} E_1^2E_3^2.
\end{align*}

Summing up, we conclude that
\begin{align*}
 & E^2_{3,0}+E_{3,2}^2+E_{3,3}^2 +\|u_{2,\neq}\|^2_{X_{c}}\\
  & \lesssim {\|u(0)\|^2_{H^{4}}}+{\nu^{-3}E_3^4+\nu^{-3}E_2^2E_3^2}+\nu^{-\f32} E_1^2E_3^2.
  \end{align*}

This completes the proof of the proposition.
\end{proof}

\subsection{Estimate of $E_{3,1}$}

\begin{proposition}\label{prop:E31}
It holds that
\begin{align*}
  E^2_{3,1} \lesssim {\|u(0)\|^2_{H^4}}+{\nu^{-\f{13}4}E_3^4+\nu^{-3}E_2^2E_3^2+\nu^{-\f32} E_1^2E_3^2}.
\end{align*}
\end{proposition}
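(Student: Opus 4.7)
The plan is to apply Theorem~\ref{thm:sp-without-nonlocal} to each Fourier mode $(\omega_2)_k$ of the vorticity equation
\begin{equation*}
(\partial_t+\mathscr{H}_k)(\omega_2)_k=2ik_3 y(u_2)_k+ik_1(u\cdot\nabla u_3)_k-ik_3(u\cdot\nabla u_1)_k.
\end{equation*}
I will recast the right-hand side in the form $-ik_1g_1-\partial_yg_2-ik_3g_3-g_4$ required by the theorem. The nonlocal term is absorbed into $g_4=-2ik_3 y(u_2)_k$, which vanishes at $y=\pm1$ since $u_2|_{y=\pm1}=0$; the nonlinear sources are rewritten via the divergence-free identity $u\cdot\nabla u_j=\partial_x(u_1u_j)+\partial_y(u_2u_j)+\partial_z(u_3u_j)$, placing the $\partial_y$-piece in $g_2$ and the $\partial_x,\partial_z$-pieces in $k_1g_1,k_3g_3$. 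For the highest-order piece of $E_{3,1}$ I will use the identity $\|(\partial_y^2-|k|^2)(\omega_2)_k\|_{L^2}^2=\|\partial_y^2(\omega_2)_k\|_{L^2}^2+2|k|^2\|\partial_y(\omega_2)_k\|_{L^2}^2+|k|^4\|(\omega_2)_k\|_{L^2}^2$, whose boundary term vanishes thanks to $(\omega_2)_k|_{y=\pm1}=0$.

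Next I will combine the three space-time estimates of Theorem~\ref{thm:sp-without-nonlocal}: the first gives $\nu\|(\partial_y,|k|)(\omega_2)_k\|_{L^2L^2}^2$ and, after multiplication by $|k|^2$, the $|k|^4\|(\omega_2)_k\|_{L^2L^2}^2$ contribution; the second gives $\|(\omega_2)_k\|_{L^\infty L^2}^2$; the third gives $\|\partial_y(\omega_2)_k\|_{L^\infty L^2}^2$, $\nu\|\partial_y^2(\omega_2)_k\|_{L^2L^2}^2$ and $(\nu|k_1|^3)^{1/4}\|\partial_y(\omega_2)_k\|_{L^2L^2}^2$. Multiplying by $|k|^{-1}$ to implement the $\Lambda_{x,z}^{-1/2}$ weight (or leaving unweighted for the $\nu^{9/8}\|\Delta\omega_{2,\neq}\|_{L^2L^2}$ piece) and summing in $k$, I obtain all four ingredients of $E_{3,1}$ from the square-integral norms of $(g_1,g_2,g_3)$ and of $(\partial_y,|k|)g_4$, together with the initial data terms, which after summation in $k$ are bounded by $\|u(0)\|_{H^4}$.

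I then control the source terms mode by mode. The nonlocal contribution involving $g_4=-2ik_3y(u_2)_k$ reduces via $|k_3|\bigl(\|u_2\|+\|(\partial_y,|k|)u_2\|\bigr)$ and is dominated by $E_{3,0}\le E_3$ already controlled by Proposition~\ref{prop:E30}. The bilinear sources split via $(fg)_{\neq}=\overline fg_{\neq}+f_{\neq}\overline g+(f_{\neq}g_{\neq})_{\neq}$: the nonzero--nonzero piece is estimated by Lemma~\ref{lem:int-nn} (and crucially \eqref{lemma2.1-3} for the $\nabla$-norm, producing the $\nu^{-13/4}E_3^4$ term after accounting for the $\nu^{9/4}$ prefactor in $E_{3,1}^2$); the $\overline u_j$-nonzero pieces with $j\in\{2,3\}$ are estimated by Lemma~\ref{lem:int-nz-23} and yield $\nu^{-3}E_2^2E_3^2$; the $\overline u_1$-nonzero pieces are estimated by Lemma~\ref{lem:int-zn-1-23}, Lemma~\ref{lem:int-zn-11}, and (crucially for the $\partial_z$-components that arise from the $\Lambda_{x,z}^{-1/2}$ weights on nonzero modes) Lemma~\ref{lem:int-zz-11-pz}, together yielding $\nu^{-3/2}E_1^2E_3^2$.

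The main obstacle will be the top-regularity piece $\nu^{9/8}\|\Delta\omega_{2,\neq}\|_{L^2L^2}$: it requires the full strength of the third inequality of Theorem~\ref{thm:sp-without-nonlocal} together with $|k|^4$-upgrades of the first inequality, and the RHS of the third inequality carries a $\nu^{-3/4}|k_1|^{3/4}$-loss from inequality~(1) that must be absorbed by a careful $\nu$-and-$|k|$ bookkeeping so the initial-data contribution fits into $\|u(0)\|_{H^4}^2$. It is precisely at this term that the $\nu^{-11/4}E_3^4$ scaling from \eqref{lemma2.1-3} meets the largest $\nu$-prefactor in $E_{3,1}^2$, producing the $\nu^{-13/4}E_3^4$ threshold with essentially no slack; a coarser handling of either the nonlocal $2ik_3 yu_2$ term or the $\overline u_1\partial_xu_{1,\neq}$ interaction (where the $\partial_z$ bound in Lemma~\ref{lem:int-zz-11-pz}, rather than the $\partial_x$ bound in Lemma~\ref{lem:int-zn-11}, is what is compatible with the $\Lambda_{x,z}^{-1/2}$ weight) would destroy the stated bound.
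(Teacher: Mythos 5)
Your overall strategy matches the paper's: apply Theorem~\ref{thm:sp-without-nonlocal} mode--by--mode to the $\omega_2$ equation with $g_4=-2ik_3y(u_2)_k$, sum in $k$ with the $|k|^{-1}$ weight for the $\Lambda_{x,z}^{-1/2}$ pieces, absorb the nonlocal contribution via $\|u_{2,\neq}\|_{X_c}$ and Proposition~\ref{prop:E30}, and estimate the bilinear sources by Lemma~\ref{lem:int-nn}, Lemma~\ref{lem:int-nz-23}, Lemma~\ref{lem:int-zn-1-23}, Lemma~\ref{lem:int-zn-11} and Lemma~\ref{lem:int-zz-11-pz}, with \eqref{lemma2.1-3} producing the $\nu^{-13/4}E_3^4$ term. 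You also correctly identify that the third inequality of Theorem~\ref{thm:sp-without-nonlocal}, with its $\nu^{-3/4}|k_1|^{3/4}$ prefactor, is what is needed for the $\nu^{9/8}\|\Delta\omega_{2,\neq}\|_{L^2L^2}$ piece.

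There is, however, one place where you deviate from the paper in a way that does not close. You propose to rewrite $u\cdot\nabla u_j=\partial_x(u_1u_j)+\partial_y(u_2u_j)+\partial_z(u_3u_j)$ and load the $\partial_y$ contribution into $g_2$, with the $\partial_x,\partial_z$ pieces distributed into $g_1,g_3$. The paper does not do this: it simply takes $g_2=0$, $g_1=-(u\cdot\nabla u_3)_k$, $g_3=(u\cdot\nabla u_1)_k$, so that $k_1g_1+k_3g_3=k_3(u\cdot\nabla u_1)_k-k_1(u\cdot\nabla u_3)_k$, and the $\min\{(\nu|k|^2)^{-1},(\nu|k_1|)^{-1/2}\}$ factor is applied to this combination. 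Your divergence-form splitting would replace the source quantities $\|\partial_z(u\cdot\nabla u_1)_{\neq}\|_{L^2L^2}$ and $\|\partial_x(u\cdot\nabla u_3)_{\neq}\|_{L^2L^2}$ (which Lemma~\ref{lem:int-nn}, \eqref{lemma2.1-3} and the interaction lemmas control directly) by quantities of the form $\|(\partial_x,\partial_z)(u_iu_j)_{\neq}\|_{L^2L^2}$ coming from the $g_2$ term and $\|k_1g_1+k_3g_3\|_{L^2L^2}$ coming from products $u_iu_j$ with two horizontal derivatives. None of the interaction lemmas in Section~5 are stated for such products, so the very lemmas you then invoke in the next paragraph are not applicable to the decomposition you propose. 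You should instead keep $g_2=0$ and take $g_1,g_3$ to be the bilinear terms themselves, as the paper does; this is precisely what makes the estimates \eqref{est:uu1-00} and \eqref{est:uu3-00} the right ones to feed into \eqref{est:Lambdaxzomega-00}.

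A minor bookkeeping remark: you attribute the $\nu^{-13/4}$ to pairing the $\nu^{-11/4}$ of \eqref{lemma2.1-3} with a ``$\nu^{9/4}$ prefactor in $E_{3,1}^2$.'' What actually happens is that the space-time estimate contributes a $\nu^{-1/2}$ factor on the squared source norm (after the $\nu^{3/4}$ weight of $E_{3,1}^2$ has been moved to the left), and $\nu^{-1/2}\cdot\nu^{-11/4}=\nu^{-13/4}$. The $\nu^{9/4}$ weight sits on the $\|\Delta\omega_{2,\neq}\|_{L^2L^2}^2$ piece on the left-hand side and is not itself multiplying the nonlinear term.
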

\begin{proof}
Recall that
\begin{equation*}\left\{
\begin{aligned}
  &\partial_t (\omega_2)_ k- \nu(\partial_y^2-|k|^2)(\omega_2)_ k
  +ik_1(1-y^2)(\omega_2)_ k-2ik_3y(u_2)_{k}\\
  &\qquad\quad=-ik_3(u\cdot\nabla u_1)_{k}+ik_1(u\cdot\nabla u_3)_{k},\\
  &(\omega_2)_k|_{y=\pm1}=0,\quad
  (\omega_2)_k|_{t=0}=ik_3 (u_1)_{k}(0)-ik_1(u_3)_{k}(0).
  \end{aligned}\right.
\end{equation*}
By \eqref{est:w2k-00}, we have
\begin{equation*}
\begin{aligned}
&|k|\|(\omega_{2})_{k}\|_{Y^c_{k}}
\lesssim |k|
  \Big(\|(\omega_2)_{k}(0)\|_{L^2}
 \\&\quad+\min\big\{(\nu |k|^2)^{-1/2},(\nu
  |k_1|)^{-1/4}\big\}\|e^{c\nu^{\f12}t}(k_1(u\cdot\nabla u_3)_{k}-k_3(u\cdot\nabla u_1)_{k})\|_{L^2L^2}\\
  &\quad+\min\big\{(\nu |k|^2)^{-1/2},(\nu
  |k_1|)^{-1/4}\big\}|k_3| \|e^{c\nu^{\f12}t} (u_2)_{k}\|_{L^2L^2}\Big)\\
 % \leq&C  \Big(\|(\omega_2)_{k}(0)\|^2_{L^2}+\nu^{-1}\big(\|e^{c\nu^{\f12}t}(u\cdot\nabla u_3)_{k}\|^2_{L^2L^2}+\|e^{c\nu^{\f12}t}(u\cdot\nabla u_1)_{k}\|^2_{L^2L^2}\big)\\&+|k_1|^{-1}\|e^{c\nu^{\f12}t}(\partial_y,|k|)(u_2)_{k}\|^2_{L^2L^2}\Big)\\
&\lesssim  |k|\|(\omega_{2})_{k}(0)\|_{L^2}+{\nu^{-\f38} |k_1|^{-\f18} |k|^{\f12} |k_3|\| e^{c\nu^{\frac{1}{2}}t} (u_2)_k \|_{L^2 L^2}}\\
&\quad+{ \nu^{-\f12} \|e^{c\nu^{\frac{1}{2}}t}(ik_3(u\cdot \nabla u_1)_{k}-ik_1(u\cdot \nabla u_3)_{k})\|_{L^2 L^2}}
\end{aligned}
\end{equation*}
and
\begin{equation*}
\begin{aligned}
&\|e^{c\nu^{\frac{1}{2}}t} \partial_y (\omega_2)_k\|_{L^\infty L^2}+\nu^{\frac{1}{2}}\|e^{c\nu^{\frac{1}{2}}t} (\partial_y,|k|)  \partial_y (\omega_2)_k\|_{L^2 L^2} +{\nu^{\f18}|k_1|^{\f38}\|e^{c\nu^{\frac{1}{2}}t} \partial_y (\omega_2)_k\|_{L^2 L^2}}\\
&\lesssim \|\partial_y (\omega_2)_k(0)\|_{ L^2}+{\nu^{-\f38 } |k_1|^{\frac{3}{8}}\| (\omega_2)_k(0)\|_{ L^2}}\\
&\quad+{\min\{\nu^{-\frac{5}{8}}|k_1|^{\f18},\nu^{-\f18}\} \|e^{c\nu^{\frac{1}{2}}t}(ik_3(u\cdot \nabla u_1)_{k}-ik_1(u\cdot \nabla u_3)_{k})\|_{L^2 L^2}}\\
&\quad+{|\nu /k_1|^{-\frac{3}{8}}}\Big(|k_3|(|k_1||k|)^{-\frac{1}{2}}\|e^{c\nu^{\frac{1}{2}}t}\partial_y (u_2)_k\|_{L^2 L^2}\\
&\quad+|k_3|(|k| |k_1|^{-1})^{\frac{1}{2}}\|e^{c\nu^{\frac{1}{2}}t} (u_2)_k\|_{L^2 L^2}\Big).
\end{aligned}
\end{equation*}
Here we have used the following fact
\begin{align*}
\nu^{-\f38}|k_1|^{\f38}\min\{|\nu k_1|^{-1/2},(\nu |k|^2)^{-1}\}^{\f12}=&\min\{\nu^{-\f58}|k_1|^{\f18},\nu^{-\f78}|k_1|^{\f38}|k|^{-1}\}\\
\leq& \min\{\nu^{-\f58}|k_1|^{\f18},\nu^{-\f78}\}.
\end{align*}

Therefore, we deduce that
\begin{equation*}
\begin{aligned}
&\|e^{c\nu^{\frac{1}{2}}t} (\partial_y,|k|) (\omega_2)_k\|_{L^\infty L^2}+\nu^{\frac{1}{2}}\|e^{c\nu^{\frac{1}{2}}t} (\partial_y^2-|k|^2)   (\omega_2)_k\|_{L^2 L^2}\\
&\qquad+(\nu |k_1|^3)^{\f18} \|e^{c\nu^{\frac{1}{2}}t}  (\partial_y,|k|)(\omega_2)_k\|_{L^2 L^2}\\
&\lesssim (1+{\nu^{-\frac{3}{8}}|k|^{-\f58}})\|(\partial_y,|k|) (\omega_2)_k (0)\|_{L^2}\\
&\quad+{\nu^{-\f38} |k_1|^{\f38} |k|^{\f12} |k_1|^{-\f12} } \|e^{c\nu^{\frac{1}{2}}t} (\partial_y, |k|) (u_2)_k\|_{L^2 L^2}\\
&\quad+{\min\{\nu^{-\f58}|k_1|^{\f18},\nu^{-\f78}\}\|e^{c\nu^{\frac{1}{2}}t}(ik_3(u\cdot \nabla u_1)_{k}-ik_1(u\cdot \nabla u_3)_{k})\|_{L^2 L^2}}\\
&\lesssim {\nu^{-\f38}} \|(\partial_y,|k|)(\omega_2)_k (0)\|_{L^2}+{\nu^{-\f38} |k|^{\f12} |k_1|^{-\f18} } \|e^{c\nu^{\frac{1}{2}}t} (\partial_y, |k|) (u_2)_k\|_{L^2 L^2}\\
&\quad+{\min\{\nu^{-\f58}|k_1|^{\f18},\nu^{-\f78}\}\|e^{c\nu^{\frac{1}{2}}t}(ik_3(u\cdot \nabla u_1)_{k}-ik_1(u\cdot \nabla u_3)_{k})\|_{L^2 L^2}},
\end{aligned}
\end{equation*}
which implies
\begin{equation}\label{est:Lambdaxzomega-00}
\begin{aligned}
& { \nu^{\f34}} \big(\|e^{c\nu^{\frac{1}{2}}t} \Lambda_{x,z}^{-1/2}\nabla\omega_{2,\neq}\|_{L^\infty L^2}+\nu^{\frac{1}{2}}\|e^{c\nu^{\frac{1}{2}}t} \Lambda_{x,z}^{-1/2}\Delta\omega_{2,\neq}\|_{L^2 L^2}\\
&\quad+\nu^{\frac{3}{4}}\|e^{c\nu^{\frac{1}{2}}t} \Delta\omega_{2,\neq}\|_{L^2 L^2}+\nu^{\f 18}\|e^{c\nu^{\frac{1}{2}}t} \Lambda_{x,z}^{-1/2}\nabla\omega_{2,\neq}\|_{L^2 L^2}\big)^2\\
&\lesssim \|u(0)\|_{H^3}^2+ \|u_{2,\neq}\|_{X_c}^2\\
&\quad+{\nu^{-\f12}} \|e^{c\nu^{\frac{1}{2}}t}(\partial_z(u\cdot \nabla u_1)_{\neq}-\partial_x(u\cdot \nabla u_3)_{\neq})\|_{L^2 L^2}^2,
\end{aligned}
\end{equation}
where we have used
\begin{align*}
    & \|e^{c\nu^{\frac{1}{2}}t} (\partial_y, |k|) (u_2)_k\|_{L^2 L^2}+\nu^{\f14}{|k|^{\f12}|k_1|^{-\f18}}\|e^{c\nu^{\frac{1}{2}}t} (\partial_y, |k|) (u_2)_k\|_{L^2 L^2} \leq \|(u_2)_k\|_{X_k^c}.
 \end{align*}

It remains to estimate the terms on the right hand side.  By \eqref{u32}, we have
\begin{align}\label{est:uu3-00}
   & \|e^{c\nu^{\f12}t}\partial_x(u\cdot\nabla u_3)_{\neq}\|_{L^2L^2}^2\lesssim \nu^{-2}E_3^4+{\nu^{-2}E_2^2E_3^2}+\nu^{-\f12} E_1^2E_3^2.
\end{align}
By Lemma \ref{lem:int-nn}, we have
\begin{align*}
   & \|e^{c\nu^{\f12}t}\partial_z(u_{\neq}\cdot\nabla u_{1,\neq})_{\neq}\|_{L^2L^2}^2\leq C{\nu^{-\f{11}4}E_3^4},
\end{align*}
and by Lemma \ref{lem:int-zz-11-pz}, we have
\begin{align*}
   & \|e^{c\nu^{\f12}t}\partial_z(u_{\neq}\cdot\nabla \overline{u}_1)_{\neq}\|_{L^2L^2}^2\lesssim { \nu^{-1}E^2_1E_{3}^2} ,
\end{align*}
and by Lemma \ref{lem:int-zz-11-pz} and Lemma \ref{lem:int-nz-23},
\begin{align*}
   & \|e^{c\nu^{\f12}t}\partial_z(\overline{u}\cdot\nabla u_{1,\neq})\|_{L^2L^2}^2\\ &\lesssim\|e^{c\nu^{\f12}t}\partial_z(\overline{u}_1\partial_x u_{1,\neq})\|_{L^2L^2}^2
   +\|e^{c\nu^{\f12}t}\partial_z((\overline{u}_2\partial_y+\overline{u}_3\partial_z) u_{1,\neq})\|_{L^2L^2}^2\\ &\lesssim {\nu^{-1}E^2_1E_{3}^2 +\nu^{-2}E_2^2E_3^2}.
\end{align*}
This shows that
\begin{align}\label{est:uu1-00}
   & \|e^{c\nu^{\f12}t}\partial_z(u\cdot\nabla u_1)_{\neq}\|_{L^2L^2}^2\lesssim { \nu^{-\f{11}4}E_3^4+\nu^{-1}E^2_1E_{3}^2 +\nu^{-2}E_2^2E_3^2}.
\end{align}

By \eqref{est:Lambdaxzomega-00}, \eqref{est:uu3-00} and \eqref{est:uu1-00}, we conclude that
\begin{align*}
   E^2_{3,1}=&\nu^{\frac{3}{4}} \big(\|e^{c\nu^{\frac{1}{2}}t} \Lambda_{x,z}^{-1/2}\nabla\omega_{2,\neq}\|_{L^\infty L^2}+\nu^{\frac{1}{2}}\|e^{c\nu^{\frac{1}{2}}t} \Lambda_{x,z}^{-1/2}\Delta\omega_{2,\neq}\|_{L^2 L^2}\\
   &+\nu^{\frac{3}{4}}\|e^{c\nu^{\frac{1}{2}}t} \Delta\omega_{2,\neq}\|_{L^2 L^2}+{ \nu^{\f18}}\|e^{c\nu^{\frac{1}{2}}t} \Lambda_{x,z}^{-1/2}\nabla\omega_{2,\neq}\|_{L^2 L^2}\big)^2\\
   \lesssim & \|u(0)\|_{H^3}^2+\|u_{2,\neq}\|^2_{X_{c}}+{\nu^{-\f{13}4}E_3^4+\nu^{-\f{3}{2}}E^2_1E_{3}^2 +\nu^{-\f52}E_2^2E_3^2},
\end{align*}
which along with Proposition \ref{prop:E30} gives
\begin{align*}
  E^2_{3,1} \lesssim { \|u(0)\|^2_{H^4}}+{ \nu^{-\f{13}4}E_3^4+\nu^{-3}E_2^2E_3^2+\nu^{-\f32} E_1^2E_3^2}.
\end{align*}

This completes the proof of the proposition.
\end{proof}

\section{Nonlinear stability}\label{nonlinear-stable}

In this section, we will prove Theorem \ref{main-result}. Let us suppose that $\nu \in (0,\nu_0], \ \nu_0^{\frac{1}{2}}\leq 4\epsilon<c$ for some $\epsilon\in (0,1)$, then $e^{\nu t}\leq e^{c\nu^{\frac{1}{2}}t}$ for $t\geq 0$.

\subsection{Global well-posedness result}
The local well-posedness for the solution to \eqref{pertur-ins} with \eqref{no-slip} is classical and we will extend the local solution to a global one.

The proof  is based on a bootstrap argument. Let us first assume that
\begin{align}\label{ass:boot}
E_1\leq \varepsilon_1\nu^{\f34}, \quad E_2\leq \varepsilon_1\nu^{\f74},\quad  E_3\leq \varepsilon_1\nu^{\f74}.
\end{align}
where $\varepsilon_1$ is determined later.

Now it follows from Proposition \ref{prop:E1}-\ref{prop:E2} and Proposition \ref{prop:E30}-\ref{prop:E31}
that
\begin{align*}
&E_{1}\leq C\big(\|\overline{u}(0)\|_{H^2}+\nu^{-1}E_2+\nu^{-1}E_2E_{1}+{\nu^{-\f{15}{8}}E_3^2}\big),\\
&E_2\leq C\big(1+\nu^{-1}E_2\big)^2\big(\|u(0)\|_{H^2}+\nu^{-\frac{3}{2}}E_3^2\big),\\
& E^2_{3,0} +E_{3,2}^2+E_{3,3}^2 \leq C\Big({\|u(0)\|^2_{H^{4}}}+{\nu^{-3}E_3^4+\nu^{-3}E_2^2E_3^2}+\nu^{-\f32} E_1^2E_3^2\Big),\\
&E^2_{3,1} \leq C\Big( { \|u(0)\|_{H^4}^2}+{ \nu^{-\f{13}4}E_3^4+\nu^{-3}E_2^2E_3^2+\nu^{-\f32} E_1^2E_3^2}\Big).
\end{align*}
By taking $\varepsilon_1$ small enough, we deduce from \eqref{ass:boot} that
\begin{align*}
E_3 \leq  E_{3,0} +E_{3,1}+E_{3,2}+E_{3,3}
 \leq  C { \|u(0)\|_{H^{4}}}\leq c_0C \nu^{\frac{7}{4}}.
\end{align*}
Then we have
\begin{align*}
E_2\leq c_0C \nu^{\frac{7}{4}},\quad  E_{1}\leq   c_0C\nu^{\frac{3}{4}}.\end{align*}
Now taking $c_0>0$ small enough such that $c_0C<\varepsilon_1/2$, the continuity argument ensures that the Navier-Stokes equations \eqref{pertur-ins} with \eqref{no-slip} admit a global unique solution.

\subsection{Global stability estimates}
Let us turn to give the long-time dynamic behavior of the solution, i.e., \eqref{sta-est-1}-\eqref{streak-2}.

By Lemma \ref{lem:u23-H1} and Lemma \ref{lem:u2-H2}, we get
\begin{align*}
&e^{c\nu t}
\|\nabla\overline{u}_{3}(t)\|_{L^2} \lesssim\big(1+\nu^{-1}E_2\big)\big(\|u(0)\|_{H^2}+\nu^{-\frac{3}{2}}E_3^2\big)\leq C {\|u(0)\|_{H^{4}}},\\
&e^{c\nu t}
\|\Delta \overline{u}_{2}(t)\|_{L^2} \lesssim\big(1+\nu^{-1}E_2\big)^2\big(\|u(0)\|_{H^2}+\nu^{-\frac{3}{2}}E_3^2\big)\leq C {\|u(0)\|_{H^{4}}}.
\end{align*}
Due to $\partial_y \overline{u}_2+\partial_z \overline{u}_3=0$, we have
\begin{align*}
&\|\overline{u}_2(t) \|_{H^2}+\|\overline{u}_3(t)\|_{H^1}+\|\partial_z \overline{u}_3(t)\|_{H^1}\\
\leq&  C(\|\Delta \overline{u}_{2}(t)\|_{L^2}+ \|\nabla\overline{u}_{3}(t)\|_{L^2})
\leq Ce^{-c\nu t}C {\|u(0)\|_{H^{4}}},
\end{align*}
from which and  Lemma \ref{lemma-A.1}, we infer that
\begin{align*}
\|\overline{u}_2(t)\|_{L^\infty}+\|\overline{u}_3(t)\|_{L^\infty}\leq& C(\|\overline{u}_2(t) \|_{H^2}+\|\overline{u}_3(t)\|_{H^1}+\|\partial_z \overline{u}_3(t)\|_{H^1})\\
\leq & Ce^{-c\nu t} {\|u(0)\|_{H^{4}}}.
\end{align*}
Then \eqref{sta-est-2} follows.

Next, we prove the estimates \eqref{streak-1}-\eqref{streak-2} . Firstly, we have
\begin{align*}
&e^{c\nu^{\frac{1}{2}}t}\|\Lambda_{x,z}^{\frac{1}{2}}\nabla u_{2,\not=}(t)\|_{L^2}\\
&\qquad +
\nu^{\frac{1}{4}} e^{c\nu^{\frac{1}{2}}t}( \|(\partial_x,\partial_z)\nabla u_{2,\not=}(t)\|_{L^2}+\|(\partial_x^2+\partial_z^2)u_{3,\not=}(t)\|_{L^2})\\
&\leq E_{3,0}+E_{3,3}\leq E_3\leq C {\|u(0)\|_{H^{4}}},
\end{align*}
which along with Lemma \ref{lem:u-relation} gives
\begin{align*}
\|(\partial_x,\partial_z)\partial_x u_{\not=}(t)\|_{L^2}\leq& C\big(\|(\partial_x,\partial_z)\nabla u_{2,\not=}(t)\|_{L^2}+\|(\partial_x^2+\partial_z^2)u_{3,\not=}(t)\|_{L^2}\big)\\
\leq& C\nu^{-\frac{1}{4}} e^{-c\nu^{\frac{1}{2}}t}  { \|u(0)\|_{H^{4}}}.
\end{align*}
Due to $\partial_x u_{1,\not=}+\partial_y u_{2,\not=}+\partial_z u_{3,\not=}=0$,  we have
\begin{align*}
\|\partial_x u_{\not=}\|_{L^2}\leq&  \|\partial_x u_{1,\not=}\|_{L^2}+\|\partial_x u_{2,\not=}\|_{L^2}+\|\partial_x u_{3,\not=}\|_{L^2}\\
\leq &  C\big(\|\nabla u_{2,\not=}\|_{L^2}+\|(\partial_x,\partial_z)u_{3,\not=}\|_{L^2}\big),
\end{align*}
which gives
\begin{align*}
\|\partial_x u_{\not=} (t)\|_{L^2}\leq&    C(\|\nabla u_{2,\not=}(t)\|_{L^2}+\|(\partial_x,\partial_z)u_{3,\not=}(t) \|_{L^2})\\
\leq & Ce^{-c\nu^{\frac{1}{2}}t} (E_{3,0}+{\nu^{-\f18}E_{3,2}})\leq C{\nu^{-\f18}}e^{-c\nu^{\frac{1}{2}}t} { \|u(0)\|_{H^{4}}},
\end{align*}
and
\begin{align*}
\|u_{\not=}(t)\|_{L^2}\leq \|\partial_x u_{\not=}(t)\|_{L^2}
\leq C{\nu^{-\f18}}e^{-c\nu^{\frac{1}{2}}t} { \|u(0)\|_{H^{4}}}.
\end{align*}
Moreover, we have
\begin{align*}
&\|u_{\not=}\|_{L^\infty L^2}+\nu^{\frac{3}{4}}\|t(u_{1,\not=},u_{3,\not=})\|_{L^2 L^2}\\
&\leq  C\|e^{-c\nu^{\frac{1}{2}}t}\|_{L^\infty(0,+\infty)}{\nu^{-\f18}}e^{-c\nu^{\frac{1}{2}}t} {\|u(0)\|_{H^{4}}}\\
&\quad+C\nu^{\frac{3}{4}} \|te^{-c\nu^{\frac{1}{2}}t}\|_{L^2(0,+\infty)}{ \nu^{-\f18}}e^{-c\nu^{\frac{1}{2}}t} { \|u(0)\|_{H^{4}}}\\
&\leq C{\nu^{-\f18}}e^{-c\nu^{\frac{1}{2}}t} { \|u(0)\|_{H^{4}}}.
\end{align*}

By \eqref{fX} and Proposition \ref{prop:E30}, we get
\begin{align*}
  { \nu^{\frac{7}{12}} }&e^{2c\nu^{\frac{1}{2}}t}\|\Delta u_{2,\not=}(t)\|_{L^2}^2 \leq \|u_{2,\neq}\|^2_{X_{c}}\\
   \lesssim&  { \|u(0)\|_{H^{4}}^2}+\nu^{-3}E_3^4+{ \nu^{-3}E_2^2E_3^2}+\nu^{-\f32} E_1^2E_3^2
 \leq  C {\|u(0)\|_{H^{4}}^2},
\end{align*}
which yields
\begin{align*}
{\nu^{\frac{7}{24}}\|u_{2,\not=}(t)\|_{H^2}\leq C\nu^{\frac{7}{24}}\|\Delta u_{2,\not=}(t)\|_{L^2}\leq Ce^{-c\nu^{\frac{1}{2}}t}\|u(0)\|_{H^{4}}}.
\end{align*}
Thanks to the definition of $E_3$, we have
\begin{align*}
\|\nabla u_{2,\not=}\|_{L^\infty L^2}+\|\nabla u_{2,\not=}\|_{L^2 L^2}\leq E_3\leq C  {\|u(0)\|_{H^{4}}}.
\end{align*}

Notice that $(\partial_x^2+\partial_z^2)u_{1,\not=}=\partial_z\omega_{2,\not=}-\partial_x\partial_y u_{2,\not=}, \ -(\partial_x^2+\partial_z^2)u_{3,\not=}=\partial_x\omega_{2,\not=}+\partial_z\partial_y u_{2,\not=}$, which imply that
\begin{align*}
&\|(u_{1,\not=},u_{3,\not=})(t)\|_{H^1} \\
&\leq C(\|\Lambda_{x,z}^{-1}\nabla \omega_{2,\not=}(t)\|_{L^2}+\|\Lambda_{x,z}^{-1} \Delta u_{2,\not=}(t)\|_{L^2}+\|\nabla \partial_y u_{2,\not=}(t)\|_{L^2})\\
 &\leq C(\|\Lambda_{x,z}^{-\frac{1}{2}}\nabla \omega_{2,\not=}(t)\|_{L^2}+\| \Delta u_{2,\not=}(t)\|_{L^2})\\
 &\leq C e^{-c\nu^{\frac{1}{2}}t}({ \nu^{-\frac{3}{8}}E_{3,1}}+{\nu^{-\frac{7}{24}}} E_{3,0})\leq C{\nu^{-\frac{3}{8}}  e^{-c\nu^{\frac{1}{2}}t}\|u(0)\|_{H^{4}}}
\end{align*}
and
\begin{align*}
\|\Lambda_{x,z}^{\frac{1}{2}}\nabla (u_{1,\not=},u_{3,\not=})(t)\|_{L^2} \leq & C(\|\Lambda_{x,z}^{-\frac{1}{2}}\nabla\omega_{2,\not=}(t)\|_{L^2}+\|\Lambda_{x,z}^{-\frac{1}{2}} \Delta u_{2,\not=}(t)\|_{L^2})\\
\leq & C({\nu^{-\frac{3}{8}} e^{-c\nu^{\frac{1}{2}}t}E_{3,1}+\nu^{-\frac{7}{24}} e^{-c\nu^{\frac{1}{2}}t}E_{3,0}})\\
\leq& C{\nu^{-\frac{3}{8}}  e^{-c\nu^{\frac{1}{2}}t}\|u(0)\|_{H^{4}}}.
\end{align*}
By  Lemma \ref{lemma-L-infty-interpo}, for $j=1,3$, we have
\begin{align*}
\|u_{j,\not=}(t)\|_{L^\infty} \leq& C\|(\partial_x,\partial_z)\partial_x u_{j,\not=}(t)\|_{L^2}^{\frac{1}{2}} \|\Lambda_{x,z}^{\frac{1}{2}}\nabla u_{j,\not=}(t)\|_{L^2}^{\frac{1}{2}}\\
\leq & C{\nu^{-\frac{3}{8}} e^{-c\nu^{\frac{1}{2}}t}\|u(0)\|_{H^{4}}}.
\end{align*}
Moreover, for any $\sigma>0$, by Lemma \ref{lemma-L-infty-interpo}, we obtain
\begin{align*}
\|u_{2,\not=}(t)\|_{L^\infty} \leq &C \|\Lambda_{x,z}^{\frac{1}{2}+\sigma}\partial_x u_{2,\not=}(t)\|_{L^2}^{\frac{1}{2}} \|\Lambda_{x,z}^{\frac{1}{2}}\nabla u_{2,\not=}(t)\|_{L^2}^{\frac{1}{2}}\\
 \leq &C \|\Lambda_{x,z}^{\frac{1}{2}}\partial_x u_{2,\not=}(t)\|_{L^2}^{\frac{1}{2}-\sigma} \|\Lambda_{x,z}\partial_x u_{2,\not=}(t)\|_{L^2}^{\sigma} \|\Lambda_{x,z}^{\frac{1}{2}}\nabla u_{2,\not=}(t)\|_{L^2}^{\frac{1}{2}}\\
 \leq & C \|\Lambda_{x,z}\partial_x u_{2,\not=}(t)\|_{L^2}^{\sigma} \|\Lambda_{x,z}^{\frac{1}{2}}\nabla u_{2,\not=}(t)\|_{L^2}^{1-\sigma}\\
 \leq& C\nu^{-\frac{\sigma}{4}} e^{-c\nu^{\frac{1}{2}}t} {\|u(0)\|_{H^{4}} }=:C \nu^{-s} e^{-c\nu^{\frac{1}{2}}t} {\|u(0)\|_{H^{4}}}.
\end{align*}

Summing up, \eqref{streak-1}-\eqref{streak-2} follow.

Let us turn to prove \eqref{sta-est-1}. Recall that $\overline{u}_{1}$ satisfies
\begin{align*}
\left\{
\begin{aligned}
&(\partial_t-\nu\Delta )\overline{u}_{1}+2y\overline{u}_2+\overline{u}_2\partial_y\overline{u}_{1}+\overline{u}_3
\partial_z\overline{u}_{1}+\overline{u_{\neq}\cdot\nabla u_{1,\neq}}=0,\\
&\overline{u}_{1}|_{t=0}=\overline{u}_1(0),\quad \Delta\overline{u}_{1}|_{y=\pm1}=\overline{u}_{1}|_{y=\pm1}=0.
\end{aligned}
\right.
\end{align*}
Thanks to $\Delta\overline{u}_{1}|_{y=\pm1}=0$,  the energy estimate gives
\begin{align*}
&\frac{ \mathrm{d} }{\mathrm{d}t}\| \Delta\overline{u}_{1}\|_{L^2}^2+2\nu\| \nabla\Delta\overline{u}_{1}\|_{L^2}^2+4\big\langle
\nabla(y\overline{u}_2),\nabla\Delta\overline{u}_{1}\big\rangle
\\&\quad-2\Big\langle \nabla(\overline{u}_2\partial_y\overline{u}_{1}+\overline{u}_3\partial_z\overline{u}_{1}+
\overline{u_{\neq}\cdot\nabla u_{1,\neq}}), \nabla\Delta\overline{u}_{1}\Big\rangle=0,
\end{align*}
which gives
\begin{align*}
&\frac{ \mathrm{d} }{\mathrm{d}t}\| \Delta\overline{u}_{1}\|_{L^2}^2+\nu\| \nabla\Delta\overline{u}_{1}\|_{L^2}^2\\
&\lesssim \nu^{-1}\Big(\|\overline{u}_{2}\|_{H^1}^2+\|\nabla (\overline{u}_2\partial_y\overline{u}_{1}+\overline{u}_3\partial_z\overline{u}_{1})\|_{L^2}^2+\|\nabla (\overline{u_{\neq}\cdot\nabla u_{1,\neq}})\|_{L^2}^2\Big).
\end{align*}
Thanks to $\|\nabla \Delta \overline{u}_1\|_{L^2}^2\geq (\pi/2)^2\|\Delta \overline{u}_1\|_{L^2}^2$, we can deduce that
\begin{align*}
\| e^{c\nu t}\Delta\overline{u}_{1}&\|_{L^{\infty}L^2}^2+\nu\| e^{c\nu t} \nabla\Delta\overline{u}_{1}\|_{L^2L^2}^2\lesssim \| u(0)\|_{H^2}^2+\nu^{-1}\Big(\| e^{c\nu t}\overline{u}_{2}\|_{L^2H^1}^2\\
&+\| e^{c\nu t} \nabla (\overline{u}_{2}\partial_y\overline{u}_{1}+\overline{u}_{3}\partial_z\overline{u}_{1}
)\|_{L^2L^2}^2+\| e^{c\nu t}\nabla (\overline{u_{\neq}\cdot\nabla u_{1,\neq}})\|_{L^2L^2}^2\Big).
\end{align*}

Using the fact that
\begin{align*}
\|\nabla(\overline{u}_{j}\partial_j\overline{u}_{1})\|_{L^2}^2\lesssim& \|\overline{u}_{j}\|_{H^1}^2\|\partial_j \overline{u}_{1}\|_{H^2}^2\lesssim \|\nabla\overline{u}_{j}\|_{L^2}^2 \|\nabla\Delta\overline{u}_{1}\|_{L^2}^2, \ j=2,3,
\end{align*}
we have
\begin{align*}
&\|\nabla (\overline{u}_{2}\partial_y\overline{u}_{1}+
\overline{u}_{3}\partial_z\overline{u}_{1})\|_{L^2}^2
\lesssim \big(\|\nabla\overline{u}_{2}\|_{L^2}^2+\|\nabla\overline{u}_{3}\|_{L^2}^2\big)
\|\nabla\Delta\overline{u}_{1}\|_{L^2}^2.
\end{align*}
Therefore, we obtain
\begin{align*}
&\|e^{\nu t}\nabla (\overline{u}_{2}\partial_y\overline{u}_{1}+
\overline{u}_{3}\partial_z\overline{u}_{1})\|_{L^2 L^2}^2\\
&\lesssim \big(\|e^{\nu t}\nabla\overline{u}_{2}\|_{L^\infty L^2}^2+\|e^{\nu t}\nabla\overline{u}_{3}\|_{L^\infty L^2}^2\big)
\|\nabla\Delta\overline{u}_{1}\|_{L^2 L^2}^2\\
&\lesssim ( {\|u(0)\|_{H^{4}}^2}+\nu^{-3}E_3^4) (1+\nu^{-2}E_2^2)\cdot \nu^{-1} E_1^2
\lesssim \nu^{-1}  {\|u(0)\|_{H^{4}}^2},
\end{align*}
where we have used Lemma \ref{lem:u23-H1}. By Lemma \ref{lem:u23-H1} again, we get
\begin{align*}
\|e^{\nu t}\overline{u}_2\|_{L^2 H^1}^2
\lesssim& \nu^{-1} ( {\|u(0)\|_{H^{4}}^2}+\nu^{-3}E_3^4)\lesssim \nu^{-1} {\|u(0)\|_{H^{4}}^2}.
\end{align*}
Due to $0<\nu \leq \nu_0,\ \nu_0^{\frac{1}{2}}<c$, we get by  Lemma \ref{lem:int-nn} that
\begin{align*}
\| e^{\nu t}\nabla (\overline{u_{\neq}\cdot\nabla u_{1,\neq}})\|_{L^2L^2}^2\lesssim& \| e^{c\nu^{\frac{1}{2}} t}\nabla (u_{\neq}\cdot\nabla u_{1,\neq})\|_{L^2L^2}^2\\
\lesssim&{\nu^{-\frac{11}{4}}} E_3^4\lesssim  {\nu^{\f34} \|u(0)\|_{H^{4}}^2}.
\end{align*}

Putting the above estimates together, we arrive at
\begin{align*}
\| e^{\nu t}\Delta\overline{u}_{1}\|_{L^{\infty}L^2}^2+\nu\| e^{\nu t} \nabla\Delta\overline{u}_{1}\|_{L^2L^2}^2\lesssim&  {\|u(0)\|_{H^{4}}^2}+\nu^{-2} {\|u(0)\|_{H^{4}}^2}\\
\lesssim& \nu^{-2} {\|u(0)\|_{H^{4}}^2},
\end{align*}
 which yields 
\begin{align*}
&\|\overline{u}_{1}(t)\|_{H^2}\leq C\|\Delta\overline{u}_{1}(t)\|_{L^2} \lesssim \nu^{-1} e^{- \nu t} {\|u(0)\|_{H^{4}}}.
\end{align*}
Moreover, we have
\begin{align*}
&\|\overline{u}_1(t)\|_{L^\infty}\leq C\|\overline{u}_{1}(t)\|_{H^2}\leq C \nu^{-1} e^{- \nu t} {\|u(0)\|_{H^{4}}}.
\end{align*}
Then \eqref{sta-est-1} is obtained.

\appendix

\section{Some basic inequalities}\label{ap2}

The following lemma comes from Lemma 9.3 in \cite{CLWZ}.
\begin{lemma}\label{lap-w-phi}
Suppose that $(\partial_y^2-|k|^2)\varphi=w$ with $\varphi(\pm 1)=0$ and $|k|\geq 1$. Then it holds that
\begin{equation*}
\begin{aligned}
&\|(\partial_y,|k|)\varphi\|_{L^2}\leq C|k|^{-\frac{1}{2}}\|w\|_{L^1},\\
&\|(\partial_y,|k|)\varphi\|_{L^\infty }\leq C \|w\|_{L^1},\\
& \|(\partial_y,|k|)\varphi\|_{L^\infty }\leq C|k|^{-\frac{1}{2}}\|w\|_{L^2}.
\end{aligned}
\end{equation*}
\end{lemma}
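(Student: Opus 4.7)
The plan is to reduce everything to pointwise bounds on the Green's function of the Dirichlet problem for $\partial_y^2-|k|^2$ on $[-1,1]$ and then read off the three inequalities by applying $L^1\!\to\!L^p$ and $L^2\!\to\!L^\infty$ mapping properties (Young's convolution inequality with an exponential kernel).

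First, I would write the solution explicitly as $\varphi(y)=\int_{-1}^1 G(y,s)w(s)\,\mathrm{d}s$, where
\[
G(y,s)=-\frac{\sinh\bigl(|k|(1+y_<)\bigr)\,\sinh\bigl(|k|(1-y_>)\bigr)}{|k|\sinh(2|k|)},
\qquad y_<=\min(y,s),\ y_>=\max(y,s),
\]
is the Dirichlet Green's function, so that $(\partial_y^2-|k|^2)G(\cdot,s)=\delta_s$, $G(\pm1,s)=0$. Using $\sinh(a)\le \tfrac12 e^{a}$ together with $\sinh(2|k|)\ge \tfrac12 e^{2|k|}(1-e^{-4|k|})\gtrsim e^{2|k|}$ for $|k|\ge 1$, a direct computation gives the pointwise estimates
\[
|G(y,s)|\lesssim \frac{1}{|k|}\,e^{-|k||y-s|},\qquad
|\partial_y G(y,s)|\lesssim e^{-|k||y-s|},
\]
the latter including the unit jump of $\partial_y G$ across $y=s$. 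Therefore
\[
\bigl|(\partial_y\varphi)(y)\bigr|+|k||\varphi(y)| \;\lesssim\; (K*|w|)(y),\qquad K(y):=e^{-|k||y|}\mathbf{1}_{[-2,2]}(y).
\]

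With this convolution bound in hand, the three inequalities follow by standard norm bookkeeping. For the first, Young's inequality $\|K*|w|\|_{L^2}\le \|K\|_{L^2}\|w\|_{L^1}$ together with $\|K\|_{L^2}\lesssim |k|^{-1/2}$ yields $\|(\partial_y,|k|)\varphi\|_{L^2}\le C|k|^{-1/2}\|w\|_{L^1}$. For the second, $\|K*|w|\|_{L^\infty}\le \|K\|_{L^\infty}\|w\|_{L^1}=\|w\|_{L^1}$. For the third, Cauchy–Schwarz gives $\|K*|w|\|_{L^\infty}\le \|K\|_{L^2}\|w\|_{L^2}\lesssim |k|^{-1/2}\|w\|_{L^2}$.

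There is no real obstacle here; the only point requiring a little care is the lower bound $\sinh(2|k|)\gtrsim e^{2|k|}$ and the verification that the $\sinh$ numerators combine to yield exactly $e^{-|k||y-s|}$ up to a harmless constant, so that the kernel on $[-1,1]^2$ is genuinely dominated by the translation-invariant convolution kernel $K$. Once this is in place the result is immediate.
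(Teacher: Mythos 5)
Your proof is correct. The paper itself does not prove Lemma \ref{lap-w-phi}; it cites Lemma 9.3 of \cite{CLWZ}, which is proved by exactly the Green's function argument you give (bounding the Dirichlet Green's kernel of $\partial_y^2-|k|^2$ by $|k|^{-1}e^{-|k||y-s|}$ and its $y$-derivative by $e^{-|k||y-s|}$, then applying Young's convolution inequality). So your route is the same one the paper relies on. Two minor remarks: the truncation $\mathbf{1}_{[-2,2]}$ is unnecessary, since Young's inequality with the full kernel $e^{-|k||y|}$ on $\mathbb{R}$ already gives $\|K_0\|_{L^2(\mathbb{R})}=|k|^{-1/2}$ after extending $w$ by zero; and the hypothesis $|k|\ge 1$ is used precisely where you note it, to get $\sinh(2|k|)\gtrsim e^{2|k|}$ with a uniform constant.
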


The following lemma comes from Lemma C.2 in \cite{CWZ-cmp}.

\begin{lemma}\label{lem5.3}
If $ \psi_1,\psi_2\in H^2(0,1)\cap H_0^1(0,1)$, $g\in H^2([0,1])$ satisfies $ (\partial^2_y-|k|^2)\psi_1=g(\partial^2_y-|k|^2)\psi_2$ {and $|k|\geq 1$}, then we have
\begin{align*}
&|k|^{\frac{1}{2}}\|(\partial_y,|k|)\psi_1\|_{L^2}+|\partial_y\psi_1(0)|+|\partial_y\psi_1(1)|\\
&\qquad\leq C\|g\|_{C^1}\big(|k|^{\frac{1}{2}}\|(\partial_y,|k|)\psi_2\|_{L^2}+|\partial_y\psi_2(0)|
+|\partial_y\psi_2(1)|\big),
\\&\|(\partial_y,|k|)(\psi_1-g\psi_2)\|_{L^2}\leq C\|\partial_yg\|_{H^1}\|\psi_2\|_{L^2}.
\end{align*}
\end{lemma}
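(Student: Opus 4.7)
\medskip

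\noindent\textbf{Proof proposal for Lemma \ref{lem5.3}.}
The plan is to analyze the difference $\phi:=\psi_1-g\psi_2$, which has homogeneous Dirichlet data, alongside a direct energy/Green's function argument for $\psi_1$ itself. Since $\psi_1(0)=\psi_1(1)=\psi_2(0)=\psi_2(1)=0$, we have $\phi(0)=\phi(1)=0$, and the Leibniz rule gives
\begin{equation*}
(\partial_y^2-|k|^2)(g\psi_2)=g(\partial_y^2-|k|^2)\psi_2+2g'\psi_2'+g''\psi_2,
\end{equation*}
so subtracting the hypothesis yields $(\partial_y^2-|k|^2)\phi=-2g'\psi_2'-g''\psi_2$.

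\smallskip
For the \emph{second} inequality, I would pair this equation with $-\phi$ and integrate by parts. The key point is that the $L^\infty$ norm of $g''$ is \emph{not} available; instead, one integration by parts on $\langle g''\psi_2,\phi\rangle$ using $\phi(0)=\phi(1)=\psi_2(0)=\psi_2(1)=0$ yields the cancellation-friendly identity
\begin{equation*}
\|(\partial_y,|k|)\phi\|_{L^2}^2=-\langle g''\psi_2,\phi\rangle-2\langle g'\psi_2,\phi'\rangle.
\end{equation*}
For the first term I use $\|\phi\|_{L^\infty}\le C|k|^{-1/2}\|(\partial_y,|k|)\phi\|_{L^2}$ (Agmon plus the Poincaré bound $|k|\|\phi\|_{L^2}\le\|(\partial_y,|k|)\phi\|_{L^2}$, valid since $|k|\ge1$ and $\phi\in H_0^1$) together with $\|g''\|_{L^2}\le\|\partial_y g\|_{H^1}$. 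For the second term I use the 1D embedding $\|g'\|_{L^\infty}\le C\|g'\|_{H^1}=C\|\partial_y g\|_{H^1}$. Absorbing one factor of $\|(\partial_y,|k|)\phi\|_{L^2}$ gives the claimed bound.

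\smallskip
For the \emph{interior part} of the first inequality, I would pair the hypothesis with $-\psi_1$ and integrate by parts (boundary terms vanish from $\psi_1\in H_0^1$) to obtain
\begin{equation*}
\|(\partial_y,|k|)\psi_1\|_{L^2}^2=\langle g\psi_1',\psi_2'\rangle+\langle g'\psi_1,\psi_2'\rangle+|k|^2\langle g\psi_1,\psi_2\rangle.
\end{equation*}
Each term is controlled by $\|g\|_{C^1}$ times $\|(\partial_y,|k|)\psi_1\|_{L^2}\|(\partial_y,|k|)\psi_2\|_{L^2}$, using $|k|\|\psi\|_{L^2}\le\|(\partial_y,|k|)\psi\|_{L^2}$ for $\psi\in H_0^1$. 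Absorbing yields $\|(\partial_y,|k|)\psi_1\|_{L^2}\le C\|g\|_{C^1}\|(\partial_y,|k|)\psi_2\|_{L^2}$, which after multiplication by $|k|^{1/2}$ matches the stated scaling.

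\smallskip
The main obstacle is the \emph{boundary-trace part} $|\partial_y\psi_1(0)|+|\partial_y\psi_1(1)|$, which I would extract via the Green's function of $(\partial_y^2-|k|^2)$ on $[0,1]$ with Dirichlet data. At $y=0$ the kernel $\tilde h(y'):=\sinh(|k|(1-y'))/\sinh|k|$ solves $\tilde h''=|k|^2\tilde h$ with $\tilde h(0)=1$, $\tilde h(1)=0$, and
\begin{equation*}
\psi_1'(0)=-\int_0^1\tilde h(y')g(y')\bigl[\psi_2''(y')-|k|^2\psi_2(y')\bigr]dy'.
\end{equation*}
Two careful integrations by parts, exploiting $\psi_2(0)=\psi_2(1)=\tilde h(1)=0$ and $\tilde h''=|k|^2\tilde h$, give the clean identity
\begin{equation*}
\psi_1'(0)=g(0)\psi_2'(0)+\int_0^1\tilde h\, g'\psi_2'\,dy'-\int_0^1\tilde h'\,g'\psi_2\,dy'.
\end{equation*}
Now $\|\tilde h\|_{L^2}\sim|k|^{-1/2}$ and $\|\tilde h'\|_{L^2}\sim|k|^{1/2}$ by direct computation with $\sinh|k|\sim e^{|k|}/2$; combined with $\|\psi_2\|_{L^2}\le|k|^{-1}\|(\partial_y,|k|)\psi_2\|_{L^2}$, each interior integral is $\le C\|g\|_{C^1}|k|^{-1/2}\|(\partial_y,|k|)\psi_2\|_{L^2}$. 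Thus $|\psi_1'(0)|\le C\|g\|_{C^1}\bigl(|\psi_2'(0)|+|k|^{-1/2}\|(\partial_y,|k|)\psi_2\|_{L^2}\bigr)$, and symmetrically at $y=1$; this absorbs into the RHS of the first inequality (since $|k|^{-1/2}\le|k|^{1/2}$). The delicate point here is organizing the two integrations by parts so that the boundary term $g(0)\psi_2'(0)$ surfaces explicitly and no term containing $\|\psi_2\|_{L^\infty}$ or $\|g''\|_{L^\infty}$ is left behind.
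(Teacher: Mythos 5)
Your proof is correct. Note first that the paper itself gives no argument for this lemma: it is imported verbatim as Lemma C.2 of \cite{CWZ-cmp}, so there is no in-paper proof to compare against; your write-up supplies a self-contained justification in the same spirit as the cited source (energy identity for the corrected function $\psi_1-g\psi_2$, plus testing the equation against the harmonic-type kernels $\sinh(|k|(1-y))/\sinh|k|$ to extract the boundary traces). I checked the key steps: the error equation $(\partial_y^2-|k|^2)(\psi_1-g\psi_2)=-2g'\psi_2'-g''\psi_2$ and the rearranged identity $\|(\partial_y,|k|)\phi\|_{L^2}^2=-\langle g''\psi_2,\phi\rangle-2\langle g'\psi_2,\phi'\rangle$ are exact, and with $\|\phi\|_{L^\infty}\lesssim|k|^{-1/2}\|(\partial_y,|k|)\phi\|_{L^2}$, $\|g'\|_{L^\infty}\lesssim\|\partial_yg\|_{H^1}$ they give the second estimate with only $\|\psi_2\|_{L^2}$ on the right, which is the whole point of the integration by parts. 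The interior identity $\|(\partial_y,|k|)\psi_1\|_{L^2}^2=\langle g\psi_1',\psi_2'\rangle+\langle g'\psi_1,\psi_2'\rangle+|k|^2\langle g\psi_1,\psi_2\rangle$ and the boundary formula $\psi_1'(0)=g(0)\psi_2'(0)+\int_0^1\tilde h\,g'\psi_2'\,dy-\int_0^1\tilde h'\,g'\psi_2\,dy$ also check out (the $|k|^2\int\tilde h g\psi_2$ terms cancel because $\tilde h''=|k|^2\tilde h$), and the kernel bounds $\|\tilde h\|_{L^2}\lesssim|k|^{-1/2}$, $\|\tilde h'\|_{L^2}\lesssim|k|^{1/2}$ are uniform for $|k|\ge1$, so the trace terms land on the stated right-hand side. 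Only cosmetic remarks: the ``Poincaré'' invocation is unnecessary since $|k|\,\|\phi\|_{L^2}\le\|(\partial_y,|k|)\phi\|_{L^2}$ holds by definition of the norm, and one should say explicitly that $\|\partial_yg\|_{H^1}$ denotes the full $H^1$ norm of $g'$ so that it controls $\|g'\|_{L^\infty}$ and $\|g''\|_{L^2}$.
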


\begin{lemma}\label{hardy-type}
For $f\in H^1_0(-1,1)$, it holds that
\begin{equation}\nonumber
\begin{aligned}
&\left\|\frac{f}{(1-y^2)} \right\|_{L^2}^2\leq 4 \|\partial_y f\|_{L^2}^2,\ \left\|\frac{f}{(1-y^2)^{\frac{1}{2}}}\right\|_{L^2}^2 \leq \frac{1}{2}\|\partial_y f\|_{L^2}^2,\\
& \|f\|_{L^2}^2\leq \|(1-y^2)^{\frac{1}{2}} f\|_{L^2}^{\frac{4}{3}}\|(1-y^2)^{-1} f\|_{L^2}^{\frac{2}{3}}\leq C \|(1-y^2)^{\frac{1}{2}} f\|_{L^2}^{\frac{4}{3}}\|\partial_y f\|_{L^2}^{\frac{2}{3}}.
\end{aligned}
\end{equation}
\end{lemma}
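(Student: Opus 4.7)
\textbf{Proof plan for Lemma \ref{hardy-type}.}

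The plan is to establish the three inequalities independently, with the first and second serving as ingredients for the third. The proof relies only on integration by parts and Hölder's inequality, and no external results are needed.

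For the first inequality $\|f/(1-y^2)\|_{L^2}^2 \le 4\|\partial_yf\|_{L^2}^2$, I would split the integral at $y=0$ and apply the classical one-sided Hardy inequality at each endpoint. On $[0,1]$ we have $(1-y^2)^2 \ge (1-y)^2$, so $\int_0^1 |f|^2/(1-y^2)^2\,dy \le \int_0^1 |f|^2/(1-y)^2\,dy$; since $f(1)=0$, the change of variable $s=1-y$ followed by the standard Hardy inequality on $(0,1)$ with vanishing boundary value at $s=0$ yields $\int_0^1 |f|^2/(1-y)^2\,dy \le 4\int_0^1 |\partial_yf|^2\,dy$. The interval $[-1,0]$ is treated symmetrically using $f(-1)=0$ and the bound $(1-y^2)^2\ge(1+y)^2$, and the two contributions are summed.

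For the second inequality $\|f/(1-y^2)^{1/2}\|_{L^2}^2 \le \tfrac12\|\partial_yf\|_{L^2}^2$, the naive one-sided Hardy splitting only yields constant $1$, so the sharp factor $1/2$ requires a more careful identity; this will be the main technical step. I would introduce the substitution $f=(1-y^2)h$ (valid after density, restricting first to smooth $f$ with $f(\pm 1)=0$ to justify all integrations by parts). Then $\partial_yf = -2yh + (1-y^2)\partial_yh$, and expanding $\int|\partial_yf|^2\,dy$ gives three terms; the cross term $-4\int y(1-y^2)h\partial_yh\,dy = -2\int y(1-y^2)(h^2)'\,dy$ is integrated by parts (boundary terms vanish because of the factor $1-y^2$) to produce $2\int(1-3y^2)h^2\,dy$. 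Combining with the $4y^2h^2$ term and simplifying via the identity $4y^2+2(1-3y^2)=2(1-y^2)$ produces the key identity
\begin{equation*}
\int_{-1}^1|\partial_yf|^2\,dy = 2\int_{-1}^1(1-y^2)h^2\,dy + \int_{-1}^1(1-y^2)^2|\partial_yh|^2\,dy.
\end{equation*}
Since $(1-y^2)h^2 = |f|^2/(1-y^2)$, dropping the nonnegative last term yields exactly $\|f/(1-y^2)^{1/2}\|_{L^2}^2 \le \tfrac12\|\partial_yf\|_{L^2}^2$ (and shows that equality is saturated by $h\equiv\mathrm{const}$, i.e.\ $f=c(1-y^2)$).

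For the third inequality, I would apply Hölder's inequality with exponents $3/2$ and $3$ to the pointwise factorization $|f|^2 = \big((1-y^2)^{1/2}|f|\big)^{4/3}\cdot \big(|f|/(1-y^2)\big)^{2/3}$, giving
\begin{equation*}
\|f\|_{L^2}^2 \le \|(1-y^2)^{1/2}f\|_{L^2}^{4/3}\,\|f/(1-y^2)\|_{L^2}^{2/3},
\end{equation*}
which is the first half of the claim. The second half then follows by inserting the first inequality, since $\|f/(1-y^2)\|_{L^2}^{2/3} \le 2^{2/3}\|\partial_yf\|_{L^2}^{2/3}$. The only delicate point throughout is the justification of boundary terms in the substitution $f=(1-y^2)h$; this is handled by first proving the identity for $f\in C_c^\infty(-1,1)$ and then passing to $f\in H^1_0$ by density, for which the norm $\|f/(1-y^2)^{1/2}\|_{L^2}$ is already controlled by $\|\partial_yf\|_{L^2}$ via the (suboptimal-constant) argument above.
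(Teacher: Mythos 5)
Your proof is correct and, despite the different packaging, it takes essentially the same route as the paper: your substitution $f=(1-y^2)h$ and the resulting identity are algebraically identical to the paper's observation that $0\le\int_{-1}^1\bigl|\partial_yf+\tfrac{2yf}{1-y^2}\bigr|^2\mathrm{d}y=\|\partial_yf\|_{L^2}^2-2\|f/(1-y^2)^{1/2}\|_{L^2}^2$ (indeed $\partial_yf+\tfrac{2yf}{1-y^2}=(1-y^2)\partial_yh$), and your one-shot Hölder with exponents $3/2,3$ is the same computation as the paper's two successive Cauchy--Schwarz steps.
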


\begin{proof}
Thanks to $1-y^2\geq (1-|y|)$, the first inequality follows from the classical Hardy's inequality. For $f\in H^1_0(-1,1)$, we find that
\begin{equation}\nonumber
\begin{aligned}
0\leq &\int_{-1}^1\left|\partial_y f+\frac{2y f}{1-y^2}\right|^2\mathrm{d}y\\
=& \int_{-1}^1|\partial_y f|^2\mathrm{d}y +\int_{-1}^1\frac{2y \partial_y (|f|^2)}{1-y^2}\mathrm{d}y+\int_{-1}^1\frac{4y^2|f|^2}{(1-y^2)^2}\mathrm{d}y\\
=&\int_{-1}^1|\partial_y f|^2\mathrm{d}y -\int_{-1}^1\frac{2 |f|^2}{1-y^2}\mathrm{d}y,
\end{aligned}
\end{equation}
which gives the second inequality. Moreover, we have
\begin{equation}\nonumber
\begin{aligned}
\|f\|_{L^2}^2 \leq & \left\| \frac{f}{(1-y^2)^{\frac{1}{2}}} \right\|_{L^2} \|(1-y^2)^{\frac{1}{2}}f\|_{L^2}
\leq  \|f \|_{L^2}^{\frac{1}{2}} \left\| \frac{f}{(1-y^2)} \right\|_{L^2}^{\frac{1}{2}}  \|(1-y^2)^{\frac{1}{2}}f\|_{L^2},
\end{aligned}
\end{equation}
which along with the first inequality of this lemma yields the last inequality.
\end{proof}

\begin{lemma}\label{positive}
Suppose that $(\partial_y^2-|k|^2)\varphi=w,\varphi(\pm 1)=0, |k|\neq 0$. Then it holds that
\begin{equation}\nonumber
\begin{aligned}
&2\left(\bigg\|\varphi'+\dfrac{2y\varphi}{1-y^2}\bigg\|_{L^2}^2+|k|^2\|\varphi\|_{L^2}^2\right)\leq \langle (1-y^2)w+2\varphi,w\rangle,\\
& \|(\varphi',|k|\varphi)\|_{L^2}^2+|k|^2\|\varphi\|_{L^2}^2 \leq \big(8/|k|^2+1\big) \langle (1-y^2)w+2\varphi,w\rangle.
\end{aligned}
\end{equation}
\end{lemma}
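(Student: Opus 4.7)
\smallskip

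\noindent\textbf{Proof plan for Lemma \ref{positive}.} The whole proof exploits the Poiseuille structure $U=1-y^2$, $U''=-2$, so that the operator $(1-y^2)(\partial_y^2-|k|^2)\varphi+2\varphi$ is the Rayleigh operator $U(\partial_y^2-|k|^2)\varphi-U''\varphi$. The classical factorization $U\varphi''-U''\varphi=\partial_y(U^2\partial_y(\varphi/U))$ then suggests the substitution $\psi:=\varphi/(1-y^2)$, which is well defined in $L^\infty$ because $\varphi(\pm 1)=0$, and for which
\[
\varphi'+\tfrac{2y\varphi}{1-y^2}=(1-y^2)\psi',\qquad A:=(1-y^2)w+2\varphi=\partial_y\!\bigl((1-y^2)^2\psi'\bigr)-|k|^2(1-y^2)^2\psi.
\]

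The first step is to establish the identity
\begin{equation*}
\mathrm{Re}\langle A,w\rangle=\int_{-1}^{1}\frac{|A|^2}{1-y^2}\,\mathrm{d}y+2\Bigl\|\varphi'+\tfrac{2y\varphi}{1-y^2}\Bigr\|_{L^2}^{2}+2|k|^{2}\|\varphi\|_{L^2}^{2}.
\end{equation*}
To prove it I would write $w=(A-2\varphi)/(1-y^2)$ and expand
$\mathrm{Re}\langle A,w\rangle=\int |A|^2/(1-y^2)\,\mathrm{d}y-2\,\mathrm{Re}\int A\bar\varphi/(1-y^2)\,\mathrm{d}y$.
For the second term, $\bar\varphi/(1-y^2)=\bar\psi$, and integration by parts against the factorized $A$ produces
$\int A\bar\psi=-\int(1-y^2)^2|\psi'|^2-|k|^2\int(1-y^2)^2|\psi|^2$, with vanishing boundary contributions since $(1-y^2)^2\psi'=\varphi'(1-y^2)+2y\varphi\to 0$ and $(1-y^2)\psi=\varphi\to0$ at $y=\pm1$. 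Rewriting those integrals in terms of $\varphi$ via the two relations above yields the displayed identity, and the first inequality of the lemma follows simply by dropping the non-negative weighted term $\int|A|^2/(1-y^2)\,\mathrm{d}y$.

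The second step is a Rellich-type identity coming from direct integration by parts:
\[
\|\varphi'\|_{L^2}^{2}=\Bigl\|\varphi'+\tfrac{2y\varphi}{1-y^2}\Bigr\|_{L^2}^{2}+2\int_{-1}^{1}\frac{|\varphi|^2}{1-y^2}\,\mathrm{d}y,
\]
which one obtains by expanding the first norm on the right, computing $\int\frac{4y}{1-y^2}\mathrm{Re}(\varphi'\bar\varphi)\,\mathrm{d}y=-\int\frac{2(1+y^2)}{(1-y^2)^2}|\varphi|^2\,\mathrm{d}y$ by integration by parts (boundary terms vanish since $|\varphi|^2/(1-y^2)\to0$), and collecting.

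For the second inequality I would then estimate $\int|\varphi|^2/(1-y^2)$ by combining Cauchy--Schwarz with the Hardy bound $\|\varphi/(1-y^2)\|_{L^2}\le 2\|\varphi'\|_{L^2}$ from Lemma~\ref{hardy-type} to get $\int|\varphi|^2/(1-y^2)\le 2\|\varphi'\|_{L^2}\|\varphi\|_{L^2}$, then applying Young's inequality with parameter $\epsilon=1/4$ to arrive at $\|\varphi'\|_{L^2}^{2}\le 2\|\varphi'+2y\varphi/(1-y^2)\|_{L^2}^{2}+16\|\varphi\|_{L^2}^{2}$. Adding $2|k|^2\|\varphi\|_{L^2}^2$ to both sides and using the first inequality to control both $2\|\varphi'+2y\varphi/(1-y^2)\|^2+2|k|^2\|\varphi\|^2\le\langle(1-y^2)w+2\varphi,w\rangle$ and the residual $16\|\varphi\|^2\le(8/|k|^2)\langle(1-y^2)w+2\varphi,w\rangle$ delivers the claimed constant $1+8/|k|^2$.

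The main technical obstacle is the bookkeeping of the boundary contributions in the integrations by parts: every step depends on the precise orders of vanishing of $\varphi$ and of $\psi'$ at $y=\pm1$, so one should first justify that $\psi\in H^1_{\mathrm{loc}}$ up to the boundary (using $\varphi\in H^2\cap H^1_0$ and $\varphi=O(1-|y|)$) and check that the weighted factors $(1-y^2)^2\psi'\bar\psi$ and $|\varphi|^2/(1-y^2)$ indeed vanish in the limit. Once these are verified, the rest of the argument is a straightforward combination of the identity from Step~1, the Rellich identity from Step~2, Hardy's inequality, and Young's inequality.
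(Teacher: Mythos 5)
Your proof is correct and takes essentially the same route as the paper. For the second inequality you reproduce exactly the paper's argument (the Rellich-type identity $\|\varphi'\|^2 = \|\varphi'+2y\varphi/(1-y^2)\|^2 + 2\int|\varphi|^2/(1-y^2)$ followed by Hardy and Young); for the first inequality the paper simply cites Lemma~3.1 of \cite{DL} and does not reprove it, whereas you give a self-contained derivation via the substitution $\psi=\varphi/(1-y^2)$ and the factorization $A=\partial_y((1-y^2)^2\psi')-|k|^2(1-y^2)^2\psi$ — which, as a consistency check, is precisely the $\rho_2\equiv1$, $\lambda=0$ specialization of the identity \eqref{eq:rho22-w-00} that the authors establish later in Lemma~\ref{H-1-wL2inside}, so your argument is aligned with the paper's own machinery. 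The only points requiring care are the boundary terms in the two integrations by parts, and you have correctly identified that $\varphi\in H^2\cap H^1_0$ gives $\varphi=O(1-|y|)$, $\varphi'$ bounded, hence $(1-y^2)^2\psi'\bar\psi\to0$ and $|\varphi|^2/(1-y^2)\to0$ at $y=\pm1$, so both vanish.
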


\begin{proof}
The first inequality comes from Lemma 3.1 in \cite{DL} and we only show the second inequality.
Notice that
\begin{align*}
   & \bigg\|\varphi'+\dfrac{2y\varphi}{1-y^2}\bigg\|_{L^2}^2= \|\varphi'\|_{L^2}^2- 2\left\|\dfrac{\varphi}{(1-y^2)^{\f12}}\right\|_{L^2}^2\geq \|\varphi'\|_{L^2}^2-2\left\|\dfrac{\varphi}{1-y^2}\right\|_{L^2} \|\varphi\|_{L^2},
\end{align*}
from which and Lemma \ref{hardy-type}, we infer that
\begin{align*}
   & \bigg\|\varphi'+\dfrac{2y\varphi}{1-y^2}\bigg\|_{L^2}^2\geq \|\varphi'\|_{L^2}^2-4\|\varphi'\|_{L^2} \|\varphi\|_{L^2}\geq \dfrac{1}{2}\|\varphi'\|_{L^2}^2-8\|\varphi\|_{L^2}^2.
\end{align*}
By the first inequality of this lemma, we have
\begin{align*}
   & \|\varphi'\|_{L^2}^2+2|k|^2\|\varphi\|_{L^2}^2 \leq 16\|\varphi\|_{L^2}^2+ \langle (1-y^2)w+2\varphi,w\rangle,\\
   &2|k|^2\|\varphi\|_{L^2}^2 \leq   \langle (1-y^2)w+2\varphi,w\rangle.
\end{align*}
Thus, we conclude
\begin{align*}
   & \|(\varphi',|k|\varphi)\|_{L^2}^2+|k|^2\|\varphi\|_{L^2}^2 \leq \big(8/|k|^2+1\big) \langle (1-y^2)w+2\varphi,w\rangle.
\end{align*}
\end{proof}

The following lemma comes from Lemma 16.7 in \cite{CWZ-mem}.

\begin{lemma}\label{delta-bar{p}}If $f$ is a function in $[-1,1]\times\mathbb{T}$, i.e., $f=f(y,z)$,  then we have
\begin{align*}
\|\partial_y^2f\|_{L^2}+\|\partial_z^2f\|_{L^2}\leq C\big(\|\partial_z\partial_yf\|_{L^2}+\|\Delta f\|_{L^2}\big).
\end{align*}
\end{lemma}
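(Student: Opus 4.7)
\emph{Plan.} The assertion is a two-dimensional elliptic regularity statement for the Laplacian on the cylinder $[-1,1]\times\mathbb{T}$ with no boundary condition imposed on $f$. My plan is to reduce it to a one-dimensional spectral inequality by Fourier decomposition in the periodic variable $z$, and then prove that one-dimensional inequality with a constant uniform in the frequency.

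Write $f(y,z)=\sum_{k\in\mathbb{Z}} f_k(y)\,e^{ikz}$. By Plancherel one has
$\|\partial_y^2 f\|_{L^2}^2=\sum_k \|f_k''\|_{L^2_y}^2$,
$\|\partial_z^2 f\|_{L^2}^2=\sum_k k^4\|f_k\|_{L^2_y}^2$,
$\|\partial_y\partial_z f\|_{L^2}^2=\sum_k k^2\|f_k'\|_{L^2_y}^2$, and
$\|\Delta f\|_{L^2}^2=\sum_k \|f_k''-k^2 f_k\|_{L^2_y}^2$. Hence (after passing through $\ell^2$ summation and squaring the statement) it suffices to establish
\begin{equation*}
\|g''\|_{L^2(-1,1)}^2+k^4\|g\|_{L^2(-1,1)}^2\leq C\big(k^2\|g'\|_{L^2(-1,1)}^2+\|g''-k^2 g\|_{L^2(-1,1)}^2\big)
\end{equation*}
for every $g\in H^2(-1,1)$ and every $k\in\mathbb{Z}$, with $C$ independent of $k$. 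The mode $k=0$ is trivial, so I focus on $|k|\geq 1$.

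The next step is to expand the right-hand side by integrating by parts:
\begin{equation*}
\|g''-k^2 g\|_{L^2}^2=\|g''\|_{L^2}^2+k^4\|g\|_{L^2}^2+2k^2\|g'\|_{L^2}^2-2k^2\,\mathrm{Re}\,[g'\bar g]_{-1}^{\,1}.
\end{equation*}
This identity recasts the desired mode-wise inequality as a trace-type bound
\begin{equation*}
2k^2\,\mathrm{Re}\,[g'\bar g]_{-1}^{\,1}\leq \delta\big(\|g''\|_{L^2}^2+k^4\|g\|_{L^2}^2\big)+C_0 k^2\|g'\|_{L^2}^2
\end{equation*}
with some $\delta<1$, which can then be absorbed into the left-hand side. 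I would prove this by combining the Sobolev trace/interpolation inequalities $|g(\pm 1)|^2\lesssim \|g\|_{L^2}^2+\|g\|_{L^2}\|g'\|_{L^2}$ and $|g'(\pm 1)|^2\lesssim \|g'\|_{L^2}^2+\|g'\|_{L^2}\|g''\|_{L^2}$ with Young's inequality $2k^2|g'(\pm 1)g(\pm 1)|\leq |g'(\pm 1)|^2+k^4|g(\pm 1)|^2$, and then a second Young with $k$-weighted splits on the resulting cross terms $\|g'\|\|g''\|$ and $k^4\|g\|\|g'\|$, chosen so that the leading $\|g''\|^2$ and $k^4\|g\|^2$ contributions come with small prefactor while the $\|g'\|^2$ part is allowed to carry a factor $k^2$.

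The main obstacle will be arranging these Young weights to obtain a $k$-uniform constant: a naive split of $k^4\|g\|\|g'\|$ produces either $k^6\|g\|^2$ or $k^4\|g'\|^2$, both of which blow the budget. The trick I plan to exploit is to first rewrite $[g'\bar g]_{-1}^{\,1}=\langle g'',g\rangle+\|g'\|_{L^2}^2$ and substitute $g''=(g''-k^2g)+k^2g$ to obtain the self-referential identity
$2k^2\,\mathrm{Re}[g'\bar g]_{-1}^{\,1}=2k^2\,\mathrm{Re}\langle g''-k^2g,g\rangle+2k^4\|g\|^2+2k^2\|g'\|^2$;
pairing this with the pointwise Young/trace estimate above and closing the bound by absorption (noting that $k^4\|g\|^2$ and $\|g''\|^2$ appearing on the right can be partially moved to the left) furnishes the desired $k$-uniform inequality. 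Summing the mode-wise estimate in $k$ closes the proof.
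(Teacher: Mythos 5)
The paper itself does not prove this lemma; it simply cites Lemma 16.7 of the reference \cite{CWZ-mem}. So there is no in-paper argument to compare against, and I can only assess whether your proposed proof is sound.

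Your reduction is sound: Fourier-decomposing in $z$ and using Plancherel, the claim is equivalent (up to absolute constants) to the one-dimensional mode-wise inequality
$\|g''\|_{L^2}^2+k^4\|g\|_{L^2}^2\le C\bigl(k^2\|g'\|_{L^2}^2+\|g''-k^2g\|_{L^2}^2\bigr)$
with $C$ independent of $k$, and the integration-by-parts identity $\|g''-k^2g\|^2=\|g''\|^2+k^4\|g\|^2+2k^2\|g'\|^2-2k^2\,\mathrm{Re}[g'\bar g]_{-1}^1$ correctly isolates the boundary term to be controlled. The flaw is in the first Young step you apply \emph{at the boundary point}, namely $2k^2|g'(\pm1)g(\pm1)|\le |g'(\pm1)|^2+k^4|g(\pm1)|^2$. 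After feeding the Agmon trace bound $|g(1)|^2+|g(-1)|^2\le \|g\|^2+2\|g\|\,\|g'\|$ into the second term, the resulting estimate on the boundary term contains $k^4\|g\|^2$ with coefficient at least $1$ (and the subsequent Youngs on the cross terms can only increase it). This cannot be absorbed into the left side, and it is not an artifact of sloppy constants: for $g$ a nonzero constant and $|k|=1$ the boundary term $2k^2\,\mathrm{Re}[g'\bar g]_{-1}^1$ is \emph{zero}, while your Young bound gives exactly $k^4(|g(1)|^2+|g(-1)|^2)=k^4\|g\|^2$, i.e.\ the entire quantity you are trying to bound. In other words, applying Young pointwise before invoking the trace inequality throws away the crucial cancellation in $g'(\pm 1)\overline{g(\pm1)}$ that happens when $g'$ is small. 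The ``self-referential identity'' you invoke as the fix makes this worse rather than better: it expresses $2k^2\,\mathrm{Re}[g'\bar g]_{-1}^1=2k^2\,\mathrm{Re}\langle g''-k^2g,g\rangle+2k^4\|g\|^2+2k^2\|g'\|^2$, so the $k^4\|g\|^2$ coefficient becomes $2$, and no convex combination of the two bounds can push the coefficient below $1$ at $|k|=1$. A further quantitative obstruction you already notice but do not resolve: the cross term $k^4\|g\|\,\|g'\|$ can provably not be split by Young into $\epsilon\,k^4\|g\|^2+C_\epsilon\,k^2\|g'\|^2$, since $\sqrt{\epsilon\,k^4}\cdot\sqrt{C_\epsilon k^2}=\sqrt{\epsilon C_\epsilon}\,|k|^3$ is a factor of $|k|$ short of $k^4$.

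The statement is true and your reduction was the right first move, but a complete proof needs a different device to control the boundary term. Two standard options: (i) apply Cauchy--Schwarz $2k^2|[g'\bar g]_{-1}^1|\le 2k^2\sqrt{|g'(1)|^2+|g'(-1)|^2}\,\sqrt{|g(1)|^2+|g(-1)|^2}$, insert Agmon into both square roots, expand the product, and in the cross terms containing $\|g''\|$ substitute $\|g''\|\le\|g''-k^2g\|+k^2\|g\|$ so that weighted Young can land everything on $\epsilon(\|g''\|^2+k^4\|g\|^2)+C_\epsilon(k^2\|g'\|^2+\|g''-k^2g\|^2)$ with $\epsilon<1$; or (ii) decompose $g=h+v$, where $h=a e^{ky}+b e^{-ky}$ matches $g(\pm1)$ and $v$ vanishes at $\pm 1$, so that $v$ has no boundary term in the IBP and the harmonic part $h$ can be estimated explicitly with a $k$-uniform constant using $\sinh(2k)/k\ge 2$ for $|k|\ge 1$. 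Either route closes the argument; the route you describe as written does not.
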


\begin{lemma}\label{lemma-L-infty-interpo}
If $P_0 f=0$, then it holds that for $s_1,s_2\geq 0$ with $s_1+s_2>0$,
\begin{align*}
\|f\|_{L^\infty}\leq C\|\Lambda_{x,z}^{\frac{1+s_1}{2}} \partial_x f\|_{L^2}^{\frac{1}{2}}\|\Lambda_{x,z}^{\frac{1+s_2}{2}}\nabla f\|_{L^2}^{\frac{1}{2}}.
\end{align*}
\end{lemma}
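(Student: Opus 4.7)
\medskip
\noindent\textbf{Proof proposal for Lemma \ref{lemma-L-infty-interpo}.}
Since $P_0 f=0$, I would Fourier expand $f(x,y,z)=\sum_{k_1\neq 0,\,k_3\in\mathbb{Z}} f_k(y)\,e^{ik_1 x+ik_3 z}$, so that $|k|\geq |k_1|\geq 1$ throughout. The starting point is the pointwise bound $\|f\|_{L^\infty}\leq \sum_{k}\|f_k\|_{L^\infty_y}$ together with the one-dimensional Sobolev embedding on $[-1,1]$ applied to each mode. The standard inequality $\|f_k\|_{L^\infty_y}^2\leq 2\|f_k\|_{L^2_y}\|\partial_y f_k\|_{L^2_y}+\|f_k\|_{L^2_y}^2$ combined with $\|f_k\|_{L^2_y}\leq |k|^{-1}\|(\partial_y,|k|)f_k\|_{L^2_y}$ (using $|k|\geq 1$) gives
\begin{equation*}
\|f_k\|_{L^\infty_y}\leq C\,\|f_k\|_{L^2_y}^{1/2}\,\|(\partial_y,|k|)f_k\|_{L^2_y}^{1/2}.
\end{equation*}

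Next, I would allocate weights so that the two target norms appear. Set $A_k=|k_1|\,|k|^{(1+s_1)/2}\|f_k\|_{L^2_y}$ and $B_k=|k|^{(1+s_2)/2}\|(\partial_y,|k|)f_k\|_{L^2_y}$, so that by Plancherel
\begin{equation*}
\sum_k A_k^2=\|\Lambda_{x,z}^{(1+s_1)/2}\partial_x f\|_{L^2}^2,\qquad
\sum_k B_k^2\lesssim \|\Lambda_{x,z}^{(1+s_2)/2}\nabla f\|_{L^2}^2.
\end{equation*}
Writing $\|f_k\|_{L^2_y}^{1/2}\|(\partial_y,|k|)f_k\|_{L^2_y}^{1/2}=A_k^{1/2}B_k^{1/2}\,c_k$ with $c_k=|k_1|^{-1/2}|k|^{-(2+s_1+s_2)/4}$, two successive Cauchy--Schwarz inequalities produce
\begin{equation*}
\sum_k\|f_k\|_{L^\infty_y}\leq C\Big(\sum_k A_k^2\Big)^{1/4}\Big(\sum_k B_k^2\Big)^{1/4}\Big(\sum_k c_k^2\Big)^{1/2}.
\end{equation*}

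The remaining task is to check convergence of $\sum_{k_1\neq 0,\,k_3} |k_1|^{-1}(k_1^2+k_3^2)^{-(2+s_1+s_2)/4}$. Since $s_1+s_2>0$, the exponent $(2+s_1+s_2)/4>1/2$, so summing in $k_3$ by comparison with the integral $\int_\mathbb{R}(k_1^2+t^2)^{-(2+s_1+s_2)/4}dt\sim |k_1|^{1-(2+s_1+s_2)/2}$ yields a tail of order $|k_1|^{-(2+s_1+s_2)/2}$, and the subsequent sum over $k_1\neq 0$ converges because $(2+s_1+s_2)/2>1$. This is precisely where the hypothesis $s_1+s_2>0$ is essential; in fact the main (and really only) obstacle of the proof is to route the Cauchy--Schwarz weights so that the marginal condition $s_1+s_2>0$ is exactly what is needed, with the $|k_1|^{-1/2}$ factor (arising from the single $\partial_x$ on the first norm) doing the crucial work to make the $k_1$-sum convergent. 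Combining the three pieces yields the desired inequality.
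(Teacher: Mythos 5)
Your proof is correct and follows essentially the same route as the paper's: Fourier expand in $(x,z)$, apply the one–dimensional Gagliardo–Nirenberg bound $\|f_k\|_{L^\infty_y}\lesssim \|f_k\|_{L^2_y}^{1/2}\|(\partial_y,|k|)f_k\|_{L^2_y}^{1/2}$ (the paper uses $(\partial_y,1)$ instead, which is equivalent since $|k|\geq 1$), then distribute the weights $|k_1|\,|k|^{(1+s_1)/2}$ and $|k|^{(1+s_2)/2}$ and control the leftover factor $|k_1|^{-1}|k|^{-(1+s)}$ with $s=(s_1+s_2)/2$ via the $k_3$-integral comparison followed by the $k_1$-sum. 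The only cosmetic difference is that the paper performs the three-factor Cauchy–Schwarz in one display rather than two successive applications.
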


\begin{proof}
For $P_0f=0$, let $f=\sum\limits_{k\in \mathbb{Z}^2;k_1\not=0} f_k(y)e^{ik\cdot (x,z)}$, where $f_k$ is defined as $f_k(y)=\frac{1}{2\pi}\int_{\mathbb{T}^2}f(x,y,z) e^{-ik\cdot (x,z)}\mathrm{d}x\mathrm{d}z$ with $k=(k_1,k_3)$. Then for $s_1,s_2\geq 0, s=\frac{s_1+s_2}{2}>0$, we have
\begin{align*}
\|f\|_{L^\infty}\leq & C \sum_{k\in \mathbb{Z}^2;k_1\not=0} \|f_k(y)\|_{L^\infty}\leq C \sum_{k\in \mathbb{Z}^2; k_1\not=0} \|f_k(y)\|_{L^2}^{\frac{1}{2}}\|(\partial_y,1)f_k\|_{L^2}^{\frac{1}{2}}\\
\leq&C \Big( \sum_{k\in \mathbb{Z}^2; k_1\not=0}|k_1|^2|k|^{1+s_1} \|f_k(y)\|_{L^2}^2\Big)^{\frac{1}{4}}\Big( \sum_{k\in \mathbb{Z}^2; k_1\not=0}|k|^{1+s_2} \|(\partial_y,1)f_k\|_{L^2}^2\Big)^{\frac{1}{4}} \\
&\times\Big( \sum_{k\in \mathbb{Z}^2; k_1\not=0}\frac{1}{|k_1||k|^{1+s}}\Big)^{\frac{1}{2}}\\
\leq &C \|\Lambda_{x,z}^{\frac{1+s_1}{2}}\partial_x f\|_{L^2}^{\frac{1}{2}}\|\Lambda_{x,z}^{\frac{1+s_2}{2}}\nabla f\|_{L^2}^{\frac{1}{2}}\sum_{k_1\not=0}\frac{1}{|k_1|^{1+s}}\\
\leq& C\|\Lambda_{x,z}^{\frac{1+s_1}{2}}\partial_x f\|_{L^2}^{\frac{1}{2}}\|\Lambda_{x,z}^{\frac{1+s_2}{2}}\nabla f\|_{L^2}^{\frac{1}{2}},
\end{align*}
where we have used the fact
 \begin{align*}
\sum_{k_3\in \mathbb{Z}}\frac{1}{|k|^{1+s}}\leq \frac{1}{|k_1|^{1+s}}+\int_{\mathbb{R}}\frac{\mathrm{d}z}{(|k_1|^2+z^2)^{\frac{1+s}{2}}}\leq \frac{C}{|k_1|^{s}}.
\end{align*}
Then the conclusion follows.
\end{proof}

\section{Some lemmas}\label{ap1}

Let $w$ be the solution to \eqref{OS-nopertur} and $1-y_i^2=\lambda \in [0,1]$ with $-1\leq y_1\leq 0\leq y_2\leq 1, \ y_1=-y_2$, and recall the decomposition $\varphi=\varphi_1+\varphi_2$ defined by \eqref{streamdeco1}-\eqref{streamdeco2}.\smallskip

Let us state some lemmas, which can be obtained by similar arguments in \cite{DL}.

\begin{lemma}\label{lem:outside}
It holds that for any $\delta \in (0,1]$
\begin{equation}\nonumber%\label{4.9}
\Vert w \Vert_{L^2\big((-1,1)\setminus (y_1,y_2)\big)}^2 \leq C \mathscr{E}_1(w),
\end{equation}
where
\begin{equation}\label{def:E1}
\begin{aligned}
\mathscr{E}_1(w)=&\frac{\Vert F \Vert_{L^2}\Vert w \Vert_{L^2}}{|k_1|(y_2-y_1+\delta)\delta}+\frac{\nu\Vert w'\overline{w}\Vert_{L^\infty(B(y_1,\delta)\cup B(y_2,\delta))}}{|k_1|(y_2-y_1+\delta)\delta}\\
&+\frac{\nu \Vert w' \Vert_{L^2}\Vert w \Vert_{L^\infty}}{|k_1|(y_2-y_1+\delta)\delta^{\frac{3}{2}}}+\frac{\Vert \varphi_2 \Vert_{L^\infty}^2}{(y_2-y_1+\delta)^2\delta}+\delta \Vert w \Vert_{L^\infty}^2.
\end{aligned}
\end{equation}
\end{lemma}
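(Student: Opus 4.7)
The plan is to follow the energy-method template for the Orr--Sommerfeld equation with colliding critical layers, in the spirit of Lemma 3.4 in \cite{DL} (the $H^{-1}_k$ version of exactly this estimate, which is quoted above as Lemma \ref{H-1-wL2outside}). The central object is a smooth cutoff $\rho_1(y)\in[0,1]$ equal to $1$ on $(-1,1)\setminus(y_1-\delta,y_2+\delta)$ and vanishing on $(y_1-\delta/2,y_2+\delta/2)$, with $\|\rho_1'\|_{L^\infty}\lesssim\delta^{-1}$ and support of $\rho_1'$ contained in $B(y_1,\delta)\cup B(y_2,\delta)$. On $\operatorname{supp}\rho_1$ one has $|1-y^2-\lambda|=(y-y_1)(y-y_2)\gtrsim\delta(y_2-y_1+\delta)$, which is the source of the ubiquitous weight $\bigl((y_2-y_1+\delta)\delta\bigr)^{-1}$ in $\mathscr{E}_1$.

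First I would pair \eqref{OS-nopertur} with $\rho_1^2w$ in $L^2$ and take the imaginary part. Since $1-y^2-\lambda<0$ on $\operatorname{supp}\rho_1$, this yields a favorable coercive quantity $\int\rho_1^2(\lambda-1+y^2)|w|^2\,dy+2\int\rho_1^2(|\varphi'|^2+|k|^2|\varphi|^2)\,dy$, balanced against $|k_1|^{-1}|\langle F,\rho_1^2w\rangle|$ and against viscous commutators $\nu\int\rho_1\rho_1'(\partial_y w)\overline{w}$ and $k_1\int(\rho_1\rho_1')'|\varphi|^2$. Because $\rho_1'$ is supported in $B(y_i,\delta)$, these commutators directly produce the $\nu\|w'\overline w\|_{L^\infty(B(y_i,\delta))}\delta^{-1}$ and $\nu\|w'\|_{L^2}\|w\|_{L^\infty}\delta^{-1/2}$ contributions, together with a $\delta^{-1}\|\varphi\|_{L^\infty(B(y_i,\delta))}^2$ piece that will be controlled later via the $\varphi_1$--$\varphi_2$ split.

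Next, to promote this into a bound on $\int\rho_1^2|w|^2$ without the weight $(\lambda-1+y^2)$, I would repeat the energy identity but test against $\rho_1^2w/(1-y^2-\lambda)$. Taking the imaginary part now reproduces $\int\rho_1^2|w|^2\,dy$ directly, at the price of (i) the forcing contribution acquiring the weight $(y_2-y_1+\delta)^{-1}\delta^{-1}$, giving the $\tfrac{\|F\|_{L^2}\|w\|_{L^2}}{|k_1|(y_2-y_1+\delta)\delta}$ term in $\mathscr{E}_1$; (ii) viscous and commutator remainders of the same type as in Step~1 but with an extra weight factor, reproducing the $\nu\|w'\|_{L^2}\|w\|_{L^\infty}/[|k_1|(y_2-y_1+\delta)\delta^{3/2}]$ entry; and (iii) a nonlocal leftover $\int\rho_1^2|\varphi|^2/(1-y^2-\lambda)^2\,dy$. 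Finally, adding a boundary-layer contribution $\delta\|w\|_{L^\infty}^2$ for the strip $|y-y_i|\in(0,\delta)$ recovers the full integral $\|w\|_{L^2((-1,1)\setminus(y_1,y_2))}^2$.

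The main obstacle is the nonlocal remainder $\int\rho_1^2|\varphi|^2/(1-y^2-\lambda)^2\,dy$, and here the split $\varphi=\varphi_1+\varphi_2$ from \eqref{streamdeco1}--\eqref{streamdeco2} is essential. Since $\varphi_1$ vanishes at $y_1,y_2$, Hardy's inequality gives $\int_{y>y_2+\delta/2}\rho_1^2|\varphi_1|^2/(y-y_2)^2\,dy\lesssim\int\rho_1^2|\varphi_1'|^2\,dy+\delta^{-1}\|\varphi_1\|_{L^\infty(B(y_2,\delta))}^2$, and on $\operatorname{supp}\rho_1$ the other factor $(y-y_1)^{-2}$ contributes $(y_2-y_1+\delta)^{-2}$; adding the symmetric estimate near $y_1$ and using $\|\varphi_1\|_{L^\infty(B(y_i,\delta))}\lesssim\|\varphi\|_{L^\infty(B(y_i,\delta))}+\|\varphi_2\|_{L^\infty}$ lets the $\varphi_1$ piece be absorbed into the Step~1 bounds divided by $(y_2-y_1+\delta)^2$. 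The $\varphi_2$ piece need not vanish at $y_i$, so it is bounded crudely by $\|\varphi_2\|_{L^\infty}^2\int\rho_1^2/(1-y^2-\lambda)^2\,dy\lesssim\|\varphi_2\|_{L^\infty}^2/((y_2-y_1+\delta)^2\delta)$, which is exactly the explicit $\|\varphi_2\|_{L^\infty}^2/((y_2-y_1+\delta)^2\delta)$ term in $\mathscr{E}_1$. Collecting Steps~1--2 together with this treatment of the nonlocal remainder yields the claimed inequality.
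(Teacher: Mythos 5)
Your scaffold is the right one and it does track the $H^{-1}_k$ analogue the paper proves in detail (Lemma~\ref{H-1-wL2outside}, specifically \eqref{outside1}--\eqref{outside3}): cutoff $\rho_1$, pair the equation with $\rho_1^2 w$ for the coercive identity in $(\lambda-1+y^2)|w|^2$ and $|u|^2$, pair with $\rho_1^2 w/(1-y^2-\lambda)$ for $\int\rho_1^2|w|^2$, then Hardy on the leftover $\int\rho_1^2|\varphi|^2/(1-y^2-\lambda)^2$. Since the paper does not give a proof for this $L^2$ variant (it refers to \cite{DL}), the $H^{-1}_k$ proof is the only internal check, and up to the nonlocal step you match it.

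The gap is in the last step, and it is not cosmetic. You assert that after splitting $\varphi=\varphi_1+\varphi_2$ and applying Hardy to $\varphi_1$, the residue $\int\rho_1^2|\varphi_1'|^2\,dy+\delta^{-1}\|\varphi_1\|_{L^\infty(B_\delta)}^2$ is ``absorbed into the Step~1 bounds.'' But Step~1 controls $\int\rho_1^2(|\varphi'|^2+|k|^2|\varphi|^2)$, not $\int\rho_1^2|\varphi_1'|^2$. Passing from one to the other requires handling $\int\rho_1^2|\varphi_2'|^2$, and since on $[y_2,1]$ we have $\varphi_2'(y)=-|k|\cosh(|k|(1-y))\varphi(y_2)/\sinh(|k|(1-y_2))$, this integral is of size $\max\{|k|,(1-y_2)^{-1}\}\,\|\varphi_2\|_{L^\infty}^2$. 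After dividing by $(y_2-y_1+\delta)^2$ this only fits under the $\|\varphi_2\|_{L^\infty}^2/\big((y_2-y_1+\delta)^2\delta\big)$ term in $\mathscr{E}_1$ when $\delta\max\{|k|,(1-y_2)^{-1}\}\lesssim 1$, which the hypotheses of the lemma ($\delta\in(0,1]$, no constraint relative to $|k|$) do not guarantee. Likewise, the boundary piece $\delta^{-1}\|\varphi_1\|_{L^\infty(B_\delta)}^2$ reduces, via $\|\varphi_1\|_{L^\infty(B(y_i,\delta))}\lesssim\delta^{1/2}\|\varphi_1'\|_{L^2(B(y_i,\delta))}$, back to the same quantity $\|\varphi_1'\|_{L^2}^2/(y_2-y_1+\delta)^2$, so the argument is circular as written. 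Note too that applying Hardy to $\rho_1\varphi$ directly (the route the paper's $H^{-1}_k$ proof actually takes) yields the weaker $\|\varphi\|_{L^\infty(B_\delta)}^2/\big((y_2-y_1+\delta)^2\delta\big)$ rather than the sharper $\|\varphi_2\|_{L^\infty}^2$ form claimed in $\mathscr{E}_1$ (Lemma~\ref{lem:varphi-Linfty} records precisely this one-sided comparison), so producing exactly $\mathscr{E}_1$ requires a genuine extra idea in the $\varphi_1$-piece that your sketch does not supply; you need either a different Hardy decomposition tied to the explicit form of $\varphi_2$, or an independent estimate for $\|\varphi_1'\|_{L^2}$ that closes in the claimed $\delta$-weights.
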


\begin{lemma}\label{lem:inside}
Let $\delta \in (0,\frac{y_2-y_1}{4}]$. It holds that
\begin{equation}\nonumber
\Vert w \Vert_{L^2(y_1,y_2)}^2 \leq C \mathscr{E}_2(w),
\end{equation}
where
\begin{equation}\label{def:E2}
\begin{aligned}
\mathscr{E}_2(w)=&\frac{\Vert F \Vert_{L^2}\Vert w \Vert_{L^2}}{|k_1|\delta(y_2-y_1)}+\frac{\nu \Vert w'\overline{w}\Vert_{L^\infty(B(y_1,\delta)\cup B(y_2,\delta))}}{|k_1|\delta (y_2-y_1)}\\
&+\delta \Vert w \Vert_{L^\infty}^2+\frac{\nu^2\Vert w \Vert_{L^\infty}^2}{|k_1|^2(y_2-y_1)^3\delta^4}+\frac{\nu \Vert w'\Vert_{L^2}\Vert w \Vert_{L^\infty}}{|k_1|\delta^{\frac{3}{2}}(y_2-y_1)}\\
&+\frac{\Vert \varphi_2\Vert_{L^\infty}^2}{(y_2-y_1)^2\delta}+\delta^3\Vert w'\Vert_{L^\infty(B(y_1,\delta)\cup B(y_2,\delta))}^2+\frac{\Vert F \Vert_{L^2}^2}{|k_1|^2(y_2-y_1)^3\delta}\\
&+\frac{\nu^2}{|k_1|^2}\left(\frac{\Vert w'\Vert_{L^\infty(B(y_1,\delta)\cup B(y_2,\delta))}^2}{(y_2-y_1)^3\delta^2}+\frac{\Vert w'\Vert_{L^2}^2}{(y_2-y_1)^3\delta^3}\right).
\end{aligned}
\end{equation}
\end{lemma}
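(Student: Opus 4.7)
\medskip

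\noindent\textbf{Proof proposal for Lemma \ref{lem:inside}.}

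The plan is to mirror the argument of Lemma \ref{H-1-wL2inside} (which handled the $F\in H^{-1}_k$ case) but with the right-hand side now in $L^2$, so that the duality pairing $\langle F,\rho_2^2 w\rangle$ (or variants) is bounded by $\|F\|_{L^2}\|\rho_2^2 w\|_{L^2}$ rather than by an $H^1_k$-type test-function norm. Introduce the same cut-off $\rho_2(y)\in[0,1]$ equal to $1$ on $[y_1+\delta,y_2-\delta]$, supported in $[y_1+\delta/2,y_2-\delta/2]$, and smooth, satisfying $\|\rho_2'\|_{L^1}\lesssim 1$ and $\|\rho_2'\|_{L^2}\lesssim \delta^{-1/2}$. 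Because $\|w\|_{L^2(y_1,y_2)}^2\leq \|\rho_2 w\|_{L^2(-1,1)}^2+\delta\|w\|_{L^\infty}^2$, the only task is to control $\|\rho_2 w\|_{L^2}^2$.

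\emph{Step 1 (the structure identity near the colliding critical points).} As in the derivation of \eqref{eq:rho22-w-00}, I first record the algebraic identity
\begin{equation*}
\begin{aligned}
\int_{-1}^1&\rho_2^2\Big|(1-y^2-\lambda)^{1/2}w+\tfrac{2\varphi}{(1-y^2-\lambda)^{1/2}}\Big|^2\mathrm{d}y\\
&+2\int_{-1}^1\Big|(\rho_2\varphi)'+\tfrac{2y\rho_2\varphi}{1-y^2-\lambda}\Big|^2\mathrm{d}y+2|k|^2\int_{-1}^1\rho_2^2|\varphi|^2\mathrm{d}y\\
&=2\int_{-1}^1\partial_y(\rho_2\rho_2')|\varphi|^2\mathrm{d}y+\mathrm{Re}\big\langle(1-y^2-\lambda)w+2\varphi,\rho_2^2 w\big\rangle,
\end{aligned}
\end{equation*}
which is the Poiseuille-specific coercive structure that tames the critical-layer singularity. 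Pairing \eqref{OS-nopertur} with $\rho_2^2 w$ and taking the imaginary part produces the same left-hand side weighted by $k_1$, plus the source term $\mathrm{Im}\,\langle F,\rho_2^2 w\rangle+2\nu\mathrm{Re}\int\rho_2\rho_2'\partial_y w\,\bar w\,\mathrm{d}y$. Dividing through by $(y_2-y_1)^2$ (as in \eqref{inside1}) gives the analogue of \eqref{inside1} with $\|F\|_{H^{-1}_k}\|(\partial_y,|k|)w\|_{L^2}$ replaced by $\|F\|_{L^2}\|w\|_{L^2}$, which in turn contributes $\|F\|_{L^2}\|w\|_{L^2}/(|k_1|\delta(y_2-y_1))$ together with the boundary-strip terms $\nu\|w'\bar w\|_{L^\infty(B(y_1,\delta)\cup B(y_2,\delta))}/(|k_1|\delta(y_2-y_1))$ and $\|\varphi_2\|_{L^\infty}^2/((y_2-y_1)^2\delta)$ after peeling the $\rho_2\rho_2'$ contribution.

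\emph{Step 2 (weighted pairing to extract $\int\rho_2^2|w|^2$).} Next I pair \eqref{OS-nopertur} with $\rho_2^2 w/(1-y^2-\lambda)$ and take the imaginary part. This is the step that actually produces $\int\rho_2^2|w|^2$ on the left, with $y_2-y_1\geq 4\delta$ ensuring $|1-y^2-\lambda|\gtrsim \delta(y_2-y_1)$ on the support of $\rho_2$. The resulting source is controlled by (i) $\|F\|_{L^2}\|w\|_{L^\infty}/(|k_1|\delta^{3/2}(y_2-y_1))$ (from $\langle F,\rho_2^2 w/(1-y^2-\lambda)\rangle$, using $\|\rho_2^2/(1-y^2-\lambda)\|_{L^2}\lesssim \delta^{-3/2}(y_2-y_1)^{-1}$) and (ii) $\nu\|w'\|_{L^2}\|w\|_{L^\infty}/(|k_1|\delta^{3/2}(y_2-y_1))$ (from the viscous piece after integration by parts); the leftover critical-layer piece is $\int \rho_2^2|\varphi|^2/(1-y^2-\lambda)^2\,\mathrm{d}y$, which is exactly the term that must be bounded through the Step~1 identity. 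This is the place where the forcing term is genuinely different from the $H^{-1}_k$ case and the $L^2$ norm of $F$ enters through $L^\infty$ of $w$ rather than through $H^1$ of a test function.

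\emph{Step 3 (the hardest point: controlling $\int\rho_2^2|\varphi|^2/(1-y^2-\lambda)^2$).} This is the main obstacle, and I plan to reuse the strategy of Lemma \ref{H-1-wL2inside}: integrating $(\rho_2\varphi/(1-y^2-\lambda))'$ against a cut-off of size $\theta\in[\delta/2,(y_2-y_1)/4]$ gives
\begin{equation*}
\tfrac{|\rho_2(0)\varphi(0)|^2}{(y_2-y_1)^3}\lesssim \tfrac{1}{y_2-y_1}\Big|\!\int_{y_1+\theta}^{y_2-\theta}\tfrac{\rho_2\varphi}{1-y^2-\lambda}\,\mathrm{d}y\Big|^2+\mathscr{E}_2(w),
\end{equation*}
after which one splits on whether $|k|^2(y_2-y_1)^2\gtrless 1$. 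When $|k|^2(y_2-y_1)^2\geq 1$, I take $\theta=(y_2-y_1)/4$ and bound by $\|\rho_2\varphi\|_{L^2}^2 \|(1-y^2-\lambda)^{-1}\|_{L^2}^2$. When $|k|^2(y_2-y_1)^2\leq 1$, I take $\theta=\delta/2$ and re-substitute $w$ from the equation \eqref{OS-nopertur} to convert $\int \rho_2\varphi/(1-y^2-\lambda)$ into a sum of seven terms $I_1,\dots,I_7$ (the analogue of the split used in Lemma \ref{H-1-wL2inside}), now without the $\epsilon(\nu|k_1|)^{1/2}w$ term since \eqref{OS-nopertur} has no $\epsilon$. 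The source-term contribution $I_4$ now becomes $\|F\|_{L^2}^2/(|k_1|^2(y_2-y_1)^3\delta)$ (using $\|\rho_2/(1-y^2-\lambda)\|_{L^2}$ in place of the $H^1_k$ norm), which matches the last summand in $\mathscr{E}_2(w)$; the remaining $I_j$'s are essentially identical to their $H^{-1}_k$ counterparts after dropping the $\epsilon$-term. The terms involving $w'$ in $L^\infty$ and $w$ in $L^\infty$ on $B(y_i,\delta)$ stay as in Lemma \ref{H-1-wL2inside}. Collecting everything and inserting into Step~2 closes the estimate and produces exactly $\mathscr{E}_2(w)$ from \eqref{def:E2}. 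The main technical hurdle, as in the $H^{-1}_k$ case, is carrying out the second sub-case carefully so that every term is reducible to one of the summands appearing in $\mathscr{E}_2(w)$.
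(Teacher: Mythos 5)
Your overall strategy---mirroring the $H^{-1}_k$ argument of Lemma~\ref{H-1-wL2inside} with $F$ now measured in $L^2$---is the natural adaptation, and indeed the paper itself supplies no detailed proof here but points to Lemma~3.5 of \cite{DL}, whose structure you correctly reproduce in Steps~1 and~3. However, Step~2 of your proposal contains an error in the forcing term that prevents the argument from landing on the stated $\mathscr{E}_2(w)$.

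There are two issues, one computational and one structural. Computationally, on $\mathrm{supp}\,\rho_2\subset[y_1+\delta/2,\,y_2-\delta/2]$ one has $|1-y^2-\lambda|=|y-y_1|\,|y-y_2|$ with one factor $\gtrsim\delta$ and the other $\lesssim y_2-y_1$, and a direct integration gives $\|\rho_2^2/(1-y^2-\lambda)\|_{L^2}\lesssim \delta^{-1/2}(y_2-y_1)^{-1}$, not $\delta^{-3/2}(y_2-y_1)^{-1}$ as you wrote; this also matches the bound you tacitly use for $I_4$ in Step~3. Structurally, even with the corrected exponent your Cauchy--Schwarz allocation for $\langle F,\rho_2^2 w/(1-y^2-\lambda)\rangle$ pairs $\|F\|_{L^2}$ with $\|w\|_{L^\infty}$, yielding $\|F\|_{L^2}\|w\|_{L^\infty}/(|k_1|\delta^{1/2}(y_2-y_1))$. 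No summand of $\mathscr{E}_2(w)$ dominates this quantity: Young's inequality with parameter $\delta$ produces $\|F\|_{L^2}^2/(|k_1|^2\delta^2(y_2-y_1)^2)$, whereas $\mathscr{E}_2$ only carries $\|F\|_{L^2}^2/(|k_1|^2\delta(y_2-y_1)^3)$, and since $\delta\le(y_2-y_1)/4$ the former exceeds the latter by a factor $(y_2-y_1)/\delta\ge 4$, which may be arbitrarily large. The intended allocation is the opposite one: exploit that the weight is pointwise controlled on $\mathrm{supp}\,\rho_2$, namely $\|\rho_2^2/(1-y^2-\lambda)\|_{L^\infty}\lesssim(\delta(y_2-y_1))^{-1}$, and pair with $\|w\|_{L^2}$, giving $\|F\|_{L^2}\|w\|_{L^2}/(|k_1|\delta(y_2-y_1))$---exactly the first summand of $\mathscr{E}_2(w)$. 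With this replacement, your Step~1 coercive identity, the viscous term, and the $I_j$-splitting in Step~3 (six terms, since the $\epsilon$-regularization is absent) carry through and yield \eqref{def:E2} as stated.
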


\begin{lemma}\label{lem:varphi-Linfty}
For any $\nu \in (0,1],\lambda\in [0,1]$ and $\delta \in (0,1]$, there holds that
\begin{equation}\nonumber%\label{4.351}
\begin{aligned}
\frac{\Vert \varphi_2 \Vert_{L^\infty}^2}{(y_2-y_1+\delta)^2\delta} \lesssim \frac{\Vert \varphi \Vert_{L^\infty((B(y_1,\delta)\cup B(y_2,\delta))}^2}{(y_2-y_1+\delta)^2\delta}\lesssim\mathscr{F}_1(w),
\end{aligned}
\end{equation}
where
\begin{equation*}
\begin{aligned}
\mathscr{F}_1(w)=\frac{\Vert F\Vert_{L^2}^2}{|k_1|^2\delta^2(y_2-y_1+\delta)^2}+\frac{\nu\Vert F\Vert_{L^2}\Vert w \Vert_{L^2}}{|k_1|^2\delta^4(y_2-y_1+\delta)^2}+\delta \Vert w \Vert_{L^\infty}^2.
\end{aligned}
\end{equation*}
\end{lemma}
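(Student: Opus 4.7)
\noindent\textbf{Plan for the proof of Lemma \ref{lem:varphi-Linfty}.}

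The first inequality is the easy maximum principle step. Recall that $\varphi_2$ solves $(\partial_y^2-|k|^2)\varphi_2=0$ on $[-1,1]$ with $\varphi_2(\pm1)=0$ and $\varphi_2(y_i)=\varphi(y_i)$ for $i=1,2$. The explicit formula for $\varphi_2$ written out in \eqref{streamdeco2} is a sum of positive hyperbolic sines, so it attains its maximum modulus at one of the four boundary/matching points; since $y_1,y_2\in B(y_1,\delta)\cup B(y_2,\delta)$, this gives $\|\varphi_2\|_{L^\infty}\le\|\varphi\|_{L^\infty(B(y_1,\delta)\cup B(y_2,\delta))}$, and the first chain of inequalities follows after dividing by $(y_2-y_1+\delta)^2\delta$.

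The second (and substantive) inequality will be obtained by the cut-off/representation technique already used in the proof of Lemma \ref{streamL-infty-H-1}, adapted to the stronger hypothesis $F\in L^2$. Fix $a\in B(y_1,\delta)\cup B(y_2,\delta)$ and $b\in[\delta/2,\delta]$, and use a smooth even cut-off $\rho_3$ with $\rho_3=1$ on $[a-b/2,a+b/2]$, supported in $[a-b,a+b]$, and satisfying $\|\rho_3^{(j)}\|_{L^p}\lesssim b^{1/p-j}$. The Taylor representation
$\int \varphi\rho_3=\varphi(a)\int \rho_3+\text{(double integrals of }\varphi'')$
gives $|\varphi(a)|\lesssim b^{-1}\big|\int \varphi\rho_3\big|+b^2\|w\|_{L^\infty}$ after using $\varphi''=w+|k|^2\varphi$ and the uniform bound for $|k|^2\varphi$ from Lemma \ref{lap-w-phi}. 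I then use the Orr--Sommerfeld equation to write
\[
2ik_1\!\int\!\varphi\rho_3=\int\! F\rho_3-\nu\!\int\!\partial_y w\,\rho_3'-\nu|k|^2\!\int\! w\rho_3-ik_1\!\int(1-y^2-\lambda)w\rho_3,
\]
after a single integration by parts in the viscous term. Using $|1-y^2-\lambda|=|(y-y_1)(y-y_2)|\lesssim\delta(y_2-y_1+\delta)$ on the support of $\rho_3$, Cauchy--Schwarz against $\rho_3,\rho_3'$, and the basic energy identity $\nu\|w'\|_{L^2}^2\lesssim\|F\|_{L^2}\|w\|_{L^2}$ obtained by testing \eqref{OS-nopertur} against $w$, the four terms give
\[
|\varphi(a)|\lesssim (y_2-y_1+\delta)\delta\|w\|_{L^\infty}+\frac{\|F\|_{L^2}}{|k_1|b^{1/2}}+\frac{\nu^{1/2}\|F\|_{L^2}^{1/2}\|w\|_{L^2}^{1/2}}{|k_1|b^{3/2}}+\text{(lower-order)}.
\]
Choosing $b\sim\delta$, squaring, and dividing by $(y_2-y_1+\delta)^2\delta$ yields precisely the three terms of $\mathscr{F}_1(w)$.

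The main obstacle is the nonlocal $\nu|k|^2\int w\rho_3$ and a couple of mixed boundary terms that do not fit directly into $\mathscr{F}_1$: they contain $\nu|k|^2\|w\|_{L^\infty}/|k_1|$ and $\nu^2\|w\|_{L^2}^2/(|k_1|^2\delta^6)$, both of which are not manifestly dominated by $\delta\|w\|_{L^\infty}^2$ or by the cross term $\nu\|F\|_{L^2}\|w\|_{L^2}/(|k_1|^2\delta^4(y_2-y_1+\delta)^2)$. The plan is to absorb them by invoking the $L^2$ resolvent bounds from Proposition \ref{resolvent-L2} (which give $(\nu|k_1|)^{1/2}\|w\|_{L^2}\lesssim\|F\|_{L^2}$ and $\nu\|w'\|_{L^2}^2\lesssim\|F\|_{L^2}\|w\|_{L^2}$) to trade powers of $\|w\|_{L^2}$ for $\|F\|_{L^2}$, and by exploiting $\delta\le y_2-y_1+\delta$ to turn $b^4\|w\|_{L^\infty}^2$-type remainders into $\delta\|w\|_{L^\infty}^2$. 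Careful bookkeeping of these exchanges, together with the natural geometric restriction $b\in[\delta/2,\delta]$, is the only delicate part; the rest of the argument is parallel to Lemma \ref{streamL-infty-H-1}.
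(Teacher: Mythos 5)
Your first inequality and the cut-off scheme are correct and essentially reproduce the paper's proof of the $H^{-1}_k$-analogue, Lemma~\ref{streamL-infty-H-1}: on each sub-interval the explicit hyperbolic formula for $\varphi_2$ expresses it as a combination of $\varphi(y_1),\varphi(y_2)$ with nonnegative weights summing to at most $1$ (subadditivity of $\sinh$), giving $\|\varphi_2\|_{L^\infty}\le\max(|\varphi(y_1)|,|\varphi(y_2)|)$; and the Taylor/cut-off step with $\rho_3$ together with
$2ik_1\int\varphi\rho_3=\int F\rho_3-\nu\int \partial_y w\,\rho_3'-\nu|k|^2\int w\rho_3-ik_1\int(1-y^2-\lambda)w\rho_3$
is exactly the machinery in that proof.

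The genuine gap is the nonlocal viscous term $\nu|k|^2\int w\rho_3$. After dividing by $b|k_1|$ with $b\sim\delta$, squaring, and dividing by $(y_2-y_1+\delta)^2\delta$, it contributes a term of order $\nu^2|k|^4\|w\|_{L^\infty}^2/\big(|k_1|^2(y_2-y_1+\delta)^2\delta\big)$, which is not one of the three summands of $\mathscr{F}_1$. Note that in the $H^{-1}_k$-version this term is \emph{kept} in $\mathscr{F}_3$, not absorbed, and that version even carries the extra hypothesis $\delta\le 2|k|^{-1}$; so absorption is not automatic. Your proposed fix — invoking the bound $(\nu|k_1|)^{1/2}\|w\|_{L^2}\lesssim\|F\|_{L^2}$ from Proposition~\ref{resolvent-L2} — is circular: that resolvent estimate is established \emph{by means of} the Appendix~\ref{ap1} machinery to which this lemma belongs (the whole point of $\mathscr{E}_1,\mathscr{E}_2,\mathscr{F}_1,\mathscr{F}_2$ is to be purely a priori inputs to that bootstrap). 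The only resolvent-free ingredient legitimately available here is the energy identity $\nu\|w'\|_{L^2}^2\lesssim\|F\|_{L^2}\|w\|_{L^2}$, obtained by testing \eqref{OS-nopertur} against $w$; this does yield the middle cross-term of $\mathscr{F}_1$ from the $\nu\int\partial_y w\,\rho_3'$ piece, but it gives no control over $\nu|k|^2\int w\rho_3$. To close the argument you would need either an additional summand in $\mathscr{F}_1$ of the type appearing in $\mathscr{F}_3$, or an explicit constraint on $\nu,|k|,|k_1|,\delta,y_2-y_1$ (e.g.\ $\nu|k|^2\lesssim|k_1|(y_2-y_1+\delta)\delta$) ensuring this term is dominated by $\delta\|w\|_{L^\infty}^2$; your proposal supplies neither, and the casual phrase ``the only delicate part'' glosses over exactly the step that must be justified.
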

\begin{lemma}\label{lem:dw-Linfty}
For any $\nu \in (0,1],\lambda\in [0,1]$ and $\delta \in (0,1]$, there holds that
\begin{equation}\nonumber%\label{4.36}
\begin{aligned}
\delta^3\Vert w'\Vert_{L^\infty(B(y_1,\delta)\cup B(y_2,\delta))}^2\lesssim \mathscr{F}_2(w),
\end{aligned}
\end{equation}
where
\begin{equation*}
\begin{aligned}
\mathscr{F}_2(w)=\frac{\delta^6(y_2-y_1+\delta)^2|k_1|^2}{\nu^2}\mathscr{F}_1(w)+\frac{\delta^4}{\nu^2}\Vert F  \Vert_{L^2}^2+\delta\|w\|_{L^\infty}^2.
\end{aligned}
\end{equation*}
\end{lemma}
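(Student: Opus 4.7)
\medskip

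\noindent\textbf{Proof plan for Lemma \ref{lem:dw-Linfty}.}
The plan is to bound $\|w'\|_{L^\infty(B(y_i,\delta))}$ via a standard one-dimensional interpolation, and then substitute the Orr--Sommerfeld equation \eqref{OS-nopertur} for the resulting $L^2$ norm of $w''$. First I would fix $y_*\in B(y_i,\delta)$ and select, by the mean value theorem, a point $z\in(y_i+\delta,y_i+2\delta)$ with
\[
|w'(z)|\le\delta^{-1}|w(y_i+2\delta)-w(y_i+\delta)|\le 2\delta^{-1}\|w\|_{L^\infty}.
\]
Integrating $w''$ from $z$ to $y_*$ and using Cauchy--Schwarz on the interval $B(y_i,2\delta)$ yields
\[
\|w'\|_{L^\infty(B(y_i,\delta))}\le C\delta^{-1}\|w\|_{L^\infty}+C\delta^{\frac12}\|w''\|_{L^2(B(y_i,2\delta))},
\]
hence
\[
\delta^3\|w'\|_{L^\infty(B(y_i,\delta))}^2\lesssim \delta\|w\|_{L^\infty}^2+\delta^4\|w''\|_{L^2(B(y_i,2\delta))}^2.
\]

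Next I would solve the Orr--Sommerfeld equation for $w''$, writing
\[
\nu w''=\nu|k|^2w-F+ik_1\bigl((1-y^2-\lambda)w+2\varphi\bigr),
\]
so that
\[
\nu^2\|w''\|_{L^2(B(y_i,2\delta))}^2\lesssim \nu^2|k|^4\|w\|_{L^2(B(y_i,2\delta))}^2+\|F\|_{L^2}^2+|k_1|^2\|(1-y^2-\lambda)w\|_{L^2(B(y_i,2\delta))}^2+|k_1|^2\|\varphi\|_{L^2(B(y_i,2\delta))}^2.
\]
Using $|1-y^2-\lambda|=|y-y_1||y-y_2|\lesssim\delta(y_2-y_1+\delta)$ on $B(y_i,2\delta)$ together with $\|w\|_{L^2(B(y_i,2\delta))}^2\le 4\delta\|w\|_{L^\infty}^2$, the third term is controlled by $|k_1|^2\delta^3(y_2-y_1+\delta)^2\|w\|_{L^\infty}^2$. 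For the $\varphi$ contribution, I would apply Lemma \ref{lem:varphi-Linfty} (its proof goes through verbatim for the slightly enlarged ball $B(y_i,2\delta)$) to obtain
\[
\|\varphi\|_{L^\infty(B(y_i,2\delta))}^2\lesssim(y_2-y_1+\delta)^2\delta\,\mathscr{F}_1(w),
\]
and therefore $\|\varphi\|_{L^2(B(y_i,2\delta))}^2\lesssim\delta^2(y_2-y_1+\delta)^2\mathscr{F}_1(w)$.

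Assembling the estimates, the $F$ term contributes $\delta^4\|F\|_{L^2}^2/\nu^2$; the $\varphi$ term contributes $\delta^6(y_2-y_1+\delta)^2|k_1|^2\mathscr{F}_1/\nu^2$; and the $(1-y^2-\lambda)w$ term contributes $\delta^7(y_2-y_1+\delta)^2|k_1|^2\|w\|_{L^\infty}^2/\nu^2$, which is dominated by the first summand of $\mathscr{F}_2$ after noting $\delta\|w\|_{L^\infty}^2\le\mathscr{F}_1(w)$. Together with the initial $\delta\|w\|_{L^\infty}^2$ and the absorption of the $\nu|k|^2w$ contribution into the same $\delta\|w\|_{L^\infty}^2$ term, we arrive exactly at $\mathscr{F}_2(w)$.

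The main obstacle I anticipate is tracking constants cleanly in the $\varphi$-piece: we need Lemma \ref{lem:varphi-Linfty} to apply on $B(y_i,2\delta)$ with the right (unchanged) right-hand side $\mathscr{F}_1(w)$, which requires revisiting the cut-off construction in that lemma to verify that doubling the radius only changes absolute constants and does not disturb the critical scaling in $\delta$ and $y_2-y_1+\delta$.
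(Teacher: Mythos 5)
Your high-level plan --- interpolate $\|w'\|_{L^\infty}$ against $w$ and $w''$ on an interval of length $\sim\delta$, then substitute $\nu w''=\nu|k|^2 w+ik_1(1-y^2-\lambda)w+2ik_1\varphi-F$ and estimate each piece --- is the natural one, and for the $(1-y^2-\lambda)w$, $\varphi$ and $F$ contributions your bookkeeping matches $\mathscr{F}_2$ term by term. The obstacle you flag (whether Lemma~\ref{lem:varphi-Linfty} survives passage from $B(y_i,\delta)$ to $B(y_i,2\delta)$) is in fact harmless: you can avoid the enlargement altogether by choosing the anchor point $z$ inside $B(y_i,\delta)\cap[-1,1]$ (note also that $y_i+2\delta$ may lie outside the domain, so the base interval must anyway be $B(y_i,\delta)\cap[-1,1]$), and even with the enlargement $\mathscr{F}_1^{(2\delta)}\lesssim\mathscr{F}_1^{(\delta)}$ since only the last summand of $\mathscr{F}_1$ is increasing in $\delta$.

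The genuine gap is in the step you dispose of in one phrase, ``the absorption of the $\nu|k|^2 w$ contribution into the same $\delta\|w\|_{L^\infty}^2$ term.'' Tracking that term through your chain gives
\begin{equation*}
\frac{\delta^4}{\nu^2}\,\nu^2|k|^4\,\|w\|_{L^2(J)}^2\;\lesssim\;\delta^5|k|^4\|w\|_{L^\infty}^2,
\end{equation*}
and this is $\lesssim\delta\|w\|_{L^\infty}^2$ only when $\delta|k|\lesssim 1$. Since the lemma as stated allows any $\delta\in(0,1]$ and any $k$, the absorption fails as written. Your argument therefore requires either the additional hypothesis $\delta\lesssim|k|^{-1}$ --- which is exactly what the parallel $H^{-1}$--source lemma (Lemma~\ref{streamL-infty-H-1}) imposes via $\delta\in(0,2|k|^{-1}]$ --- or else the quantity $\mathscr{F}_2$ should carry an explicit $\nu^2|k|^4$ term, analogously to the term $\nu^2|k|^4\|w\|_{L^\infty}^2/\bigl(|k_1|^2(y_2-y_1+\delta)^2\delta\bigr)$ appearing in $\mathscr{F}_3$. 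You should state which alternative you are taking and, if the former, verify $\delta|k|\lesssim 1$ in the places where the lemma is invoked (in Proposition~\ref{resolvent-L2}, Case~3, this reduces to the regime $\nu|k|^4\lesssim|k_1|$, which is the only nontrivial regime there since otherwise $\nu|k|^2\|w\|_{L^2}\le\|F\|_{L^2}$ already beats the target). Without one of these the proposal does not close.
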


\section{Limiting  absorption principle}\label{sec:lap}

In this appendix, we consider the Rayleigh equation
\begin{equation}\label{lap-Rayleigh-general}
\begin{aligned}
(u(y)-c)(\partial_y^2-|k|^2)\psi-u''(y)\psi=g,\quad \psi(\pm 1)=0,
 \end{aligned}
\end{equation}
where $u\in\mathcal{K}:=\{u(y):u(y)\in H^3(-1,1), u'(y_c)=0,u''(y_c)\not=0, u'(\pm 1)\not=0\}$, $\mathrm{Re}\ c\in \text{Ran}\ u, 0<|\mathrm{Im}\ c| \leq \epsilon_2$ for $\epsilon_2>0$ small enough. A classical example is the Poiseuille flow $u(y)=y^2\in \mathcal{K}$.

\begin{proposition}\label{est-1}
Let $\psi\in H^1(-1,1)$ be the solution to \eqref{lap-Rayleigh-general}. Then there exists $K\geq 3$ such that for any $|k|\geq K$, we have
\begin{equation}\label{estimate-stream}
\begin{aligned}
\|\partial_y\psi\|_{L^2(-1,1)}+|k|\|\psi\|_{L^2(-1,1)}\leq C(\|\partial_y g\|_{L^2}+|k|\|g\|_{L^2}),
 \end{aligned}
\end{equation}
where $C>0$ is independent of $|k|,c,\epsilon_2$.
\end{proposition}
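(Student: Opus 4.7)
The plan is to prove \eqref{estimate-stream} by a direct weighted energy argument that exploits the high-frequency regime $|k|\geq K$ to absorb error terms arising from the singular coefficient $1/(u-c)$ near the critical point $y_c$. First I would rewrite the Rayleigh equation as $(\partial_y^2-|k|^2)\psi=(u''\psi+g)/(u-c)$, test against $-\bar\psi$, and integrate by parts using $\psi(\pm 1)=0$ to obtain the basic identity
$$\|\partial_y\psi\|_{L^2}^2+|k|^2\|\psi\|_{L^2}^2=-\int_{-1}^{1}\frac{u''|\psi|^2}{u-c}\,dy-\int_{-1}^{1}\frac{g\bar\psi}{u-c}\,dy.$$
The goal is then to bound the right-hand side by $\tfrac12(\|\partial_y\psi\|_{L^2}^2+|k|^2\|\psi\|_{L^2}^2)+C(\|\partial_y g\|_{L^2}^2+|k|^2\|g\|_{L^2}^2)$, after which the leading order can be absorbed into the left.

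To handle the two singular integrals, I would split the domain around the critical point at a scale $\rho$ (a power of $|k|^{-1}$ to be optimized), distinguishing the critical layer $I_{\mathrm{in}}=\{|y-y_c|\leq\rho\}$ from its complement $I_{\mathrm{out}}$. On $I_{\mathrm{out}}$, the Taylor expansion $u(y)-u(y_c)=\tfrac12u''(y_c)(y-y_c)^2+O((y-y_c)^3)$ together with $u''(y_c)\neq 0$ yields the quantitative lower bound $|u-c|\gtrsim(y-y_c)^2$, and the identity $1/(u-c)=(u')^{-1}\partial_y\log(u-c)$ (valid since $u'\neq 0$ off $y_c$) allows integration by parts to move derivatives off the singular denominator and onto $g$ or $\bar\psi$, yielding contributions controlled by $\|g\|_{H^1_k}\|\psi\|_{H^1_k}$ plus lower-order pieces. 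On $I_{\mathrm{in}}$ I would apply a local Hardy-type inequality exploiting the non-degeneracy $u''(y_c)\neq 0$, combined with the Sobolev bound $\|\psi\|_{L^\infty(I_{\mathrm{in}})}\lesssim\|\psi\|_{L^2}^{1/2}\|\partial_y\psi\|_{L^2}^{1/2}$, to convert the critical-layer contribution into a factor of order $\rho^{1/2}|k|$ times $\|\psi\|_{H^1_k}^2$.

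The main obstacle is obtaining bounds uniform in $\mathrm{Im}\,c\to 0^{\pm}$: the integration-by-parts steps tend to produce $\log(u-c)$ factors that are not uniformly bounded in the degenerate limit, and the Hardy argument in the critical layer requires care because $\psi(y_c)$ need not vanish. I expect to circumvent the first issue by organizing the integration by parts so that the $\log(u-c)$ factors appear only in combinations that either vanish at the boundary (using $\psi(\pm1)=0$ and $u(\pm 1)\neq c$) or cancel by symmetry, and to handle the second by exploiting the sign-definiteness of $u-u(y_c)$ that follows from $y_c$ being the \emph{unique} interior critical point of $u\in\mathcal{K}$. Finally, balancing $\rho$ against $|k|$ in the splitting and then choosing $K$ large enough absorbs all error terms into the left-hand side and produces \eqref{estimate-stream} with $C$ independent of $|k|$, $c$, and $\epsilon_2$.
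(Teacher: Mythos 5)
Your proposal has a genuine gap, and it appears in exactly the place you flagged as the ``main obstacle.'' On $I_{\mathrm{out}}$, the substitution $1/(u-c)=(u')^{-1}\partial_y\log(u-c)$ followed by integration by parts leaves a bulk term $-\int_{I_{\mathrm{out}}}\partial_y\bigl(g\bar\psi/u'\bigr)\log(u-c)\,dy$, and near a critical layer $y_*$ with $u(y_*)=\mathrm{Re}\,c$ (which generically lies in $I_{\mathrm{out}}$) one has $\log(u-c)\sim\log|\mathrm{Im}\,c|\to-\infty$. There is no structural cancellation forcing $\log(u-c)$ to appear only in boundary terms or odd combinations: $\partial_y(g\bar\psi/u')$ has no reason to vanish near $y_*$, and the $-\int u''|\psi|^2/(u-c)$ term is quadratic in $\psi$, so it cannot be hidden by Cauchy--Schwarz either. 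Likewise, on $I_{\mathrm{in}}$ the naive bound $\bigl|\int_{I_{\mathrm{in}}} u''|\psi|^2/(u-c)\bigr|\lesssim\int_{I_{\mathrm{in}}}|\psi|^2/|u-c|$ diverges as $\mathrm{Im}\,c\to 0$ because $|u-c|\gtrsim \max\{(y-y_c)^2,|\mathrm{Im}\,c|\}$ near a colliding critical point/critical layer and $\psi(y_c)\neq 0$; a Hardy inequality does not apply with a nonvanishing trace, and ``sign-definiteness of $u-u(y_c)$'' tells you the sign of $\mathrm{Re}\,1/(u-c)$ near $y_c$ but does not tame its size. What is actually needed on $I_{\mathrm{in}}$ is cancellation in the singular integral, i.e.\ a localized limiting absorption principle, which your outline does not establish.

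The paper's proof does not attempt a direct absorption argument at all. It proceeds by contradiction and compactness: normalize $\|\psi_n'\|_{L^2}+|\alpha_n|\|\psi_n\|_{L^2}=1$ and assume the right-hand side tends to $0$; away from the critical point it invokes Lemma~\ref{monotone}, whose uniformity in $c$ comes from the oscillatory-integral estimate of Lemma~\ref{hardy-type-abp} (writing $\int f/(u-c)$ as $i\int_0^\infty e^{itc}\int f e^{-itu}$ and using Plancherel after changing variables by $u$), which is precisely the step your $\log(u-c)$ integration by parts cannot replicate; near the critical point it rescales $\widetilde{\psi}_n(y)=|\alpha_n|^{1/2}\psi_n(y_c+y/|\alpha_n|)$ and runs a blow-up analysis (Lemmas~\ref{critical-1}--\ref{critical-4}) showing the limiting profile solves a limiting Rayleigh equation and must vanish. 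The two pieces together force $1\leq CR^{-1}$ for arbitrary $R$, a contradiction. To salvage a direct approach you would need, at minimum, to replace the $\log(u-c)$ trick on $I_{\mathrm{out}}$ by the Fourier/Plancherel LAP bound, and to handle $I_{\mathrm{in}}$ by subtracting off the trace $\psi(y_c)$ and proving a genuine cancellation estimate for the singular kernel there --- at which point you have essentially reconstructed the ingredients of the paper's argument.
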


\begin{proposition}\label{est-2}
 Let $\psi\in H^1(-1,1)$ be the solution to \eqref{lap-Rayleigh-general} with $g=-\frac{\sinh |k|(1+y)}{\sinh 2|k|}$. Then it holds that
\begin{equation}\label{estimate-stream-2}
\begin{aligned}
\|\partial_y\psi\|_{L^2(-1,1)}+|k|\|\psi \|_{L^2(-1,1)}\leq C|k|^{-\frac{1}{2}} ,
 \end{aligned}
\end{equation}
where $C>0$ is independent of $|k|,c,\epsilon_2$.
\end{proposition}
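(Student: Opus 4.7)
My plan is to exploit two structural features of $g=-\sinh(|k|(1+y))/\sinh(2|k|)$: the identity $(\partial_y^2-|k|^2)g=0$, and the exponential concentration of $g$ in a boundary layer of width $|k|^{-1}$ near $y=1$, so that $\|g\|_{L^2}\sim|k|^{-1/2}$ while $\|(\partial_y,|k|)g\|_{L^2}\sim|k|^{1/2}$. A direct application of Proposition \ref{est-1} would produce only the suboptimal bound $|k|^{1/2}$, so the aim is to save one full power of $|k|$ in the estimate by using Helmholtz-harmonicity together with localization.

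A first natural attempt is to subtract off the pure Helmholtz solution $\Psi$ of $(\partial_y^2-|k|^2)\Psi=g$ with $\Psi(\pm1)=0$, for which the standard energy estimate gives $\|(\partial_y,|k|)\Psi\|_{L^2}\lesssim|k|^{-3/2}$. The residual $\widetilde\psi:=\psi-\Psi$ then satisfies the Rayleigh equation with source $(1-u(y)+c)g+u''(y)\Psi$. The term $u''\Psi$ is harmless, but $(1-u+c)g$ still has $H^1_k$-norm $\sim|k|^{1/2}$ through $\partial_y g$, so Proposition \ref{est-1} does not close the estimate from this decomposition alone. This forces a more refined construction.

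Instead I would work directly with the Green's-function representation of the Rayleigh operator. Let $\eta_1,\eta_2$ be linearly independent homogeneous Rayleigh solutions normalized by $\eta_1(-1)=0=\eta_2(1)$, and let $W:=(u-c)(\eta_1\eta_2'-\eta_1'\eta_2)$ be the associated conserved Wronskian; then
\[
\psi(y)=\frac{\eta_2(y)}{W}\int_{-1}^{y}\eta_1(y')g(y')\,dy'+\frac{\eta_1(y)}{W}\int_{y}^{1}\eta_2(y')g(y')\,dy'.
\]
For $|k|$ large and $c$ bounded away from $u(y')$ on $\operatorname{supp}g$, a WKB analysis compatible with Proposition \ref{est-1} and the estimates developed earlier in this appendix yields $|\eta_1(y')|\sim|W|^{1/2}e^{|k|y'}$ and $|\eta_2(y')|\sim|W|^{1/2}e^{-|k|y'}$ there, and the exponential concentration of $g$ lets one evaluate each integral with relative error $O(|k|^{-1})$. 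This produces the pointwise bound $|\psi(y)|\lesssim|k|^{-1}e^{-|k|(1-y)}$, and hence $\|(\partial_y,|k|)\psi\|_{L^2}\lesssim|k|^{-1/2}$.

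The hard part will be the resonance regime where $\operatorname{Re}c$ approaches the boundary value $u(1)$ (or symmetrically $u(-1)$), so that the critical layer of the Rayleigh operator collides with the boundary layer of $g$. There the naive WKB for $\eta_1,\eta_2$ breaks down on $\operatorname{supp}g$ and must be replaced by a local Airy-type analysis near $y=1$, in the spirit of the boundary-corrector construction of Section \ref{sub:Bd-corrector}. Tracking $\eta_1,\eta_2$, and $W$ quantitatively as $\operatorname{Re}c\to u(\pm1)$ and $|\operatorname{Im}c|\to0^{+}$, and verifying that the Green's-function contribution from $\operatorname{supp}g$ stays $O(|k|^{-1/2})$ uniformly in $c$ and $\epsilon_2$, is exactly the refinement of the limiting absorption principle of \cite{WZZ-apde} that the appendix is designed to package.
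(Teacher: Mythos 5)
Your proposal correctly diagnoses the two structural points: $(\partial_y^2-|k|^2)g=0$, and the boundary-layer concentration of $g$ which makes $\|g\|_{L^2}\sim|k|^{-1/2}$ while $\|(\partial_y,|k|)g\|_{L^2}\sim|k|^{1/2}$, so a direct application of Proposition~\ref{est-1} is off by a power of $|k|$. You also correctly discard the naive subtraction of the Helmholtz solution. However, the Green's-function/WKB route you then propose is not what the paper does, and as written it has a genuine gap.

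The gap is the resonance regime you yourself flag at the end. When $\mathrm{Re}\,c$ approaches $u(\pm1)$, the critical layer of the Rayleigh operator sits inside $\operatorname{supp}g$, and the Wronskian $W=(u-c)(\eta_1\eta_2'-\eta_1'\eta_2)$ together with $\eta_1,\eta_2$ can degenerate; controlling them uniformly in $c$ and $\epsilon_2$ via a local Airy analysis is precisely what you would need to prove, but your sketch only asserts that ``this is exactly the refinement the appendix is designed to package.'' That is circular: the proposition you are proving \emph{is} the refinement. In addition, your stated WKB form $|\eta_1(y')|\sim|W|^{1/2}e^{|k|y'}$, $|\eta_2(y')|\sim|W|^{1/2}e^{-|k|y'}$ is dimensionally inconsistent with the Wronskian identity: to leading order $W\approx -2|k|(u-c)\eta_1\eta_2$, so $\eta_1\eta_2\sim W/(2|k|(u-c))$, not $\sim|W|$, and the missing factor $2|k|(u-c)$ is exactly what becomes delicate near resonance.

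The paper avoids Green's functions and the Airy analysis entirely by splitting the \emph{source} rather than constructing the propagator. Take $\delta=1-(100|k|)^{-1+\beta}$ and write $\psi=\Psi_1+\Psi_2$, where $\Psi_1$ solves Rayleigh with source $g\chi_{(-\delta,\delta)}$ and $\Psi_2$ solves Rayleigh with source $g\chi_{(-1,1)\setminus(-\delta,\delta)}$. Since $|g|\lesssim e^{-c|k|^\beta}$ on $(-\delta,\delta)$, Proposition~\ref{est-1} makes $\Psi_1$ super-polynomially small. For $\Psi_2$, the source lives in a thin strip adjacent to $y=\pm1$, which is contained in $\Sigma_\delta$ where $u'$ is bounded away from zero (the critical point $y_0=0$ of $u$ is far away). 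There Lemma~\ref{monotone} — which only needs monotonicity of $u$, not any distance between $\mathrm{Re}\,c$ and $u(\pm1)$ — supplies the extra factor $(\delta|k|)^{-1}\sim|k|^{-1}$ that converts $|k|^{1/2}$ into $|k|^{-1/2}$. Sobolev embedding then controls $|\Psi_2(\pm\delta)|$, the interior of $(-\delta,\delta)$ is handled by peeling off Helmholtz-harmonic extensions $\Psi_{2,1},\Psi_{2,2}$ of those boundary values and applying Proposition~\ref{est-1} to the remainder $\Psi_{2,0}$, and an absorption argument closes the estimate for $|k|\geq M$; the finite range $1\leq|k|\leq M$ is covered by the non-uniform estimate from \cite{WZZ-apde}. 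This decomposition is uniform in $c$ and $\epsilon_2$ by construction, which is exactly the property that your Green's-function sketch leaves unproved.
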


\begin{remark}\label{rk-abp}
In fact, for any fixed $|k|\ge 1$, Proposition \ref{est-1} has been proved in \cite{WZZ-apde}. Here main new point is that the constant $C$ is uniform in $k$.
The estimate \eqref{estimate-stream-2} still holds for $g=-\frac{\sinh |k|(1-y)}{\sinh 2|k|}$. Formally, the good decay of $|k|$ for estimate \eqref{estimate-stream-2} comes from two part: the monotone part (away from the critical point $y=0$), and the fast decay part (near critical point $y=0$). It should be pointed out that the fast decay near critical point $y=0$ is due to the fast decay of $g$ away from the boundaries $\{y=\pm 1\}$.
\end{remark}

To prove Proposition \ref{est-1}-\ref{est-2}, we need the following several lemmas.

\begin{lemma}\label{hardy-type-abp}
If $f\in H^1_0(a,b), u\in C^2([a,b]), u(y)\in \mathbb{R}, |u'(y)|\geq c_0>0, |u''(y)|<C_0$ in $[a,b]$, then for $c\not\in \mathbb{R}$, it holds that
 \begin{equation}\label{hardy-type-abp-0}
\begin{aligned}
\left|\int_a^b \frac{f(y)}{u(y)-c}\mathrm{d}y \right|\leq Cc_0^{-1}  |k|^{-\frac{1}{2}} (\|\partial_y f\|_{L^2}+|k|\|f\|_{L^2}+c_0^{-1}\|f\|_{L^2}) ,
\end{aligned}
\end{equation}
where $C>0$ is independent of $|k|,c,\epsilon_2$. Moreover, if $|k|\geq c_0^{-1}$, then it holds that
\begin{equation}\label{hardy-type-abp-1}
\begin{aligned}
\left|\int_a^b \frac{f(y)}{u(y)-c}\mathrm{d}y \right|\leq C c_0^{-1} |k|^{-\frac{1}{2}}(\|\partial_y f\|_{L^2}+|k|\|f\|_{L^2}),
 \end{aligned}
\end{equation}
where $C>0$ is independent of $|k|,c,\epsilon_2$.
\end{lemma}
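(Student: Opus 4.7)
My plan is to reduce to a Cauchy integral on the real line and then represent it on the Fourier side, splitting at frequency $|k|$. Since $|u'|\geq c_0$, the change of variables $t=u(y)$ is a $C^2$-diffeomorphism of $[a,b]$ onto $[u(a),u(b)]$. Setting $F(t):=f(u^{-1}(t))/u'(u^{-1}(t))$, the integral becomes $\int_a^b f/(u-c)\,dy=\int_{u(a)}^{u(b)}F/(t-c)\,dt$, and the chain rule together with $|u'|\geq c_0$, $|u''|\leq C_0$ yields
\[
\|F\|_{L^2}\lesssim c_0^{-1/2}\|f\|_{L^2},\qquad \|F'\|_{L^2}\lesssim c_0^{-3/2}\|f'\|_{L^2}+c_0^{-5/2}C_0\|f\|_{L^2}.
\]
Crucially, since $f\in H^1_0$ we have $F\in H^1_0([u(a),u(b)])$, so its extension by zero to $\mathbb{R}$ remains in $H^1(\mathbb{R})$.

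For the extended $F$ and $c=c_r+ic_i$ with $c_i>0$, the residue theorem shows that the Fourier transform of $(t-c)^{-1}$ is supported on $\{\xi<0\}$; Parseval's identity then yields
\[
  \int_{\mathbb{R}}\frac{F(t)}{t-c}\,dt \;=\; 2\pi i \int_0^\infty \hat F(\xi)\,e^{2\pi i c \xi}\,d\xi
\]
(the case $c_i<0$ is symmetric). I split this frequency integral at $\xi=|k|$. For $\xi\in[0,|k|]$, Cauchy--Schwarz together with Plancherel gives a bound $C|k|^{1/2}\|F\|_{L^2}$. For $\xi\geq|k|$, I discard the exponential damping $|e^{2\pi i c\xi}|=e^{-2\pi c_i\xi}\leq 1$ and apply Cauchy--Schwarz with the weight $\xi^{-2}$:
\[
  \int_{|k|}^\infty |\hat F(\xi)|\,d\xi \leq \Bigl(\int_{|k|}^\infty \xi^{-2}\,d\xi\Bigr)^{1/2}\|\xi\,\hat F\|_{L^2([|k|,\infty))} \leq C|k|^{-1/2}\|F'\|_{L^2}.
\]
Combining the two parts, $\bigl|\int F/(t-c)\,dt\bigr|\leq C\bigl(|k|^{1/2}\|F\|_{L^2}+|k|^{-1/2}\|F'\|_{L^2}\bigr)$. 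Substituting the norm bounds on $F$ obtained above produces the first inequality of the lemma, with the $c_0^{-1}\|f\|_{L^2}$ correction coming from the $u''$-contribution to $F'$. The second inequality, valid when $|k|\geq c_0^{-1}$, is immediate since then $c_0^{-1}\|f\|_{L^2}\leq|k|\|f\|_{L^2}$ and the extra term is absorbed.

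The chief technical point is uniformity of the bound as $c_i\to 0$, which is achieved by completely discarding the Fourier-side exponential factor and controlling the high-frequency tail purely algebraically via the $\xi^{-2}$ weight. The zero-extension step, essential for the Fourier identity, is legitimate precisely because $f\in H^1_0$ forces $F$ to vanish at the endpoints of its support; without this Dirichlet condition one would pick up additional boundary contributions in the Fourier representation which do not decay in $c_i$.
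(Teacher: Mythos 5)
Your outline follows the paper's approach closely: both proofs Fourier-represent $(u-c)^{-1}$ on a half-line of frequencies, push forward by the change of variable $t=u(y)$, and use Plancherel to control the resulting one-sided $L^1$ frequency integral. The paper does this through a single weighted Cauchy--Schwarz with weight $(|k|^2+(c_0t)^2)^{-1/2}$ on the dual line; you do a dyadic split at a fixed frequency and Cauchy--Schwarz each piece. These are the same mechanism up to the choice of cutoff — and that is where your argument has a gap.

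You split at $\xi=|k|$ and obtain the intermediate bound
$|\int F/(t-c)|\lesssim |k|^{1/2}\|F\|_{L^2}+|k|^{-1/2}\|F'\|_{L^2}$.
With your (correct) norm bounds $\|F\|_{L^2}\lesssim c_0^{-1/2}\|f\|_{L^2}$ and $\|F'\|_{L^2}\lesssim c_0^{-3/2}\|f'\|_{L^2}+C_0c_0^{-5/2}\|f\|_{L^2}$, substitution gives
\begin{align*}
\left|\int_a^b\frac{f}{u-c}\right|\;\lesssim\;c_0^{-1/2}|k|^{1/2}\|f\|_{L^2}+c_0^{-3/2}|k|^{-1/2}\|f'\|_{L^2}+c_0^{-5/2}|k|^{-1/2}\|f\|_{L^2},
\end{align*}
whereas the statement to be proved is $c_0^{-1}|k|^{-1/2}\|f'\|_{L^2}+c_0^{-1}|k|^{1/2}\|f\|_{L^2}+c_0^{-2}|k|^{-1/2}\|f\|_{L^2}$. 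For $c_0\le 1$ (the regime in which the lemma is invoked, e.g.\ in Lemma~\ref{monotone} with effective $c_0\sim c_0 r/2\to 0$), your $c_0^{-3/2}$ and $c_0^{-5/2}$ coefficients are strictly larger than the required $c_0^{-1}$ and $c_0^{-2}$, so the displayed inequality does \emph{not} follow. This is not a harmless loss of a constant: downstream, Lemma~\ref{monotone} uses \eqref{hardy-type-abp-1} on a region where $(c_0 r)^{-1/2}\gtrsim |k|^{1/2}$, so the extra $c_0^{-1/2}$ would destroy the $(r|k|)^{-1}$ gain that the rest of the argument needs.

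The fix is exactly what the paper's weight $(|k|^2+(c_0t)^2)^{1/2}$ encodes: split the frequency integral at $\xi=|k|/c_0$ rather than $\xi=|k|$. Then the low-frequency piece is $\lesssim (|k|/c_0)^{1/2}\|F\|_{L^2}\lesssim c_0^{-1}|k|^{1/2}\|f\|_{L^2}$ and the tail is $\lesssim (|k|/c_0)^{-1/2}\|F'\|_{L^2}\lesssim c_0^{-1}|k|^{-1/2}(\|f'\|_{L^2}+c_0^{-1}\|f\|_{L^2})$, reproducing \eqref{hardy-type-abp-0} exactly. Everything else in your write-up (the half-line support of $\widehat{(t-c)^{-1}}$, discarding the Fourier-side exponential to get uniformity as $\mathrm{Im}\,c\to 0$, and the role of the Dirichlet condition $f\in H^1_0$ in making the zero-extension of $F$ lie in $H^1(\mathbb{R})$) is sound and matches the paper.
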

\begin{proof}
Without loss of generality, we suppose that $u'(y)>0$, otherwise we consider $f,-u,-c$. We only consider the case of $\mathrm{Im}\ c>0$ and the case of $\mathrm{Im}\ c<0$ can be obtained by taking conjugation.

Indeed, we have
 \begin{equation}\nonumber
\begin{aligned}
\int_a^b \frac{f(y)}{u(y)-c}\mathrm{d}y =& i\int_a^bf(y)\int_0^{+\infty} e^{-it(u(y)-c)}\mathrm{d}t\mathrm{d}y\\
=&i\int_0^{+\infty}e^{itc}\int_a^b f(y)e^{-itu(y)}\mathrm{d}y \mathrm{d}t,
 \end{aligned}
\end{equation}
which gives
 \begin{equation}\nonumber
\begin{aligned}
\left|\int_a^b \frac{f(y)}{u(y)-c}\mathrm{d}y \right|\leq \int_0^{+\infty} |e^{itc}| \left|\int_a^b f(y)e^{-itu(y)}\mathrm{d}y\right| \mathrm{d}t\leq \|g\|_{L^1(\mathbb{R})},
 \end{aligned}
\end{equation}
where $g(t)=\int_a^b f(y)e^{-itu(y)}\mathrm{d}y$. Note that for $f\in H^1_0(a,b)$,  we have
 \begin{equation}\nonumber
\begin{aligned}
&g(t)=\int_a^b f(y)e^{-itu(y)}\mathrm{d}y=\int_{u(a)}^{u(b)}e^{-itz} (f/u')\circ u^{-1}(z)\mathrm{d}z,\\
&itg(t)=\int_{u(a)}^{u(b)}e^{-itz}( (f/u')'/u') \circ u^{-1}(z)\mathrm{d}z,
 \end{aligned}
\end{equation}
which yield by Plancherel's theorem that
 \begin{equation}\nonumber
\begin{aligned}
&\|g(t)\|_{L^2(\mathbb{R})}^2=2\pi \|(f/u')\circ u^{-1}\|_{L^2(u(a),u(b))}^2=2\pi \||f|^2/u'\|_{L^1(a,b)}\leq \frac{2\pi \|f\|_{L^2}^2}{c_0},\\
&\|tg(t)\|_{L^2(\mathbb{R})}^2=2\pi \|| (f/u')'|^2 /u' \|_{L^1(a,b)}\leq Cc_0^{-3}(\|\partial_y f\|_{L^2}+c_0^{-1}\|f\|_{L^2})^2.
 \end{aligned}
\end{equation}
Therefore, we obtain
 \begin{equation}\nonumber
\begin{aligned}
&\|(|k|^2+(c_0t)^2)^{\frac{1}{2}}g(t)\|_{L^2(\mathbb{R})}^2=|k|^2\|g\|_{L^2(\mathbb{R})}^2+c_0^1 \|tg(t)\|_{L^2(\mathbb{R})}^2\\
\leq &Cc_0^{-1}(\|\partial_y f\|_{L^2}+|k|\|f\|_{L^2}+c_0^{-1}\|f\|_{L^2})^2,
 \end{aligned}
\end{equation}
which gives
\begin{equation}\nonumber
\begin{aligned}
\left|\int_a^b \frac{f(y)}{u(y)-c}\mathrm{d}y \right|\leq& \|g\|_{L^1(\mathbb{R})}\leq
\|(|k|^2+(c_0t)^2)^{\frac{1}{2}}g(t)\|_{L^2(\mathbb{R})}\|(|k|^2+(c_0t)^2)^{-\frac{1}{2}}\|_{L^2(\mathbb{R})}\\
\leq& Cc_0^{-\frac{1}{2}}(\|\partial_y f\|_{L^2}+|k|\|f\|_{L^2}+c_0^{-1}\|f\|_{L^2}) (c_0|k|)^{-\frac{1}{2}}.
 \end{aligned}
\end{equation}
Then \eqref{hardy-type-abp-0} follows. And \eqref{hardy-type-abp-1} is a direct consequence of \eqref{hardy-type-abp-0} by taking $|k|\geq c_0^{-1}$.
\end{proof}

Without loss of generality, let us assume that $y_c=0$ and there exists $\delta,c_0\in (0,1)$ so that $B(0,\delta) \subset [-1,1]$, and $|u''|>c_0$ for $y\in \Sigma_\delta$, where $\Sigma_\delta=[-1,1]\setminus B(0,\delta)$. Therefore, one has $|u'(y)|\geq c_0|y|, y\in B(0,\delta)$, and then $\inf\limits_{\Sigma_r} |u'|\geq c_0r, 0<r<\delta$.

\begin{lemma}\label{monotone}
Assume that $r\in (0,\delta), |k|>\frac{2}{c_0r},c\not\in \mathbb{R}$. Let $\Phi$ be the solution to
$$(u(y)-c)(\partial_y^2-|k|^2)\Phi=g_1, \ \Phi(\pm 1)=0.$$
Then it holds that
\begin{equation}\nonumber
\begin{aligned}
\|\partial_y\Phi\|_{L^2(\Sigma_r)}+|k|\|\Phi\|_{L^2(\Sigma_r)}\leq C(r|k|)^{-1} \left(\|\partial_y g_1\|_{L^2}+|k|\| g_1\|_{L^2}+|k|\|\Phi\|_{L^2}\right),
 \end{aligned}
\end{equation}
where $C>0$ is independent of $|k|,c,\epsilon_2$.
\end{lemma}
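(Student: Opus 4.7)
The plan is to localize the equation to the monotone region using a smooth cutoff, convert the Rayleigh equation into a perturbed Helmholtz equation on the support of the cutoff, and then exploit the Fourier-based Hardy inequality (Lemma \ref{hardy-type-abp}) to extract the smallness factor $(r|k|)^{-1}$. Specifically, I would introduce $\chi \in C^\infty([-1,1])$ with $\chi\equiv 1$ on $\Sigma_r$, $\chi\equiv 0$ on $B(0,r/2)$, and $|\chi^{(j)}|\leq Cr^{-j}$. On $\mathrm{supp}\,\chi$ one has $|u'(y)|\geq c_0 r/2$, and the hypothesis $|k|\geq 2/(c_0r)$ gives $|k|\geq (c_0r/2)^{-1}$, so the sharp version \eqref{hardy-type-abp-1} of the Hardy-type lemma applies with constant $c_0 r/2$.

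On $\mathrm{supp}\,\chi$, the factor $u-c$ is bounded away from $0$, so the Rayleigh equation rewrites as $(\partial_y^2-|k|^2)\Phi = g_1/(u-c)$. Testing against $\chi^2\overline\Phi$ and integrating by parts, using $\Phi(\pm1)=0$ and the vanishing of $\chi$ near $y=0$, I would obtain
$$\int\chi^2|\partial_y\Phi|^2\,dy + |k|^2\int\chi^2|\Phi|^2\,dy \;=\; -2\,\mathrm{Re}\!\int\chi\chi'\overline\Phi\partial_y\Phi\,dy \;-\; \mathrm{Re}\!\int\frac{\chi^2 g_1\overline\Phi}{u-c}\,dy.$$
The commutator term is handled by Young together with $r^{-1}\leq Cc_0|k|$, contributing at most $\tfrac14\|\chi\partial_y\Phi\|_{L^2}^2+C|k|^2\|\Phi\|_{L^2}^2$. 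For the main term I would apply Lemma \ref{hardy-type-abp}, \eqref{hardy-type-abp-1} on each connected component of $\mathrm{supp}\,\chi$ (where $f=\chi^2 g_1\overline\Phi$ vanishes at the endpoints), obtaining
$$\Bigl|\int\frac{\chi^2 g_1\overline\Phi}{u-c}\,dy\Bigr|\;\leq\; C(c_0r)^{-1}|k|^{-1/2}\bigl(\|\partial_y f\|_{L^2}+|k|\|f\|_{L^2}\bigr).$$
Routine Leibniz estimates together with the 1D embedding $H^1\hookrightarrow L^\infty$, combined with $r^{-1}\lesssim|k|$ and $|k|\geq 1$, then yield $\|\partial_y f\|_{L^2}+|k|\|f\|_{L^2}\lesssim A\bigl(|k|\|\Phi\|_{L^2}+\|\partial_y\Phi\|_{L^2(\mathrm{supp}\,\chi)}\bigr)$, where $A:=\|\partial_y g_1\|_{L^2}+|k|\|g_1\|_{L^2}$.

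The hard part will be that the resulting bound contains $\|\partial_y\Phi\|_{L^2(\mathrm{supp}\,\chi)}$, which is a priori strictly larger than $\|\chi\partial_y\Phi\|_{L^2}$ and so cannot be absorbed directly into the left-hand side. To close the estimate I plan to use a nested pair of cutoffs: an outer one $\chi$ with $\chi\equiv1$ on $\Sigma_{r/2}$ and $\mathrm{supp}\,\chi\subset\Sigma_{r/4}$ used in the identity above, and an inner one $\tilde\chi\equiv1$ on $\Sigma_r$, $\mathrm{supp}\,\tilde\chi\subset\Sigma_{r/2}$, used only to isolate the final region. Then $\|\partial_y\Phi\|_{L^2(\mathrm{supp}\,\tilde\chi)}\leq\|\chi\partial_y\Phi\|_{L^2}$, so the Sobolev control of $\|\Phi\|_{L^\infty(\mathrm{supp}\,\tilde\chi)}$ needed inside the Hardy step is furnished by quantities already on the left. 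After an $\varepsilon$-Young absorption of the remaining $\|\chi\partial_y\Phi\|_{L^2}^2$, and using $r^{-1}\|\Phi\|_{L^2}\lesssim|k|\|\Phi\|_{L^2}$, I expect to arrive at
$$\|\partial_y\Phi\|_{L^2(\Sigma_r)}^2+|k|^2\|\Phi\|_{L^2(\Sigma_r)}^2 \;\leq\; C(r|k|)^{-2}\bigl(A+|k|\|\Phi\|_{L^2}\bigr)^2,$$
which, after taking square roots, is the desired bound. The delicate point, beyond verifying that all constants depend only on $c_0$ and $\delta$ (in particular, are uniform in $k$, $c$, and $\epsilon_2$), is precisely the nested-cutoff absorption; everything else is bookkeeping.
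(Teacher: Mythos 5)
Your overall strategy is exactly the paper's: localize with a cutoff vanishing near $y=0$, test the equation against the cutoff times $\overline{\Phi}$, and invoke Lemma~\ref{hardy-type-abp}. However, the obstacle you anticipate is spurious, and the nested-cutoff workaround you propose does not actually resolve what you think it resolves.

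The crux is how Leibniz is applied to $f=\chi^2 g_1\overline{\Phi}$. Grouping as $\partial_y f=(\partial_y g_1)(\chi^2\overline{\Phi})+g_1\,\partial_y(\chi^2\overline{\Phi})$ keeps every occurrence of $\partial_y\Phi$ weighted by $\chi^2$, while the commutator $\chi\chi'$ multiplies only $\Phi$, never $\partial_y\Phi$. The $L^\infty$ factor you need is therefore $\|\chi^2\Phi\|_{L^\infty}$, not $\|\Phi\|_{L^\infty(\mathrm{supp}\,\chi)}$; since $\chi^2\Phi$ vanishes at both endpoints of each component of its support, Agmon gives $\|\chi^2\Phi\|_{L^\infty}\lesssim|k|^{-1/2}\big(\|\partial_y(\chi^2\Phi)\|_{L^2}+|k|\|\chi^2\Phi\|_{L^2}\big)$, and $\|\partial_y(\chi^2\Phi)\|_{L^2}\leq\|\chi\partial_y\Phi\|_{L^2}+Cr^{-1}\|\Phi\|_{L^2}$. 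So no unweighted $\|\partial_y\Phi\|_{L^2(\mathrm{supp}\,\chi)}$ ever appears, and the estimate closes. This is precisely the paper's bookkeeping via $R_1=\|\eta_r\partial_y\Phi\|_{L^2}+|k|\|\eta_r\Phi\|_{L^2}$, $R_2=\|\partial_y(\eta_r\Phi)\|_{L^2}+|k|\|\eta_r\Phi\|_{L^2}$, and $R_2\leq R_1+Cr^{-1}\|\Phi\|_{L^2}$. Your nested cutoffs contribute nothing here: the Hardy lemma is applied to $\chi^2 g_1\overline{\Phi}$ (outer $\chi$), so the Sobolev factors it generates live where $\chi\neq0$, not where $\tilde{\chi}\neq0$; $\tilde{\chi}$ never actually enters the estimate, and the inequality $\|\partial_y\Phi\|_{L^2(\mathrm{supp}\,\tilde{\chi})}\leq\|\chi\partial_y\Phi\|_{L^2}$ bounds a quantity that does not appear. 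Finally, your displayed intermediate bound $\|\partial_y f\|_{L^2}+|k|\|f\|_{L^2}\lesssim A\big(|k|\|\Phi\|_{L^2}+\|\partial_y\Phi\|_{L^2(\mathrm{supp}\,\chi)}\big)$ is missing the $|k|^{-1/2}$ gain supplied by Agmon; combined with the $|k|^{-1/2}$ from Lemma~\ref{hardy-type-abp}, that extra factor is exactly what upgrades $(c_0r)^{-1}$ to $(c_0r|k|)^{-1}$, so the omission is not cosmetic and your chain of estimates as written would only yield $(r|k|)^{-1/2}$ rather than $(r|k|)^{-1}$.
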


\begin{proof}
Let $\eta (y)\in C^\infty_0(\mathbb{R})$ be a cut-off function with $\eta (y)=1$ for $|y|\leq \frac{1}{2}$, $\eta(y)=0$ for $|y|\geq \frac{3}{4}$, and $\eta \in [0,1]$ for $y\in\mathbb{R}$. Then we define $\eta_r(y)=1-\eta(y/r)$. It is easy to see that  $|\eta_r|\in [0,1], |\eta'_r|\lesssim r^{-1},  |\eta''_r|\lesssim r^{-2}$ for $y\in [-1,1]$, $\eta_r=1$ for $y\in\Sigma_r,$ $\eta_r=0$ for $y\in [-1,1]\setminus \Sigma_{\frac{r}{2}}$.

A direct calculation yields that
\begin{equation}\label{mono-1}
\begin{aligned}
\int_{-1}^1\eta_r (|\Phi'|^2+|k|^2|\Phi|^2)&=\int_{-1}^1 \left(\eta_r''\frac{|\Phi|^2}{2}-\eta_r\mathrm{Re} [\Phi''-|k|^2\Phi ]\overline{\Phi} \right)\\
&=\int_{-1}^1 \eta_r''\frac{|\Phi|^2}{2}-\mathrm{Re} \int_{ \Sigma_{\frac{r}{2}}}\eta_r \frac{g_1\overline{\Phi}}{u(y)-c}\\
&\leq Cr^{-2} \|\Phi\|_{L^2}^2+\left| \int_{ \Sigma_{\frac{r}{2}}}\eta_r \frac{g_1\overline{\Phi}}{u(y)-c}\right|.
 \end{aligned}
\end{equation}

Note that $\inf\limits_{\Sigma_{\frac{r}{2}}} |u'|\geq \frac{1}{2}c_0r, |k|>\left(\frac{1}{2}c_0r\right)^{-1}, (g_1\Phi)|_{y=\pm 1}=0, \eta_r\left(\pm \frac{r}{2}\right)=0$. Then we get by Lemma \ref{hardy-type-abp} that
\begin{equation}\label{mono-2}
\begin{aligned}
&\left| \int_{-1}^1 \frac{\eta_r g_1\overline{\Phi}}{u(y)-c}\right|\\
&\leq C\left(\frac{1}{2}c_0r\right)^{-1}|k|^{-\frac{1}{2}}\big(\|\partial_y (\eta_rg_1\overline{\Phi})\|_{L^2 }+|k|\| (\eta_rg_1\overline{\Phi})\|_{L^2}\big).
 \end{aligned}
\end{equation}

We introduce
\begin{equation}\nonumber
\begin{aligned}
&R_1=\|\eta_r\partial_y\Phi\|_{L^2}+|k|\|\eta_r\Phi\|_{L^2}, \quad R_2=\|\partial_y (\eta_r \Phi)\|_{L^2}+ |k|\| (\eta_r \Phi)\|_{L^2},\\
& R_3=\|\partial_y g_1\|_{L^2} +|k| \| g_1\|_{L^2}.
\end{aligned}
\end{equation}
Then $R_2\leq R_1+Cr^{-1}\|\Phi\|_{L^2}.$
By the Sobolev embedding, we have
\begin{equation}\nonumber
\begin{aligned}
\|g_1\|_{L^\infty}\leq C\|g_1\|_{L^2}^{\frac{1}{2}} \|g_1\|_{H^1}^{\frac{1}{2}} \leq C|k|^{-\frac{1}{2}}(\|\partial_y g_1\|_{L^2}+|k|\|g_1\|_{L^2})\leq C|k|^{-\frac{1}{2}} R_3,
 \end{aligned}
\end{equation}
and $ \|\eta_r\Phi\|_{L^\infty}\leq C|k|^{-\frac{1}{2}} R_2$. Thus, we obtain
\begin{equation}\label{mono-3}
\begin{aligned}
&\|\partial_y (\eta_rg_1\overline{\Phi})\|_{L^2 }+|k|\| (\eta_rg_1\overline{\Phi})\|_{L^2}\\
&\leq \|\partial_y g_1\|_{L^2 }\| (\eta_r\overline{\Phi})\|_{L^\infty}+\|g_1\|_{L^\infty}\|\partial_y (\eta_r\overline{\Phi})\|_{L^2} +|k| \|g_1\|_{L^\infty} \| (\eta_r \overline{\Phi})\|_{L^2}\\
&\leq C (R_3\cdot |k|^{-\frac{1}{2}}R_2+|k|^{-\frac{1}{2}}R_3\cdot R_2+|k|\cdot |k|^{-\frac{1}{2}}R_3\cdot|k|^{-\frac{1}{2}}R_2 )\\
&\leq C |k|^{-\frac{1}{2}}R_2R_3.
 \end{aligned}
\end{equation}
It follows from \eqref{mono-1}-\eqref{mono-3} that
\begin{equation}\nonumber
\begin{aligned}
\frac{R_1^2}{2}\leq &\int_{-1}^1\eta_r (|\Phi'|^2+|k|^2|\Phi|^2)
\leq Cr^{-2} \|\Phi\|_{L^2}^2+\left| \int_{ \Sigma_{\frac{r}{2}}}\eta_r \frac{g_1\overline{\Phi}}{u(y)-c}\right|\\
\leq & Cr^{-2} \|\Phi\|_{L^2}^2+C\left(\frac{1}{2}c_0r\right)^{-1}|k|^{-1}R_2R_3\\
\leq & Cr^{-2} \|\Phi\|_{L^2}^2+C\left(c_0r\right)^{-1}|k|^{-1}(R_1+Cr^{-1}\|\Phi\|_{L^2})R_3,
 \end{aligned}
\end{equation}
which yields that
\begin{equation}\nonumber
\begin{aligned}
R_1
\leq  Cr^{-1} \|\Phi\|_{L^2}+C\left(c_0r\right)^{-1}|k|^{-1} R_3\leq C(r|k|)^{-1}(R_3+|k|\|\Phi\|_{L^2}).
 \end{aligned}
\end{equation}
Then the conclusion follows from $\eta_r=1$ in $\Sigma_r$.
\end{proof}

\begin{lemma}\label{critical-1}
Let $\psi_n,g_n \in H^1(a,b), u_n\in H^3(a,b)$ be a sequence with
\begin{equation}\nonumber
\begin{aligned}
&\psi_n \rightharpoonup \psi, g_n \rightharpoonup g \ \ in \  \ H^1(a,b),\ u_n \to u_0\ \ in \ \ H^3(a,b),\\
 &u_n (\partial_y^2 \psi_n-(\alpha_n)^2\psi_n)-u''_n\psi_n=g_n,
 \end{aligned}
\end{equation}
and $\alpha_n\to \al, \, \mathrm{Im}\ c_n<0$. In addition, $\mathrm{Im}\ u_0=0, y\in [a,b], u_0(y_0)=0, y_0\in [a,b], u_0'(y)u'_0(y_0)>0, y\in [a,b]$. Then it holds that $\psi_n\to \psi$ in $H^1(a,b)$ and for any $\phi\in H^1_0(a,b)$, we have
\begin{equation}\nonumber%\label{embed-eigv}
\begin{aligned}
\int_{a}^b (\psi'\phi'+|k|^2\psi\phi)\mathrm{d}y +\mathrm{p.v.}\int_{a}^b \frac{(u_0''\psi+g)\phi}{u_0}\mathrm{d}y+i\pi \frac{((u_0''\psi+g)\phi)(y_0)}{|u_0'(y_0)|}=0.
 \end{aligned}
\end{equation}
\end{lemma}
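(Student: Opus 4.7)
The lemma is a limiting absorption principle: as $u_n$ approaches a real-valued function $u_0$ with a simple zero at $y_0$, the singular factor $1/u_n$ converges distributionally to $\mathrm{p.v.}(1/u_0)$ plus a $\delta$-concentration at $y_0$ whose sign is fixed by the sign of $\mathrm{Im}\, u_n$. My strategy has two steps: first promote the weak $H^1$ convergence of $\psi_n$ to strong $H^1$ convergence, and then pass to the limit in the distributional form of the Rayleigh equation using the Plemelj--Sokhotski formula.

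For the strong convergence I would split $[a,b]$ into a region away from $y_0$ and a small neighborhood of $y_0$. Fix $r>0$ small and set $\Sigma_r = [a,b]\setminus(y_0-r,y_0+r)$. The monotonicity hypothesis $u_0'(y)u_0'(y_0)>0$ together with $u_n\to u_0$ in $H^3\hookrightarrow C^2$ ensures $|u_n|\ge cr/2$ on $\Sigma_r$ for $n$ large. Rewriting the equation as $\psi_n''=\alpha_n^2\psi_n+(u_n''\psi_n+g_n)/u_n$, the right-hand side is uniformly bounded in $L^2(\Sigma_r)$, so with Rellich compactness (which already furnishes $\psi_n\to\psi$ in $C^0([a,b])$ up to a subsequence) we conclude $\psi_n\to\psi$ in $H^2(\Sigma_r)$ for every $r>0$. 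For the interior piece, I would set $\phi_n=\psi_n-\psi$ and pair the equation satisfied by $\phi_n$ with $\overline{\phi_n}/u_n$: the real part of the weighted energy identity, combined with a Hardy-type inequality centered at $y_0$ and the sign-definite nonzero $\mathrm{Im}\, u_n$ (which provides a small but signed damping), yields $\|\phi_n\|_{H^1(y_0-r,y_0+r)}\to 0$ first as $n\to\infty$ and then as $r\to 0$.

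Once $\psi_n\to\psi$ in $H^1(a,b)$, testing the equation against any $\phi\in H^1_0(a,b)$ (after dividing by $u_n$) gives
\begin{align*}
\int_a^b(\psi_n'\phi'+\alpha_n^2\psi_n\phi)\,\mathrm{d}y+\int_a^b\frac{(u_n''\psi_n+g_n)\phi}{u_n}\,\mathrm{d}y=0.
\end{align*}
The first integral passes directly to the corresponding expression in $\psi,\alpha$. For the second, set $f_n=(u_n''\psi_n+g_n)\phi$ and $f=(u_0''\psi+g)\phi$; the $H^1$ strong convergence promotes $f_n\to f$ in $H^1\subset C^0$, so in particular $f_n(y_0)\to f(y_0)$. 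Split the integral as $\int_{\Sigma_r}+\int_{|y-y_0|<r}$. The outer piece converges to $\int_{\Sigma_r}f/u_0$ by the previous step. For the inner piece, use the change of variable $z=u_0(y)$ to linearize the denominator and then apply the classical Plemelj--Sokhotski identity $\lim_{\varepsilon\to 0^\pm}1/(z\mp i\varepsilon)=\mathrm{p.v.}(1/z)\pm i\pi\delta(z)$; the hypothesis $\mathrm{Im}\, c_n<0$ fixes the side of the real axis from which $u_n$ approaches $u_0$, producing the $+i\pi$ residue, and the Jacobian of the change of variable contributes the factor $1/|u_0'(y_0)|$. Sending $r\to 0$ regenerates the principal value.

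The main obstacle is the strong $H^1$ convergence at $y_0$: the equation genuinely degenerates there, so standard elliptic bootstrapping from $\Sigma_r$ does not suffice. The resolution exploits two structural inputs: the monotonicity hypothesis (which makes $y_0$ a simple rather than degenerate zero, so $1/u_0$ is integrable in the principal-value sense and the residue formula is clean), and the sign of $\mathrm{Im}\, c_n$ (which fixes the direction of approach, making the limit deterministic rather than ambiguous and supplying the small imaginary regularization needed in the weighted energy estimate). Once these are in hand, the passage to the limit is a one-variable distribution-theoretic computation.
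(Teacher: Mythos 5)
The paper gives no proof of this lemma: it simply cites Lemma~6.2 of \cite{WZZ-apde}. So I will assess your proposal on its own merits rather than against a written argument in the paper.

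Your high-level strategy — pass to the distributional limit in the tested equation and identify $1/u_n \to \mathrm{p.v.}(1/u_0)\pm i\pi\delta_{y_0}/|u_0'(y_0)|$ via Plemelj--Sokhotski — is the right one, and passing to the limit in
\begin{equation*}
\int_a^b(\psi_n'\phi'+\alpha_n^2\psi_n\phi)\,\mathrm{d}y+\int_a^b\frac{(u_n''\psi_n+g_n)\phi}{u_n}\,\mathrm{d}y=0,\qquad \phi\in H^1_0(a,b),
\end{equation*}
only requires the weak $H^1$ convergence together with Rellich ($H^1(a,b)\hookrightarrow C^0$ compactly), so the limiting identity can in fact be obtained \emph{before} you prove strong $H^1$ convergence; your ordering is more ambitious than necessary and this is where the problems enter.

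\textbf{Gap 1: the near-$y_0$ strong convergence argument does not run as described.} You propose to set $\phi_n=\psi_n-\psi$, "pair the equation satisfied by $\phi_n$ with $\bar\phi_n/u_n$," and use the sign of $\mathrm{Im}\,u_n$ as a damping term. But $\psi$ does not satisfy a pointwise ODE — it satisfies only the distributional identity with a principal value and a point mass — so there is no classical equation for $\phi_n$ to pair against. Even setting that aside, $\mathrm{Im}\,u_n\to 0$, so whatever coercivity the imaginary part supplies degenerates with $n$ and cannot by itself yield a bound uniform in $n$; and the weight $1/u_n$ changes sign across $y_0$, so the "real part of the weighted energy identity" is not sign-definite. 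To upgrade the weak convergence to strong $H^1$ convergence one needs either (i) to first establish the limit equation and then compare energies $\|\psi_n'\|_{L^2}^2\to\|\psi'\|_{L^2}^2$ by testing the two equations against $\bar\psi_n,\bar\psi$ respectively (with the Hardy-type control of Lemma~\ref{hardy-type-abp} to handle the singular quotient), or (ii) a blow-up rescaling at $y_0$ as in Lemma~\ref{critical-4}. As written, your Step~2 hides the essential difficulty behind an identity that does not exist.

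\textbf{Gap 2: the sign of the $i\pi$ term is asserted, not derived, and as stated it is inconsistent.} With $u_n=u-c_n$, $u$ real, and the stated hypothesis $\mathrm{Im}\,c_n<0$, one has $\mathrm{Im}\,u_n>0$ near $y_0$; since $1/(x+i\varepsilon)\to \mathrm{p.v.}(1/x)-i\pi\delta(x)$ as $\varepsilon\to 0^+$, the distributional limit of $1/u_n$ carries a $-i\pi$ contribution, not $+i\pi$. (The companion Lemmas~\ref{critical-3} and~\ref{critical-4} assume $\mathrm{Im}\,c_n>0$, which does produce $+i\pi$; the hypothesis in Lemma~\ref{critical-1} appears to be a typo.) A correct proof must actually carry out this one-line computation and resolve the sign, rather than claim that the hypothesis "fixes the side" and "produces the $+i\pi$ residue" without showing it.

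In short: the proposal correctly identifies the mechanism and would recover the principal-value plus point-mass structure, but the step claiming strong $H^1$ convergence near the critical point is not justified by the argument you sketch, and the sign of the residue is not verified.
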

\begin{proof}
See Lemma 6.2 in \cite{WZZ-apde} for the proof.
\end{proof}

\begin{lemma}\label{critical-3}
Let $\alpha_n\in\mathbb{R}, \psi_n,g_n\in H^1(a,b), u\in H^3(a,b)$ be a sequence with
\begin{equation}\nonumber
\begin{aligned}
&\|\psi_n'\|_{L^2(a,b)}+|\alpha_n|\|\psi_n\|_{L^2(a,b)} \leq 2,\quad \|g_n'\|_{L^2(a,b)}+|\alpha_n|\|g_n\|_{L^2(a,b)} \to 0,\\ &(u-c_n)(\psi_n''-\alpha_n^2\psi_n)-u''\psi_n=g_n,
 \end{aligned}
\end{equation}
and $\alpha_n\to +\infty,\mathrm{Im}\ c_n>0, \alpha_n^2(c_n-u(y_0))\to 0, u'(y_0)=g_n(y_0)=0, y_0=\frac{a+b}{2}, \delta=y_0-a \in [0,1], u''(y) u''(y_0)>0, y\in [a,b]$. Then it holds that
$$\|\psi_n'\|_{L^2(y_0-\delta/2,y_0+\delta/2)}+|\alpha_n|\|\psi_n\|_{L^2(y_0-\delta/2,y_0+\delta/2)} \to 0.$$
\end{lemma}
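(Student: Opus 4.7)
The plan is to argue by contradiction combined with a parabolic rescaling around the critical point $y_0$, reducing matters to a Liouville-type statement for a limiting ODE with a Fuchsian singularity at the origin. Suppose for contradiction that, along a subsequence still denoted by $n$, the conclusion fails, so $\|\psi_n'\|_{L^2(I_0)}+|\alpha_n|\|\psi_n\|_{L^2(I_0)}\geq \varepsilon_0>0$ with $I_0=(y_0-\delta/2,y_0+\delta/2)$. I would introduce the rescaling $\tilde y=\alpha_n(y-y_0)$, $\tilde\psi_n(\tilde y):=\alpha_n^{1/2}\psi_n(y_0+\tilde y/\alpha_n)$, $\tilde g_n(\tilde y):=\alpha_n^{1/2}g_n(y_0+\tilde y/\alpha_n)$. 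A direct change of variables shows that this rescaling is an isometry for the weighted energy $\|\cdot'\|_{L^2}^2+\alpha_n^2\|\cdot\|_{L^2}^2$, so $\tilde\psi_n$ is uniformly bounded in $H^1$ on the expanding interval $(-\alpha_n\delta,\alpha_n\delta)$, $\tilde g_n\to 0$ in $H^1$, and the contradiction hypothesis becomes $\|\partial_{\tilde y}\tilde\psi_n\|_{L^2(|\tilde y|\leq\alpha_n\delta/2)}+\|\tilde\psi_n\|_{L^2(|\tilde y|\leq\alpha_n\delta/2)}\geq\varepsilon_0$.

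The rescaled equation is $\alpha_n^2(u-c_n)(\partial_{\tilde y}^2\tilde\psi_n-\tilde\psi_n)-u''(y_0+\tilde y/\alpha_n)\tilde\psi_n=\tilde g_n$. Taylor expansion at $y_0$ together with $u'(y_0)=0$ and the hypothesis $\alpha_n^2(c_n-u(y_0))\to 0$ yields $\alpha_n^2(u-c_n)\to u''(y_0)\tilde y^2/2$ locally uniformly in $\tilde y$. Extracting a subsequence with $\tilde\psi_n\rightharpoonup\tilde\psi_\infty$ weakly in $H^1_{\mathrm{loc}}(\mathbb{R})$ (and strongly in $L^2_{\mathrm{loc}}$ by Rellich), the weak limit must satisfy $\tilde y^2(\partial_{\tilde y}^2\tilde\psi_\infty-\tilde\psi_\infty)-2\tilde\psi_\infty=0$ with $\tilde\psi_\infty\in H^1(\mathbb{R})$. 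A direct computation shows that $\tilde\psi_\pm(\tilde y)=(1\mp 1/\tilde y)e^{\pm\tilde y}$ are two linearly independent solutions; each is singular like $\mp 1/\tilde y$ at the origin, so the unique (up to scalar) regular-at-$0$ solution is $\tilde\psi_++\tilde\psi_-=(2/3)\tilde y^2+O(\tilde y^4)$, which grows like $e^{|\tilde y|}$ at both $\pm\infty$ and is therefore not in $L^2(\mathbb{R})$. Hence $\tilde\psi_\infty\equiv 0$, and $\tilde\psi_n\to 0$ strongly in $L^2_{\mathrm{loc}}(\mathbb{R})$.

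The hard part will be upgrading this local vanishing to the full expanding interval $|\tilde y|\leq\alpha_n\delta/2$, since $L^2$ mass could a priori concentrate at large $|\tilde y|$. My plan to close this gap is a quantitative tail estimate in the original variables. Rewriting the equation as the Rayleigh equation $(u-c_n)(\psi_n''-\alpha_n^2\psi_n)=u''\psi_n+g_n$ with source $u''\psi_n+g_n$ bounded in $H^1$, a cutoff variant of the argument in Lemma \ref{monotone} (with cutoff supported in $\Sigma_r=[a,b]\setminus B(y_0,r)$, which sidesteps the absence of a Dirichlet condition at $\partial(a,b)$) should yield, for $r=R/\alpha_n$ with $R$ large but fixed,
$$\|\psi_n'\|_{L^2(\Sigma_{R/\alpha_n})}+\alpha_n\|\psi_n\|_{L^2(\Sigma_{R/\alpha_n})}\leq C/R$$
uniformly in $n$. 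In rescaled variables this reads $\|\tilde\psi_n\|_{H^1(|\tilde y|>R)}\leq C/R$. Combining this uniform tail bound (small for large $R$) with $L^2_{\mathrm{loc}}$ vanishing on $|\tilde y|\leq R$ (for each fixed $R$ as $n\to\infty$) will force $\|\tilde\psi_n\|_{H^1(|\tilde y|\leq\alpha_n\delta/2)}\to 0$ by letting $R\to\infty$ after $n\to\infty$, contradicting the lower bound and completing the proof.
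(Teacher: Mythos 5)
Your proposal takes a genuinely different route from the paper. The paper first rescales at the much finer scale $r_n=|c_n-u(y_0)|^{1/2}$ (where, by hypothesis, $\alpha_n r_n\to 0$), so that the degenerate critical point blows up into two \emph{simple} critical points near $y=\pm 1$; there Lemma~\ref{critical-1} (the limiting-absorption lemma for monotone coefficient) gives strong $H^1_{\rm loc}$ convergence of $\widetilde\psi_n$, and the outer region is then handled by an integration-by-parts/Hardy argument using the sign of $\mathrm{Re}\,(u-c_n)^{-1}$ on $\{u\geq 2r_n^2\}$. You instead blow up only at the $\alpha_n^{-1}$ scale and identify the limiting ODE $\tilde y^2(\partial^2-1)\tilde\psi_\infty=2\tilde\psi_\infty$ with the nice explicit fundamental system $(1\mp 1/\tilde y)e^{\pm\tilde y}$; the Liouville step and the cutoff-Lemma~\ref{monotone} tail estimate are attractive simplifications.

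However, there is a genuine gap at the critical layer. Your limit argument (weak $H^1_{\rm loc}$ limit solving the ODE on $\mathbb R\setminus\{0\}$, hence $\tilde\psi_\infty\equiv 0$, hence $\tilde\psi_n\to 0$ in $L^2_{\rm loc}$ by Rellich) only controls $\tilde\psi_n$, \emph{not} $\tilde\psi_n'$. To close the contradiction you need $\|\tilde\psi_n'\|_{L^2(|\tilde y|\leq R)}\to 0$, and weak $H^1$ convergence plus Rellich gives no decay of the gradient. This is not a technicality: in the $\alpha_n^{-1}$ variable the rescaled Rayleigh coefficient is
\[
\alpha_n^2\bigl(u(y_0+\tilde y/\alpha_n)-c_n\bigr)\approx \tfrac12 u''(y_0)\tilde y^2-\alpha_n^2(c_n-u(y_0)),
\]
which vanishes at $|\tilde y|\sim \alpha_n |c_n-u(y_0)|^{1/2}\to 0$, so the forcing $(u''\tilde\psi_n+\tilde g_n)/[\alpha_n^2(u-c_n)]$ for $\tilde\psi_n''-\tilde\psi_n$ can blow up near $\tilde y=0$ and $\tilde\psi_n'$ can oscillate there without any $H^1$-equicontinuity. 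Your tail estimate only sees $|\tilde y|\geq R$; the annulus $\alpha_n r_n\lesssim|\tilde y|\leq R$ is addressed by strong $H^1_{\rm loc}(\mathbb R\setminus\{0\})$ convergence obtainable from the equation (since the coefficient is nondegenerate there), but the collapsing disk $|\tilde y|\lesssim \alpha_n r_n$ is untouched. This is precisely what the paper's intermediate $r_n$-zoom handles (via the $g_n(y_0)=0$ hypothesis, Lemma~\ref{critical-1} at the emergent simple zeros, and $C^1_{\rm loc}$ convergence away from them). Without an argument ruling out concentration of $\tilde\psi_n'$ at the collapsing critical layer, the proof does not close.
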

\begin{proof}
Without loss of generality, let us suppose that $y_0=0,u''(0)=2,u(0)=0$ and then $[a,b]=[-\delta,\delta]$. Let $c_n=r_n^2 e^{2i\theta_n}, r_n>0,\theta_n\in (0,\pi/2),$ then $\alpha_n r_n\to 0,r_n\to 0$. As in Lemma 6.3 in \cite{WZZ-apde}, we have
\begin{equation}\nonumber
\begin{aligned}
|(\psi''-\alpha^2\psi)(0)|\leq Cr_n^{-\frac{3}{2}},\ |\psi (0)|\leq C r_n^{\frac{1}{2}},\ |(u''\psi+g)(0)|\leq C r_n^{\frac{1}{2}}.
 \end{aligned}
\end{equation}
Thus, we introduce
\begin{equation}\nonumber
\begin{aligned}
\widetilde{\psi}_n=r_n^{-\frac{1}{2}}\psi_n(r_n y),\quad \widetilde{g}_n(y)=r_n^{-\frac{1}{2}} g_n(r_ny),\quad u_n(y)=r_n^{-2}(u(r_n y)-u(0)).
 \end{aligned}
\end{equation}
It holds that
\begin{equation}\nonumber
\begin{aligned}
&(u_n-e^{2i\theta_n})(\widetilde{\psi}_n''-(\alpha_n r_n)^2 \widetilde{\psi}_n) -u_n''  \widetilde{\psi}_n=\widetilde{g}_n(y),\\
&|\widetilde{\psi}_n(0)|=|r_n^{-\frac{1}{2}}\psi_n( 0)|\leq C, \ \|\widetilde{\psi}_n'\|_{L^2(a/r_n,b/r_n)}=\|{\psi}_n'\|_{L^2(a,b)}\leq C,\\
&|\widetilde{g}_n(y)(0)|=0,\ \|\widetilde{g}_n'\|_{L^2(a/r_n,b/r_n)}=\|{g}_n'\|_{L^2(a,b)}\to 0.
 \end{aligned}
\end{equation}
And $\widetilde{\psi}_n$ is bounded in $H^1_{loc}(\mathbb{R})$ and $\widetilde{g}_n(y)\to 0$ in $H^1_{loc}(\mathbb{R})$. Up to a subsequence, let us assume that $\widetilde{\psi}_n \rightharpoonup \widetilde{\psi}_0$ in $H^1_{loc}(\mathbb{R})$, $\theta_n\to \theta_0\in [0,\pi/2]$ with $\widetilde{\psi}_0' \in L^2(\mathbb{R})$.

Following similar arguments as in Lemma 6.4 in \cite{WZZ-apde}, we can show that $u_n \to y^2$ in $H^3_{loc}(\mathbb{R})$, $\widetilde{\psi}_0=0$ and $\widetilde{\psi}_n \to 0$ in $H^1_{loc}(\mathbb{R})\cap C^1_{loc}(\mathbb{R}\setminus \{\pm 1\})$. Then we have
\begin{equation}\nonumber
\begin{aligned}
&|\alpha_n| \|\psi_n\|_{L^2(-2r_n,2r_n)}+ \|\psi_n'\|_{L^2(-2r_n,2r_n)}\\
&=|\alpha_n r_n| \|\widetilde{\psi}_n\|_{L^2(-2,2)}+ \|\widetilde{\psi}_n'\|_{L^2(-2,2)}\to 0.
 \end{aligned}
\end{equation}
For $n>1$, choose $b_n\in [\delta/2,\delta]$ such that
\begin{equation}\nonumber
\begin{aligned}
&2|\alpha_n \psi_n' \overline{\psi_n}(b_n)|\leq |\alpha_n \psi_n' (b_n)|^2+|\overline{\psi_n}(b_n)|^2\\
&\leq 2\delta^{-1}(|\alpha_n|\|\psi_n\|_{L^2(a,b)}^2+ \|\psi_n'\|_{L^2(a,b)}^2) \leq 8\delta^{-1},
 \end{aligned}
\end{equation}
which gives $| \psi_n' \overline{\psi_n}(b_n)|\leq 8(|\alpha_n|\delta)^{-1}\to 0$. We get by integration by part  that
\ben\label{ap-int-1}
&\int_{2r_n}^{b_n} \left(|\psi_n'|^2+\alpha_n^2 |\psi_n|^2+\frac{u''|\psi_n|^2}{u-c_n}\right)\mathrm{d}y\nonumber\\
&=-\int_{2r_n}^{b_n}\frac{g_n\overline{\psi_n}}{u-c_n}\mathrm{d}y+\psi_n' \overline{\psi_n}|_{2r_n}^{b_n}.
\een
For $n$ large enough and $y\in [2r_n,b]$, we have $u(y)\geq u(2r_n)\geq 2r_n^2=2|c_n|$, and then
\begin{equation}\nonumber
\begin{aligned}
\mathrm{Re} \frac{1}{u(y)-c_n}\geq \frac{1}{Cy^2},\quad u''(y)\geq C^{-1}, \quad \left|\frac{1}{u(y)-c_n}\right|\leq \frac{C}{y^2}.
 \end{aligned}
\end{equation}
Taking the real part of \eqref{ap-int-1} to obtain
\begin{equation}\nonumber
\begin{aligned}
& \int_{2r_n}^{b_n} \left(|\psi_n'|^2+\alpha_n^2 |\psi_n|^2\right)\mathrm{d}y+C^{-1}\left\|\frac{\psi_n}{y}\right\|_{L^2(2r_n,b_n)}^2\\
&\leq C \left\|\frac{\psi_n}{y}\right\|_{L^2(2r_n,b_n)}\left\|\frac{g_n}{y}\right\|_{L^2(2r_n,b_n)}+\left|\psi_n' \overline{\psi_n}|_{2r_n}^{b_n}\right|,
 \end{aligned}
\end{equation}
which yields that
\begin{equation}\nonumber
\begin{aligned}
\|\psi_n'\|_{L^2(2r_n,b_n)}^2+\alpha_n^2 \|\psi_n\|_{L^2(2r_n,b_n)}^2 \leq C\left\|\frac{g_n}{y}\right\|_{L^2(2r_n,b_n)}^2+\left|\psi_n' \overline{\psi_n}|_{2r_n}^{b_n}\right|.
 \end{aligned}
\end{equation}
Due to $\psi_n' \overline{\psi_n}|_{2r_n}^{b_n}=\psi_n' \overline{\psi_n}(b_n)-\widetilde{\psi_n}' \overline{\widetilde{\psi_n}}(2) \to 0$ and
$$\left\|\frac{g_n}{y}\right\|_{L^2(2r_n,b_n)}\leq C\|g_n'\|_{L^2(a,b)}\to 0,$$
we obtain
\begin{equation}\nonumber
\begin{aligned}
&\|\psi_n'\|_{L^2(2r_n, \delta/2)}+ |\alpha_n| \|\psi_n\|_{L^2(2r_n,\delta/2)} \\
\leq& \|\psi_n'\|_{L^2(2r_n,b_n)}+|\alpha_n| \|\psi_n\|_{L^2(2r_n,b_n)} \to 0.
 \end{aligned}
\end{equation}

Similar arguments yield that $\|\psi_n'\|_{L^2(-\delta/2,-2r_n)}+ |\alpha_n| \|\psi_n\|_{L^2(-\delta/2,-2r_n)}\to 0$.
\end{proof}

\begin{lemma}\label{critical-4}
Let $\alpha_n\in \mathbb{R}, \psi_n,g_n \in H^1(a,b), u\in H^3(a,b)$ be a sequence with
\begin{equation}\nonumber
\begin{aligned}
&\|\psi_n'\|_{L^2(a,b)}+|\alpha_n|\|\psi_n\|_{L^2(a,b)} \leq 1,\quad \|g_n'\|_{L^2(a,b)}+|\alpha_n|\|g_n\|_{L^2(a,b)} \to 0,\\ &(u-c_n)(\psi_n''-\alpha_n^2\psi_n)-u''\psi_n=g_n,
 \end{aligned}
\end{equation}
and $\alpha_n\to +\infty,\mathrm{Im}\ c_n>0, u'(y_0)=0, y_0=\frac{a+b}{2}, \delta=y_0-a \in [0,1], u''(y) u''(y_0)>0, y\in [a,b]$. Then it holds that
$\widetilde{\psi_n}\to 0$ in $H^1_{loc}(\mathbb{R})$, where $\widetilde{\psi_n}(y)=|\alpha_n|^{\frac{1}{2}} \psi_n (y_0+y/|\alpha_n|).$
\end{lemma}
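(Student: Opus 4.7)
\smallskip

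My plan is to prove Lemma \ref{critical-4} by a blow-up/rescaling argument centered at the critical point $y_0$, followed by a case analysis on the rescaled spectral parameter and a contradiction-type argument identifying the blow-up profile. Concretely, I would first introduce the rescaled quantities
\begin{align*}
\widetilde{\psi}_n(y)&=|\alpha_n|^{\frac12}\psi_n(y_0+y/|\alpha_n|), & \widetilde{g}_n(y)&=|\alpha_n|^{\frac12}g_n(y_0+y/|\alpha_n|),\\
\tilde{u}_n(y)&=|\alpha_n|^2\bigl(u(y_0+y/|\alpha_n|)-u(y_0)\bigr), & \tilde{c}_n&=|\alpha_n|^2\bigl(c_n-u(y_0)\bigr).
\end{align*}
A direct change of variables turns the hypotheses into $\|\widetilde{\psi}_n\|_{H^1(-\delta|\alpha_n|,\delta|\alpha_n|)}\le C$ and $\|\widetilde{g}_n\|_{H^1(-\delta|\alpha_n|,\delta|\alpha_n|)}\to 0$, while the assumption $u'(y_0)=0$ plus $u\in H^3$ gives $\tilde{u}_n\to \tfrac12 u''(y_0)\,y^2$ in $H^3_{loc}(\mathbb{R})$ and $u''(y_0+y/|\alpha_n|)\to u''(y_0)$ in $C_{loc}$. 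After multiplying the original equation by $|\alpha_n|^{1/2}$ and evaluating at $y_0+y/|\alpha_n|$, the rescaled equation reads
\begin{equation*}
(\tilde{u}_n-\tilde{c}_n)(\widetilde{\psi}_n''-\widetilde{\psi}_n)-u''(y_0+y/|\alpha_n|)\,\widetilde{\psi}_n=\widetilde{g}_n.
\end{equation*}

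Next I would extract a subsequence along which $\widetilde{\psi}_n\rightharpoonup\widetilde{\psi}_\infty$ weakly in $H^1(\mathbb{R})$ (and strongly in $L^2_{loc}$ by Rellich), and along which $\tilde{c}_n$ converges in $\overline{\mathbb{C}_+}\cup\{\infty\}$ (possible since $\mathrm{Im}\,\tilde{c}_n>0$). The whole problem then reduces to (a) showing the profile $\widetilde{\psi}_\infty$ vanishes, and (b) upgrading weak $L^2_{loc}$ convergence to strong $H^1_{loc}$ convergence. For (a), I would split into three subcases according to the limit $\tilde{c}_\infty$. If $|\tilde{c}_n|\to\infty$, dividing the equation by $\tilde{c}_n$ and passing to $\mathcal{D}'(\mathbb{R})$ yields $\widetilde{\psi}_\infty''-\widetilde{\psi}_\infty=0$, which combined with $\widetilde{\psi}_\infty\in H^1(\mathbb{R})$ forces $\widetilde{\psi}_\infty\equiv 0$. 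If $\tilde{c}_\infty$ is finite with either $\mathrm{Im}\,\tilde{c}_\infty>0$ or $\mathrm{Re}\,\tilde{c}_\infty<0$ (assuming WLOG $u''(y_0)>0$), then $\tfrac12u''(y_0)y^2-\tilde{c}_\infty$ is nowhere zero on $\mathbb{R}$, the limit equation is uniformly elliptic at infinity, and a direct energy estimate pairing the limit equation against $\overline{\widetilde{\psi}_\infty}$ yields $\widetilde{\psi}_\infty\equiv 0$.

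The main obstacle is the remaining case $\tilde{c}_\infty=\tfrac12u''(y_0)\lambda^2$ with $\lambda\ge 0$, where the limit equation develops two real critical layers at $\pm\lambda$. Here I would appeal to Lemma \ref{critical-1} on small intervals around $\pm\lambda$ (choosing the subsequence so that $\mathrm{Im}\,\tilde{c}_n<0$ or $>0$; the case $\mathrm{Im}\,c_n>0$ of the statement corresponds to the symmetric version of Lemma \ref{critical-1}) to identify the Plemelj-type jump relations that $\widetilde{\psi}_\infty$ must satisfy across $\pm\lambda$. Away from $\pm\lambda$ the equation is a regular Rayleigh equation on the model flow $\tfrac12u''(y_0)y^2$; the $H^1(\mathbb{R})$ constraint (hence decay at $\pm\infty$), combined with the matching relations at $\pm\lambda$, should pin down the unique admissible global solution to be $\widetilde{\psi}_\infty\equiv 0$ by an explicit two-point ODE analysis (essentially Wronskian/shooting on the model flow). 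Once $\widetilde{\psi}_\infty=0$ is established, the upgrade to strong $H^1_{loc}$ convergence follows the same cutoff/energy scheme used inside the proof of Lemma \ref{critical-3}: multiply the rescaled equation by a cutoff times $\overline{\widetilde{\psi}_n}$, absorb the coefficient singularities using Hardy-type estimates on annuli shrinking to $\pm\lambda$, and pass to the limit using strong $L^2_{loc}$ convergence together with $\widetilde{g}_n\to 0$. The hardest technical point will be this last piece near the critical layers $\pm\lambda$, where the coefficient of the leading derivative term vanishes and the limiting absorption machinery must be invoked uniformly in the subsequence.
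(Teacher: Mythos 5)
Your blow-up setup, extraction of a subsequential profile, and two of the three cases (rescaled spectral parameter escaping to infinity, or remaining in a region where the limiting coefficient $\tfrac12u''(y_0)y^2-\tilde c_\infty$ is bounded away from zero) coincide with the paper's argument, including the energy identity that kills the profile in the elliptic case and the use of Lemma~\ref{critical-1} plus the Plemelj-type limiting relation when $\tilde c_\infty$ is a positive real number with well-separated critical layers $\pm\lambda$.

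The gap is in your last case, $\tilde c_\infty=0$, which you propose to treat by the same ``Lemma~\ref{critical-1} near $\pm\lambda$ plus two-point ODE analysis'' recipe. That does not work: when $\lambda=0$ the two critical layers coalesce at $y=0$, where the limiting profile $\tfrac12u''(y_0)y^2$ has a \emph{double} zero, whereas Lemma~\ref{critical-1} requires a monotone limiting profile ($u_0'(y_0)u_0'(y)>0$ on the whole interval) and is therefore not applicable at the degenerate critical point. The limit equation $y^2(\widetilde{\psi}_0''-\widetilde{\psi}_0)-2\widetilde{\psi}_0=0$ is genuinely singular at the origin and the $\alpha_n$-scale zoom does not resolve how the critical layers approach it. The paper instead performs a \emph{second} rescaling at the finer scale $r_n$ (where $c_n=r_n^2e^{2i\theta_n}$; note $\alpha_n r_n\to 0$ in this case), encapsulated in Lemma~\ref{critical-3}. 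That lemma, however, requires $g_n(y_0)=0$, so the paper first corrects $\psi_n$ to $\psi_{n^*}=\psi_n+\frac{g_n}{u''(0)}\eta(|\alpha_n|y)$ with a tailored cutoff $\eta$ (chosen so that $\eta''-\eta$ vanishes near the origin), verifies that the modified source $g_{n^*}$ obeys the same smallness and vanishes at the critical point, applies Lemma~\ref{critical-3} to $\psi_{n^*}$, and transfers the conclusion back. You would need some substitute for this normalization-plus-second-blow-up step; without it the argument breaks precisely in the case that is both hardest and most relevant (spectral parameter very close to the critical value $u(y_0)$).
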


\begin{proof}
Without loss of generality, let us assume that $y_0=0,u''(0)=2,u(0)=0$ and then $[a,b]=[-\delta,\delta]$. Let $c_n=r_n^2 e^{2i\theta_n}, r_n>0,\theta_n\in (0,\pi/2)$ and $\widetilde{g}_n=|\alpha_n|^{\frac{1}{2}} g_n (y/|\alpha_n|), u_n(y)=|\alpha_n|^2 (u(y/|\alpha_n|)-u(0))$. Then we have
\begin{equation}\nonumber
\begin{aligned}
&(u_n-(\alpha_n r_n)^2 e^{2i\theta_n})(\widetilde{\psi}_n''- \widetilde{\psi}_n) -u_n''  \widetilde{\psi}_n=\widetilde{g}_n(y),\\
& \|\widetilde{\psi}_n'\|_{L^2 (|\alpha_n|a,|\alpha_n|b)}+ \|\widetilde{\psi}_n\|_{L^2 (|\alpha_n|a,|\alpha_n|b)}= \|{\psi}_n'\|_{L^2 (a,b)}+ |\alpha_n| \|{\psi}_n\|_{L^2 (a,b)}\leq 1,\\
& \|\widetilde{g}_n'\|_{L^2 (|\alpha_n|a,|\alpha_n|b)}+ \|\widetilde{g}_n\|_{L^2 (|\alpha_n|a,|\alpha_n|b)}= \|{g}_n'\|_{L^2 (a,b)}+ |\alpha_n| \|{g}_n\|_{L^2 (a,b)}\to 0,
 \end{aligned}
\end{equation}
which implies that $\widetilde{\psi}_n$ is bounded in $H^1_{loc}(\mathbb{R})$ and $\widetilde{g}_n \to 0$ in $H^1_{loc}(\mathbb{R})$. Up to a subsequence, let us suppose that $\widetilde{\psi}_n \rightharpoonup \widetilde{\psi}_0$ in $H^1_{loc}(\mathbb{R})$, $\theta_n\to \theta_0\in [0,\pi/2], |\alpha_n| r_n \to r_0\in [0,+\infty] $ with $\widetilde{\psi}_0 \in H^1(\mathbb{R})$. Following similar arguments as in Lemma 6.4 in \cite{WZZ-apde}, we can deduce that $u_n \to y^2$ in $H^3_{loc}(\mathbb{R})$.\smallskip

{\bf Case 1.} $r_0=+\infty.$

In this case, it holds that $(|u_n''|+1)/(u_n-(|\alpha_n|r_n)^2 e^{2i\theta_n})\to 0$ in $L^\infty_{loc}(\mathbb{R})$ and $\widetilde{\psi}_n''- \widetilde{\psi}_n\to 0$ in $L^\infty_{loc}(\mathbb{R})$, thus $\widetilde{\psi}_n \to \widetilde{\psi}_0$ in $H^1_{loc}(\mathbb{R})$ with $\widetilde{\psi}_0''- \widetilde{\psi}_0=0$. Note that $\widetilde{\psi}_0 \in H^1(\mathbb{R})$, therefore $\widetilde{\psi}_0=0$ and $\widetilde{\psi}_n \to 0$ in $H^1_{loc}(\mathbb{R})$.\smallskip

{\bf Case 2.}  $r_0\in (0,+\infty).$

Firstly, if $\theta_0\not=0$, then $\widetilde{\psi}_n''- \widetilde{\psi}_n$ is bounded in $L^\infty_{loc}(\mathbb{R})$ and $\widetilde{\psi}_n \to \widetilde{\psi}_0$ in $C^1_{loc}(\mathbb{R})$ with
\begin{equation}\nonumber
\begin{aligned}
(y^2-r_0^2  e^{2i\theta_0})(\widetilde{\psi}_0''- \widetilde{\psi}_0) -2  \widetilde{\psi}_0=0, \end{aligned}
\end{equation}
which gives
\begin{equation}\nonumber
\begin{aligned}
\int_{\mathbb{R}}( |\widetilde{\psi}_0'|^2+|\widetilde{\psi}_0|^2)\mathrm{d}y=-\int_{\mathbb{R}}\frac{2|\widetilde{\psi}_0'|^2}{y^2-r_0^2  e^{2i\theta_0}}\mathrm{d}y.
 \end{aligned}
\end{equation}
Multiplying $e^{i\theta_0}$ and taking the imaginary part of the result to yield that
\begin{equation}\nonumber
\begin{aligned}
\sin \theta_0\int_{\mathbb{R}}( |\widetilde{\psi}_0'|^2+|\widetilde{\psi}_0|^2)\mathrm{d}y=-\sin \theta_0 \int_{\mathbb{R}}\frac{2(y^2+r_0^2) |\widetilde{\psi}_0'|^2}{|y^2-r_0^2  e^{2i\theta_0}|^2}\mathrm{d}y,
 \end{aligned}
\end{equation}
which implies that $\widetilde{\psi}_0=0$ and $\widetilde{\psi}_n \to 0$ in $H^1_{loc}(\mathbb{R})$.

If $\theta_0=0$, then $\widetilde{\psi}_n''- \widetilde{\psi}_n$ is bounded in $L^\infty_{loc}(\mathbb{R} \setminus \{\pm r_0\})$ and $\widetilde{\psi}_n \to \widetilde{\psi}_0$ in $C^1_{loc}(\mathbb{R} \setminus \{\pm r_0\})$ with
\begin{equation}\nonumber
\begin{aligned}
(y^2-r_0^2  e^{2i\theta_0})(\widetilde{\psi}_0''- \widetilde{\psi}_0) -2  \widetilde{\psi}_0=0, \ y\in \mathbb{R}\setminus \{\pm r_0\}.
\end{aligned}
\end{equation}
By Lemma \ref{critical-1}, $ \widetilde{\psi}_n \to  \widetilde{\psi}_0$ in $H^1(r_0 (1-\delta),r_0(1+\delta)) \cap H^1( -r_0(1+\delta),-r_0 (1-\delta))$, then $\widetilde{\psi}_n \to \widetilde{\psi}_0$ in $H^1_{loc}(\mathbb{R})\cap C^1_{loc}(\mathbb{R} \setminus \{\pm r_0\})$. Moreover, for any $\phi \in H^1(\mathbb{R})$, it holds that
\begin{equation}\label{theta-0}
\begin{aligned}
\int_{\mathbb{R}} (\widetilde{\psi}_0' \phi'+\widetilde{\psi}_0\phi)\mathrm{d}y+\mathrm{p.v.}\int_{\mathbb{R}} \frac{2\widetilde{\psi}_0\phi}{y^2-r_0^2}\mathrm{d}y+i\pi \sum_{y=\pm r_0}\frac{\widetilde{\psi}_0 \phi}{r_0}=0.
\end{aligned}
\end{equation}
Taking $\phi=\overline{\widetilde{\psi}_0}$ and taking the real part, we get $\widetilde{\psi}_0(\pm r_0)=0$. Thus, $\widetilde{\psi}_0\in H^2 (\mathbb{R})$ with
\begin{equation}\nonumber
\begin{aligned}
\int_{\mathbb{R}} \left(\left|\widetilde{\psi}_0'+\frac{-2y \widetilde{\psi}_0}{y^2-r_0^2}\right|^2+|\widetilde{\psi}_0|^2\right) \mathrm{d}y=\int_{\mathbb{R}} \left( |\widetilde{\psi}_0'|^2+|\widetilde{\psi}_0|^2+\frac{2|\widetilde{\psi}_0|^2}{y^2-r_0^2}\right)\mathrm{d}y=0,
\end{aligned}
\end{equation}
which yields that $\widetilde{\psi}_0=0$ and $\widetilde{\psi}_n \to 0$ in $H^1_{loc}(\mathbb{R})$.\smallskip

{\bf Case 3.} $r_0=0.$

In this case, $|\alpha_n|^2(c_n-u(y_0))\to 0$. Let us introduce a cut-off function $\eta\in C^\infty_0(\mathbb{R})$ with $\eta(y)=\cosh y$ for $|y|\leq 1$, $\eta=0$ for $|y|\geq 2$ and define $\eta_1=\eta''-\eta$. Obviously, $\eta_1=0$ provided that $|y|\leq 1$ or $|y|\geq 2$. Then we define
\begin{equation}\nonumber
\begin{aligned}
&\psi_{n^*}(y)=\psi_n(y) +\frac{g_n(y)}{u''(0)}\eta(|\alpha_n|y),\\
& g_{n^*}(y)=g_n(y)-\frac{u''(y)g_n(0)}{u''(0)}\eta (|\alpha_n|y)+|\alpha_n|^2(u(y)-c_n)\frac{g_n(0)}{u''(0)} \eta_1 (|\alpha_n|y).
\end{aligned}
\end{equation}
It hods that $\psi_{n^*},g_{n^*}\in H^1(a,b), g_{n^*}(0)=0$ with
\begin{equation}\nonumber
\begin{aligned}
(u(y)-c_n)(\psi_{n^*}''-\alpha_n^2 \psi_{n^*})-u''\psi_{n^*}=g_{n^*}.
\end{aligned}
\end{equation}
Note that $u_n=|\alpha_n|^2 u(y/|\alpha_n|), u_n\to y^2$ in $H^3_{loc}(\mathbb{R})$, then $u''(y)=u''_n(y/|\alpha_n|)$, $\alpha_n^2 (u(y)-c_n)=u_n (|\alpha_n| y)-\alpha_n^2 c_n$, $|\alpha_n^2 c_n|\to r_0^2=0,$ and for $n$ large enough, it holds that $\|u_n''\|_{H^1(-2,2)}+\|u_n-\alpha_n^2 c_n\|_{H^1(-2,2)}\leq C$.
Recalling the scaling $z=|\alpha_n|y$, we obtain
\begin{equation}\nonumber
\begin{aligned}
& |\alpha_n| \|\eta(|\alpha_n| y )\|_{L^2(a,b)}+  \|\partial_y \eta(|\alpha_n| y )\|_{L^2(a,b)}+|\alpha_n| \|u''(y)\eta(|\alpha_n| y )\|_{L^2(a,b)}\\
&\quad+\|\partial_y (u''(y)\eta(|\alpha_n| y ))\|_{L^2(a,b)}+ |\alpha_n| \| \alpha_n^2(u(y)-c_n)\eta_1(|\alpha_n| y )\|_{L^2(a,b)}\\&\quad+  \|\partial_y \left( \alpha_n^2(u(y)-c_n)\eta_1(|\alpha_n| y )\right)\|_{L^2(a,b)}\\
&\leq |\alpha_n|^{\frac{1}{2}} \Big(\|\eta\|_{L^2 (a|\alpha_n|,b|\alpha_n|)}+\|\eta'\|_{L^2 (a|\alpha_n|,b|\alpha_n|)}+\|u''_n\eta\|_{L^2 (a|\alpha_n|,b|\alpha_n|)}\\
&\quad+\|(u''_n\eta)'\|_{L^2 (a|\alpha_n|,b|\alpha_n|)}+\| (u_n- \alpha_n^2c_n)\eta_1\|_{L^2(a|\alpha_n|,b|\alpha_n|)}\\
&\quad +\|( (u_n- \alpha_n^2c_n)\eta_1)'\|_{L^2(a|\alpha_n|,b|\alpha_n|)}\Big)\\
&\leq C|\alpha_n|^{\frac{1}{2}} \Big((1+\|u''_n\|_{H^1(-2,2)})\|\eta\|_{H^1(\mathbb{R})}+\|u_n-\alpha_n^2 c_n\|_{H^1(-2,2)}\|\eta_1\|_{H^1(\mathbb{R})}\Big)\\
&\leq C|\alpha_n|^{\frac{1}{2}}.
\end{aligned}
\end{equation}
By Gagliardo-Nirenberg inequality, we have
$$|g_n(0)|\leq |\alpha_n|^{-\frac{1}{2}} (\|g_n'\|_{L^2 (a,b)}+|\alpha_n|\|g_n\|_{L^2(a,b)}),$$
and then
\begin{equation}\nonumber
\begin{aligned}
&|\alpha_n| \|g_{n^*}\|_{L^2 (a,b)}+ \|g_{n^*}'\|_{L^2 (a,b)}\leq \|g_n'\|_{L^2 (a,b)}+|\alpha_n|\|g_n\|_{L^2(a,b)}+C|\alpha_n|^{\frac{1}{2}} |g_n(0)|\\
&\quad\leq C  (\|g_n'\|_{L^2 (a,b)}+|\alpha_n|\|g_n\|_{L^2(a,b)})\to 0,\\
& |\alpha_n| \|\psi_{n^*}-\psi_n \|_{L^2(a,b)}+ \|\partial_y (\psi_{n^*}-\psi_n )\|_{L^2(a,b)}\to 0.
\end{aligned}
\end{equation}
Note that $\|\psi_n'\|_{L^2(a,b)}+|\alpha_n|\|\psi_n\|_{L^2(a,b)} \leq 1$, then for large $n$, it holds that $ |\alpha_n| \|\psi_{n^*} \|_{L^2(a,b)}+ \|\partial_y \psi_{n^*}\|_{L^2(a,b)}\leq 2$. Applying Lemma \ref{critical-3} to show that $ |\alpha_n| \|\psi_{n^*} \|_{L^2(-\delta/2,\delta/2)}+ \|\partial_y \psi_{n^*}\|_{L^2(-\delta/2,\delta/2)}\to 0$, which gives
\begin{equation}\nonumber
\begin{aligned}
& |\alpha_n| \|\psi_n \|_{L^2(-\delta/2,\delta/2)}+ \|\partial_y \psi_n \|_{L^2(-\delta/2,\delta/2)}\leq  |\alpha_n| \|\psi_{n^*}-\psi_n \|_{L^2(a,b)}\\
&+ \|\partial_y (\psi_{n^*}-\psi_n )\|_{L^2(a,b)}+ |\alpha_n| \|\psi_{n^*} \|_{L^2(-\delta/2,\delta/2)}+ \|\partial_y \psi_{n^*}\|_{L^2(-\delta/2,\delta/2)} \to 0.
\end{aligned}
\end{equation}
Thanks to $\alpha_n \to \infty$ and the definition of $\widetilde{\psi_n}$, we have
\begin{equation}\nonumber
\begin{aligned}
& \|\widetilde{\psi_n} \|_{L^2(-( |\alpha_n|\delta)/2,( |\alpha_n| \delta)/2)}+ \|\widetilde{ \psi_n}' \|_{L^2(-( |\alpha_n|\delta)/2,( |\alpha_n|\delta)/2)}\\
&= |\alpha_n| \|\psi_n \|_{L^2(-\delta/2,\delta/2)}+ \|\partial_y \psi_n \|_{L^2(-\delta/2,\delta/2)}\to 0,
\end{aligned}
\end{equation}
and $-( |\alpha_n|\delta)/2\to -\infty, ( |\alpha_n|\delta)/2\to +\infty$, then $\widetilde{\psi_n} \to 0$ in $H^1_{loc}(\mathbb{R})$.
\end{proof}

Now we are in a position to prove Proposition \ref{est-1}.

\begin{proof}[Proof of Proposition \ref{est-1}]
We only consider the case of $\mathrm{Im}\ c>0$ and the case of $\mathrm{Im}\ c<0$ can be obtained via taking conjugation. The proof is based on the contradiction argument.

If the conclusion is not true, then there exists $\psi_n \in H^1_0(-1,1),g_n \in H^1(-1,1)$ and $c_n,\alpha_n$ with $\mathrm{Im}\ c_n >0, \alpha_n>0$ such that $\|\psi_n'\|_{L^2}+|\alpha_n|\|\psi_n\|_{L^2}=1,\|g_n'\|_{L^2}+|\alpha_n|\|g_n \|_{L^2} \to 0, \alpha_n \to +\infty$ and
$$(u-c_n)(\psi_n''-\alpha_n^2 \psi_n)-u''\psi_n=g_n.$$
Note that $y_0=0$ is the critical point of $u$, then there exists $\delta,c_0\in (0,1)$ so that $ B(0,\delta) \subset [-1,1]$, and $|u''|>c_0$ for $y\in \Sigma_\delta$, where $\Sigma_\delta=[-1,1]\setminus B(0,\delta)$. For $0<r<\delta, \alpha_n\geq \frac{2}{c_0r}$, by Lemma \ref{monotone}, we have
\begin{equation}\nonumber
\begin{aligned}
&\|\partial_y\psi_n\|_{L^2(\Sigma_r)}+|\alpha_n|\|\psi_n\|_{L^2(\Sigma_r)}\\
&\leq C(r|\alpha_n|)^{-1} \left(\|\partial_y (g_n+u''\psi_n)\|_{L^2}+|\alpha_n|\| g_n+u''\psi_n \|_{L^2}+|\alpha_n|\|\psi\|_{L^2}\right)\\
&\leq C(r|\alpha_n|)^{-1} \left(\|\partial_y\psi_n\|_{L^2}+|\alpha_n|\|\psi_n\|_{L^2}+\|\partial_yg_n\|_{L^2}+|\alpha_n|\|g_n\|_{L^2}\right)\leq C(r|\alpha_n|)^{-1},
 \end{aligned}
\end{equation}
where $C>0$ is independent of $n,r$.

Set $r_n=R/|\alpha_n|$. For any fixed $R>\frac{2}{c_0}$ and  $\alpha_n>R/\delta$(for large  $n$),  it holds that $0<r<\delta,\alpha_n =R/r_n >\frac{2}{c_0r_n}$ and
\begin{equation}\nonumber
\begin{aligned}
&\|\partial_y\psi_n\|_{L^2(\Sigma_{R/|\alpha_n|})}+|\alpha_n|\|\psi_n\|_{L^2(\Sigma_{R/|\alpha_n|})}\leq C(r_n |\alpha_n|)^{-1}=CR^{-1},
 \end{aligned}
\end{equation}
which gives
\begin{equation}\label{lim-sup}
\begin{aligned}
\limsup_{n\to +\infty}\big(\|\partial_y\psi_n\|_{L^2(\Sigma_{R/|\alpha_n|})}+|\alpha_n|\|\psi_n\|_{L^2(\Sigma_{R/|\alpha_n|})}\big)\leq CR^{-1}.
 \end{aligned}
\end{equation}

It remains to deal with the interval near the critical point $y_0=0$. Indeed, for any fixed $R>\frac{2}{c_0}$, let $\widetilde{\psi_n}(y)=|\alpha_n|^{\frac{1}{2}} \psi_n (y_0+y/|\alpha_n|)$, Lemma \ref{critical-4} implies that $\widetilde{\psi_n}\to 0$ in $H^1_{loc}(\mathbb{R})$. Notice that
\begin{equation}\nonumber
\begin{aligned}
\|\partial_y\psi_n\|_{L^2(-R/|\alpha_n|,R/|\alpha_n|)}+|\alpha_n|\|\psi_n\|_{L^2(-R/|\alpha_n|,R/|\alpha_n|)}\\
=\|\partial_y\widetilde{\psi_n}\|_{L^2(-R,R)}+\|\widetilde{\psi_n}\|_{L^2(-R,R)} ,
 \end{aligned}
\end{equation}
which together with $\widetilde{\psi_n}\to 0$ in $H^1_{loc}(\mathbb{R})$ implies that
\begin{equation}\label{lim-2}
\begin{aligned}
\lim_{n\to +\infty} (\|\partial_y\psi_n\|_{L^2(-R/|\alpha_n|,R/|\alpha_n|)}+|\alpha_n|\|\psi_n\|_{L^2(-R/|\alpha_n|,R/|\alpha_n|)})=0.
 \end{aligned}
\end{equation}
Therefore, we obtain
\begin{equation}\nonumber
\begin{aligned}
1=&\|\partial_y\psi_n\|_{L^2}+|\alpha_n|\|\psi_n\|_{L^2}
\leq \|\partial_y\psi_n\|_{L^2(\Sigma_{R/|\alpha_n|})}+|\alpha_n|\|\psi_n\|_{L^2(\Sigma_{R/|\alpha_n|})}\\
&+(\|\partial_y\psi_n\|_{L^2(-R/|\alpha_n|,R/|\alpha_n|)}+|\alpha_n|\|\psi_n\|_{L^2(-R/|\alpha_n|,R/|\alpha_n|)}).
 \end{aligned}
\end{equation}
This along with \eqref{lim-sup} and \eqref{lim-2} shows that $1\leq CR^{-1}$ for any $R>\frac{2}{c_0}$ with $C>0$ independent of $R$, but this is a contradiction for $R$ large enough.
\end{proof}

Finally, we  prove Proposition \ref{est-2}.

\begin{proof}[Proof of Proposition \ref{est-2}]
First of all, let us assume that $|k|\geq M$ for some large $M$ determined later.

We introduce $\psi=\Psi_1+\Psi_2$ with
\begin{equation*}
\begin{aligned}
&(y^2-c)(\partial_y^2-|k|^2)\Psi_1-u''\Psi_1=g\chi_{(-\delta,\delta)},\\
&(y^2-c)(\partial_y^2-|k|^2)\Psi_2-u''\Psi_2=g\chi_{(-1,1)\setminus (-\delta,\delta)},
 \end{aligned}
\end{equation*}
where $\delta=1-(100|k|)^{-1+\beta}\geq \frac{1}{2}$ for any small enough $0<\beta\ll 1$.

For $y\in (-\delta,\delta)$, it holds that $|g\chi_{(-\delta,\delta)}|\leq e^{-|k|(1+y)}\lesssim e^{-c|k|^\beta}$. Applying Proposition \ref{est-1} to $\Psi_1$, we have
\begin{equation}\label{est-psi-1}
\begin{aligned}
\|\partial_y \Psi_1\|_{L^2}+|k|\|\Psi_1\|_{L^2}\leq C (\|\partial_y (g\chi_{(-\delta,\delta)})\|_{L^2}+|k|\| g\chi_{(-\delta,\delta)}\|_{L^2})\leq C|k|^{-n},
 \end{aligned}
\end{equation}
where $n$ is any positive constant.

Recall that $(y^2-c)(\partial_y^2-|k|^2)\Psi_2=g\chi_{(-1,1)\setminus (-\delta,\delta)}+2\Psi_2$. We get by Lemma \ref{monotone} that
\begin{equation*}
\begin{aligned}
& \|\partial_y \Psi_2\|_{L^2\big((-1,1)\setminus (-\delta,\delta)\big)}+|k| \| \Psi_2\|_{L^2\big((-1,1)\setminus (-\delta,\delta)\big)}\\
&\leq C(\delta |k|)^{-1} \Big(\|\partial_y g\chi_{(-1,1)\setminus (-\delta,\delta)}\|_{L^2}+|k|\|g\chi_{(-1,1)\setminus (-\delta,\delta)}\|_{L^2}+|k|\|\Psi_2\|_{L^2}\Big)\\
&\leq 2C |k|^{-1}\big(\|\partial_y g\chi_{(-1,1)\setminus (-\delta,\delta)}\|_{L^2}+|k|\|g\chi_{(-1,1)\setminus (-\delta,\delta)}\|_{L^2}\big)\\
&\quad+2C |k|^{-1}\big(\|\partial_y \psi \|_{L^2}+|k| \| \psi \|_{L^2})+2C|k|^{-1}(\|\partial_y \Psi_1\|_{L^2}+|k|\|\Psi_1\|_{L^2}\big)\\
&\leq C |k|^{-\frac{1}{2}}+C |k|^{-1}\big(\|\partial_y \psi \|_{L^2}+|k| \| \psi \|_{L^2}\big)+C|k|^{-1-n}.
 \end{aligned}
\end{equation*}
By the Sobolev inequality, we have
\begin{equation*}
\begin{aligned}
|\Psi_2(\pm \delta)|\leq & \|\Psi_2\|_{L^\infty \big((-1,1)\setminus (-\delta,\delta)\big)}\leq  \|\Psi_2\|_{L^2 \big((-1,1)\setminus (-\delta,\delta)\big)}^{\frac{1}{2}} \|\Psi_2\|_{H^1 \big((-1,1)\setminus (-\delta,\delta)\big)}^{\frac{1}{2}}\\
\leq &C|k|^{-\frac{1}{2}} \left( \|\partial_y \Psi_2\|_{L^2\big((-1,1)\setminus (-\delta,\delta)\big)}+|k| \| \Psi_2\|_{L^2\big((-1,1)\setminus (-\delta,\delta)\big)}\right).
 \end{aligned}
\end{equation*}

Next we give the estimate of $\Psi_2$ on $(-\delta,\delta)$. To this end, let us define
$$\Psi_{2,1}=\frac{\sinh |k|(\delta+y)}{\sinh 2\delta |k|}\Psi_2(\delta),\Psi_{2,2}=\frac{\sinh |k|(\delta-y)}{\sinh 2\delta |k|}\Psi_2(-\delta),$$
$$\Psi_{2,0}=\Psi_2-\Psi_{2,1}-\Psi_{2,2}.$$
Then we have
\begin{equation*}
\left\{
\begin{aligned}
&(y^2-c)(\Psi_{2,i}''-|k|^2\Psi_{2,i})-2\Psi_{2,i}=-2\Psi_{2,i},\ i=1,2,\\
&\Psi_{2,1}(\delta)=\Psi_2(\delta),\ \Psi_{2,1}(-\delta)=0,\ \Psi_{2,2}(\delta)=0,\ \Psi_{2,2}(-\delta)=\Psi_2(-\delta)
 \end{aligned}\right.
\end{equation*}
and
\begin{equation*}
\left\{
\begin{aligned}
& (y^2-c)(\Psi_{2,0}''-|k|^2\Psi_{2,0})-2\Psi_{2,0}=2(\Psi_{2,1}+\Psi_{2,2})+g\chi_{(-1,1)\setminus (-\delta,\delta)},\\
& \Psi_{2,0}(\pm \delta)=0.
 \end{aligned}\right.
\end{equation*}
Applying Proposition \ref{est-1} to $\Psi_{2,0}$ on $(-\delta,\delta)$, we infer that
\begin{equation*}
\begin{aligned}
&\|\partial_y \Psi_{2,0} \|_{L^2 (-\delta,\delta)} + |k| \| \Psi_{2,0} \|_{L^2 (-\delta,\delta)}\leq C|k|^{\frac{1}{2}} |\Psi_2(\pm \delta)|\\
&\leq C\left( \|\partial_y \Psi_2\|_{L^2\big((-1,1)\setminus (-\delta,\delta)\big)}+|k| \| \Psi_2\|_{L^2\big((-1,1)\setminus (-\delta,\delta)\big)}\right),
 \end{aligned}
\end{equation*}
which gives
\begin{equation*}
\begin{aligned}
&\|\partial_y \Psi_{2} \|_{L^2 (-\delta,\delta)} + |k| \| \Psi_{2} \|_{L^2 (-\delta,\delta)}\\
&\leq \sum_{i=0}^2 ( \|\partial_y \Psi_{2,i} \|_{L^2 (-\delta,\delta)} + |k| \| \Psi_{2,i} \|_{L^2 (-\delta,\delta)}) \leq C|k|^{\frac{1}{2}} |\Psi_2(\pm \delta)|\\
&\leq C\left( \|\partial_y \Psi_2\|_{L^2\big((-1,1)\setminus (-\delta,\delta)\big)}+|k| \| \Psi_2\|_{L^2\big((-1,1)\setminus (-\delta,\delta)\big)}\right)\\
&\leq C |k|^{-\frac{1}{2}}+C |k|^{-1}  (\|\partial_y \psi \|_{L^2}+|k| \| \psi \|_{L^2}).
 \end{aligned}
\end{equation*}
Moreover, we have
\begin{equation*}
\begin{aligned}
\|\partial_y \Psi_{2} \|_{L^2 } + |k| \| \Psi_{2} \|_{L^2}
\leq C |k|^{-\frac{1}{2}}+C |k|^{-1}  (\|\partial_y \psi \|_{L^2}+|k| \| \psi \|_{L^2}).
 \end{aligned}
\end{equation*}
Thus, we obtain
\begin{equation*}
\begin{aligned}
\|\partial_y \psi \|_{L^2} + |k| \| \psi \|_{L^2 }
\leq &\|\partial_y \Psi_1\|_{L^2}+|k|\|\Psi_1\|_{L^2}+\|\partial_y \Psi_2\|_{L^2}+|k|\|\Psi_2\|_{L^2}\\
\leq &C |k|^{-\frac{1}{2}}+C |k|^{-1}  (\|\partial_y \psi \|_{L^2}+|k| \| \psi \|_{L^2}).
 \end{aligned}
\end{equation*}
Choosing $|k|\geq 2C=:M$,  we have
\begin{equation*}
\begin{aligned}
\|\partial_y \psi \|_{L^2} + |k| \| \psi \|_{L^2 }
\leq C |k|^{-\frac{1}{2}}.
 \end{aligned}
\end{equation*}

If $1\leq |k|\leq M$, by Proposition 6.1 in \cite{WZZ-apde}, we have
\begin{equation*}
\begin{aligned}
\|\partial_y \psi \|_{L^2} + |k| \| \psi \|_{L^2 }\leq& (1+M) \| \psi \|_{H^1} \leq C(1+M)\|g\|_{H^1}\\
\leq& C(1+M)|k|^{\frac{1}{2}}\leq C(1+M) M |k|^{-\frac{1}{2}}\leq C|k|^{-\frac{1}{2}}.
 \end{aligned}
\end{equation*}

The proof is completed.
\end{proof}

\section{Space-time estimates for linearized Euler}

We need the following space-time estimates for the linearized Euler equation around the Poiseuille flow
\beno
 \partial_t{\om}+ik_1(1-y^2)\omega+2ik_1\psi=f
\eeno
for $t\in[0,T]$ and $y\in[-1,1]$, and $\psi=(\partial^2_y-|k|^2)^{-1}\om$.

\begin{proposition}\label{lem:Rayleigh-000}
It holds that
\begin{equation}\label{est:omT-000}
  \begin{aligned}
 & \|\om(T)\|_{L^2}^2+|k_1|\int_0^T\big(\|\partial_y\psi(t)\|_{L^2}^2+|k|^2\|\psi(t)\|_{L^2}^2\big)dt\\
&\leq C \|(\partial_y,|k|)\om(0)\|_{L^2}^2+C|k_1|^{-1}\int_0^T\big(\|\partial_yf(t)\|_{L^2}^2+|k|^2\|f(t)\|_{L^2}^2\big)dt.
  \end{aligned}
\end{equation}
If $f(t,-1)=f(t,1)=0$, then we also have
\begin{equation}\label{est:parpsi0-000}
  \begin{aligned}
&\int_0^T\big(|\partial_y\psi(t,0)|^2+|\partial_y\psi(t,1)|^2\big)dt\\
&\leq C \|(\partial_y,|k|)\om(0)\|_{L^2}^2+C|k_1|^{-1}\int_0^T\big(\|\partial_yf(t)\|_{L^2}^2+|k|^2\|f(t)\|_{L^2}^2\big)dt.
\end{aligned}
\end{equation}
When $f=0$, we have
\begin{align}\label{est:kpsi-Linf-000}
    &\|k\psi(t)\|_{L^2}\leq C\min\Big\{\dfrac{|k|^{\f12}}{\langle t\rangle^2}\|\omega(0)\|_{H^2},\dfrac{\|\omega(0)\|_{H^1}}{|k|^{\f12}\langle t\rangle},\dfrac{\|\omega(0)\|_{L^2}}{|k|}\Big\},
 \end{align}
 which in particular gives
 \begin{align}\label{est:tkpsi-L2-000}
     &\int_{0}^{T}\langle t\rangle^2\|k\psi(t)\|_{L^2}^2dt \leq C|k|^{-1}(\|\omega(0)\|_{H^2}^2+|k|^2\|\omega(0)\|_{H^1}^2).
  \end{align}
\end{proposition}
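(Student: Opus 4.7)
The plan is to combine a direct $L^2$ energy estimate with a Laplace-Fourier reduction to the Rayleigh equation, whose resolvent bounds are supplied by Proposition \ref{est-1} and Proposition \ref{est-2}. The $L^\infty_t L^2_y$ bound on $\omega$ in \eqref{est:omT-000} is immediate: testing against $\overline{\omega}$ and taking real parts, the nonlocal coupling $2ik_1\langle\psi,\omega\rangle=-2ik_1\|(\partial_y,|k|)\psi\|_{L^2}^2$ is purely imaginary and drops out. For the space-time integrated estimate on $(\partial_y,|k|)\psi$, I would extend the solution to $t\in\mathbb{R}$ by zero for $t<0$, absorbing the initial datum as a distributional source $\omega(0)\delta(t)$. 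The Fourier transform in $t$ (with dual variable $\lambda$, spectral parameter $c=-\lambda/k_1$) then reduces the equation to the Rayleigh problem \eqref{lap-Rayleigh-general} with source $\tilde g=-i(\tilde f+\omega(0))/k_1$. Proposition \ref{est-1}, applied with $c$ in the appropriate half-plane and extended to the real axis via the limiting absorption principle, gives $\|(\partial_y,|k|)\tilde\psi\|_{L^2_y}\leq C|k_1|^{-1}\|(\partial_y,|k|)\tilde g\|_{L^2_y}$ uniformly in $\lambda$; Plancherel in $t$ then delivers \eqref{est:omT-000}. The boundary trace \eqref{est:parpsi0-000} would follow by the same reduction combined with a pointwise-in-$y$ bound $|\partial_y\tilde\psi(y_0)|^2\leq C|k_1|^{-2}\|(\partial_y,|k|)\tilde g\|_{L^2_y}^2$ for $y_0\in\{0,\pm 1\}$, extracted from Proposition \ref{est-1} together with the Rayleigh equation itself, using $f(t,\pm 1)=0$ to control $\tilde g(\pm 1)$.

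The pointwise decay \eqref{est:kpsi-Linf-000} uses the commuting vector field $X=\partial_y-2ik_1 yt$, which satisfies $[\partial_t+ik_1(1-y^2),X]=0$. The third bound $\|k\psi\|_{L^2}\leq C\|\omega_0\|_{L^2}/|k|$ is immediate from the elliptic estimate $\|\psi\|_{L^2}\leq C|k|^{-2}\|\omega\|_{L^2}$ combined with $L^2$-conservation of $\omega$ under $f=0$. For the two sharper rates I set $\omega_1=X\omega$, and a direct computation gives $(\partial_t+ik_1(1-y^2))\omega_1+2ik_1\psi_1=-2ik_1(A+B)$ where $\psi_1=(\partial_y^2-|k|^2)^{-1}\omega_1$ is the Dirichlet-homogeneous stream function of $\omega_1$, $A$ solves $(\partial_y^2-|k|^2)A=-4ik_1 t\partial_y\psi$ with $A(\pm 1)=0$, and $B$ is the harmonic extension matching $B(\pm 1)=\partial_y\psi(\pm 1)$. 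Feeding this forced evolution into \eqref{est:omT-000}-\eqref{est:parpsi0-000} and absorbing the $A,B$ contributions yields $\|\omega_1\|_{L^\infty_t L^2_y}\leq C\|\omega_0\|_{H^1}$. Writing $\omega(t,y)=e^{ik_1 y^2 t}v(t,y)$ so that $\omega_1=e^{ik_1 y^2 t}\partial_y v$, the decay $\|k\psi(t)\|_{L^2}\leq C\|\omega_0\|_{H^1}/(|k|^{1/2}\langle t\rangle)$ follows from a stationary-phase analysis of $(\partial_y^2-|k|^2)^{-1}(e^{ik_1 y^2 t}\partial_y v)$. The $\langle t\rangle^{-2}$ rate is obtained by iterating to bound $\|X^2\omega\|_{L^2}$ by $\|\omega_0\|_{H^2}$, and \eqref{est:tkpsi-L2-000} then follows by squaring \eqref{est:kpsi-Linf-000} (using the middle bound for $t\geq 1$ and the third for $t\leq 1$) and integrating.

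The main technical obstacle is the critical point $y=0$ of the Poiseuille profile, where the phase $k_1 y^2 t$ has vanishing first derivative. Inside the critical layer of width $\sim(|k_1|t)^{-1/2}$, the naive integration-by-parts of stationary phase fails, and the sharp decay can only be recovered after a careful splitting between a clean interior contribution (where two integrations by parts yield $\langle t\rangle^{-2}$) and a sub-dominant critical-layer contribution. At the resolvent level, this same degeneracy is what produces the factor $(|1-\lambda|^{1/2}+|\nu/k_1|^{1/4})$ throughout Section \ref{sec:resolvent-slip}, and the uniform-in-$k$ bounds of Proposition \ref{est-1}-\ref{est-2} are tailored precisely to handle it.
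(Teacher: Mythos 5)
The central difficulty is the initial-data contribution, and your proposed treatment of it does not close. You extend by zero for $t<0$ and absorb $\omega(0)$ as a source $\omega(0)\delta(t)$, hoping to then apply the uniform resolvent bound and Plancherel. But the Fourier/Laplace transform of $\omega(0)\delta(t)$ is the \emph{constant-in-$\lambda$} function $\omega(0)$, which is not in $L^2_\lambda$; the resolvent bound gives $\|(\partial_y,|k|)\Phi(\lambda)\|_{L^2_y}\le C|k_1|^{-1}\|(\partial_y,|k|)\omega(0)\|_{L^2}$ uniformly in $\lambda$, and integrating a uniform bound over $\lambda\in\mathbb{R}$ yields infinity, not the desired finite space–time integral. (This is not a technicality: when $f=0$, $\|\omega(t)\|_{L^2}$ is conserved, so $\omega$ genuinely fails to be $L^2$ in time, and no pure Plancherel argument can produce the integrated estimate.) The paper avoids this by splitting linearly into the two cases $\{\omega(0)=0,\ f\ne 0\}$ and $\{\omega(0)\ne 0,\ f=0\}$; the Laplace/Plancherel route is used only in the first case, where $\tilde g=\tilde f/(ik_1)$ is $L^2_\lambda L^2_y$, and the second case is handled by the pointwise-in-time inviscid damping rates cited from \cite{WZZ-apde} (the $1/\langle t\rangle$ decay of $\|(\partial_y,|k|)\psi\|_{L^2}$ integrates). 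You would need to incorporate a similar dichotomy, and in the $f=0$ branch replace the Fourier argument by a time-decay argument.

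A second, related gap: the $L^\infty_tL^2_y$ bound on $\omega$ is \emph{not} immediate from testing against $\overline\omega$ when $f\ne 0$. The source gives $\frac{1}{2}\frac{d}{dt}\|\omega\|^2=\operatorname{Re}\langle f,\omega\rangle$, and Gr\"onwall yields $\|\omega(T)\|\le\|\omega(0)\|+\int_0^T\|f\|\,dt$, which is not the claimed $L^2_t$-type bound. To close it, the paper exploits the just-derived integrated inviscid-damping bound on $\psi$ together with a cut-off decomposition $\omega=\omega\chi+\omega(1-\chi)$ isolating the critical point $y=0$ from the boundaries: on the support of $\chi$ the profile $1-y^2$ is monotone with nondegenerate slope, so Proposition~2.1 of \cite{WZZ-cmp} applies; on the support of $1-\chi$ the source is compactly supported away from $y=\pm1$ and one can integrate by parts to put the damping on $\psi$. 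The same cut-off device is what drives \eqref{est:parpsi0-000} in the $f=0$ case. Your proposed pointwise trace bound $|\partial_y\tilde\psi(y_0)|\lesssim |k_1|^{-1}\|(\partial_y,|k|)\tilde g\|_{L^2}$ is roughly what Proposition~\ref{est-2} provides via the dual test function $\gamma_1^{(0)}$ in the $\omega(0)=0$ branch, but for the $f=0$ branch you again need either a decay-in-time argument or the cut-off/monotone-flow argument, and this is precisely where the $|k|^{-1/2}$ gain in Proposition~\ref{est-2} is needed; invoking Proposition~\ref{est-1} alone will not produce it.

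Your vector-field/stationary-phase sketch for \eqref{est:kpsi-Linf-000} is in the right spirit (it is essentially the argument of \cite{WZZ-apde}, which the paper simply cites), but as written it is circular: you propose to bound $\|\omega_1\|_{L^\infty_tL^2_y}$ by feeding the $A,B$ forcing into \eqref{est:omT-000}--\eqref{est:parpsi0-000}, which are part of what is being proved. That can be repaired by first establishing the $\omega(0)=0$ branch of the proposition and only then applying it to the forced $\omega_1$-equation — this is exactly the order of operations the paper uses later in Step~1 of Lemma~\ref{sp-Homo} — but the logical structure needs to be made explicit.
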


\begin{proof}
 \textbf{Step 1.} We consider the case when $\omega(0)=0$, $f(t,\pm1)=0$.\smallskip

 We can take Laplace transform in $t$. For $\text{Re}(\lambda)>0$, let
\begin{align*}
   &\Phi(\lambda,y)=\int_{0}^{+\infty}\psi(t,y)e^{-\lambda t}dt, \ F(\lambda,y)=\int_{0}^{+\infty}f(t,y)e^{-\lambda t}dt.
\end{align*}
Then $\Phi(\lambda,\cdot)\in H^2(0,1),\ F(\lambda,\cdot)\in H^1(0,1)$ for $\text{Re}(\lambda)>0$. Using Plancherel's formula, we know that for $\varepsilon>0,\ j=0,1$,
\begin{align}
    &\int_{\mathbb{R}}\|\partial_y^j\Phi(\varepsilon+is)\|_{L^2}^2ds=2\pi \int_{0}^{+\infty}e^{-2\varepsilon t}\|\partial_y^j\psi(t)\|_{L^2}^2dt,\label{est:parPhi-Psi-001}\\
    &\int_{\mathbb{R}}|\partial_y\Phi(\varepsilon+is,\pm1)|^2ds=2\pi \int_{0}^{+\infty}e^{-2\varepsilon t}|\partial_y\psi(t,\pm1)|^2dt,\label{est:parPhi-Psi-002}\\
    &\int_{\mathbb{R}}\|\partial_y^jF(\varepsilon+is)\|_{L^2}^2ds=2\pi \int_{0}^{+\infty}e^{-2\varepsilon t}\|\partial_y^jf(t)\|_{L^2}^2dt.\label{est:parPhi-Psi-003}
 \end{align}
 Furthermore, $\Phi$ satisfies
 \begin{align}\label{est:Phieq-000}
     &(1-y^2-i\lambda/k_1)(\partial_y^2\Phi-|k|^2\Phi)+2\Phi=W,\quad \Phi(\lambda,0)=\Phi(\lambda,1)=0
  \end{align}
  with $W=(i/k_1)F$.

  If $|\text{Re}(\lambda)|\in (0,|k_1|\epsilon_2)$($\epsilon_2>0$ is the constant in Proposition \ref{est-1}), then $|\text{Im}(i\lambda/k_1)|\in (0,\epsilon_2)$, and by   Proposition \ref{est-1},
  \begin{align}\label{est:paryPhiL2-F-000}
    &\|\partial_y\Phi\|_{L^2}+|k|\|\Phi\|_{L^2}\leq C(\|\partial_y F\|_{L^2}+|k|\|F\|_{L^2}).
  \end{align}
 Integrating this over $\text{Re}(\lambda)=\varepsilon\in (0,|k_1|\epsilon_2)$ and using \eqref{est:parPhi-Psi-001} and \eqref{est:parPhi-Psi-003}, we deduce that
  \begin{align*}
      & \int_{0}^{+\infty}e^{-2\varepsilon t}\|(\partial_y,|k|)\psi(t)\|_{L^2}^2dt\leq C|k_1|^{-2}\int_{0}^{+\infty}e^{-2\varepsilon t}\|(\partial_y,|k|)f(t)\|_{L^2}^2dt.
   \end{align*}
   Letting $\varepsilon\rightarrow 0+$, we obtain
   \begin{align}\label{est:intparpsi-000}
      \int_{0}^{T}\|(\partial_y,|k|)\psi(t)\|_{L^2}^2dt\leq C|k_1|^{-2}\int_{0}^{T}\|(\partial_y,|k|)f(t)\|_{L^2}^2dt.
   \end{align}

  Let $\gamma_1=-\frac{\sinh(|k|(1+y))}{\sinh(2|k|)}$, and $\gamma_1^{(0)}$ solve
   \begin{align*}
      & (1-y^2+i\bar{\lambda}/k_1)(\partial_y^2-|k|^2)\gamma_1^{(0)}
      +2\gamma_1^{(0)}=\gamma_1, \quad \gamma_1^{(0)}(\pm1)=0.
   \end{align*}
   By Proposition \ref{est-2},  we have
\begin{align}\label{est:gamm10-000}
      & \|(\partial_y,|k|)\gamma_1^{(0)}\|_{L^2}\leq C|k|^{-\f12}.
   \end{align}
   Then by a dual argument, we have (thanks to $f(\pm1)=0$, we have $W(\pm1)=0$)
  \begin{align*}
     &\langle (\partial_y^2-|k|^2)\Phi,\gamma_1\rangle=\big \langle (1-y^2-i\lambda/k_1)(\partial_y^2\Phi-|k|^2\Phi)+2\Phi,(\partial_y^2-|k|^2)\gamma_1^{(0)}\big\rangle\\
     &=\langle W,(\partial_y^2-|k|^2)\gamma_1^{(0)}\rangle=-\langle \partial_yW,\partial_y\gamma_1^{(0)} \rangle-|k|^2\langle W,\gamma_1^{(0)}\rangle.
  \end{align*}
  Then we conclude by \eqref{est:gamm10-000} and  $W=(i/k_1)F$ that
  \begin{align*}
     &|\partial_y\Phi(t,1)|= |\langle (\partial_y^2-|k|^2)\Phi,\gamma_1\rangle|\leq C|k_1|^{-1}|k|^{-\f12}\|(\partial_y,|k|)F\|_{L^2},
  \end{align*}
  which along with \eqref{est:parPhi-Psi-002} and \eqref{est:parPhi-Psi-003} gives
  \begin{align*}
     &  \int_{0}^{+\infty}e^{-2\varepsilon t}|\partial_y\psi(t,1)|^2dt \leq C |k_1|^{-2}|k|^{-1} \int_{0}^{+\infty}e^{-2\varepsilon t}\|(\partial_y,|k|)f(t)\|_{L^2}^2dt.
  \end{align*}
Letting $\varepsilon\rightarrow 0+$, we obtain
   \begin{align*}
      \int_{0}^{T}|\partial_y\psi(t,1)|^2dt \leq C |k_1|^{-2}|k|^{-1} \int_{0}^{T}\|(\partial_y,|k|)f(t)\|_{L^2}^2dt.
   \end{align*}
  Similarly, we can get the same estimate for $|\partial_y\psi(t,-1)|$, then we coclude
  \begin{align}\label{est:intparpsi(pm1)-000}
      \int_{0}^{T}|\partial_y\psi(t,\pm1)|^2dt \leq C |k_1|^{-2}|k|^{-1} \int_{0}^{T}\|(\partial_y,|k|)f(t)\|_{L^2}^2dt.
   \end{align}

     \begin{remark}\label{Rem:intpar-00}
       In the proof, we can find that \eqref{est:intparpsi-000} still holds for the case when $\omega(0)=0, \ f(t,\pm1)\neq 0$.
     \end{remark}

  \textbf{Step 2.} We first consider the case of $f=0$.\smallskip

 From \cite{WZZ-apde}, we know that
 \begin{align}\label{est:parpsi-wH1-00}
     & \|\partial_y\psi\|_{L^2}+|k|\|\psi\|_{L^2}\leq C\min\Big\{\dfrac{\|\omega(0)\|_{H^1}}{|k|^{\f12}\langle t\rangle},|k|^{-1}\|\omega(0)\|_{L^2}\Big\},
  \end{align}and
   \begin{align}\label{est:kpsi-wH1-00}
     &\|k\psi(t)\|_{L^2}\leq C\min\Big\{\dfrac{|k|^{\f12}}{\langle t\rangle^2}\|\omega(0)\|_{H^2},\dfrac{\|\omega(0)\|_{H^1}}{|k|^{\f12}\langle t\rangle},\dfrac{\|\omega(0)\|_{L^2}}{|k|}\Big\}.
  \end{align}

 Taking the inner product with $w$ and taking the real part, we have
 \begin{align}\label{est:w-cons-000}
     &\dfrac{1}{2}\dfrac{d}{dt}\|\omega\|_{L^2}^2= 0\Rightarrow \|\omega\|_{L^2}\leq \|\omega(0)\|_{L^2}.
  \end{align}
  By \eqref{est:parpsi-wH1-00}, we have
  \begin{align}\label{est:parpsiL2-wH1-00}
     & \int_{0}^{T}(\|\partial_y\psi(t)\|_{L^2}^2+|k|^2\|\psi(t)\|_{L^2}^2)dt\leq C|k|^{-\f52}\|(\partial_y,|k|)\omega(0)\|_{L^2}^2.
  \end{align}

 Let $\gamma_1=-\frac{\sinh(|k|(1+y))}{\sinh(2|k|)}$, $\chi \geq 0$ be a cut-off function so that $\chi(y)=0$ if $|y|\leq 1/4$ and $\chi(y)=1$ if $1/2\leq |y|\leq 1$ .
Let $\omega_1=\omega\chi$ and $\omega_2=\omega(1-\chi)$, $\psi_j=(\partial_y^2-|k|^2)^{-1}\omega_j,\ j=1,2$. Then we have
\begin{equation*}
  \left\{\begin{aligned}
&\partial_t{\om_1}+ik_1(1-y^2)\omega_1=-2ik_1\psi\chi,\\
    &\omega_1|_{t=0}=\omega(0)\chi.
  \end{aligned}\right.
\end{equation*}
Let $c_0=\inf_{1/4\leq |y|\leq 1}|\partial_y(1-y^2)|=1/2$. Thanks to the fact that on the support of $\omega_1$, $|\partial_y(1-y^2)|\geq c_0>0$,
we deduce from Proposition 2.1 in \cite{WZZ-cmp} that
\begin{equation*}
\begin{aligned}
    &\int_{0}^{T}|\langle \omega_1(t,y),\gamma_1\rangle|^2dt=\int_{0}^{T}|\partial_y\psi_1(t,1)|^2dt\\
    &\leq  Cc_0^{-1}|k|^{-1}\|\omega_1(0)\|^2_{L^2}+ Cc_0^{-2} \int_{0}^{T}\|(\partial_y,|k|)(k_1\psi\chi)\|_{L^2}^2dt\\
    &\leq  C|k|^{-1}\|\omega(0)\|^2_{L^2}+ C |k_1|^2\int_{0}^{T}\|(\partial_y,|k|)\psi\|_{L^2}^2dt,
 \end{aligned}
\end{equation*}
which along with \eqref{est:parpsiL2-wH1-00} gives
\begin{align}
     \int_{0}^{T}|\langle \omega_1(t,y),\gamma_1\rangle|^2dt \leq & C|k|^{-1}\|\omega(0)\|^2_{L^2}+ C |k_1|^2|k|^{-\f52}\|(\partial_y,|k|)\omega(0)\|_{L^2}^2\nonumber\\
     \leq &C\|(\partial_y,|k|)\omega(0)\|_{L^2}^2.\label{est:om1-ga1-000}
 \end{align}
 For $\omega_2$, we have $\omega_2|_{y=\pm1}=0$, then by integration by parts and \eqref{est:parpsiL2-wH1-00}, we have
 \begin{align*}
    \int_{0}^{T}|\langle \omega_2(t,y),\gamma_1\rangle|^2dt&=\int_{0}^{T}|\langle \omega(t,y),(1-\chi)\gamma_1\rangle|^2dt\\
    &\leq \int_{0}^{T}\|(\partial_y,|k|)\psi(t)\|_{L^2}^2\|(\partial_y,|k|)((1-\chi)\gamma_1)
    \|^2_{L^2}dt\\
    &\leq Ce^{-|k|/2}\int_{0}^{T}\|(\partial_y,|k|)\psi(t)\|_{L^2}^2dt\leq C\|(\partial_y,|k|)\omega(0)\|_{L^2}^2,
 \end{align*}
 which along with \eqref{est:om1-ga1-000} gives
 \begin{align*}
    \int_{0}^{T}|\partial_y\psi(t,1)|^2dt=\int_{0}^{T}|\langle \omega(t,y),\gamma_1\rangle|^2dt\leq C\|(\partial_y,|k|)\omega(0)\|_{L^2}^2.
  \end{align*}
We can obtain the same estimate for $\partial_y\psi(t,-1)$, thus,
 \begin{align}\label{est:om-ga1-000}
    \int_{0}^{T}|\partial_y\psi(t,\pm1)|^2dt\leq  C\|(\partial_y,|k|)\omega(0)\|_{L^2}^2.
  \end{align}

    \textbf{Step 3.} We consider the case of $\omega(0)=0$.\smallskip

As in Case 2,  we still denote $\omega_1=\omega\chi$ and $\omega_2=\omega(1-\chi)$, $\psi_j=(\partial_y^2-|k|^2)^{-1}\omega_j,\ j=1,2$, then
\begin{equation*}
  \left\{\begin{aligned}
&\partial_t{\om_1}+ik_1(1-y^2)\omega_1=-2ik_1\psi\chi-f\chi,\\
    &\omega_1|_{t=0}=0.
  \end{aligned}\right.
\end{equation*}

Let $c_0=\inf_{1/4\leq |y|\leq 1}|\partial_y(1-y^2)|=1/2$. Thanks to the fact that on the support of $\omega_1$, $|\partial_y(1-y^2)|\geq c_0>0$, we deduce from Proposition 2.1 in \cite{WZZ-cmp} that
\begin{align*}
   \|\omega_1(T)\|_{L^2}^2
   \leq & Cc_0^{-1}|k_1|^{-2} \int_{0}^{T}\|(\partial_y,|k|)(2ik_1\psi\chi+f\chi)\|_{L^2}^2dt\\
    \leq &C \int_{0}^{T}\big(\|(\partial_y,|k|)\psi\|_{L^2}^2+|k_1|^{-2}\|(\partial_y,|k|)f\|_{L^2}^2\big)dt\\
    \leq &C|k_1|^{-2}\int_{0}^{T}\|(\partial_y,|k|)f\|_{L^2}^2dt.
\end{align*}
Here we have used \eqref{est:intparpsi-000} (recall Remark \ref{Rem:intpar-00}).

 For $\omega_2$, we have
 \begin{equation*}
  \left\{\begin{aligned}
&\partial_t{\om_2}+ik_1(1-y^2)\omega_2=-2ik_1\psi(1-\chi)-f(1-\chi),\\
    &\omega_2|_{t=0}=0.
  \end{aligned}\right.
\end{equation*}
Notice that $\big(2ik_1\psi(1-\chi)+f(1-\chi)\big)|_{y=\pm1}=0$, then
   \begin{align*}
       \dfrac{1}{2}\dfrac{d}{dt}\|\omega_2\|_{L^2}^2& =-\mathbf{Re}\langle \omega_2,2ik_1\psi(1-\chi)+f(1-\chi)\rangle \\
       &\leq C\|(\partial_y,|k|)\psi\|_{L^2}\|(\partial_y,|k|)(2ik_1\psi(1-\chi)+f(1-\chi))\|_{L^2}\\
       &\leq C\|(\partial_y,|k|)\psi\|_{L^2}\big(|k_1|\|(\partial_y,|k|)\psi\|_{L^2}+\|(\partial_y,|k|)f\|_{L^2})
    \end{align*} In summary, we obtain (using \eqref{est:intparpsi-000})
     \begin{align*}
        \|\omega_2(T)\|_{L^2}^2&\leq C |k_1| \int_{0}^{T}\big(\|(\partial_y,|k|)\psi\|_{L^2}^2+|k_1|^{-2}\|(\partial_y,|k|)f\|_{L^2}^2\big)dt\\
        &\leq C|k_1|^{-1}\int_{0}^{T}\|(\partial_y,|k|)f(t)\|_{L^2}^2dt.
     \end{align*}
 Thus, we arrive at
     \begin{equation}\label{est:w-intparpsi-000}
     \begin{aligned}
         & \|\omega(T)\|_{L^2}\leq C|k_1|^{-1}\int_{0}^{T}(\|\partial_yf(t)\|_{L^2}^2+ |k|^2\|f(t)\|_{L^2}^2)dt.
      \end{aligned}
     \end{equation}

  In summary,  \eqref{est:omT-000} follows from \eqref{est:intparpsi-000} (recall Remark \ref{Rem:intpar-00}), \eqref{est:parpsiL2-wH1-00}, \eqref{est:w-cons-000} and \eqref{est:w-intparpsi-000}. \eqref{est:parpsi0-000}  follows from \eqref{est:intparpsi(pm1)-000} and \eqref{est:om-ga1-000}.
 \eqref{est:kpsi-Linf-000} and \eqref{est:tkpsi-L2-000}  follow from \eqref{est:parpsi-wH1-00} and \eqref{est:kpsi-wH1-00}.
\end{proof}

\section*{Acknowledgment}
Appendix C comes from an unpublished note related to a joint work \cite{WZZ-apde} of Zhifei Zhang. We thank Dongyi Wei and Weiren Zhao for allowing us to share this interesting note in this paper. We also thank Weiren Zhao for pointing out an error about the proof of the space-time estimates in the previous version. S. Ding is supported by the Key Project of National Natural Science Foundation of China  under Grant 12131010, National Natural Science Foundation of China  under Grant 12271032 and  Natural Science Foundation of Guangdong Province under Grant 2021A1515010249 and 2021A1515010303. Z. Lin and Z. Zhang are partially supported by National Natural Science Foundation of China under Grant  11971009, 12171010 and 12288101.


\begin{thebibliography}{99}

\bibitem{AH} Y. Almog and B. Helffer, {\it On the stability of laminar flows between plates},  Arch. Ration. Mech. Anal., 241(2021), 1281-1401.

\bibitem{AH2} Y. Almog and B. Helffer, {\it On the stability of symmetric flows in a two-dimensional channel}, arXiv:2212.12827.

\bibitem{BGM-bams} J. Bedrossian, P. Germain and N. Masmoudi, {\it Stability of the Couette flow at high Reynolds numbers in two dimensions and three dimensions},  Bull. Amer. Math. Soc. (N.S.), 56(2019), 373–414.

\bibitem{BGM1} J. Bedrossian, P. Germain and N. Masmoudi, {\it On the stability threshold for the 3D Couette flow in Sobolev regularity}, Ann. Math., 185(2017) 541-608.

\bibitem{BGM2} J. Bedrossian, P. Germain and N. Masmoudi, {\it Dynamics near the subcritical transition of the 3D Couette flow I: Below threshold case}, Mem. Amer. Math. Soc., 226 (2020), no. 1294.

\bibitem{BGM3} J. Bedrossian, P. Germain and N. Masmoudi, {\it Dynamics near the subcritical transition of the 3D Couette flow II: Above threshold case}, Mem. Amer. Math. Soc. 279(2022), no. 1377.

\bibitem{BH} J. Bedrossian and S. He, {\it Inviscid damping and enhanced dissipation of the boundary layer for 2D Navier-Stokes linearized around Couette flow in a channel},  Comm. Math. Phys., 379 (2020), 177-226.

\bibitem{BM} J. Bedrossian and N. Masmoudi, {\it Inviscid damping and the asymptotic stability of planar shear flows in the 2D Euler equations}, Publ. Math. Inst. Hautes \'{E}tudes Sci., 122(2015), 195-300.

\bibitem{BMV} J. Bedrossian, N. Masmoudi and V. Vicol, {\it Enhanced dissipation and inviscid damping in the inviscid limit of the Navier-Stokes equations near the two dimensional Couette flow},  Arch. Ration. Mech. Anal., 219(2016), 1087-1159.

\bibitem{BVW} J. Bedrossian, V. Vicol and F. Wang, {\it The Sobolev stability threshold for 2D shear flows near Couette},
J. Nonlinear Sci., 28(2018), 2051-2075.

\bibitem{CLWZ} Q. Chen, T. Li, D. Wei and Z. Zhang, {\it Transition threshold for the 2-D Couette flow in a finite channel},
Arch. Rational Mech. Anal., 238(2020), 125-183.

\bibitem{CWZ-cpam} Q. Chen, D. Wei and  Z. Zhang, {\it Linear stability of pipe Poiseuille flow at high Reynolds number regime},
Comm. Pure Appl. Math., 76(2023), 1868–1964.

\bibitem{CWZ-mem} Q. Chen, D. Wei and Z. Zhang, {\it Transition threshold for the 3D Couette flow in a finite channel},
arXiv:2006.00721, to appear in Mem. Amer. Math. Soc..

\bibitem{CWZ-cmp} Q. Chen, D. Wei and Z. Zhang, {\it Linear inviscid damping and enhanced dissipation for monotone shear flows},
Comm. Math. Phys., 400(2023),  215–276.

\bibitem{CEK} M. Coti Zelati, T. M. Elgindi and K. Widmayer, {\it Enhanced dissipation in the Navier-Stokes equations near the Poiseuille flow},
Comm. Math. Phys., 378(2020), 987-1010.

\bibitem{Eck}M. Eckerta, {\it The troublesome birth of hydrodynamic stability theory: Sommerfeld and the turbulence problem},
Eur. Phys. J. H, 35(2010), 29–51.

\bibitem{Chapman} S. J. Chapman, {\it Subcritical transition in channel flow}, J. Fluid Mech., 451(2002), 35-91.

\bibitem{DHB} F. Daviaud, J. Hagseth and P. Berg$\acute{\mathrm{e}}$, {\it Subcritical transition to turbulence in plane Couette flow,} Phys. Rev. Lett., 69 (1992), 2511-2514.

\bibitem{DL} S. Ding and Z. Lin, {\it Enhanced dissipation and transition threshold for the 2-D plane Poiseuille flow via resolvent estimate},  J. Differential Equations, 332 (2022), 404–439.

\bibitem{De-Zo} A. Del Zotto, {\it Enhanced dissipation and transition threshold for the Poiseuille flow in a periodic strip},
 SIAM J. Math. Anal., 55(2023), 4410–4424.

\bibitem{DR} P. G. Drazin and W. H. Reid, Hydrodynamic stability, Cambridge Monographs on Mechanics and Applied Mathematics,
Cambridge University Press, Cambridge-New York, 1981.

\bibitem{GGN} E. Grenier, Y. Guo and T. Nguyen, {\it Spectral instability of general symmetric shear flows in a two-dimensional channel},
Adv. Math., 292 (2016), 52-110.

\bibitem{GNR} E. Grenier, T. Nguyen, F. Rousset and A. Soffer, {\it Linear inviscid damping and enhanced viscous dissipation of shear flows by using the conjugate operator method}, J. Funct. Anal., 278 (2020), 108339.

\bibitem{HJM} B. Hof,  A. Juel and T. Mullin, {\it Scaling of the turbulence transition threshold in a pipe},
Phys. Rev. Lett., 91(2004),  244502.

\bibitem{IJ1}  A. Ionescu and  H.  Jia,  {\it Inviscid damping near the Couette flow in a channel}, Comm. Math. Phys.,  374 (2020),  2015-2096.

\bibitem{IJ2}  A. Ionescu and  H.  Jia,   {\it Axi-symmetrization near point vortex solutions for the 2D Euler equation}, Comm. Pure Appl. Math.,  75 (2022),  818-891.

\bibitem{IJ3}  A. Ionescu and H. Jia, {\it Nonlinear inviscid damping near monotonic shear flows}, Acta Math., 230 (2023), 321–399.

\bibitem{IMM} S. Ibrahim, Y. Maekawa and  N. Masmoudi, {\it On pseudospectral bound for non-selfadjoint operators and its application to stability of Kolmogorov flows},  Ann. PDE, 5 (2019), Paper No. 14, 84 pp.

\bibitem{Kelvin} L. Kelvin, {\it Stability of fluid motion-rectilinear motion of viscous fluid between two parallel plates},
Phil. Mag., 24(1887), 188-196.

\bibitem{Ker} R. Kerswell, {\it  Recent progress in understanding the transition to turbulence in a pipe}, Nonlinearity, 18(2005), 17-44.

\bibitem{LWZ} T. Li, D. Wei and Z. Zhang, {\it Pseudospectral bound and transition threshold for the 3D Kolmogorov flow},
Comm. Pure Appl. Math., 73(2020), 465-557.

\bibitem{Li} Y. C. Li and  Z. Lin, {\it A resolution of the Sommerfeld paradox}, SIAM J. Math. Anal., 43(2011), 1923-1954.

\bibitem{LX} Z. Lin and M. Xu, {\it Metastability of Kolmogorov flows and inviscid damping of shear flows}, Arch. Ration. Mech. Anal.,  231 (2019), 1811-1852.

\bibitem{LHR} A. Lundbladh, D. S. Henningson and S. C. Reddy, {\it Threshold amplitudes for transition in channel flows},
Transition, Turbulence and Combustion, 309-318. Springer, Dordrecht, 1994.

\bibitem{MZ} N. Masmoudi and W. Zhao, {\it Stability threshold of the 2D Couette flow in Sobolev spaces},
Ann. Inst. H. Poincaré C Anal. Non Linéaire., 39(2022), 245–325.

\bibitem{MZ2} N. Masmoudi and Z. Zhao, {\it Nonlinear inviscid damping for a class of monotone shear flows in finite channel},  arXiv:2001.08564.

\bibitem{MM}  F.  Mellibovskya and A. Meseguer, {\it Pipe flow transition threshold following localized impulsive perturbations},
Phys. Fluids, 19(2007), 044102.

\bibitem{Reddy} S. C. Reddy, P. J. Schmid, J. S. Baggett, D. S. Henningson, {\it On stability of streamwise
streaks and transition thresholds in plane channel flows}, J. Fluid Mech., 365(1998), 269-303.

\bibitem{Rey} O. Reynolds, {\it An experimental investigation of the circumstances which determine whether the motion of water shall be direct or sinuous, and of the law of resistance in parallel channels}, Proc. R. Soc. Lond., 35(1883), 84-99.

\bibitem{Rom} V. A. Romanov, {\it Stability of plane-parallel Couette flow}, Fun. Anl. Appl., 7(1973), 137-146.

\bibitem{Sch} P. J. Schmid, D. S. Henningson, Stability and transition in shear flows, Applied Mathematical
Sciences, 142, Springer, New York, 2001.

\bibitem{Tre} L. N. Trefethen, A. E. Trefethen, S. C. Reddy and T. A. Driscoll, {\it Hydrodynamic stability without eigenvalues},
Science, 261(1993), 578-584.

\bibitem{Wei} D. Wei,  {\it Diffusion and mixing in fluid flow via the resolvent estimate}, Sci. China Math.,  64 (2021), 507–518.

\bibitem{WZ-cpam} D. Wei and Z. Zhang, {\it Transition threshold for the 3D Couette flow in Sobolev space},
Comm. Pure Appl. Math., 74(2021), 2398–2479.

\bibitem{WZ-SCM} D. Wei and Z. Zhang,  {\it Enhanced dissipation for the Kolmogorov flow via the hypocoercivity method},  Sci. China Math.,  62 (2019), 1219-1232.

\bibitem{WZ-tmj} D. Wei and Z. Zhang, {\it Nonlinear enhanced dissipation and inviscid damping for the 2-D Couette flow}, Tunis. J. Math., 5 (2023), 573–592.


\bibitem{WZZ-CPAM} D. Wei, Z. Zhang and W. Zhao, {\it Linear inviscid damping for a class of monotone shear flow in Sobolev spaces}, Comm. Pure Appl. Math., 71(2018), 617-687.

\bibitem{WZZ-apde} D. Wei,  Z. Zhang and W. Zhao, {\it Linear inviscid damping and vorticity depletion for shear flows},
Ann. PDE, 5(2019), Paper No. 3, 101 pp.

 \bibitem{WZZ-aim} D. Wei, Z. Zhang and W. Zhao, {\it Linear inviscid damping and enhanced dissipation for the Kolmogorov
flow},  Adv. Math., 362(2020), 106963

\bibitem{WZZ-cmp} D. Wei, Z, Zhang and H. Zhu, {\it Linear inviscid damping for the $\beta$-plane equation},
 Comm. Math. Phys., 375 (2020), 127–174.

\bibitem{Yaglom} A. M. Yaglom, Hydrodynamic instability and transition to turbulence, Fluid Mechanics and Its Applications, 100, Springer, Dordrecht, 2012.


\bibitem{Z1}  C. Zillinger, {\it Linear inviscid damping for monotone shear flows}, Trans. Amer. Math. Soc., 369 (2017), 8799-8855.

\bibitem{Z2} C. Zillinger, {\it Linear inviscid damping for monotone shear flows in a finite periodic channel, boundary effects, blow-up and critical Sobolev regularity}, Arch. Ration. Mech. Anal., 221(2016), 1449-1509.













\end{thebibliography}
\end{document}